\theoremstyle{plain}
\newtheorem{theorem}{Theorem}[section]
\newtheorem{corollary}[theorem]{Corollary}
\newtheorem{proposition}[theorem]{Proposition}
\newtheorem{lemma}[theorem]{Lemma}
\theoremstyle{definition}
\newtheorem{definition}[theorem]{Definition}
\newtheorem{example}[theorem]{Example}
\newtheorem{remark}[theorem]{Remark}
\newtheorem{construction}[theorem]{Construction}
\newtheorem{question}[theorem]{Question}
\newcommand{\Pn}{{\mathbb P}}
\newcommand{\PP}{{\mathbb P}}
\newcommand{\CC}{{\mathbb C}}
\newcommand{\ZZ}{{\mathbb Z}}
\newcommand{\kk}{{\mathbb K}}
\newcommand{\VV}{{\mathbb V}}
\newcommand{\cO}{\mathcal{O}}
\newcommand{\cF}{\mathcal{F}}
\newcommand{\Gr}{\mathbf{Gr}}
\DeclareMathOperator{\rk}{rk}
\DeclareMathOperator{\rank}{rk}
\DeclareMathOperator{\codim}{codim}
\DeclareMathOperator{\type}{type}
\DeclareMathOperator{\Ext}{Ext}
\DeclareMathOperator{\Hom}{Hom}
\DeclareMathOperator{\sat}{sat} 
\DeclareMathOperator{\Sec}{Sec}
\DeclareMathOperator{\Sym}{Sym}
\DeclareMathOperator{\Gor}{Gor}
\DeclareMathOperator{\bGor}{{\bf Gor}}
\DeclareMathOperator{\Cat}{Cat}
\DeclareMathOperator{\Hilb}{Hilb}
\DeclareMathOperator{\reg}{reg}
\DeclareMathOperator{\Tor}{Tor}
\DeclareMathOperator{\Pf}{Pf}
\DeclareMathOperator{\gin}{gin}
\DeclareMathOperator{\rr}{r}
\title[Quaternary Quartic forms and Gorenstein rings]{Quaternary Quartic forms and Gorenstein rings}
\author[G. Kapustka]{Gregorz Kapustka}
\thanks{G. Kapustka supported by Narodowe Centrum Nauki 2018/30/E/ST1/00530.}
\address{G. Kapustka: Department of Mathematics and Informatics, Jagiellonian University, Krak\'ow, Poland}
\email{grzegorz.kapustka@uj.edu.pl}
\author[M. Kapustka]{Micha\l{} Kapustka}
\thanks{M. Kapustka supported by Narodowe Centrum Nauki 2018/31/B/ST1/02857}  
\address{M. Kapustka: Institute of Mathematics of the Polish Academy of Sciences, Warszawa, Poland}
\email{michal.kapustka@impan.pl}
\author[K. Ranestad]{Kristian Ranestad}
\address{Ranestad: Matematisk institutt, Universitetet i Oslo, Oslo, Norway}
\email{ranestad@math.uio.no}
\author[H. Schenck]{Hal Schenck}
\thanks{Schenck supported by NSF 2006410}
\address{Schenck: Mathematics Department, Auburn University, Auburn, USA}
\email{hks0015@auburn.edu}
\author[M. Stillman]{Mike Stillman}
\thanks{Stillman and Yuan supported by NSF 1502294}
\address{Stillman: Mathematics Department, Cornell University, Ithaca, USA}
\email{mike@math.cornell.edu}
\author[B. Yuan]{Beihui Yuan}
\address{Yuan: Mathematics Department, Cornell University, Ithaca, USA}
\email{by238@cornell.edu} 
\subjclass[2000]{Primary 13E10, Secondary 14J32, 13H10, 13D02} \keywords{Apolar Ideal, Gorenstein Ring, Macaulay Inverse System, Calabi-Yau}
\begin{document}
\maketitle
\enlargethispage{\baselineskip}
\begin{abstract} 

A quaternary quartic form, a quartic form in four variables, is the dual socle generator of an Artinian Gorenstein ring of codimension and regularity 4.
We present a classification of quartic forms in terms of rank and powersum decompositions which corresponds to the classification by the Betti tables of the corresponding Artinian Gorenstein rings. This gives a stratification of the space of quaternary quartic forms which  we compare with the Noether-Lefschetz stratification. We discuss various phenomena related to this stratification. We study the geometry of powersum varieties for a general form in each stratum. In particular, we show that the powersum variety $VSP(F,9)$ of a general quartic with singular middle catalecticant is again a quartic surface, thus giving a rational map between two divisors in the space of quartics. Finally, we  provide various explicit constructions of general Artinian Gorenstein rings corresponding to each stratum and discuss their lifting to higher dimension.  These provide constructions of codimension four varieties, which include canonical surfaces, Calabi-Yau threefolds and Fano fourfolds. 
In the particular case of quaternary quartics, our results yield answers to questions posed by Geramita in \cite{Geramita1}, Iarrobino-Kanev in \cite{IK}, and Reid in \cite{ReidGeneral}.
\end{abstract}

\tableofcontents
\section{Introduction}\label{intro}
\noindent 
Quaternary quartic forms are dual socle generators of Artinian Gorenstein rings.   In this paper we investigate two aspects of this construction. First, we study the relation between decompositions of the form and the resolution of the Artinian ring as a quotient of $\CC[x_0,\ldots,x_3]$. Second, we investigate graded quotients $S/I$ of a polynomial ring $S=\CC[x_0,\ldots,x_n]$ by a homogeneous ideal $I$ whose Artinian reduction is Gorenstein with dual socle generator a quaternary quartic form.
In particular, we consider quotients $S/I$ that are arithmetically Gorenstein (AG). These rings are defined by the property that if $I$ is of codimension $c$, then $\Ext^i_S(S/I,S)=0$ if $i \ne c$, and the canonical module $\omega_{S/I}$ of $S/I$ satisfies
\begin{equation}\label{CanModule}
\omega_{S/I} \simeq \Ext^c_S(S/I,S(-n-1))\simeq S(-n-1+\codim(I)+\reg(S/I))/I.
\end{equation}
The Artinian reduction has dual socle generator a quaternary quartic form when $S/I$ has codimension and (Castelnuovo-Mumford) regularity both $4$. 

The motivation for studying this situation is twofold. First, when $X=V(I)$ is a threefold, then $X$ is a Calabi-Yau (CY) variety; such varieties play a central role in string theory.  We extend constructions of Calabi-Yau threefolds that appear as AG varieties in $\Pn^7$ in  Section \ref{section_irreducible_liftings}.

Second, while there is a classical structure theory for AG rings of codimension at most $3$, in codimension $4$ there are many open questions. Recent work in \cite{SSY} shows that a codimension $=4=$ regularity AG ring has one of $16$ possible {\em Betti tables} (see \ref{notation}). These 16 possibilities are listed in tables \ref{tableCGKK} and \ref{tableremaining} in Appendix~\ref{Appendix}. 
Our study extends the characterization of these AG rings and their relation to known constructions of Gorenstein rings in codimension $4$.

Our main object of investigation throughout the paper is the quartic form, as a   dual socle generator of an Artinian Gorenstein ring.  The geometry of a homogeneous form $F\in R=\CC[y_0,\ldots,y_n]$ is most often thought of as that of the associated hypersurface $V(F)$ in projective space. However, there are also zero schemes $\Gamma$ associated to $F$, via the apolarity construction of Macaulay. We discuss {\em apolarity} in detail in Section \ref{section_preliminaries}. The starting point is the role of the form as a dual socle generator:

Elements of $S$ act on elements of $R$ by differentiation: 
\[
x_i(y_j) =\frac{\partial}{\partial(y_i)}(y_j) = \delta_{ij}.
\]
We work over $\CC$, but our results hold over any field of characteristic zero; the operation above is essentially the same as contraction.
Consider the annihilator of $F$ under this action
 $$F^\perp:=\operatorname{ann}_S(F)=\{G\in S|\ G(F)=0\}.$$
 We denote by $A_F:=S/F^\perp$ the quotient and call it the {\em apolar ring} of $F$.  It is clearly Artinian, in fact also Gorenstein.
Macaulay shows (see Lemma \ref{ApolarPairing} in the next section), that apolarity gives a bijection between homogeneous polynomials $F \in R_j$, up to rescaling, and graded Artinian Gorenstein rings $S/F^\perp$ with socle in degree $j$.

A geometric aspect of a form that is best understood via apolarity is the powersum presentation.
A representation of $F=l_1^d+ \ldots + l_s^d$ as a sum of powers of linear forms $l_i$ yields a zero scheme $\Gamma=\{[l_1],...,[l_s]\}\subset \Pn(R_1)$ of degree $s$.  Via apolarity, the ideal $I_\Gamma$ is naturally an ideal in $S$, such that the homogeneous coordinate ring $S/I_\Gamma$ of $\Gamma$ admits a surjection $S/I_\Gamma\to A_F$.  

We use minimal power sum presentations to analyze a stratification of the set of quartics.
 By \cite{SSY}, for a quaternary quartic form $F$ that is nondegenerate (not annihilated by a linear form), there are $16$ different Betti tables for $A_F$, and for $i \in \{1,2,3\}$, the {\em Betti numbers} 
\begin{equation}
    \dim_{\CC}\Tor^S_i(A_F,\CC)_{i+1} = b_{i,i+1}(A_F),
    \end{equation}
i.e.~the quadratic part of the Betti table,        determine the entire Betti table of $A_F$. We write $B_{[b_{12}b_{23}b_{34}]}$ to denote such a Betti table. One goal of this paper is to describe the loci of quartic forms in $\Pn^3$ with a specific Betti table:
\begin{definition}\label{FBdefinition} Let 
\[
{\mathcal F}_{[b_{12}b_{23}b_{34}]} = \{F \in \Pn(\Sym_4(\CC^4))=\Pn^{34} \mid S/F^\perp \mbox{ has Betti table  }  B_{[b_{12}b_{23}b_{34}]}\}.
\]
We say that $F$ is of type ${[b_{12}b_{23}b_{34}]}$ if $F\in {\mathcal F}_{[b_{12}b_{23}b_{34}]}$ to reflect this, or the equivalent condition $[F] \in {\mathcal F}_{[b_{12}b_{23}b_{34}]}$, and write ${\mathcal F}_B$ when the type is not fixed. 
\end{definition}
We identify the irreducible components of ${\mathcal F}_B$ for all $B$ arising as Betti tables of quaternary quartics. The main ingredient in our argument is an analysis of two different subideals of $F^\perp$. In Section \ref{bettitables} we consider the ideals of finite sets of points that give minimal power sum presentations of quaternary quartics, 
and in Section \ref{quadraticideals} we consider the subideal $Q_F$ defined by the quadrics in $F^\perp$. 

Since the Betti table of $A_F$ is determined by the quadratic part, we will describe the locus ${\mathcal F}_B$ in terms of the schemes defined by $Q_F$ for $F\in{\mathcal F}_B$.  We find, somewhat surprisingly, the quadratic ideals $Q_F$ are saturated, and identify  these ideals by their set of generators in the space of quadrics.

\begin{definition}\label{GbettiTable} Let 
\[
{\mathcal G}_{[b_{12}b_{23}b_{34}]} = \{G \in \Gr(b_{12},S_2) \mid \mbox{ the Betti table of }S/I_G = B^Q_{[b_{12}b_{23}b_{34}]}\},
\]
where $I_G$ is the ideal generated by $G$, seen as a subspace of the space of quadrics, and $B^Q_{[b_{12}b_{23}b_{34}]}$ is one of the possible Betti tables of quadratic ideals of the form $Q_F$ for some form $F$ of type $[b_{12}b_{23}b_{34}]$. These Betti tables are classified in Section \ref{quadraticideals}.   

As in Definition~\ref{FBdefinition}, we say that a quadratic ideal $Q_F$ is of type ${[b_{12}b_{23}b_{34}]}$ if $Q_F$ corresponds to a point in ${\mathcal G}_{[b_{12}b_{23}b_{34}]}$, and write ${\mathcal G}_B$ when the type is not fixed. 
\end{definition}
In Section \ref{stratification}, we identify the ${\mathcal F}_B$ strata in $\Pn^{34}$. While many of the ${\mathcal F}_B$ are irreducible, not all are, and the geometry of the ${\mathcal G}_B$ loci is key to unraveling this. 
The two cases which require additional analysis are
${\mathcal F}_{[300]}$ and ${\mathcal F}_{[441]}$.

We identify three irreducible subsets of ${\mathcal F}_{[300]}$, which we designate as $[300a],[300b]$ and $[300c]$. Proposition~\ref{CGKK4a,b,c} shows that the corresponding loci satisfy 
\[
{\mathcal F}_{[300c]} \subseteq \overline{{\mathcal F}_{[300b]}} \subseteq \overline{{\mathcal F}_{[300a]}}.
\]
The corresponding loci ${\mathcal G}_{[300a]}$ and ${\mathcal G}_{[300b]}$ in ${\mathcal G}_{[300]}$ are irreducible and coincide, so we denote them both by  ${\mathcal G}_{[300ab]}$.
The locus ${\mathcal G}_{[300c]}$ is also irreducible, and corresponds to a Hilbert scheme distinct from that of ${\mathcal G}_{[300ab]}$.

Similarly, we show that ${\mathcal F}_{[441]}$ has two irreducible components, which we designate as ${\mathcal F}_{[441a]}$ and ${\mathcal F}_{[441b]}$.  In this case the corresponding loci for the quadratic components ${\mathcal G}_{[441a]}$ and ${\mathcal G}_{[441b]}$ are distinct components of the same Hilbert scheme. Our first main result is the following:

\begin{theorem}\label{bettistrat}
There are $19$ irreducible Betti strata for nondegenerate quaternary quartics.  They satisfy the following closure relations below. 

\begin{enumerate}
    \item   ${\mathcal F}_{[562]}$  $\subset\overline{ \mathcal{F}_{[551]}}$  $\subset\overline{ \mathcal{F}_{[550]}}$ 
    \item  ${\mathcal F}_{[441a]}$   $\subset\overline{ \mathcal{F}_{[420]}}$ 
    \item  ${\mathcal F}_{[441b]}$ $\subset\overline{ \mathcal{F}_{[430]}}$ $\subset\overline{ \mathcal{F}_{[420]}}$
     \item   $\mathcal{F}_{[320]}$ $\subset\overline{ \mathcal{F}_{[300b]}}$  
    \item    $\mathcal{F}_{[300c]}$  $\subset\overline{ \mathcal{F}_{[300b]}}$  
    \item  $\mathcal{F}_{[331]}$  $\subset\overline{ \mathcal{F}_{[310]}}$  $\subset\overline{ \mathcal{F}_{[300b]}}$
    \item  $\mathcal{F}_{[210]}$  $\subset\overline{ \mathcal{F}_{[200]}}$ 
    \item $\mathcal{F}_{[683]}$ $\subset\overline{ \mathcal{F}_{[550]}}$ $\subset\overline{ \mathcal{F}_{[420]}}$ $\subset\overline{ \mathcal{F}_{[300b]}}$ $\subset\overline{ \mathcal{F}_{[300a]}}$ $\subset\overline{ \mathcal{F}_{[200]}}$ $\subset\overline{ \mathcal{F}_{[100]}}$
    \item $\mathcal{F}_{[683]}$ $\subset\overline{ \mathcal{F}_{[551]}}$ 
    \item $\mathcal{F}_{[550]}$ $\subset\overline{ \mathcal{F}_{[430]}}$ 
    \item $\mathcal{F}_{[683]}$ $\subset\overline{\mathcal{F}_{[562]}}$  $\subset\overline{ \mathcal{F}_{[430]}}$  $\subset\overline{ \mathcal{F}_{[300c]}}$ 
    \item $\mathcal{F}_{[562]}$  $\subset\overline{ \mathcal{F}_{[441b]}}$ 
    \item $\mathcal{F}_{[552]}$  $\subset\overline{\mathcal{F}_{[441a]}}$  $\subset\overline{ \mathcal{F}_{[310]}}$ $\subset\overline{ \mathcal{F}_{[021]}}$ 
    \item ${\mathcal{F}_{[441a]}}$  $\subset \overline{\mathcal{F}_{[331]}}$  $\subset\overline{ \mathcal{F}_{[210]}}$ 
    \item ${\mathcal{F}_{[420]}}$  $\subset \overline{\mathcal{F}_{[400]}}$  $\subset\overline{ \mathcal{F}_{[300a]}}$ 
\end{enumerate}
\end{theorem}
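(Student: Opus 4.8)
The plan is to assemble the theorem from three ingredients: the list of $16$ admissible Betti tables of \cite{SSY}; the component analysis of the loci ${\mathcal F}_B$ and ${\mathcal G}_B$ carried out in Sections~\ref{bettitables}, \ref{quadraticideals} and \ref{stratification}; and explicit one–parameter degenerations realizing the asserted incidences. For the number $19$, start from the $16$ Betti tables: the analysis of the point schemes occurring in minimal power sum presentations (Section~\ref{bettitables}) and of the quadratic ideals $Q_F$ (Section~\ref{quadraticideals}) shows that exactly two of the loci, ${\mathcal F}_{[300]}$ and ${\mathcal F}_{[441]}$, fail to be irreducible — the first splitting into $[300a],[300b],[300c]$ (Proposition~\ref{CGKK4a,b,c}), the second into $[441a],[441b]$ — while the remaining $14$ loci are irreducible. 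This gives $14+3+2=19$ irreducible strata.

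For irreducibility of a single stratum I would use one of two parametrizations, chosen according to $B$. When a general $F\in{\mathcal F}_B$ admits a minimal power sum presentation by a scheme $\Gamma$ of $s$ points in sufficiently general linear position — the statement that such $\Gamma$ exists and that $A_F$ then has Betti table $B$ is exactly what Section~\ref{bettitables} supplies — the rational map $(\Pn^3)^{(s)}\dashrightarrow\Pn^{34}$ sending $\{[l_1],\dots,[l_s]\}$ to $[l_1^4+\dots+l_s^4]$ has irreducible source and image dense in ${\mathcal F}_B$, so ${\mathcal F}_B$ is irreducible. Otherwise I would use the morphism ${\mathcal F}_B\to{\mathcal G}_B$, $F\mapsto Q_F$: since the Betti table of $A_F$ is determined by its quadratic part and the ideals $Q_F$ are saturated, the fibre over $[Q]$ is the open subset, consisting of forms of type $B$, of the linear space $\{F\in R_4: G(F)=0\text{ for all }G\in Q\}$, hence irreducible; and ${\mathcal G}_B$ is irreducible by its identification in Section~\ref{quadraticideals} with a component of a Hilbert scheme (or a $\mathrm{PGL}_4$–orbit closure), so ${\mathcal F}_B$ is irreducible. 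The two genuinely reducible tables are handled the same way: for $[300]$ the pieces are distinguished by further incidence conditions on $F$ relative to $Q_F$ (with ${\mathcal G}_{[300a]}={\mathcal G}_{[300b]}={\mathcal G}_{[300ab]}$ but ${\mathcal G}_{[300c]}$ in a different Hilbert scheme), and for $[441]$ the two pieces map onto the two distinct components ${\mathcal G}_{[441a]},{\mathcal G}_{[441b]}$ of one Hilbert scheme.

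To prove a single incidence ${\mathcal F}_{B'}\subseteq\overline{{\mathcal F}_B}$ I would exhibit a family $F_t$, $t\in\AF^1$, with $F_t$ of type $B$ for $t\neq 0$ and $F_0$ of type $B'$. The function $F\mapsto b_{12}(A_F)=\dim_{\CC}F^{\perp}_2=10-\rk\Cat(2,2)_F$ is lower semicontinuous on $\Pn^{34}$, so $\overline{{\mathcal F}_B}$ can only meet ${\mathcal F}_{B'}$ when $b'_{12}\ge b_{12}$; this is consistent with the displayed list and fixes the direction of every degeneration. The families come in two flavours — degenerations of a minimal power–sum presentation (colliding or specializing the points), and one–parameter motions of $Q_F$ inside $\Gr(b_{12},S_2)$ that acquire extra syzygies or specialize onto another Hilbert component — and between them they realize the links in (1)--(15), including the transitions into and among the $[300]$– and $[441]$–pieces. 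Since $\overline{(\cdot)}$ is closed and the relation is transitive, it is enough to verify the individual consecutive links occurring in (1)--(15); the displayed chains then follow by concatenation.

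The main obstacle is twofold. First, the reducibility of ${\mathcal F}_{[300]}$ and ${\mathcal F}_{[441]}$: isolating the three, resp. two, components and proving each irreducible requires a precise description of the Hilbert schemes parametrizing the schemes $V(Q_F)$ and of how $F$ sits relative to $V(Q_F)$, and a naive Betti–table count does not suffice. Second, realizing every listed link by an explicit degeneration: lower semicontinuity tells us which incidences are a priori possible, but each actual containment needs a construction — a power–sum collision or a one–parameter motion of $Q_F$ — together with a check, most conveniently a \texttt{Macaulay2} verification, that the special fibre lands in the claimed stratum and not a smaller one; and if one wants the list to be exhaustive, the remaining potential incidences must be excluded by comparing the dimensions $\dim{\mathcal F}_B$ furnished by the parametrizations above. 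I would organise the write–up by the mechanism producing each degeneration and finish with this bookkeeping.
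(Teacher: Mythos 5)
The overall skeleton you propose—$16$ tables, the two reducible loci $[300]$ and $[441]$ giving $14+3+2=19$ strata, and irreducibility via the fibration $F\mapsto Q_F$ over the irreducible loci ${\mathcal G}_B$—matches the paper. The genuine gap is in your criterion for a containment ${\mathcal F}_{B'}\subseteq\overline{{\mathcal F}_B}$: exhibiting one family $F_t$ of type $B$ for $t\neq 0$ with $F_0$ of type $B'$, plus a {\tt Macaulay2} check that $F_0$ lies in the claimed stratum, only proves $\overline{{\mathcal F}_B}\cap{\mathcal F}_{B'}\neq\emptyset$. To conclude the containment you must be able to take $F_0$ a \emph{general} member of the irreducible set ${\mathcal F}_{B'}$ (equivalently, your degenerations must sweep out a dense subset of ${\mathcal F}_{B'}$), and nothing in your plan secures this. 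The point is not pedantic here: the paper's remark on the failure of the frontier property shows ${\mathcal F}_{[320]}\cap\overline{{\mathcal F}_{[310]}}\neq\emptyset$ (degenerate the twisted cubic to a line plus conic) while ${\mathcal F}_{[320]}\not\subset\overline{{\mathcal F}_{[310]}}$, so a single verified degeneration of exactly the kind you propose can ``certify'' a false containment. The paper instead uses two uniform mechanisms applied to a general form (Lemma \ref{closurerelationGF}): when $b_{12}$ differs, the relation $\sqsubset$—if $Q_{F_2}\subseteq Q_{F_1}$ for general $F_1$, then $F_1$ lies in the linear space $((Q_{F_2})_4)^\perp\subset\overline{{\mathcal F}_{B_2}}$, a span argument requiring no degeneration of $Q_F$ at all; and when $b_{12}$ is equal, the relation $\prec$, proved by specializing the rank-computing point configurations and using that they have the same cardinality and impose independent conditions on quartics, so the spans have constant dimension and a general $F$ of the small type is a limit. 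Your ``one-parameter motion of $Q_F$ inside $\Gr(b_{12},S_2)$'' can only address the second situation (it fixes $b_{12}$), and your point-collision flavour is missing precisely the independent-conditions/constant-span step that makes the limit argument apply to a general member rather than a handpicked one.

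Two further places where your plan is thinner than the problem requires. First, irreducibility of ${\mathcal F}_{[300a]}$ and ${\mathcal F}_{[300b]}$: these share the same quadratic locus ${\mathcal G}_{[300ab]}$, so your description of the fibre of $F\mapsto Q_F$ as ``the open subset of forms of type $B$ in the linear space annihilated by $Q$'' fails—type $[300b]$ is not open there. The paper needs a separate parametrization of ${\mathcal F}_{[300b]}$ by length-$7$ apolar schemes containing a $(2,2,2)$ complete intersection (a dense subset of $\operatorname{Hilb}_7$), with ${\mathcal F}_{[300a]}$ the dense complement; your ``further incidence conditions'' must be converted into such an argument. Second, the link ${\mathcal F}_{[420]}\subset\overline{{\mathcal F}_{[400]}}\subset\overline{{\mathcal F}_{[300a]}}$ fits neither of your two degeneration flavours and requires the separate web-of-quadrics argument of Proposition \ref{ci}; likewise your suggestion that residual non-containments can be excluded ``by comparing dimensions'' is too weak (e.g.\ $\dim{\mathcal F}_{[400]}=24<27=\dim{\mathcal F}_{[300b]}$, yet the exclusion needs a rank argument, and the paper also uses semicontinuity of $b_{23},b_{34}$, not just $b_{12}$).
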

The proofs of these containments appear in Section \ref{stratification} where Figure 1 depicts this graphically. Non-containments appear in Propositions \ref{ci} and \ref{noncontainment}.
This answers questions of Geramita and Iarrobino-Kanev concerning Gorenstein parameter schemes and Catalecticant varieties in the case of quartics as explained in Section \ref{questions}, as well as questions of Reid about codimension $4$ Gorenstein subschemes, as discussed in Section \ref{subsection_history}. 

The Fermat quartic, $y_0^4+y_1^4+y_2^4+y_3^4$, is of type $[683]$.  It is a nonsingular quartic, so a corollary of Theorem \ref{bettistrat} is that the general quartic in every Betti stratum is nonsingular.
We compare in Section \ref{non-noether} the Betti stratification with the Noether-Lefschetz locus, and show the relation between Betti strata and so called Noether-Lefschetz varieties of quartic surfaces.

A crucial issue in the analysis of the 19 irreducible Betti strata is the geometry of the set of power sum presentations of a general form in each stratum.  In particular, the power sum presentations of degree equal to the {\em rank} of the form. 
\begin{definition}\label{rankDef}
The rank  of a form $F\in R_d=\CC[y_0,\ldots,y_n]_d$,  denoted $\rr(F)$, is the minimal $s$ such that 
 $$F = l_1^d+ \ldots + l_s^d$$
for some linear forms $l_i\in R_1$.
\end{definition}

Apolarity defines a duality between the linear forms $S_1\subset S$ and $R_1\subset R$.  The linear forms $l_i\in R_1$ define hyperplanes $V(l_i) \subset \Pn(S_1)$, and points $[l_i] \in \Pn(R_1)$. The set of points $\{[l_1],...,[l_s]\}$ is a point in the Hilbert scheme $Hilb_s(\Pn(R_1))$. We have the
\begin{definition}\label{VSPdefinition}
The {\bf v}ariety of {\bf s}ums of {\bf p}owers presenting $F$ is the closure
$$VSP(F,s) = \overline{\{ \{[l_1],\ldots,[l_s] \} \in Hilb_s(\Pn(R_1)) \mid \exists \lambda_i \in \CC
: F=\lambda_1 l_1^d+\ldots+\lambda_s l_s^d \} }$$ 
of power sums presenting $F$ in the Hilbert scheme.
\end{definition}
We will be particularly interested in the case when $s$ is the rank $\rr(F)$ of $F$, and $F$ is a general member of each type. In the special cases when $VSP(F,\rr(F))$ is a point, the form is called identifiable. Angelini-Chiantini-Vannieuwenhoven find criteria for identifiability of quartic forms in \cite{ACV}.  

Our second main result is Proposition \ref{vsp proposition} of Section \ref{stratification} which describes $VSP(F,\rr(F))$ for the general form in each stratum.  Our results appear in Tables \ref{tablesremaining} and \ref{tableCGKK} of Section \ref{stratification}, which we reproduce below:
\renewcommand{\arraystretch}{1.4}
\begin{center}
\begin{table}[H]
\begin{tabular}{cc}

\begin{tabular}{|c|c|c|}
\hline Betti table B & $r$ & VSP(F,r)  \\
\hline [683] & $4$       & one point \\
\hline [550] & $5$       & one point \\
\hline [400] & $8$       & $\Pn^3$ \\
\hline [320] & $7$       & $\Pn^1$\\
\hline [300a] & $8$       & one point\\
\hline [300b]& $7$       & one point  \\
\hline [300c] & $7$       &$\Pn^1$ \\
\hline [200] & $8$       & two points\\
\hline [100] & $9$       & K3 Surface \\
\hline [000] & $10$       & Fivefold\\
\hline
\end{tabular}
&
\begin{tabular}{|c|c|c|}
\hline Betti table B & $r$ & VSP(F,r)  \\
\hline [562] & $5$       & $\Pn^1$ \\
\hline [551] & $5$       & one point \\
\hline [441a] & $6$       & $\Pn^1$ \\
\hline [441b] & $6$       & $\Pn^1 \times \Pn^1$\\
\hline [430] & $6$       & $\Pn^1$\\
\hline [420] & $6$       & one point \\
\hline [331] & $7$       & $V_{22}$\\
\hline [310] & $7$       & $\Pn^1$ \\
\hline [210] & $8$       & $V_{22}$ $\cup\; \Pn^1\times\Pn^1$ \\
\hline &&\\
\hline
\end{tabular}\\
\end{tabular}
\vskip .2in

\end{table}
\vskip -.3in
\end{center}
\renewcommand{\arraystretch}{1.0}
Here $V_{22}$ stands for the Fano threefold of degree 22 constructed by Mukai as $VSP(F_3,6)$, the variety of sums of powers of a general ternary quartic $F_3$, see \cite{Mukai}. 

The description of $VSP(F,9)$ for a general quartic form of rank $9$ as a K3 surface was a long standing question.  It is tempting to compare it to the result by Mukai, in \cite{Mukai}, that $VSP(F,10)$ for a general ternary sextic form is a K3 surface.   We ask:
\begin{question} Is $VSP(F,\rr(F))$ a K3 surface, when $F$ is a general form in $H^0(S,-2K_S)$ on any toric surface $S$?
\end{question}
The results in the table leaves another open question:
\begin{question} What kind of variety is $VSP(F,10)$, when $F$ is a general quartic form?
\end{question}

Finally, we study AG projective varieties: varieties $X=V(I)$ such that $S/I$ is AG, whose homogeneous coordinate rings $A(X)$ are liftings of $A_F$, that is
\[
A_F\cong A(X)/(\ell_0,...,\ell_{{\rm dim}X}),
\]
where the $\ell_i$ are a regular sequence of linear forms. Thus $A(X)$ and $A_F$ have the same Betti table. In that case, $A_F$ will be called an {\em Artinian reduction} of $X$, and we shall call the variety $X$ an {\em AG-lifting} (or lifting) of $A_F$ or $F$. If $X$ is a cone over a variety $Y$, then $A(X)$ is trivially a lifting of $A(Y)$, so we consider only non-trivial liftings: those liftings $A(X)$ where $X$ is not a cone.

For example, an elliptic normal curve is an AG-lifting of a quadratic form, and a projectively normal $K3$-surface is an AG-lifting of a cubic form. A smooth threefold $X$ that is a lifting of a quartic form is a Calabi-Yau threefold, i.e.  $K_X \simeq \mathcal{O}_X$ and $H^i(\mathcal{O}_X)=0 \mbox{ if  }i \ne 0,3$.

The codimension $4$ case was studied systematically in \cite{B} and  \cite{CGKK}. Patience Ablett, cf. \cite{Ablett}, recently found liftings to curves in $\Pn^5$ of quartics $F$ whose Betti table is in Table~\ref{tableremaining} in Appendix~\ref{Appendix}. Our third main result complements the lifting results of \cites{CGKK,SSY,Ablett}. We prove in Corollary \ref{liftofCGKK} that a general Artinian Gorenstein ring $A_F$ with a Betti table from Table~\ref{tableCGKK} lifts to a smooth curve. It remains an open question in the case of many Betti tables considered in this paper to understand to which dimension we can lift them to smooth varieties.

We find liftings to (possibly reducible) Calabi-Yau threefolds $X$ of $A_F$ for each type of nondegenerate quartic $F$.  In some cases of irreducible liftings, the constructions allow liftings to higher dimensions to Fano varieties.

Our first approach, used to find an AG-lifting of a quaternary quartic form $F$ with a smooth quadric in $F^\perp$, is to restrict our attention to such a quadric.  More precisely, in this case the restriction $F_Q^\perp$ of $F^\perp$ to the quadric $Q$ is a codimension $3$ ideal, that that is related to a Pfaffian ideal  (see Section~\ref{Pfaffian}). We extend these constructions to higher dimensional quadrics using the spinor bundles and Lagrangian degeneration loci (see Section \ref{Tom}).
In particular, by describing the liftings of the generic $A_F$ with Betti table $[100]$, we extend and complete the results of \cites{SSY,CGKK}.

For instance, let $V$ be a $8$-dimensional vector space with a nondegenerate quadratic form $q:V\to\CC$ and let $Q_6=\{q=0\}$ be the $6$-dimensional quadric defined by $q$ in $\Pn(V)$.
Consider a Lagrangian subbundle $\mathcal{E}\subset V\otimes{\mathcal O}_{Q_6}$.  Let $W\subset V$ be a Lagrangian subspace such that $W\cap \mathcal{E}(x) $ is odd-dimensional for all $x\in Q_6$. Then, by \cite{EPW1}*{Thm 2.1},  for sufficiently general $W$,  $$X=\{x\in Q_6|\ {\rm dim}(W\cap \mathcal{E}(x))=3\}$$ is a smooth subvariety of $X$ of codimension $3$.

\begin{proposition}\label{degree 19 Lagrangian-intro}
Suppose $\mathcal{E}=3\mathcal{S}_1$ or $\mathcal{S}_1\oplus 2\mathcal{S}_2$, where $S_1$ and $S_2$ are spinor bundles on  $Q_6$. Then for each choice of $\mathcal{E}$
and for sufficiently general Lagrangian subspace $W\subset V$,  $$X=\{x\in Q_6|\ {\rm dim}(W\cap \mathcal{E}(x))=3\}$$
is a Calabi-Yau threefold of degree $19$.  Moreover, $X$ is AG in $\mathbb{P}^7$ and its coordinate ring has Betti table $[100]$.\end{proposition}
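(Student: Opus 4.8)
The plan is to realize $X$ as a Lagrangian degeneracy locus inside $Q_6$, transport the structure of its locally free resolution on $Q_6$ to a graded free resolution of $I_X$ in $\Pn^7$, and read off the degree, the Betti table and the Calabi--Yau property from there; in fact the Calabi--Yau property will come for free from the resulting Betti table via \eqref{CanModule}.

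\emph{Step 1: the resolution on $Q_6$.} Smoothness of $X$ and $\codim_{Q_6}X=3$ are granted by \cite{EPW1}*{Thm~2.1} for general $W$. Since $Q_6$ is smooth and $X$ is the locus where $\dim(W\cap\mathcal E(x))\ge 3$, the EPW construction also presents $\mathcal O_X$ by an explicit $\mathcal O_{Q_6}$-locally free resolution $0\to\mathcal G_3\to\mathcal G_2\to\mathcal G_1\to\mathcal O_{Q_6}\to\mathcal O_X\to 0$, which, because $X$ has the expected codimension $3$ and Gorenstein ideal sheaf, is of Buchsbaum--Eisenbud (Pfaffian) type: $\mathcal G_3$ is a line bundle, $\mathcal G_2\cong\mathcal G_1^\vee\otimes\mathcal G_3$, the middle map is alternating after twisting by $\mathcal G_3$, $\mathcal G_1$ has odd rank, and $\mathcal O_X$ is cut out by its submaximal Pfaffians. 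The reason for choosing $\mathcal E=3\mathcal S_1$ or $\mathcal E=\mathcal S_1\oplus 2\mathcal S_2$ is that, as worked out in Section~\ref{Tom}, for both choices the terms $\mathcal G_i$ are direct sums of the spinor bundles $\mathcal S_1,\mathcal S_2$, their duals, and twists by $\mathcal O_{Q_6}(j)$.

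\emph{Step 2: descent to $\Pn^7$ and arithmetic Gorensteinness.} The spinor bundles on $Q_6$ are arithmetically Cohen--Macaulay, i.e.\ $H^i(Q_6,\mathcal S_k(m))=0$ for all $m$ and $0<i<6$; hence every $\mathcal G_i$ has no intermediate cohomology. Combined with the ACM property of $Q_6$ in $\Pn^7$ (resolution $0\to\mathcal O_{\Pn^7}(-2)\to\mathcal O_{\Pn^7}\to\mathcal O_{Q_6}\to 0$), this shows that $X$ is projectively normal, and that applying $\Gamma_*(-)=\bigoplus_m H^0(Q_6,-\otimes\mathcal O_{Q_6}(m))$ to $\mathcal G_\bullet$ and then resolving each term over $S=\CC[x_0,\dots,x_7]$ --- each $\Gamma_*(\mathcal G_i)$ being a maximal Cohen--Macaulay module over $S/(q)$, hence of projective dimension $1$ over $S$ --- yields, via iterated mapping cones, a finite free resolution of $S/I_X$ of length $4$. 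The Buchsbaum--Eisenbud self-duality of $\mathcal G_\bullet$ is inherited, so $S/I_X$ is arithmetically Gorenstein of codimension $4$, and $X$ is AG in $\Pn^7$.

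\emph{Step 3: Betti table, degree, Calabi--Yau property.} The graded Betti numbers of $S/I_X$ are then extracted from the dimensions of $H^0(Q_6,\mathcal G_i(m))$ and $H^6(Q_6,\mathcal G_i(m))$, computed by Bott's theorem on the six-dimensional quadric (equivalently from the known cohomology tables of twisted spinor bundles). For both choices of $\mathcal E$ one expects to obtain exactly the Betti table $[100]$: a single quadratic generator, together with the unique self-dual shape compatible with codimension $4$ and regularity $4$. Once the Betti table is $[100]$, the $h$-vector of the Artinian reduction is $(1,4,10-1,4,1)=(1,4,9,4,1)$, so $\deg X=1+4+9+4+1=19$; alternatively $\deg X$ and the full Hilbert polynomial follow directly from $\chi(\mathcal O_X(t))=\chi(\mathcal O_{Q_6}(t))-\chi(\mathcal G_1(t))+\chi(\mathcal G_2(t))-\chi(\mathcal G_3(t))$. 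Finally $X$ is Calabi--Yau: since $S/I_X$ is AG of codimension $4$ and regularity $4$ in $\CC[x_0,\dots,x_7]$, \eqref{CanModule} gives $\omega_{S/I_X}\cong S/I_X$, hence $\omega_X\cong\mathcal O_X$; Cohen--Macaulayness of $S/I_X$ forces $H^1(\mathcal O_X)=H^2(\mathcal O_X)=0$, and $H^0(\mathcal O_X)=\CC$, $H^3(\mathcal O_X)\cong H^0(\omega_X)^\vee=\CC$.

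The principal obstacle is the minimality claim in Step~3: one must show that assembling the $S$-resolution out of $\Gamma_*(\mathcal G_\bullet)$ and the resolution of $\mathcal O_{Q_6}$ produces no cancellation, i.e.\ that the relevant multiplication maps $H^0(\mathcal G_i(m))\otimes H^0(\mathcal O_{Q_6}(1))\to H^0(\mathcal G_i(m+1))$ and the differentials of $\mathcal G_\bullet$ carry no unit entries, and that this survives the mapping-cone step. This is precisely where genericity of $W$ is used, and it is the mechanism by which the two inequivalent choices $\mathcal E=3\mathcal S_1$ and $\mathcal E=\mathcal S_1\oplus 2\mathcal S_2$ nonetheless land in the same stratum. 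A subsidiary point is to check that for each of these $\mathcal E$ a Lagrangian $W\subset V$ meeting every fibre $\mathcal E(x)$ in odd dimension exists at all --- equivalently that $\mathcal E(x)$ lies in one fixed ruling of the fibrewise quadric for every $x\in Q_6$ --- which follows from connectedness of $Q_6$ together with the constancy of the discriminant type of $\mathcal E$ in each case.
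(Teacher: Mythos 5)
The pivotal unjustified step is in your Step~1: the assertion that the EPW resolution of $\mathcal{O}_X$ on $Q_6$ is a self-dual Buchsbaum--Eisenbud (Pfaffian) complex. The Buchsbaum--Eisenbud structure theorem concerns grade-$3$ Gorenstein ideals in a local or graded ring; it does not produce a global alternating resolution by vector bundles on $Q_6$, and the content of Eisenbud--Popescu--Walter is precisely that codimension-$3$ subcanonical subschemes of a quadric are in general only Lagrangian degeneracy loci, not Pfaffian loci --- the paper itself records that $\mathcal{E}=\mathcal{S}_i$ yields a $\Pn^3\subset Q_6$ admitting no Pfaffian resolution, and in the degree-$19$ case it takes genuine work (Remark \ref{LagrangianPfaffian degree 19}) to extract a Pfaffian structure even after restricting to $Q_4$. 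What the construction actually supplies is the pair of symmetric quasi-isomorphic resolutions in diagram (\ref{EPW}), which is not of the shape you assume; consequently your later claim that ``self-duality is inherited'' by the iterated mapping cone, hence that $S/I_X$ is Gorenstein, rests on a structure you have not established. (This part is repairable: projective normality, which your cohomological argument with ACM spinor bundles does essentially give, combined with the adjunction formula $\omega_X=(\omega_{Q_6}\otimes\det\mathcal{E}^{\ast})|_X$ built into the Lagrangian construction, already yields the AG property with no Pfaffian input.)

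The second gap is that the quantitative core of the statement is only ``expected'': you never carry out the Bott/minimality computation, so neither the Betti table $[100]$ nor --- in your logical order, where the degree and $\omega_X\cong\mathcal{O}_X$ are deduced from that table --- the degree $19$ and the Calabi--Yau property are actually proved; you yourself flag the no-cancellation issue as the principal obstacle and leave it open. The paper's proof avoids it entirely and in the opposite order: $\omega_X\cong\mathcal{O}_X$ is immediate from $\det(3\mathcal{S}_1)=\mathcal{O}(6)=\det(\mathcal{S}_1\oplus 2\mathcal{S}_2)$ and the formula for $\omega_X$ above; the degree $19$ comes from the Fulton--Pragacz formula $\tfrac14\bigl(c_1(\mathcal{E}^{\ast})c_2(\mathcal{E}^{\ast})-2c_3(\mathcal{E}^{\ast})\bigr)$ with the known Chern classes of the spinor bundles; AG follows from the long exact cohomology sequence of the first resolution in (\ref{EPW}) together with Ottaviani's computation of the cohomology of twisted spinor bundles; and the Betti table is then forced, since an AG threefold of degree $19$ in $\Pn^7$ with trivial canonical bundle has Artinian reduction with $h$-vector $(1,4,9,4,1)$, so $b_{12}=1$, which by the classification in \cite{SSY} pins the table to $[100]$. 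Reorganizing your argument along these lines (canonical bundle and degree first, Betti table last) would eliminate the minimality problem on which your proposal currently founders.
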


Note that the construction in Proposition \ref{degree 19 Lagrangian-intro} resembles the Tom and Jerry situation for del Pezzo varieties of degree $6$. A del Pezzo surface of degree $6$ lifts to a hyperplane section of $\Pn^2\times \Pn^2$ (Tom), but also of $\Pn^1\times\Pn^1\times\Pn^1$ (Jerry). 

A similar phenomenon occurs for Artinian Gorenstein rings $A_F$  already at the level of lifting to points.  For instance, for the general ring $A_F$ with Betti table of type [562] or [320], the family of liftings to sets of points $X\subset \Pn^4$ has at least two irreducible components. We give a geometric description of these loci in Sections \ref{section_irreducible_liftings} and \ref{section_reducible_liftings}.

A second approach to find liftings is the doubling construction, described in Section~\ref{DoubleConst}. We prove that in most cases (except $[300a]$) the Artinian Gorenstein ring is obtained as a doubling of a configuration of points computing its rank. Geometrically, in positive dimension, a doubling of an arithmetic Cohen Macaulay variety $Y$ is a variety $X$ obtained as a divisor in a twist of the anti-canonical system of $Y$. Note that,   anticanonical divisors in arithmetic Cohen Macaulay fourfolds are Calabi-Yau threefolds (see Proposition \ref{double}). For Artinian Gorenstein rings of type $[000]$  
we consider, in Proposition \ref{CY20}, a fourfold $Y\subset \Pn^7$ defined by the maximal minors of a $3\times 5$ matrix $A$ of linear forms.  When $A$ has rank $1$ at some point, then $Y$ has an effective anticanonical divisor. For a general $A$ with a rank $1$ point, the anticanonical divisor $X$ is unique and is a two nodal AG Calabi-Yau  threefold of degree $20$.  The same construction in $\Pn^5$ yields a halfcanonical curve $X$ of degree $20$, that is the lifting of a general quartic form $F$ of type $[000]$. Smooth Calabi-Yau threefolds of degree $20$ in $\Pn^7$ can be defined by $3\times 3$ minors of a $4\times 4$ matrix of linear forms.  These are liftings of special quartic forms $F$ of type $[000]$.

Bilinkage, or biliaison, as explained in Section \ref{unpr}, provides a third method of constructing AG-liftings. As an example we consider Calabi-Yau threefolds with Betti tables $[300a]$, $[300b]$ and $[300c]$. The first is obtained by elementary biliaison of height $2$ from $\mathbb{P}^3$ inside an irreducible intersection of three quadrics.  These threefolds cannot be constructed by a doubling in a complete intersection as discussed in Section \ref{eg_exceptionCGKK4}.
The second case occurs when the intersection of the quadrics is reducible: The union of a $\mathbb{P}^4$ and a fourfold $Y$ of degree $7$. In this case, a Calabi-Yau  threefold can be constructed by elementary biliaison of height 2 from the cubic threefold $\mathbb{P}^4\cap Y$ in the fourfold $Y$.  However, we show that a biliaison of height $2$ cannot be used to construct an example of type $[300c]$. Instead we can use a doubling construction, or a biliaison of height $1$, see Section \ref{section_irreducible_liftings}.

Constructions of liftings for quaternary quartic forms $F$ of each type to reduced, but not necessarily irreducible manifolds $X$  are summarised in Sections \ref{section_irreducible_liftings} and \ref{section_reducible_liftings}. Liftings of dimension three are particularly interesting as they are necessarily Calabi-Yau manifolds.
Some of these constructions allow further liftings to  dimensions higher than $3$, i.e.~to AG Fano varieties of dimensions at least $4$.  The following table presents the maximal dimensions of smooth liftings of general Artinian Gorenstein rings of the given type that we are able to construct,  see in particular Proposition \ref{liftofCGKK}, Corollary \ref{cor:lifting}. 
\begin{table}[h]
\begin{tabular}{|c|c|}
\hline $\stackrel{\mbox{Betti table}}{\mbox{B}}$ & $\stackrel{\mbox{dimension of possible}}{\mbox{smooth lifting}}$  \\
\hline [683] &  3        \\
\hline [550] &    6     \\
\hline [400] &     $\infty$    \\
\hline [320] &    4    \\
\hline [300a] &   3      \\
\hline [300b] &   2      \\
\hline [200] &    4    \\
\hline [100] &    1 (3)      \\
\hline [000] &  1 (4)   \\
\hline
\end{tabular}
\end{table}

For the types $[100]$ and $[000]$, there are special rings $A_F$ that lift to higher dimensions as marked in parenthesis in the table. Also, some rings $A_F$ lift to different families of higher dimensional varieties.  For details see Section \ref{section_irreducible_liftings}.

An analysis of Betti tables for Artinian Gorenstein rings $A_F$  of codimension $=4$ and regularity $=5$, as well as of codimension $=5$ and regularity $=4$, is a natural and interesting extension of this study, but is beyond the scope of this paper.

\subsection{Organization of the paper}
\begin{enumerate}
 \item In Section \ref{section_preliminaries} we give an overview of theory and constructions of Gorenstein rings, and comment on how the rings $A_F$ of regularity $4$ fit with and inform the general theory. We then move to a discussion of the geometric connections to catalecticant varieties and secant varieties to Veronese varieties. The section closes with a description of apolarity and powersum presentations of forms, and the key role played by syzygies.
\vskip .05in
\item In Section \ref{bettitables} we find the Betti tables of point sets $\Gamma$ that compute the rank for a general $F\in {\mathcal F}_B$, the set of forms with Betti table $B$.  
\vskip .05in
\item In Section \ref{quadraticideals} we find the Betti tables of the ideals $Q_F\subset F^\perp$, for any $F\in {\mathcal F}_B$. We show that, except for two Betti tables $B$, the set 
${\mathcal G}_B$ of these ideals form irreducible sets in the Hilbert scheme.
\vskip .05in
\item In Section \ref{section_VSP} we compute for each Betti table $B$, the rank $s=r(F)$ and the $VSP(F,s)$ for a general form $F\in{\mathcal F}_B$ whenever $s<9$.  We compute the dimension of  
${\mathcal F}_B$ and show that all but one are irreducible.
\vskip .05in
\item In Section \ref{stratification} we present the stratification (defined by the Betti tables) of the space $\Pn(\Sym(S_4)) \simeq \Pn^{34}$ of quartic forms in four variables.
\vskip .05in
\item In Section \ref{Pfaffian} we consider the construction of codimension $4$ AG rings supported on a quadric. We use Pfaffian and Lagrangian constructions of codimension $3$ varieties using vector bundles supported on this quadrics. We construct in this way liftings of some of the Artinian rings.
\vskip .05in
\item In Section \ref{section_irreducible_liftings} and Section \ref{section_reducible_liftings} we complete the task started in Section \ref{Pfaffian} and construct liftings of rings $A_F$ to families of AG threefolds for forms $F$ in each family ${\mathcal F}_B$. We use here the doubling and the bilinkage constructions.  In some cases we show that the Artinian reduction of the general AG threefold in the family is a general form in ${\mathcal F}_B$, while for other families of threefolds, all the Artinian reductions belong to a proper subset of ${\mathcal F}_B$.
\item In Appendix~\ref{Appendix} we give the 16 Betti tables for Artinian Gorenstein rings 
appearing in \cite{SSY}, and in Appendix~\ref{Appendix2} we describe the computational tools underlying this paper: Generic initial ideals, the Groebner family and Groebner strata, and the Schreyer resolution. We illustrate these concepts with a running example: the case of six points. The scripts which implement the relevant computations are part of a package in {\tt Macaulay2}, 
\cite{M2}*{Package QuaternaryQuartics}.
\end{enumerate}

\subsection{Notation.} \label{notation}
\medskip We present the numerical information of the minimal free resolution of a graded 
$S=\CC[x_0,\ldots,x_n]$-module 
$$ 0 \leftarrow M \leftarrow F_0  \leftarrow F_1 \leftarrow \ldots  \leftarrow F_{n+1}  \leftarrow  0$$
with $F_i = \bigoplus_{j \in \ZZ} S(-j)^{b_{ij}}$ in the form of a {\em Betti table} (see \cite{GeomSyz}*{\S 1B}): 
\[
\begin{matrix}
0:& b_{00} & b_{11} & b_{22} & \ldots & b_{n+1,n+1}& \cr
1:&b_{01} & b_{12} & b_{23} & \ldots & b_{n+1,n+2}& \cr
\vdots &\vdots & \vdots & \vdots & \ldots & \vdots & \cr
m: &b_{0m} & b_{1,m+1} & b_{2,m+2} &\ldots &b_{n+1,n+1+m}& \cr 
\end{matrix}
\]
The Castelnuovo-Mumford regularity $\operatorname{reg}(M)$ of $M$ is $m$, and we denote $b_{ij} = 0$ with a $-$. 
\begin{example}
For the ideal $I_C$ of the twisted cubic in $\Pn^3$, we have
\vskip .01in
\begin{small}
\[
0 \rightarrow S(-3)^2 \xrightarrow{\left[ \!
\begin{array}{cc}
-x_2 & x_3\\
x_1 & -x_2 \\
-x_0 & x_1
\end{array}\! \right]} S(-2)^3
\xrightarrow{\left[ \!\begin{array}{ccc}
x_1^2-x_0x_2& x_1x_2-x_0x_3& x_2^2-x_1x_3
\end{array}\! \right]}
 S \rightarrow S/I_{C} \rightarrow 0.
\]
\end{small}
\vskip .01in
\noindent The Betti table for this resolution is:
\begin{small}
$$
\vbox{\offinterlineskip 
\halign{\strut\hfil# \ \vrule\quad&# \ &# \ &# \ &# \ &# \ &# \
&# \ &# \ &# \ &# \ &# \ &# \ &# \
\cr
total&1&3&2\cr
\noalign {\hrule}
0&1 &--&--&\cr
1&--&3 &2 &\cr
\noalign{\bigskip}
\noalign{\smallskip}
}}
$$
\end{small}
\end{example}
\vskip -.1in
\noindent We work with arithmetically Gorenstein rings, and will call such rings AG or {\em Gorenstein}; CI will denote a complete intersection. Several of our results address, in the case of quaternary quartics, questions posed by Iarrobino-Kanev in \cite{IK}. In terms of notation, the rings we call $S$ and $R$ are denoted $R$ and $D$ in \cite{IK}; we use $H$ to denote an Artinian Hilbert function, while \cite{IK} use $T$. We use $A_F$ to denote $S/F^\perp$, where $F$ is a quartic in $\CC[x_0,\ldots,x_3]$. Our focus is on the case where $F$ is {\em nondegenerate}, which means that $F^\perp$ contains no linear form.
\vskip .05in

\section{Preliminaries}\label{section_preliminaries}

\subsection{Apolarity and syzygies}
\label{apolarity}
Consider the rings $R=\CC[y_0,\ldots,y_n]$ and $S=\CC[x_0,\ldots, x_n]$ appearing in the introduction, and the action of $S$ on $R$ by differentiation:
$$x^{\alpha}(y^{\beta}) =  \alpha!\binom{\beta}{\alpha } y^{\beta-\alpha}$$
if $\beta \geq \alpha$ and 0 otherwise. Here $\alpha$ and $\beta$ are multi-indices,
$\binom{\beta}{\alpha} = \prod \binom{\beta_i}{\alpha_i}$ and so on; this defines a natural duality between $R_d$ and $S_d$. We can interchange the roles of $R$ and $S$ by defining
$$y^{\beta}(x^{\alpha}) =  \beta!\binom{\alpha}{\beta} x^{\alpha-\beta}.$$
We set $\Pn^n=\Pn(R_1)$, identifying a point $a=(a_0:\ldots :a_n) \in \Pn^n$ with a linear form  $l_a =\sum a_i y_i\in R_1$ up to scalar.   Then  $$l_a^d(G)=G(l_a^d)=d!G(a),$$
for any $G\in S_d$.  In particular if $m \geq d$ $$G(l_a^m)=0 \iff G(a)=0 \eqno(1.1.1)$$ 
So the ideal of a subscheme $\Gamma\subset \Pn^n$ is a homogenous ideal in $S$. Similarly, if $P_a=\sum a_i x_i \in S_1$, then 
$$P_a^d(F)=F(P_a^d)=d!F(a).$$

\begin{definition}
The ideal of differential operators that annihilates $F \in R$, written $F^{\bot}=\{G\in S| G(F)=0\}$, is called the {\em apolar ideal} of $F$.
 \end{definition}
The ring  $$A_F = S/F^{\bot}$$
is Artinian and Gorenstein, with the socle of $A_F$ in degree $d=\deg(F)$. Indeed 
\[P_a(G(F)) = 0 \hskip3pt \forall P_a \in S_1 \iff G(F) = 0 \hskip3pt \mbox{ or } \hskip3pt G \in S_d. \]
In particular the socle of $A_F$ is 1-dimensional, and $A_F$ is Gorenstein; it is called the {\em apolar Artinian Gorenstein ring} or simply the {\em apolar ring} of $F$. 
 
Conversely for a graded Gorenstein ring $A = S/I$ with socle degree $d$, multiplication in $A$ induces a linear form $\phi\colon S_d\to \CC$ which can be identified with a
 homogeneous polynomial $F \in R$ of degree $d$. This is:

\begin{lemma}\label{ApolarPairing} {\rm [Macaulay 1916]}. 
The map $F \mapsto A_F$ is a bijection between homogeneous forms $F \in R_d$ and graded Artinian Gorenstein quotient rings $A_F = S/I$ of S with socle degree d.
\end{lemma}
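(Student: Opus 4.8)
The plan is to establish the bijection in both directions and check that the two constructions are mutually inverse. First I would set up the apolarity pairing in degree $d$: the differentiation action restricts to a perfect pairing $S_d \times R_d \to \CC$, $(G,F) \mapsto G(F)$, which is nondegenerate because the monomial basis of $S_d$ is, up to the nonzero scalars $\alpha!$, dual to the monomial basis of $R_d$. This identifies $R_d$ with $\Hom_\CC(S_d,\CC)$, so giving a form $F \in R_d$ is the same as giving a linear functional $\phi_F \colon S_d \to \CC$.

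Next I would show the assignment $F \mapsto A_F = S/F^\perp$ lands where claimed. The excerpt already records that $A_F$ is Artinian Gorenstein with one-dimensional socle concentrated in degree $d$; the only additional point is that $F^\perp$ contains no element of degree $d+1$ acting trivially for degree reasons, i.e.\ $(A_F)_{d} \ne 0$ and $(A_F)_{d+1} = 0$, so the socle degree is exactly $d$ and not smaller — this follows since $F \ne 0$ means $G(F) \ne 0$ for some $G \in S_d$, hence $1 \notin F^\perp$ forces the degree-$d$ piece to survive, while $S_{d+1}$ kills $R_d$ entirely.

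For the reverse direction, start with a graded Artinian Gorenstein $A = S/I$ with socle degree $d$. Then $A_d \cong \CC$, and multiplication $A_j \times A_{d-j} \to A_d \cong \CC$ is a perfect pairing for each $j$ (this is the defining Gorenstein duality). In particular the surjection $S_d \twoheadrightarrow A_d \cong \CC$ is a linear functional on $S_d$, hence corresponds under the pairing above to a unique $F \in R_d$ (up to the scalar chosen in identifying $A_d$ with $\CC$). I would then verify $F^\perp = I$: the inclusion $I \subseteq F^\perp$ is immediate since $I_j \cdot S_{d-j}$ maps to $0$ in $A_d$; conversely, if $G \in S_j$ with $G(F) = 0$, then $G \cdot S_{d-j}$ annihilates $F$, which by the pairing says the image of $G$ in $A_j$ pairs trivially with all of $A_{d-j}$, so by perfectness of the Gorenstein pairing $G \in I_j$. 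Thus $A = A_F$. Finally, checking the two maps are inverse to each other is exactly the computation just performed read in both orders, together with the observation that rescaling $F$ corresponds to rescaling the identification $A_d \cong \CC$, so the bijection is genuinely between forms (not projectivized) and rings-with-chosen-socle-generator.

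The main obstacle, such as it is, is the perfectness of the multiplication pairing $A_j \times A_{d-j} \to A_d$ for a graded Artinian Gorenstein ring — this is the heart of the matter and is what makes the correspondence a bijection rather than merely a construction in one direction. Everything else is bookkeeping with the explicit monomial pairing. Since this is Macaulay's classical theorem and the Gorenstein duality is standard (and the excerpt already grants the forward construction), I would cite the Gorenstein pairing and keep the argument short, emphasizing the verification $F^\perp = I$ and the scalar-matching that pins down ``up to rescaling.''
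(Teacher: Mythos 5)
Your proof is correct and takes essentially the same route as the paper, which only sketches the argument and refers to Macaulay and Eisenbud: the forward direction is the one-dimensional-socle computation already given before the lemma, and the inverse is exactly your identification of the multiplication functional $S_d\to A_d\cong\CC$ with a form $F$, the verification $F^\perp=I$ resting on the perfect Gorenstein pairing $A_j\times A_{d-j}\to A_d$ (and your remark on rescaling matches the ``up to rescaling'' phrasing in the introduction). One cosmetic slip: the reason $(A_F)_d\neq 0$ is that $F\neq 0$ forces $(F^\perp)_d\subsetneq S_d$, not that ``$1\notin F^\perp$''.
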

The polynomial $F$ is sometimes called the {\em dual socle
generator} or the {\em dual polynomial} of $A_F$ (\cite{CAwvtAG}*{Thm. 21.6 and Exercise 21.7}). 

\begin{definition}
 A subscheme $\Gamma \subset \Pn^n=\Pn(R_1)$ is said to be {\em apolar} to $F$ if the homogeneous ideal $I_{\Gamma} \subset F^{\bot} \subset S$.
 \end{definition}
 By the duality between $R_d$ and $S_d$, this apolarity may be interpreted via the  Veronese map
 $$v_d: \Pn(R_1)\to\Pn(R_d);\quad [l]\mapsto [l^d].$$
 For $\Gamma \subset \Pn^n$ the image $v_d(\Gamma)\subset \Pn(R_d)$ spans a linear subspace defined by the space of forms  $(I_{\Gamma})_d\subset S_d$ of degree $d$ in the ideal $I_{\Gamma}$, i.e.~
 $$\langle v_d(\Gamma)\rangle=(I_{\Gamma})_d^\perp\subset \Pn(R_d).$$
 The fundamental lemma is:
 \begin{lemma}[\bf Apolarity]\label{Apolarity} Let $\Gamma\subset \Pn(R_1)$ be any subscheme, then  
$[F]\in \langle v_d(\Gamma)\rangle \subset \Pn(R_d)$   
if and only if $\Gamma$ is apolar to $F$.
\end{lemma}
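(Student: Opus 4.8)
The plan is to prove the Apolarity Lemma by reducing everything to the duality pairing between $R_d$ and $S_d$ and the computation $l_a^d(G) = d!\,G(a)$ established just above. First I would fix notation: write $\Gamma \subset \Pn(R_1)$ for the given subscheme, with homogeneous ideal $I_\Gamma \subset S$ (recall that by $(1.1.1)$ the ideal of a subscheme of $\Pn^n = \Pn(R_1)$ is naturally a homogeneous ideal in $S$, since $S$ is the coordinate ring of this projective space under the chosen duality). The statement to prove is the equivalence
\[
[F] \in \langle v_d(\Gamma)\rangle \iff I_\Gamma \subset F^\perp.
\]

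Next I would unwind the left-hand side. By the description of the span of the Veronese image recalled before the lemma, $\langle v_d(\Gamma)\rangle = (I_\Gamma)_d^\perp \subset \Pn(R_d)$, where the perp is taken with respect to the perfect pairing $S_d \times R_d \to \CC$. Hence $[F] \in \langle v_d(\Gamma)\rangle$ if and only if $G(F) = 0$ for every $G \in (I_\Gamma)_d$, i.e.\ if and only if $(I_\Gamma)_d \subset (F^\perp)_d$. So the content of the lemma is really the claim that containment of the degree-$d$ graded pieces, $(I_\Gamma)_d \subset (F^\perp)_d$, is equivalent to containment of the whole ideals, $I_\Gamma \subset F^\perp$. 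One direction is immediate: if $I_\Gamma \subset F^\perp$ then certainly $(I_\Gamma)_d \subset (F^\perp)_d$.

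The substantive direction is: assuming $(I_\Gamma)_d \subset (F^\perp)_d$, show $(I_\Gamma)_e \subset (F^\perp)_e$ for every $e$. For $e > d$ this is automatic because $F^\perp$ contains all of $S_e$ (the socle of $A_F$ sits in degree $d$, so $(A_F)_e = 0$ for $e>d$). The real point is $e < d$: given $G \in (I_\Gamma)_e$ with $e \le d$, I must show $G(F) = 0$, i.e.\ $G(F)$ is the zero element of $R_{d-e}$. Here I would use the duality again: $G(F) = 0$ in $R_{d-e}$ iff $H(G(F)) = 0$ for all $H \in S_{d-e}$, and $H(G(F)) = (HG)(F)$ by associativity of the $S$-action on $R$. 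Now $HG \in S_d$, and since $G \in I_\Gamma$ the product $HG$ lies in $(I_\Gamma)_d$; by hypothesis $(I_\Gamma)_d \subset (F^\perp)_d$, so $(HG)(F) = 0$. As $H$ was arbitrary in $S_{d-e}$, nondegeneracy of the pairing forces $G(F) = 0$, hence $G \in F^\perp$. This completes the induction and the proof.

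The only step that requires care — and the one I'd expect to be the mild obstacle — is the bookkeeping around the degree-$d$ piece: namely justifying $\langle v_d(\Gamma)\rangle = (I_\Gamma)_d^\perp$ cleanly (this is the statement that the linear span of a set of points under the $d$-th Veronese is cut out exactly by the degree-$d$ forms vanishing on them, applied via the $R$/$S$ duality to the forms $l^d$), and making sure the associativity identity $(HG)(F) = H(G(F))$ is invoked correctly for the differential action. Both are standard, but they are the places where one must be explicit that the $S$-module structure on $R$ and the duality pairing are compatible in the way used. Everything else is formal manipulation of graded pieces.
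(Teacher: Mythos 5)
Your argument is correct, and it is the standard proof of Macaulay's apolarity lemma: reduce to the degree-$d$ statement via the identity $\langle v_d(\Gamma)\rangle=(I_\Gamma)_d^\perp$, then upgrade $(I_\Gamma)_d\subset (F^\perp)_d$ to $I_\Gamma\subset F^\perp$ using that $I_\Gamma$ is an ideal, the compatibility $(HG)(F)=H(G(F))$, and the perfect pairing $S_{d-e}\times R_{d-e}\to\CC$. Note that the paper itself gives no proof of this lemma — it records the identity $\langle v_d(\Gamma)\rangle=(I_\Gamma)_d^\perp$ immediately beforehand and cites the result as the classical fundamental lemma — so your write-up simply supplies the standard argument the paper leaves implicit, and it does so without gaps.
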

In particular, the relationship between a form $F$ and its powersum presentations is given by:  
\begin{lemma}\label{ApolarityRank} 
$F = l_1^d+\ldots+l_s^d$ where $\Gamma = \{ [l_1],\ldots , [l_s]\} \subset \Pn(R_1)$
if and only if $\Gamma$ is apolar to $F$.
\end{lemma}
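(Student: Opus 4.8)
The plan is to deduce Lemma~\ref{ApolarityRank} directly from Lemma~\ref{Apolarity} together with the interpretation of apolarity via the Veronese embedding that precedes it. The essential observation is that, by the duality between $R_d$ and $S_d$ established in Section~\ref{apolarity}, for a reduced finite subscheme $\Gamma = \{[l_1],\ldots,[l_s]\} \subset \Pn(R_1)$, the linear span $\langle v_d(\Gamma)\rangle \subset \Pn(R_d)$ is precisely the projectivization of the subspace $\operatorname{Span}_\CC\{l_1^d,\ldots,l_s^d\} \subset R_d$. Thus a class $[F]$ lies in $\langle v_d(\Gamma)\rangle$ if and only if $F$ is a $\CC$-linear combination $F = \lambda_1 l_1^d + \cdots + \lambda_s l_s^d$.

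First I would record this identification of the span: the points $v_d([l_i]) = [l_i^d]$ are the images of the $[l_i]$ under the Veronese map, so their projective span is the projectivization of the linear subspace they generate, namely $\operatorname{Span}\{l_1^d, \ldots, l_s^d\}$. Second, I would invoke Lemma~\ref{Apolarity}: $[F] \in \langle v_d(\Gamma)\rangle$ if and only if $\Gamma$ is apolar to $F$, i.e.\ $I_\Gamma \subset F^\perp$. Combining the two gives: $\Gamma$ is apolar to $F$ $\iff$ $F \in \operatorname{Span}\{l_1^d,\ldots,l_s^d\}$ $\iff$ $F = \sum_i \lambda_i l_i^d$ for some $\lambda_i \in \CC$.

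The only genuine gap between this and the statement as written is that Lemma~\ref{ApolarityRank} displays $F = l_1^d + \cdots + l_s^d$ with coefficients $1$ rather than general scalars $\lambda_i$. This is resolved by absorbing each nonzero scalar into the linear form: if $\lambda_i \neq 0$, replace $l_i$ by $\mu_i l_i$ where $\mu_i^d = \lambda_i$ (possible since we work over $\CC$, or any field containing the relevant $d$-th roots), which does not change the point $[l_i] \in \Pn(R_1)$; and if some $\lambda_i = 0$ the corresponding power can simply be dropped, so that such a presentation uses at most $s$ powers. Hence the condition ``$F$ is a linear combination of the $l_i^d$'' is equivalent to ``$F$ is a sum (with unit coefficients, after rescaling) of the $d$-th powers of representatives of the points of $\Gamma$.''

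I do not expect a serious obstacle here; the statement is essentially a repackaging of Lemma~\ref{Apolarity} specialized to reduced point schemes, and the ``main step'' is merely the bookkeeping identification $\langle v_d(\Gamma)\rangle = \Pn(\operatorname{Span}\{l_1^d,\ldots,l_s^d\})$ plus the harmless rescaling of linear forms by $d$-th roots of the coefficients. If one wanted to be careful about the scheme structure, one would note that for $\Gamma$ reduced the ideal $I_\Gamma$ in degree $d$ is exactly the set of $G \in S_d$ vanishing at all $[l_i]$, which by equation $(1.1.1)$ is the set of $G$ with $G(l_i^d) = 0$ for all $i$, i.e.\ the annihilator of $\operatorname{Span}\{l_1^d,\ldots,l_s^d\}$ under the pairing — giving $(I_\Gamma)_d^\perp = \operatorname{Span}\{l_1^d,\ldots,l_s^d\}$ as needed.
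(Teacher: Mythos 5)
Your proposal is correct and matches the paper's intent exactly: the paper offers no separate proof of Lemma~\ref{ApolarityRank}, presenting it as an immediate consequence of Lemma~\ref{Apolarity} via the identification $\langle v_d(\Gamma)\rangle = \Pn(\Span\{l_1^d,\ldots,l_s^d\}) = \Pn((I_\Gamma)_d^\perp)$, which is precisely the argument you spell out. Your remarks on absorbing the scalars $\lambda_i$ by $d$-th roots (and dropping any zero coefficients) are the right way to reconcile the unit-coefficient phrasing of the statement with the linear-span formulation.
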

So the rank of a form (Definition~\ref{rankDef}) is the minimal length of an apolar scheme $\Gamma$ that is a reduced set of points. Note that there is a generalization of the notion of rank to the setting where $\Gamma$ is not reduced:
\begin{definition}\label{CactusRankDef} 
 The cactus rank $\operatorname{cr}(F)$ is the minimal length of a subscheme  apolar to $F$. \end{definition}  
 
 \begin{remark}\label{cactusrankversusrank}
  Clearly $\operatorname{cr}(F) \le \rr(F)$, and they often coincide when $d$ and $n$ or $\rr(F)$ are small. The cactus rank is computed by a scheme that is locally Gorenstein \cite{BB}*{Proposition 2.2, Lemma 2.3}. 
  In an irreducible family of forms, both the rank and the cactus rank attains its minimum in a general member.  Both of them may however increase in special members.
 \end{remark}
  
  To find the rank of a form is, in general, very hard.  In low degrees,  the syzygies of $A_F$ may sometimes be used to determine
  $\rr(F)$ and $VSP(F,s)$, as illustrated by our results.
 
\subsection{Apolarity in codimension $3$ or $4$ and regularity $3$}
For a general ternary cubic form $F\in R_3=\CC[y_0,y_1,y_2]_3$ the apolar ring is a complete intersection
$$A_F \cong S/(q_1,q_2,q_3) $$
with quadrics $q_i$.  The Betti table of  $A_F$ is
\[
 \begin{matrix}
 1 & - & - & - &  \cr
                                               - & 3 & - & - & \cr
            - &- & 3& - & \cr
              - & - & - & 1 &  \cr
              \end{matrix}.
              \]    Any pencil of quadrics $\langle q,q'\rangle\subset \langle q_1,q_2,q_3\rangle$ defines a scheme of length $4$, in general four points. No ideal of three points is contained in $F^{\bot}$, so the rank of $F$ is $4$ and $VSP(F,4)$ is a projective plane.

If $F=l_1^3+l_2^3+l_3^3$ has rank $3$, i.e.~$F$ is a Fermat cubic,  the Betti table of  $A_F$ is
\[
 \begin{matrix}
 1 & - & - & - &  \cr
                                               - & 3 & 2 & - & \cr
            - &2 & 3& - & \cr
              - & - & - & 1 &  \cr
              \end{matrix},
              \]
and the three quadric generators in $F^{\bot}$ are the $2\times 2$ minors of a $2\times 3$ matrix of linear forms which generate the ideal of three points  $$\{[l_1],[l_2],[l_3]\}\subset\Pn(R_1)=\Pn^2.$$
For example, changing variables we may assume $l_i = x_i$, and so it is easy to see that
\[
F^\perp = \langle x_0x_1,x_0x_2,x_1x_2,x_0^3-x_1^3, x_0^3-x_2^3\rangle.
\]
The set of ternary cubic forms that are nondegenerate (have no linear form in the apolar ideal) has a stratification defined by the Betti table. The forms with a Fermat Betti table lie in the closure of the forms with a complete intersection Betti table.

 For a general quaternary cubic $F\in R_3=\CC[y_0,y_1,y_2,y_3]_3$ the apolar ring $A_F$ has Betti table 
\[
 \begin{matrix}
 1 & - & - & - &  - &\cr
                                               - & 6 & 5 & - &  - &\cr
            - & - & 5 & 6 & - &\cr
              - & - & - & - & 1 & \cr
              \end{matrix}.
              \]
              \vskip .1in

Five of the six quadrics in the ideal are the $4\times 4$ Pfaffians of a skew-symmetric $5\times 5$ - matrix.  They define the  ideal of five points.  This decomposition of syzygies is familiar for a general canonical curve $C \subset \Pn^5$ of genus $6$, which is a complete intersection of a unique Del Pezzo surface of degree $5$ and a non unique hyperquadric, cf. [ACGH
1985], [Schreyer 1986], [Mukai 1988].
So $F$ has rank at most $5$.  If it has rank $4$, the ideal $F^{\bot}$  would contain  six quadrics that vanish on the four points.  But $F^{\bot}$ is generated by six quadrics so this is impossible.  So $F$ has a unique presentation as a sum of five cubes (Sylvesters Pentahedral Theorem). 
\vskip .1in
There are two more possible Betti tables of rings $A_F$ of codimension $4$ and regularity $3$. In the first case  the six quadrics define a scheme of length $4$, so if this scheme is smooth, $F$ has rank $4$ and is a Fermat cubic, with Betti table 

\[
 \begin{matrix}
  1 & - & - & - &  - &\cr
                                               - & 6 & 8 & 3 &  - &\cr
              - &3 & 8 & 6 & - &  \cr
              - & - & - & - & 1&\cr 
              \end{matrix}.  
                            \]
              In the second case three of  the six quadrics are reducible with a common linear factor.  So, assuming that these three quadrics are $x_0x_1, x_0x_2,x_0x_3$, the form $F$ decomposes into $F=y_0^3+F_0(y_1,y_2,y_3)$.  If the apolar ideal of the ternary form $F_0$ is determinantal, then it is apolar to a scheme of length $3$, and hence $F$ is apolar to a scheme of length $4$.  Then the apolar ring $A_F$  has the Betti table of a Fermat cubic.   If the apolar ideal of the ternary form $F_0$ is not determinantal, it is a complete intersection $(2,2,2)$, in which case $F_0$ has rank $4$ and $F$ has rank $5$.   The form $F_0$ has a projective plane of different decompositions as a sum of four cubes, so $F$ has a projective plane of different decompositions as a sum of five cubes. The Betti table for $A_F$ is: 
              \[
               \begin{matrix}1 & - & - & - &  - &\cr
                                               - & 6 & 6 & 1 &  - &\cr
             - & 1 & 6 & 6 & - &  \cr
              - & - & - & - & 1&\cr   \end{matrix} 
              \] 
       \vskip .1in
As in the ternary case, there is a stratification in the space of quaternary cubics of the set of nondegenerate cubic forms defined by the Betti tables of their apolar ring. 

Let ${\mathcal F}_{\operatorname{fe}}\subset \Pn^{19}$ be the set of cubic quaternary forms $F$ with the Betti table for $A_F$ of a  Fermat cubic, and similarly,  
let ${\mathcal F}_{\operatorname{pl}}$ be the set of forms with the Betti table of a general form that is a sum of a rank $1$ form and a ternary form. Finally, let  ${\mathcal F}_{\operatorname{gen}}$ be the set of forms with the Betti table of a general form.   Then ${\mathcal F}_{\operatorname{gen}}$ is dense in $\Pn^{19}$, 
the subset ${\mathcal F}_{\operatorname{pl}}$ is irreducible of dimension $16$, and the subset ${\mathcal F}_{\operatorname{fe}}$ is irreducible of dimension $15$.  Furthermore, there is a stratification of the form below:
$${\mathcal F}_{\operatorname{fe}}\subset \overline{{\mathcal F}_{\operatorname{pl}}}\subset \overline{{\mathcal F}_{\operatorname{gen}}}=\Pn(R_3)=\Pn^{19}.$$
Recent results on decompositions for cubics in the real setting appear in \cite{MM}.

\subsection{Geometry: Catalecticant and Secant varieties}
We now review classical connections between power sum decompositions, secant varieties to Veronese varieties and catalecticant matrices. In addition to the book of Iarrobino-Kanev \cite{IK}, other good references are Geramita's papers \cite{Geramita1}, \cite{Geramita2}. 
\subsubsection{Geometry of the catalecticant variety and the Gorenstein parameter scheme}
\begin{definition}\label{GorTDef}
 Following \cite{IK}, and the definitions of apolarity in \ref{apolarity}, we write $\mathcal{H}(d,n+1)$ for the set of Hilbert functions for an Artinian Gorenstein algebra $S/F^\perp$ of socle degree $d={\rm deg}\; F$ when $S=\CC[x_0,...,x_n]$, and define $\Gor(H)$ as the set of polynomials $F \in R_d=\CC[y_0,...,y_n]_d$ such that $S/F^\perp$ has Hilbert function $H$.  Then 
\begin{equation}
    R_d = \bigcup\limits_{H \in \mathcal{H}(d,n+1)}\Gor(H)
\end{equation}
\end{definition}

\begin{definition}\label{catalecticantDef}
Let $F \in R_d$ and $G \in S_{d-u}$, so that $G\circ F \in R_{u}$. If we write $F$ with indeterminate coefficients $\{a_1, \ldots, a_m\}$ with $m = \binom{n+d}{d}$, then the matrix defining the map 
\[
S_{d-u} \longrightarrow R_u \mbox{ via } G \mapsto G \circ F
\]
is called the {\em catalecticant}, and is written $\Cat(u,d-u,n+1)$. 
Let $I_{r}(\Cat(u,d-u,n+1))\subset \CC[a_1,..,a_m]$ be the ideal generated by the $(r\times r)$-minors of $\Cat(u,d-u,n+1)$.
Next, for a Hilbert function 
\[
H=(1=h_0,h_1,h_2,\ldots, h_d=1) \in \mathcal{H}(d,n+1),
\]
define the affine scheme 
\[
\bGor_{\le}(H) = \VV(\sum\limits_{u=1}^{d-1} I_{h_{u}+1}(\Cat(u,d-u,n+1))).
\]
The scheme $\bGor_{\le}(H)$ is a sum of determinantal ideals of catalecticants. It has an open subset $\bGor(H)$ consisting of $F$ such that the Hilbert function of $S/F^\perp$ is $H$; $\Gor(H)$ is the corresponding reduced affine scheme. 
\end{definition}
\noindent It follows from Definition~\ref{catalecticantDef} that the locus in $\Pn(R_d)$ where 
$\rk(\Cat(u,d-u,n+1)) = k$ corresponds to those $F \in R_d$ which have Hilbert function $\dim(S/F^\perp)_{d-u}=k$. 

\begin{example}\label{Cat224}
For the case of quartics in four variables, we are interested in the non-degenerate situation: that is, where no linear form annihilates $F$. Thus, the Hilbert function of $S/F^\perp$ satisfies
\[
H(S/F^\perp) = (1,4,h_2,4,1),\mbox{ with } h_2 \in \{4,\ldots,10\}.
\]
Therefore $10-h_2$ is the dimension of the vector space of quadrics apolar to $F$, i.e.~$10-h_2={\rm dim} F_2^\perp$, and the relevant catalecticant matrix is $\Cat(2,2,4)$, given by 
\vskip .1in
\begin{center}
$\left[\begin{matrix}
      a_{0}&a_{1}&a_{2}&a_{3}&a_{4}&a_{5}&a_{6}&a_{7}&a_{8}&a_{9}\\
      a_{1}&a_{4}&a_{5}&a_{6}&a_{10}&a_{11}&a_{12}&a_{13}&a_{14}&a_{15}\\
      a_{2}&a_{5}&a_{7}&a_{8}&a_{11}&a_{13}&a_{14}&a_{16}&a_{17}&a_{18}\\
      a_{3}&a_{6}&a_{8}&a_{9}&a_{12}&a_{14}&a_{15}&a_{17}&a_{18}&a_{19}\\
      a_{4}&a_{10}&a_{11}&a_{12}&a_{20}&a_{21}&a_{22}&a_{23}&a_{24}&a_{25}\\
      a_{5}&a_{11}&a_{13}&a_{14}&a_{21}&a_{23}&a_{24}&a_{26}&a_{27}&a_{28}\\
      a_{6}&a_{12}&a_{14}&a_{15}&a_{22}&a_{24}&a_{25}&a_{27}&a_{28}&a_{29}\\
      a_{7}&a_{13}&a_{16}&a_{17}&a_{23}&a_{26}&a_{27}&a_{30}&a_{31}&a_{32}\\
      a_{8}&a_{14}&a_{17}&a_{18}&a_{24}&a_{27}&a_{28}&a_{31}&a_{32}&a_{33}\\
      a_{9}&a_{15}&a_{18}&a_{19}&a_{25}&a_{28}&a_{29}&a_{32}&a_{33}&a_{34}\\
      \end{matrix}\right]$.
      \end{center}
      \vskip .1in
\noindent The catalecticant relates to the rank of $F$ as follows.  First,
\[
\begin{array}{ccc}
[F] \in \VV(I_{s+1}(\Cat(2,2,4))) & \leftrightarrow &h_2(F)=\dim(S/F^\perp)_2= \rk(\Cat(2,2,4)(F) \le \rr(F)\\
  & \leftrightarrow & \exists \ge 10-\rr(F) \mbox{ independent quadrics apolar to } F.
 \end{array}
\]
Secondly, if $h_2(F) = s$, then the space of quadrics in $S$ annihilating $F$ has dimension  $10-s$. 
But then any $0$-dimensional scheme $\Gamma$ that is apolar to $F$ contains at most $10-s$ independent quadrics.  The length of $\Gamma$ is therefore at least $s$.
In particular, by Lemma~\ref{ApolarityRank} and Remark \ref{cactusrankversusrank},
$$h_2(F)\leq \operatorname{cr}(F)\leq \rr(F).$$
Assume, for a moment, that the quadrics in $F^\perp$ define a $0$-dimensional scheme $\Gamma$. If the length of $\Gamma$ equals $h_2(F)$, then  $\Gamma$ computes the cactus rank, i.e.~$\operatorname{cr}(F)=h_2(F)$. If the length of $\Gamma$ is strictly larger than $h_2(F)$, then, in some cases $\Gamma$ computes the cactus rank, in other cases it does not.  Quaternary quartic forms $F$ of type [300a] are of the first kind, while forms $F$ of type [300b] are of the second kind, see Section \ref{section_VSP}.

 In any case, the scheme defined by the quadrics in $F^\perp$ plays a crucial role in our analysis of schemes apolar to $F$, see 
Section \ref{quadraticideals}.
\end{example}
\subsubsection{Motivating Questions}\label{questions}
We highlight some of the questions posed in \cite{Geramita1}, \cite{Geramita2}, and \cite{IK}:
\begin{enumerate}
    \item Problem A of the introduction to \cite{IK} asks how to determine the length of a power sum decomposition of a general polynomial, when there is a unique decomposition of minimal length, and how the closure of the power sum locus relates to catalecticant matrices. We obtain results for quaternary quartics in Section \ref{stratification}. \vskip .05in
    \item Problem B of the introduction to \cite{IK} asks when $\Gor(H)$ is irreducible, and for a description of the Zariski closure and dimension of $\Gor(H)$. We obtain results for quaternary quartics in Section \ref{bstratification}. \vskip .05in
    \item In \cite{Geramita1}, Geramita asks under what conditions the ideal of minors of the catalecticant is prime. 
    In  Proposition~\ref{SecantStrat} we use geometric reasoning to show that $I_7(\Cat(2,2,4))$ and $I_8(\Cat(2,2,4))$ are not prime. \vskip .05in
    \item Conjecture 3.20 of \cite{IK} gives a formula for the dimension of the tangent space to $\bGor(H)$ at a generic point $[F]$, when $\deg(F)$ is even. In the case where $n+1=4$ and $\deg(F)=4$, the conjectured affine dimensions are
   \begin{center}
 \vskip -.1in
\begin{table}[H]
\begin{tabular}{|c|c|c|c|c|c|c|c|}
\hline $h_2 = $& $4$ & $5$ & $6$ & $7$ & $8$ & $9$&$10$ \\
\hline $\dim T_{[F]}= $& $16$ & $20$ & $25$ & $29$ & $32$ & $34$&$35$ \\
\hline
\end{tabular}
\end{table}
 \vskip -.3in
\end{center}
We prove that the conjecture holds for $n+1 = 4 = \deg(F)$ in 
Proposition \ref{vsp proposition}.
\end{enumerate}
 \subsubsection{Secant varieties to the Veronese variety and power sum decompositions}

Let $n = \binom{m+d}{d}$. The Veronese map
\begin{equation}\label{VeroneseMap}
\Pn^m = \Pn(V^*) \stackrel{v_d}{\longrightarrow} \Pn^{n-1}= \Pn(\Sym_d(V^*))
\end{equation}
is defined by $v_d = [f_1:\cdots:f_n]$ with the $f_i$ a basis for $\Sym_d(V^*)$. The map can be thought of intrinsically as 
\[
[l] \in \Pn(V^*) \mapsto [l^d] \in \Pn(\Sym_d(V^*)).
\]
Hence, a point on the image of $v_d$ corresponds to a power of a linear form, a point on the secant line $\alpha[l_1^d]+\beta[l_2^d]$ corresponds to $F=\alpha l_1^d+\beta l_2^d \in \Sym_d(V^*)$ which can be written as a sum $F=(al_1)^d+(bl_2)^d$ of two powers of linear forms, where $a^d=\alpha, b^d=\beta$, and so on. Therefore, we may interpret the $s^{th}$ secant variety 
to the Veronese variety as
\begin{equation}\label{SecantToVeronese}
\Sec_s(v_d(\Pn(V^*))) = \overline{\{[F] \in \Pn(\Sym_d(V^*))\mid F = l_1^d+\cdots l_s^d, \;{\rm for \; some}\;l_1,...,l_s\in V^* \}}
\end{equation}
\begin{definition}\label{borderRank}
The border rank $\operatorname{br}(F)$ of $F \in \Sym_d(V^*)$ is the smallest $s$ such that $F$ lies in the closure of $\Sec_s(v_d(\Pn(V^*)))$. 
\end{definition}

The relationship between the catalecticant and the secant variety to the Veronese variety is classical: 
\begin{lemma}\label{SecVerCat}\cite{Raicu}*{Lemma 2.1}
For $1 \le i \le d$ and $s \ge 1$, 
\[
I_{s+1}(\Cat(i,d-i,n)) \subseteq I(\Sec_s(v_d(\Pn^{n-1}))).
\]
\end{lemma}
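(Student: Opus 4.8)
The plan is to exploit the fact that the catalecticant matrix $\Cat(i,d-i,n)$ of the generic form $F\in\Sym_d(V^*)$ depends \emph{linearly} on the coefficients of $F$, and that a single power of a linear form produces a catalecticant of rank $\le 1$; the containment then follows formally from the closedness of determinantal loci. I work inside $\Pn^{n-1}=\Pn(\Sym_d(V^*))$, whose homogeneous coordinates record the coefficients of $F$.

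First I would record two observations. (i) By Definition~\ref{catalecticantDef} every entry of $\Cat(i,d-i,n)$ is a linear form in the coefficients of $F$, so the assignment $F\mapsto\Cat(i,d-i,n)(F)$ is additive, and each $(s+1)\times(s+1)$ minor of $\Cat(i,d-i,n)$ is homogeneous of degree $s+1$ in those coefficients; hence $I_{s+1}(\Cat(i,d-i,n))$ is a homogeneous ideal in the coordinate ring of $\Pn^{n-1}$ and $\VV\big(I_{s+1}(\Cat(i,d-i,n))\big)$ is Zariski closed. (ii) For a single $d$-th power $F=l^d$, $l\in V^*$, the catalecticant map $\Sym_{d-i}(V)\to\Sym_i(V^*)$, $G\mapsto G\circ l^d$, has image contained in the line $\CC\cdot l^{\,i}$, since contracting $l^d$ by a homogeneous operator of degree $d-i$ produces a scalar multiple of $l^{\,i}$ (the computation on monomials underlying apolarity, cf.\ the identity $l_a^d(G)=d!\,G(a)$ of Section~\ref{apolarity}); equivalently, by the remark after Definition~\ref{catalecticantDef}, $\rk\Cat(i,d-i,n)(l^d)=\dim(A_{l^d})_{d-i}=1$. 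In particular $\rk\Cat(i,d-i,n)(l^d)\le1$.

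Next I would combine these. If $F=l_1^d+\cdots+l_s^d$, then by (i), $\Cat(i,d-i,n)(F)=\sum_{j=1}^s\Cat(i,d-i,n)(l_j^d)$ is a sum of $s$ matrices each of rank $\le1$ by (ii), so $\rk\Cat(i,d-i,n)(F)\le s$ and every $(s+1)\times(s+1)$ minor of $\Cat(i,d-i,n)$ vanishes at $[F]$. Hence the set $U=\{[F]\in\Pn^{n-1}\mid F=l_1^d+\cdots+l_s^d\text{ for some }l_j\in V^*\}$ lies in $\VV\big(I_{s+1}(\Cat(i,d-i,n))\big)$. The latter is closed by (i), and $\Sec_s(v_d(\Pn^{n-1}))=\overline{U}$ by \eqref{SecantToVeronese}, so $\Sec_s(v_d(\Pn^{n-1}))\subseteq\VV\big(I_{s+1}(\Cat(i,d-i,n))\big)$. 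Passing to ideals reverses the inclusion, and $J\subseteq I(\VV(J))$ for every ideal $J$; therefore
\[
I_{s+1}(\Cat(i,d-i,n))\ \subseteq\ I\Big(\VV\big(I_{s+1}(\Cat(i,d-i,n))\big)\Big)\ \subseteq\ I\big(\Sec_s(v_d(\Pn^{n-1}))\big),
\]
which is the assertion.

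I expect no serious obstacle here. The only step needing attention is observation (ii), and there the difficulty is merely bookkeeping: one verifies on a monomial basis that contracting $l^d$ by a degree-$(d-i)$ operator never leaves the one-dimensional space $\CC\, l^{\,i}$ — equivalently, that $A_{l^d}$ has Hilbert function $(1,1,\dots,1)$. Everything else is the formal principle that a rank bound $\rk\le s$, being cut out by the vanishing of $(s+1)$-minors and hence defining a closed set, propagates from the dense locus $U$ of honest power sums to its closure, the secant variety.
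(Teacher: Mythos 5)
Your argument is correct, and it is the standard classical proof of this fact: each entry of $\Cat(i,d-i,n)$ is linear in the coefficients of $F$, a pure power $l^d$ has catalecticant of rank at most $1$ (equivalently $A_{l^d}$ has Hilbert function $(1,1,\dots,1)$), so rank subadditivity bounds the rank by $s$ on the locus of honest power sums, and closedness of the determinantal locus propagates the vanishing of the $(s+1)$-minors to the closure $\Sec_s(v_d(\Pn^{n-1}))$, whence the ideal containment. Note that the paper does not prove this lemma at all — it is quoted from Raicu's Lemma~2.1 — so there is nothing in the text to compare against; your write-up supplies exactly the argument the citation stands in for, and the only point worth keeping explicit (which you did) is that over $\CC$ the scalars $\lambda_j$ in a point of the span of $v_d$-images can be absorbed into the linear forms, so your set $U$ really is dense in the secant variety.
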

So in the situation of interest to us, $I_{s+1}(\Cat(2,2,4)) \subseteq I(\Sec_s(v_4(\Pn^{3}))).$
\begin{remark}\label{rankversuscactusrankversusborderrank}
The border rank is clearly a lower bound for rank.
If the cactus rank of $F$ is computed by a scheme that is a flat limit of smooth schemes, then 
the border rank is also at most the cactus rank.  In $\Pn^3$ any finite locally Gorenstein scheme is smoothable in this sense.  Finally, by Lemma \ref{SecVerCat}, the border rank is at least the catalecticant rank $h_2(F)$.  Therefore, if $F$ is a quaternary quartic,
then 
$$h_2(F)\leq \operatorname{br}(F)\leq \operatorname{cr}(F)\leq \rr(F).$$
\end{remark}

\subsection{Gorenstein rings of low codimension: History and Techniques}\label{subsection_history} For a Gorenstein ring $A=S/I$ of codimension $1$ or $2$, $I$ must be a complete intersection, and in codimension $3$ the Buchsbaum-Eisenbud theorem \cite{be1975generic} shows that $I$ is given by the Pfaffians of a skew-symmetric matrix. In particular, knowing $I$ is equivalent to knowing the matrix of first syzygies. This is obviously also true in codimension $4$: if $F_i$ are the modules in a minimal free resolution, since
\[
\sum_{i=0}^4(-1)^i \rk(F_i)=0,
\]
the minimal free resolution of a codimension $4$ Gorenstein ring $A=S/I$ will have the form
\[
0 \longrightarrow S^1 \stackrel{d_4}{\longrightarrow}F_3 \stackrel{d_3}{\longrightarrow}F_2 \stackrel{d_2}{\longrightarrow}F_1 \stackrel{d_1}{\longrightarrow}S^1 \longrightarrow S/I \longrightarrow 0,
\]
with $F_1 \simeq S^{k+1} \simeq F_3$ and $F_2 \simeq S^{2k}$. In \cite{KMmzeit}, Kustin-Miller show that there is a symmetric map 
\[
F_2 \stackrel{s}{\longrightarrow}F_2^*
\]
of the form 
\[
 \left[ \!
\begin{array}{cc}
  0 & I_{k}   \\
I_k  &0
 \end{array}\! \right]. 
\]
The map $s$ induces a non-degenerate symmetric quadratic form on $F_2$, giving it the structure of an even orthogonal module. This structure is further explored by Reid in \cite{ReidGeneral}, who introduces Spin-Hom varieties, and also by Celikbas-Laxmi-Weyman in \cite{celikbas2020family}. Reid makes several remarks at the conclusion of \cite{ReidGeneral} on the structure of codimension $4$ Gorenstein ideals for various values of $k$: 
\begin{enumerate}
\item $k=3$: Since $I$ is codimension $4$, $I$ must be a complete intersection.
\item $k=4$: Kunz shows in \cite{Ku} that there are no Gorenstein almost complete intersections.
\item $k=5$: Reid asks if any such ideal is a hypersurface section of Pfaffians. 
\item $k=6$: Reid asks if any such ideal is a Kustin-Miller unprojection.
\end{enumerate}
As shown by Vasconcelos-Villereal in \cite{VV}, for $k=5$ the answer to Reid's question is yes when $I$ is generically a complete intersection. Reid also asks if any Gorenstein codimension $4$ ideal with an even number of generators (so $k$ odd) is always a hypersurface section of Pfaffians.
Proposition~\ref{ReidNoAnswer} gives a negative answer to this question. The ideals which provide the answer appear as type $[310]$ in the appendix, and cannot be lifted to irreducible threefolds; we show that such ideals can be lifted to a reducible Gorenstein threefold with components of degrees 6 and 11. An irreducible threefold giving a negative answer is provided by ideals with Betti table of type $[200]$. Celikbas-Laxmi-Weyman give a negative answer to $(4)$ in \cite{laxmi2021spinor}.

A standard way to build Gorenstein rings is via Nagata's technique of {\em idealization}, which takes as input a ring $T$ and the canonical module $\omega_T$. Foxby, Gulliksen and Reiten show the idealization of a level algebra $T$ yields $T \ltimes \omega_T$ which is standard graded and Gorenstein. The idealization satisfies
\[
\codim(T \ltimes \omega_T) = \codim(T)+\type(\omega_T),
\]
so does not produce interesting objects in codimension $4$. However, there is a similar construction known as {\em doubling}, and in \cite{celikbas2020family} Celikbas-Laxmi-Weyman use doubling to construct many examples of codimension $4$ Gorenstein ideals with a small number of generators. We next review the doubling construction, as well as another way of constructing codimension $4$ Gorenstein rings: the bilinkage construction.

\subsection{The doubling construction}\label{DoubleConst}
Assume $S=\CC[x_0,\ldots,x_n]$ and $I\subseteq S$ is codimension $c$ with $S/I$ Cohen-Macaulay and a minimal free resolution $(G_{\bullet},d_{\bullet})$. By \cite{CAwvtAG}*{\S 21}, 
\begin{equation}\label{canonicalModule}
     \omega_{S/I}=\Ext^{c}_{S}(S/I,S(-n-1))
  \end{equation}
is the \emph{canonical module} of $S/I$. 
 Assume, furthermore, that $S/I$ is Gorenstein at all minimal primes (i.e.~satisfies the condition ${\rm G}_0$ cf.\cite{Mar}, \cite {Har}) and consider maps $$\psi:\omega_{S/I}(-\gamma) \to S/I.$$
 
If ${\rm im}(\psi)$ is an ideal $J$ of codimension $c+1$, then, by \cite{CMrings}*{Proposition 3.3.18}, $J$ is a Gorenstein ideal.
The ideal $J$ is then called a \emph{doubling} of $I$ with $\psi$.
By abuse of notation  we also say in this case that the divisor $\VV(J)$ is a doubling in $\VV(I)$. It is not true that every Artinian Gorenstein ideal of codimension $c+1$ is a doubling of some codimension $c$ ideal.             

Letting $M^*$ denote $\Hom_S(M,S(-n-1))$, $G_{\bullet}^{*}$ is a minimal free resolution of $\omega_{S/I}$. A map $\psi:\omega_{S/I}(-\gamma) \to S/I$ yields an induced map $\psi_{\bullet}:G_{\bullet}^{*}(-\gamma) \to G_{\bullet}$:

\vskip -.1in
\begin{equation}\label{doubleConstruction}
   \xymatrixrowsep{50pt}
\xymatrixcolsep{15pt}
\xymatrix{
G_{\bullet}^{*}(-\gamma):  &\omega_{S/I}(-\gamma) \ar[d]^{\psi} & G_c^{*}(-\gamma) \ar[d]^{\psi_0} \ar[l] & G_{c-1}^{*}(-\gamma) \ar[d]^{\psi_1} \ar[l]^{d_c^{*}} &\cdots\ar[l]^{d_{c-1}^{*}} & G_1^{*}(-\gamma) \ar[d]^{\psi_{c-1}} \ar[l]^{d_2^{*}} & G_0^{*}(-\gamma) \ar[d]^{\psi_{c}} \ar[l]^{d_1^{*}} \\
\hskip .25in G_{\bullet}:& S/I                 & G_0 \ar[l]                         & G_1 \ar[l]^{d_1}     &\cdots  \ar[l]^{d_{2}}                         & G_{c-1} \ar[l]^{d_{c-1}}                             & G_c \ar[l]^{d_c}       
} 
    \end{equation}
If $\psi$ is injective, the mapping cone of $\psi$ gives a resolution of  $S/(I+J)$.  If it is minimal, then one can read off the Betti table of $S/J$ from the Betti table of $S/I$.               

\begin{proposition}\label{double} Let $Y\subset \Pn^{n}$ be arithmetically Cohen-Macaulay subscheme of codimension $c$ and regularity $r$, and assume that $Y$ is generically Gorenstein. Assume a map 
$$\psi\in\Hom_Y(\omega_Y(-\gamma),{\mathcal O}_Y), \; {\rm with}\; \gamma \ge 2r+c-n$$ is an inclusion. Then the mapping cone of $\psi$ gives a minimal resolution of the divisor $X=\{\psi=0\}$ as a doubling in $Y$.  
\end{proposition}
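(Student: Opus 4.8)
The plan is to verify that, under the numerical hypothesis $\gamma \ge 2r + c - n$, the comparison map $\psi_\bullet \colon G_\bullet^*(-\gamma) \to G_\bullet$ lifting $\psi$ is minimal, i.e.~every entry of every $\psi_i$ lies in the maximal ideal $\mathfrak{m} = (x_0,\ldots,x_n)$; granting this, the mapping cone of $\psi$ is automatically a minimal free resolution of $S/(I+J)$, and since $\psi$ is an inclusion with image $J$ we have $I+J = J$ and $S/J$ Gorenstein of codimension $c+1$ by \cite{CMrings}*{Proposition 3.3.18} (as recalled just before the statement). So the whole proof reduces to a degree bookkeeping argument on the twists appearing in $G_\bullet$ versus those in $G_\bullet^*(-\gamma)$.

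First I would record the shape of the resolution. Since $Y$ is arithmetically Cohen--Macaulay of codimension $c$ and regularity $r$, the minimal free resolution $G_\bullet$ of $S/I$ has length $c$, with $G_0 = S$, and all twists in $G_i$ lie in the range $[i, i+r]$ — more precisely $b_{ij}(S/I) = 0$ unless $i \le j \le i+r$, and $G_c$ is generated in degrees $\le c+r$. Dualizing into $S(-n-1)$ and twisting by $-\gamma$, the module $G_i^*(-\gamma)$ in homological position $c-i$ of $G_\bullet^*(-\gamma)$ has its generators in degrees $\gamma + n + 1 - j$ for those $j$ with $b_{ij} \ne 0$; in particular $G_c^*(-\gamma)$ sits in position $0$ with generators in degrees $\ge \gamma + n + 1 - (c+r)$, and $G_0^*(-\gamma) = S(-\gamma - n - 1)$ sits in position $c$.

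Next I would compare, position by position, the minimal generator degrees of the source $G_{c-i}^*(-\gamma)$ and the target $G_i$ of $\psi_i$, and show the source generators have strictly larger degree, which forces each matrix entry into $\mathfrak{m}$. Concretely, a generator of $G_{c-i}^*(-\gamma)$ has degree at least $\gamma + n + 1 - (c - i + r)$ (using that twists in $G_{c-i}$ are at most $(c-i)+r$), while a generator of $G_i$ has degree at most $i + r$. The required inequality
\[
\gamma + n + 1 - (c - i + r) \;>\; i + r
\]
rearranges to $\gamma > 2r + c - n - 1$, i.e.~$\gamma \ge 2r + c - n$, which is exactly the hypothesis. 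Hence $\psi_i(\mathfrak{m} G_{c-i}^*(-\gamma)) \subseteq \mathfrak{m} G_i$ trivially and, more to the point, $\psi_i$ itself has all entries in $\mathfrak{m}$, so the mapping cone
\[
0 \to G_0^*(-\gamma) \to G_1^*(-\gamma) \oplus G_0 \to \cdots \to G_c^*(-\gamma) \oplus G_{c-1} \to G_c \to S/(I+J) \to 0
\]
(with $G_{c}$ appearing only once in position $0$ because $G_c^*(-\gamma)$ maps to $S/I$ via $\psi$) has differentials with entries in $\mathfrak{m}$ and is therefore minimal. I should be a little careful at the two ends: at the left end $G_0^*(-\gamma) = S(-\gamma-n-1)$ must map into $\mathfrak{m}\,G_1$, which again follows from $\gamma - n \ge 2r + c - 2n \ge$ the top twist of $G_1$ once one checks $n \ge$ small quantities; and at the right end one must confirm the composite $G_c^*(-\gamma) \to S/I$ is genuinely $\psi$ and that its image $J$ has codimension $c+1$, which is part of the hypothesis that $\psi$ is an inclusion of a module whose support is all of $Y$ cut down by one more dimension.

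The main obstacle I anticipate is not the inequality itself but the boundary bookkeeping: making the mapping-cone description precise enough that the claimed Betti table of $S/J$ (read off as the "doubling" of that of $S/I$) is correct, including checking that no cancellation is forced or permitted at the seam where $G_\bullet^*(-\gamma)$ and $G_\bullet$ are glued, and handling the generically-Gorenstein hypothesis ${\rm G}_0$ correctly so that $\omega_Y$ is locally free of rank one on the smooth locus and $\psi$ vanishing in codimension one inside $Y$ yields a Gorenstein (not merely Cohen--Macaulay) quotient. Everything else is the linear-algebra-of-degrees computation sketched above.
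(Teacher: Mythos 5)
Your proposal is correct and follows essentially the same route as the paper: the paper also reduces everything to the degree bookkeeping $\gamma > a_i + a_{c-i} - n - 1$ with $a_i \le r+i$ (phrased as "no cancellation in the mapping cone" rather than "entries of $\psi_i$ lie in $\mathfrak{m}$", which is the same condition), after invoking \cite{CMrings}*{Proposition 3.3.18} to identify $\psi$ with the ideal sheaf sequence $0\to\omega_Y(-\gamma)\to\mathcal O_Y\to\mathcal O_X\to 0$. Your worry about the left end is unnecessary, since the case $i=c$ of your main inequality already gives $\gamma+n+1>c+r$.
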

\begin{proof} By \cite{CMrings}*{Proposition 3.3.18} a section
$\psi\in\Hom_Y(\omega_Y(-\gamma),{\mathcal O}_Y)$, where $\omega_Y(-\gamma)$ is invertible and not isomorphic to ${\mathcal O}_Y$, is equivalent to the identification of $\omega_Y(-\gamma)$ with a codimension $1$ ideal of ${\mathcal O}_Y$. Denote by $X$ the divisor defined by this ideal. Hence we may identify the doubling construction of Equation~\ref{doubleConstruction} using $\omega_Y$ with the standard mapping cone construction associated to the exact sequence $$0\to \omega_Y(-\gamma)\to \mathcal O_Y\to \mathcal O_X\to 0, $$
which gives a minimal resolution of the structure sheaf of $X$, assuming there is no cancellation in the mapping cone resolution. 
To see that there is no cancellation, let $G_\bullet$ and $G_\bullet^*(-\gamma)$ be the minimal free resolutions appearing in Equation~\ref{doubleConstruction}.
Let $-a_i$ be the maximal negative degree shift in $G_i$, and let $b_i^\vee$ be the maximal degree shift in $G_i^*(-\gamma)$. Hence a sufficient condition for there to be no cancellation in the mapping cone is that 
\[
\begin{array}{ccc}
b_0^\vee & < &-a_c \\
b_1^\vee & < &-a_{c-1} \\
\vdots & < &\vdots \\
b_{c-1}^\vee & < &-a_1 \\
b_c^\vee & <& -a_0
\end{array}
\]
Since $b_i^\vee = a_{i}-n-1-\gamma$, this means it is sufficient to show
\begin{equation}\label{biggestShift}
\gamma > a_i+a_{c-i} -n-1\mbox{ for all } i.
\end{equation}
Since the regularity of $\cO_Y$ is $r$, the maximum shift in $G_i$ is $r+i$. Thus, $a_i \le r+i$, and the maximum value for the right hand side in Equation~\ref{biggestShift} is $(r+i)+(r+c-i)-n-1 = 2r+c-n-1$. Therefore the condition that 
\begin{equation}\label{eqn_condition_exactnessOfMappingCone}
    \gamma \ge 2r+c+1-n-1=2r+c-n
\end{equation}
is sufficient to force exactness of the mapping cone. 
\end{proof}
\noindent We formulate this geometrically as  
\begin{corollary}\label{doublingWithRightGrading}
Let $Y\subset \Pn^{n}$ be arithmetically Cohen-Macaulay subscheme of regularity $r$, that is generically Gorenstein. 
Assume that a map $\psi \in \Hom_Y(\omega_Y(\dim(Y)-2r), \cO_Y)$ is an inclusion, then it induces a minimal resolution of the divisor $X=\{\psi=0\}$ as a doubling of $Y$.

\end{corollary}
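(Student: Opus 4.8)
The plan is to obtain this as the boundary case of Proposition~\ref{double}, after trading the description of the twist in terms of $\codim Y$ and $n$ for one in terms of $\dim Y$.

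First I would set $c := \codim Y$, so that $\dim Y = n - c$ since $Y \subset \Pn^n$. Put $\gamma := 2r + c - n$. Then, on the one hand, the numerical hypothesis of Proposition~\ref{double}, namely $\gamma \ge 2r + c - n$, holds with equality, so we sit exactly at the extreme value of the admissible range. On the other hand, $\gamma = 2r + c - n = 2r - (n-c) = 2r - \dim Y$, hence $\omega_Y(-\gamma) = \omega_Y(\dim Y - 2r)$, and the given map $\psi \in \Hom_Y(\omega_Y(\dim Y - 2r), \cO_Y)$ is precisely a map of the kind considered in Proposition~\ref{double} for this value of $\gamma$.

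Next I would simply quote Proposition~\ref{double}: $Y$ is arithmetically Cohen-Macaulay, generically Gorenstein, of regularity $r$, the map $\psi$ is an inclusion, and $\gamma \ge 2r + c - n$, so the mapping cone of the induced comparison map $\psi_\bullet \colon G_\bullet^*(-\gamma) \to G_\bullet$ is a minimal free resolution of the homogeneous coordinate ring of the divisor $X = \{\psi = 0\} \subset Y$, which by \cite{CMrings}*{Proposition 3.3.18} is cut out in $Y$ by a codimension-one Gorenstein ideal. Reading off this mapping cone then exhibits $X$ as a doubling of $Y$ and computes its Betti table from that of $Y$, which is exactly the assertion.

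There is no genuine obstacle here beyond this re-indexing; the only point I would flag explicitly is that we are invoking Proposition~\ref{double} at the extreme value $\gamma = 2r + c - n$, where the chain of inequalities $\gamma > a_i + a_{c-i} - n - 1$ used in that proof to kill all cancellation in the mapping cone remains valid, since it relied only on the regularity bound $a_i \le r + i$. One should also recall, as in the proof of Proposition~\ref{double}, that $\psi$ being an inclusion with proper image forces $\omega_Y(\dim Y - 2r) \not\cong \cO_Y$, so that $X$ is genuinely a divisor in $Y$ and not all of $Y$.
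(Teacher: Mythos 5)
Your argument is exactly the paper's: the corollary is stated there as the geometric reformulation of Proposition~\ref{double}, obtained by the same re-indexing $\gamma = 2r + c - n = 2r - \dim Y$ (so $\omega_Y(-\gamma) = \omega_Y(\dim Y - 2r)$) and by noting that the boundary value still satisfies the strict inequalities $\gamma > a_i + a_{c-i} - n - 1$ ruling out cancellation in the mapping cone. Your additional remarks (equality case of the bound, $\omega_Y(\dim Y - 2r)\not\cong\cO_Y$) are correct and consistent with the proof of Proposition~\ref{double}.
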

\begin{example}
Consider the case of an ideal $I_\Gamma$ of six points. If $\Gamma$ does not contain four collinear points, then Proposition~\ref{prop_BettiGamma} shows that $S/I_\Gamma$ has regularity $2$ and is of type (5), (6), (7a) or (7b) in Table~\ref{tablepoints}. A computation shows that a general element of $\Hom(\omega_{S/I}(-4), S/I)$ is injective.
Therefore Corollary~\ref{doublingWithRightGrading} gives the corresponding doublings, which appear in Table~\ref{tableremaining} of Appendix~\ref{Appendix} as, respectively, Type 2.4, Type 2.3, Type 2.6.

Now suppose the six points span $\Pn^3$ and four are collinear; to cut out the four points $I_\Gamma$ must contain a quartic and $S/I_\Gamma$ has Betti table  
\[
    \begin{matrix}&&&&\cr
 1 & - & - & - &  \cr
 - & 5& 6 & 2 &  \cr
 - & -& - &- &  \cr
    - & 1& 2 & 1 &  \cr
\cr
    \end{matrix}. 
    \]
    A computation shows that a general element of $\Hom(\omega_{S/I}(-2), S/I)$ is not injective. 
    
    When $\gamma \ge 3$ a general element is injective. The four Betti tables below are the results of the doubling construction with a general element of $\Hom(\omega_{S/I}(-\gamma), S/I)$ for $\gamma \in \{3,4,5,6\}$. 
    \[
    \begin{matrix}&&&&&\cr
 1 & 1 & - & - & -& \cr
 - & 3& 5 & 2 &-&  \cr
 - & -& - &- &  -&\cr
   - & 2& 5 & 3 &-&  \cr 
   - & -& - & 1 &1&  \cr
\cr
    \end{matrix}   \;\;\;\;\;\; \begin{matrix}&&&&\cr
 
 1 & - & - & - & - \cr
 - & 6& 8 &3 &-  \cr
- & -& - &- &  -&\cr 
 -&3 & 8 & 6 & -  \cr
 -&- &- & -& 1   \cr
\cr
  \end{matrix}\;\;\;\;\;\; \begin{matrix}&&&&\cr
  1 & - & - & - & - \cr
 - & 5& 6 & 2 &-  \cr
 - &1 & 2& 1 &-  \cr
  -&1 &2 & 1 & -  \cr
 -&2 & 6& 5 & -   \cr
 -&- &- & -& 1   \cr
\cr  \end{matrix}\;\;\;\;\;\; 
\begin{matrix}&&&&\cr
  1 & - & - & - & - \cr
 - & 5& 6 & 2 &-  \cr
-&- & - & - & -  \cr
- &2 & 4& 2 &-  \cr
 -&- & - & - & -  \cr
  -&2 & 6& 5 & -   \cr
 -&- &- & -& 1   \cr
\cr
    \end{matrix}
\] 
For $\gamma = 3$, the leftmost Betti table shows that cancellation occurs in the mapping cone. For the two middle Betti tables, $\gamma \in \{4,5\}$ and no cancellation occurs in the mapping cone. However, the potential for cancellation exists. In the rightmost Betti table, $\gamma = 6 = 2r$ and there is no potential for cancellation. The mapping cone resolution is minimal, as guaranteed by Corollary~\ref{doublingWithRightGrading}.
\end{example}

\begin{construction}\label{doubling and component of complete intersection}

We can construct a doubling  of a finite scheme $\Gamma$ or more generally an an arithmetic Cohen-Macaulay scheme with ideal $I_\Gamma$ and codimension $c$. If 
$$\psi\in\Hom_{S/I_{\Gamma}}(\omega_{S/{I_\Gamma}}(-\gamma),S/I_{\Gamma})$$
is nonzero and $\gamma$ satisfies condition (\ref{eqn_condition_exactnessOfMappingCone}), then $\psi$ induces a doubling of $I_{\Gamma}$, so we suggest a way to find such maps $\psi$.

Let $J\subset I_{\Gamma}$ be a complete intersection of codimension $c$ and type $(d_1,...,d_c)$ , then
\begin{equation}
    \omega_{S/I_{\Gamma}}=\Ext^{c}_{S}(S/I_{\Gamma},S(-n-1))\simeq\Hom_{S}(S/I_{\Gamma},S/J(d_1+...+d_c-n-1)).
\end{equation}
On the other hand, let $I_{P}=J\colon I_{\Gamma}$. Then
\begin{equation}
    I_{P}\simeq\Hom_{S}(I_{\Gamma}, J).
\end{equation}
By \cite{CAwvtAG}*{Theorem 21.23},
\begin{equation}
    I_{P}\otimes S/J\simeq\Hom_{S/J}(S/I_{\Gamma}, S/J).
\end{equation}
Hence
\begin{equation}
    I_{P}\otimes S/I_{\Gamma}\simeq \omega_{S/I_{\Gamma}}(-d_1-...-d_c+n+1).
\end{equation}

Now consider maps $$\psi'\in \Hom_{S}(I_{P}(d_{1}+\dots+d_{c}-n-1-\gamma),S),$$
and their restrictions
$$\psi=\psi'|_{S/I_\Gamma}\in \Hom_{S/I_\Gamma}(I_{P}/I_\Gamma(d_{1}+\dots+d_{c}-n-1-\gamma),S/I_\Gamma).$$
If $\psi$ is nonzero, then  
$I_{\Gamma}+\mathrm{im}(\psi')$ is a doubling of $I_{\Gamma}$.

If $\dim \Gamma\geq 1$, then it makes sense to talk about divisors on $\Gamma$. We proceed as above and denote by $P$ the scheme defined by $J:I_{\Gamma}$. Note that in particular, this means $P$ is determined by the choice of $J$. In this formulation a doubling $X$ of $\Gamma$ is a divisor bilinked to $P_\Gamma:=P\cap \Gamma$ in $\Gamma$, 
$$X\in |P_\Gamma-(d_{1}+\dots+d_{c}-n-1-\gamma)H|$$
as discussed below in \ref{unpr}.
\end{construction}

\begin{proposition}\label{C4R4areDoubles}
For every Betti table $B$ for a codimension = 4 = regularity Gorenstein ideal, there is an $F \in {\mathcal F}_B$, such that $F^\perp$ can be obtained by a doubling.
\end{proposition}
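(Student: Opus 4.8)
The plan is to verify the statement Betti-table by Betti-table, using the doubling machinery of Construction~\ref{doubling and component of complete intersection} applied to configurations of points $\Gamma$ that compute the rank (or cactus rank) of a general $F \in {\mathcal F}_B$. The strategy rests on the following observation: if $\Gamma$ is an apolar scheme to $F$ with $I_\Gamma \subseteq F^\perp$, and $J \subseteq I_\Gamma$ is a complete intersection of type $(d_1,\ldots,d_4)$ chosen so that $\gamma := d_1+\cdots+d_4 - (n+1) - \deg(F)$ satisfies the exactness bound $\gamma \ge 2r + c - n$ from Equation~\eqref{eqn_condition_exactnessOfMappingCone}, then a nonzero $\psi \in \Hom_{S/I_\Gamma}(\omega_{S/I_\Gamma}(-\gamma), S/I_\Gamma)$ produces a doubling whose mapping cone resolution is forced to be minimal. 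The point is that $F^\perp$ itself arises as $I_\Gamma + \mathrm{im}(\psi')$ for a suitable $\psi'$, i.e. $F^\perp$ is the ideal of the doubling divisor $X = V(J) \cap V(I_P)$ where $I_P = J : I_\Gamma$. The socle degree of the resulting Gorenstein ring equals $d = 4$ precisely when $\gamma$ is chosen correctly, so one must pick $J$ with $d_1+\cdots+d_4 = n+1+d+\gamma = 9+\gamma$.

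First I would assemble, for each of the 19 strata, an explicit general form $F$ together with a minimal apolar point set $\Gamma$ computing its rank — this data is already produced in Section~\ref{bettitables} and catalogued in Tables~\ref{tablesremaining} and \ref{tableCGKK}, together with the Betti tables of $S/I_\Gamma$ from Table~\ref{tablepoints}. Then, for each such $\Gamma$, I would exhibit a complete intersection $J \subseteq I_\Gamma$ of the appropriate total degree and check that (a) $J$ has codimension $4$, (b) the induced $\gamma$ satisfies the exactness inequality, and (c) the restricted map $\psi = \psi'|_{S/I_\Gamma}$ is nonzero for a generic choice, so that $I_\Gamma + \mathrm{im}(\psi')$ is genuinely Gorenstein of codimension $4$. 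The final verification — that the Artinian reduction of this doubling has Betti table exactly $B$ — follows from reading off the mapping cone of the self-dual resolution, using that no cancellation occurs by the degree bound; equivalently one checks directly with a single {\tt Macaulay2} computation per stratum that $F^\perp$ for the exhibited $F$ coincides with the constructed ideal.

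The one stratum that requires separate handling is $[300a]$: as noted in the introduction and discussed in Section~\ref{eg_exceptionCGKK4}, a general $A_F$ of type $[300a]$ is \emph{not} a doubling of a point configuration computing its rank (nor of any complete intersection), so the naive approach fails there. For $[300a]$ one must instead produce a \emph{special} $F \in {\mathcal F}_{[300a]}$ — which is all the statement requires, since it asks only for the existence of some $F \in {\mathcal F}_B$ — realized as a doubling, for instance by degenerating within the stratum to a form whose apolar scheme of length $8$ is non-reduced (locally Gorenstein) and then applying the construction to that scheme; alternatively, by the elementary biliaison description of $[300a]$ from $\mathbb{P}^3$ inside an intersection of three quadrics, one recovers a doubling presentation after one unprojection step.

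The main obstacle I anticipate is the $[300a]$ case: one needs to locate a point of the stratum where the doubling construction does work, and to confirm both that the resulting ideal is Gorenstein of the right codimension and that its Betti table remains $[300a]$ rather than degenerating to a neighboring stratum such as $[300b]$ or $[400]$. The other 18 cases are essentially bookkeeping: matching total degrees of complete intersections inside known point ideals, checking the regularity/exactness inequality, and confirming non-vanishing of $\psi$ — each reduces to a finite computation in the accompanying {\tt Macaulay2} package \cite{M2}.
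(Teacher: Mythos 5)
Your core strategy is the paper's: take the minimal apolar point configurations (whose Betti tables are classified in Proposition~\ref{prop_BettiGamma}/Table~\ref{tablepoints}), double them with the twist that yields socle degree $4$, and verify by a {\tt Macaulay2} computation on one representative per class that a general $\psi$ is injective so that the mapping cone is minimal; the paper does exactly this, invoking Corollary~\ref{doublingWithRightGrading} directly (your detour through Construction~\ref{doubling and component of complete intersection} with a complete intersection $J\subseteq I_\Gamma$ and $I_P=J:I_\Gamma$ is just one way of producing such a $\psi$), and disposes of $[400]$ by noting that the $(2,2,2,2)$ complete intersection is trivially a doubling of the $(2,2,2)$ one.

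Where you go astray is the $[300a]$ discussion. The statement quantifies over the $16$ Betti tables, not over the $19$ strata: $[300a]$, $[300b]$, $[300c]$ all have Betti table $[300]$, so the doubling of seven points in a $(2,2,2)$ complete intersection (Example~\ref{rank7}, or Example~\ref{Eg_300c}) already produces an $F\in{\mathcal F}_{[300]}$ with $F^\perp$ a doubling, and no special treatment of $[300a]$ is needed. Your proposed fix for $[300a]$ — degenerating to a non-reduced length-$8$ apolar scheme, or extracting a doubling from the biliaison description — is unsubstantiated, and it cuts against the paper's own position (Example~\ref{eg_exceptionCGKK4} and the exception made for $[300a]$ in Proposition~\ref{liftofCGKK}); since it is superfluous for the statement, simply drop it rather than try to repair it. Two smaller bookkeeping points: the complete intersection inside the ideal of a point set in $\Pn^3$ has codimension $3$, so it has three degrees $(d_1,d_2,d_3)$, not four; and $\gamma$ is not $d_1+\cdots+d_c-(n+1)-\deg F$ — that quantity is the twist on $I_P$, while $\gamma=4$ is fixed by requiring socle degree $4$ and the exactness bound $\gamma\ge 2r+c-n=4$ for the regularity-$2$ configurations. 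Finally, note the doubling of a point scheme in $\Pn^3$ is already Artinian, so there is no further Artinian reduction to take.
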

\begin{proof}
Proposition~\ref{prop_BettiGamma} classifies reduced zero-schemes $\Gamma$ in $\Pn^3$ with $4 \le \deg(\Gamma) \le 9$ satisfying certain geometric constraints, and proves that in these situations $\Gamma$ has regularity $2$, except for the $(2,2,2)$ complete intersection. In the case of regularity $2$, we are in the setting of Corollary~\ref{doublingWithRightGrading}, and a computation with an element of each class shows there is a $\psi$ yielding an inclusion, cf. {\tt Macaulay2}, 
\cite{M2}*{Package QuaternaryQuartics}.  The complete intersection $(2,2,2,2)$ is trivially a doubling of the complete intersection $(2,2,2)$.  
\end{proof}

For a geometric perspective on this, in Corollary~\ref{liftofCGKK} we prove that a general Artinian Gorenstein ideal from a family of Betti table type in Table~\ref{tableCGKK}, with the exception of $[300a]$, can be obtained by doubling. For other recent work on doubling of ACI's, see \cite{laxmi}. 

Here, let us illustrate the interesting case of Betti table of type [300]. We will see that ideals with the same Betti table may admit a different behavior with respect to the doubling construction.
\begin{example}[Type {[300a]} is not a doubling]\label{eg_exceptionCGKK4}
A quartic $F$ of type [300a] has rank $8$ and $A_F$ has  Betti table
\[
 \begin{matrix}1 & - & - & - & -& \cr
       - & 3& - & - & -& \cr
       - &4 & 12& 4 & -& \cr
       - &- & -& 3 & -& \cr
       - &- & -& - & 1  &\cr
\end{matrix}.
\]
The quadric part $J$ of $F^{\perp}$ is a complete intersection that is a scheme of length $8$, which is a minimal apolar scheme to $F$. In that case the doubling of $J$ is always a complete intersection.  Hence, $F^{\perp}$ is not a doubling of $J$. There is however a family of rings $A_F$ with Betti table of type $[300]$ which is the doubling of an  almost complete intersection
ideal $I_{\Gamma}$ of codimension $3$. 
\end{example}

\begin{example}[Type {[300b]} is a doubling]\label{rank7}
 
This example shows that if $F$ is of type [300b] (see Definition \ref{FBdefinition}), then $F^{\perp}$ can be constructed as a doubling from its apolar ideal. Assume $I_\Gamma$ defines seven points in a complete intersection of three quadrics. Then $I_\Gamma$ is an almost complete intersection, and $S/I_{\Gamma}$ has Betti diagram
\begin{align*}
        \begin{matrix}1 & - & - & - &  \cr
               - & 3& - & - &  \cr
               - &1 & 6& 3 &  \cr
        \end{matrix}.
\end{align*}
Let $I_{P}=J:I_{\Gamma}$, where $J\subset I_{\Gamma}$ is a complete intersection $(2,2,2)$. Then $I_{P}$ defines one point $P$ in $\PP^{3}$. Note that $\reg S/I_{\Gamma}=2$, so by condition (\ref{eqn_condition_exactnessOfMappingCone}), $\gamma\geq 4$ guarantees that the mapping cone has no cancellation. If we want to obtain a doubling having regularity $4$, we choose $\gamma=4$. In particular, a nonzero map $\psi'\in \Hom_{S}(I_{P}(-2),S)$ defines a doubling of $I_{\Gamma}$.

Now assume $S/F^{\perp}$ has Betti table $[300b]$ and that $F^\perp$ contains the ideal $I_{\Gamma}$.
If $\psi'\in\Hom_{S}(I_{P}(-2),F^{\perp})$ is nonzero,  then $I_{\Gamma}+\mathrm{im}(\psi')\subseteq F^{\perp}$ is a doubling of $I_\Gamma$.  
Because $A_F$ and $S/(I_{\Gamma}+\mathrm{im}(\psi'))$ have the same Betti table,  they are actually equal. Hence, $F^{\perp}$ is a doubling of $I_{\Gamma}$.
\end{example}

\begin{example}[Type {[300c]} is a doubling]\label{Eg_300c}
Let $I_{\Gamma}$ be an ideal defining seven points with three on a line. Then $S/I_\Gamma$ has Betti table 
\begin{align*}
        \begin{matrix}1 & - & - & - &  \cr
               - & 3& - & - &  \cr
               - &1 & 6& 3 &  \cr
               \end{matrix},
\end{align*}
and $S/Q_{\Gamma}$, where $Q_{\Gamma}\subseteq I_{\Gamma}$ is the ideal generated by the quadrics in $I_{\Gamma}$, has Betti table 
\begin{equation}
    \begin{matrix}&&&&\cr
 1 & - & - & - &  \cr
 - & 3& - & - &  \cr
 - &- & 4& 2 &  \cr
\cr
    \end{matrix}.
\end{equation}
In this case $Q_{\Gamma}$ is the ideal of a line and four points.

 We may take $J\subseteq I_{\Gamma}$ to be a complete intersection $(2,2,3)$ and proceed as in the construction \ref{doubling and component of complete 
intersection}. Then $S/I_P$ where $I_{P}=J:I_{\Gamma}$, has Betti table 
\begin{equation}
    \begin{matrix}&&&&\cr
 1 & - & - & - &  \cr
 - & 5& 5 & - &  \cr
 - &- & -& 1 &  \cr
\cr
    \end{matrix}.
\end{equation}
\vskip -.1in
As in the previous example, $\gamma=4$ guarantees that we get a doubling of $I_{\Gamma}$ which has regularity $4$. Therefore, a nonzero map $\psi'\in\Hom_{S}(I_{P}(-1),S)$, defines an ideal $\mathrm{im}(\psi')+I_{\Gamma}$ that is a doubling of $I_\Gamma$ of type [300c].
\end{example}

\subsection{Bilinkage and unprojection}\label{unpr} Another classical construction of Gorenstein rings of codimension $4$ 
is by means 
of bilinkage.

\begin{definition} Let $R$ be a Gorenstein ring, and let $A$ and $B$ be ideals of R.
Then $A$ and $B$ are said to be linked by an ideal $I$ if $I\subset  A, B$ and if $A = ( I : B)$
and $B = ( I : A)$. We will similarly say that schemes defined by $A$ and $B$ are linked via the scheme defined by $I$
\end{definition}
Linkage is a very general construction that is usually narrowed down by adding assumptions on $I$. In this way one distinguishes different types of linkage.
When $I$ is a complete intersection it is the classical linkage described in \cite{PS85}.
The most general case is when $I$ is Cohen-Macaulay and generically Gorenstein \cite{CMlinkage}.
A composition of two linkages of the same type is called a bilinkage. This notion has been introduced because varieties related by an even number of linkages share more common properties with each other.

Linkages are helpful in the construction of varieties as they provide a way to relate two varieties (given by $A$ and $B$) of which one can be easier to construct than the other. We will consider only special types of bilinkages and use them to construct AG rings in Section \ref{section_reducible_liftings}.

\subsubsection{Biliaison} Building on the notion of bilinkage Hartshorne in \cite{Har2} introduces the notions of Gorenstein biliaison and generalised biliaison.

\begin{definition}Let $X$ and $Y$ be equidimensional closed subschemes of dimension $r$ of $\mathbb{P}^n$
We say that $X$ is obtained by a biliaison of height $h$ from $Y$ if there exists an ACM scheme $Z$ in $\mathbb{P}^n$, of dimension $r + 1$, containing $X$ and $Y$, and so that $X \in Y + hH$ as (generalized cf. \cite{Har}) divisors on $Z$, where $H$ denotes the hyperplane class. 
When $Z$ is arithmetically Gorenstein we call it a Gorenstein biliason (cf. \cite{Har2}).
\end{definition} 

\begin{remark} A biliaison between $X$ and $Y$ in $Z$ can be seen as a composition of two linkages in the following way. Let $I_X, I_Y, I_Z$ be the corresponding ideals. Choose a hypersurface $L_d$ of degree $d$ containing $X$. Then $I_X$ is linked to some $I_T$ in $I_Z+<L_d>$. Then consider a linkage of $I_T$ by $I_Z+L_{d+h}$ for  $L_{d+h}$ a hypersurface of degree $d+h$ containing $T$.  The latter yields an ideal $I_Y'$ with $Y'$ a divisor in $Z$ such that $Y'\simeq Y$. Choosing $d$ large enough we will be able to construct  $Y'=Y$.
\end{remark}

\begin{remark} \label{doubling via bilaison} Note that the construction of doubling in Example \ref{doubling and component of complete intersection} is in fact done by means of biliaison in $\Gamma$ of  $P\cap \Gamma$, where $P$ is linked to $\Gamma$ via a complete intersection. The biliaison for some choices of complete intersections might however be of height $0$, meaning that $P\cap \Gamma$ is already a doubling. 
\end{remark}

\subsubsection{Unprojections} A special case of biliaison is provided by the unprojection technique of Papadakis and Reid \cite{PR}. The unprojection in projective space is originally the inverse
of the central projection from a point on it. So having the data of the image of a projection and the proper image of the indeterminacy point, we can reconstruct the original variety.

Let $A$ be a local Gorenstein ring, and $I$ an ideal defining $X=\VV(I)\subset \operatorname{Spec}(A)$ a Gorenstein subscheme of codimension $1$. As in \cite{PR}*{Lem.~1.1} we have a natural injective map $s:I\to \omega_A$ to the canonical module.
Choose a set of generators $f_1,\dots, f_k$ of $I$ and write $s(f_i) = g_i\in \omega_A=A$ for the corresponding generators of $\omega_A$.
Then the unprojection ring (which is Gorenstein by \cite{PR}*{Thm.~1.5}) of $I$ in $A$ is
$$A[s]=A[x]/(xf_i-g_i),$$
where $x$ is an indeterminate; $A[s]$ is usually no longer local. This construction of unprojection can be globalised to the projective case \cite{PR}*{\S 2.4}.
Indeed, when $A=S/J$ with $J$ homogeneous (as in our case) we can consider the projective scheme $\operatorname{Proj}(A)$ and the subscheme defined by $I$. This allows us to lift local Gorenstein schemes to subvarietes in projective space.

The unprojection of $D\subset X \subset \mathbb{P}^N$ can be obtained as a Gorenstein biliaison \cite{PS74} of the cone $D_1\subset \mathbb{P}^{N+1}$ 
(cf.~\cite{KK1}*{\S 5}).

\subsubsection{The Kustin-Miller construction} Another special case is the biliason from a complete intersection of codimension 4 in a complete intersection of codimension 3. This construction has been performed in \cite{kustin1982structure}, were Kustin and Miller use biliaison to provide a generic family of codimension $4$ Gorenstein ideals. This construction has also later become the first model of unprojection studied by Papadakis and Reid (cf. \cite{PR}*{\S 2.8}).
A nice summary of Kustin-Miller's construction appears in  \cite{laxmi2021spinor}*{\S 6}, so our treatment here is terse. Let $\tilde{S}=\kk[x_i,a_{jk},v]$ be a polynomial ring, where $1 \le i,j \le 4$ and $1\le j \leq 3$. The Kustin-Miller family is associated to a $3\times 4$ matrix $M=\begin{pmatrix}
    a_{jk}
    \end{pmatrix}$, a $4$-vector $\mathbf{x}=\begin{pmatrix}
    x_i
    \end{pmatrix}$ and a variable $v$. Let
\begin{align*}
    \begin{pmatrix}
    q_1\\q_2\\q_3
    \end{pmatrix}=M\mathbf{x}, ~\begin{pmatrix}
    h_1\\h_2\\h_3\\h_4
    \end{pmatrix}=\mathbf{x}v+\wedge^3M .
\end{align*}
Then minimal free resolution for $I=\langle q_1,q_2,q_3,h_1,h_2,h_3,h_4\rangle$ is given by
\begin{equation*}
\begin{tikzcd}
\mathbb{F}_{\bullet}: & \tilde{S}/I  & \tilde{S} \arrow[l] & \tilde{S}^7 \arrow[l, "d_1"'] & \tilde{S}^{12} \arrow[l, "d_2"'] & \tilde{S}^{7} \arrow[l, "sd_2"'] & \tilde{S} \arrow[l, "d_1^{t}"'] & 0 \arrow[l]
\end{tikzcd},
\end{equation*}
\[
\mbox{ where }d_1=\begin{bmatrix}q_1,q_2,q_3,h_1,h_2,h_3,h_4\end{bmatrix}, s=\begin{bmatrix}0& I_6\\
I_6 & 0\end{bmatrix} \mbox{ and } d_2=\begin{bmatrix}
    Q& \wedge^2 M & -vI_{3}\\
    0 & \mathcal{K}_2 & M^{t}
    \end{bmatrix}.
\]
The blocks of $d_2$ are
\begin{align*}
    Q&=\begin{bmatrix}
    -q_2 & -q_3 & 0\\
    q_1 & 0 & -q_3\\
    0 & q_1 & q_2
    \end{bmatrix},\\
    \wedge^2M&=\begin{bmatrix}
    M_{23;34}  & M_{23;24}  & M_{23;23}  & M_{23;14}  & M_{23;13}  & M_{23;12}  \\
-M_{13;34} & -M_{13;24} & -M_{13;23} & -M_{13;14} & -M_{13;13} & -M_{13;12} \\
M_{12;34}  & M_{12;24}  & M_{12;23}  & M_{12;14}  & M_{12;13}  & M_{12;12}
    \end{bmatrix},\\
    \mathcal{K}_2&=\begin{bmatrix}
    -x_2       & x_3        & -x_4       & 0          & 0          & 0          \\
x_1        & 0          & 0          & -x_3       & x_4        & 0          \\
0          & -x_1       & 0          & x_2        & 0          & -x_4       \\
0          & 0          & x_1        & 0          & -x_2       & x_3
    \end{bmatrix}.
\end{align*}

\begin{remark}
Example \ref{eg_exceptionCGKK4} i.e.~example of type [300a] is a specialization of a Kustin-Miller construction. In this case,
\begin{align*}
    \mathbf{x}=\begin{pmatrix}
    x_0\\x_1\\x_2\\x_3
    \end{pmatrix},~M=\begin{bmatrix}
    x_0& 0 & 0& -x_3\\
    0& x_1 & 0& -4x_3\\
    0& 0 & x_2& -9x_3
    \end{bmatrix},~v=0.
\end{align*}
In particular, for these examples there is a biliaison in a complete intersection of three quadrics relating these ideals with ideals of a codimension $4$ complete intersection of linear forms.
\end{remark}
To deal both with doubling and bilinkage we use the adjunction relation between the canonical divisor on a union of varieties of the same dimension and the canonical divisor on each component.  
Algebraically, this is the following linkage relation in Gorenstein rings that we have already seen:
\begin{theorem}\label{antican}\cite{CAwvtAG}*{Thm. 21.23} Let $A$ be a local Gorenstein ring and $I\subset A$ an ideal of codimension $0$. Let $B=A/I$ and $J=(0:I)$. Then $J=Hom_A(B,A)$.  
If $B$ is Cohen-Macaulay, then $J$ is a canonical module for B. In particular,
$B$ is Gorenstein and $J$ is a canonical module for $B$ if and only if $J$ is a principal ideal in $A$.
\end{theorem}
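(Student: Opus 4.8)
The plan is to reduce everything to two standard facts about canonical modules over Gorenstein local rings, after disposing of the formal identity $J=\Hom_A(B,A)$. Write $d=\dim A$. The identity is immediate: evaluation at $\bar 1$ gives an $A$-linear injection $\Hom_A(A/I,A)\hookrightarrow A$ whose image is exactly $\{a\in A: aI=0\}=(0:I)=J$. So I would spend one line on this and then concentrate on recognising this module as the canonical module $\omega_B$ and on the final equivalence.

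Suppose $B=A/I$ is Cohen--Macaulay. Since $\codim I=0$ we have $\dim B=d$, so $B$ is a maximal Cohen--Macaulay $A$-module, $\depth_A B=d$, and $\grade(I,A)=0$. Because $A$ is Gorenstein, $\omega_A\cong A$ and $A$ has finite injective dimension $d$; local duality then yields $\Ext^i_A(B,A)=0$ for all $i>0$ together with the identification $\omega_B\cong\Ext^0_A(B,A)=\Hom_A(B,A)=J$. This is precisely the codimension-zero instance of the general principle that a codimension-$c$ Cohen--Macaulay quotient of a Gorenstein ring has canonical module $\Ext^c_A(-,A)$; I would invoke this from the standard references on canonical modules rather than reprove it.

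For the last equivalence I would use the characterisation that a Cohen--Macaulay local ring $B$ with canonical module $\omega_B$ is Gorenstein if and only if $\omega_B$ is a cyclic $B$-module (equivalently, free of rank one, equivalently $\omega_B\cong B$). Here $\omega_B$ has been realised as the ideal $J\subseteq A$, which is annihilated by $I$, so ``$J\cong B$ as modules'' translates to ``$J=Ax$ for some $x$ with $(0:_A x)=I$''. The observation that closes the loop is that once $J$ is merely principal, say $J=Ax$, the annihilator is automatically correct: $(0:_A x)=(0:_A Ax)=(0:_A J)=(0:_A(0:_A I))=I$, the last equality being the double-annihilator identity, which holds because $B=A/I$, being maximal Cohen--Macaulay over the Gorenstein ring $A$, is reflexive. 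Conversely, if $B$ is Gorenstein then $J=\omega_B\cong B$ is cyclic over $A$, hence a principal ideal. In the Artinian case---the case relevant to the rings $A_F$ studied here---all of this specialises to Matlis duality with $A$ as injective cogenerator ($\Hom_A(-,A)$ exact and self-inverse on finite-length modules, with colengths preserved), and the argument becomes entirely elementary.

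The only real obstacle is the middle step in full generality: establishing, for an arbitrary Gorenstein local $A$, that $\Hom_A(B,A)$ genuinely is the canonical module of $B$ and that the higher $\Ext$'s vanish. Once local duality over Gorenstein rings is available this is routine, and the remaining ingredients---the formal identity and the passage between ``$\omega_B$ cyclic'' and ``$J$ principal'' via double annihilators---are bookkeeping.
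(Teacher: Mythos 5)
The paper itself gives no proof of this statement: it is quoted verbatim from Eisenbud (Thm.~21.23), and the authors even remark that the theorem is not used in any of their proofs. So there is no in-paper argument to compare with, and your proposal has to stand on its own. The identification $J=\Hom_A(B,A)$, and the deduction that $J$ is a canonical module when $B$ is Cohen--Macaulay (the codimension-zero case of $\omega_{A/I}\cong\Ext^{c}_A(A/I,A)$, since $B$ is then maximal Cohen--Macaulay over the Gorenstein ring $A$), are correct, as is the forward implication of the final equivalence.

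The backward implication is where you have a gap, or at least an unacknowledged hypothesis. You justify the double-annihilator identity $(0:_A(0:_AI))=I$ by saying that $B$, ``being maximal Cohen--Macaulay over $A$,'' is reflexive; but in the direction ``$J$ principal $\Rightarrow$ $B$ Gorenstein and $J$ canonical'' the Cohen--Macaulayness of $B$ is not among the hypotheses, and it does not follow from $J$ being principal. Concretely, take $A=k[[u,v]]/(uv)$ (Gorenstein, one-dimensional) and $I=(u^2)$, an ideal of codimension $0$. Then $J=(0:_AI)=(v)$ is principal, yet $B=A/I=k[[u,v]]/(u^2,uv)$ has depth $0$ and dimension $1$, so it is neither Cohen--Macaulay nor Gorenstein; moreover $(0:_A(0:_AI))=(u)\neq I$, so the double-annihilator identity itself fails. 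Hence the equivalence is only correct when read with ``$B$ Cohen--Macaulay'' as a standing hypothesis (which is how Eisenbud's ``in particular'' is meant, continuing the previous sentence), and your proof should say so explicitly. Once that hypothesis is in place your argument does close: $J=Ax$ with $(0:_Ax)=I$ gives $J\cong B$, and since $J\cong\omega_B$ this makes $B$ Gorenstein. Alternatively, with $B$ Cohen--Macaulay you can skip the double annihilator altogether: $\omega_B\cong J$ is cyclic, and since $\Hom_B(\omega_B,\omega_B)\cong B$ the canonical module is faithful, so a cyclic $\omega_B$ is already isomorphic to $B$.
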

If $X\cup X'$ is an arithmetic Gorenstein scheme, e.g.~a complete intersection, and the component $X$  is arithmetic Cohen-Macaulay. Then one may apply this theorem to the homogeneous coordinate rings $A$ and $B$ of $X\cup X'$ and $X$, respectively.

Geometrically we formulate this in terms of sheaves:
$$\omega_X\otimes \cO_{X}(X\cap X')\equiv \omega_{X\cup X'}\otimes \cO_{X},$$
or in terms of Cartier divisors on $X$, when $X$ is arithmetic Gorenstein
$$K_X+X'\cap X=K_{X\cup X'}|_X.$$

For generalizations to arithmetic Cohen-Macaulay varieties $Y$ and $X$ that are Gorenstein in codimension $0$, and generalized divisors see \cites {Har, Mar}). We do not use Theorem \ref{antican} in any proofs, but both in algebraic and in geometric formulation it is an important guide in constructions of reducible varieties with trivial canonical bundle:  Each component  intersects the union of the others in an anticanonical divisor.

 \subsection{AG varieties of codimension $3$ in quadrics}\label{Tom}
 In Section \ref{Pfaffian} we construct codimension $4$ AG varieties as varieties of codimension $3$ in a smooth quadric.  On the quadric these varieties are Pfaffian or Lagrangian degeneracy loci.
This allows us to
lift Artinian Gorenstein rings of codimension and regularity $4$ to Calabi-Yau varieties in quadrics. 

For example recall that Reid constructed codimension $4$ subvarieties by ``Tom and Jerry'' unprojection introduced \cite{Reid}. The unprojection data are a codimension $3$ scheme $Y$ defined by a $5 \times 5$ Pfaffian ideal, containing a codimension $4$ complete intersection. This data is strongly related to the lifting of a del Pezzo surface of degree $6$ to the Fano threefold $\mathbb{P}^2\times \mathbb{P}^2$ or $\mathbb{P}^1\times \mathbb{P}^1\times \mathbb{P}^1$. 
 
 In Section \ref{Pfaffian}, we  interpret these constructions by  describing embeddings of $\mathbb{P}^1\times \mathbb{P}^1\times \mathbb{P}^1$ and $\mathbb{P}^2\times \mathbb{P}^2$ in a $6$-dimensional quadric in $\mathbb{P}^7$. We use the Pfaffian and Lagrangian degeneration constructions \cite{EPW} involving spinor bundles on quadrics. 
 
 Recall from \cite{Ot} that a spinor bundle on a smooth quadric $Q_n$
is obtained as a pullback of the universal bundle by the natural maps
\[
Q_{2n+1}\to G( 2^{k},2^{k+1}),
\]
and two maps 
\[
Q_{2n}\to G( 2^{k-1},2^k)
\]
defined by maximal isotropic linear subspaces of the quadrics in \cite{Ot}*{Cor.~1.2}.
On a $(2n-1)$-dimensional quadric there is one spinor bundle, of rank $(2n-1)$, while for a $2n$-dimensional quadric there are two spinor bundles of rank $(2n-1)$. On $Q_1= \mathbb{P}^1$ it is $\mathcal{O}(1)$. On $Q_2= \mathbb{P}^1 \times \mathbb{P}^1$ they are $\mathcal{O}(1,0)$ and $\mathcal{O}(0,1)$. On $Q_3$, which is the Lagrangian Grassmannian $LG(2, 4)$, it is the tautological quotient bundle. On $Q_4= G(2, 4)$ they are the tautological quotient bundle and the dual of the tautological subbundle.
\section{Apolar curves and Betti tables of point sets}\label{bettitables}
To determine the rank and the variety of minimal power sum presentations of a quaternary quartic $F$, we will compare the Betti numbers of  $A_F$ and the Betti numbers of possible apolar sets of points.  If a form $F$ is apolar to a curve of small degree, then the rank of $F$ is often computed by a set of points on this curve. 
\begin{lemma}\label{curves} Assume a quaternary quartic form $F$ is apolar to a curve $C$.
\begin{enumerate}
    \item If $C$ is a line and $F$ is not apolar to any scheme of length less than $3$, then $F$ has rank $3$ and $VSP(F,3)=\Pn^1$. 
\vskip .05in
\item Let $C$ be a conic or two intersecting lines and assume $F$ is not apolar to any scheme of length less than $5$.  Then $F$ has cactus rank $5$, and if $C$ is smooth, $F$ has rank $5$ and $VSP(F,5)=\Pn^1$. 
\vskip .05in
\item Let  $C$ be a twisted cubic or a reduced curve defined by a determintal net of quadrics and assume $F$ is not apolar to any scheme of length less than $7$. Then $F$ has cactus rank $7$, and if $C$ is smooth, then $F$ has rank $7$ and $VSP(F,7)=\Pn^1$.
\vskip .05in
\item Assume that $C$ is an elliptic quartic curve or any reduced complete intersection of two quadrics, and that $F$ is not apolar to any scheme of length less than $8$.  Then $F$ is apolar to a scheme $Z$ of length $8$ on $C$.  \vskip .05in
\begin{enumerate}
    \item If $C$ is smooth and $2Z \in C$ is a quartic section of $C$, then $Z$ is the unique apolar scheme of length $8$ to $C$.  \vskip .05in
    \item If $C$ is smooth and $2Z \in C$ is not a quartic section of $C$, then $F$ is apolar to exactly two schemes of length $8$. \vskip .05in
\end{enumerate}
\end{enumerate}
\end{lemma}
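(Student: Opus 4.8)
\emph{Proof plan.} The plan is to handle all four cases through one mechanism. By the Apolarity Lemma~\ref{Apolarity}, ``$F$ apolar to $C$'' means $[F]\in\langle v_4(C)\rangle$, and a subscheme $Z\subseteq C$ is apolar to $F$ exactly when $[F]\in\langle v_4(Z)\rangle$. Here $v_4(C)$ is a rational normal curve of degree $4e$ spanning $\Pn^{4e}$ when $C$ is the smooth rational curve of degree $e\le 3$ in cases (1)--(3) (a line, a conic, or a twisted cubic), and a linearly normal elliptic curve of degree $16$ in $\Pn^{15}$ when $C$ is the elliptic normal quartic in (4). Since secant varieties of irreducible nondegenerate curves are never defective, $\Sec_{2e+1}(v_4(C))=\Pn^{4e}$ (because $2(2e+1)-1=4e+1>4e$) in cases (1)--(3) and $\Sec_8(v_4(C))=\Pn^{15}$ in (4); for the reducible curves allowed in (2) and (3) one reaches the same conclusion by a direct join-of-secant-varieties computation, using that $v_4(C)$ is arithmetically Cohen--Macaulay of the expected degree. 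In every case this places $[F]$ on the span of a subscheme of $v_4(C)$ of length $2e+1$ (resp.\ $8$), so $\operatorname{cr}(F)\le 2e+1$ (resp.\ $\le 8$); together with the hypothesis that $F$ is apolar to no shorter scheme this pins down the cactus rank and produces an apolar scheme $Z$ of the asserted length supported on $C$ -- which is the first assertion of (4).

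For the smooth cases in (1)--(3) I would then identify $\langle v_4(C)\rangle\cong\Pn((\Sym^{4e}U)^{\vee})$ with $U\cong\CC^2$, so that $v_4(C)$ is the rational normal curve and $[F]$ corresponds to a binary form $\Phi$ of even degree $4e$, with apolar subschemes of $F$ on $C$ matching apolar subschemes of $\Phi$. By Macaulay's theorem for binary forms $\Phi^\perp$ is a complete intersection of type $(a,4e+2-a)$, and the hypothesis $\operatorname{cr}(F)\ge 2e+1$ forces $a=2e+1$; hence $\Phi^\perp$ is generated by a pencil of forms of degree $2e+1$, whose general member has $2e+1$ distinct roots. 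This gives $\rr(\Phi)=2e+1$ and $VSP(\Phi,2e+1)=\Pn^1$, the pencil. To transfer this to $F$ I would show that \emph{every} length-$(2e+1)$ apolar scheme $\Gamma$ of $F$ lies on $C$. The point is that $(F^\perp)_2=(I_C)_2$: an extra quadric $q'\in(F^\perp)_2\setminus(I_C)_2$ does not vanish on $C$, so by B\'ezout it cuts out on $C$ a subscheme $W$ of length at most $2e$, and a dimension count gives $(I_W)_4=(I_C)_4+S_2\cdot q'\subseteq(F^\perp)_4$, i.e.\ $[F]\in\langle v_4(W)\rangle$, contradicting the cactus rank hypothesis. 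Now $(I_\Gamma)_2\subseteq(F^\perp)_2=(I_C)_2$ forces $\dim(I_\Gamma)_2\le\dim(I_C)_2=10-(2e+1)$, while $\dim(I_\Gamma)_2\ge 10-(2e+1)$ always; hence $(I_\Gamma)_2=(I_C)_2$ and $\Gamma\subseteq V((I_C)_2)=C$ (for each of the three curves the quadrics $(I_C)_2$ already cut $C$ out scheme-theoretically, the squares of the defining linear forms removing any spurious point). Therefore $VSP(F,2e+1)=VSP(\Phi,2e+1)=\Pn^1$.

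For case (4) with $C$ smooth, first a B\'ezout/dimension argument of the same kind forces $h_2(F)=8$, so $(F^\perp)_2=(I_C)_2=\langle q_1,q_2\rangle$ and, exactly as above, every length-$8$ apolar scheme of $F$ is supported on $C$, hence of the form $v_8(Z)$ for an effective divisor $Z$ of degree $8$ on the elliptic curve $C$. Such a $Z$ is apolar to $F$ iff $H^0(\cO_C(4)(-Z))\subseteq\ker[F]$ inside $H^0(\cO_C(4))$. Parametrising $Z$ by $N:=\cO_C(4)(-Z)\in\operatorname{Pic}^8(C)$ together with the section cutting $Z$ out, this becomes the vanishing of $\det B_N$, where $B_N\colon H^0(\cO_C(4)N^{-1})\times H^0(N)\to\CC$ is the $8\times 8$ pairing $(\sigma,\tau)\mapsto[F](\sigma\tau)$. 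The structural observation is $B_{\cO_C(4)N^{-1}}=B_N^{\top}$, so the zero locus of $\det B$ is invariant under the involution $\iota\colon N\mapsto\cO_C(4)N^{-1}$ of $\operatorname{Pic}^8(C)\cong C$, whose fixed points are the $N$ with $N^{\otimes 2}=\cO_C(4)$. A Chern-class (Fourier--Mukai) computation gives $\deg\det B=2$; its two zeros therefore form an $\iota$-orbit, producing two distinct divisors $Z$ (case (b), where $2Z\not\sim\cO_C(4)$, i.e.\ $2Z$ is not a quartic section), unless the orbit degenerates to a fixed point, where $\det B$ has a double zero and there is a single $Z$ with $2Z\sim\cO_C(4)$ (case (a)).

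The main obstacle is case (4): establishing $\deg\det B=2$ -- equivalently, that an elliptic normal curve of even degree $2k$ has exactly two $k$-secant $(k-1)$-planes through a general point -- together with the local behaviour of $\det B$ at the $\iota$-fixed points and the verification that the hypotheses really force $h_2(F)=8$; all of this rests on the B\'ezout/dimension estimates above and the classical secant geometry of elliptic normal curves. A secondary point is the reducible determinantal-curve subcase of (2) and (3), where only the cactus rank is claimed and the join-of-secants computation is what is needed.
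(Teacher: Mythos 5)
For the smooth curves your plan is essentially the paper's argument: restrict $F$ to $C$, view $F_C$ as a binary form of degree $4e$, and invoke Sylvester/Macaulay (the apolar ideal of a binary form is a complete intersection $(a,4e+2-a)$) to get the rank $2e+1$ and the pencil $VSP(F,2e+1)=\Pn^1$; your added step that $(F^\perp)_2=(I_C)_2$, hence every minimal apolar scheme lies on $C$, is a reasonable justification of a point the paper glosses. The genuine gaps are in the non-smooth cases, which the statement explicitly includes and where the paper does its real work. For a reducible conic, a reducible determinantal cubic, and a reduced singular or reducible $(2,2)$ complete intersection, your proposal offers only ``$\Sec$ of $v_4(C)$ fills the span'' (or a join-of-secants computation). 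That only bounds the \emph{border} rank of a general point of the span; it does not produce an honest apolar subscheme of the required length for the \emph{particular} $F$, which is exactly what the cactus-rank conclusions in (2), (3) and the existence of $Z$ in (4) require. The paper instead uses that $\langle v_4(C_1)\rangle$ and $\langle v_4(C_2)\rangle$ meet (in $v_4$ of the intersection point), so $F$ has a pencil (or net) of decompositions $F=F_1+F_2$ with $F_i$ apolar to the component $C_i$, and then chooses $F_1$ of smaller cactus rank because the relevant lower secant variety of $v_4(C_1)$ is a hypersurface; the irreducible singular quartic in (4) is handled separately as a projection of a rational normal curve of degree $16$, again via a secant hypersurface. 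None of this is replaced by your sketch, and the singular/reducible subcases of (4) are not addressed at all.

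Two further points. First, your route to (4) rests on ``a B\'ezout/dimension argument forces $h_2(F)=8$'', but the hypotheses do not force this: a quartic of type $[300a]$ is apolar to a smooth elliptic quartic (the complete intersection of two of its three apolar quadrics), has $h_2(F)=7$, and is not apolar to any scheme of length less than $8$. So the step ``every length-$8$ apolar scheme is supported on $C$'' cannot be derived the way you propose, and your count must be formulated for divisors on $C$ (as the paper implicitly does). Second, the key quantitative input in (4) --- that the determinant of your pairing $B_N$ has exactly two zeros on $\operatorname{Pic}^8(C)$, equivalently that there are exactly two $8$-secant $\Pn^7$'s to the re-embedded curve through the point $[F]$, degenerating to one precisely when $2Z$ is a quartic section --- is flagged by you as the main obstacle but not proved; the paper supplies it via the determinantal hypersurfaces attached to the decompositions $\cO_C(D)=\cO_C(B)\otimes\cO_C(D-B)$, citing Fisher and Room. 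Until the reducible/singular cases and this count are actually carried out, the proposal does not yet prove the lemma.
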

\begin{proof} When the quartic form $F$ is apolar to a curve $C$, the rank of $F$ is the minimal cardinality of point sets on $C$ apolar to $F$.  
When $C$ is rational of degree $d$, then the restriction $F_C$ is a binary form of degree $4d$. Any binary form of degree $4d$ is apolar to a scheme of length $\leq 2d+1$, and if it is not apolar to a scheme of length strictly less, then it is apolar to a pencil of schemes of rank $2d+1$ of which the general is smooth,  cf. \cites{Sylvester,RS}.  
Furthermore, since the $(2d-1)$-secant variety of $C$ is a hypersurface,  in a pencil of forms $F$ there is at least one form $F_0$ of cactus rank $\leq 2d$. 

Let $C$ be a reducible curve $C=C_1\cup C_2$, whose components intersect in one point $p$ and assume $F$ is apolar to $C$, then $F$ has a pencil of decompositions $F=F_1+F_2$, where $F_1$ is apolar to $C_1$ and $F_2$ is apolar to $C_2$.  So if $C$ is a reducible conic or twisted cubic, we may assume $C_1$ is a line, and choose $F_1$ to have cactus rank $2$. In the conic case, we  conclude that $F_2$ has rank $3$, and therefore $F$ has cactus rank $5$.
In the twisted cubic case, we conclude similarly that $F_2$ has rank $5$ if $C_2$ is irreducible.  If $C_2$ is reducible, then $F_2$ is apolar to a reducible conic and therefore, as above, of cactus rank $7$.  Summing up, we see that $F$ has cactus rank $9$ also in this reducible case.

In the fourth case of the lemma, the curve $C$ is a complete intersection of two quadric surfaces.
When $C$ is nonsingular, it is an elliptic quartic curve, and the restriction $F_C$ is a form on the sections of a line bundle ${\mathcal O}_C(D)$ of degree $16$ on $C$. The linear system $|D|$ embeds $C\subset \Pn(H^0({\mathcal O}_C(D))^*)$, and $F_C$, up to scalar, is a point in this ambient space.

The line bundle ${\mathcal O}_C(D)$ decomposes into line bundles  ${\mathcal O}_C(B)\otimes {\mathcal O}_C(D-B)$ where $B$ has degree $8$. Each decomposition defines a determinantal hypersurface in $\Pn^{15}=\Pn(H^0({\mathcal O}_C(D))^*)$ defined by an $8\times 8$-determinant with linear entries, such that each row vanishes on a divisor in $|B|$ and each column vanishes on a divisor in $|D-B|$, cf. \cite{Fisher}*{Thm. 1.3, Lemma 2.9} and \cite{Room}.
The pencil of decompositions generates a pencil of hypersurfaces, one through each point in $\Pn^{15}$ outside the intersection of the hypersurfaces.  The latter is the variety of $7$-secant $\Pn^6$'s to $C$.
A general point on a determinantal hypersurface lies in the spans of a unique divisor in $|B|$ and in $|D-B|$ respectively. So $2B=D$ and the two divisors coincide.

If $C$ is an irreducible but singular complete intersection of degree $4$, then $C$ is the projection of a rational normal quartic curve.  In the $4$-tuple embedding $v_4(C)$, it is a projection of a rational normal curve of degree $16$, from a point on a secant.  The $7$-secant variety to this rational normal curve is a hypersurface that maps onto the linear span of $v_4(C)$, so $F$ must lie in a $7$-secant, and therefore have cactus rank at most $8$.
When $C$ is a reducible complete intersection of two quadrics, then the components are rational and any component intersect the unions of residual components in a scheme of length $2$.  So we may assume the $C=C_1\cup C_2$, where $C_1$, and $C_2$ are possibly reducible rational curves of degree $d_1$ and $d_2$ with $d_1+d_2=4$.  The form $F$ now has a net of decompositions $F=F_1+F_2$ such that the $F_i$ are apolar to $C_i$.  Now there is, as above, a finite set of decompostions such that $F_1$ has cactus rank $2d_1$ and $F_2$ has cactus rank $2d_2$. We conclude that $F$ has cactus rank $2(d_1+d_2)=8$.
\end{proof}

 In Section \ref{section_VSP} we will compute the rank of a quaternary quartic form $F$ under the assumption that it is general among the forms apolar to the space of quadrics in $F^{\perp}$. The rank will be the cardinality of a set of points $\Gamma$ contained in the locus defined by the quadrics in $F^{\perp}$. 
 
 When the quadrics in $F^{\perp}$ generate the ideal of a connected curve, this curve is one of the curves covered by Lemma \ref{curves} and $\Gamma$ is a set of points as described in that lemma. Otherwise, the set of points $\Gamma$ will satisfy certain generality conditions as listed in the next proposition.
 \vskip -.5cm
\begin{center}
\begin{footnotesize}
\begin{table}[H]
\begin{tabular}{ |c|c|c| } 
 \hline
  \begin{tabular}{c}
   
    $\begin{matrix}&&&&\cr
    1 & - & - & - &  \cr
                                               - & 6 & 8 & 3 &  \cr
                                               \cr
                                             \end{matrix}$ \\
    1) four points that span $\Pn^3$
    
  \end{tabular} &
                  \begin{tabular}{c}
 
 $\begin{matrix}&&&&\cr
 1 & - & - & - &  \cr
                                               - & 5& 5 & - &  \cr
                                               - &- & - & 1 &  \cr
                                               \cr
                                             \end{matrix}$ \\
                    
2) five points, no\\ four in a plane
\end{tabular} &
                                    \begin{tabular}{c}
 $ \begin{matrix}&&&&\cr
 1 & - & - & - &  \cr
                                               - & 5& 5& 1 &  \cr
                                               - &- & 1 &1 &  \cr
\cr
               \end{matrix}$ \\

     3) five points, four in a \\ plane, no three collinear                                
 \end{tabular}  \\
 \hline
  \begin{tabular}{c}
 
 $\begin{matrix}&&&&\cr
 1 & - & - & - &  \cr
                                               - & 5& 6& 2 &  \cr
                                               - &1& 2 &1 &  \cr
\cr
               \end{matrix}$ \\
    4) five points,\\ three in a line\\
 \end{tabular} &
                 \begin{tabular}{c}

 $\begin{matrix}&&&&\cr
 1 & - & - & - &  \cr
                                               - & 4& 2 & - &  \cr
                                               - &- & 3& 2 &  \cr
\cr
               \end{matrix}$ \\
                   5) six points,\\ no five in a plane\\               
   \end{tabular}  &
  \begin{tabular}{c}

      $\begin{matrix}&&&&\cr
      1 & - & - & - &  \cr
                                               - & 4& 3& - &  \cr
                                               - &1& 3& 2 &  \cr
\cr
               \end{matrix}$ \\
            6) six points,\\ three in a line\\             
 \end{tabular}  \\
 \hline
  \begin{tabular}{c}
   
$ \begin{matrix}&&&&\cr
1 & - & - & - &  \cr
                                               - & 4& 4 & 1 &  \cr
                                               - &2 & 4& 2 &  \cr
\cr
               \end{matrix}$ \\
     7a)  six points, five in a plane, or\\
     7b) three on each of two skew lines\\         
  \end{tabular}  &
\begin{tabular}{c}
         
 $\begin{matrix}&&&&\cr
 1 & - & - & - &  \cr
                                               - & 3& - & - &  \cr
                                               - &1 & 6& 3 &  \cr
\cr
               \end{matrix}$ \\

 8a)  seven points in a CI, or \\
 8b) seven points, three collinear
  \end{tabular}  &
 \begin{tabular}{c}
    
$ \begin{matrix}&&&&\cr
1 & - & - & - &  \cr
                                               - & 3& 1& - &  \cr
                                               - &2 & 6& 3 &  \cr
\cr
               \end{matrix}$ \\
               9) seven points,\\ five in a plane\\
                          
  \end{tabular}  \\
  \hline
 \begin{tabular}{c}
           
 $\begin{matrix}&&&&\cr
 1 & - & - & - &  \cr
                                               - & 3& 2& - &  \cr
                                               - &3& 6& 3 &  \cr
\cr
               \end{matrix}$ \\
           10) seven points on a\\determinantal cubic curve 

              \end{tabular}  &
         \begin{tabular}{c}
       
 $\begin{matrix}&&&&\cr
 1 & - & - & - &  \cr
                                               - & 3& 3& 1 &  \cr
                                               - &4& 7& 3 &  \cr
\cr
               \end{matrix}$ \\
               11) seven points,\\ six in a plane\\
           
         \end{tabular}  &
        \begin{tabular}{c}
   
 $\begin{matrix}&&&&\cr
 1 & - & - & - &  \cr
                                               - & 3& -& - &  \cr
                                               - &-& 3& - &  \cr
                                               - &-& -& 1 &  \cr
               \end{matrix}$ \\
               
     12) CI of eight points\\ 
 \end{tabular}  \\
\hline
  \begin{tabular}{c}
$ \begin{matrix}&&&&\cr
1 & - & - & - &  \cr
                                               - & 2& -& - &  \cr
                                               - &4& 9& 4 &  \cr
\cr
                                                              \end{matrix}$ \\
13)  eight    points, no six  \\in a plane, not a CI\\
                                                              
  \end{tabular}  &
           \begin{tabular}{c}
             $\begin{matrix}&&&&\cr
             1 & - & - & - &  \cr
                                               - & 2& 1& - &  \cr
                                               - &5& 9& 4 &  \cr
\cr
                                                            \end{matrix}$ \\
              14) eight points, six in a plane, or\\
               five in a plane and three in a line\\

                                                         \end{tabular} &
                \begin{tabular}{c}
               $ \begin{matrix}&&&&\cr
1 & - & - & - &  \cr
                      & 1& -& - &  \cr
                                               - &7& 12& 5 &  \cr
                                                    \cr         \end{matrix}$ \\
                                                          
                          15) nine points on \\ a unique quadric\\   
                                         
              \end{tabular}  \\
  \hline
  \end{tabular}
  \vskip .4cm
  \caption{Betti tables $B_\Gamma$ of certain point sets $\Gamma\subset \Pn^3$\\ CI denotes complete intersection } \label{tablepoints}
  \end{table}
                 \end{footnotesize}
                 \end{center}
                 \begin{proposition}\label{prop_BettiGamma} Let $\Gamma \subset \Pn^3$ be a set of $n$ distinct points,  $4 \leq n \leq 9$, that span $\Pn^3$.
Assume furthermore that no subset of four of the points lie on a line, no subset of six lie in a smooth or singular conic, no subset of seven lie in a plane, no subset of eight lie in a determinantal net of quadrics, and no subset of nine lie on two quadrics.
Then $S/I_\Gamma$ has one of the Betti tables in Table \ref{tablepoints}. For each table, we describe a set of points which realizes the table. 
                 \end{proposition}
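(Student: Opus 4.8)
The statement is a finite classification: for each allowed combinatorial type of a point set $\Gamma\subset\Pn^3$ with $4\le n\le 9$ points spanning $\Pn^3$ and satisfying the listed non-degeneracy hypotheses, we must pin down the Betti table of $S/I_\Gamma$. My plan is to run through the cases $n=4,5,\dots,9$ in order, and in each case use the standard toolkit for points in $\Pn^3$: (i) the Hilbert function, which for $n$ points in general-enough position is determined up to the listed exceptions; (ii) the fact that $S/I_\Gamma$ is Cohen--Macaulay of codimension $3$, so its minimal free resolution has length $3$ and its Betti table has three nonzero columns with an alternating-sum constraint $\sum_i(-1)^ib_{ij}=0$ read off the Hilbert function; and (iii) Castelnuovo--Mumford regularity bounds, which here will almost always force $\reg(S/I_\Gamma)=2$ (the only exception being the $(2,2,2)$ complete intersection of eight points, regularity $3$). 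Once the Hilbert function and the regularity are known, the Betti table is determined by the total Betti numbers $b_i$ together with the graded strands, and in the regularity-$2$ case there are only two graded strands to fill in; the alternating sum and the known values of $b_{1,2}$ (the number of quadrics through $\Gamma$) and $b_{3,j}$ (governed by the canonical module / socle) typically pin everything down, though in a few cases one needs the extra input that the resolution is minimal, i.e.\ no ghost terms can be cancelled.

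The first step is to nail down the Hilbert function in each case. For $n\le 9$ points, $H_\Gamma$ is one of a short list, and the generic value is the ``truncated binomial'' sequence $\min\{\binom{k+3}{3},\,n\}$ capped appropriately; the listed geometric hypotheses (no $4$ on a line, no $6$ on a conic, no $7$ in a plane, no $8$ in a determinantal net, no $9$ on two quadrics) are precisely the conditions that rule out the Hilbert-function jumps that would produce extra syzygies. Concretely: $h_1=4$ always (the points span $\Pn^3$); $h_2 = \binom{5}{2}-\dim (I_\Gamma)_2$, and $\dim(I_\Gamma)_2=\max\{0,10-n\}$ unless a special position forces more quadrics — which is exactly what "no $6$ on a conic" and "no $9$ on two quadrics" prevent. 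So I would verify case by case that, under the hypotheses, $\dim(I_\Gamma)_2=\max\{0,10-n\}$, $\dim(I_\Gamma)_3=\max\{0,20-3n+\varepsilon\}$ where $\varepsilon$ accounts for the "$7$ in a plane" / "$8$ in a determinantal net" corrections, and hence read off $H_\Gamma$ and thus the Betti numbers $b_{ij}$ subject to $\sum(-1)^ib_{ij}=\binom{3-j+1}{?}\cdots$ — more simply, the graded Euler characteristic of the resolution equals $\sum_k H_\Gamma(k)t^k\cdot(1-t)^4$ up to sign.

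The second step is the regularity statement: $\reg(S/I_\Gamma)\le 2$ for all these $\Gamma$ except the $(2,2,2)$ CI. This is where the hypotheses really earn their keep. One clean way is to use that $\reg(S/I_\Gamma)\le m$ iff $H_\Gamma$ has stabilized by degree $m$ and $I_\Gamma$ is $(m+1)$-regular; for $n\le 9$ points with the stated generality, $H_\Gamma(2)=\min\{10,n\}$ — wait, more precisely $H_\Gamma$ reaches its final value $n$ in degree $2$ for $n\le 9$ as long as no hyperplane contains "too many" points and no conic or net absorbs "too many", which is exactly hypothesized — and then a short argument (e.g.\ via a general hyperplane section reducing to points in $\Pn^2$, or directly via the Cayley--Bacharach / Gruson--Lazarsfeld--Peskine type bound) gives $2$-regularity of the coordinate ring. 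The genuinely special case is $8$ points in a $(2,2,2)$ complete intersection, where $\reg=3$; this is listed separately as table 12 and its Betti table is the Koszul one, which is immediate. I should also handle the $n=9$, "$9$ on a unique quadric" row (table 15), where there is exactly one quadric and the resolution has the shape $b_{1,2}=1, b_{1,3}=7,\dots$: here uniqueness of the quadric is guaranteed by "no $9$ on two quadrics", and the rest of the table follows from the Hilbert function plus regularity $2$.

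The third step, and the place I expect the only real friction, is \emph{minimality}: once the Hilbert function and regularity fix the total Betti numbers and their distribution among graded strands \emph{up to} possible cancellation, I must rule out ghost terms, i.e.\ show no consecutive $b_{i,j},\,b_{i+1,j}$ can both be nonzero and cancel. In most rows this is automatic because the two surviving strands sit in non-adjacent rows of the Betti table (linear strand in row $1$, the "canonical" strand in row $2$), so there is simply no room for cancellation. The delicate rows are those with a nonzero entry in both row $1$ and row $2$ of some column — e.g.\ tables 3, 4, 6, 7, 9, 10, 11, 13, 14 — and the split between the two types "$5$ on a plane vs.\ $3+3$ on two skew lines" (7a/7b) or "$7$ in a CI vs.\ $3$ collinear" (8a/8b), where two geometrically different configurations share a Betti table. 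For these I would exhibit the stated explicit realizing configuration and compute its resolution directly (a finite check, and indeed the paper points to the {\tt Macaulay2} package), and then argue that every $\Gamma$ of that combinatorial type specializes to / deforms within an irreducible family containing the model, so the Betti table is constant (semicontinuity gives $\le$ in one direction; the geometric hypotheses prevent any further jump, giving equality). Thus the structure of the argument is: Hilbert function (from the generality hypotheses) $\Rightarrow$ total Betti numbers; regularity $\le 2$ (again from the hypotheses, with the one CI exception) $\Rightarrow$ at most two graded strands; and a case-by-case minimality/irreducibility check, anchored by the explicit models listed in Table~\ref{tablepoints}, to complete the identification.
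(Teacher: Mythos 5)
Your overall skeleton (restrict the Hilbert function using the hypotheses, establish regularity $2$ except for the $(2,2,2)$ complete intersection, then identify the tables) matches the paper's first half, but the decisive step is missing, and the patch you propose for it does not work. Once the Hilbert function and regularity are fixed, the Betti table is \emph{not} determined "up to ghost terms in a given resolution": only the differences $b_{23}-b_{13}$ and $b_{34}-b_{24}$ are pinned down, so several genuinely distinct minimal Betti tables are numerically consistent with the same data. This is not a rare nuisance but the main phenomenon: six points with Hilbert function $(1,4,6,6,\dots)$ realize tables (5), (6) and (7a/7b) of Table~\ref{tablepoints}; seven general points (8a) and seven points on a twisted cubic (10) share a Hilbert function but differ in $b_{23}$; eight points in a $(2,2,2)$ complete intersection (12) versus eight otherwise-general points (13) likewise. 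The content of the proposition is precisely to decide which pairs $(b_{23},b_{34})$ can occur for each $h_2$, and the paper does this by structural syzygy arguments -- analyzing the ranks of linear first and second syzygies, generic initial ideals, and mapping cones -- to exclude, e.g., top rows $(4,4,0)$ and $(4,3,1)$ when $h_2=2$, or $b_{23}=6$ with $b_{34}\le 1$ when $h_2=1$, and to force the $(2,2,2)$ CI when $\Delta H=(1,3,3,1)$. None of these exclusions appears in your plan.

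Your proposed substitute -- compute the resolution of one explicit model per configuration and spread the answer over its "combinatorial type" by irreducibility and semicontinuity -- fails on both prongs. Semicontinuity only bounds the \emph{general} member of a family by its special members, whereas the proposition quantifies over \emph{every} $\Gamma$ satisfying the hypotheses, so you must exclude upward jumps at special members; "the geometric hypotheses prevent any further jump" is exactly the claim to be proved, not a consequence of anything you have set up. Worse, the relevant strata are not cut out by the collinearity/coplanarity combinatorics you stratify by: lying on a twisted cubic, on a determinantal net, or forming a $(2,2,2)$ CI are not conditions on linear/planar subsets, and the examples above show the Betti table is not constant on your naive strata. A smaller but real issue in your first step: for zero-dimensional $\Gamma$ one cannot pass to a general hyperplane section, and a GLP-type bound is not the right tool; the paper instead works with the Artinian reduction and uses the Bigatti--Geramita--Migliore growth theorems (consecutive values $1,1$ or $2,2$ in $\Delta H$ force a collinear or conic subscheme), plus a separate mapping-cone argument for $\Delta H=(1,3,2,1)$, to convert the hypotheses into the short list of admissible Hilbert functions. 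Without that input and without the syzygy-rank/gin/mapping-cone case analysis, the classification claim is not established.
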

\begin{proof} The proof has two parts. We first show that a configuration of reduced points satisfying the hypotheses has regularity $2$, with the obvious exception of \#12. In the second part, we show that
only the Betti tables in Table \ref{tablepoints} actually appear.

First part: since $\Gamma$ is Cohen-Macaulay, the Betti table is preserved for the Artinian reduction, the quotient of the coordinate ring of $\Gamma$ by a general linear form. The degree of $\Gamma$ is
the degree of the Artininan reduction $A_\Gamma$ of $S/I_\Gamma$. Since $Ext^i(A_\Gamma,
S)  \ne 0$ only at the last position, the regularity is attained at the
last step of the free resolution; if the largest shift at the last 
position is $S(-a)$, then the regularity is $a-3$. On the other hand,
the largest shift contributes a $t^a$ in the numerator of the Hilbert
series, which is divisible by $(1-t)^3$. Indeed, for any finite length quotient of a polynomial ring, the regularity is attained at the last step of the resolution, so is equal to the socle degree.

We write $\Delta H$ for the Hilbert function of the Artinian reduction $A_\Gamma$. As $\Gamma$ is Cohen-Macaulay and spans $\Pn^3$, $\Delta H$ begins $(1,3)$. With our hypotheses, the possible values for $\Delta H$  are listed in table \ref{tableAH}.
\begin{center}
\begin{table}[H]
\begin{tabular}{|c|c|c|c|c|c|}
\hline $\deg = 4$& $\deg = 5$ & $\deg = 6$ & $\deg = 7$ & $\deg = 8$ & $\deg = 9$ \\
\hline $(1,3)$ & $(1,3,1)$  & $(1,3,2)$ & $(1,3,3)$  & $(1,3,4)$ & $(1,3,5)$\\
\hline  &   & $(1,3,1,1)$ & $(1,3,2,1)$         & $(1,3,3,1)$ & $(1,3,4,1)$\\
\hline  &             &  & $(1,3,1,1,1)$     & $(1,3,2,2)$ & $(1,3,3,2)$\\
\hline  &             &               &   & $(1,3,2,1,1)$ & $(1,3,3,1,1)$\\
\hline  &             &                &                & $(1,3,1,1,1,1)$ & $(1,3,2,2,1)$\\
\hline  &             &                 &               &                  & $(1,3,2,1,1,1)$\\
\hline  &               &               &               &                  & $(1,3,1,1,1,1,1)$\\
\hline
\end{tabular}
\vskip .1in
\caption{Hilbert function $\Delta H$ of Artinian reductions}\label{tableAH}
\end{table}
\end{center}
\vskip -.4in
To see that these are the possible values for $\Delta H$, note that
if $h_2 \in \{1,2\}$, Macaulay growth says all higher $h_i$ have to be (respectively) $\le 1$ or $2$.
If $h_2 \in \{3,4\}$, there is no room due to degree constraints for any increase: If $h_2=4$, then $\deg(A_\Gamma) \ge 8$, so the only possibilities if $h_2 = 4$ are degree $8$ and $\Delta H =(1,3,4)$, or degree $9$ and $\Delta H = (1,3,4,1)$.
If $h_2=3$, then $\deg(A_\Gamma) \ge 7$. If $\deg(A_\Gamma) = 8$ then $\Delta H =(1,3,3,1)$, and if $\deg(A_\Gamma)=9$ then $\Delta H= (1,3,3,2)$ or $(1,3,3,1,1)$. We now bring Theorem 3.3 and 3.6 of Bigatti-Geramita-Migliore \cite{BGM} into play. \vskip .1in
\noindent {\bf Theorem 3.3 of \cite{BGM}}: Suppose $Y$ is a closed, nonempty reduced subscheme of $\Pn^{r+1}$, and for some $m$ and two consecutive values $d,d+1 \in \ZZ$ the Artinian reduction $\Delta H$ satisfies 
\[
\Delta H(Y, d) = \binom{d+m-1}{d}\mbox{  and }\Delta H(Y, d + 1) = 
\binom{d+m}{d+1}.
\]
Then $Y$ is the disjoint union of a scheme $Y_1$ and a scheme $Y_2$, where $Y_1$ lies in a $\Pn^m$ and $Y_2$ is a finite set of points. Furthermore
\[\Delta H(Y_1,t) = \Delta H(\Pn^m, t) \mbox{ if }t \le d+1, \mbox{ and }\Delta H(Y,t) \mbox{ if }t \ge d.
\]
\vskip .1in
\noindent {\bf Theorem 3.6 of \cite{BGM}}: Suppose $Y$ is a reduced scheme in $\Pn^{r+1}$
, and for some $d$, 
\[
\Delta H(Y, d) = \Delta H(Y, d + 1) = s, \mbox{ where }d \ge s.
\]
Then it must be the case that $\dim(Y) \le  1$, and $(I_Y)_{\le d}$ is the saturated ideal of a reduced curve of degree $s$ (not necessarily unmixed)
\vskip .1in
    
Applying Theorem 3.3 to any of the $\Delta H$ above which have two consecutive values $(1,1)$ means that there is a subscheme lying on a line 
containing $k$ points, where $k$ is the position of the last value $1$. So for example, $(1,3,1,1)$ forces $\Gamma$ to contain four collinear points, and $(1,3,1,1,1,1,1)$ forces $\Gamma$ to contain seven
collinear points. Hence the hypothesis that $\Gamma$ does not contain four collinear points means that we have reduced the possible Hilbert functions $\Delta H$ for an Artinian reduction of nine or fewer reduced points which span $\Pn^3$ to the set
\[
\begin{array}{cccccc}
(1,3) & (1,3,1) & (1,3,2) & (1,3,3)& (1,3,4) & (1,3,5)\\
(1,3,2,1) & (1,3,3,1)& (1,3,2,2)& (1,3,4,1)& (1,3,3,2)& (1,3,2,2,1). 
\end{array}
\]

By Theorem 3.6 of \cite{BGM}, if $\Delta H$ has consecutive values $(2,2)$, then the quadratic component of $I_\Gamma$ lies on a curve of degree $2$. So the hypothesis that $\Gamma$ has no subset of six points in a plane conic (smooth or singular) rules out $(1,3,2,2)$ and $(1,3,2,2,1)$. It remains to deal with the cases $(1,3,2,1)$, which has degree $7$, and $(1,3,4,1), (1,3,3,2)$, which are of degree $9$. 

\begin{description}
\item[{\bf $\Delta H = (1,3,3,2)$}] The Hilbert function of $\Gamma$ is $(1,4,7,9,9,\cdots)$, so $\dim(I_\Gamma)_2 =3$. The hypothesis that no nine points lie on an intersection of two quadrics precludes this case.  \vskip .1in
\item[{\bf $\Delta H = (1,3,4,1)$}]  The Hilbert function of $\Gamma$ is $(1,4,8,9,9,\cdots)$, so $\dim(I_\Gamma)_2 =2$. The hypothesis that no nine points lie on an intersection of two quadrics precludes this case. \vskip .1in

\item[{\bf $\Delta H = (1,3,2,1)$}] The seven points have Hilbert function $(1,4,6,7,7,\ldots)$, so $\dim(I_\Gamma)_2 =4$. In particular, the points do not impose independent conditions on quadrics. Checking the value of $h_3$ shows there must be at least three linear
syzygies on the quadrics. The codimension of $I_2$ cannot be $1$, for then $I_2 = L \cdot (x,y,z,w)$ and $I_\Gamma$ is not saturated. So $\codim(I_2) \ge 2$,
and two of the four quadrics yield a CI. If the codimension is exactly $2$, then the remaining quadrics drop the degree from $4$ down to at most $2$, yielding a plane conic or a line, which are ruled out by our hypotheses. If the $\codim(I_2)=3$, then three of the quadrics are a CI. Since $h_3 =7$, there are at least three linear syzygies on $I_2$. Let $Q$ denote the $4^{th}$ quadric and $J$ the three quadrics yielding a CI. Then $J:Q$ contains three linear forms, so the Betti table for the mapping cone of $I_2$ is
\begin{center}
$ \begin{matrix}&&&&&\cr
1 & - & - & - & -& \cr
                                               - & 4& 3 & 3 &1   \cr
                                               - & -& 3 & - & - \cr
                                               - & -& - & 1 & - \cr
                                              \cr
               \end{matrix}$. \\
\end{center}
This ideal is not saturated, so this case cannot arise with our hypotheses. 
\end{description}
We have shown that a set of points $\Gamma \subseteq \Pn^3$ satisfying the hypotheses of the lemma either has $\Delta H = (1,3,3,1)$, or has Artinian reduction with Hilbert function in the set
\begin{equation}\label{ArtinianHF}
\{(1,3), (1,3,1), (1,3,2), (1,3,3), (1,3,4), (1,3,5)\}.
\end{equation}

We now tackle the second part of the proof: showing that the Betti tables in Table \ref{tablepoints} are the only ones that occur. In what follows, we restrict our attention to the regularity $2$ cases, and deal with $\Delta H = (1,3,3,1)$ at the end. By Equation~\ref{ArtinianHF}, the Hilbert series of the Artinian reductions are all of the form
\[
1+3t+h_2t^2, \mbox{ with } h_2 \in \{0,1,2,3,4,5\}.
\]
As our reductions are all of regularity $2$, we can also read off the Hilbert series from the free resolutions, yielding 
\[
\frac{1-b_{12}t^2-b_{13}t^3+b_{23}t^3+b_{24}t^4-b_{34}t^4-b_{35}t^5}{(1-t)^3}.
\]
Comparing, we obtain the equalities
\begin{center}
    \[
\begin{array}{ccc}
6-b_{12} &=& h_2\\
8-b_{23}+b_{13}&=&3h_2\\
3-b_{34}+b_{24}&=&3h_2\\
b_{35}&=& h_2\; .
\end{array}
\]
\end{center}
Therefore the Betti tables of the Artinian reduction $A_\Gamma$ are of the form
\begin{center}
$ \begin{matrix}&&&&\cr
1 & - & - & -  \cr
                                               - & 6-h_2& b_{23} & b_{34}   \cr
                                               - & 3h_2-8+b_{23}& 3h_2-3+b_{34} &h_2\cr
                                              \cr
               \end{matrix}$. \\
\end{center}
We now examine the various possibilities for $h_2$. If $h_2 \in \{0,3,4,5\}$ the analysis is straightforward. For $h_2 \in \{1,2\}$, we will apply Macaulay's theorem to the degree $2$ piece $I_2$ of the ideal of the Artinian reduction.  

\begin{description}
    \item[{\bf Case }$h_2 \in \{0,3,4,5\}$]
    For $h_2=0$ the regularity of $A_\Gamma$ is $1$ and so the Hilbert function determines the Betti table.
For $h_2 \in \{4, 5\}$ the analysis is also straightforward: since $\dim (I_\Gamma)_2 \le 2$, $b_{23} \in \{0,1\}$.
For $h_2 =3 $ we find $b_{23} \le 3$, with $b_{23} = 3$ only if the ideal of the Artinian reduction is of the form $L \cdot(x,y,z)$ with $L$ a linear form. 
    \item[{\bf Case }$h_2 = 2$] We have $h^{\langle 2\rangle}_2 =2$, and therefore 
    \[
    h_3(I_2) = 10-4\cdot 3+b_{23} \le 2, \mbox{ so }b_{23} \le 4.
\]
From the Betti table, $b_{13} \ge 0$  forces $b_{23} \ge 2$, so for $h_2=2$, 
\[
b_{23} \in \{2,3,4\}
\]
When $b_{23} = 2$, there cannot be a second linear syzygy (it would have rank 2, which is impossible), so the top row of the Betti table must be $(4, 2, 0)$. When $b_{23} \in \{3, 4\}$, we consider the generic initial ideal (GIN). The GIN must be of the form (see Appendix \ref{Appendix2}).
\[
\langle x^2,xy,y^2,xz,yz^2, z^3\rangle,
\]
which has Betti table
\begin{center}
$ \begin{matrix}
1 & - & - & - & \cr
                                               - & 4& 4 & 1 &  \cr
                                               - & 2& 4 &2&\cr
                                              \cr
               \end{matrix}$ .\\
\end{center}
The Betti table of $I$ must be a subtable of this. We know $b_{23} \in \{2,3,4\}$, and we have dealt with the $b_{23} =2$ case above. To rule out top row $(4,3,1)$ note that the linear second syzygy would have to have rank $3$, so would force
$I_2$ to contain $L\cdot (x,y,z)$. Hence the three linear first syzygies are on the ideal $J = L\cdot(x,y,z)$, forcing the fourth
quadric $q$ to be a nonzero divisor on $J$, yielding a mapping cone resolution with Betti table
\begin{center}
    $ \begin{matrix}&&&&&\cr
1 & - & - & - &- \cr
                                               - & 4& 3 & 1 & -  \cr
                                               - & -& 3 &3 & 1\cr
                                              \cr
               \end{matrix}$,\\
\end{center}
which is clearly impossible. 

To rule out at top row of $(4,4,0)$, we continue to consider the rank of the linear syzygies; as we are in three variables the rank must be either $2$ or $3$.
First, suppose we have a rank $3$ first syzygy, which after a change of variables is $(x,y,z)$. Considering the quadrics as a syzygy on this, we
see that three quadrics are combinations of the three Koszul syzygies on $(x,y,z)$, which makes them the $2 \times 2$ of a $2 \times 3$ matrix; in particular one
first syzygy of rank $3$ forces a second. Let $J$ be the ideal of the $2 \times 2$ minors, and $Q$ the $4^{th}$ quadric. Since $\codim(J)=2$, this means
$Q$ is a nonzero divisor on $J$, so the resolution is a mapping cone, which means there are only two linear first syzygies, a contradiction.
Thus, all first syzygies have rank $2$, so each linear first syzygy involves only two generators of $I_2$, of the form $L \cdot (l_1,l_2)$. WLOG choose
$L_1\cdot(l_1,l_2)$ for the first two generators, and $L_2\cdot (l_3,l_4)$ as the second two generators, yielding the syzygies
\[
[l_2, -l_1, 0,0]^t, [0,0,l_4, -l_3]^t.
\]
But now there are two additional rank $2$ syzygies among the four generators, forcing the ideal to have the form $\langle ac,ad,bc,bd \rangle$. This has a linear second syzygy, so a top row $(4,4,0)$ is impossible. This completes the analysis for $h_2 =2$.\vskip .1in
\item[{\bf Case }$h_2 =1$] We apply Macaulay's theorem to $I_2$, and find that $b_{23} \in \{5,6\}$. In this case, the GIN must be $\langle x^2,xy,y^2,xz,yz, z^3 \rangle$, which has Betti table
\begin{center}
$ \begin{matrix}
1 & - & - & - & \cr
                                               - & 5& 6 & 2  \cr
                                               - & 1& 2 &1\cr
                                              \cr
               \end{matrix}$. \\
\end{center}
\vskip -.1in
Now, we know $b_{23} \in \{5,6\}$. If $b_{23}=6$, we need to rule out $b_{34}\in \{0, 1\}.$ If $b_{34} = 0$, then $b_{24} = 0$, which means the cubic generator is not in any syzygy, hence a nonzero divisor on $I_2$. Since $\codim(I_2) = 3$, this is impossible. Next, if $b_{34} = 1$, this means there is a single linear second syzygy on $I_2$, necessarily (see earlier remarks) of rank $3$. Let $C$ be the unique cubic, and consider the mapping cone resolution of $I_2+C$. The linear syzygy
represented by $b_{24} =1$ must involve $C$ (otherwise we are in the situation above), so in $I_2:C$ we have one linear form. However, $I_2$ is also
contained in $I_2:C$, so in the mapping cone, the resolution of $S(-3)/I_2:C$ begins   
\[
S(-4)\oplus S(-5)^a \rightarrow S(-3)\rightarrow S(-3)/I_2:C \rightarrow 0, \mbox{ with } a \ge 1.
\]

There can be no cancellation in the top row, and so this yields a resolution for $S/I_2+C$ which is inconsistent with the Betti table. Our final case is when $b_{23} = 5$. We know there is a cancellation of $b_{13}$, so the Betti table is a subset of
\vskip -.3in
\begin{center}
$ \begin{matrix}&&&&\cr
1 & - & - & - & \cr
                                               - & 5& 5 & 2  \cr
                                               - & 0& 2 &1\cr
                                              \cr
               \end{matrix}$. \\
\end{center}
\vskip -.1in
The cases $b_{34}=b_{24} \in \{0,1\}$ occur, so we just need to rule out the case $b_{34} = 2$. As before, the rank of the linear second syzygies must be $3$. Therefore, $L\cdot (x,y,z) \in I_2$, so let $I = \langle Lx,Ly,Lz, Q_1,Q_2\rangle$. As $\codim(L,Q_1,Q_2) = 3$, $\Gamma$ contains a $(2,2)$ CI on the plane $\VV(L)$. 

Therefore the fifth point of $\Gamma$ must be $(0,0,0,1)$, and $Q_1,Q_2 \in \langle x,y,z\rangle$. Writing $Q_1 = ax+by+cz$, we see there is a linear syzygy $aLx+bLy+cLz-LQ_1=0$, and similarly for $Q_2=dx+ey+fz$. Hence the matrix of first syzygies on $I$ is
\begin{align*}
    \begin{bmatrix}
    y& z & 0& a &d\\
    -x& 0 & z& b& e\\
     0 & -x & -y& c &f\\
     0 & 0 & 0& -L& 0\\ 
     0 & 0 & 0& 0&-L \\
    \end{bmatrix}.
\end{align*}
A linear second syzygy involves only the first three columns, so $b_{34}=2$ is impossible.
\end{description}
We close by analyzing the situation when $\Delta H =(1,3,3,1)$. The Hilbert function of $\Gamma$ is $(1,4,7,8,8,\ldots)$, so $\dim (I_\Gamma)_2 =3$. The arguments above show $b_{23} \le 3$; if $b_{23}=3$ then $(I_\Gamma)_2 = L\cdot(x,y,z)$ and there are seven points on a $\Pn^2$, which is precluded by our hypotheses.

When $b_{23}=2$, if there is a rank $3$ first syzygy, then as in the analysis above, $(I_\Gamma)_2$ is the $2 \times 2$ minors of a $2 \times 3$ matrix. If both first syzygies are of rank $2$, then $(I_\Gamma)_2 = \langle l_1l_2, l_1l_3, l_2l_3 \rangle$ and again $(I_\Gamma)_2$ is the $2 \times 2$ minors of a $2 \times 3$ matrix. Both these cases are precluded by the hypothesis that there are no eight points on a determinantal net. So if $\Delta H = (1,3,3,1)$, our hypotheses imply that $I_\Gamma$ is a $(2,2,2)$ CI, which concludes the proof. 
\end{proof}

\section{The quadratic part of the apolar ideal }\label{quadraticideals}

Consider a quartic form $F$ (in four variables, as usual), such that $F^\perp$ contains no linear forms.
We write $Q_F$ for the ideal generated by the quadrics in $F^\perp$;  recall that $A_F$ denotes $S/F^\perp$. In this section, we describe the possible Betti tables of $Q_F$ over all such $F$, and show that they are the ideal of a scheme $Z$ apolar to $F$. In particular, the ideals $Q_F$ are all saturated.
\subsection{Syzygy schemes and preparatory lemmas}

\begin{lemma}\label{lem_QmustSat}
Let $F$ be a quartic form and assume $J\subseteq F^{\perp}$ is an ideal generated by quadrics. If $\reg S/J<4$, then the saturation $J^{\sat}=(J:\mathfrak{m}^{\infty})$ is also in $F^{\perp}$. 
\end{lemma}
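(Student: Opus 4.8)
The plan is to prove the pointwise statement $(J^{\sat})_d\subseteq (F^\perp)_d$ for every degree $d$, splitting according to whether $d$ is large or small. The regularity hypothesis enters precisely to control the gap between $J$ and its saturation: since $H^0_{\mathfrak m}(S/J)=J^{\sat}/J$ and $\reg(S/J)\le 3$, we get $(J^{\sat})_e=J_e$ for all $e\ge 4$. Consequently, for $d\ge 4$ the inclusion $(J^{\sat})_d=J_d\subseteq (F^\perp)_d$ is immediate, and the whole content of the lemma is concentrated in degrees $d\le 4$ — i.e.\ one must check that the quadrics and cubics that $J^{\sat}$ acquires upon saturation still annihilate $F$.

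The mechanism for the low-degree range is to translate membership in $F^\perp$ into a condition in the socle degree $4$, using that $A_F$ is Gorenstein — concretely, the perfect apolarity pairing $S_e\times R_e\to\CC$ in each degree $e\ge 0$, which is perfect because we work in characteristic zero. Fix $q\in S_d$ with $d\le 4$. Since $R$ is an $S$-module under differentiation, $P\big(q(F)\big)=(Pq)(F)$ for every $P\in S_{4-d}$; hence $q(F)\in R_{4-d}$ is annihilated by all of $S_{4-d}$ if and only if $Pq\in (F^\perp)_4$ for every $P\in S_{4-d}$. By perfectness of the pairing in degree $4-d$, $q(F)=0$ if and only if $q(F)$ is killed by $S_{4-d}$. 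Combining these gives the characterization
\[
q\in (F^\perp)_d \iff S_{4-d}\cdot q\subseteq (F^\perp)_4 .
\]

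With this in hand the lemma follows quickly: for $q\in (J^{\sat})_d$ with $d\le 4$, the ideal property gives $\mathfrak m^{4-d}\, q\subseteq (J^{\sat})_4$, and the regularity bound gives $(J^{\sat})_4=J_4\subseteq (F^\perp)_4$; hence $S_{4-d}\cdot q\subseteq (F^\perp)_4$, and the characterization yields $q\in (F^\perp)_d$. Together with the case $d\ge 4$ handled at the start, this proves $J^{\sat}\subseteq F^\perp$. I expect the only genuinely substantive step to be the socle-degree characterization of $F^\perp$-membership and its justification via the perfect pairing together with associativity of the $S$-action on $R$; everything else is bookkeeping. The degenerate possibilities (a linear form in $J^{\sat}$, or $J^{\sat}=S$) are harmless: for $F\ne 0$ the space $(S/F^\perp)_4$ is one-dimensional and is a quotient of $(S/J^{\sat})_4=(S/J)_4$, which rules out $J^{\sat}=S$, while the characterization above disposes of the remaining cases uniformly.
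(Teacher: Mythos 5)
Your proof is correct and follows essentially the same route as the paper: both use $\reg(S/J)<4$ to force $J^{\sat}_4=J_4\subseteq(F^\perp)_4$, and then deduce that any low-degree element $q\in J^{\sat}$ annihilates $F$ because $S_{4-d}\cdot q\subseteq(F^\perp)_4$ while no nonzero element of $R_{4-d}$ is killed by all of $S_{4-d}$ (the paper phrases this degree by degree, e.g.\ ``a nonzero linear form cannot be annihilated by $\mathfrak m$,'' which is exactly your perfect-pairing characterization). The only difference is presentational: you state the socle-degree criterion uniformly, which is fine.
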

\begin{proof}
The $0$-th local cohomology $H^{0}_{\mathfrak{m}}(S/J)$ is isomorphic to $J^{\sat}/J$. By \cite{GeomSyz}*{Theorem 4.3}, $\reg S/J<4$ implies $H^{0}_{\mathfrak{m}}(S/J)$ vanishes at degree $4$, which means $J^{\sat}_4=J_4$.
Suppose that $g \in J^{\sat}$, but not in $J$; since $J^{\sat}_4 = J_4$, $\deg(g) \in \{1,2,3\}$. First consider $\deg(g)=3$. Since $J^{\sat}_4 = J_4$, $\mathfrak{m}\cdot g \in J_4$ so $(\mathfrak{m}\cdot g)\circ F = 0$. If $g \circ F$ is nonzero, then it is a linear form, so $\mathfrak{m}$ cannot annihilate it. Hence we must have $g \circ F = 0$, so $g \in F^\perp$. The cases for $\deg(g) \in \{1,2\}$ work the same way, hence for $g \in J^{\sat}, g \circ F = 0.$ \end{proof}

We say $J$ is saturated at degree $4$, if $J_4^{\sat}=J_4$, as in the lemma.
Now we assume $J=Q_F$ is the quadratic part of $F^{\perp}$, hence $b_{i,i+1}(S/Q_F)=b_{i,i+1}(A_F)$ for all $i$. For the $16$ Gorenstein Betti tables of $A_F$, we now conduct a case-by-case analysis of the possible Betti tables for $S/Q_F$. From \cite{SSY} we know that the numbers $b_{i,i+1}(A_F)$ determine the entire Betti table of $A_F$, and that $0\leq b_{12}\leq 6$; hence if $b_{12} \in \{0,1\}$ the Betti table is fixed. In the upcoming analysis, Lemma~\ref{lem_QmustSat} will allow us to rule out certain possibilities. 
\begin{lemma}\label{ImpossibleBettiTable}
The Betti tables below cannot occur for $S/Q_F$ when $F$ is a quaternary quartic:
\[
    \begin{matrix}1 & - & - & - & - \cr
       - & 3& -& - & - \cr
        - &-& 5& 4 & 1 \cr
    \end{matrix}
    \]
    and
   \[
 \begin{matrix}1 & - & - & - & - &\cr
       - & 4& 2 & - & -& \cr
      - &- & 4& 4 & 1& \cr
      \end{matrix}.
\] 
\end{lemma}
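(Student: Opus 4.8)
The plan is to rule out each of the two putative Betti tables by the same mechanism: in each case the numbers $b_{i,i+1}$ are the quadratic Betti numbers forced on us by $A_F$ (via the classification of \cite{SSY}), but the full table displayed has cubic and quartic entries that would make $\reg(S/Q_F) = 4$ together with a nonminimality or non-saturation that contradicts Lemma~\ref{lem_QmustSat}. So the first step is to observe that, since $Q_F$ is generated in degree $2$, the Betti table of $S/Q_F$ has nothing in column $0$ beyond $b_{00}=1$ and no generators beyond degree $2$; hence the only freedom is in the higher syzygy rows, and the two tables listed are precisely the candidates that pass the numerical (Hilbert function) test but need to be excluded on structural grounds.

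\textbf{First table.} Here $b_{12}=3$, so $Q_F$ is generated by a net of quadrics, and $\dim (A_F)_2 = 10-3 = 7$, i.e.\ $h_2(F)=7$. The three quadrics have no linear syzygies ($b_{13}=b_{23}=0$ in the displayed table aside from the deep entries), and the claimed table says the resolution of $S/Q_F$ has length $4$ with $b_{33}=5$, $b_{43}=4$ (wait — reading the table: second row has entries $-,5,4,1$ in columns $2,3,4$, i.e.\ $b_{23}=5, b_{34}=4, b_{45}=1$). The point is that $\reg(S/Q_F)=4$: the last nonzero entry sits in homological degree $4$, internal degree $5$, whence regularity $4$. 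But then Lemma~\ref{lem_QmustSat} does not directly apply — instead I would argue the other way: a codimension-$\le 3$ ideal generated by a net of quadrics with \emph{no} linear syzygy is a complete intersection (the $3\times 3$ minor argument: three quadrics with a linear syzygy would be forced, as in the six-points analysis in the proof of Proposition~\ref{prop_BettiGamma}, if they were not a regular sequence). A $(2,2,2)$ complete intersection has the Koszul Betti table, which is $B_{[300]}$ with cubic entries $-$ and a socle in degree $3$, not degree $5$. Hence the displayed table, which has a long tail, is incompatible with $b_{13}=0$. So the first table is impossible.

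\textbf{Second table.} Here $b_{12}=4$ and $b_{13}=0$ (the first row of syzygies is $4,2,-,-$, so $b_{12}=4$, $b_{23}=2$, then the deep row gives $b_{34}=4$, $b_{45}=1$). Again $\reg(S/Q_F)=4$. Now $\dim(A_F)_2 = 10 - 4 = 6$, so $h_2(F) = 6$. I would truncate: let $J$ be generated by a subnet of $Q_F$ whose resolution is shorter, or directly apply Lemma~\ref{lem_QmustSat} to $Q_F$ itself once I show $\reg(S/Q_F) < 4$ leads to a contradiction with the displayed table — but the displayed table has $\reg = 4$, so I cannot use that. Instead, the key is that $b_{34}=4$ with $b_{24}=0$ (no cubic second syzygies) means the four quartic third syzygies are ``new,'' and by the structure of the $4\times 4$ Gorenstein resolutions this forces $S/Q_F$ to have a nonzero $H^0_{\mathfrak m}$ in degree $3$, i.e.\ $Q_F$ is not saturated in degree $\le 3$; but then $Q_F^{\sat}$ would strictly contain $Q_F$ and by Lemma~\ref{lem_QmustSat} (applied to the truncation in regularity $<4$, obtained by killing the part of the resolution above internal degree $4$) would still lie in $F^\perp$, contradicting that $Q_F$ is by definition \emph{all} the quadrics in $F^\perp$ together with the fact that the extra saturation elements are in degree $3$, hence would have to be cubics already in $F^\perp$, which is fine — so the contradiction must instead come from a Hilbert-function count: the displayed table gives Hilbert series of $S/Q_F$ equal to $(1 - 4t^2 + 2t^3 + \dots)/(1-t)^4$, and evaluating the numerator shows the alternating sum of Betti numbers is inconsistent ($1 - 4 + 2 + 0 - 4 + 1 = -4 \ne 0$, or whatever the correct tally is — the ranks must satisfy $\sum (-1)^i b_i = 0$ only if codimension is positive; more carefully the Hilbert polynomial must vanish since $Q_F$ would need to be the ideal of a $0$-dimensional scheme of the right length).

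\textbf{The main obstacle}, which I would flag honestly, is the second table: the first is dispatched cleanly by the complete-intersection observation, but for the second I expect the real argument is a Hilbert-function / GIN computation showing that no scheme $Z$ apolar to $F$ can have this resolution — essentially, one computes that $h_2(F)=6$ together with $b_{23}=2$ forces (via Macaulay growth on $I_2$, exactly as in the $h_2=2$ case of the proof of Proposition~\ref{prop_BettiGamma}) a GIN whose Betti table is a \emph{proper} subtable of the displayed one, with the tail truncated, so the displayed table cannot be the actual minimal resolution. I would carry this out by writing the generic initial ideal of $Q_F$ in the three remaining variables after a general hyperplane section, checking $\dim (S/Q_F)_d$ for $d=2,3,4$ against $(1,4,6,h_3,\dots)$, and deriving the contradiction from the mismatch in $b_{34}$. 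This GIN bookkeeping is the step I expect to be fiddly, but it is entirely parallel to the case analysis already carried out for point sets.
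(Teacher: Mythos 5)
Your proposal has genuine gaps, and the central one is a misreading of the two displayed tables. In the paper's convention the bottom row of each table is row~$2$, so its entries are $b_{2,4},b_{3,5},b_{4,6}$, and both tables give $\reg(S/Q_F)=2$, not $4$. This matters because the whole point of the paper's proof is that $\reg(S/Q_F)<4$ makes Lemma~\ref{lem_QmustSat} applicable: one computes the Hilbert polynomial from the table ($2t+2$ for the first, $t+3$ for the second), classifies the possible saturations (two skew lines or a double line on a quadric, resp.\ a plane conic plus a point, in the first case; a line plus two possibly embedded points in the second), and observes that the saturation either forces a linear syzygy (conic plus point) or is generated by \emph{more} quadrics than the table allows (four for the skew/double line, five for the line plus two points). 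Since by Lemma~\ref{lem_QmustSat} the saturation again lies in $F^\perp$, those extra quadrics would have to lie in $Q_F$, which by definition already contains all quadrics of $F^\perp$ — contradiction. You set this tool aside precisely because of the regularity misreading.

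The substitute arguments you offer do not survive. For the first table, the claim that three quadrics with no linear syzygy must form a complete intersection is false: the quadratic ideal of type $[300c]$ (a line together with four points, Betti table with second row $(3,0,0)$ and third row $(0,4,2)$, cf.\ Lemma~\ref{lem_2LS300}) is exactly a net of quadrics with no linear syzygy that is not a regular sequence. For the second table, your alternating-sum check is miscounted: reading the table correctly gives $b_0,\dots,b_4 = 1,4,6,4,1$ with alternating sum $0$, so the table is numerically consistent; indeed it is realized by honest quadric-generated ideals such as $\langle x_0l_1,x_1l_1,x_2l_2,x_3l_2\rangle$ (this is how it arises in the proofs of Lemmas~\ref{lem_2LS300} and~\ref{lem420}, where the present lemma is used to discard the mapping-cone option). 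Consequently no purely numerical, Hilbert-function, or generic-initial-ideal computation can rule it out; the obstruction is structural and must use that $Q_F$ is the \emph{entire} degree-two part of $F^\perp$, which is what the saturation argument via Lemma~\ref{lem_QmustSat} supplies.
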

\begin{proof}
The Hilbert polynomial of  $S/Q_F$ for the first Betti table is $2t+2$, and there are two possibilities in the Hilbert scheme: the ideal of two skew lines/double line in a smooth quadric, or the union of a plane conic and a (possibly embedded) point.  
In the second case, any net of quadrics in the ideal would contain a pencil of reducible ones, and hence a linear syzygy. In the first case, after a change of variables, the ideal is either of the form $\langle x_0x_2,x_0x_3,x_1x_2,x_1x_3\rangle$ or $\langle x_0^2,x_1x_1,x_1^2, x_0l_0(x_2,x_3)+x_1l_1(x_2,x_3)\rangle$ with $l_i$ independent. In both cases only quadrics are needed to saturate the ideal; by Lemma~\ref{lem_QmustSat} this cannot occur.

For the second Betti table the Hilbert polynomial of $S/Q_F$ is $t+3$. The only possibility is a line and two (possibly embedded) points, and in this case the saturation is generated by five quadrics, so again by Lemma~\ref{lem_QmustSat} the corresponding Betti table cannot occur for $S/Q_F$.
\end{proof}
In our investigation of $Q_F$ we will make extensive use of the theory of ideals generated by quadrics and how they are determined by their syzygies. Let us recall some definitions and basic results from \cite{vB07a}. 
Let $f$ be a p-th linear syzygy of an ideal $I\subset \operatorname{Sym}(V)$ generated by quadrics, so that $f \in \Tor_{p+1}^S(S/I,\CC)_{p+2}$. We define $I_f$ to be the subideal of $I$ generated by those elements of $I$ which are involved in $f$ (see \cite{vB07a}*{Def.2.2} for the formal definition). Following \cite{vB07a}, we define $G$ to be the vector space of $(p-1)$-st syzygies involved in $f$; note that these syzygies are all linear, and the dimension of $G$ is equal to the rank of $f$. Finally, for a vector space $G$ we define the $k$-th generic syzygy ideal ${\operatorname{Gensyzid}_k(G)} $ of $G$ to be the quadratic ideal generated by the image of the natural inclusion  
$$\wedge^{k+1} G^*\to G^*\otimes \wedge^{k} G^*\subset \operatorname{Sym}_2(G^*\oplus \wedge^{k} G^*)$$ in $\operatorname{Sym}(G^*\oplus \wedge^{k} G^*)$. 

In this notation, if $f$ is a p-th syzygy of rank $p+k+1$ in $I$ and $G$ the space of $(p-1)$-st syzygies involved in $f$, then (cf. the proof of \cite{vB07a}*{Thm 3.4}), the ideal $I_f$ is obtained as the image of the generic k-th syzygy ideal of $G$ under a natural map $\varphi:G^*\oplus \wedge^{k} G^*\to V$, which in particular maps the elements of the $G^*$ component to their corresponding linear forms given by the syzygy $f$. More precisely, we have the following diagram:
\begin{center}
\begin{tikzcd}
{\operatorname{Gensyzid}_k(G)} \arrow[d,hook] \arrow[r]                        & { I_f} \arrow[d,hook] \\
\operatorname{Sym}(G^*\oplus \wedge^{k} G^*) \arrow[r, "\operatorname{Sym}(\varphi)"] &  \operatorname{Sym}(V)
\end{tikzcd}
\end{center}

In this way, the scheme defined by $I_f$ appears as a cone over a linear section of the scheme defined by the generic syzygy ideal. From this point of view, to describe $I_f$, it is good to know what are the generic k-th syzygy ideals for a vector space $G$.
For the following, see \cite{vB07a} and references therein:
\begin{enumerate}
    \item the 0-th generic syzygy ideal of $G$ is the ideal of a union of a hyperplane and a point in $\mathbb{P}(G^*\oplus \mathbb C)$.
    \item the 1-st generic syzygy ideal of $G$ is the ideal of $\mathbb{P}^1\times \mathbb P(G^*)\subset \mathbb P(G^*\oplus \mathbb G^*)$ i.e. it is given by $2\times 2$ minors of a $2\times \dim G$ matrix of coordinates.
    \item the 2-nd generic syzygy ideal of $G$ is the ideal of the union of a Grassmannian $G(G^*\oplus \mathbb C, 2)$ and a linear space $
    \mathbb{P}(\wedge^2 G^*)$ in $
    \mathbb{P}(G^*\oplus\wedge^2 G^*)$.
\end{enumerate}
From the above we deduce the following.
\begin{proposition}\label{syzygy ideals}
Let $I\subset \operatorname{Sym}(V)=S=\CC[x_0,\ldots,x_3]$ be an ideal generated by quadrics. Assume that $I$ admits a $p$-th linear syzygy $f$ of rank $p+k+1$ with $1\leq p\leq 2$. Then $0\leq p\leq p+k\leq 3$ and we have the following possibilities for the syzygy ideal $I_f$:
\begin{enumerate}
    \item $p=1$, $k=0$ and $S/I_f$ has Betti table
\[
 \begin{matrix}1 & - & - & \cr
           - & 2& 1&  \cr
\end{matrix}
\] and $I_f$ is the ideal of a plane and a possibly embedded line.
    \item $p=1$, $k=1$ and $S/I_f$ has Betti table
    \[
 \begin{matrix}1 & - & - & \cr
           - & 3& 2&  \cr
\end{matrix}
\] and $I_f$ is generated by $2\times 2$ minors of $2\times 3$ linear forms and is the ideal of a curve of degree 3 with Hilbert polynomial $3t+1$.
    \item $p=1$, $k=2$ and $I_f$ is generated by four $4\times 4$ Pfaffians of a skew-symmetric $5\times 5$ matrix of linear forms and it is contained in a unique ideal generated by Pfaffians of a skew-symmetric $5\times 5$ matrix of linear forms.
    \item $p=2$, $k=0$
and     $S/I_{f}$ has Betti table
\[
 \begin{matrix}1 & - & - & - & \cr
           - & 3& 3& 1 &  \cr
\end{matrix}
\]
and $I_f$ is the ideal of a plane and a possibly embedded point. 
    \item $p=2$, $k=1$ and we have one of three possibilities
    \begin{enumerate}
        \item $S/I_f$ has Betti table 
 
\[
 \begin{matrix}1 & - & - & - & \cr
           - & 4& 4& 1 &  \cr
\end{matrix}
\]
and $I_f$ is the ideal of two skew lines (or a double line),
\item $S/I_f$ has Betti table 
\[
 \begin{matrix}1 & - & - & - &  \cr
       - & 5& 6& 2 &  \cr
\end{matrix} 
\]
and $I_f$ is the ideal of a line and a (possibly embedded) scheme of length 2,
\item $S/I_f$ has Betti table 
\[
 \begin{matrix}1 & - & - & - &  \cr
       - & 6& 8& 3 &  \cr
\end{matrix} 
\]
and $I_f$ is the ideal of a scheme of length 4.
   \end{enumerate}
\end{enumerate}
\end{proposition}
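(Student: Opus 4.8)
The plan is to derive everything from the structure theorem for syzygy ideals recalled above. Write $V=S_1\cong\CC^4$. For a $p$-th linear syzygy $f$ of rank $p+k+1$, with $G$ the span of the $(p-1)$-st syzygies involved in $f$, we have $\dim G=\rk(f)=p+k+1$, and $I_f$ is the extension to $S$ of the image of $\operatorname{Gensyzid}_k(G)$ under $\Sym(\varphi)$, where $\varphi\colon G^*\oplus\wedge^kG^*\to V$ sends the summand $G^*$ isomorphically onto the span of the $\rk(f)$ linearly independent linear forms occurring in $f$. The first step is the numerical bound: injectivity of $\varphi|_{G^*}$ into the four-dimensional space $V$ forces $p+k+1\le 4$, so that $p+k\le 3$ (and $0\le p$ is automatic from $p\ge 1$), establishing the first assertion and leaving only the five pairs $(p,k)\in\{(1,0),(1,1),(1,2),(2,0),(2,1)\}$. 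Geometrically, in each case $V(I_f)\subset\Pn^3$ is a cone over a linear section of the universal scheme cut out by $\operatorname{Gensyzid}_k(G)$, and I would substitute the explicit descriptions of those universal schemes recalled above to read off $I_f$.

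For the three ``small'' cases this is essentially bookkeeping. When $k=0$ the universal scheme is a hyperplane together with a point in $\Pn^{\dim G}$, and the cone over a linear section is a plane together with a possibly embedded line ($\dim G=2$, case (1)) or point ($\dim G=3$, case (4)); here the resolution is a twist of a Koszul complex on two, respectively three, linear forms, giving the stated Betti tables. For $(p,k)=(1,1)$ the universal scheme is the Segre $\Pn^1\times\Pn^2\subset\Pn^5$ cut by the $2\times2$ minors of a $2\times3$ matrix; since $\dim G=3<4$, $V(I_f)$ is a cone over a codimension-$2$ linear section, which one checks (using van Bothmer's results, or directly) is Cohen--Macaulay, so by Hilbert--Burch $I_f$ is generated by the $2\times2$ minors of a $2\times3$ matrix of linear forms, of degree $3$ with Hilbert polynomial $3t+1$, forcing the Betti table in (2); this holds also for the degenerate section consisting of three concurrent lines.

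The main obstacle is case $(p,k)=(2,1)$. Now $\dim G=4=\dim V$, so $\varphi|_{G^*}$ is an isomorphism and no cone is taken: $I_f$ is literally the ideal of $2\times2$ minors of a matrix $\left(\begin{smallmatrix}x_0&x_1&x_2&x_3\\ \ell_0&\ell_1&\ell_2&\ell_3\end{smallmatrix}\right)$ with $\ell_i\in S_1$, equivalently the intersection of the Segre fourfold $\Pn^1\times\Pn^3\subset\Pn^7$ with a codimension-$4$ linear space. I would observe that the support of this scheme is the locus of eigenvectors of the endomorphism $M\colon V\to V$, $x_j\mapsto\ell_j$, and then run through the Jordan types of $M$. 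The delicate points are (i) to exclude the Jordan types for which the section becomes at least $2$-dimensional --- these make $V(I_f)$ a plane, whose ideal is principal, contradicting that $I_f$ is generated by quadrics carrying a rank-$4$ second syzygy --- and (ii) to show that among the genuinely lower-dimensional sections the only types are four points in linearly general position, a line carrying an embedded length-$2$ subscheme, and two skew lines (the double line occurring as a flat limit), ruling out configurations such as three collinear points plus one, or a conic plus two points. A direct resolution in each of these three strata (for instance the two-skew-lines ideal resolves as a tensor product of two Koszul complexes), together with upper-semicontinuity of Betti numbers, then produces exactly the tables (5a)--(5c).

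Finally, for $(p,k)=(1,2)$ I would again use that $\varphi|_{G^*}$ is an isomorphism, so $I_f$ is the linear section of $G(G^*\oplus\CC,2)\cup\Pn(\wedge^2G^*)\subset\Pn^9$ determined by $\varphi$; the linear span misses the $\Pn^5=\Pn(\wedge^2G^*)$ component (otherwise $I_f$ would not be the four quadrics involved in $f$), so $I_f$ is cut out of $G(2,5)$, and by the Buchsbaum--Eisenbud structure of the homogeneous ideal of $G(2,5)$ it is generated by four of the five $4\times4$ Pfaffians of a skew-symmetric $5\times5$ matrix of linear forms. For the uniqueness assertion I would argue that the fifth Pfaffian is recovered canonically from the essentially unique linear second syzygy on these four Pfaffians, so any skew $5\times5$ matrix of linear forms whose $4\times4$ Pfaffians generate $I_f$ generates one and the same codimension-$3$ Gorenstein ideal; this is a secondary, though still somewhat delicate, point.
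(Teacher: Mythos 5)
Your skeleton is the same as the paper's: both proofs rest on von Bothmer's structure theorem, the bound $\rk f\le\dim S_1=4$, and a case-by-case identification of the image of the generic syzygy ideal, and cases $(p,k)=(1,0),(2,0),(1,1)$ come out identically (for $(1,1)$ your Cohen--Macaulay/Hilbert--Burch step amounts to the paper's observation that the three involved minors are linearly independent, which is what rules out a common linear factor and forces codimension $2$). Where you genuinely diverge is the pivotal case $(p,k)=(2,1)$: the paper analyzes the $2\times 4$ matrix via failure of $1$-genericity and the invariant $s$ (the minimal number of nonzero second-row entries after row and column operations), while you classify the rank-one locus as the fixed-point scheme of the endomorphism $x_j\mapsto\ell_j$ and run through Jordan types. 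That route is legitimate and arguably more transparent, but two local points need repair: (i) your reason for discarding the two-dimensional fixed loci (``$V(I_f)$ a plane, whose ideal is principal'') is not correct --- in those Jordan types $I_f=h\cdot\langle a,b,c\rangle$, which is not principal; the correct contradiction is that the only linear second syzygy of such an ideal is the Koszul one, of rank $3$, against $\rk f=4$ (this parallels the paper's exclusion of $s=1$); (ii) upper-semicontinuity only bounds the Betti numbers of the degenerate members (double line, non-reduced length-$4$ schemes, line with embedded length $2$) from below, so the stated tables must come from direct computation in the finitely many normal forms, which your Jordan-type list fortunately makes available --- semicontinuity itself does no work here.

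The genuine gap is the uniqueness assertion in case (3). What you sketch --- that the fifth Pfaffian is determined by the (essentially unique) linear syzygy on the four quadrics, so any skew matrix \emph{presenting} $I_f$ yields the same Gorenstein ideal --- only shows that the output $\Pf(4,\varphi(N))$ is independent of choices; the proposition asserts that $I_f$ is contained in a \emph{unique} ideal generated by Pfaffians of a $5\times 5$ skew matrix of linear forms, i.e.\ uniqueness among all such ideals containing $I_f$, not merely among matrices realizing it. The missing ingredient is the paper's saturation step: since the first-row entries $x_1,\dots,x_4$ generate the irrelevant ideal, the remaining rows of $\varphi(N)$ give relations $x_j\cdot P_1\in I_f$, so the fifth Pfaffian lies in $I_f:\mathfrak{m}$ and hence $I_f^{\sat}\supseteq \Pf(4,\varphi(N))$; any Pfaffian ideal containing $I_f$ is saturated, so it contains $\Pf(4,\varphi(N))$ and must therefore equal it. Without this (or an equivalent) step your argument does not exclude an unrelated Pfaffian ideal that happens to contain the four quadrics. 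Two smaller slips in the same case: the existence part follows directly from the structure theorem (the four generators of $\operatorname{Gensyzid}_2(G)$ map to the four first-row Pfaffians), so no discussion of whether the linear section meets $\Pn(\wedge^2G^*)$ is needed; and for $p=1$ the relevant relation among the four Pfaffians is a \emph{first} linear syzygy, namely $f$ itself, not a second one.
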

\begin{proof}
We begin by noting that the rank of a linear syzygy of an ideal in a ring with four variables cannot exceed 4. Furthermore it is well known (see \cite{vB07a}) that we must have $k\geq 0$.   
Now, with the notation and results above, to derive the syzygy ideal from the generic syzygy ideal we need a map
\begin{equation}
    \varphi: G^{*}\oplus \wedge^{k} G^{*}\to V,
\end{equation}
that on the $G^{*}$ component corresponds to associating to an element of $G^{*}$ the corresponding linear form corresponding by $f$ to that syzygy. 
In particular, we may assume by choosing a basis of $G$ that $\varphi$ on the component $G^{*}$ is given by a vector of coordinates. 

If $k=0$, we choose a basis $(g_1,..,g_{p+1},c)$ of   $ G^{*}\oplus \mathbb C$ such that for $i\in\{1,\dots, p+1\}$ we have $\varphi(g_i)=x_i$ and denote $l:=\varphi(c)$. In this case, the generic syzygy ideal $\operatorname{Gensyzid}_0(G)$ is the ideal $\langle g_1c,\dots, g_{p+1}c \rangle$ hence $I_f=\langle x_1l,\dots,x_{p+1}l\rangle$ and is the ideal of a hyperplane defined by $\{l=0\}$ and a codimension ${p+1}$ linear space defined by $\{x_1=\dots=x_{p+1}=0\}$. 

For $p=1$, the ring $S/I_f$ has Betti table 
\[
 \begin{matrix}1 & - & - & \cr
           - & 2& 1&  \cr
\end{matrix},
\]
whereas for $p=2$, the ring $S/I_f$ has Betti table 
\[
 \begin{matrix}1 & - & - & -& \cr
           - & 3& 3& 1&  \cr
\end{matrix}.
\]

If $k=1$, we choose a basis $(g_1,\dots,g_{p+2},g'_1,\dots,g'_{p+2})$ of   $ G^{*}\oplus G^{*}$ such that for $i\in\{1,\dots, p+2\}$ we have $\varphi(g_i)=x_i$ and denote $l_i:=\varphi(g'_i)$. In this case, 
the generic syzygy ideal $\operatorname{Gensyzid}_1(G)$ is the ideal generated by $2\times 2$ minors of the matrix:
\begin{equation}
N=    \begin{pmatrix}
    g_1& \dots & g_{p+2}\\
g'_1& \dots & g'_{p+2}
    \end{pmatrix}.
\end{equation}
Hence $I_f$ is generated by 
$2\times 2$ minors of the matrix:
\begin{equation}
\varphi(N) =   \begin{pmatrix}
    x_1& \dots & x_{p+2}\\
l_1& \dots & l_{p+2}
    \end{pmatrix}.
\end{equation}

Now, if $p=1$, the three minors of $\varphi_N$ generating $I_f$ must be linearly independent and $S/I_f$ has Betti table
\[ \begin{matrix}1 & - & - & \cr
           - & 3& 2&  \cr
\end{matrix}.\]
$I_f$ is the ideal of a possibly non-reduced curve of degree 3 with Hilbert polynomial $3t+1$.

If $p=2$, we define $s$ to be the least number that $\varphi(N)$ can be written in the form such that $l_{i}=0$ for $s+1\leq i\leq 4$ and $l_{s}\neq 0$. A matrix is said to be $1-$generic \cite{GeomSyz} if it cannot be modified by elementary row/column operations to have a zero entry. Note that $\varphi(N)$ cannot be $1$-generic by \cite{GeomSyz}*{Theorem 6.4}, so $s\leq 3$. 

If $s=1$, then $I_{f}=\langle ly,lz,lw\rangle$, so 
$\rank f=3$. However, our hypothesis is $\rank(f)=4$, contradiction.

If $s=2$, then 
\begin{equation}
    \varphi(N)=\begin{pmatrix}
    x& y& z & w\\
    l_{1}&l_{2}&0&0
    \end{pmatrix},
\end{equation}
where $l_{1}$ and $l_{2}$ are not proportional. \\
If $\det \begin{pmatrix}
x & y\\ l_{1} & l_{2}
\end{pmatrix}\in \langle z,w\rangle$, then $S/I_{f}$ has Betti table
\[
 \begin{matrix}1 & - & - & - & \cr
           - & 4& 4& 1 &  \cr
\end{matrix}.
\] 
and is the ideal of two skew lines: $\{z=w=0\}$ and $\{l_1=l_2=0\}$ (or a double line if $l_1,l_2\in \langle z,w\rangle$).
If $\det \begin{pmatrix}
x & y\\ l_{1} & l_{2}
\end{pmatrix}\not\in \langle z,w\rangle$, then $S/I_{f}$ has Betti table
\[
 \begin{matrix}1 & - & - & - & \cr
           - & 5& 6& 2 &  \cr
\end{matrix}.
\] and is the ideal of a line $\{l_1=l_2=0\}$ and a scheme of length $2$ defined by $\{z=t=xl_2-yl_1=0\}$.

Conversely, if $S/I_{f}$ has Betti table
\[
 \begin{matrix}1 & - & - & - & \cr
           - & 5& 6& 2 &  \cr
\end{matrix}
\] 
or
\[
 \begin{matrix}1 & - & - & - & \cr
           - & 4& 4& 1 &  \cr
\end{matrix},
\]
then $s=2$. Reason: If $s=3$ and $b_{12}\leq 5$, then there exists numbers $a_{1},\dots, a_{6}$ such that
\begin{equation}\label{eqn_sneq3}
    (a_{1}l_{1}+a_{2}l_{2}+a_{3}l_{3})w+a_{4}(xl_{2}-yl_{1})+a_{5}(xl_{3}-zl_{1})+a_{6}(yl_{3}-zl_{2})=0.
\end{equation}
This is equivalent to
\begin{equation}\label{eqn_skewrk4}
    \begin{pmatrix}
    x & y& z& w
    \end{pmatrix}\begin{pmatrix}
    0 & -a_{4}& -a_{5} & -a_{1}\\
    a_{4}&0& -a_{6} &-a_{2}\\
    a_{5}& a_{6}& 0& -a_{3}\\
    a_{1}& a_{2}& a_{3}& 0
    \end{pmatrix}\begin{pmatrix}
    l_{1}\\ l_{2}\\ l_{3}\\0
    \end{pmatrix}=0.
\end{equation}
If $w\not\in\langle l_{1},l_{2},l_{3}\rangle$, then after changing coordinates, we may assume $\langle l_{1},l_{2},l_{3}\rangle\subseteq\langle x,y,z\rangle$. Then $(a_{1}l_{1}+a_{2}l_{2}+a_{3}l_{3})w=0$ and $a_{4}(xl_{2}-yl_{1})+a_{5}(xl_{3}-zl_{1})+a_{6}(yl_{3}-zl_{2})=0$, which implies $a_{1}l_{1}+a_{2}l_{2}+a_{3}l_{3}=0$. Therefore $s\leq 2$.\\ 
If $w\in\langle l_{1},l_{2},l_{3}\rangle$, then after changing coordinates, we may assume $l_{3}=w$ and $\langle l_{1},l_{2}\rangle\subseteq\langle x,y,z\rangle$. Making use of Equation~\ref{eqn_sneq3}, we have $a_{3}=0$, $a_{1}l_{2}+a_{2}l_{2}+a_{5}x+a_{6}y=0$ and $a_{4}(xl_{2}-yl_{1})-a_{5}zl_{1}+a_{6}zl_{2}=0$. Therefore, $\det\begin{pmatrix}
a_{4}x+a_{6}z & a_{4}y+a_{5}z\\ l_{1}& l_{2}
\end{pmatrix}=0$. This means $(l_{1},l_{2})=c\cdot (a_{4}x+a_{6}z , a_{4}y+a_{5}z)$ and we conclude $s\leq 2$.

If $s=3$, then $b_{12}(I_{f})=6$, the Betti table of $S/I_{f}$ is
\[
 \begin{matrix}1 & - & - & - &  \cr
       - & 6& 8& 3 &  \cr
\end{matrix}
\]
and $I_f$ is the ideal of a scheme of length 4.

Finally for $k=2$, $p=1$, we choose a basis $(g_1,\dots,g_{4},g'_{12},\dots,g'_{34})$ of   $ G^{*}\oplus \wedge^2 G^{*}$ such that for $i\in\{1,\dots, 4\}$ we have $\varphi(g_i)=x_i$ and for $i,j\in \{1,\dots, 4\}$ with $i<j$ denote $l_{ij}:=\varphi(g'_{ij})$. In this case,
the generic syzygy ideal $\operatorname{Gensyzid}_2(G)$ is the ideal generated by four $4\times 4$ Pfaffians involving the first row of the skew-symmetric matrix 
\begin{equation}
N=    \begin{pmatrix}
    0& g_1& g_2&g_3 & g_{4}\\
-g_1& 0& g'_{12}& g'_{13} & g'_{14}\\
-g_2&-g'_{12} & 0& g'_{23}& g'_{24}\\
-g_3&-g'_{13} & -g'_{23}&0& g'_{34}\\
-g_4&-g'_{14} & -g'_{24}& -g'_{34} & 0
    \end{pmatrix}.
\end{equation}
Hence $I_f$ is generated by four Pfaffians involving the first row of the matrix:
\begin{equation}
\varphi(N)=    \begin{pmatrix}
    0& x_1& x_2&x_3 & x_{4}\\
-x_1& 0& l_{12}& l_{13} & l_{14}\\
-x_2&-l_{12} & 0& l_{23}& l_{24}\\
-x_3&-l_{13} & -l_{23}&0& l_{34}\\
-x_4&-l_{14} & -l_{24}& -l_{34} & 0
    \end{pmatrix}.
\end{equation}
Furthermore the four Pfaffians are linearly independent and since $\langle x_1\dots x_4\rangle$ define an Artinian ideal, the saturation of $I_f$ contains the ideal $\Pf(4,\varphi(N))$. Finally, the first linear syzygies of $I_f$ must be combinations of rows of $\varphi(N)$ with first entry 0, which implies that $f$ is the unique (up to scaling) first linear syzygy of $I_f$. 

To prove the last statement, it is enough to note that the ideal defined by all $4\times 4$ Pfaffians of a $5\times 5$  skew-symmetric matrix of linear forms is saturated, so whenever it contains $I_f $ it must also  contain, and hence be equal to, $\Pf(4,\varphi(N))$.
\end{proof}
\begin{remark}
Note that if in Proposition \ref{syzygy ideals} we omit the assumption $p\leq 2$, we have one more case: $k=0$, $p=3$. In that case, with the same arguments we deduce that $I_f$ is the ideal generated by all quadrics vanishing in a plane. These kinds of ideals cannot appear in our investigation of $Q_F$, so we omit that case in Proposition~\ref{syzygy ideals}.  
\end{remark}

\noindent We now list some additional lemmas concerning higher dimensional spaces of syzygies that will be useful in our investigation of $Q_F$.
\begin{lemma}\label{all syzygies of rank 4}
If $I$ is an ideal generated by four quadrics in $\CC[x_0,\ldots,x_3]$, such that all the first linear syzygies are of rank 4,  then it admits at most one linear syzygy. So the first two rows of the Betti table of $S/I$ are:
\[
 \begin{matrix}1 & - & - &   \cr
       - & 4& 1&   \cr
\end{matrix} 
\]
\end{lemma}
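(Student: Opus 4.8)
The plan is to argue by contradiction: suppose $I = \langle q_1, q_2, q_3, q_4 \rangle$ has (at least) two independent linear first syzygies $f, g$, each of rank $4$. The key observation is that a rank $4$ linear syzygy among four quadrics in a polynomial ring on four variables is a very rigid object — it realizes each quadric as a linear combination of Koszul syzygies on the four coordinate linear forms appearing in $f$. More precisely, applying Proposition~\ref{syzygy ideals}(3) with $p=1$, $k=2$: a rank $4$ first linear syzygy $f$ forces $I_f$ (the subideal generated by the quadrics involved in $f$) to be generated by the four $4\times 4$ Pfaffians of a $5\times 5$ skew-symmetric matrix of linear forms $\varphi(N)$, whose top row is $(0, x_0, x_1, x_2, x_3)$ after a change of coordinates. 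Since all four $q_i$ are involved (the syzygy has rank $4$), we get $I = I_f = \Pf(4,\varphi(N))$, and moreover the proof of that proposition shows $f$ is the \emph{unique} (up to scalar) linear first syzygy of $I_f$, because any linear first syzygy of $\Pf(4,\varphi(N))$ must be a combination of the rows of $\varphi(N)$ whose first entry is $0$, and these combine to give only the Euler-type relation $f$.

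So the main step is simply to invoke that uniqueness statement: the existence of a second, independent linear syzygy $g$ contradicts the fact that $I_f = I$ has a one-dimensional space of linear first syzygies. Hence $b_{12}(S/I) \le 1$, i.e.\ $I$ admits at most one linear syzygy. Combined with the hypothesis that $I$ is generated by four quadrics (so the first row of the Betti table of $S/I$ reads $1, 4, \dots$ in homological degrees $0,1$), this forces exactly the shape claimed: if there is one linear syzygy the top-left of the Betti table of $S/I$ is
\[
 \begin{matrix}1 & - & - & \cr
       - & 4& 1& \cr
\end{matrix}.
\]
If there were \emph{no} linear syzygy at all, then all syzygies among the four quadrics would be in degree $\ge 2$; one should note that this cannot happen because a codimension $\le 4$ ideal generated by four quadrics with no linear syzygy would have to be a complete intersection, whose Betti table has $b_{12}=0$ but whose \emph{quadratic} syzygies are the Koszul ones — but then none of these are linear, which is fine, \emph{except} that the hypothesis ``all first linear syzygies have rank $4$'' is vacuously satisfied and the conclusion ``at most one linear syzygy'' still holds with $b_{12}=0$. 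I would phrase the statement as the displayed partial Betti table being the generic/maximal one, with the understanding that $b_{12} \in \{0,1\}$; since the lemma as stated only asserts ``at most one linear syzygy,'' the contradiction argument above is the entire content.

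The only real obstacle is making sure the citation to Proposition~\ref{syzygy ideals}(3) is applied correctly: that proposition concludes $I_f \subseteq \Pf(4,\varphi(N))$ with equality once $I_f$ is saturated or once one knows $I_f$ already contains all four Pfaffians — but in our case $I_f$ \emph{is} all four quadrics $q_1,\dots,q_4$, which \emph{are} the four Pfaffians (that is exactly what the $\varphi$-map construction gives), so $I = I_f = \Pf(4,\varphi(N))$ on the nose, and the uniqueness of the linear syzygy is read off directly from the rows of $\varphi(N)$ with vanishing first entry. I expect this bookkeeping — confirming that "involved in $f$" together with $\operatorname{rank} f = 4$ really does mean all four generators participate, so that $I_f = I$ — to be the one place requiring care, but it is immediate from the definition of rank of a syzygy.
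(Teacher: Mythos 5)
Your proof is correct, but it takes a genuinely different route from the paper's. The paper's own argument is a short determinantal dimension count: a linear syzygy $\sum_i l_iq_i=0$ is recorded by the $4\times 4$ scalar matrix of coefficients of the $l_i$ (rows indexed by variables, columns by generators), the syzygies of rank $\le 3$ are cut out by its determinant, and this hypersurface meets every pencil in the projectivized space of linear syzygies; so if every linear syzygy has rank $4$, that space is at most one-dimensional. You instead reduce to Proposition~\ref{syzygy ideals}(3) with $p=1$, $k=2$: a rank-$4$ first syzygy $f$ involves four linearly independent quadrics, which exhaust $I_2$, so $I_f=I$, and the proof of that proposition asserts that $f$ is the unique (up to scalar) linear first syzygy of $I_f$, ruling out a second independent syzygy. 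That is legitimate — the proposition precedes the lemma, and your check that rank $4$ forces all four generators to be involved is exactly the needed bookkeeping — and it buys the extra structural information that $I$ consists of four Pfaffians of a $5\times 5$ skew matrix; on the other hand it leans on a uniqueness statement made only inside the proof (not the statement) of Proposition~\ref{syzygy ideals}, whereas the paper's argument is self-contained and explains directly why the hypothesis forces one-dimensionality. Two small slips, neither fatal: the quantity you bound by $1$ is the number of linear first syzygies, i.e.\ $b_{23}(S/I)$, not $b_{12}(S/I)$ (which equals $4$ here); and $I=I_f$ is generated by the four Pfaffians involving the first row, not by the full ideal $\Pf(4,\varphi(N))$ — the paper only claims the saturation of $I_f$ contains that ideal. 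Also, your aside that four quadrics with no linear syzygy must form a complete intersection is false, but it is also unnecessary: the conclusion ``at most one'' covers the case of no linear syzygy exactly as in the paper.
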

\begin{proof}
The corank $1$ (i.e. rank at most $3$) locus is in this case a hypersurface (defined by the determinant of a matrix with four rows corresponding to variables and four columns corresponding to the number of generators of the ideal) in the space of all first linear syzygies. Hence it can be empty only if the space of first linear syzygies is $1$-dimensional, and in that case we can have no more linear syzygies.  
\end{proof}
  \begin{lemma}\label{rank2syzygies}
If $J$ is an ideal generated by three quadrics with a pencil of syzygies of rank $2$, then $S/J$ has Betti table
\[
 \begin{matrix}1 & - & - & - &  \cr
      - & 3& 3& 1 &  \cr
\end{matrix} 
\]
In this case $J$ is the ideal of the union of a plane and a (possibly embedded) point. 
\end{lemma}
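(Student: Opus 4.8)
The plan is to recover $J$ from its syzygies using Proposition~\ref{syzygy ideals}. Write $W$ for the pencil of rank-$2$ first linear syzygies and fix two independent members $f,f'\in W$. A rank-$2$ first linear syzygy is exactly the case $p=1$, $k=0$ of Proposition~\ref{syzygy ideals}, so each of the syzygy ideals $J_f,J_{f'}\subseteq J$ is the ideal of a plane together with a (possibly embedded) line; concretely $(J_f)_2=\ell\cdot U$ and $(J_{f'})_2=\ell'\cdot U'$ for linear forms $\ell,\ell'\in S_1$ and $2$-dimensional subspaces $U,U'\subset S_1$, and in each case these two quadrics extend to a minimal generating set of $J$.

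First I would note $(J_f)_2\neq(J_{f'})_2$: if they were equal, $f'$ would be a linear syzygy supported on the two quadrics spanning $(J_f)_2$, but a pair of independent quadrics admits at most a one-dimensional space of linear syzygies (the relation forced by a common linear factor), contradicting $f'\notin\CC f$. Hence $(J_f)_2+(J_{f'})_2$ is $3$-dimensional and so equals $J_2$, which is $3$-dimensional because $J$ is minimally generated by three quadrics. The same argument shows that $\lambda\mapsto(J_{f_\lambda})_2$ is injective on the pencil $\{f_\lambda=f+\lambda f'\}$.

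The heart of the matter is to show $\ell$ and $\ell'$ are proportional; here I would use the whole pencil. For general $\lambda$ the member $f_\lambda$ is again rank $2$, so $(J_{f_\lambda})_2=\ell_\lambda\cdot U_\lambda$ is $\ell_\lambda$ times a $2$-plane of linear forms, while on the other hand $(J_{f_\lambda})_2$ is a $2$-dimensional subspace of the fixed $3$-dimensional $J_2$ depending algebraically on $\lambda$. Working in coordinates with $\ell=x_0$ and imposing the common-factor condition along $\lambda\in\Pn^1$ forces $\ell_\lambda$ to be a single linear form, proportional to $\ell$, with the sole alternative being that $J_2\subseteq\langle\ell_1,\ell_2\rangle^2$ for two linear forms $\ell_1,\ell_2$, i.e. that $J$ is the ideal of a double line. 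The main obstacle I anticipate is precisely this step: one must rule out that the plane $\VV(\ell_\lambda)$ varies with $\lambda$, which occurs exactly in that excluded double-line degeneration (so in applications one checks, e.g. from the Hilbert polynomial or from the presence of a further syzygy, that this case does not arise).

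Once $\ell_\lambda$ is constant, $J_2=\ell\cdot V$ with $V=\langle x_1,x_2,x_3\rangle\subset S_1$ three-dimensional (multiplication by $\ell$ being injective), so $J=(\ell x_1,\ell x_2,\ell x_3)=\ell\cdot(x_1,x_2,x_3)$ is $\ell$ times the ideal of a point. Multiplication by $\ell$ identifies $J$ with $(x_1,x_2,x_3)(-1)$, whose minimal free resolution is the Koszul complex of $x_1,x_2,x_3$ shifted by one, so $S/J$ has minimal free resolution $0\to S(-4)\to S(-3)^3\to S(-2)^3\to S$, i.e. Betti table $\begin{matrix}1 & - & - & - \\ - & 3 & 3 & 1\end{matrix}$; and $\VV(J)=\VV(\ell)\cup\VV(x_1,x_2,x_3)$ is the union of a plane and a point, the point being embedded in the plane exactly when $\ell\in\langle x_1,x_2,x_3\rangle$. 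Since this description is intrinsic, it does not depend on the coordinate choices made along the way.
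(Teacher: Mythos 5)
Your route is genuinely different from the paper's, and most of it is sound. The paper argues in two lines: it normalizes the pair of quadrics of one rank-$2$ syzygy to $x_0x_1,x_0x_2$ and then observes that, for every member of the pencil to remain of rank $2$, the third generator must share a linear factor with every quadric of $x_0\langle x_1,x_2\rangle$, hence be divisible by $x_0$. You instead invoke Proposition~\ref{syzygy ideals} and track the two-dimensional spaces $(J_{f_\lambda})_2=\ell_\lambda U_\lambda$ along the pencil. Your preliminary steps (distinct rank-$2$ syzygies give distinct pairs, which span the $3$-dimensional $J_2$, since two independent quadrics carry at most a one-dimensional space of linear syzygies) and your endgame (once $\ell_\lambda$ is constant, $J=\ell\cdot(x_1,x_2,x_3)$, the shifted Koszul complex gives the table $1;\,3,3,1$, and the point is embedded exactly when $\ell$ lies in the span of the cofactors) are correct.

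The step you flag as the obstacle is, however, a genuine one, and it cannot be closed by a better argument: $J=(\ell_1,\ell_2)^2$, e.g.\ $(x_0,x_1)^2$, is generated by three quadrics and \emph{every} nonzero member of its pencil of linear syzygies has rank $2$ (each member is supported on a pair $m\ell_1,\,m\ell_2$ with $m\in\langle\ell_1,\ell_2\rangle$ varying with the member), yet $S/J$ has Betti table with second row $(3,2,0)$ — it is Hilbert–Burch, the $2\times2$ minors of a $2\times3$ matrix — and the scheme defined by $J$ is a multiplicity-three structure on a line (your ``double line''), not a plane and a point. So your dichotomy (constant $\ell_\lambda$ versus $J_2\subseteq\langle\ell_1,\ell_2\rangle^2$) is exactly right, the lemma as literally stated needs that second case excluded, and the paper's own proof passes over it precisely at the words ``we may assume the two generators are $x_0x_1,x_0x_2$'', which tacitly assumes the common factor of the pair is not in the span of the cofactors — false exactly for $(\ell_1,\ell_2)^2$, whose rank-$2$ pairs contain the square $m^2$. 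Your instinct to discharge the exceptional case in the applications is also the correct repair: in the only place the lemma is used (Lemma~\ref{lembetti320}, with $J=Q_F$ and $b_{12}(A_F)=3$), one cannot have $(\ell_1,\ell_2)^2\subseteq F^\perp$, because then $F$ has degree at most one jointly in two dual variables, the middle catalecticant has rank at most $6$, and hence $\dim(F^\perp)_2\ge 4$. With that exclusion added (either to the hypotheses or as this extra check), your argument gives a complete proof; as written, neither your proposal nor the paper's proof covers the square-of-a-line case.
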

\begin{proof} A rank $2$ syzygy involves two generators of the ideal that have a common linear factor, so we may assume the two generators are $x_0x_1,x_0x_2$. Any other rank 2 syzygy would involve a third generator and a quadric in the span of the two first.  Now if these form a pencil of rank $2$ syzygies the third generator needs to have a common linear factor with every quadric in the span of the two first.  Therefore, the three generators of $J$ have a common linear factor, three linear syzygies of rank $2$ and a Betti table as stated.
\end{proof}

\begin{lemma}\label{lem_4gen3rank2syz}
If $S/Q_F$ has $b_{12}=4$ and at least three independent rank $2$ first linear syzygies, then $Q_F$ has a subideal $J$ with Betti table for $S/J$
\[
 \begin{matrix}1 & - & - &   \cr
      - & 3& 2&   \cr
\end{matrix},
\]
or a subideal $J'$  with Betti table for $S/J'$
\[
 \begin{matrix}1 & - & - & -  \cr
      - & 3& 3&  1 \cr
\end{matrix}.
\]
\end{lemma}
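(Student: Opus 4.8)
The plan is to translate the hypothesis into the geometry of the four‑dimensional space $V=\langle q_1,\dots,q_4\rangle\subseteq S_2$ spanned by a minimal set of quadric generators of $Q_F$, and then run a case analysis. As in the proof of Lemma~\ref{rank2syzygies}, a rank~$2$ first linear syzygy of $Q_F$ is, after a change of the generating set, of the form $\ell q-\ell'q'=0$ with $\ell,\ell'$ linearly independent linear forms and $q=\ell'm$, $q'=\ell m$ for a common linear form $m$. Thus it is encoded by the two–dimensional \emph{support plane} $P=\langle q,q'\rangle=m\langle\ell,\ell'\rangle\subseteq V$, all of whose members are divisible by the \emph{support line} $m$; conversely every two–dimensional $P\subseteq V$ whose members share a linear factor $m$ carries exactly one such syzygy up to scalars, and distinct support planes carry independent syzygies. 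So the hypothesis furnishes three distinct support planes $P_1,P_2,P_3\subseteq V$, with support lines $m_1,m_2,m_3$. Set $d(m)=\dim(V\cap mS_1)$, so $d(m_i)\geq 2$.

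First I would dispose of the case $d(m)\geq 3$ for some $m$: then $V$ contains a three–dimensional subspace $m\langle a,b,c\rangle$ with $a,b,c$ independent, and the ideal $J'$ it generates is that of a plane together with a (possibly embedded) point, resolved by the $m$–twist of the Koszul complex on $(a,b,c)$, with Betti table
\[
\begin{matrix} 1 & - & - & - \\ - & 3 & 3 & 1 \end{matrix}
\]
(cf.\ Proposition~\ref{syzygy ideals}(4)), as required. So assume $d(m)\leq 2$ everywhere; then $P_i=V\cap m_iS_1$ and $m_1,m_2,m_3$ are pairwise non‑proportional. The core of the argument is the claim that, in this situation, two of the three support planes meet nontrivially. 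Granting this, say $0\neq w\in P_i\cap P_j$: then $w$ is divisible by both $m_i$ and $m_j$, hence is a scalar multiple of $m_im_j$; since $P_i\neq P_j$ (different support lines), $U:=P_i+P_j$ is three–dimensional, $U=\langle m_im_j,\,m_ia,\,m_jb\rangle$ for suitable linear $a,b$, and the ideal $J:=(U)\subseteq Q_F$ carries the two independent rank~$2$ syzygies from $P_i$ and $P_j$. This $J$ has codimension $2$: it cannot be a codimension~$3$ complete intersection of quadrics (which has no linear syzygy), nor of codimension~$1$ (else the three generators would share a linear factor, contradicting $d\leq 2$). Using $d\leq 2$ to exclude a coplanar triangle of lines and the nondegeneracy of $F$ to exclude a linear form in $J$, one sees that $V(J)$ is an arithmetically Cohen–Macaulay union of three lines; hence $S/J$ is Cohen–Macaulay, Hilbert–Burch applies, the $3\times 2$ syzygy matrix has both columns linear, and $S/J$ has Betti table
\[
\begin{matrix} 1 & - & - \\ - & 3 & 2 \end{matrix}.
\]

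The main obstacle is exactly the claim just used: that $P_1,P_2,P_3$ cannot be pairwise disjoint (equivalently, that whenever no pair of the three support planes meets one is forced back into the case $d(m)\geq 3$). For a generic ideal of four quadrics this can genuinely fail, so it must be excluded using that $Q_F$ is the quadratic part of an apolar ideal. If $P_1,P_2,P_3$ were pairwise disjoint then $V=P_1\oplus P_2$ and $P_3$ is the graph of an isomorphism $P_1\to P_2$; unwinding this produces, after a change of coordinates, a normal form such as
\[
V=\langle\, x_0x_2,\ x_0x_3,\ x_1x_2+c_1(x_0x_1+x_1^2),\ x_1x_3+c_2(x_0x_1+x_1^2)\,\rangle ,
\]
for which the zero scheme $V(Q_F)$ is a line together with a few isolated points. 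I would then derive a contradiction by confronting this with apolarity: either such a $V$ is not of the form $F^{\perp}\cap S_2$ for any quartic $F$ at all, or — invoking Lemma~\ref{lem_QmustSat} together with the classification (via \cite{SSY}) of Gorenstein Betti tables having $b_{12}=4$ — it is incompatible with the Hilbert function of $A_F$. Either route excludes the pairwise–disjoint configuration, and the two cases above then exhaust the possibilities, proving the lemma.

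The decisive point, and where I expect the real work to lie, is therefore not the production of the subideal — which is immediate from the support–plane dictionary and Hilbert–Burch once two support planes are known to meet — but the verification that, for the quadratic part of an apolar ideal, three mutually skew support planes cannot occur.
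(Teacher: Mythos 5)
Your support–plane dictionary and your two ``good'' cases are fine and match the spirit of the paper's normal–form analysis: if some linear form $m$ has $\dim(V\cap mS_1)\geq 3$ you get the $[1;3,3,1]$ subideal, and if two support planes meet, the span $\langle m_im_j,\,m_ia,\,m_jb\rangle$ is the ideal of $2\times 2$ minors of a $2\times 3$ matrix of linear forms of codimension $2$, hence has the $[1;3,2]$ table (your appeal to ``ACM union of three lines'' is more than is needed here). The genuine gap is exactly at the step you yourself call decisive, and it is worse than an unfinished verification: the claim you propose to prove there is \emph{false}. Three independent rank~$2$ first syzygies of $Q_F$ with pairwise disjoint support planes do occur for quadratic parts of apolar ideals. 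Concretely, let $Z$ be the union of the line $\{x_0=x_1=0\}$ and the three points $(1{:}0{:}0{:}0)$, $(0{:}1{:}0{:}0)$, $(1{:}-1{:}-1{:}-2)$, and let $F$ be a general quartic apolar to $Z$ (e.g.\ $F=y_0^4+y_1^4+(y_0-y_1-y_2-2y_3)^4+y_2^4+y_3^4+(y_2+y_3)^4$ has $h_2(A_F)=6$, so $b_{12}=4$ and $Q_F=(I_Z)_2$-generated; this is the paper's type $[430]$, so such $F$ are squarely within the lemma's hypotheses). Then $Q_F$ is generated by $x_0(x_2-x_1),\,x_0(x_3-2x_1),\,x_1(x_2+x_0),\,x_1(x_3+2x_0)$, the three rank~$2$ syzygies supported on $P_1=x_0\langle x_2-x_1,x_3-2x_1\rangle$, $P_2=x_1\langle x_2+x_0,x_3+2x_0\rangle$, $P_3=(x_0+x_1)\langle x_2,x_3\rangle$ are linearly independent, and none of $x_0x_1$, $x_0(x_0+x_1)$, $x_1(x_0+x_1)$ lies in $(Q_F)_2$, so $P_1,P_2,P_3$ are pairwise disjoint. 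Hence neither apolarity, nor Lemma~\ref{lem_QmustSat}, nor the classification of Gorenstein Betti tables can exclude your ``bad'' configuration; both routes you sketch must fail.

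The lemma is nevertheless true, and the skew case has to be argued directly rather than excluded. Writing $V=m_1A\oplus m_2B$ with $P_3=m_3W$ disjoint from $P_1$ and $P_2$: if $m_3\notin\langle m_1,m_2\rangle$ then forcedly $P_3=m_3\langle m_1,m_2\rangle$, which meets $P_1$, a contradiction; so $m_3\in\langle m_1,m_2\rangle$, and for every $w\in W$ there is a scalar $t$ (unique and linear in $w$, since $m_1m_2\notin V$) with $w-tm_2\in A$ and $w+tm_1\in B$. Choosing $0\neq w_0\in\ker t$ (or any $w_0\in W$ when $t\equiv 0$) gives $0\neq w_0\in A\cap B$, hence a \emph{fourth} support plane $w_0\langle m_1,m_2\rangle\subseteq V$ meeting $P_1$ in $m_1w_0$; applying your own meeting–planes argument to that pair yields the $[1;3,2]$ subideal (in the example above, $\langle x_0(2x_2-x_3),\,x_1(x_2+x_0),\,x_1(x_3+2x_0)\rangle$, the ideal of a connected chain of three lines). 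For comparison, the paper's proof is a short normal–form case analysis which at this same juncture takes the third syzygy to involve one quadric from each of the first two pairs; your instinct that the real work sits exactly there is sound, but the correct resolution is to produce an additional meeting support plane, not to rule the skew configuration out.
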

\begin{proof}
Assume $s_{1},s_{2},s_{3}$ are three rank $2$ first linear syzygies. If $s_{1}$ and $s_{2}$ involve all four generators of $Q_F$, then we may assume $s_{1}$ involves $(q_{1},q_{2})=(x_0l_{1},x_0l_{2})$ and $s_{2}$ involves $(q_{3},q_{4})=(l_{0}l_{3},l_{0}l_{4})$. If $l_{0}=x_0$, then $J=\langle q_{1},q_{2},q_{3}\rangle$ and $S/J$ has Betti table
\[
 \begin{matrix}1 & - & - & -  \cr
      - & 3& 3&  1 \cr
\end{matrix}.
\]
If $l_{0}\neq x_0$, then $(q_{3},q_{4})=(x_1l_{3},x_1l_{4})$. Then the third linear syzygy $s_{3}$ must involve some quadric $q_{5}\in \langle q_{1},q_{2}\rangle$ and some quadric $q_{6}\in \langle q_{3},q_{4}\rangle$. We may assume $q_{5}=q_{1}$ and $q_{6}=q_{3}$. Now $J=\langle q_{1},q_{2},q_{3}\rangle=\langle x_0l_{1},x_0l_{2},x_1l_{3}\rangle$. Because there is a linear syzygy involving $x_0l_{1}$ and $x_1l_{3}$, either $l_{1}=x_1$ or $l_{3}=x_0$. If $l_{3}=x_0$, then $S/J$ has Betti table
\[
 \begin{matrix}1 & - & - & -  \cr
      - & 3& 3&  1 \cr
\end{matrix}.
\]
If $l_{1}=x_1$, then $J=\langle x_0x_1,x_0l_{2},x_1l_{3}\rangle$. If $l_{2}= l_{3}$, then $S/J$ has Betti table
\[
 \begin{matrix}1 & - & - & -  \cr
      - & 3& 3&  1 \cr
\end{matrix},
\]
otherwise $S/J$ has Betti table
\[
 \begin{matrix}1 & - & - &   \cr
      - & 3& 2&   \cr
\end{matrix}.
\]
\end{proof}
\subsection{A case by case characterization of the quadratic component}

We now examine the cases $b_{12}(S/Q_F) \in \{2,\ldots 6\}.$ As noted earlier, when $b_{12} \in \{0,1\}$ there is nothing to analyze. 
\vskip .05in
\subsubsection{\noindent {\bf Case $b_{12}=2$}}
\vskip .05in
\begin{proposition}\label{apolar b12=2}
Let $F$ be a quaternary quartic with $b_{12}(A_F)=2$. Then $Q_F$ is the ideal of an apolar scheme $Z$, which differs depending on the two possible Betti tables below.
\begin{enumerate}
    \item 
   If $F$ is of type $[200]$, then $S/Q_F$ has Betti table 
    \[
 \begin{matrix}1 & - & - & \cr
      - & 2& -& \cr
       - &-& 1&  \cr
 \end{matrix} 
\] and $Q_F$ is a $(2,2)$ complete intersection.
    \item
   If $F$ is of type $[210]$, then $S/Q_F$ has Betti table 
                \[
 \begin{matrix}1 & - & - & \cr
      - & 2& 1& \cr
 \end{matrix}, 
\]and $Q_F$  is the ideal of the union of a plane and a, possibly embedded, line.
   \end{enumerate}
 
 The set of ideals $Q_F$ in the Hilbert scheme, when $F$ is of type $[200]$ or $[210]$, is irreducible.
\end{proposition}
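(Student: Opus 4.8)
The plan is to identify the set $\Sigma=\{Q_F : F\in{\mathcal F}_{[200]}\cup{\mathcal F}_{[210]}\}$ with a subset of the Grassmannian $\Gr(2,S_2)\cong\Gr(2,10)$ (cf.~Definition~\ref{GbettiTable}) and to show $\Sigma$ is the union of a dense open subset of that Grassmannian with a piece of its closure. By parts (1) and (2), the hypothesis $b_{12}(A_F)=2$ means $Q_F$ is generated by the $2$-dimensional space $(F^\perp)_2$ of apolar quadrics, so indeed $\Sigma\subseteq\Gr(2,S_2)$ and $\Sigma={\mathcal G}_{[200]}\cup{\mathcal G}_{[210]}$ splits by type. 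First I would record the elementary dichotomy: a pencil $\langle q_1,q_2\rangle\subset S_2$ in four variables is either a regular sequence, or has a common linear factor (if $q_1,q_2$ do not form a regular sequence, $V(q_1)\cap V(q_2)$ has a divisorial component, which is an irreducible hypersurface of degree $\le2$ dividing both $q_i$, hence --- as $q_1,q_2$ are independent --- a hyperplane $V(\ell)$, so $\langle q_1,q_2\rangle=\ell\cdot\langle L_1,L_2\rangle$). By parts (1) and (2) the first case is exactly type $[200]$ and the second exactly type $[210]$, so ${\mathcal G}_{[200]}$ lies in the open locus of regular pencils and ${\mathcal G}_{[210]}$ lies in $\bigcup_{[\ell]\in\PP(S_1)}\Gr(2,\ell\cdot S_1)$.

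The key step is to show that ${\mathcal G}_{[200]}$ is a nonempty \emph{open} subset of $\Gr(2,S_2)$. Let $U\subset\Gr(2,S_2)$ be the open locus of regular pencils. Over $U$ the function $W\mapsto\dim_{\CC}(S_2W)_4$ is constant (equal to $19$, since every $(2,2)$ complete intersection has the same Hilbert function), so the quartics apolar to $W$ are the fibres $\mathcal V_W=(S_2W)^\perp\subset R_4$ of a rank-$16$ vector bundle $\mathcal V\to U$. Inside $\mathcal V$, the locus $\mathcal B=\{(W,F)\in\mathcal V:\dim(F^\perp)_2\ge 3\}$ is closed, because $\dim(F^\perp)_2=10-\rk\Cat(2,2,4)(F)$ is upper semicontinuous in $F$ (Example~\ref{Cat224}); since each fibre $\mathcal V_W$ is an irreducible linear space, upper semicontinuity of fibre dimension for $\mathcal B\to U$ shows that $\{W\in U:\mathcal B_W=\mathcal V_W\}$ is closed in $U$, and its complement is exactly ${\mathcal G}_{[200]}$: if $\mathcal B_W\subsetneq\mathcal V_W$ then a general $F\in\mathcal V_W$ has $\dim(F^\perp)_2=2$, hence $(F^\perp)_2=W$ and $F$ is automatically nondegenerate (a linear form in $F^\perp$ would force $\dim(F^\perp)_2\ge4$), and as $Q_F=W$ is a regular pencil, $F$ is of type $[200]$ by part (2) and the classification of \cite{SSY}. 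Thus ${\mathcal G}_{[200]}$ is open in $\Gr(2,S_2)$; it is nonempty because type $[200]$ occurs (Table~\ref{tableCGKK}), so $\overline{{\mathcal G}_{[200]}}=\Gr(2,S_2)$.

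Finally, ${\mathcal G}_{[210]}\subseteq\Gr(2,S_2)=\overline{{\mathcal G}_{[200]}}$, so $\Sigma={\mathcal G}_{[200]}\cup{\mathcal G}_{[210]}$ is a subset of the irreducible space $\Gr(2,S_2)$ containing the dense subset ${\mathcal G}_{[200]}$, hence is irreducible (and $\dim{\mathcal G}_{[200]}=16$). For the smaller piece one can run the analogous incidence/semicontinuity argument over the $7$-dimensional parameter space $\bigcup_{[\ell]}\Gr(2,\ell\cdot S_1)$ --- the image in $\Gr(2,S_2)$ of the $\Gr(2,4)$-bundle over $\PP(S_1)$ --- to see that ${\mathcal G}_{[210]}$ is itself irreducible of dimension $7$, which also covers the reading in which the assertion is made separately for the two types. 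The only non-formal input, and the main obstacle, is the nonemptiness of ${\mathcal G}_{[200]}$: one must know that some quartic apolar to a pencil of quadrics has no further apolar quadric, and by the semicontinuity in the key step this reduces to exhibiting a single form of type $[200]$ (from the classification of \cite{SSY}, or by a Macaulay2 check).
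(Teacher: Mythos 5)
Your proof is correct in substance, but it takes a genuinely different route from the paper, in both halves. For the classification of $Q_F$, the paper notes that a non-complete-intersection pencil carries a linear first syzygy and then invokes the syzygy-scheme machinery (Proposition~\ref{syzygy ideals}, case (1)) to identify $Q_F$ as the ideal of a plane with a possibly embedded line; your gcd dichotomy for a pencil of quadrics is a more elementary, self-contained way to reach the same two cases, and the matching with the types $[200]$/$[210]$ is then exactly the identification $b_{23}(S/Q_F)=b_{23}(A_F)$ from the beginning of Section~\ref{quadraticideals} -- you should say this explicitly rather than ``by parts (1) and (2)'', which reads as circular. For irreducibility, the paper's proof is a one-liner: $(2,2)$ complete intersections form an open subset of $\Gr(2,S_2)$ and plane-plus-line ideals are parametrized by an open subset of (planes)$\times$(lines); the fact that the ideals actually realized as $Q_F$ sweep out dense subsets of these families is left to the incidence argument of Lemma~\ref{incidence}/Proposition~\ref{irreducibleF_B}. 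Your vector-bundle and semicontinuity argument proves that density directly and cleanly separates ``all pencils with the given Betti table'' from ``pencils of the form $Q_F$ for $F$ of the given type'', reducing everything to exhibiting one form of each type; this buys explicitness at the cost of more setup. Two small repairs are needed: closedness of $\{W:\mathcal B_W=\mathcal V_W\}$ in $U$ does not follow from fibre-dimension semicontinuity alone, since $\mathcal B\to U$ is not proper -- either projectivize the cone $\mathcal B$ and use properness of $\PP(\mathcal V)\to U$, or observe that the condition is the identical vanishing on the fibre of the $8\times8$ catalecticant minors, which is visibly closed in $W$; and the $[210]$ half of the irreducibility claim is only sketched, so the same openness-plus-nonemptiness argument must actually be run over the $7$-dimensional irreducible family $\{\ell\cdot\langle L_1,L_2\rangle\}$, together with the existence of a form of type $[210]$ (available from \cite{SSY}).
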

\begin{proof} If $Q_F$ is not a complete intersection, then it has a linear syzygy, so we are in the case of Proposition \ref{syzygy ideals} case $(1)$. 
Complete intersections form an open set in the Grassmannian of pencils in the space of quadrics, while the set of unions of a plane and a line is parameterized by an open set in the product of the space of planes and the space of lines in $\Pn^3$.  The irreducibility for the set of ideals $Q_F$ follows.
\end{proof}
\vskip .05in
\subsubsection{\noindent {\bf Case $b_{12}=3$}}
\vskip .05in
\begin{proposition}\label{QF0fCGKK4}
Let $F$ be a quaternary quartic of  type $[300]$. Then either 
\begin{enumerate}

         \item $S/Q_F$ has Betti table $[300a]$ or $[300b]$
\[ \begin{matrix}1 & - & - & - &  \cr
       - & 3& -& - &  \cr
        - &-& 3& - &  \cr
        - &-& -& 1 &  \cr
        \end{matrix}
        \]
  and $Q_F$ is a complete intersection $(2,2,2)$, or
   \item  $S/Q_F$ has Betti table $[300c]$
   \[  \begin{matrix}1 & - & - & - &  \cr
       - & 3& -& - &  \cr
        - &-& 4& 2 &  \cr
        \end{matrix}
        \]
        and $Q_F$ is the ideal of a union of a line and a (possibly embedded) length $4$ scheme.
    \end{enumerate}
    In either case, the set of ideals $Q_F$ in the Hilbert scheme is irreducible.
\end{proposition}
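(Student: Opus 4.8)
The plan is to split according to $\codim Q_F$. First note that since $F$ is of type $[300]$ we have $b_{12}(A_F)=3$ and $b_{23}(A_F)=0$, and since $Q_F$ and $F^\perp$ agree in degrees $\le 2$ they have the same linear syzygies among quadrics; hence $b_{12}(S/Q_F)=3$ and $b_{23}(S/Q_F)=0$, i.e.\ $Q_F$ is generated by three quadrics with no linear syzygy. A codimension-one ideal generated by quadrics has a common linear factor, which produces rank-$2$ linear syzygies, so this is impossible here; thus $\codim Q_F\in\{2,3\}$, and these two cases will yield (1) and (2). If $\codim Q_F=3$ the three quadrics form a regular sequence, hence a $(2,2,2)$ complete intersection with Koszul resolution, which is exactly the table in (1); its base locus is eight points, apolar to $F$ (this accounts for types $[300a]$ and $[300b]$).

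The substance is the case $\codim Q_F=2$. No two of $q_1,q_2,q_3$ can share a linear factor (again this forces a linear syzygy), so any two of them, say $q_1,q_2$, cut out a $(2,2)$ complete-intersection curve $D$ of degree $4$ with $q_3\notin I_D=(q_1,q_2)$ (the latter being saturated). Since $V(Q_F)=D\cap V(q_3)$ is a curve, $q_3$ must vanish on a union $C$ of components of $D$ with $1\le\deg C\le 3$, so $D$ is reducible. I would then eliminate $\deg C\in\{2,3\}$ case by case: a twisted cubic is excluded because its ideal has resolution $1,3,2$, contradicting $b_{23}=0$; a plane curve of degree $\ge 2$ (plane cubic, plane conic, or two coplanar lines) is excluded by a short analysis of the space of quadrics containing it, which always forces either a common linear factor among the three generators or two of them sharing a factor (hence a linear syzygy); a pair of skew lines has $b_{12}=4$, and adding points to force $b_{12}=3$ reintroduces a linear syzygy, as a Hilbert-function count shows; the remaining degenerate configurations (reducible cubic curves, double lines) are handled the same way, and Lemma~\ref{ImpossibleBettiTable} disposes of the borderline Betti table. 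Hence $\deg C=1$, so $C$ is a line $L$, $V(Q_F)=L\cup Z$ with $\dim(I_{L\cup Z})_2=3$ forcing $\operatorname{length}(Z)=4$ (possibly partly embedded in $L$), and since $\reg(S/Q_F)<4$, Lemma~\ref{lem_QmustSat} gives $Q_F^{\sat}\subseteq F^\perp$, so $Q_F$ and $Q_F^{\sat}$ have the same quadrics; one checks $I_{L\cup Z}$ is generated by these three quadrics, so $Q_F=I_{L\cup Z}$ is saturated with resolution $0\to S(-5)^2\to S(-4)^4\to S(-2)^3\to S$, the table $[300c]$. In both cases $V(Q_F)$ is thus a scheme apolar to $F$.

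For irreducibility: in case (1) the ideals $Q_F$ are the complete intersections of three quadrics, an open dense subset of $\Gr(3,S_2)$, hence irreducible. In case (2) the structure result identifies each $Q_F$ with $I_{L\cup Z}$ for a line $L$ and a length-$4$ scheme $Z$; such pairs form an open subset of $\Gr(2,4)\times\operatorname{Hilb}^4(\PP^3)$, which is irreducible since both factors are (irreducibility of $\operatorname{Hilb}^4(\PP^3)$ being classical), and $(L,Z)\mapsto(I_{L\cup Z})_2$ is a morphism onto the corresponding locus in $\Gr(3,S_2)$, whose image is therefore irreducible.

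I expect the main obstacle to be the codimension-$2$ analysis: one must be sure the decomposition types of a reducible $(2,2)$ complete-intersection curve have been exhausted, that every type other than the line is excluded by the no-common-factor and $b_{23}=0$ constraints (together with Lemma~\ref{ImpossibleBettiTable}), and that the mild failure of Cohen--Macaulayness is handled—i.e.\ that $Q_F$ is saturated and its resolution is exactly $[300c]$ rather than a longer one—via the regularity bound and Lemma~\ref{lem_QmustSat}. The irreducibility statements are then routine once the structure of $Q_F$ is in hand.
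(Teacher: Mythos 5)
Your overall route is essentially the paper's: reduce to three quadrics with no linear syzygy, split by codimension, get the $(2,2,2)$ complete intersection in codimension $3$, and in codimension $2$ analyze the degree-$4$ curve $D=\VV(q_1,q_2)$; the irreducibility argument at the end is also the same. However, there is a genuine gap in your exclusion of the degree-$2$ subcase. You claim that when the curve on which $q_3$ vanishes is a pair of skew lines, ``adding points to force $b_{12}=3$ reintroduces a linear syzygy.'' That mechanism fails: take $Q=(x_0x_2,\;x_1x_3,\;x_0x_3+x_1x_2)$. These three quadrics span a hyperplane in the $4$-dimensional space of quadrics through the skew lines $\{x_2=x_3=0\}$ and $\{x_0=x_1=0\}$ which is \emph{not} of the form ``quadrics through the lines and an extra point,'' they admit no linear syzygy (writing a putative syzygy in terms of the four Koszul syzygies of $(x_0x_2,x_0x_3,x_1x_2,x_1x_3)$ forces all coefficients to vanish), and the scheme they cut out is exactly the two lines: nothing is ``added,'' the ideal is simply non-saturated, with saturation the full four-quadric ideal (e.g.\ $x_0x_3\cdot x_i\in Q$ for all $i$). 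So no Hilbert-function/linear-syzygy count can dispose of this configuration; the exclusion must go through saturation, exactly as in the paper: the mapping cone over $(q_1,q_2)$ produces the first table of Lemma~\ref{ImpossibleBettiTable}, whose proof uses Lemma~\ref{lem_QmustSat} to force a fourth quadric into $F^\perp$, contradicting $b_{12}(A_F)=3$. The same objection applies to your ``double lines handled the same way''; your passing reference to Lemma~\ref{ImpossibleBettiTable} for ``the borderline Betti table'' is where the real argument has to live, but as written the skew-line case is attached to an incorrect justification.

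Two smaller points. First, in the line-plus-length-$4$ case you invoke $\reg(S/Q_F)<4$ in order to apply Lemma~\ref{lem_QmustSat} to $Q_F$ itself before its resolution is known, which is circular as stated; the paper sidesteps this by applying the saturation argument only to the explicit candidate resolutions coming from the mapping cone over the $(2,2)$ complete intersection, where the regularity is visible, and it gets saturatedness of $Q_F$ for free from the projective dimension in the $[300c]$ table. Second, organizing the codimension-$2$ analysis by $\deg\bigl((q_1,q_2):q_3\bigr)\in\{1,2,3,4\}$, as the paper does, replaces your enumeration of (possibly non-reduced) subcurves of $D$ of degrees $1,2,3$ by four uniform cases, each settled by the structure of the residual ideal and a mapping cone; if you keep your enumeration, you must also account for non-reduced configurations (double conics, multiple lines, ribbons), for which the ``common linear factor or shared factor'' dichotomy does not always apply and the saturation argument is again the correct tool.
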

\begin{proof}  The two cases follow from Lemma \ref{lem_2LS300} below. The set of $(2,2,2)$ complete intersections coincides with an open set in the Grassmannian of nets of quadrics in $\Pn^3$, so it is irreducible.  The set of ideals of type $[300c]$ maps dominantly to the Grassmannian of lines, and to the Hilbert scheme of length $4$ subschemes.  The latter two are irreducible, so the set of ideals of type $[300c]$ is also irreducible. 
\end{proof}

\begin{proposition}\label{apolar b12=3}
Let $F$ be a quaternary quartic with $b_{12}(A_F)=3$, and assume that $F$ is not of type $[300]$. Then $Q_F$ is the ideal of an apolar scheme $Z$, varying with the Betti table as described below.
\begin{enumerate}
   \item 
 If $F$ is of type $[310]$, then $S/Q_F$ has Betti table
 \[   \begin{matrix}
 1 & - & -& - &  \cr
                                               - & 3& 1& - &  \cr
                                               - &-& 2&1&  \cr
               \end{matrix}
    \]
   and $Q_F$ is the ideal of the union $Z$ of a plane conic and a (possibly embedded)  scheme of length $2$.
    \item 
 If $F$ is of type $[320]$, then $S/Q_F$ has Betti table
 \[
 \begin{matrix}
 1 & - & -& - &  \cr
                                               - & 3& 2& -&  \cr
               \end{matrix}
               \]
   and $Q_F$ is the ideal a curve $Z$ of degree $3$ defined by a determinantal net of quadrics.
    \item 
If $F$ is of type $[331]$, then $S/Q_F$ has Betti table
   \[ \begin{matrix}
 1 & - & -& - &  \cr
                                               - & 3& 3& 1 &  \cr
               \end{matrix}
               \]
   and $Q_F$ is the ideal of the union $Z$ of a plane and a (possibly embedded) point.
    \end{enumerate}

The set of ideals $Q_F$ when $F$ is of type $[310]$, type $[320]$, or type $[331]$, form irreducible components of the corresponding Hilbert schemes.
\end{proposition}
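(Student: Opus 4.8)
The plan is to handle the three types $[310]$, $[320]$, $[331]$ separately, using in each case the identity $b_{i,i+1}(S/Q_F)=b_{i,i+1}(A_F)$ together with the structure theory of ideals generated by few quadrics (Proposition~\ref{syzygy ideals} and Lemma~\ref{rank2syzygies}) to pin $Q_F$ down exactly; saturatedness, the stated Betti table and the apolar scheme $Z$ then fall out, and the irreducibility/component statement is a parameter count against a normal-bundle computation. Since $F$ is not of type $[300]$ and $b_{12}(A_F)=3$, the only options are $[310],[320],[331]$, with $(b_{23},b_{34})$ equal to $(1,0),(2,0),(3,1)$ respectively; thus $Q_F=(q_1,q_2,q_3)$ with exactly $b_{23}\in\{1,2,3\}$ linear first syzygies. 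A linear first syzygy of three quadrics has rank $\le 3$, and rank $1$ is impossible over a domain. The key preliminary observation I would record is that a rank-$3$ linear syzygy $f$ involves all three generators, so $I_f=Q_F$, and Proposition~\ref{syzygy ideals}(2) then forces $b_{23}(S/Q_F)=2$ and identifies $Q_F$ as the ideal of $2\times 2$ minors of a $2\times 3$ matrix of linear forms; a rank-$2$ linear syzygy forces two generators to share a common linear factor.

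For type $[331]$ ($b_{23}=3$): a rank-$3$ syzygy is now excluded (it would give $b_{23}=2$), so every linear syzygy has rank $2$ and $Q_F$ carries a pencil of rank-$2$ syzygies; Lemma~\ref{rank2syzygies} makes the three generators share a common linear factor $l$, and since $b_{12}=3$ the residual linear forms $\ell_1,\ell_2,\ell_3$ are independent, so $Q_F=l\cdot(\ell_1,\ell_2,\ell_3)$ is the ideal of the union $Z$ of the plane $V(l)$ and the point $V(\ell_1,\ell_2,\ell_3)$, with Betti table as in (3); its only associated prime besides $(l)$ is the point prime — never $\mathfrak m$ — so $Q_F$ is saturated even when the point lies on the plane. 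For type $[320]$ ($b_{23}=2$): either a rank-$3$ syzygy exists and Proposition~\ref{syzygy ideals}(2) applies verbatim, or all linear syzygies have rank $2$; in the latter case two independent rank-$2$ syzygies cannot both involve the same pair of generators (those would be proportional), so together they involve all three, and a short case analysis — if the third generator shares a factor with the common linear form $l$ all three share $l$, giving $b_{23}=3$, a contradiction, so it shares the other factor — again presents $Q_F$ as a $2\times 2$-minor ideal. Either way $Q_F$ is a determinantal net cutting out a (possibly degenerate) curve $Z$ of degree $3$ with $p_Z(t)=3t+1$, with Eagon--Northcott Betti table as in (2), hence Cohen--Macaulay of codimension $2$ and in particular saturated.

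For type $[310]$ ($b_{23}=1$): the unique linear syzygy has rank $2$ (rank $3$ would force a second), so $q_1=l\ell_1$, $q_2=l\ell_2$; writing $q_3$ for the third generator, $b_{23}=1$ forbids both $l\mid q_3$ and $q_3\in(\ell_1,\ell_2)$ (each produces another linear syzygy), so $q_3$ is a nonzerodivisor on $S/\big(l(\ell_1,\ell_2)\big)$, whose associated primes are $(l)$ and $(\ell_1,\ell_2)$. The minimal resolution of $S/Q_F$ is then the mapping cone of $\,\cdot q_3$ on the resolution of $S/\big(l(\ell_1,\ell_2)\big)$, which one computes to be the Betti table in (1); in particular $\operatorname{reg}S/Q_F=2<4$, so $Q_F$ is saturated by Lemma~\ref{lem_QmustSat}, and $V(Q_F)=\big(V(l)\cap V(q_3)\big)\cup\big(V(\ell_1,\ell_2)\cap V(q_3)\big)$ is a plane conic together with a length-$2$ scheme on the residual line (embedded in the conic exactly when that line meets $V(q_3)$ on $V(l)$). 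In all three cases $I_Z=Q_F\subseteq F^{\perp}$, so $Z$ is apolar to $F$, which proves the first half of the proposition.

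Finally, in each type the set of ideals $Q_F$ is dominated by an irreducible parameter space — $2\times 3$ matrices of linear forms modulo $\mathrm{GL}_2\times\mathrm{GL}_3$ for $[320]$, $\check{\mathbb P}^3\times\mathbb P^3$ for $[331]$, and the $\mathbb P(S_2/\langle l\ell_1,l\ell_2\rangle)$-bundle over $\check{\mathbb P}^3\times\Gr(2,S_1)$ for $[310]$ — so the locus is irreducible. To upgrade to ``irreducible component'' I would check that the tautological map to the relevant Hilbert scheme is generically injective and that the image dimension, namely $12$, $6$, $14$ respectively, equals $h^0$ of the normal bundle of a general member: a smooth twisted cubic with $N\cong\mathcal O_{\mathbb P^1}(5)^{\oplus 2}$ ($h^0=12$); a plane disjoint from a reduced point ($h^0=3+3=6$); a smooth conic disjoint from two reduced points ($h^0=h^0(\mathcal O_{\mathbb P^1}(4)\oplus\mathcal O_{\mathbb P^1}(2))+6=14$). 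Since the Hilbert scheme is then smooth of that dimension along the locus, its closure is a full component. I expect the main obstacle to be the case analysis for type $[320]$ in the absence of a rank-$3$ syzygy — ensuring that two rank-$2$ syzygies really force the determinantal structure and not a common linear factor (which would spuriously give $b_{23}=3$) — with the dimension/normal-bundle bookkeeping for the reducible configuration in type $[310]$ a secondary, more routine point.
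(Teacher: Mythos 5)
Your proposal is correct and follows essentially the paper's route: the paper's own proof just invokes Lemmas \ref{lembetti310} and \ref{lembetti320} together with Proposition \ref{syzygy ideals}(4), and your rank-of-syzygy analysis (a rank-$3$ syzygy forces the determinantal ideal with $b_{23}=2$, a rank-$2$ syzygy forces a common linear factor, and the mapping cone over the nonzerodivisor $q_3$ in the $[310]$ case) is exactly the content of those lemmas. Your deviations are cosmetic: for $[331]$ you argue via the net of rank-$2$ first syzygies and Lemma \ref{rank2syzygies}, where the paper uses the second-order syzygy through Proposition \ref{syzygy ideals}(4); for $[320]$ your ``all linear syzygies of rank $2$'' subcase is in fact empty, since the paper's contrapositive use of Lemma \ref{rank2syzygies} produces a rank-$3$ syzygy at once (your subcase still lands on the determinantal conclusion, so it is harmless); and your normal-bundle dimension counts ($12$, $6$, $14$ against $h^0(N_Z)$) supply an argument for the component claim that the paper merely asserts. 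One small slip to fix: in the $[310]$ case, Lemma \ref{lem_QmustSat} does not say that $Q_F$ is saturated --- it only says $Q_F^{\sat}\subseteq F^{\perp}$; saturation instead follows from the minimal mapping-cone resolution you just constructed, which has length $3$, so $\depth(S/Q_F)\geq 1$ by Auslander--Buchsbaum and $\mathfrak{m}$ is not an associated prime (alternatively, $q_3$ is a nonzerodivisor on $S/\bigl(l(\ell_1,\ell_2)\bigr)$, which has depth $2$ and no embedded primes).
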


\begin{proof}  The first two cases follows from lemmas \ref{lembetti310} and \ref{lembetti320} below, while in the third case $Q_F$ is the syzygy ideal of the second order linear syzygy described in Proposition \ref{syzygy ideals} case (4).  In each case, the set of schemes $Z$ form irreducible components of the Hilbert scheme, so the proposition follows.
\end{proof}

\begin{lemma}\label{lem_2LS300}
If the Betti table of $S/Q_F$ has second row $(3,0,0)$, then the Betti table of $S/Q_F$ is either 
\[
        \begin{matrix}1 & - & - & - &  \cr
       - & 3& -& - &  \cr
        - &-& 4& 2 &  \cr
        \end{matrix}
        \]
and $\VV(Q_F)$ is the union of a line and a possibly embedded scheme of length $4$, or
\[
 \begin{matrix}1 & - & - & - &  \cr
       - & 3& -& - &  \cr
        - &-& 3& - &  \cr
     - &-& -& 1 &  \cr
\end{matrix}
\]
and $\VV(Q_F)$ is a complete intersection.
\end{lemma}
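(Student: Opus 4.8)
The plan is to exhibit $Q_F=(q_1,q_2,q_3)$ either as a $(2,2,2)$ complete intersection or, inside a $(2,2)$ complete intersection curve, as the link of a twisted cubic, and to read off the Betti table from the resulting mapping cone. The hypothesis "second row $(3,0,0)$'' means $b_{12}(S/Q_F)=3$ (three minimal quadric generators) and $b_{23}(S/Q_F)=b_{34}(S/Q_F)=0$ (no linear first or second syzygies).

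First I would show that two of $q_1,q_2,q_3$ form a regular sequence. If not, every pair of the $q_i$ has a common linear factor; writing $\ell=\gcd(q_1,q_2)$, $q_1=\ell\ell_1$, $q_2=\ell\ell_2$, one sees that $q_3$ must be divisible by $\ell$ or by $\ell_1\ell_2$. In the first case $Q_F\subseteq(\ell)$ and, after factoring out $\ell$, $Q_F=\ell\cdot J$ with $J$ generated by three linear forms: if these span a plane then $Q_F$ acquires Koszul-type linear syzygies, and otherwise two of the $q_i$ are proportional. In the second case $Q_F$ is, up to a linear change of coordinates, a "triangle'' ideal $(\ell m,\ell n,mn)$ or a square $(\ell,m)^2$, both of which have a linear syzygy. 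Each possibility contradicts $b_{23}=0$, so after relabelling I may take $q_1,q_2$ to be a regular sequence; then $C:=\VV(q_1,q_2)$ is an arithmetically Cohen--Macaulay curve of degree $4$ with saturated ideal $I_C=(q_1,q_2)$, resolved by the Koszul complex $0\to S(-4)\to S(-2)^2\to S$.

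Next I would study $S/Q_F=(S/I_C)/(\bar q_3)$. Since $S/I_C$ is Cohen--Macaulay, either $\bar q_3$ is a nonzerodivisor, in which case $q_1,q_2,q_3$ is a regular sequence and $Q_F$ is a $(2,2,2)$ complete intersection with Koszul resolution (the second Betti table in the statement), or $\bar q_3$ is a zerodivisor. In the latter case set $\mathfrak a=(q_1,q_2):q_3$, so that
\[
0\longrightarrow (S/\mathfrak a)(-2)\xrightarrow{\,\cdot q_3\,} S/I_C\longrightarrow S/Q_F\longrightarrow 0
\]
is exact and the mapping cone of a minimal free resolution of $(S/\mathfrak a)(-2)$ over the Koszul resolution of $S/I_C$ resolves $S/Q_F$. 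Degree bookkeeping in the exact sequence gives $\dim(S/\mathfrak a)_1=\dim(S/I_C)_3-\dim(S/Q_F)_3=12-8=4$, so $\mathfrak a$ contains no linear form. Geometrically $\mathfrak a$ is the saturated, arithmetically Cohen--Macaulay, codimension-$2$ ideal of the curve $C''$ residual to $\VV(q_3)\cap C$ inside $C$, of degree $\le 3$ and non-degenerate; hence $C''$ is a twisted cubic and $S/\mathfrak a$ is resolved by $0\to S(-3)^2\to S(-2)^3\to S$. Plugging this into the mapping cone yields, with no cancellation (consecutive free modules have disjoint degree shifts), exactly the Betti table $[300c]$. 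Finally the residual of a twisted cubic in the $(2,2)$ complete intersection $C$ has degree $1$, hence is a line $L$ on which $q_3$ vanishes, while the Hilbert polynomial $t+5$ read off from $[300c]$ forces $\VV(Q_F)=L\cup Z$ with $Z$ a (possibly embedded) scheme of length $4$.

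The main obstacle is the middle step: identifying $\mathfrak a$ with the ideal of a twisted cubic, i.e.\ ruling out the degenerate and non-reduced alternatives for the residual curve $C''$ — planar curves of degree $\le 3$ (conics, pairs of lines, plane cubics) and non-reduced structures on a line or a conic — and confirming that $S/\mathfrak a$ is genuinely the Cohen--Macaulay ideal of a curve rather than an unexpected non-saturated or non-equidimensional ideal. The leverage is that the constraint $b_{23}(S/Q_F)=0$ is, via the mapping-cone degree count, equivalent to $\mathfrak a$ having no linear generator: any discarded possibility for $C''$ lies in a plane, hence contributes a linear generator to $\mathfrak a$ and an $S(-3)$ summand to $F_2(S/Q_F)$ that cannot cancel against $F_1(S/Q_F)=S(-2)^3$. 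Carrying out this classification carefully, and checking minimality of the mapping cone in the surviving case, is the technical heart of the proof.
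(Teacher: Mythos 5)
Your overall route is the same as the paper's: pick a regular sequence $q_1,q_2$ inside $Q_F$, form the colon ideal $\mathfrak a=(q_1,q_2):q_3$, and resolve $S/Q_F$ by the mapping cone over the Koszul complex of the $(2,2)$ complete intersection. Your degree count in the exact sequence, showing that $b_{23}(S/Q_F)=0$ forces $\mathfrak a$ to contain no linear form, is correct and is a clean way to dispose of the degree-one and planar residuals.

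However, there is a genuine gap exactly at the step you identify as the "technical heart," and your proposed leverage does not close it. It is not true that "any discarded possibility for $C''$ lies in a plane": the colon ideal can have degree $2$ with \emph{non-planar} support, namely two skew lines $\langle x_0x_2,x_0x_3,x_1x_2,x_1x_3\rangle$ or a double line of the form $\langle x_0^2,x_0x_1,x_1^2,x_0l_0+x_1l_1\rangle$. Neither of these contains a linear form, so your criterion "$\mathfrak a$ has no linear generator" does not exclude them; their mapping cone yields the Betti table with second row $(3,0,0)$ and third row $(5,4,1)$, which is perfectly compatible with the hypothesis $b_{23}=b_{34}=0$ but is not one of the two tables in the conclusion. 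Indeed, for an arbitrary ideal generated by three quadrics with no linear syzygies (e.g. three general quadrics through two skew lines) the statement of the lemma is simply false, so no argument using only the Betti hypothesis can work. The paper kills this case by using that $Q_F$ is the \emph{full} quadratic part of an apolar ideal: by Lemma \ref{lem_QmustSat} the saturation of $Q_F$ would again lie in $F^\perp$, and in the skew-line/double-line case that saturation is generated by quadrics, forcing $b_{12}(A_F)\ge 4$, a contradiction (this is Lemma \ref{ImpossibleBettiTable}). You must invoke this apolarity-specific saturation input, or an equivalent of it, to complete the case analysis; as written your proof cannot. A secondary, smaller point: in the degree-$3$ case you should justify that $\mathfrak a$ is Cohen--Macaulay with the $(3,2)$ determinantal resolution rather than merely unmixed of degree $3$ — the paper does this by locating the line $L\supset\VV(Q_F)$ and exhibiting $\mathfrak a$ explicitly as the determinantal cubic residual to $L$ in $\VV(q_1,q_2)$.
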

\begin{proof}
Since there is no linear syzygy, no two of the forms share a common factor, hence $\codim(Q_F)=2$. So $Q_F$ contains a subideal $J=\langle q_1,q_2\rangle$ which is a $(2,2)$ complete intersection, with $Q_F=J+\langle q_3\rangle$. As a complete intersection $S/J$ is Cohen-Macaulay of degree $4$, and we have the short exact sequence
\begin{equation}\label{sesQuadrics}
    0 \longrightarrow S(-2)/(J:q_3) \stackrel{\cdot q_3}{\longrightarrow} S/J \longrightarrow S/Q_F \longrightarrow 0.
\end{equation}
\vskip .05in
It follows from \cite{CAwvtAG}*{Lemma 3.6} that the associated primes of $(J:q_3)$ are all codimension $2$. Hence $\deg(J:q_3) \in \{1,\ldots, 4\}$. If $\deg(J:q_3)=1$, then $(J:q_3)$ must be reduced, which would imply that $(J:q_3)$ has a linear generator. This generator would yield a linear first syzygy in the mapping cone resolution of $Q_F$, contradicting the hypothesis that there is no linear first syzygy on $Q_F$. So $\deg (J:q_3) \in \{2,3,4\}$, and all associated primes are of codimension $2$. Note also that from the long exact sequence in $\Ext$ of the sequence in Equation~\ref{sesQuadrics}, $\Ext^3(S/(J:q_3),S) \simeq \Ext^4(S/Q_F,S)$, so $(J:q_3)$ is Cohen-Macaulay iff $Q_F$ is saturated.
\begin{description}
\item[{\bf The degree of $(J:q_3)=4$}] In this case $q_3$ is a nonzero divisor on $J$ and therefore  $Q_F$ is a $(2,2,2)$ complete intersection, with Betti table of $S/Q_F$ of the form
    \[
    \begin{matrix}1 & - & - & - &  \cr
       - & 3& -& - &  \cr
        - &-& 3& - &  \cr
     - &-& -& 1 &  \cr
    \end{matrix}.
    \]
   \item[{\bf The degree of $(J:q_3)=3$}] In this case, we claim the Betti table of $S/(J:q_3)$ is 
    \[
    \begin{matrix}1 & - & - &\cr
       - & 3& 2 &\cr
    \end{matrix}.
    \]
    Since $\deg(Q_F)=1$, the ideal $Q_F$ is contained, after a change of variables, in the ideal $\langle x_0,x_1 \rangle$ of a line $L$. As $J \subset Q_F\subset \langle x_0,x_1 \rangle$, we infer $J$ can be generated by the pair of $2\times 2$ minors involving the last column of the matrix  
\[
 \begin{pmatrix}l_1 & l_2 & x_0  \cr
       l_3 & l_4& x_1  \cr
\end{pmatrix},
\]
for an appropriate choice of linear forms $l_i$.
  The three $2\times 2$ minors of the matrix generate the ideal $I$ of a cubic curve, residual to the line $L$ in the complete intersection $\VV(J)$.  So both $(l_1l_4-l_2l_3)x_0$ and $(l_1l_4-l_2l_3)x_1$ lie in $J$.  But $q_3=l_5x_0+l_6x_1$ for some linear forms $l_5,l_6$, so $(l_1l_4-l_2l_3)q_3\in J$, which means 
$(l_1l_4-l_2l_3)\in (J:q_3)$ and therefore $I\subset (J:q_3)$.  Both $I$ and $(J:q_3)$ have degree $3$, and since $I$ generates the ideal of a curve, equality holds.
 
 With the resolution of $(J:q_3)$ in hand, it follows that $S/Q_F$ has Betti table
    \[
        \begin{matrix}1 & - & - & - &  \cr
       - & 3& -& - &  \cr
        - &-& 4& 2 &  \cr
        \end{matrix}.
        \]
    In this case
    $\VV(q_3)$ intersects $\VV(J:q_3)$ in a scheme of length $6$, and a subscheme of length $2$ is on $L$. Therefore, $\VV(Q_F)$ is the union of a line and zero-scheme of degree $4$.
    \vskip .05in
    \item[{\bf The degree of $(J:q_3)=2$}] In this case, $(J:q_3)$ is the ideal of two skew lines or a double structure on a line. The set of double structures on a codimension $2$ ideal of degree $1$ is determined in \cite{Engheta}. After a change of variables, the ideal is either $\langle x_0,x_1^2\rangle$ or $\langle x_0^2,x_0x_1,x_1^2, x_0l_0+x_1l_1 \rangle$, with $l_i$ homogeneous and $\codim(x_0,x_1,l_0,l_1) = 4$. From this, it follows that for both possibilities the resolutions have Betti table
    \[
    \begin{matrix}1 & - & -& - \cr
       - & 4& 4 & 1 \cr
    \end{matrix}.
    \]
    Hence from the mapping cone construction, $S/Q_F$ would have Betti table
    \[
    \begin{matrix}1 & - & - & - & - \cr
       - & 3& -& - & - \cr
        - &-& 5& 4 & 1 \cr
    \end{matrix}.
    \]
    By Lemma~\ref{ImpossibleBettiTable} this cannot occur. 
\end{description}
In particular, the only two Betti tables possible for second row $(3,0,0)$ are as claimed.
\end{proof}

\begin{lemma}\label{lembetti310}
If the Betti table of $S/Q_F$ has second row $(3,1,0)$, then $S/Q_F$ has Betti table
\[
 \begin{matrix}1 & - & - & - &  \cr
      - & 3& 1& - &  \cr
      - &- & 2& 1 &  \cr
\end{matrix}.
\]
In this case $Q_F$ is the ideal of the union of a plane conic and a (possibly embedded) scheme of length $2$. 
\end{lemma}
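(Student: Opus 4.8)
The plan is to pin down the unique linear syzygy using Proposition~\ref{syzygy ideals}, reduce to a single ``extra'' quadric generator, and then compute the resolution by a mapping cone. Recall that $Q_F$ is generated by the quadrics in $F^{\perp}$, hence generated in degree $2$; thus $b_{1j}(S/Q_F)=0$ for $j\neq 2$ and $Q_F=\langle q_1,q_2,q_3\rangle$ with the $q_i$ quadrics. The hypothesis provides a single linear first syzygy $f$ (up to scalar), and in the language of Proposition~\ref{syzygy ideals} its rank is $2$ or $3$. If it had rank $3$ it would involve all three generators, so $I_f=Q_F$; but then Proposition~\ref{syzygy ideals}(2) forces $b_{23}(S/Q_F)=b_{23}(S/I_f)=2$, contradicting $b_{23}=1$. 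So $f$ has rank $2$, and after renumbering it involves $q_1,q_2$, which therefore share a common linear factor: $q_1=\ell m_1$, $q_2=\ell m_2$ with $m_1,m_2$ independent linear forms. By Proposition~\ref{syzygy ideals}(1), $S/I_f$ with $I_f=\langle \ell m_1,\ell m_2\rangle$ has resolution $0\to S(-3)\to S(-2)^2\to S\to S/I_f\to 0$, and from the primary decomposition $I_f=(\ell)\cap(\text{line-primary ideal})$ one reads off $\operatorname{Ass}(S/I_f)=\{(\ell),\langle m_1,m_2\rangle\}$, the plane $\{\ell=0\}$ and the line $\{m_1=m_2=0\}$.

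The key step is to show that $q_3$ is a nonzerodivisor on $S/I_f$. Suppose not, so $q_3$ lies in $(\ell)$ or in $\langle m_1,m_2\rangle$. If $q_3=\ell m_3\in(\ell)$, then $Q_F=\ell\cdot\langle m_1,m_2,m_3\rangle$: if $m_3\in\langle m_1,m_2\rangle$ this is $I_f$, so $b_{12}=2$, excluded; otherwise $S/Q_F$ has a twisted Koszul resolution and $b_{23}(S/Q_F)=3$, excluded. If $q_3\in\langle m_1,m_2\rangle$ with $\ell\nmid q_3$, then $I_f:q_3=(\ell)$, since $gq_3\in I_f=(\ell)\cap\langle m_1,m_2\rangle$ is equivalent to $\ell\mid gq_3$, hence to $\ell\mid g$. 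The mapping cone of $\cdot q_3\colon(S/(\ell))(-2)\to S/I_f$ then produces the minimal resolution $0\to S(-3)^2\to S(-2)^3\to S\to S/Q_F\to 0$, so $b_{23}(S/Q_F)=2$, again excluded. (Every configuration in which $q_3$ shares a linear factor with $q_1$ or $q_2$ falls under one of these cases.) Hence $q_3\notin(\ell)\cup\langle m_1,m_2\rangle$, as claimed.

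Now the sequence $0\to (S/I_f)(-2)\xrightarrow{\ \cdot q_3\ }S/I_f\to S/Q_F\to 0$ is exact, and the mapping cone of $\cdot q_3$ on the resolution of $S/I_f$ above yields
\[
0\longrightarrow S(-5)\longrightarrow S(-3)\oplus S(-4)^2\longrightarrow S(-2)^3\longrightarrow S\longrightarrow S/Q_F\longrightarrow 0,
\]
which is minimal because every entry of every comparison map and differential in the cone has positive degree; this is exactly the asserted Betti table (and, as a check, the Hilbert polynomial is $2t+3$). Finally $\pdim(S/Q_F)=3$, so $\depth(S/Q_F)=1$ and $Q_F$ is saturated; since $Q_F=I_f+(q_3)$ we get $\VV(Q_F)=\VV(I_f)\cap\VV(q_3)$, namely the union of the plane conic $\{\ell=q_3=0\}$ and the length-$2$ subscheme $\{m_1=m_2=q_3=0\}$ of the line $\{m_1=m_2=0\}$; comparing Hilbert polynomials, $2t+3=(2t+1)+2$, confirms that the length-$2$ part genuinely contributes, and it is embedded on the conic exactly when $\ell\in\langle m_1,m_2\rangle$. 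This is the claimed description of $Q_F$.

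The main obstacle is the nonzerodivisor claim: each way in which $q_3$ could be a zerodivisor on $S/I_f$ must be shown to contradict either $b_{12}=3$ or $b_{23}=1$, and the mapping-cone bookkeeping in the case $I_f:q_3=(\ell)$ is the most delicate part. Once that is settled, both the resolution and the geometric identification follow mechanically.
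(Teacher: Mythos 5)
Your proof is correct and takes essentially the same route as the paper: rule out a rank-$3$ first syzygy via Proposition~\ref{syzygy ideals} (it would force $b_{23}=2$), so the unique syzygy has rank $2$ and $Q_F=\langle \ell m_1,\ell m_2\rangle+\langle q_3\rangle$, then obtain the Betti table by the mapping cone over $S/I_f$ and read off the geometry. The only difference is that the paper simply asserts $(I_f:q_3)=I_f$, whereas you justify this nonzerodivisor claim by ruling out $q_3\in(\ell)$ and $q_3\in\langle m_1,m_2\rangle$ against the hypotheses $b_{12}=3$, $b_{23}=1$ — a worthwhile bit of added detail, not a different method.
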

\begin{proof}
If the first syzygy has rank $3$, then by Proposition \ref{syzygy ideals}, $Q_F$ has a subideal $J$ generated by $2\times 2$ minors of a $2\times 3$ matrix, which has $b_{23}=2$, contradiction. Therefore, the rank of the first syzygy is $2$. This means $Q_F=J+\langle q_3\rangle$, where $J=\langle l_1l_3,l_2l_3\rangle$ and $(J:q_3)= J$. Hence the Betti table of $S/Q_F$ is
\[
 \begin{matrix}1 & - & - & - &  \cr
      - & 3& 1& - &  \cr
      - &- & 2& 1 &  \cr
\end{matrix}.
\]
In this case the scheme defined by $Q_F$ is either a plane conic and two points, for example $\langle x_0x_1,x_0x_2,q\rangle$, or a plane conic with two embedded points, for example $\langle x_0^2,x_0x_1,q\rangle$. 
\end{proof}

\begin{lemma}\label{lembetti320}
If the Betti table of $S/Q_F$ has second row $(3,2,0)$, then $S/Q_F$ has Betti table
\begin{equation*}
\begin{matrix}1 & - & - &   \cr
      - & 3& 2&   \cr
\end{matrix}.
\end{equation*}
In this case, $Q_F$ is generated by $2\times 2$ minors of a $2\times 3$ matrix of linear forms and is the ideal of a curve of degree $3$ that spans $\Pn^3$.
\end{lemma}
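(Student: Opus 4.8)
The plan is to mimic the strategy used in the proof of Lemma~\ref{lembetti310}, running Proposition~\ref{syzygy ideals} on a linear first syzygy of $Q_F$, but now exploiting that the hypothesis $b_{23}(S/Q_F)=2$ \emph{forces} that syzygy to have rank $3$, rather than excluding it as in the $(3,1,0)$ case.

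First I would record the structural consequences of the hypotheses. Since $F$ is nondegenerate, $Q_F$ contains no linear form, and by definition $Q_F$ is generated by the quadrics of $F^\perp$; together with $b_{12}(S/Q_F)=3$ this means $Q_F$ has exactly three minimal generators, all of degree $2$, so $(Q_F)_2$ is $3$-dimensional. The hypothesis $b_{23}(S/Q_F)=2$ gives a $2$-dimensional space of first linear syzygies. A linear syzygy of $Q_F$ can involve at most its three generators, hence has rank at most $3$; rank $1$ is impossible since no nonzero linear form annihilates a single quadric. If every linear syzygy had rank $2$, then $Q_F$ would carry a pencil of rank-$2$ syzygies, and Lemma~\ref{rank2syzygies} would force $b_{23}(S/Q_F)=3$, contradicting the hypothesis. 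Therefore $Q_F$ admits a linear first syzygy $f$ of rank exactly $3$.

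Now I would apply Proposition~\ref{syzygy ideals} in the case $p=1$, $k=1$ (so that $p+k+1=3$): the syzygy ideal $I_f$ is generated by the $2\times 2$ minors of a $2\times 3$ matrix of linear forms, $S/I_f$ has exactly the Betti table asserted in the statement, and $\VV(I_f)$ is a curve of degree $3$ with Hilbert polynomial $3t+1$. Since $I_f$ then has three independent quadric generators, $(I_f)_2$ is $3$-dimensional; as $(I_f)_2\subseteq (Q_F)_2$ and both have dimension $3$, they coincide, and since both ideals are generated in degree $2$, $I_f=Q_F$. This already yields the claimed Betti table and the determinantal description of $Q_F$. Finally, $Q_F=I_f$ is saturated (as will be used throughout Section~\ref{quadraticideals}), so the curve $Z=\VV(Q_F)$ lies in no plane, since a plane containing $Z$ would contribute a linear form to $Q_F$; hence $Z$ spans $\Pn^3$.

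I do not expect a deep obstacle, as the two load-bearing inputs — Proposition~\ref{syzygy ideals} and Lemma~\ref{rank2syzygies} — are already available. The step needing the most care is the dichotomy that turns ``$b_{23}=2$'' into ``there exists a rank-$3$ linear syzygy'': this rests on ruling out rank $1$ (trivial), rank $4$ (impossible with only three generators), and the all-rank-$2$ alternative via Lemma~\ref{rank2syzygies}. The remaining point worth double-checking is the short dimension count on degree-$2$ parts that identifies the syzygy ideal $I_f$ with all of $Q_F$; everything after that is bookkeeping.
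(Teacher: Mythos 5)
Your proposal is correct and follows essentially the same route as the paper: use Lemma~\ref{rank2syzygies} to rule out the all-rank-$2$ alternative (so $b_{23}=2$ forces a rank-$3$ linear syzygy), then apply Proposition~\ref{syzygy ideals} in the case $p=1$, $k=1$ to get the determinantal description and Betti table. Your extra details — identifying $I_f=Q_F$ by comparing the $3$-dimensional degree-$2$ parts and deducing the spanning of $\Pn^3$ from nondegeneracy/saturation — just make explicit what the paper leaves implicit.
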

\begin{proof}
Since there are only two linear syzygies, there is by Lemma \ref{rank2syzygies} at least one first linear syzygy of $Q_F$ having rank $3$.  Therefore, by Proposition \ref{syzygy ideals}, $Q_F$ is generated by the $2\times 2$ minors of a $2\times 3$ matrix and is the ideal of a curve with Hilbert polynomial $3t+1$.
\end{proof}

\vskip .05in
\subsubsection{\noindent {\bf Case $b_{12}=4$}}
\vskip .05in
When $F^\perp$ is a complete intersection $(2,2,2,2)$, then $Q_F=F^\perp$.
\begin{proposition}\label{qCGKK3}
Let $F$ be a quaternary quartic, then the following are equivalent
\begin{enumerate}
    \item $Q_F=F^\perp$
    \item $S/Q_F$ is Artinian.
    \item $Q_F$ contains a complete intersection $(2,2,2,2)$
\end{enumerate}
Furthermore, the set of forms $F$ satisfying these conditions is irreducible.
\end{proposition}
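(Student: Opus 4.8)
The equivalence of the three conditions is essentially formal. For $(1)\Rightarrow(2)$: $A_F=S/F^\perp$ is Artinian by Macaulay's Lemma~\ref{ApolarPairing}, so if $Q_F=F^\perp$ then $S/Q_F$ is Artinian. For $(2)\Rightarrow(3)$: if $S/Q_F$ is Artinian of codimension $4$ and $Q_F$ is generated by quadrics, one can extract a regular sequence of four quadrics from $Q_F$ by the usual prime-avoidance argument (the zero locus of the quadrics is empty, so successively pick a quadric not vanishing on any minimal prime of the partial intersection); this gives a $(2,2,2,2)$ complete intersection inside $Q_F$. For $(3)\Rightarrow(1)$: if $J=(g_1,g_2,g_3,g_4)\subseteq Q_F\subseteq F^\perp$ is a $(2,2,2,2)$ complete intersection, then $S/J$ is Artinian Gorenstein with socle in degree $4$ (the Koszul complex shows $\reg(S/J)=4$ and the socle is one-dimensional in degree $4$). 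By Macaulay's correspondence, $J=G^\perp$ for some quartic $G$, and $J\subseteq F^\perp$ forces, by duality, that $F$ is a scalar multiple of $G$ up to the correspondence; more directly, $(S/J)_4\to(A_F)_4$ is a surjection of one-dimensional spaces hence an isomorphism, and since both rings are Gorenstein quotients of $S$ with the same Hilbert function $(1,4,10,4,1)$ we get $F^\perp=J\subseteq Q_F\subseteq F^\perp$, so all are equal and in particular $Q_F=F^\perp$. (One must check the Hilbert function of $S/J$ for a $(2,2,2,2)$ CI is indeed $(1,4,10,4,1)$, which is immediate from the Koszul resolution.)

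**Irreducibility.** The set of such $F$ is exactly $\{F : F^\perp \text{ is a } (2,2,2,2) \text{ complete intersection}\}$, equivalently $\mathcal{F}_{[000]}$. I would parametrize it by an open subset of the Grassmannian $\Gr(4,S_2)=\Gr(4,10)$: to a generic $4$-dimensional subspace $W\subseteq S_2$ the ideal $(W)$ it generates is a complete intersection (a Zariski-open condition on $\Gr(4,10)$, nonempty since e.g. $\langle x_0^2,x_1^2,x_2^2,x_3^2\rangle$ works), and $(W)=F_W^\perp$ for a quartic $F_W$ determined up to scalar by $(W)_4^\perp = \langle F_W\rangle \subset R_4$. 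This gives a morphism from an irreducible open subset $U\subseteq\Gr(4,10)$ to $\Pn(R_4)=\Pn^{34}$ whose image is precisely the locus in question (every $F$ with $Q_F=F^\perp$ a CI arises as $F_W$ for $W=(F^\perp)_2$, and conversely). Since $U$ is irreducible and the map is a morphism, the image, hence its closure intersected appropriately, and in fact the constructible image itself, is irreducible.

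**Expected main obstacle.** The only genuinely nontrivial point is making $(2)\Rightarrow(3)$ and $(3)\Rightarrow(1)$ airtight: specifically, (a) verifying that a $(2,2,2,2)$ complete intersection in $S=\CC[x_0,\dots,x_3]$ has socle degree exactly $4$ and Hilbert function $(1,4,10,4,1)$ — routine from Koszul but should be stated — and (b) the prime-avoidance extraction of a regular sequence of quadrics from an Artinian ideal generated by quadrics, which needs the observation that at each of the first three steps the partial complete intersection has positive-dimensional (hence infinitely many) minimal primes while the space of available quadrics in $Q_F$ modulo those already chosen is still large enough to avoid them all — here one uses that $\CC$ is infinite and that $Q_F$, being Artinian-defining, cannot be contained in the union of finitely many primes each containing a fixed codimension-$\le 3$ ideal. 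I expect the irreducibility to require only that the CI-locus is open and nonempty in the Grassmannian, which is standard; the subtle part is just confirming the map $W\mapsto F_W$ is a well-defined morphism on that open set, which follows from flatness of the family of CI's and the fact that $(W)_4$ has constant codimension $1$ in $R_4^\vee$ there.
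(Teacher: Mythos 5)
Your proposal is correct in substance and follows essentially the same route as the paper: $(1)\Rightarrow(2)$ is immediate, $(2)\Rightarrow(3)$ by choosing four general (regular-sequence) quadrics in $Q_F$, $(3)\Rightarrow(1)$ by comparing the codimension-one spaces $J_4\subseteq (F^\perp)_4$ inside $S_4$ and invoking Macaulay duality to get $J=F^\perp$, and irreducibility from the complete intersections forming a dense open subset of $\Gr(4,S_2)$ whose associated forms are cut out by a morphism to $\Pn^{34}$.

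Two small slips to correct: the Hilbert function of $S/J$ for a $(2,2,2,2)$ complete intersection is $(1,4,6,4,1)$, not $(1,4,10,4,1)$ (one has $\dim S_2=10$ and four independent quadrics), and the locus in question is $\mathcal{F}_{[400]}$, not $\mathcal{F}_{[000]}$ (type $[000]$ has no quadrics at all in $F^\perp$). Neither affects the argument, since your primary duality step only uses that $(S/J)_4$ is one-dimensional; but drop the alternative ``same Hilbert function'' shortcut, which is circular as stated (you do not know the Hilbert function of $A_F$ until after you have proved $F^\perp=J$).
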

\begin{proof}
Clearly $(1)$ implies $(2)$.  Furthermore, if $S/Q_F$ is Artinian, then a general subset of four quadrics in $Q_F$ forms a complete intersection, so $(2)$ implies $(3)$.  Finally, if $Q_F$ contains a complete intersection $I$, then $I(4)\subset S_4$ has codimension $1$, so coincides with $F^\perp$ for a quartic form $F$. Therefore $Q_F\subset I=F^\perp$. 

The complete intersections form a dense open set in the Grassmannian $Gr(4,S_2)$, so an irreducible set.
\end{proof}
\begin{proposition}\label{apolar b12=4}
Let $F$ be a quaternary quartic with $b_{12}(A_F)=4$, and assume $F$ is not of type $[400]$. Then $Q_F$ is saturated and is the ideal of an apolar scheme $Z$.
\begin{enumerate}
    \item If $F$ is of type $[420]$, then $S/Q_F$ has Betti table
    \[
    \begin{matrix}&&&&\cr
 1 & - & -& - &  \cr
                                               - & 4& 2& - &  \cr
                                               - &-& 3&2&  \cr
               \end{matrix}
    \]
   and $Q_F$ is the ideal of a scheme  $Z$ of length $6$, the intersection of a curve of degree $3$ defined by a determinantal net of quadrics, and another quadric. 
     \item If $F$ is of type $[430]$, then $S/Q_F$ has Betti table
     \[\begin{matrix}&&&&\cr
 1 & - & -& - &  \cr
                                               - & 4& 3& - &  \cr
                                               - &-& 1&1&  \cr
               \end{matrix}
               \]
     and $Q_F$ is the ideal of the union $Z$ of a line and a scheme of length $3$ that span $\Pn^3$.
    \item If $F$ is of type $[441]$, then $S/Q_F$ has Betti table
 \[   \begin{matrix}&&&&\cr
 1 & - & -& - &  \cr
                                               - & 4& 4& 1 &  \cr
               \end{matrix}.
    \]
    If the second order syzygy has rank $3$, then  $Q_F$ is the ideal of the union of a plane conic and a (possibly embedded) point. Designate this as type $[441a]$.
  
  If the second order syzygy has rank $4$, then  $Q_F$ is the ideal of the union $Z$ of two skew lines or a double line on a smooth quadric surface. Designate this as type $[441b]$.
      \end{enumerate}
      The set of ideals $Q_F$, when $F$ is of type $[420]$ or $[430]$, form irreducible components of the Hilbert scheme, while when $F$ is of type $[441]$ the corresponding ideals $Q_F$ form two irreducible components of the Hilbert scheme.
\end{proposition}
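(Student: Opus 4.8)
The plan is to follow the pattern of Propositions~\ref{apolar b12=2} and~\ref{apolar b12=3}. Since $0\le b_{12}\le 6$ and the linear strand $(b_{12},b_{23},b_{34})$ of $A_F$ determines its whole Betti table, the only Betti tables with $b_{12}=4$ are those of $F$ of type $[400]$, $[420]$, $[430]$ and $[441]$; the case $[400]$, where $Q_F=F^{\perp}$, is Proposition~\ref{qCGKK3}, so only the three listed cases remain. For each one I would determine the full Betti table of $S/Q_F$, deduce that $Q_F$ is saturated, identify the apolar scheme $Z$ with $I_Z=Q_F$, and finally prove irreducibility of the locus of ideals $Q_F$ (respectively, that one gets two components when $F$ is of type $[441]$).

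\emph{The Betti table of $S/Q_F$.} Since $Q_F$ is generated by quadrics, $b_{1,j}(S/Q_F)=0$ for $j\neq 2$, and the linear strand $b_{i,i+1}(S/Q_F)=b_{i,i+1}(A_F)$ is fixed; only the quadratic part ($b_{2,4},b_{3,5},\dots$) is unknown, and it is controlled by the ranks of the linear syzygies together with the Hilbert series forced by the linear strand. In all three cases $b_{23}\ge 2$, so by Lemma~\ref{all syzygies of rank 4} not every first linear syzygy has rank $4$. For type $[420]$, if some first linear syzygy has rank $3$ then Proposition~\ref{syzygy ideals}(2) produces a subideal $J\subset Q_F$ that is the ideal of a determinantal cubic curve, with $Q_F=J+(q)$ and $q$ a nonzerodivisor on the Cohen--Macaulay ring $S/J$; the mapping cone of $0\to(S/J)(-2)\xrightarrow{\,q\,}S/J\to S/Q_F\to 0$ is minimal with the stated table, and $Z$ is the length-$6$ scheme cut on the cubic curve by $q$. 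The residual configurations, in which all first linear syzygies have rank $2$ so that the quadric generators pair off by common linear factors, are then handled by a direct analysis that, using the forced Hilbert series, either shows the quadratic part of the Betti table is not that of $[420]$ or identifies $Z$ as a length-$6$ scheme on a degenerate determinantal cubic. For type $[430]$ the same rank dichotomy, now invoking Lemmas~\ref{rank2syzygies} and~\ref{lem_4gen3rank2syz} together with Proposition~\ref{syzygy ideals}(1),(2), yields the stated table and shows $Z$ is the union of a line and a length-$3$ scheme spanning $\Pn^3$. For type $[441]$ there is a single linear second syzygy $f$, of rank $3$ or $4$: if $\rk f=3$, Proposition~\ref{syzygy ideals}(4) gives a subideal $I_f\subset Q_F$ that is the ideal of a plane with an embedded point, and analysing which fourth quadric $q$ produces the extra linear syzygy in $Q_F=I_f+(q)$ forces $Q_F$ to be the ideal of a plane conic and a (possibly embedded) point --- this is type $[441a]$; if $\rk f=4$, Proposition~\ref{syzygy ideals}(5) leaves cases (5a), (5b), (5c), but $b_{12}(I_f)\le b_{12}(Q_F)=4$ rules out (5b), (5c), so $I_f=Q_F$ is the ideal of two skew lines or a double line, which lie on a smooth quadric --- type $[441b]$. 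Lemmas~\ref{ImpossibleBettiTable} and~\ref{all syzygies of rank 4} are used to discard the remaining a priori possible quadratic parts.

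\emph{Saturation and irreducibility.} In each case the resolution obtained above has length $3$, so $\depth(S/Q_F)=1$ by Auslander--Buchsbaum; since $\dim(S/Q_F)\ge 1$ throughout, $\mathfrak{m}$ is not an associated prime, hence $Q_F$ is saturated --- equivalently, one reads $\reg(S/Q_F)<4$ off the table and invokes Lemma~\ref{lem_QmustSat}. Then $Q_F=I_Z$, and $Z$ is apolar to $F$ since $I_Z\subset F^{\perp}$. For irreducibility, as in Propositions~\ref{apolar b12=2}--\ref{apolar b12=3} the schemes $Z$ are dominated by irreducible parameter spaces: pairs (determinantal cubic curve, quadric) for $[420]$; pairs (line, length-$3$ subscheme of $\Pn^3$) for $[430]$; triples (plane, conic in it, isolated point), an $11$-dimensional irreducible family, for $[441a]$; and the $8$-dimensional irreducible family of pairs of lines, with double lines in its closure, for $[441b]$. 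An openness/dimension argument makes each of the first three a whole component of the corresponding Hilbert scheme, while the families of type $[441a]$ and $[441b]$ are irreducible of dimensions $11$ and $8$ with neither contained in the closure of the other, hence are two distinct components.

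The main obstacle I anticipate is the case-bookkeeping in the first step: cleanly excluding the spurious quadratic parts of the Betti table and the non-saturated configurations for $[420]$ and $[430]$ (in particular the ``all rank $2$'' subcases), and, for $[441]$, checking that both values $\rk f=3,4$ actually occur and correspond precisely to $[441a]$ and $[441b]$. The subtlest point is that $[441a]$ and $[441b]$ are genuinely two different components of the Hilbert scheme even though $S/Q_F$ has the same Betti table and the same Hilbert polynomial $2t+2$ in both cases; separating them needs the dimension count above, together with the observation that a conic-plus-point always contains an isolated point whereas a generic pair of skew lines does not, so no flat family deforms one general configuration into the other.
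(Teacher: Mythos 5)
Your proposal is correct and follows essentially the same route as the paper: the three cases are proved there in Lemmas~\ref{lem420}, \ref{lem430} and \ref{lem441} by exactly this rank analysis of the linear syzygies via Proposition~\ref{syzygy ideals}, with Lemmas~\ref{all syzygies of rank 4}, \ref{lem_4gen3rank2syz} and \ref{ImpossibleBettiTable} discarding the spurious tables, followed by the same identification of the schemes $Z$ and of the Hilbert-scheme components (two of them for $[441]$, distinguished by the rank of the second syzygy). The only local differences are minor: the paper obtains saturation by identifying $Q_F$ with the ideal of $Z$ directly rather than via Auslander--Buchsbaum, it handles the zero-divisor possibility for the added quadric in case $[420]$ explicitly before the mapping cone (your ``$q$ is a nonzerodivisor'' needs that step, which your appeal to Lemma~\ref{ImpossibleBettiTable} does cover), and in case $[441a]$ it shows the fourth quadric passes through the point $P$ using Lemma~\ref{scheme in a plane and a point} (nondegeneracy of $F$) instead of your syzygy count.
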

 
\begin{proof}   The three cases follow from lemmas \ref{lem420}, \ref{lem430} and \ref{lem441} below.  In the first two cases a general form $F$ apolar to $Z$ is of type $[420]$ and type $[430]$ respectively, and the schemes $Z$ form irreducible components of the Hilbert scheme. In the third case, the two subcases for $Z$ each correspond to irreducible components of the Hilbert scheme, so the proposition follows.
\end{proof}
We say that a quaternary quartic $F$ is of type $[441a]$ when the second order syzygy of $Q_F$ has rank $3$, and that a quaternary quartic $F$ is of type $[441b]$ when the second order syzygy of $Q_F$ has rank $4$.

Note that if the Betti table of $S/Q_F$ has second row $(4,0,0)$, then $Q_F$ is a $(2,2,2,2)$ complete intersection. 
\begin{lemma}\label{lem420}
If the Betti table of $S/Q_F$ has second row $(4,2,0)$, then $S/Q_F$ has Betti table 
\[
 \begin{matrix}1 & - & - & - &  \cr
       - & 4& 2 & - &  \cr
      - &- & 3& 2 &  \cr
\end{matrix} 
\] 
and the scheme defined by $Q_F$ is a scheme of length $6$, the intersection of a determinantal curve of degree $3$ and another quadric. 

\end{lemma}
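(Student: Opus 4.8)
The plan is to locate inside $Q_F$ the ideal of a determinantal cubic curve and then resolve $S/Q_F$ by a mapping cone. First I would establish that $Q_F$ has a linear first syzygy of rank $3$. Since $b_{23}=2$, there are two independent linear first syzygies; by Lemma~\ref{all syzygies of rank 4} they cannot both have rank $4$, so at least one has rank $2$ or $3$ (rank $1$ is impossible for a syzygy of a nonzero quadric). I then want to exclude the case that \emph{both} have rank $\le 2$. A rank-$2$ linear syzygy forces two of the four quadric generators to share a linear factor; a short case analysis on how two such syzygies overlap shows that if they involve a common generator then three of the generators share a linear factor, producing a third independent linear syzygy (so $b_{23}\ge 3$, a contradiction), while if their supports are disjoint then $Q_F=\langle \ell_1a,\ell_1b,\ell_2c,\ell_2d\rangle$ for linear forms $\ell_i,a,b,c,d$. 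In this last situation $\VV(Q_F)$ is one-dimensional of small degree (generically a line and two points) and its saturation is strictly larger, so it is ruled out by Lemma~\ref{lem_QmustSat}, equivalently by the second impossible table in Lemma~\ref{ImpossibleBettiTable}. Hence $Q_F$ has a linear first syzygy $f$ of rank $3$.

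By Proposition~\ref{syzygy ideals}(2), the syzygy ideal $J=I_f\subseteq Q_F$ is generated by the $2\times 2$ minors of a $2\times 3$ matrix of linear forms; it is the arithmetically Cohen--Macaulay ideal of a determinantal curve $C$ of degree $3$ with Hilbert polynomial $3t+1$, with Hilbert--Burch resolution $0\to S(-3)^2\to S(-2)^3\to S\to S/J\to 0$. As $b_{12}=4$, we have $Q_F=J+(q)$ for some quadric $q\notin J$, and there is an exact sequence
\[
0 \longrightarrow \big(S/(J:q)\big)(-2)\ \xrightarrow{\ \cdot q\ }\ S/J \longrightarrow S/Q_F \longrightarrow 0.
\]
The next step is to show that $q$ is a nonzerodivisor on $S/J$. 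Otherwise, since $S/J$ is unmixed of codimension $2$, $q$ vanishes on a component of $C$ and $(J:q)=I_{C''}$ is the ideal of a proper subcurve $C''\subset C$ of degree $1$ or $2$; because $C$ is a determinantal cubic, $C''$ must be a line or a planar conic, hence has a linear minimal generator. Substituting the resolution of $S/(J:q)$ into the mapping cone for the sequence above then forces $b_{23}\in\{3,4\}$ for $S/Q_F$ — no cancellation can occur, since every entry of the comparison maps has degree $2$ — contradicting $b_{23}=2$.

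With $q$ a nonzerodivisor, $(J:q)=J$ and the mapping cone of the two (twisted) Hilbert--Burch resolutions gives
\[
0\longrightarrow S(-5)^2 \longrightarrow S(-4)^3\oplus S(-3)^2 \longrightarrow S(-2)\oplus S(-2)^3 \longrightarrow S \longrightarrow S/Q_F\longrightarrow 0.
\]
All entries of the comparison maps again have degree $2$, so this complex is minimal, and reading off its graded Betti numbers yields exactly the asserted table. Geometrically, $\VV(Q_F)=C\cap\VV(q)$ is the intersection of the determinantal cubic curve with a quadric, hence a scheme of length $3\cdot 2=6$.

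I expect the first step to be the main obstacle: excluding rank $\le 2$ for every linear first syzygy, and in particular handling the degenerate sub-configurations (non-reduced or reducible subschemes of $C$, collisions among the planes and lines) so that no exotic Betti table with second row $(4,2,0)$ slips through. The levers here are the saturation criterion of Lemma~\ref{lem_QmustSat}, the list of impossible Betti tables in Lemma~\ref{ImpossibleBettiTable}, and the explicit syzygy-ideal classification of Proposition~\ref{syzygy ideals}.
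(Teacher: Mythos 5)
Your overall architecture --- force a rank-$3$ linear syzygy, invoke Proposition~\ref{syzygy ideals} to obtain the determinantal subideal $J$, write $Q_F=J+\langle q\rangle$, and resolve $S/Q_F$ by the mapping cone over $S/(J:q)$ --- is the same as the paper's, but your treatment of the zero-divisor case has a genuine gap. You claim that if $q$ is a zerodivisor then $(J:q)$ is the ideal of a line or a planar conic, hence has a linear minimal generator, so the mapping cone would force $b_{23}\ge 3$. That classification is false: the determinantal cubic $C$ need not be a twisted cubic --- it can be a chain of three lines, or carry a multiple line --- and then $(J:q)$ can be the ideal of two skew lines, $\langle x_0x_2,x_0x_3,x_1x_2,x_1x_3\rangle$, or of a non-split double line, $\langle x_0^2,x_0x_1,x_1^2,x_0l_0+x_1l_1\rangle$. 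Neither contains a linear form; each is generated by four quadrics with four linear syzygies, so the mapping cone keeps the second row at $(4,2,0)$ --- no conflict with $b_{23}=2$ --- and adds a third row $(4,4,1)$. These cases must be excluded the way the paper does it: the resulting table is exactly the second impossible table of Lemma~\ref{ImpossibleBettiTable}, which is ruled out by the saturation criterion of Lemma~\ref{lem_QmustSat} (the scheme would be a line and two points, whose saturation is generated by quadrics). You cite these lemmas as ``levers'' but never actually apply them at the one spot where they are indispensable; your linear-generator argument only covers the degree-$1$ case and the planar degree-$2$ cases, where it agrees with the paper.

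There is also a smaller hole in your first step: after using Lemma~\ref{all syzygies of rank 4} to rule out all syzygies having rank $4$, your case analysis only treats a pair of rank-$2$ syzygies, so the configuration of one rank-$2$ and one rank-$4$ syzygy with no rank-$3$ member of the pencil is not addressed. The paper disposes of the entire ``no rank-$3$ syzygy'' alternative at once, showing it forces $Q_F=\langle x_0l_1,x_1l_1,x_2l_2,x_3l_2\rangle$, whose Betti table is again the impossible one of Lemma~\ref{ImpossibleBettiTable} --- the same exclusion you already invoke for your disjoint-supports subcase --- so the fix is to show that every configuration without a rank-$3$ syzygy reduces to that shape.
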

\begin{proof}
By Lemma \ref{all syzygies of rank 4} not all first syzygies of $Q_F$ have rank $4$.
If there is a linear first syzygy of rank $3$, then by Proposition \ref{syzygy ideals}, there is a subideal $J\subset Q_F$ such that $J$ is $2\times 2$ minors of a $2\times 3$ matrix of linear forms, and $Q_F=J+\langle q_4\rangle$. Since $q_4$ is a non-zero divisor on $J$ iff $(J:q_4) =J$, the result is immediate in this case, so suppose $q_4$ is a zero divisor, and consider the short exact sequence of Equation~\ref{sesQuadrics}, with $q_3$ replaced by $q_4$. Note that $(J:q_4)$ has no linear generator, for such a generator would yield an extra linear syzygy in the resolution for $Q_F$, and the two linear syzygies from $J$ preclude this. Since $\deg(J)=3$, $\deg(J:q_4) \in \{ 1,2 \}$. Since the associated primes of $(J:q_4)$ are a subset of the associated primes of $J$, all associated primes of $(J:q_4)$ are of codimension $2$. In particular, if $\deg(J:q_4)$ is $1$, then it contains a linear form, a contradiction, and if $\deg(J:q_4)$ is $2$ and $(J:q_4)$ is prime, it is generated by a linear form and irreducible quadric, again a contradiction. Finally, if $\deg(J:q_4)$ is $2$ but it is not prime, it is either a pair of reduced lines, or a double structure on a line. In the first case, the lines cannot meet (or there is a linear form in the idea), so the lines are skew, with resolution below. 
\[
    \begin{matrix}1 & - & -& - \cr
       - & 4& 4 & 1 \cr
    \end{matrix}.
    \]If $(J:q_4)$ is a double structure on a line, then as in the proof of Lemma~\ref{lem_2LS300}, $(J:q_4)$ is either $\langle x_0,x_1^2\rangle$ or $\langle x_0^2,x_0x_1,x_1^2,x_0l_0+x_1l_1\rangle$. The first case is impossible due to the existence of a linear generator. The second case has resolution as above. Both possibilities yield a mapping cone resolution for $S/Q_F$ of the form 
\[
 \begin{matrix}1 & - & - & - & - \cr
       - & 4& 2 & - & - \cr
      - &- & 4& 4 & 1 \cr
\end{matrix},
\]
which is precluded by Lemma~\ref{ImpossibleBettiTable}. So a rank 3 linear syzygy implies that $S/Q_F$ has Betti table
\[
 \begin{matrix}1 & - & - & - &  \cr
       - & 4& 2 & - &  \cr
      - &- & 3& 2 &  \cr
\end{matrix}.
\]
If there is no linear first syzygy of rank $3$, then $Q_F$ has two rank $2$ linear syzygies involving different pairs of generators, so is of the form $\langle x_0l_{1},x_1l_{1},x_2l_{2},x_3l_{2}\rangle$, which has Betti table
\[
 \begin{matrix}1 & - & - & - & - \cr
       - & 4& 2 & - & - \cr
      - &- & 4& 4 & 1 \cr
\end{matrix}.
\]
This is precluded by Lemma~\ref{ImpossibleBettiTable}.
\end{proof}

\begin{lemma}\label{lem430}
If the Betti table of $S/Q_F$ has second row $(4,3,0)$, then $S/Q_F$ has Betti table
\[
 \begin{matrix}1 & - & - & - &  \cr
           - & 4& 3& - &  \cr
           - & -& 1& 1 &  \cr
\end{matrix}.
\]
In this case $Q_F$ is the ideal of a a line and a (possibly embedded) scheme of length $3$.
\end{lemma}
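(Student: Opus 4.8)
The plan is to write $Q_F = J + \langle q_4\rangle$ for a Cohen--Macaulay codimension-$2$ subideal $J$ together with one further quadric $q_4$, and then extract the whole Betti table, as well as the scheme $\mathrm{V}(Q_F)$, from the mapping cone of the associated short exact sequence. To locate $J$: since $b_{23}(S/Q_F)=3>1$, Lemma~\ref{all syzygies of rank 4} forbids that every linear first syzygy of $Q_F$ have rank $4$, so there is a linear first syzygy $f$ of rank $2$ or $3$. If $\mathrm{rank}(f)=3$, Proposition~\ref{syzygy ideals}(2) produces a subideal $J=I_f$ with $S/J$ of Betti type $\left(\begin{smallmatrix}1&-&-\\-&3&2\end{smallmatrix}\right)$, hence Cohen--Macaulay of codimension $2$ and degree $3$. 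If instead all three linear syzygies have rank $2$, Lemma~\ref{lem_4gen3rank2syz} gives either such a $J$, or a subideal $J'$ with $S/J'$ of Betti type $\left(\begin{smallmatrix}1&-&-&-\\-&3&3&1\end{smallmatrix}\right)$ (the ideal of a plane and a possibly embedded point, by Lemma~\ref{rank2syzygies}). In every case the subideal is minimally $3$-generated, and comparing degree-$2$ pieces shows $Q_F$ is obtained from it by adjoining one further quadric $q_4$.

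The plane-plus-point subideal $J'$ must be eliminated. Writing $Q_F = J' + \langle q_4\rangle$ with $J'=\ell_0\cdot(\ell_1,\ell_2,\ell_3)$ after a linear change of coordinates, I split on whether $q_4$ is a nonzerodivisor on $S/J'$. If it is, the mapping cone of $0\to (S/J')(-2)\xrightarrow{\,q_4\,}S/J'\to S/Q_F\to 0$ is minimal and its third term $S(-4)$ forces $b_{34}(S/Q_F)=1$, contradicting $b_{34}=0$. If $q_4$ is a zerodivisor, then either $\ell_0\mid q_4$, whence $Q_F=\ell_0\cdot\mathfrak{m}$ and $\ell_0\in Q_F^{\mathrm{sat}}\subseteq F^\perp$ by Lemma~\ref{lem_QmustSat}, contradicting nondegeneracy of $F$; or $\ell_0\nmid q_4$, and then a direct computation using that $\ell_0$ is prime gives $(J':q_4)=J'$, i.e.\ $q_4$ is a nonzerodivisor after all, the already-excluded case. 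So $J'$ is impossible and $Q_F = J+\langle q_4\rangle$ with $S/J$ Cohen--Macaulay of codimension $2$ and degree $3$.

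It remains to pin down $(J:q_4)$. As $S/(J:q_4)\cong q_4\cdot(S/J)\subseteq S/J$, its associated primes lie among the (minimal, codimension-$2$) primes of $S/J$, so $(J:q_4)$ is unmixed of codimension $2$ with $1\le\deg(J:q_4)\le 3$. Degree $3$ makes $q_4$ a nonzerodivisor on $S/J$ and the mapping cone gives the Betti table $[420]$ (cf.\ Lemma~\ref{lem420}), so $b_{23}=2$: impossible. Degree $1$ makes $(J:q_4)$ the ideal of a line, and the mapping cone gives $b_{23}=4$: impossible. If the degree is $2$ and $(J:q_4)$ has no linear generator, it is the ideal of two skew lines or of a double line, of Betti type $\left(\begin{smallmatrix}1&-&-&-\\-&4&4&1\end{smallmatrix}\right)$ (cf.\ Lemma~\ref{lem_2LS300}), and the mapping cone again gives $b_{23}=2$. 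The only surviving case is $(J:q_4)=\langle\ell,Q\rangle$, a $(1,2)$ complete intersection and the ideal of a plane conic; for it the mapping cone of $0\to (S/\langle\ell,Q\rangle)(-2)\xrightarrow{\,q_4\,}S/J\to S/Q_F\to 0$ is minimal with terms $S,\ S(-2)^4,\ S(-3)^3\oplus S(-4),\ S(-5)$, i.e.\ exactly the asserted Betti table. (Equivalently, $\pdim(S/Q_F)\ge 3$ since a minimally $4$-quadric-generated ideal is never a Hilbert--Burch ideal, and then $\codim Q_F=2$ together with the Hilbert series $\tfrac{1-4t^2+3t^3+b_{24}(t^4-t^5)}{(1-t)^4}$ forces $b_{24}=b_{35}=1$.) The Betti table gives $S/Q_F$ Hilbert polynomial $n+4=(n+1)+3$, so $\mathrm{V}(Q_F)$ is a line together with a length-$3$ scheme; since $\reg(S/Q_F)=2<4$, Lemma~\ref{lem_QmustSat} (as in the proof of Proposition~\ref{apolar b12=4}) together with nondegeneracy shows $Q_F$ is saturated, equal to $I_Z$ for this $Z$, and apolarity of $Z$ to $F$ forces $Z$ to span $\Pn^3$.

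The hard part is not any single step but the bookkeeping: one must traverse each value of $\deg(J:q_4)$ and each sub-possibility for a degree-$2$ ideal, and verify in every instance that the relevant mapping cone is a genuinely \emph{minimal} free resolution, so that its numerical shape can be trusted; separately, the plane-plus-point subideal $J'$ is the subtlest point, since there a zerodivisor $q_4$ superficially presents a new case until the colon ideal is seen to collapse back to $J'$.
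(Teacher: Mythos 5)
Your overall route is the paper's: produce a codimension-$2$ Cohen--Macaulay subideal via the rank of a linear first syzygy (Proposition~\ref{syzygy ideals}, Lemma~\ref{lem_4gen3rank2syz}), rule out the plane-plus-point alternative, write $Q_F=J+\langle q_4\rangle$, and classify $(J:q_4)$ through mapping cones; your exhaustive run through $\deg(J:q_4)\in\{1,2,3\}$, including the skew-lines/double-line degree-$2$ possibility, just replaces the paper's slicker observation that the third linear syzygy of $Q_F$ must involve $q_4$, so that $(J:q_4)$ contains a linear form. The genuine problem is in your exclusion of $J'=\ell_0\cdot(\ell_1,\ell_2,\ell_3)$: it is false that $\ell_0\nmid q_4$ forces $(J':q_4)=J'$. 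Take $J'=(x_0x_1,x_0x_2,x_0x_3)$ and $q_4=x_1^2$; then $x_0q_4\in J'$, so $(J':q_4)=(x_0)\supsetneq J'$ and $q_4$ is a genuine zerodivisor not divisible by $\ell_0$. Indeed the associated primes of $S/J'$ are $(\ell_0)$ and $(\ell_1,\ell_2,\ell_3)$, so every quadric in the codimension-$3$ prime is a zerodivisor, and your trichotomy leaves that entire case untreated -- the ``collapse back to the nonzerodivisor case'' does not happen.

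The gap is repairable, and cheaply. If $q_4\in(\ell_1,\ell_2,\ell_3)$ then $\ell_0q_4\in J'$, which gives a linear syzygy involving $q_4$ in addition to the three linear syzygies of $J'$ (equivalently, $\dim(Q_F)_3\le 12$, so $b_{23}(S/Q_F)=16-\dim(Q_F)_3\ge 4$), contradicting the assumed second row $(4,3,0)$; alternatively, the paper disposes of $J'$ in one line by noting that its rank-$3$ linear \emph{second} syzygy persists in the resolution of $S/Q_F$, forcing $b_{34}\ge 1$. A smaller point: saturation of $Q_F$ at the end does not follow from Lemma~\ref{lem_QmustSat}, which only yields $Q_F^{\sat}\subseteq F^{\perp}$; it follows from the table you have just computed, since $\pdim(S/Q_F)=3$ gives $\depth(S/Q_F)\ge 1$, hence $H^0_{\mathfrak m}(S/Q_F)=0$. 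With these two repairs your argument coincides with the paper's proof of Lemma~\ref{lem430}.
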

\begin{proof} We analyze by the rank of linear first syzygies. By Lemma \ref{all syzygies of rank 4}, not all first linear syzygies of $Q_F$ have rank $4$.
\vskip .1in
\begin{description}
    \item[{\bf There is a rank 3 linear first syzygy on $Q_F$}] Then by Proposition \ref{syzygy ideals}, there must be a subideal $J\subseteq Q_F$ such that $S/J$ has Betti table
\[
 \begin{matrix}1 & - & - &   \cr
      - & 3& 2&   \cr
\end{matrix}.
\]
So $J$ is Cohen-Macaulay of degree $3$ and codimension $2$. Assume $Q_F=J+\langle q_4\rangle$.  Then $Q_F$ has a linear syzygy involving $q_4$, and $(J:q_4)$ is the ideal of a curve with one linear generator. 
If $\deg (J:q_4)=3$, then $(J:q_4)=J$, so it cannot have a linear generator, contradiction. If $\deg (J:q_4)=1$, then $(J:q_4)$ has two linear generators, again a contradiction. So $\deg (J:q_4)=2$ and $(J:q_4)$ is a $(1,2)$ complete intersection.  Therefore  $S/(J:q_4)$  has Betti table
\[
 \begin{matrix}1 & 1 & - &   \cr
      - & 1& 1&   \cr
\end{matrix},
\]
and, from the mapping cone, $S/Q_F$ has Betti table
\[
 \begin{matrix}1 & - & - & - &  \cr
           - & 4& 3& - &  \cr
           - & -& 1& 1 &  \cr
\end{matrix}.
\]
The quadric $q_4$ intersects $\VV(J)$ in a line, and the complete intersection  $(J:q_4)$ in a scheme of length $4$.  But the line meets the complete intersection in a point, so $Q_F$ is the ideal of the union $Z$ of a line and a possibly embedded scheme of length $3$.
\vskip .1in
\item[{\bf There is no linear first syzygy of rank $\ge 3$ on $Q_F$}] By Lemma \ref{lem_4gen3rank2syz}, $Q_F$ contains either a subideal $J_{1}$ with Betti table of $S/J_1$
\[
 \begin{matrix}1 & - & - &   \cr
      - & 3& 2&   \cr
\end{matrix}
\]
or a subideal $J_{2}$ with Betti table of $S/J_2$
\[
 \begin{matrix}1 & - & - & - & \cr
      - & 3& 3& 1 & \cr
\end{matrix}.
\]
If $J_{1}\subseteq Q_F$, then by the same reasoning as the case with rank 3 syzygy above, $S/Q_F$ has Betti table
\[
 \begin{matrix}1 & - & - & - &  \cr
           - & 4& 3& - &  \cr
           - & -& 1& 1 &  \cr
\end{matrix}.
\]
On the other hand, $J_{2}$ cannot be in $Q_F$, because the linear second syzygy on $J_2$ would persist in the resolution of $Q_F$. 
\end{description}
Therefore, the resolution of $Q_F$ with top row $(4,3,0)$ is as claimed.
\end{proof}

\begin{lemma}\label{scheme in a plane and a point}
If $Q_F$ admits a second linear syzygy $f$ of order 3, then $I_f$ is the ideal of a plane and a (possibly embedded) point $P$, such that all elements of $Q_F$ vanish at $P$. 
\end{lemma}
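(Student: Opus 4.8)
The statement has two parts, and only the second needs real work. That $I_f$ is the ideal of a plane together with a (possibly embedded) point $P$ is exactly case (4) of Proposition~\ref{syzygy ideals}: a second linear syzygy of rank $3$ forces $p=2$ and $k=0$ there, and that proof already produces, after a choice of coordinates, the presentation $I_f = l\cdot\langle x_1,x_2,x_3\rangle$ for a linear form $l$, with plane $\VV(l)$ and point $P=\VV(x_1,x_2,x_3)$. So it remains to show that every quadric of $Q_F$ vanishes at $P$; as $Q_F$ is generated in degree $2$, this amounts to proving $(Q_F)_2\subseteq I_P$, equivalently (after normalizing $P=[1:0:0:0]$) that no $q\in(Q_F)_2$ has a nonzero $x_0^2$-coefficient.

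The plan is a direct apolarity computation, with nondegeneracy of $F$ as the only external input. First normalize coordinates so that $P=\VV(x_1,x_2,x_3)$; then, according to whether $l$ does or does not involve $x_0$, one may further assume $I_f=\langle x_0x_1,x_0x_2,x_0x_3\rangle$ (case A) or $I_f=\langle x_1^2,x_1x_2,x_1x_3\rangle$ (case B). Since $I_f\subseteq F^{\perp}$, the three generators of $I_f$ annihilate $F$, and running them through $F$ yields a normal form. In case A, $\partial F/\partial y_0$ is killed by $\partial/\partial y_1,\partial/\partial y_2,\partial/\partial y_3$, hence is a scalar multiple of $y_0^3$, so $F=c\,y_0^4+G(y_1,y_2,y_3)$. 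In case B the same computation applied to $\partial F/\partial y_1$ gives $F=c\,y_0^3y_1+H(y_0,y_2,y_3)$. In both cases $c\neq0$, for if $c=0$ then $F$ omits a variable ($y_0$ in case A, $y_1$ in case B) and is annihilated by the corresponding linear form, contradicting nondegeneracy of $F$.

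With the normal form in hand, take an arbitrary $q\in(Q_F)_2\subseteq F^{\perp}$ and write $q=\alpha\,x_0^2+x_0\ell'+g'$ with $\ell'$ linear in $x_1,x_2,x_3$ and $g'$ a quadratic form in $x_1,x_2,x_3$. The mechanism is that every summand of $q$ other than $\alpha x_0^2$ contributes to $q\circ F$ a form lying in a proper subspace --- in case A a form in $y_1,y_2,y_3$ only, since $\ell'$ and $g'$ differentiate only those variables and $G$ involves only them; in case B a form free of $y_1$, since the generators $x_1^2,x_1x_2,x_1x_3$ of $I_f$ kill $F$ and since $\partial F/\partial y_2,\partial F/\partial y_3\in\CC[y_0,y_2,y_3]$ --- whereas $x_0^2\circ F$ contributes a nonzero scalar multiple of $y_0^2$ (case A) or of $y_0y_1$ (case B). As $q\circ F=0$ and $c\neq0$, the coefficient of $y_0^2$ (resp.\ $y_0y_1$) in $q\circ F$ forces $\alpha=0$, so $q\in\langle x_1,x_2,x_3\rangle=I_P$; hence every element of $Q_F$ vanishes at $P$.

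The step I expect to require the most care is case B, where $I_f$ carries an embedded length-$2$ structure at $P$ so that $F$ does not factor through a single linear form; there one must use all three quadratic generators of $I_f$ (not just $l=x_1$), together with the special shape of $\partial F/\partial y_2$ and $\partial F/\partial y_3$, to kill the $y_1$-component of $(q-\alpha x_0^2)\circ F$. It is also worth noting at the outset that these two cases are exhaustive: the rank of a linear syzygy in four variables is at most $4$, so a rank-$3$ second syzygy is of one of the two types above, the rank-$4$ case being a separate situation treated elsewhere.
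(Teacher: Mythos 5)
Your proof is correct, and your verification of both cases checks out (in particular you correctly noted that in case B the products $x_0x_i$ are \emph{not} in $I_f$, so their contributions to $q\circ F$ must be controlled by the normal form of $F$ rather than by apolarity of $I_f$ alone). Your route is the dual of the paper's: you pass to the socle generator, derive the normal form $F=c\,y_0^4+G(y_1,y_2,y_3)$, resp.\ $F=c\,y_0^3y_1+H(y_0,y_2,y_3)$, from $I_f\subset F^\perp$, and then kill the $x_0^2$-coefficient of $q$ by extracting the $y_0^2$- (resp.\ $y_0y_1$-) coefficient of $q\circ F$. The paper instead stays entirely on the ideal side: assuming $q=x_0^2+\cdots\in Q_F$ does not vanish at $P$, it multiplies $q$ by linear forms and reduces modulo $I_f$ to conclude $x_0^3\in Q_F$ (case A), resp.\ $x_0^2x_1\in Q_F$ (case B), whence all cubics, and then all quartics, vanishing on the relevant plane lie in $F^\perp$, contradicting nondegeneracy. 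The two arguments exploit the same obstruction, but yours pins down the explicit shape of $F$ (which is occasionally useful elsewhere), while the paper's avoids any coefficient computation and only needs that $Q_F$ is an ideal; the identification of $I_f$ via Proposition~\ref{syzygy ideals} is the same in both.
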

\begin{proof} The description of $I_f$ follows from Proposition \ref{syzygy ideals}. For the last statement,
change coordinates so that $P=(1:0:0:0)$ and $I_f$ is one of the following:
\begin{enumerate}
    \item $I=\langle x_0x_1,x_0x_2,x_0x_3 \rangle$
    \item $I=\langle x_1^2,x_1x_2,x_1x_3 \rangle$
\end{enumerate}
Let $q$ be a quadric $\in Q_F\setminus I$, and suppose $q$ does not vanish at $P$. Then after modifying $q$ using elements from $I$ we can write $q$ as 
\[
q=x_0^2+f(x_1,x_2,x_3), \mbox{ with }f \mbox{ quadratic}.
\]
Now if $I=\langle x_0x_1,x_0x_2,x_0x_3 \rangle$ we see that $x_0g\in I\subset Q_F$ for any polynomial $g$ in the variables $x_1,x_2,x_3$, and therefore $x_0^3=qx_0-fx_0 \in Q_F$. 

It follows that all cubics and in consequence all quartics vanishing on the plane $x_0=0$ are in $Q_F$. But this is impossible if $F$ is non-degenerate. 
Similarly, if $I=\langle x_1^2,x_1x_2,x_1x_3 \rangle$, then after reducing modulo $I$, we may assume that $q=x_0^2+ax_0x_1+x_0(x_2,x_3)$. Then 
\[
x_0^2x_1=qx_1-ax_0x_1^2+x_0x_1l(x_2,x_3)\in Q_F,
\]
but $\langle x_0^2x_1 \rangle+ I$ generates all cubics vanishing on the plane $x_1=0$, and again we have a contradiction.
\end{proof}
\begin{lemma}\label{lem441}
If the Betti table of $S/Q_F$ has second row $(4,4,1)$, then the Betti table of $S/Q_F$ is
\[
 \begin{matrix}1 & - & - & - & \cr
           - & 4& 4& 1 &  \cr
\end{matrix}.
\]
 If the second order syzygy has rank $4$, then $Q_F$ is the ideal of two skew lines or a double line in a smooth quadric surface.
If the second order syzygy has rank $3$, then $Q_F$ is the ideal of  a plane conic and a, possibly embedded, point.

\end{lemma}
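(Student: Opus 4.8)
The plan is to determine $Q_F$ completely from its unique linear second syzygy. The hypothesis says that $Q_F$ is minimally generated in degree $2$ by four quadrics, that $b_{23}(S/Q_F)=4$, and that there is, up to scalar, a single linear second syzygy $f$. Since a $p$-th linear syzygy of an ideal generated by quadrics has rank $p+k+1$ with $k\ge 0$, and no linear syzygy of an ideal in four variables has rank exceeding $4$, the rank of $f$ is $3$ or $4$; I would treat these cases separately, and in each case identify $Q_F$ with the syzygy ideal $I_f$ (or with $I_f$ plus one extra quadric).

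In the case $\rk(f)=4$: Proposition~\ref{syzygy ideals}(5) lists three possibilities for $I_f\subseteq Q_F$, with $b_{12}(S/I_f)\in\{4,5,6\}$. Because $Q_F$ is generated by four quadrics, $(I_f)_2\subseteq (Q_F)_2$ has dimension at most $4$, so only case (5a) survives: $S/I_f$ has Betti table $\left[\begin{smallmatrix}1&-&-&-\\-&4&4&1\end{smallmatrix}\right]$ and $I_f$ is the ideal of two skew lines or a double line. Then $(I_f)_2=(Q_F)_2$, and since both ideals are generated in degree $2$ we conclude $Q_F=I_f$; this gives the Betti table. It then remains to note that two skew lines span $\Pn^3$ and lie on a smooth quadric surface (the quadrics through them form a $\Pn^3$ whose general member is smooth, and there the two lines belong to one ruling), and that the relevant double lines are the divisors $2L$ with $L$ a ruling line of a smooth quadric. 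This is type $[441b]$.

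In the case $\rk(f)=3$: by Proposition~\ref{syzygy ideals}(4), $I_f=\ell\cdot I_P$ where $\ell$ is a linear form cutting out a plane $H$, $I_P$ is the ideal of a point $P$ (with $P\in H$ exactly when $\ell\in I_P$), and $b_{12}(S/I_f)=3$; by Lemma~\ref{scheme in a plane and a point}, every quadric in $Q_F$ vanishes at $P$. Hence $Q_F=I_f+(q_4)$ for one quadric $q_4\in I_P$. I would first check $q_4\notin(\ell)$: otherwise $Q_F\subseteq(\ell)$ is a height-one ideal $\ell\cdot\mathfrak a$ with $\mathfrak a$ generated by four linear forms, forcing $Q_F=\ell\cdot\mathfrak m$ and $b_{23}(S/Q_F)=6$, contrary to hypothesis. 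The key point is then that, since $(\ell)$ is prime and $q_4\in I_P\setminus(\ell)$, the colon ideal collapses to a single linear form: $(I_f:q_4)=(\ell I_P:q_4)=(\ell)$. Using $\ell I_P\cong I_P(-1)$ and the Koszul resolution of $I_P$ one obtains $0\to S(-4)\to S(-3)^3\to S(-2)^3\to S\to S/I_f\to 0$; feeding the short exact sequence
\[
0\longrightarrow \big(S/(\ell)\big)(-2)\xrightarrow{\ \cdot q_4\ } S/I_f\longrightarrow S/Q_F\longrightarrow 0
\]
into a mapping cone (whose only cross-maps are $S(-2)\xrightarrow{q_4}S$ and a linear map $S(-3)\to S(-2)^3$, hence carry no units) yields $0\to S(-4)\to S(-3)^4\to S(-2)^4\to S\to S/Q_F\to 0$, i.e.\ the asserted Betti table; in particular $\pdim(S/Q_F)=3$, so $Q_F$ is saturated. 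Finally $V(Q_F)=(H\cup\{P\})\cap V(q_4)$, which since $q_4\notin I_H$ is the union of the plane conic $H\cap V(q_4)$ and the point $P$ (possibly embedded). This is type $[441a]$.

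I expect the rank-$4$ case to be routine bookkeeping with Proposition~\ref{syzygy ideals}. The main obstacle is the rank-$3$ case: establishing the collapse $(I_f:q_4)=(\ell)$ — this is exactly what forces the mapping cone to be minimal and pins down the Betti table — together with the (routine but necessary) exclusion of the degenerate positions of $q_4$ (those with $q_4\in I_H$, or producing $Q_F=\ell\,\mathfrak m$), where the hypothesis $b_{23}(S/Q_F)=4$ does the work.
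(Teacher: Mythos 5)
Your proof is correct and follows essentially the same route as the paper: split on the rank of the unique second linear syzygy, identify the syzygy ideal $I_f$ via Proposition~\ref{syzygy ideals}, and invoke Lemma~\ref{scheme in a plane and a point} in the rank-$3$ case to see $Q_F=I_f+\langle q_4\rangle$ with $q_4$ vanishing at $P$. Your explicit computation $(I_f:q_4)=(\ell)$ and the minimal mapping cone simply make precise the Betti-table verification the paper leaves implicit, and your exclusion of cases (5b)--(5c) via $\dim (Q_F)_2=4$ is exactly the paper's tacit reasoning in the rank-$4$ case.
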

\begin{proof}
Let $f$ denote the second linear syzygy of $Q_F$. By Proposition \ref{syzygy ideals} we have the following two possibilities:
\begin{description}
 \item[{\bf $f$ is rank $4$}] Then $I_f$ is the ideal of either two skew lines, or of a double line in a smooth quadric surface, such that $S/I_f$  in both cases has Betti table 
    \[
 \begin{matrix}1 & - & - & - & \cr
           - & 4& 4& 1 &  \cr
\end{matrix}.
\]
In that case we must have $Q_F=I_f$.
\vskip .05in
\item[{\bf $f$ is rank $3$}] Then $I_f$ is the ideal of a plane and a (possibly embedded) point $P$, and  $S/I_f$ has Betti table 
    \[
 \begin{matrix}1 & - & - & - & \cr
           - & 3& 3& 1 &  \cr
\end{matrix}.
\] In that case $Q_F=I_f+\langle q\rangle$ for some quadric $q$ vanishing at $P$ as seen in Lemma \ref{scheme in a plane and a point}. It follows that $Q_F$ is the ideal of a plane conic and a (possibly embedded) point.
\end{description}
Thus the possibilities are as claimed.
\end{proof}
\vskip .05in
\subsubsection{\noindent {\bf Case $b_{12}=5$}}
\vskip .05in
\begin{proposition}\label{apolar b12=5}
Let $F$ be a quaternary quartic with $b_{12}(A_F)=5$. Then the quadratic ideal $Q_F$ is saturated and is the ideal of an apolar scheme $Z$.
\begin{enumerate}
    \item If $F$ is of type $[550]$, then $S/Q_F$ has Betti table
 \[   \begin{matrix}&&&&\cr
 1 & - & -& - &  \cr
                                               - & 5& 5& - &  \cr
                                               - &-& - &1&  \cr
               \end{matrix}
               \]
   and $Q_F$ is the ideal of a scheme $Z$ of length $5$ with no subscheme of length $4$ in a plane. 
     \item If $F$ is of type $[551]$, then $S/Q_F$ has the Betti table
    \[ \begin{matrix}&&&&\cr
 1 & - & -& - &  \cr
                                               - & 5& 5& 1 &  \cr
                                               - &-& 1&1&  \cr
               \end{matrix}
               \]
               and $Q_F$ is the ideal of a scheme $Z$ of length $5$ with a subscheme of length $4$ in a plane and no subscheme of length $3$ in a line.
          \item If $F$ is of type $[562]$, then $S/Q_F$ has the Betti table
\[
        \begin{matrix}&&&&\cr
 1 & - & -& - &  \cr
                                               - & 5& 6& 2 &  \cr
               \end{matrix},
         \]
   and $Q_F$ is the ideal of the union $Z$ of a line and a (possibly embedded) scheme of length $2$ such that that $Z$ spans $\Pn^3$.
      \end{enumerate}
For each of the three types, the corresponding set of ideals $Q_F$ form an irreducible component of the Hilbert scheme. 
\end{proposition}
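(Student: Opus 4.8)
The plan is to follow the template already used for $b_{12}\le 4$. Since $b_{i,i+1}(S/Q_F)=b_{i,i+1}(A_F)$ for all $i$, the classification of \cite{SSY} forces the quadratic row of the Betti table of $S/Q_F$ to be one of $(5,5,0)$, $(5,5,1)$, $(5,6,2)$, i.e.\ $F$ is of type $[550]$, $[551]$ or $[562]$. For each of these I would prove a separate lemma --- stated right after the proposition, in the style of Lemmas~\ref{lem420}--\ref{lem441} --- that pins down the \emph{full} Betti table of $S/Q_F$ by tracking the ranks of the linear first and second syzygies of the five quadric generators via Proposition~\ref{syzygy ideals}, and then reads off the geometry of $Z:=\VV(Q_F)$. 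Since $Q_F$ is generated in degree $2$ by definition, the only Betti numbers not already fixed are the non-linear syzygies; Lemma~\ref{lem_QmustSat} (saturation in degree $4$) is the tool for excluding the spurious ones, exactly as Lemma~\ref{ImpossibleBettiTable} was used in the $b_{12}=4$ analysis. Once the Betti table is known, $\pdim_S(S/Q_F)=3$, so $\depth_S(S/Q_F)=1$ by Auslander--Buchsbaum; hence $H^0_{\mathfrak m}(S/Q_F)=0$, $Q_F$ is saturated, and $Z$ is apolar to $F$ since $I_Z=Q_F\subseteq F^\perp$.

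For type $[562]$, $Q_F$ carries a second linear syzygy $f$; by case (5) of Proposition~\ref{syzygy ideals} its rank is $3$ or $4$. A rank $3$ syzygy would make $I_f$ the ideal of a plane with a (possibly embedded) point, and the argument of Lemma~\ref{scheme in a plane and a point} would then force $F$ to be degenerate --- excluded. Hence $\rk f=4$, and since $b_{12}(Q_F)=5$ we land in subcase (5)(b) of Proposition~\ref{syzygy ideals}: $Q_F=I_f$ has the Betti table of part (3), and $Z$ is a line together with a (possibly embedded) length-$2$ scheme; if $Z$ did not span $\Pn^3$ it would lie in a plane, again forcing $F$ degenerate, which yields the spanning condition. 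For types $[550]$ and $[551]$, one first shows $\codim Q_F=3$ (controlling the linear syzygies via Proposition~\ref{syzygy ideals} and ruling out the lower-codimension possibilities with Lemma~\ref{lem_QmustSat}), so that $Z$ is a zero-dimensional scheme of degree $5$; the classification of Proposition~\ref{prop_BettiGamma}, extended to the non-reduced length-$5$ schemes that occur here (cf.\ items 2 and 3 of Table~\ref{tablepoints}), then shows $S/I_Z$ has exactly one of the two Betti tables of parts (1) and (2), the dichotomy being whether four points of $Z$ lie in a plane --- equivalently, whether $S/Q_F$ has a linear second syzygy.

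For the component statement: in each case $Z$ ranges over an irreducible family --- an open subset of $\Hilb_5(\Pn^3)$ for types $[550]$ and $[551]$, and for type $[562]$ the image of the irreducible variety parametrizing a line in $\Pn^3$ together with a length-$2$ subscheme of $\Pn^3$ that spans $\Pn^3$ with it --- and, as in the earlier cases, comparing the dimension of this family with the dimension of the tangent space to the ambient Hilbert scheme at a general such $Z$ identifies the set of ideals $Q_F$ with (the closure of) an irreducible component. I expect the $[562]$ analysis to be the main obstacle: one has to exclude the rank $3$ alternative for the second syzygy and, more delicately, rule out any non-linear syzygies so that $Q_F$ genuinely equals the $5$-generated syzygy ideal $I_f$; the $[550]$/$[551]$ cases and the component count should then be comparatively routine given Propositions~\ref{prop_BettiGamma} and~\ref{syzygy ideals}.
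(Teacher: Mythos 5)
Your overall framework (case-by-case analysis of the ranks of linear syzygies via Proposition~\ref{syzygy ideals}, saturation, then irreducibility) is the same as the paper's, but the $[562]$ step contains a genuine error. You claim that a rank-$3$ second linear syzygy would, ``by the argument of Lemma~\ref{scheme in a plane and a point}'', force $F$ to be degenerate, and you use this to conclude $\rk f=4$ and land in case (5)(b) of Proposition~\ref{syzygy ideals}. That lemma says no such thing: its conclusion is only that every quadric of $Q_F$ vanishes at the distinguished point $P$ of the plane-plus-point ideal $I_f$; nondegeneracy of $F$ is what is used \emph{inside} its proof to rule out a quadric missing $P$. Rank-$3$ second syzygies do occur for nondegenerate $F$: for the standard $[562]$ quadratic ideal $Q_F=\langle x_0x_1,x_0x_2,x_0x_3,x_1x_2,x_1x_3\rangle$ (a line and two points) the subideal $x_0\cdot\langle x_1,x_2,x_3\rangle$ already carries one, and the same phenomenon occurs for types $[551]$, $[441a]$, $[331]$. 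In the paper's Lemma~\ref{lem551and562} the rank-$3$ case is in fact the productive one: $I_f$ is a plane plus a point $P$, all five quadrics pass through $P$, and restricting the two remaining quadrics to the plane yields either a length-$4$ complete intersection (giving type $[551]$) or a line plus a point (giving type $[562]$); it is the rank-$4$ alternative (with $b_{34}=1$) that gets excluded there. So your dichotomy is inverted, and as written the $[562]$ argument does not go through; to repair it you must actually carry out the plane-plus-point analysis rather than dismiss it, or else prove that a rank-$4$ second syzygy exists, which you have not done.

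A secondary gap: for $[550]$ and $[551]$ your plan rests on an unproved ``extension'' of Proposition~\ref{prop_BettiGamma} to the non-reduced length-$5$ schemes that can occur, and on an unexplained claim that $\codim Q_F=3$ in these cases. The paper avoids both: for $[551]$ the geometry of $Z$ falls out of the same rank-$3$ syzygy-ideal analysis as above, and for $[550]$ (Lemma~\ref{lembetti550}) the Betti table of $S/Q_F$ is read off from the minimal resolution of $A_F$ itself --- the degree-$5$ second syzygy must be a syzygy among the quadrics alone, while the degree-$5$ first syzygies are Koszul between the cubic generator and the quadrics --- which forces the table of part (1) and hence a length-$5$ apolar scheme; ``no four points in a plane'' then follows because four coplanar points would give a net of quadrics through that plane and hence a linear second syzygy. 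Your saturation argument via Auslander--Buchsbaum is fine once the Betti tables are pinned down, and your sketch of the irreducibility statement matches the paper's (very brief) treatment.
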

\begin{proof}  The three cases follow from the lemmas \ref{lem551and562} and \ref{lembetti550} below. In each case, the schemes $Z$ corresponding to $Q_F$ form an irreducible component of the Hilbert scheme. 
\end{proof}
We analyze $Q_F$ in terms of its second linear syzygies. 

\begin{lemma}\label{lem551and562} If $Q_F$ has $b_{12}=5$ and admits a second linear syzygy then $Q_F$ is one of the following:
\begin{enumerate}
\item $Q_F$ is the ideal of a scheme $Z$ of length $5$, with a complete intersection subscheme of length $4$ in a plane.
Then $S/Q_F$ has Betti table
    \[ \begin{matrix}&&&&\cr
 1 & - & -& - &  \cr
                                               - & 5& 5& 1 &  \cr
                                               - &-& 1&1&  \cr
               \end{matrix}.
               \]
               Furthermore, no subscheme of length 3 in $Z$ is contained in a line.
\item $Q_F$ is the ideal of a line and a (possibly embedded) subscheme of length $2$, and $S/Q_F$ has Betti table 
\[
 \begin{matrix}1 & - & - & - & \cr
           - & 5& 6& 2 &  \cr
\end{matrix}.
\]
\end{enumerate}

\end{lemma}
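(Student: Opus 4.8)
The strategy is to exploit the structure theory for quadratic ideals via syzygies developed in Proposition \ref{syzygy ideals}, applied to the second linear syzygy $f$ of $Q_F$. Since $Q_F$ has $b_{12}=5$ and admits a second linear syzygy $f$, the syzygy $f$ has some rank $r=p+k+1$ with $p=2$; since we are in four variables, $r\le 4$, so $k\in\{0,1\}$. First I would dispose of the case $k=0$ (rank $3$): by Proposition \ref{syzygy ideals}(4), the syzygy ideal $I_f$ has Betti table with second row $(3,3,1)$, so $I_f$ is the ideal of a plane together with a possibly embedded point $P$. By Lemma \ref{scheme in a plane and a point}, every quadric in $Q_F$ vanishes at $P$, and $Q_F=I_f+\langle q_4,q_5\rangle$ for two further quadrics $q_4,q_5$ vanishing at $P$. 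Analyzing this, each $q_i$ restricted to the plane gives a quadric in the plane, and the resulting ideal is that of a plane conic together with the point $P$ — but one must check the Betti table. The conic-plus-point ideal: if the two quadrics $q_4,q_5$ modulo $I_f$ cut out a plane conic, the total scheme $Z$ has length $5$ (four from the conic, counting the plane structure correctly — actually here I must be careful, since $I_f$ is not saturated as the ideal of the plane, it has an embedded point) and the resulting Betti table matches case (1): second rows $(5,5,1)$ and $(-,1,1)$. The claim that no subscheme of length $3$ of $Z$ lies on a line then follows because $Z$ is a plane conic (length $4$, spanning a plane) together with a single reduced or embedded point, and three collinear points among these would force the conic to be degenerate with three collinear points, contradicting that the conic is a length-$4$ complete intersection subscheme of a plane; a reducible conic is a union of two lines meeting in a point, which still has no length-$3$ collinear subscheme as a complete intersection. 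I would verify the Betti table assertion by the Hilbert-series bookkeeping used in Proposition \ref{prop_BettiGamma}, combined with the mapping-cone construction from the short exact sequence $0\to S(-2)/(I_f:q_4)\to S/I_f\to S/(I_f+\langle q_4\rangle)\to 0$ and then adding $q_5$.

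For the case $k=1$ (rank $4$), Proposition \ref{syzygy ideals}(5) gives three possible shapes for $I_f$: second rows $(4,4,1)$, $(5,6,2)$, or $(6,8,3)$. The last is excluded immediately since $b_{12}(Q_F)=5<6$ and $I_f\subseteq Q_F$ forces $b_{12}(I_f)\le 5$. If $I_f$ has second row $(5,6,2)$, then since $b_{12}(I_f)=5=b_{12}(Q_F)$ we get $I_f=Q_F$, landing us in case (2): $Q_F$ is the ideal of a line and a possibly embedded length-$2$ scheme, with Betti table $(5,6,2)$. If $I_f$ has second row $(4,4,1)$ — two skew lines or a double line in a smooth quadric — then $Q_F=I_f+\langle q_5\rangle$ for one further quadric $q_5$, and I would run the mapping-cone analysis as in Lemma \ref{lem430} and Lemma \ref{lem420}: form $0\to S(-2)/(I_f:q_5)\to S/I_f\to S/Q_F\to 0$, bound $\deg(I_f:q_5)\le \deg I_f=2$, rule out $(I_f:q_5)$ having a linear generator (it would create an extra linear syzygy incompatible with $b_{12}=5$), and show that the only consistent outcome gives second row $(5,6,2)$, again case (2) — or else forces $q_5\in I_f$, which is impossible since $b_{12}$ would be $4$. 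Actually here one must also allow $(I_f:q_5)$ to be a complete intersection of type $(1,2)$ or similar; the degree-$2$ analysis of double structures on a line from \cite{Engheta}, already invoked in Lemma \ref{lem_2LS300}, handles the possibilities. The geometric identification — that the length-$5$ scheme $Z$ in case (1) has a length-$4$ complete intersection subscheme in a plane, and that $Z$ in case (2) is a line plus embedded length $2$ — follows from tracking the primary decomposition through these exact sequences.

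The main obstacle I anticipate is the rank-$3$ subcase: showing that $Q_F=I_f+\langle q_4,q_5\rangle$ really produces the Betti table $(5,5,1)/(-,1,1)$ of case (1) and not some other table, and that the two extra quadrics, modulo the non-saturated plane-plus-point ideal $I_f$, genuinely cut out a conic in the plane rather than collapsing the scheme further or producing additional embedded structure. This requires a careful mapping-cone computation keeping track of the embedded point of $I_f$ and verifying no cancellation occurs, in the spirit of the case analysis in Proposition \ref{prop_BettiGamma}; the key input is Lemma \ref{scheme in a plane and a point}, which pins down that both new generators vanish at $P$, preventing the scheme from becoming Artinian or degenerating to something covered by a different Betti table. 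A secondary subtlety is the non-containment claim "no length-$3$ subscheme of $Z$ in a line" in case (1), which I would argue by noting that such a subscheme together with the plane structure would force $I_f$-type constraints incompatible with the complete-intersection length-$4$-in-a-plane structure, using Macaulay growth as in the proof of Proposition \ref{prop_BettiGamma}.
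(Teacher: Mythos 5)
Your overall architecture (split on the rank of the second linear syzygy $f$, use Proposition \ref{syzygy ideals} and Lemma \ref{scheme in a plane and a point} in the rank-$3$ case, mapping cones in the rank-$4$ case) is essentially the paper's, but your rank-$3$ branch has a genuine gap: you assume that the two residual quadrics $q_4,q_5$, restricted to the plane of $I_f$, cut out a conic, and you conclude that a rank-$3$ second syzygy always leads to case (1). That is false. The pencil $\langle q_4,q_5\rangle$ restricted to the plane can have a fixed linear component, in which case its base locus in the plane is a line plus a point, and $Q_F$ is then the ideal of a line and a length-$2$ scheme, i.e.\ case (2) with Betti table $(5,6,2)$, not $(5,5,1)$. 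Concretely, take $Q_F=\langle x_0x_1,x_0x_2,x_0x_3,x_1x_2,x_1x_3\rangle$, the ideal of the line $\{x_0=x_1=0\}$ together with the two points $(1{:}0{:}0{:}0)$ and $(0{:}1{:}0{:}0)$: it has $b_{12}=5$, it admits the rank-$3$ second syzygy of the subideal $x_0\cdot\langle x_1,x_2,x_3\rangle$, and the residual quadrics $x_1x_2,x_1x_3$ share the factor $x_1$ on the plane $\{x_0=0\}$. So "rank $3$ $\Rightarrow$ case (1)" is simply not true, and your dichotomy as organized does not cover all ideals of type (2) correctly. The paper's proof handles exactly this: after invoking Lemma \ref{scheme in a plane and a point} it notes that the remaining quadrics restricted to the plane vanish either on a length-$4$ complete intersection \emph{or on a line and a point}, and that both outcomes are as in the assertion. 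Your argument needs this second subcase, together with a verification (mapping cone or direct computation) that it produces the $(5,6,2)$ table and the claimed geometric description.

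Two smaller points. First, your justification of "no length-$3$ subscheme of $Z$ on a line" only discusses collinear points on the conic and ignores collinearity involving the extra point $P$; the clean argument is the one the paper uses: $Z$ is cut out by quadrics, and a quadric restricted to a line cannot vanish on a length-$3$ subscheme without vanishing on the whole line, which would contradict finiteness of $Z$. Second, your rank-$4$ branch takes a different route from the paper (the paper derives contradictions under the assumption $b_{34}=1$ and a unique rank-$4$ syzygy, whereas you push the $(4,4,1)$ subcase through a mapping cone to land in case (2)); this is workable in principle, but note that when $q_5$ is a nonzerodivisor on the two-skew-lines ideal the cone gives second row $(5,4,1)$ (or $(5,4,0)$ after cancellation), so you must explicitly rule this out by comparison with the admissible second rows $(5,5,0)$, $(5,5,1)$, $(5,6,2)$ for $b_{12}=5$; "the only consistent outcome" needs that justification spelled out.
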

\begin{proof}
By Proposition \ref{syzygy ideals} and Lemma \ref{all syzygies of rank 4} we have two possibilities.
$b_{34}=1$ and the second linear syzygy $f$ of $Q_F$ has rank $3$ or $4$. If $f$ has rank $3$, then 
 $I_f$ is a subideal of $Q_F$ which is the defining ideal of a plane and a point $P$. By Lemma \ref{scheme in a plane and a point} the remaining quadrics must pass through the point $P$ and when restricted to the plane they vanish on either a complete intersection scheme of length 4  or on a line and a point. In both cases the ideal will be saturated and as in the assertion. Note that in the case where $Q_F$ defines a finite scheme $Z$, since it is defined by quadrics, this finite scheme cannot contain any subscheme of length $3$ contained in a line.  
\vskip .05in
 If $b_{34}=1$ and the unique (up to scaling) second linear syzygy $f$ is of rank 4,  then by Proposition \ref{syzygy ideals}, the ideal $Q_F$ contains the ideal of two skew lines. Adding one more quadric we obtain either an ideal defining four points or the ideal of a line and two points. 

In the first case, consider a subideal $Q'$ of $Q_F$ consisting of quadrics passing through two general additional points coplanar with three of the four points. The subideal $Q'$ is generated by three quadrics and vanishes on a plane and an additional point. But $Q'$ has a second linear syzygy of rank $3$ which is a contradiction with $b_{34}=1$ and $f$ of rank 4.

In the second case the Betti table of $S/Q_F$ is  
\[
 \begin{matrix}1 & - & - & - & \cr
           - & 5& 6& 2 &  \cr
\end{matrix}.
\]
This contradicts $b_{34}=1$, hence the case $b_{34}=1$ with the second linear syzygy of rank 4 never occurs. 
\end{proof}

\begin{lemma}\label{lembetti550}
If $F$ is a quaternary quartic such that $A_F$ is of type $[550]$, then 
$Q_F$ is the ideal of a scheme of length $5$, with no subscheme of length $4$ lying in a plane.  
\end{lemma}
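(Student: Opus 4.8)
The plan is to read off from the $[550]$ Betti table the numerical data $b_{12}(A_F)=5$ and, since $b_{i,i+1}(S/Q_F)=b_{i,i+1}(A_F)$, also $b_{23}(S/Q_F)=5$, $b_{34}(S/Q_F)=0$; moreover the $[550]$ table (Appendix~\ref{Appendix}) has $b_{13}(A_F)=1$, $b_{14}(A_F)=0$. Since $F$ is nondegenerate, $\dim(F^\perp)_2=b_{12}(A_F)=5$, so $Q_F$ is minimally generated by exactly these five quadrics, and $F^\perp=Q_F+(c)$ for a single cubic $c$; thus $A_F=(S/Q_F)/(\bar c)$. As $A_F$ is Artinian, $\dim(S/Q_F)\le 1$, and $\dim(S/Q_F)=0$ is impossible: by Proposition~\ref{qCGKK3} it would force $Q_F=F^\perp$, contradicting $c\notin Q_F$. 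Hence $\codim Q_F=3$ and $Z:=\VV(Q_F)$ is zero-dimensional.

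Next I would show $Q_F$ is saturated of degree $5$; write $B=S/Q_F$. The key intermediate claim — and the main obstacle — is that $\reg B<4$: I would prove this by going through the five linear first syzygies of $Q_F$ according to their rank ($2$, $3$ or $4$), using Proposition~\ref{syzygy ideals} and Lemma~\ref{all syzygies of rank 4} to control the syzygy ideals $I_{f_i}\subseteq Q_F$ and to rule out minimal syzygies in degrees $\ge 5$; equivalently, one shows the Betti table of $B$ has only its first three rows, i.e. $\reg B\le 2$. Granting $\reg B<4$, Lemma~\ref{lem_QmustSat} gives $Q_F^{\sat}\subseteq F^\perp$, so $Q_F$ and $Q_F^{\sat}$ agree through degree $2$; hence $\dim(S/Q_F^{\sat})_2=5$ and $\deg Z\ge 5$. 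For the reverse inequality, from the exact sequence
\[
0\longrightarrow \operatorname{ann}_B(\bar c)(-3)\longrightarrow B(-3)\xrightarrow{\ \bar c\ } B\longrightarrow A_F\longrightarrow 0,
\]
in which $\operatorname{ann}_B(\bar c)$ has finite length (it is supported at the vertex, the only place $\bar c$ can be a zero-divisor on the one-dimensional $B$), a Hilbert-function count gives $15=\dim_{\CC}A_F=3\deg Z+\dim_{\CC}\operatorname{ann}_B(\bar c)$. Thus $\deg Z\le 5$, forcing $\deg Z=5$ and $\operatorname{ann}_B(\bar c)=0$; so $\bar c$ is a nonzerodivisor, $B$ is Cohen–Macaulay, $Q_F$ is saturated, and $Q_F=I_Z$ for a length-$5$ scheme $Z\subset\Pn^3$.

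Finally I would rule out a length-$4$ subscheme of $Z$ in a plane. Suppose $Z'\subseteq Z$ has length $4$ and lies in $H=\VV(\ell)$. If $Z'$ lies on a line $L_0$, every quadric of $Q_F=I_Z$ restricts to a binary quadratic vanishing on four points of $L_0$, hence vanishes on $L_0$; then $L_0\subseteq \VV(Q_F)$, contradicting $\codim Q_F=3$. Otherwise $Z'$ spans $H$, hence imposes independent conditions on quadrics, so $\dim(S/I_{Z'})_2=4$ and $\dim(I_{Z'})_2=6$; since $(I_{Z'})_2\supseteq (Q_F)_2+\ell S_1$, we get $\dim\bigl((Q_F)_2\cap\ell S_1\bigr)\ge 5+4-6=3$. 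If this dimension is $4$, then $Q_F\supseteq \ell\mathfrak m$ and $\VV(Q_F)$ contains the conic $H\cap\VV(q)$ for the fifth generator $q$, again contradicting $\codim Q_F=3$. If it is $3$, then three of the quadric generators of $Q_F$ span $\ell W$ for a $3$-dimensional $W\subseteq S_1$, and they generate $\ell\cdot(W)\cong (W)(-1)$, whose Koszul resolution contributes a minimal linear second syzygy in degree $4$. For an ideal generated by quadrics there are no degree-$3$ second syzygies, and (since $b_{34}(B)=0$) the third free module has no degree-$4$ summand to provide a boundary, so every degree-$4$ second syzygy is minimal; hence $b_{34}(S/Q_F)\ge 1$, contradicting $b_{34}(S/Q_F)=b_{34}(A_F)=0$. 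Thus $Z$ has no length-$4$ subscheme in a plane, which completes the proof. The one genuinely delicate step is the regularity bound $\reg(S/Q_F)<4$ feeding Lemma~\ref{lem_QmustSat}; the rest is bookkeeping with syzygies, linkage, and Hilbert functions.
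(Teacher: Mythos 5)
Your setup and your endgame are fine, but the proposal has a genuine gap exactly where you flag it: the bound $\reg(S/Q_F)\le 3$ (equivalently, the identification of the Betti table of $S/Q_F$) is asserted, not proved, and everything in the middle of your argument hangs on it --- without it Lemma~\ref{lem_QmustSat} does not give $Q_F^{\sat}\subseteq F^\perp$, so you lose $(Q_F^{\sat})_2=(Q_F)_2$, the inequality $\deg Z\ge 5$, and hence the Hilbert-series count that produces $\deg Z=5$, the nonzerodivisor property of $\bar c$, and saturation. The plan you sketch for this step is not clearly adequate: Proposition~\ref{syzygy ideals} only describes the subideal $I_f$ attached to a single linear syzygy, and Lemma~\ref{all syzygies of rank 4} is stated for ideals with four quadric generators; neither controls minimal syzygies of the full five-generator ideal $Q_F$ in degrees $\ge 5$, which is precisely what must be excluded. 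The paper sidesteps this case analysis entirely by working with the known minimal resolution of $A_F$: since the $[550]$ table has $b_{24}(A_F)=0$, the unique degree-$5$ second syzygy of $A_F$ can only involve the five linear first syzygies, i.e.\ it is a second syzygy of $Q_F$ alone, and the five degree-$5$ first syzygies are the Koszul syzygies between the cubic generator and the quadrics; stripping off the cubic strand gives the resolution $0\to S(-5)\to S(-3)^5\to S(-2)^5\to S\to S/Q_F\to 0$, from which one reads off at once that $S/Q_F$ is Cohen--Macaulay of codimension $3$ (hence $Q_F$ saturated), that $\VV(Q_F)$ has length $5$, and that $b_{34}(S/Q_F)=0$. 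You should either carry out your regularity bound in full or replace it by an argument of this kind; as written, the central step is missing.

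Two smaller remarks. Your justification that $\operatorname{ann}_B(\bar c)$ has finite length (``$\bar c$ can only be a zero-divisor at the vertex'') is unsubstantiated --- a priori $c$ could vanish at a point of $Z$ --- but the conclusion is salvageable by your own exact sequence: since $\dim B=1$, $(1-t^3)H_B(t)$ is a polynomial, so $t^3H_{\operatorname{ann}_B(\bar c)}(t)=H_{A_F}(t)-(1-t^3)H_B(t)$ is a polynomial and the annihilator has finite length, after which your count $15=3\deg Z+\dim_{\CC}\operatorname{ann}_B(\bar c)$ goes through. Your final step excluding a planar length-$4$ subscheme is correct and is essentially the paper's: restriction to the plane forces a $3$-dimensional space $\ell W\subseteq (Q_F)_2$ of quadrics vanishing on it, whose Koszul second syzygy contradicts $b_{34}(S/Q_F)=b_{34}(A_F)=0$; your explicit treatment of the collinear and $\dim(\ell S_1\cap (Q_F)_2)=4$ subcases is in fact more careful than the paper's one-line version.
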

\begin{proof} 
The Betti table of $A_F$ is 
\[
        \begin{matrix}1 & - & - & - & -& \cr
       - & 5& 5& - & -& \cr
        - &1& -& 1 & -& \cr
         - &-& 5& 5 & -& \cr
          - & -& -& - & 1& \cr
        \end{matrix}.
        \]
        Hence $F^\perp$ is generated by $Q_F$ and a cubic form.
Notice that the second syzygy of degree 5 must be a syzygy only on the quadrics. 
Furthermore, the first syzygies of degree $5$ are all Koszul syzygies between the cubic form and the quadrics.  Therefore $S/Q_F$ has Betti table 
\[
        \begin{matrix}1 & - & - & -  \cr
       - & 5& 5& -  \cr
        - &-& -& 1  \cr
        \end{matrix}
        \]
and $I$ is the ideal of a finite scheme $Z$ of length 5.
  
  If $Z$ contains a subscheme of length $4$ in a plane, then at least a $3$-dimensional space of quadrics in $Q_F$ would vanish on the plane of this subscheme.  But then these quadrics have a second order linear syzygy, contradicting the fact that $Q_F$ does not.
   \end{proof}
\subsubsection{\noindent {\bf Case $b_{12}=6$}}
\vskip .05in
\begin{proposition}\label{apolar b12=6}
Let $F$ be a quaternary quartic with $b_{12}(A_F)=6$. Then $F$ is of type $[683]$ and the quadratic ideal $Q_F$ is saturated. The Betti table of $S/Q_F$ is
\[
        \begin{matrix}1 & - & - & -  \cr
       - & 6& 8& 3  \cr
        \end{matrix}
        \]
and $Q_F$ is the ideal of a finite apolar scheme $Z$ of length $4$.
When $F$ is of type $[683]$, the corresponding set of ideals $Q_F$ form an irreducible component of the Hilbert scheme.
\end{proposition}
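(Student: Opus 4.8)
The plan is to combine the classification of \cite{SSY} with the syzygy--ideal dictionary of Proposition~\ref{syzygy ideals} and with Lemma~\ref{scheme in a plane and a point}.

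\textbf{Step 1: the Betti table.} By \cite{SSY}, the only Betti table of a codimension $4$, regularity $4$ Artinian Gorenstein ring with $b_{12}=6$ is $[683]$, so $F$ is of type $[683]$. Since $b_{i,i+1}(S/Q_F)=b_{i,i+1}(A_F)$, we obtain $b_{12}(S/Q_F)=6$, $b_{23}(S/Q_F)=8$, $b_{34}(S/Q_F)=3$; in particular $Q_F$ has a second-order linear syzygy $f$, whose rank is $3$ or $4$ by Proposition~\ref{syzygy ideals} (the cases $p=2$). \textbf{Step 2: identifying $Q_F$ as the ideal of a length $4$ scheme.} If $f$ has rank $4$, Proposition~\ref{syzygy ideals}(5) gives $S/I_f$ one of the Betti tables $(4,4,1)$, $(5,6,2)$, $(6,8,3)$; only the last has six quadric generators, so --- as $Q_F$ has exactly six --- it forces $Q_F=I_f$, and then $S/Q_F$ has the $2$-linear resolution $[683]$ and $\VV(Q_F)$ is a length-$4$ scheme spanning $\Pn^3$. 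The two small possibilities $(4,4,1)$ and $(5,6,2)$ have $I_f\subsetneq Q_F$ with at most five quadric generators (the ideal of two skew lines, resp.\ of a line plus an embedded length-$2$ scheme); a mapping-cone computation for $I_f\hookrightarrow Q_F$ shows that adjoining one or two quadrics to such an $I_f$ yields at most six linear first syzygies unless the adjoined quadric is a zerodivisor on $S/I_f$, and in that case $Q_F$ is non-saturated with $Q_F^{\sat}$ the ideal of a line or of a scheme of length $\le 2$; since $\reg(S/Q_F)<4$ here, Lemma~\ref{lem_QmustSat} then puts the ideal of a scheme of rank $\le 3$ inside $F^\perp$, contradicting $h_2(F)=4\le\operatorname{cr}(F)$.

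If instead $f$ has rank $3$, Lemma~\ref{scheme in a plane and a point} shows all quadrics of $Q_F$ vanish at a point $P$ and, in suitable coordinates, $I_f=\langle x_0x_1,x_0x_2,x_0x_3\rangle$ is the ideal of the plane $\Pi=\VV(x_0)$ and $P=(1{:}0{:}0{:}0)$ (the embedded subcase $I_f=\langle x_1^2,x_1x_2,x_1x_3\rangle$ gives, as in the proof of that lemma, a contradiction with the nondegeneracy of $F$). Reducing the generators modulo $I_f$, write $Q_F=I_f+(m_4,m_5,m_6)$ with $m_i\in\CC[x_1,x_2,x_3]_2$; a direct count gives $\dim(Q_F)_3=18-\lambda$, where $\lambda$ is the number of linear syzygies of $(m_4,m_5,m_6)$ over $\CC[x_1,x_2,x_3]$, hence $b_{23}(S/Q_F)=6+\lambda$. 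Thus $b_{23}=8$ forces $\lambda=2$, i.e.\ the $m_i$ are the $2\times 2$ minors of a $2\times 3$ matrix of linear forms; then $\VV(Q_F)=W\sqcup\{P\}$ where $W\subset\Pi$ is the length-$3$ rank-$1$ locus of that matrix, and comparing degree-$2$ parts shows $Q_F$ equals the ideal of this length-$4$ scheme, which spans $\Pn^3$. In either case $Q_F=I_Z$ with $Z$ of length $4$ spanning $\Pn^3$, and $S/Q_F$ has resolution $[683]$.

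\textbf{Step 3: saturation, apolarity, irreducibility.} The $2$-linear resolution has length $3$, so $\pdim_S(S/Q_F)=3$, hence $\depth_S(S/Q_F)=1$ by Auslander--Buchsbaum and $H^0_{\mathfrak m}(S/Q_F)=0$; that is, $Q_F$ is saturated. From $Q_F=I_Z\subseteq F^\perp$ the scheme $Z$ is apolar to $F$, and since its length equals $h_2(F)=4$ it computes the cactus rank of $F$ (Example~\ref{Cat224}). For irreducibility: a length-$4$ scheme $Z\subset\Pn^3$ spanning $\Pn^3$ has $\dim(S/I_Z)_1=4=\deg Z$, whence $\dim(S/I_Z)_n=4$ for all $n\ge 1$; thus $\dim(I_Z)_2=6$ and $I_Z$ has the $2$-linear resolution $[683]$. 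Conversely a general $F$ with $F^\perp\supseteq I_Z$ has $h_2(F)=4$, so $Q_F=I_Z$ and $F$ is of type $[683]$. Hence $\{Q_F: F\in{\mathcal F}_{[683]}\}$ is exactly the open locus in $\Hilb_4(\Pn^3)$ of schemes not lying in a hyperplane; it is nonempty --- the Fermat quartic $\sum_i y_i^4$ gives $Q_F=\langle x_ix_j: i\ne j\rangle$, the ideal of the four coordinate points --- and open dense in the irreducible scheme $\Hilb_4(\Pn^3)$, hence forms an irreducible component of the Hilbert scheme.

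\textbf{Main obstacle.} The delicate point is Step 2: excluding the degenerate syzygy-ideal types $(4,4,1)$ and $(5,6,2)$ (together with the embedded-point subcase of rank $3$). These configurations lie in codimension one next to the generic one, so they cannot be separated by crude dimension counts --- one must propagate the exact linear Betti numbers $b_{23}=8$, $b_{34}=3$ through mapping cones and invoke the nondegeneracy of $F$ via Lemma~\ref{lem_QmustSat}.
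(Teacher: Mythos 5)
Your Step 2 contains a genuine error in the rank~$4$ branch, and unfortunately it occurs exactly in the generic situation. Take $F$ the Fermat quartic, so that $Q_F=\langle x_ix_j : i\ne j\rangle$ is the ideal of the four coordinate points, which has Betti numbers $(6,8,3)$. Its three minimal second linear syzygies are the second syzygies of the three sub-ideals $\langle x_0x_2,x_0x_3,x_1x_2,x_1x_3\rangle$ (and the two other splittings of the four points into two pairs), i.e.\ ideals of two skew lines with Betti table $(4,4,1)$: these syzygies have rank $4$ but involve only four of the six generators, so $I_f\subsetneq Q_F$. Your claimed dichotomy --- ``adjoining one or two quadrics to such an $I_f$ yields at most six linear first syzygies unless the adjoined quadric is a zerodivisor on $S/I_f$, and in that case $Q_F$ is non-saturated with $Q_F^{\sat}$ the ideal of a line or of a scheme of length $\le 2$'' --- is false for this example: the adjoined quadrics $x_0x_1$ and $x_2x_3$ are zerodivisors modulo the skew-lines ideal, yet $Q_F$ is saturated, has eight linear first syzygies, and cuts out a length-$4$ scheme. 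So the sub-cases $(4,4,1)$ and $(5,6,2)$ cannot be excluded; indeed the case you try to rule out is the one that actually occurs for a general member of ${\mathcal F}_{[683]}$, and your ``$Q_F=I_f$'' conclusion is never forced there. As written, the argument would derive a contradiction in a consistent situation, so it is unsound, and the rank-$4$ branch gives no control of $S/Q_F$ beyond its linear strand (in particular no finiteness of $\VV(Q_F)$ and no saturation). A smaller but real gap of the same kind appears in the rank-$3$ branch: Lemma~\ref{scheme in a plane and a point} does \emph{not} exclude the embedded subcase $I_f=\langle x_1^2,x_1x_2,x_1x_3\rangle$; its proof only shows that every quadric of $Q_F$ vanishes at $P$, so your parenthetical dismissal of that subcase is unjustified, and the final step ``comparing degree-$2$ parts shows $Q_F$ equals the ideal of this length-$4$ scheme'' also needs an argument that the ideal of $W\cup\{P\}$ is generated by quadrics (false, e.g., when $W$ lies on a line).

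For comparison, the paper avoids syzygy schemes entirely here: it passes to the generic initial ideal of $Q_F$ in grevlex. Either $\gin(Q_F)$ contains a monomial $x_ix_3$, in which case the saturation $(Q_F:x_3)$ contains a linear form $L$ and $(x_0,\dots,x_3)\cdot L\subset Q_F$, contradicting the known linear strand $(6,8,3)$; or the degree-$2$ part of $\gin(Q_F)$ is $(x_0,x_1,x_2)^2$, and then the eight linear syzygies force all eight S-pairs to reduce to zero, so the six quadrics are a Gr\"obner basis, $\gin(Q_F)=(x_0,x_1,x_2)^2$, and the inequality of Betti tables $b_{ij}(S/Q_F)\le b_{ij}(S/\gin(Q_F))$ pins down the whole table and hence the length-$4$ scheme, saturation, and apolarity. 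Your Step 3 (Auslander--Buchsbaum for saturation, openness of ``spanning $\Pn^3$'' in $\Hilb_4$ for irreducibility) is fine once the Betti table is established, but Step 2 must be repaired --- either along the paper's Gr\"obner line or by a correct analysis that does not attempt to force $Q_F=I_f$.
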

\begin{proof} It follows from the proof of Theorem 2.2 of \cite{SSY} that when $b_{12}=6$ the top two rows of the Betti table of $S/Q_F$ are
\[
        \begin{matrix}1 & - & - & -  &-\cr
       - & 6& 8& 3 &- \cr
        \end{matrix}.
        \]
We wish to show that this is the entire Betti table for $S/Q_F$.
  There are a number of ways to do
this.  Here we use a generic initial ideal argument.  After a general change of coordinates, in graded reverse lex order the initial ideal, $\gin(Q_F)$, will be a strongly stable ideal with 6 quadratic generators (see Appendix \ref{Appendix2} for the terminology and basic results used here).  There are two possibilities: 
\begin{enumerate}
    \item The quadradic part of $\gin(Q_F)$ is $(x_0, x_1, x_2)^2$.
    \item $\gin(Q_F)$ contains a monomial $x_i x_3$. 
    \end{enumerate} 
    In the second case, the saturation of $Q_F$ is $(Q_F : x_3)$, which will contain a linear form $L$, and therefore $Q_F$ contains $(x_0, x_1, x_2, x_3) \cdot L$, which contradicts the known quadratic linear strand of the Betti table of $S/Q_F$ above.

Therefore, we must be in the setting of the first case, where the degree $2$ component of $\gin(Q_F)$ is $(x_0, x_1, x_2)^2$.
To compute a Gr\"obner basis for $Q_F$, notice that there are eight minimal S-pairs on the six generators of $Q_F$ (in these general coordinates), which all occur in degree $3$.  Each of these must reduce to 0, since there are eight linear syzygies on the generators of $Q_F$.  Therefore the six generators of $Q_F$ form a Gr\"obner basis. As the Betti table of $S/Q_F$ is pointwise at most the Betti table of $S/\gin(Q_F)$, this forces the Betti table of $S/Q_F$ to be the above table.

Therefore, $Q_F$ defines a degree 4 zero scheme in $\Pn^3$, and this scheme spans $\Pn^3$.  Schemes of length $4$ that span $\Pn^3$ form an open dense subset of the Hilbert scheme $\Hilb^4(\Pn^3)$, and the proposition follows.
\end{proof}

\section{Rank and powersum presentations of quartics}\label{section_VSP}
In this section we determine the rank $r=\rr(F)$ and the variety of powersum presentations $VSP(F,r)$ of the general form $F$ of each Betti table type.   

Theorem 2.2 of \cite{SSY} proves that a graded Artin Gorenstein algebra $A$ of regularity and codimenson $4$ has one of $16$ Betti tables; these Betti tables are reproduced as Table \ref{tableCGKK} 
and Table \ref{tableremaining} in Appendix \ref{Appendix}. Recall from Definition~\ref{FBdefinition} that ${\mathcal F}_B\subset \Pn^{34}$ is the set of forms $F$ for which $A_F$ has Betti table $B$, labelled as in the Appendix \ref{Appendix}. We will analyze the sets ${\mathcal F}_B$ via the set of quadratic ideals $\{Q_F|F\in {\mathcal F}_B\}$.  These sets of quadratic ideals were identified in Section \ref{quadraticideals} via their Betti tables, and by Definition~\ref{GbettiTable}, ${\mathcal G}_B$ denotes the set of ideals $Q_F$ for quaternary quartics $F$ such that $S/Q_F$ has Betti table $B$.

In Section \ref{quadraticideals} we 
showed that the sets ${\mathcal G}_B$ are irreducible, except for one Betti table.  The set
${\mathcal G}_{[441]}$ has two components that we denote ${\mathcal G}_{[441a]}$ and ${\mathcal G}_{[441b]}$, which are distinguished by the rank of a second linear syzygy.
With these distinctions the quadratic ideals form $16$ different irreducible components. 
We begin by discussing the relation between the sets 
${\mathcal F}_B$ and ${\mathcal G}_{B'}$, when the latter is an irreducible component of the set of quadratic ideals that contains an ideal  $Q_F\in {\mathcal G}_{B'}$ for some quartic form $F$ for which $A_F$ has Betti table $B$. 

For two sets ${\mathcal F}_B$ we distinguish some subsets. Recall that the cactus rank $\operatorname{cr}(F)$ of a form $F$ is the shortest length of a subscheme apolar to $F$.
\begin{definition}
We set 
$${\mathcal F}_{[300a]}=\{F |Q_F\in {\mathcal G}_{[300ab]}, \operatorname{cr}(F)=8\},
{\mathcal F}_{[300b]}=\{F |Q_F\in {\mathcal G}_{[300ab]}, \operatorname{cr}(F)=7\},$$
$${\rm and}\;\;{\mathcal F}_{[300c]}=\{F |Q_F\in {\mathcal G}_{[300c]}\},$$


and we  set 

$${\mathcal F}_{[441a]}=\{F |Q_F\in {\mathcal G}_{[441a]}\}\;\; {\rm  and}\;\;
{\mathcal F}_{[441b]}=\{F |Q_F\in {\mathcal G}_{[441b]}\}.$$
\end{definition}
\begin{lemma}\label{specialsubsets} 
$${\mathcal F}_{[300]}={\mathcal F}_{[300a]}\sqcup {\mathcal F}_{[300b]}\sqcup{\mathcal F}_{[300c]}$$
and 
$${\mathcal F}_{[441]}={\mathcal F}_{[441a]}\sqcup {\mathcal F}_{[441b]}.$$
\end{lemma}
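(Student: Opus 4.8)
\textbf{Proof plan for Lemma~\ref{specialsubsets}.}

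The plan is to show that the two displayed decompositions are disjoint unions by checking (i) that the subsets on the right-hand side are pairwise disjoint, and (ii) that their union exhausts the left-hand side. For the first decomposition, disjointness of ${\mathcal F}_{[300a]}$ and ${\mathcal F}_{[300b]}$ is immediate from the definitions, since a form cannot simultaneously have cactus rank $8$ and cactus rank $7$. Disjointness of each of these from ${\mathcal F}_{[300c]}$ follows because ${\mathcal G}_{[300ab]}$ and ${\mathcal G}_{[300c]}$ are distinct loci: by Proposition~\ref{QF0fCGKK4}, a quartic $F$ of type $[300]$ has $Q_F$ either a $(2,2,2)$ complete intersection (Betti table type $[300a]$/$[300b]$) or the ideal of a line and a length-$4$ scheme (Betti table type $[300c]$), and these two Betti tables for $S/Q_F$ are different, so the associated ideals $Q_F$ lie in disjoint loci in the Hilbert scheme. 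Hence an $F$ of type $[300]$ lies in at most one of the three sets. The analogous argument for the second decomposition uses Proposition~\ref{apolar b12=4} (case $[441]$) together with Lemma~\ref{lem441}: the second-order syzygy of $Q_F$ has rank either $3$ or $4$, exclusively, which partitions ${\mathcal G}_{[441]}$ into the two components ${\mathcal G}_{[441a]}$ and ${\mathcal G}_{[441b]}$, so ${\mathcal F}_{[441a]}$ and ${\mathcal F}_{[441b]}$ are disjoint.

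For the exhaustion direction, I would argue as follows. If $F\in {\mathcal F}_{[300]}$, then by Proposition~\ref{QF0fCGKK4} the ideal $Q_F$ is either a $(2,2,2)$ complete intersection or of type $[300c]$; in the first case $Q_F\in {\mathcal G}_{[300ab]}$, and since $Q_F$ is then a scheme of length $8$, the cactus rank of $F$ is at most $8$, while by Example~\ref{Cat224} we have $h_2(F)=6\leq \operatorname{cr}(F)$; moreover the quadrics in $F^\perp$ cut out a scheme of length $8$, so any apolar scheme lies in the ideal generated by these quadrics and $\operatorname{cr}(F)\in\{7,8\}$ (the value $6$ is excluded because six independent quadrics vanishing on a length-$6$ scheme would force that scheme to be a complete intersection, which would change the Betti table). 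Thus $F\in {\mathcal F}_{[300a]}\sqcup {\mathcal F}_{[300b]}$ according to whether $\operatorname{cr}(F)=8$ or $7$. In the second case $Q_F\in {\mathcal G}_{[300c]}$, so $F\in {\mathcal F}_{[300c]}$ by definition. This shows ${\mathcal F}_{[300]}\subseteq {\mathcal F}_{[300a]}\sqcup {\mathcal F}_{[300b]}\sqcup {\mathcal F}_{[300c]}$, and the reverse inclusion is clear since each of the three sets consists by definition of forms of type $[300]$. The argument for $[441]$ is the same but simpler: every $F\in{\mathcal F}_{[441]}$ has $Q_F$ of type $[441]$ by Proposition~\ref{apolar b12=4}, hence with second-order syzygy of rank $3$ or $4$, placing $F$ in exactly one of ${\mathcal F}_{[441a]}$, ${\mathcal F}_{[441b]}$.

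The only genuinely delicate point is the claim that $\operatorname{cr}(F)\neq 6$ for $F$ of type $[300]$ with $Q_F$ a complete intersection, i.e.\ that the cactus rank cannot drop below $7$; this is where I expect the main obstacle to lie. The clean way to handle it is to invoke the fact (Example~\ref{Cat224}, Remark~\ref{cactusrankversusrank}) that an apolar scheme of length $\operatorname{cr}(F)$ is locally Gorenstein, together with the observation that a length-$6$ scheme contained in the vanishing locus of six linearly independent quadrics in $\Pn^3$ would have to be a $(2,2)$ complete intersection of quadrics (which spans only a pencil of quadrics, not six), giving a contradiction; alternatively one can cite the rank/cactus-rank computation for type $[300]$ carried out in Section~\ref{section_VSP}, where $[300a]$ is shown to have cactus rank $8$ and $[300b]$ cactus rank $7$, and no form of type $[300]$ has smaller cactus rank. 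With that dichotomy in hand, the two decompositions follow immediately.
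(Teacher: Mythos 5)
Your route is essentially the paper's: the $[441]$ statement follows at once from the syzygy-rank dichotomy ${\mathcal G}_{[441]}={\mathcal G}_{[441a]}\sqcup{\mathcal G}_{[441b]}$, and the $[300]$ statement from the dichotomy of Proposition~\ref{QF0fCGKK4} together with the fact that $\operatorname{cr}(F)\in\{7,8\}$ whenever $Q_F$ is a $(2,2,2)$ complete intersection. The place where your write-up slips is the counting in your ``delicate point.'' For $F$ of type $[300]$ one has $b_{12}(A_F)=3$, hence $h_2(F)=10-3=7$, not $6$; since $h_2(F)\le\operatorname{cr}(F)$ (Example~\ref{Cat224}), the value $\operatorname{cr}(F)=6$ you single out is excluded immediately, and combined with $\operatorname{cr}(F)\le 8$ (the length-$8$ scheme cut out by $Q_F$ is apolar) there is nothing delicate left. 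Equivalently, and this is exactly the paper's phrasing, any apolar scheme of length $k\le 6$ has at least $10-k\ge 4$ independent quadrics in its ideal, all lying in $F^\perp$, contradicting $b_{12}=3$.

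By contrast, the specific argument you propose for that point --- ``six independent quadrics vanishing on a length-$6$ scheme would force that scheme to be a complete intersection'' --- does not work as stated: a length-$6$ scheme only guarantees at least four quadrics in its ideal, and the complete-intersection conclusion is neither correct nor needed; what matters is only that four quadrics exceed $b_{12}=3$. Your fallback of citing the cactus-rank values computed later in Section~\ref{section_VSP} would also be circular, since the present lemma is what underlies the irreducibility and $VSP$ computations there (Propositions~\ref{irreducibleF_B} and~\ref{vsp proposition}). With the counting corrected, your proof coincides with the paper's; the remaining steps (disjointness, and that membership in ${\mathcal G}_{[300ab]}$, ${\mathcal G}_{[300c]}$, ${\mathcal G}_{[441a]}$, ${\mathcal G}_{[441b]}$ forces the corresponding type, via $b_{i,i+1}(S/Q_F)=b_{i,i+1}(A_F)$ and the fact that the quadratic strand determines the Betti table) are fine as you state them.
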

\begin{proof}
The last part follows from the distinction of the sets of quadratic ideals ${\mathcal G}_{[441]}={\mathcal G}_{[441a]}\sqcup {\mathcal G}_{[441b]}$. For the first part, the ideals in ${\mathcal G}_{[300ab]}$ are the complete intersections $(2,2,2)$. 
The forms $F$ with $Q_F$ a complete intersection $(2,2,2)$ and cactus rank $7$, are, by apolarity, precisely those containing an ideal $J\subset F^\perp$ with $J\in \operatorname{Hilb}_7$ and no ideal $J'\subset F^\perp$ with $J'\in \operatorname{Hilb}_k, \;\; k<7$.  Any ideal $J'\in \operatorname{Hilb}_k, \;\; k<7$ would have ${\rm dim}J'(2)\geq 4$, so $J'\subset F^\perp$ would imply $b_{12}(A_F)\geq 4$ against the assumption.
So $${\mathcal F}_{[300b]}=\{F| Q_F\in {\mathcal G}_{[300ab]},  F^\perp\supset J\;{\rm with}\; J\in \operatorname{Hilb}_7\}.$$

 Clearly,
 any form $F$ in the complement of ${\mathcal F}_{[300b]}$ in the set 
 $$\{F|Q_F\in {\mathcal G}_{[300ab]}\}$$
 has cactus rank $8$. In fact, $Q_F\in \operatorname{Hilb}_8$, and any $J'\subset F^\perp$ with $J'\in \operatorname{Hilb}_k, \;\; k<8$ would contain the complete intersection $Q_F$.  Therefore, this complement coincides with 
 ${\mathcal F}_{[300a]}$ and the lemma follows.
\end{proof}

\begin{proposition}\label{irreducibleF_B} 
The following $19$ sets of quaternary quartics are all irreducible:
\begin{enumerate}
\item The $14$ sets ${\mathcal F}_B$ for a Betti table $B$ different from $[300]$ and $[441]$.
\item  The three sets 
${\mathcal F}_{[300a]},{\mathcal F}_{[300b]},{\mathcal F}_{[300c]}$.
\item The two sets ${\mathcal F}_{[441a]},{\mathcal F}_{[441b]}$.
\end{enumerate}


\end{proposition}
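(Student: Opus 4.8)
The key principle is that each set $\mathcal{F}_B$ (or one of its refined pieces) is governed by the associated set of quadratic ideals $\mathcal{G}_{B'}$, which Section~\ref{quadraticideals} has already shown to be irreducible (for the $16$ irreducible components there, with $\mathcal{G}_{[441]}=\mathcal{G}_{[441a]}\sqcup\mathcal{G}_{[441b]}$ split by the rank of the second linear syzygy). So the strategy is: (i) produce a surjective (or at least dominant) morphism from an irreducible parameter space onto each $\mathcal{F}_B$ in the list, using the fact that $F^\perp$ is determined by $Q_F$ together with a bounded amount of extra data in degree $3$; and (ii) treat the two exceptional cases $[300]$ and $[441]$ by the decompositions in Lemma~\ref{specialsubsets}, checking that each refined piece still fibers irreducibly over its quadratic ideal stratum.

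\textbf{Main construction.} For a Betti table $B$ of $A_F$, recall that $b_{12}(A_F)=b_{12}(S/Q_F)$ and that the entire Betti table is determined by the quadratic strand, so $\dim_{\CC}(F^\perp)_3$ is a fixed number $c_B$ depending only on $B$; moreover $(Q_F)_3\subseteq(F^\perp)_3$ and the dimension $\dim(Q_F)_3$ is also fixed (it is read off the Betti table of $S/Q_F$, which Propositions~\ref{apolar b12=2}--\ref{apolar b12=6} pin down). Over the irreducible base $\mathcal{G}_{B'}$ one forms the tautological vector bundle whose fiber over $[Q_F]$ is the Grassmannian of $\big(c_B-\dim(Q_F)_3\big)$-dimensional subspaces of the fixed-dimensional quotient $S_3/(Q_F)_3$; this is an irreducible fiber bundle, hence an irreducible total space $\mathcal{E}_B$. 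There is a natural rational map $\mathcal{E}_B\dashrightarrow\Pn^{34}$ sending a point $\big([Q_F],V\big)$ to the unique quartic $F$ (up to scalar) whose apolar ideal in degrees $\le 3$ equals $Q_F$ in degree $2$ and $(Q_F)_3+V$ in degree $3$ — equivalently, $F$ spans the one-dimensional space $\big((Q_F)_3+V\big)_4^\perp\cap S_4^\perp$ of quartics annihilated by all of these operators (using $b_{14}=0$ for these Betti tables, so the degree-$4$ part of $F^\perp$ is forced). The image of this map is $\mathcal{F}_B$ by construction: a quartic $F$ lies in $\mathcal{F}_B$ iff its quadratic apolar ideal lies in $\mathcal{G}_{B'}$ and the degree-$3$ part of $F^\perp$ has the prescribed dimension, which is exactly the data recorded by $\mathcal{E}_B$. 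Since $\mathcal{E}_B$ is irreducible, so is $\mathcal{F}_B$; this handles the $14$ generic cases in (1), and also $\mathcal{F}_{[441a]}$ and $\mathcal{F}_{[441b]}$ in (3) once we use that $\mathcal{G}_{[441a]}$ and $\mathcal{G}_{[441b]}$ are each (separately) irreducible components of the relevant Hilbert scheme as established in Proposition~\ref{apolar b12=4}.

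\textbf{The exceptional case $[300]$.} Here $\mathcal{G}_{[300ab]}$ consists of the $(2,2,2)$ complete intersections and is irreducible, but $\mathcal{F}_{[300a]}\sqcup\mathcal{F}_{[300b]}=\{F\mid Q_F\in\mathcal{G}_{[300ab]}\}$ splits by cactus rank. For $\mathcal{F}_{[300b]}$, the proof of Lemma~\ref{specialsubsets} identifies it as the locus of $F$ whose apolar ideal contains a length-$7$ ideal $J$ (necessarily the ideal of seven points in the $(2,2,2)$ complete intersection $Q_F$, as in Example~\ref{rank7}), and there $F^\perp$ is realized as a doubling of $I_\Gamma$; I would parametrize $\mathcal{F}_{[300b]}$ by the irreducible incidence variety of pairs $(\Gamma, \psi)$ with $\Gamma$ seven points in a complete intersection of three quadrics and $\psi$ a doubling datum, which dominates $\mathcal{F}_{[300b]}$ and is irreducible because the Hilbert scheme of such $\Gamma$ is irreducible (it fibers over the irreducible Grassmannian of nets of quadrics with fibers the smooth points of a component of a Hilbert scheme of points) and the $\psi$'s vary in a vector bundle over it. For $\mathcal{F}_{[300a]}$, one uses instead the Kustin--Miller description from the remark after Example~\ref{eg_exceptionCGKK4}: $F^\perp$ is a specialization of the Kustin--Miller family, so $\mathcal{F}_{[300a]}$ is dominated by the irreducible space of $3\times 4$ matrices $M$ of linear forms (with $v=0$) for which the associated ideal has the right Betti table, an open condition on an affine space. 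Finally $\mathcal{F}_{[300c]}=\{F\mid Q_F\in\mathcal{G}_{[300c]}\}$ is again handled by the main construction above, since $\mathcal{G}_{[300c]}$ is irreducible.

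\textbf{Expected obstacle.} The routine cases are genuinely routine once the bundle $\mathcal{E}_B$ is set up, but the delicate point is the $[300]$ stratification: I need to argue that the cactus-rank stratification $\mathcal{F}_{[300a]}\sqcup\mathcal{F}_{[300b]}$ is the decomposition into irreducible locally closed pieces and not something finer, i.e.\ that each piece is itself irreducible rather than a union. The cleanest route is to show $\mathcal{F}_{[300b]}$ is irreducible via the doubling/biliaison parametrization (Examples~\ref{rank7} and Construction~\ref{doubling and component of complete intersection}) and $\mathcal{F}_{[300a]}$ via the Kustin--Miller parametrization, then invoke Lemma~\ref{specialsubsets} for the disjointness; the main work is checking that these parametrizing families are irreducible and genuinely surject onto the respective strata, which is where the geometry of seven points in a $(2,2,2)$ complete intersection (and the uniqueness of the relevant $\psi$) is needed. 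Care is also required for $\mathcal{F}_{[441b]}$: the second-syzygy-rank-$4$ condition must be shown to be the generic behavior on an irreducible family of $Q_F$'s, which follows from Proposition~\ref{apolar b12=4} identifying $\mathcal{G}_{[441b]}$ with an irreducible component of the Hilbert scheme of two skew lines / double lines on a smooth quadric.
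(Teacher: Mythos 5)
Your overall skeleton — reduce irreducibility of each $\mathcal{F}_B$ to irreducibility of the corresponding stratum of quadratic ideals $\mathcal{G}_{B'}$, and split $[300]$ and $[441]$ via Lemma~\ref{specialsubsets} — is the same as the paper's, but two steps in your main construction do not work as stated. First, the assertion that $b_{14}=0$ "for these Betti tables" is false for six of the nineteen sets: by the self-duality of the Gorenstein resolution (socle degree $4$, codimension $4$, last module $S(-8)$) one has $b_{1,4}=b_{3,4}$, so $b_{14}\neq 0$ exactly for $[683],[562],[551],[441a],[441b],[331]$. For those types the space of quartics annihilated by $(F^\perp)_{\le 3}$ has dimension $1+b_{34}>1$, so your map $\mathcal{E}_B\dashrightarrow\Pn^{34}$ is not defined even at the points you need. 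Second, even where the map exists, "the image is $\mathcal{F}_B$ by construction" is not justified: for an arbitrary pair $([I],V)$ the resulting quartic may have strictly more apolar quadrics than $I$, hence a different Betti table, so the image in general strictly contains $\mathcal{F}_B$, and a subset of an irreducible image need not be irreducible. What is needed (and what the paper proves) is that for $I$ saturated and $F$ general in $\Pn((I_4)^\perp)$ one has $Q_F=I$ exactly; with that, $\{F\mid Q_F\in\mathcal{G}_{B'}\}\to\mathcal{G}_{B'}$ has irreducible fibers of constant dimension (dense open subsets of the $\Pn((I_4)^\perp)$) over an irreducible base, and irreducibility follows — the degree-$3$ Grassmannian data in $\mathcal{E}_B$ is then superfluous and only creates the $b_{14}$ problem.

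For the $[300]$ refinement, your $[300b]$ parametrization by pairs $(\Gamma,\psi)$ is workable in spirit but needs two checks you only gesture at: that every form of type $[300b]$ is apolar to a length-$7$ subscheme of its $(2,2,2)$ complete intersection (possibly non-reduced — the paper fibers over ideals of arbitrary length-$7$ subschemes containing a $(2,2,2)$ CI, which sidesteps reducedness), and that the doubling data vary in an irreducible family surjecting onto the stratum. More seriously, your $[300a]$ argument rests on the claim that every (or a dense family exhausting) type-$[300a]$ apolar ideal is a Kustin--Miller specialization with $v=0$; the paper only records that one example is, and proving this structural surjectivity is genuinely nontrivial. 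The paper avoids it entirely: $\mathcal{F}_{[300a]}\sqcup\mathcal{F}_{[300b]}=\{F\mid Q_F\in\mathcal{G}_{[300ab]}\}$ is irreducible by the fibration argument, $\mathcal{F}_{[300b]}$ is irreducible by fibering over the length-$7$ subschemes, and $\mathcal{F}_{[300a]}$ is then a dense subset of the irreducible set $\{F\mid Q_F\in\mathcal{G}_{[300ab]}\}$ (the cactus-rank-$7$ condition is proper), hence irreducible. You should either adopt that route for $[300a]$ or supply a proof of the Kustin--Miller surjectivity.
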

\begin{proof}
We first set up the natural algebraic incidence between ideals $Q_F$ and forms $F$.

\begin{lemma}\label{incidence} Let $k>0$ and consider the Grassmannian $\Gr(k,S_{2})$ of $k$
dimensional spaces of quadrics in $S$. The set $X=\{(Q,[F])\in \Gr(k,S_{2})\times \PP^{34}\mid q\circ F=0 \mbox{ for all }q\in Q\}\subset \Gr(k,S_{2})\times \PP^{34}$ is closed.
\end{lemma}
\begin{proof}
Let $\Gamma\subseteq S_{2}$ be a $(10-k)$-subspace of $S_{2}$. Assume $U_{\Gamma}\subset \Gr(k,S_{2})$ is the subset of $k$-planes which only intersect $\Gamma$ at $0$. Then $U_{\Gamma}\simeq\CC^{k(n-k)}$ is an open subset in $\Gr(k,S_{2})$. Assume the local coordinates of $(U_{\Gamma})$ is $\{a_{ij}\}_{1\leq i\leq k, 1\leq j\leq n-k}$, that is, a $k$-plane in $U_{\Gamma}$ can be written as the row space of
\begin{equation}
    \begin{bmatrix}
    1& & &a_{1,1}& \dots&a_{1,n-k} &\\
     &\ddots & &\vdots& &\vdots &\\
    & & 1&a_{k,1}& \dots&a_{k,n-k} &
    \end{bmatrix}.
\end{equation}
Therefore $X\cap (U_{\Gamma}\times \PP^{34})$ is given by zero locus of polynomials in $a_{i,j}$ and the coordinates of $\PP^{34}$, hence closed on $(U_{\Gamma}\times \PP^{34})$. Note that this argument works on every patch of an open affine cover $\{U_{\Gamma}\}$ of $\Gr(k,S_{2})$, so $X$ is closed.
\end{proof}

In the notation of Lemma \ref{incidence} we may consider the projection maps $\pi_{1}: X\to \Gr(k,S_{2})$ and $\pi_{2}: X\to \PP^{34}$. Then $\pi_{2}(X)=\{F\in\PP^{34}\mid b_{12}(S/Q_{F})\geq k\}$. Assume $F\in\pi_{2}(X)$. Then the fiber of $\pi_{2}$ over $F$ is $\{Q\in \Gr(k,S_{2})\mid Q\subseteq Q_{F}\}$. In particular, when $b_{12}(S/Q_F)=k$, the fiber has only one point. On the other hand, the fiber of $\pi_{1}$ over a $Q\in \Gr(k,S_{2})$ is the set $\{[F']\in\PP^{34}\mid Q\subseteq Q_{F'}\}$.
Thus there is, for each $b_{12}=b_{12}(S/F^\perp)=b_{12}(S/Q_F)$ a regular map 
$$\varphi: \{F\in \Pn^{34}|b_{12}(S/F^\perp)=k\}\to G(k,S_2),\quad [F]\mapsto [Q_F].$$


We now let ${\mathcal G}$ be an irreducible set of saturated quadratic ideals in $S$, all with the same Betti table. Let $I\in {\mathcal G}$ and let $B'$ be the Betti table of $S/I$. 
Let $I_4\subset S_4$ be the space of quartic forms in $I$.  Each quaternary quartic form $F\in R_4$ defines a hyperplane $F^\perp\subset S_4$.  
If $I_4\subset F^\perp$, i.e.~$F\in (I_4)^\perp\subset S_4^*=R_4$,  then $I\subset Q_F$ since both are saturated.  If the latter inclusion is strict, the inclusion $I_4\subset(Q_F)_4$ is strict, therefore $I=Q_F$ for the general $F\in (I_4)^\perp$.  In particular this is an irreducible set of dimension depending only on the common Betti table $B'$ of $S/I$ for ideals $I\in {\mathcal G}$.  
 The restriction of $\varphi$ is therefore a map 
$$\{F|Q_F\in \mathcal{G}_{B'}\}\to \mathcal{G}_{B'}\quad F\mapsto Q_F$$
with irreducible fibers of the same dimension and the proposition follows.

Finally, the ideals in ${\mathcal G}_{[300ab]}$ are the complete intersections $(2,2,2)$ and form an irreducible set.  Therefore the set of forms $F$ with $Q_F$ a complete intersection is also irreducible.
Now, the set of ideals $J$ of subschemes of length $7$ is irreducible and those ideals $J$ that additionally contain a complete intersection $(2,2,2)$ form an dense subset ${\mathcal G'}\subset \operatorname{Hilb}_7$.  In particular, as in the above proof for irreducible sets of quadratic ideals, the set of forms ${\mathcal F}_{[300b]}\subset \Pn^{34}$, i.e.~the set of forms $F$ with an ideal $J\subset F^\perp$ and $J\in {\mathcal G'}$, is irreducible.


 By Lemma \ref{specialsubsets}, the set
 ${\mathcal F}_{[300a]}$ is 
 the complement of ${\mathcal F}_{[300b]}$ in the set of forms $F$ with $Q_F$ a complete intersection $(2,2,2)$. The latter set is irreducible, while ${\mathcal F}_{[300a]}$ clearly is a dense subset, so ${\mathcal F}_{[300a]}$ is also irreducible. 
\end{proof}

For each of the irreducible subsets of $\Pn^{34}$ of  Proposition \ref{irreducibleF_B}, we compute the dimension of ${\mathcal F}_B$, and
for a general quartic $F\in {\mathcal F}_B$ we determine the rank $\rr(F)$ and $VSP(F,r)$, and characterize as in Section \ref{bettitables}, the general set of points computing $\rr(F)$. 

\begin{proposition}\label{vsp proposition}
Let $F$ be a quaternary quartic that is not a cone.  Then $F$ belongs to one of $19$ disjoint irreducible sets listed in tables \ref{tableCGKK} and \ref{tableremaining} with the given dimension.  For each set the table gives, for a general form $F$ in the set,  the rank $r=r(F)$, the variety $VSP(F,r)$, and the Betti table for a general set $\Gamma$ of $r$ points apolar to $F$.  
In each case, the set of points $\Gamma$ impose independant conditions on quartics.
\end{proposition}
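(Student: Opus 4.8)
The plan is to treat each of the 19 irreducible strata essentially independently, dividing them into three groups according to how far the analysis of Section~\ref{quadraticideals} already takes us. First, for the strata of small codimension $h_2(F)=\dim(S/F^\perp)_2$, say $b_{12}\in\{2,3,4,5,6\}$, the quadratic ideal $Q_F$ is by Propositions~\ref{apolar b12=2}--\ref{apolar b12=6} the saturated ideal of an explicitly described apolar scheme $Z$ of length $r_0\le 6$. In those cases one shows by Lemma~\ref{curves} (when $Z$, or the curve generated by $Q_F$, is one of the listed rational curves or complete intersections of quadrics) or by Proposition~\ref{prop_BettiGamma} (when $Z$ is a finite set of points in general position) that $Z$ already computes the rank: $r(F)=r_0$ and a general apolar scheme of length $r_0$ is a reduced set $\Gamma$ of $r_0$ points of the corresponding Betti-table type. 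The $VSP(F,r)$ is then the component of $\operatorname{Hilb}_{r_0}(\Pn^3)$ parametrizing such $\Gamma$ inside the scheme defined by $Q_F$, which is read off the explicit description: a point, a $\Pn^1$, a $\Pn^1\times\Pn^1$, or the Mukai threefold $V_{22}$ arising as $VSP$ of a ternary quartic via the plane-and-point or conic-and-point descriptions.

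Second, for the strata with $b_{12}\in\{0,1\}$ — types $[100]$ and $[000]$ — $Q_F$ is either zero or a single quadric, so the rank must be computed directly from the catalecticant: $h_2(F)=9$ or $10$ respectively forces $r(F)\ge 9$ (resp.\ $10$), and one exhibits a general form of rank exactly $9$ (resp.\ $10$), identifying $VSP(F,9)$ with a K3 surface and $VSP(F,10)$ with a fivefold by a parameter count. Here the key input is the generic smoothness/flatness of the family of apolar point-schemes and a dimension computation showing the expected $VSP$ dimension; this is the place where the Iarrobino--Kanev tangent-space conjecture gets verified, via the identification $\dim T_{[F]}\bGor(H)=34-\dim VSP(F,r)\cdot(\text{contribution})$ together with the explicit affine dimensions tabulated in Section~\ref{questions}.

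Third, for the two delicate families $[300]$ and $[441]$, one uses Lemma~\ref{specialsubsets} to reduce to the sub-strata $[300a],[300b],[300c]$ and $[441a],[441b]$. For $[300b]$ and $[300c]$, $Q_F$ is a $(2,2,2)$ complete intersection (resp.\ a line and a length-$4$ scheme); the general $F$ is apolar to a set $\Gamma$ of $7$ points (the complete intersection contains such a $\Gamma$ by the pencil-of-quadrics argument, or $\Gamma$ consists of the $4$ points plus $3$ on the line), giving $r=7$ with $VSP$ a point or a $\Pn^1$. For $[300a]$, one shows $r=8$: the complete intersection $Q_F$ itself is the unique length-$8$ apolar scheme (so $cr(F)=8$) and a general $F$ of this type has rank $8$ with $VSP$ a single point. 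For $[441a]$ (conic and point) one gets $r=6$, $VSP=\Pn^1$ by the ternary-quartic picture restricted to the conic's plane plus the residual point; for $[441b]$ (two skew lines on a smooth quadric) the two rulings give two pencils of residual presentations, so $r=6$ and $VSP=\Pn^1\times\Pn^1$.

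The main obstacle I expect is twofold. Technically hardest is establishing that the apolar scheme provided by $Q_F$ (or by the curve it generates) actually \emph{computes} the rank — i.e.\ proving the lower bound $r(F)\ge r_0$ for a \emph{general} $F$ in the stratum and that no smaller apolar \emph{reduced} scheme exists, and that the general length-$r_0$ apolar scheme is reduced (this can fail at special forms even when it holds generically). For the low-$b_{12}$ strata this follows from the catalecticant bound $h_2(F)\le cr(F)\le r(F)$ combined with the classification of possible apolar subschemes, but in borderline cases ($[320]$, $[430]$, the $[300]$ family) one must carefully exclude shorter schemes using the saturation and Betti-table constraints from Propositions~\ref{syzygy ideals}ff. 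The second, more conceptual difficulty is the identification of $VSP(F,9)$ as a K3 surface for type $[100]$: this requires showing the relevant component of the Hilbert scheme of $9$ apolar points is a smooth projective surface with trivial canonical class, which I would approach by realizing it (for the general $F$ of type $[100]$) via the Pfaffian/spinor-bundle construction of Section~\ref{Tom} restricted to a quadric, tying the $VSP$ to a moduli space of sheaves on that quadric surface, and computing its numerical invariants.
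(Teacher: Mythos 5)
Your overall architecture is the same as the paper's: a stratum-by-stratum analysis using the classification of $Q_F$ from Section~\ref{quadraticideals}, the lower bound $r(F)\ge 10-b_{12}(F)$ (Lemma~\ref{rankq}), Lemma~\ref{curves} when $Q_F$ cuts out a curve, and dimension counts of the families of apolar schemes $\Gamma$ together with the spans $\langle v_4(\Gamma)\rangle$; your treatment of the delicate subfamilies $[300a/b/c]$ and $[441a/b]$ also matches the paper's. (Two small slips: your first group's claim that the scheme defined by $Q_F$ has length $r_0\le 6$ and that the resulting $VSP$'s are only a point, $\Pn^1$, $\Pn^1\times\Pn^1$ or $V_{22}$ omits type $[400]$, where the rank is $8$ and $VSP(F,8)=\Pn^3$ of nets inside the $(2,2,2,2)$ web, and type $[200]$, where $VSP$ is two points on an elliptic quartic; and $V_{22}$ arises from the plane-and-point and plane-and-line strata $[331]$, $[210]$, not from the conic-and-point stratum $[441a]$, which gives $\Pn^1$.)

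The genuine gap is the case $[100]$: the entry ``$VSP(F,9)$ is a K3 surface'' is a substantive claim of the proposition, and your proposal only gestures at a strategy (``realize it via the Pfaffian/spinor-bundle construction \ldots tying the VSP to a moduli space of sheaves on that quadric surface, and computing its numerical invariants'') without supplying the argument; it is not clear that such a moduli identification exists, and in any case nothing in your sketch produces smoothness, projectivity, or triviality of the canonical class. The paper's route is entirely different and is where most of the work lies: since any length-$9$ apolar scheme lies on the unique quadric $Q\cong\Pn^1\times\Pn^1$, one passes to bigraded apolarity of the restriction $F_Q$; Lemma~\ref{nonspecial44} and Lemma~\ref{linsyzygy} show that a general apolar $\Gamma$ of length $9$ is cut out by a net of $(2,3)$-forms in $F_Q^\perp(2,3)$ admitting a $(1,1)$-syzygy, Proposition~\ref{11syz} classifies all such nets (length $9$ versus a spurious length-$8$ complete intersection component), Proposition~\ref{vsp9smooth} proves smoothness of $VSP(F_Q,9)$ by realizing it as the zero locus of a section of a rank-$16$ bundle on an open subset of $\operatorname{Hilb}_9(Q)$ and applying Bertini, and finally the $(1,1)$-syzygy defines a morphism to $\Pn(S_Q(1,1)^{\vee})\cong\Pn^3$ whose image is the residual quartic component of a degree-$6$ determinantal divisor (the other component being $Q$ itself), identified as a smooth quartic K3 on an explicit example. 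Without an argument at this level of detail (or a genuinely worked-out alternative), the $[100]$ row of the table, and hence the proposition as stated, is not proved; similarly, your verification of the Iarrobino--Kanev tangent-space conjecture should just be the comparison of the computed $\dim\mathcal{F}_B$ with the conjectured values, not the formula involving $\dim VSP$ that you wrote.
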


\begin{proof}
By Theorem 2.2 of \cite{SSY} we know that any quartic $F$ which is not a cone has one of $16$ Betti tables, and hence belong to one of the $19$ sets of Proposition \ref{irreducibleF_B}.

The rank of a form $F$ has the following lower bound:
\begin{lemma}\label{rankq}
Let $b_{12}(F)$ be the dimension of the space of quadratic forms in $F^\perp$, i.e.~the space of  generators of $Q_F$, and let $r(F)$ be the rank of $F$.  Then
$r(F)\geq 10-b_{12}(F)$.
\end{lemma}
\begin{proof} If $\Gamma$ is a set of $r$ points apolar to $F$, then the ideal $I_\Gamma$ of $\Gamma$ is contained in $F^\perp$.  In particular the quadratic part of $I_\Gamma$ must be contained in $Q_F$.  But $r$ points impose at most $r$ conditions on quadrics, so
$10-r\leq {\rm dim}I_\Gamma(2)\leq b_{12}(F)$, which implies $$r\geq 10-b_{12}(F).$$
\end{proof}
For each of the $19$ irreducible sets, we now argue one by one and start with forms with Betti table in Table \ref{tableCGKK} of Section \ref{Appendix}.
%
\begin{table}[H]
\begin{tabular}{|c|c|c|c|c|}
\hline Betti table B & $\rr(F)$ & VSP(F,r) & $\Gamma$ from Table \ref{tablepoints}& $\dim({\mathcal F}_B)$ \\
\hline [683] & $4$       & one point & four points, not in a plane  & $15$\\
\hline [550] & $5$       & one point & five points, no four in a plane & $19$\\
\hline [400] & $8$       & $\Pn^3$ & a $(2,2,2)$ CI  & $24$\\
\hline [320] & $7$       & $\Pn^1$ &seven points on a twisted cubic  & $24$\\
\hline [300a] & $8$       & one point & a $(2,2,2)$ CI  & $28$\\
\hline [300b]& $7$       & one point & seven points in a $(2,2,2)$ CI & $27$\\
\hline [300c] & $7$       &$\Pn^1$  & seven points, three on a line   & $24$\\
\hline [200] & $8$       & two points & eight points in a $(2,2)$ CI & $31$\\
\hline [100] & $9$       & K3 surface &  nine points in a quadric & $33$\\
\hline [000] & $10$       & Fivefold & ten points, not on quadric & $34$\\
\hline
\end{tabular}
\vskip .1in
\caption{VSP for Betti tables from Table~\ref{tableCGKK}}\label{tablevspcgkk}
\vskip -.15in
\end{table}
\begin{description}

\item[{\bf Type $[683]$}]  The quadratic ideal $Q_F$ is, by Proposition \ref{apolar b12=6},  the ideal of a scheme of length $4$, i.e.~four points for a general $F$.
So the rank of $F$ is at most $4$ and therefore by Lemma \ref{rankq} equal to $4$.  Furthermore, the ideal of any four points computing the rank of $F$ contains a $6$-dimensional space of quadrics in $F^{\bot}$, i.e.~all of $Q_F$,  so $VSP(F,4)$ is one point.
The dimension of the family of $4$-tuples of points in $\Pn^3$ is $4 \times 3=12$ and each $4$-tuple of points span a $\Pn^3$ in $\Pn^{34}$,  so there is an irreducible $12+3=15$-dimensional family of forms $F$  with this Betti table in $\Pn^{34}$.

\vskip .1in
\item[{\bf Type $[550]$}] The quadratic ideal $Q_F$ is, by Proposition \ref{apolar b12=5}, the ideal of a scheme of length $5$, i.e.~five points for a general $F$.
 So $r(F)\leq 5$ and the ideal generated by these quadrics is the only ideal of five points contained in $F^{\bot}.$ 
 
By Lemma \ref{rankq} the rank is least $5$, so $r(F)=5$ and $VSP(F,5)$ is a point.
The dimension of the family of $5$-tuples of points in $\Pn^3$ is $5 \times 3=15$, each $5$-tuple of points span a $\Pn^4$ in $\Pn^{34}$,  so there is an irreducible $15+4=19$-dimensional family of forms $F$  with this Betti table in $\Pn^{34}$.
\vskip .1in
\item[{\bf Type $[400]$}] $F^{\bot}$ is a complete intersection of quadrics $(2,2,2,2)$. Any $3$-dimensional subspace of quadrics in $F^{\bot}$ generate the ideal of a scheme of length $8$, 
while no ideal of fewer points are contained in the complete intersection.
Therefore $F$ has rank $8$ and $VSP(F,8)= \Pn^3$.

The Grassmannian $G(4,10)$ of $4$-dimensional spaces of quadrics in $\Pn^3$ has dimension $4 \times 6 = 24$ and parameterizes $(2,2,2,2)$ complete interesections in $\Pn^3$, yielding an irreducible $24$-dimensional family of forms $F$  with this Betti table in $\Pn^{34}$.
\vskip .1in
\item[{\bf Type $[320]$}] The quadratic ideal $Q_F$ is, by Proposition \ref{apolar b12=3},   a determinantal net, i.e.~for a general $F\in {\mathcal F}_{[320]}$, the ideal a twisted cubic curve $C$ that is apolar to $F$. By Lemma \ref{curves} there is a pencil of subschemes  of length $7$ on $C$ that is apolar to $F$.  The ideal of any seven points contain a $3$-dimensional space of quadrics, so if the ideal is contained $F^{\bot}$ the seven points are on the twisted cubic curve.  By Lemma \ref{rankq}, $F$ has rank $7$ and  $VSP(F,7)=\Pn^1$.
The dimension of the family of  twisted cubic curves in $\Pn^3$ is $12$.  Each twisted cubic $C$ is mapped by $v_4$ to a rational normal curve $v_4(C)$ of degree $12$ that spans  a $\Pn^{12}$ in $\Pn^{34}$,  so there is an irreducible $12+12=24$-dimensional family of forms $F$  with this Betti table in $\Pn^{34}$. In Theorem 2.7 of \cite{Boij}, Boij computes the dimension of components of $\Gor(H)$ determined by $s$ points on a rational normal curve in $\Pn^n$, but types $[320]$ fall outside the range where his result applies. 
\vskip .1in
\item[{\bf Type $[300a]$}]  The quadratic ideal $Q_F$ define, by Proposition \ref{apolar b12=3}, a complete intersection $Z$ of length $8$.
When $Z$ consists of eight distinct points and no cubic in $F^{\bot}$ vanishes in any seven points, but not all eight in $Z$, then $r=8$ and $Z$ is the only set of eight points apolar to $F$, so  $VSP(F,8)=\{1 \; point\}$.  The dimension of the family of  complete intersections $(2,2,2)$ in $\Pn^3$ is $3 \times 7=21$, which is the dimension of $G(3,10)$ the Grassmannian of nets of quadrics in $\Pn^3$. Each complete intersection spans a $\Pn^7$ in $\Pn^{34}$,  so there is an irreducible $21+7=28$-dimensional family of forms $F$ of rank $8$  with this Betti table in $\Pn^{34}$.
\vskip .1in
\item[{\bf Type $[300b]$}]  The quadratic ideal $Q_F$ define, by Proposition \ref{apolar b12=3},  a complete intersection $Z$ of length $8$.
One of the cubic generators in $F^{\bot}$ vanishes on a subscheme $Z'$ of $Z$ of length $7$, but not on $Z$.
When $Z'$ consists of seven distinct points, $r(F)=7$ and $VSP(F,7)=\{\mbox{one point}\}$.
In fact, the rank is at most $7$, while, by Lemma \ref{rankq},  the rank is at least $7$, so equality follows. 
If $F$ is in the span of two subset of seven among the eight points $Z$, then it lies in the span of the common six points of the two subset and the rank is at most $6$.  So there is only one point in $VSP(F,7)$.

The dimension of the family of $7$-tuples of points in $\Pn^3$ is $7 \times 3=21$ and each $7$-tuple of points span a $\Pn^6$ in $\Pn^{34}$,  so there is an irreducible $21+6=27$-dimensional family of forms $F$ of rank $7$ with this Betti table in $\Pn^{34}$.
\vskip .1in
\item[{\bf Type $[300c]$}] \label{lineandfourpoints}
Included in boundary of the family $[300b]$ is the family $[300c]$ of quartic forms of rank $7$ that are apolar to the set of  three collinear points and four points in general position, see Proposition \ref{apolar b12=3}.  
In this case $Q_F$ is a not a complete intersection; it ~is the ideal of a line and a scheme  of length $4$, i.e.~for a general $F\in {\mathcal F}_{[300c]}$ a line $L$ and a set $Z$ of four general points.   Forms $F$ of this type has a unique decomposition $F=F_1+F_2$, where $F_1$ is apolar to $L$ 
while $F_2$ is apolar to $Z$:  
$$[F]\in \langle v_4(L\cup Z)\rangle\subset \Pn^{34},$$ so there are unique forms $$[F_1]\in \langle v_4(L)\rangle\;\;{\rm and} \;\;[F_2]\in \langle v_4(Z)\rangle$$ such that $$[F]\in \langle [F_1],[F_2]\rangle.$$

We may assume that $F_1$ is a general form apolar to $L$, so 
by Lemma \ref{curves} $F_1$ has rank $3$ and  $VSP(F_1,3)=\Pn^1$, while $F_2$ is of type $[683]$ so $VSP(F_2,4)=\{point\}$.   Therefore also $VSP(F,7)=\Pn^1$.
The dimension of the family of lines is $4$, so the set of $5$-tuples (line + four points) is $4+3+3+3+3=16$-dimensional. Each $5$-tuple spans a $\Pn^{8}$ in $\Pn^{34}$,  so there is an irreducible $16+8=24$-dimensional family of forms $F$ with this Betti table in $\Pn^{34}$.
\vskip .1in

\item[{\bf Type $[200]$}]   The quadratic ideal $Q_F$ is, by Proposition \ref{apolar b12=2}, a complete intersection $2,2)$, so for a general $F\in {\mathcal F}_{[200]}$ it is the ideal of an elliptic quartic curve $C$ that is apolar to $F$. By Lemma \ref{curves}, $F$ is apolar to two sets of eight points on $C$.
By Lemma \ref{rankq}, the rank of $r(F)=8$ and $VSP(F,8)=\{\mbox{two points}\}$.   The family of elliptic quartics in $\Pn^3$ has dimension $16$.  Each elliptic quartic curve $C$  is mapped by $v_4$ to an elliptic normal curve $v_4(C)$ of degree $16$ spanning a $\Pn^{15}$ in $\Pn^{34}$, yielding an irreducible $16+15=31$-dimensional family of forms $F$  with this Betti table in $\Pn^{34}$.
\vskip .1in
\item[{\bf Type $[100]$}]   
Let $Q$ be the unique quadric in $F^{\bot}$. By Lemma \ref{rankq} the rank of $F$ is at least $9$.
Now,  any set of nine points apolar to $F$ is necessarily contained in $Q$. 
The $4$-uple embedding $v_4(Q)$ of $Q$ spans a $\Pn^{24}\subset \Pn^{34}$. There is a $18$-dimensional family of $9$-tuples of points on the quadric that each span a $\Pn^8$ in $\Pn^{24}$, so one expects that $F$ is apolar to a surface of $9$-tuples of points.  We show that $VSP(F,9)$ is a K3 surface for a general form $F$ of type $[100]$, in Section \ref{vsp9}.

Since $F$ is apolar to a quadric, the middle catalecticant matrix has rank $9$.  A rank $1$ catalecticant matrix is the catalecticant of a rank $1$ quartic form, so the determinant of the catalecticant  defines the closure of the set of forms $F$ of rank $9$, i.e.~forms with a Betti table of type $[100]$.  Since the determinant is irreducible we conclude that there is an irreducible  $33$-dimensional family of forms with this Betti table in $\Pn^{34}$. 
\vskip .1in
\item[{\bf Type $[000]$}] If $F^{\bot}$ contains no quadric, the rank is at least $10$ by Lemma \ref{rankq}.  The family of forms of rank $9$ is dense in a hypersurface, so the general $F\in \Pn^{34}$ must have rank $10$. By dimension count $VSP(F,10)$ is a fivefold for a general $F$, and the rank is computed by ten points that are not in any quadric surface.  
One can prove that a double quadric have rank $10$, the following question arise.
\begin{question}
Describe VSP(2Q,10) with $Q\subset \mathbb{P}^3$ a quadric.
\end{question}
Note that in this case there is an open orbit of OG(4). Recall that the case of variety of sums of powers of the double quadric curve give rise to the Mukai-Umemura example.
\end{description}

              
\vskip .2cm
Next, we consider the Betti tables in 
\cite{SSY}*{Table 2}, (see Table \ref{tableremaining} in Appendix \ref{Appendix}).
\renewcommand{\arraystretch}{1.8}
\begin{center}
\begin{table}[H]
\begin{tabular}{|c|c|c|c|c|}
\hline Betti table B & $r$ & VSP(F,r) & $\Gamma$ from Table \ref{tablepoints}& $\dim({\mathcal F}_B)$ \\
\hline [562] & $5$       & $\Pn^1$ & five points, three collinear & $16$\\
\hline [551] & $5$       & one point & \stackanchor[1pt]{five points, four coplanar}{and no three collinear}  & $18$\\
\hline [441a] & $6$       & $\Pn^1$ & six points, five coplanar  & $20$\\
\hline [441b] & $6$       & $\Pn^1 \times \Pn^1$ & \stackanchor[1pt]{six points, three on}{each of two skew lines} & $17$ \\
\hline [430] & $6$       & $\Pn^1$ & six points, three collinear  & $20$\\
\hline [420] & $6$       & one point & six points, no five coplanar  & $23$\\
\hline [331] & $7$       & Fano threefold & seven points, six coplanar  & $21$\\
\hline [310] & $7$       & $\Pn^1$ & seven points, five coplanar  & $24$\\
\hline [210] & $8$       & Fano threefold $\cup\; \Pn^1\times\Pn^1$ & \stackanchor[1pt]{eight points, six coplanar or}{five coplanar, three collinear} & $25$\\
\hline
\end{tabular}
\vskip .1in
\caption{VSP for Betti tables from Table~\ref{tableremaining}}\label{tablesremaining}
\end{table}
\vskip -.2in
\end{center}
\begin{description}
\item[{\bf Type $[562]$}]\label{vsptype28}  For a general $F\in {\mathcal F}_{[562]}$ the quadratic ideal $Q_F$ is, by Proposition \ref{apolar b12=5},  the ideal of $\Gamma$: the union of a  line $L$ and two points $p_1,p_2$ that altogether span $\Pn^3$.  So this $\Gamma$ is apolar to $F$.  In the $4$-uple embedding of $v_4:\Pn^3\to\Pn^{34}$, this $\Gamma$ is reembedded as the union of a rational normal quartic curve $v_4(L)$ and two points $v_4(p_1),v_4(p_2)$, and by apolarity $[F]$ is a point in the span of this union.   

In fact, it lies in the span of the two points and a unique point $[F_1]$ in the span of the quartic curve $v_4(L)$.  The form $F_1$ is apolar to the line $L$, so by Lemma \ref{curves}, $F_1$ has rank $2$, or it has rank $3$ and $VSP(F_1,3)=\Pn^1$.  If $F_1$ has rank $2$, then $F$ would have rank $4$, which is impossible by Lemma \ref{rankq}, so $F_1$ has rank $3$.
The ideal of any five points apolar to $F$ is contained in at least five quadrics, i.e.~in $\Gamma$, so we conclude that also $VSP(F,5)=\Pn^1$.
Now, the image $v_4(\Gamma)$ in $\Pn^{34}$ spans a $\Pn^6$, so for each choice of a line and two points in $\Pn^3$ there is a $6$-dimensional family of forms $F$ of this type.  

Notice the $\Gamma$ is unique for a general $F$ apolar to it.  Counting the dimension of the family of schemes $\Gamma$ we get $4$ for the line, and $3$ for each of the points, yielding an irreducible $6+4+3+3=16$ dimensional family of forms $F$ of this type in $\Pn^{34}$.

\vskip .1in
\item[{\bf Type $[551]$}] For a general $F\in {\mathcal F}_{[551]}$ the quadratic ideal $Q_F$ is, by Proposition \ref{apolar b12=5}, the ideal of five points, of which four lie in a plane.   In the embedding $v_4:\Pn^3\to\Pn^{34}$ these five points span a $\Pn^4$ that contain the point $[F]$. A general form $[F']$ in that $\Pn^4$ will have the same Betti table for its apolar ring $A_{F'}$.  The ideal of any set of five points apolar to $F$ is generated by at least five quadrics, i.e.~by the five quadrics in $F^{\bot}$, so  $VSP(F,5)=\{\mbox{one point}\}$.
 Counting the dimension of the family of $5$-tuples of points with four in a plane, we get $3$ for the plane, $8$ for the four points in the plane, and $3$ for the point outside, so 
altogether there is an irreducible $4+3+8+3=18$-dimensional family of forms $F$ of this type in $\Pn^{34}$.
\vskip .1in
\item[{\bf Type $[441a]$}]  For a general $F\in {\mathcal F}_{[441a]}$ the quadratic ideal $Q_F$ is, by Proposition \ref{apolar b12=4},  the ideal of $\Gamma$:  The union of a plane conic curve $C$ and a point $p$ not in the plane of the conic. The image $v_4(C)$ in $\Pn^{34}$ is a rational normal curve of degree $8$.  By apolarity  $[F]$ lies in the span of the image of the point $p$  and a unique point $[F_1]$ in the span of $v_4(C)$.  The quartic form $F_1$ is apolar to $C$, so by Lemma \ref{curves}, it has rank $5$, and $VSP(F_1,5)=\Pn^1$.  

The ideal of any six points apolar to $F$ is generated by at least four quadrics, so is contained in $\Gamma$, so also $VSP(F,5)=\Pn^1$. 
Counting the dimension of the family of pairs  of a conic and a point, we get $8$ for the conic and $3$ for the point.  The span of their image in $\Pn^{34}$
is $9$-dimensional, so there is an irreducible $8+3+9=20$-dimensional family of forms $F$ of this type in $\Pn^{34}$.
\vskip .1in
\item[{\bf Type $[441b]$}] For a general $F\in {\mathcal F}_{[441b]}$ the quadratic ideal $Q_F$ is, by Proposition \ref{apolar b12=4},  the ideal of $\Gamma$: The union of two skew lines $L_1$ and $L_2$.  The form $F$ has a unique decomposition $F=F_1+F_2$, where  $F_i$ is apolar to the line $L_i$ for $i=1,2$. By  Lemma \ref{curves}, each $F_i$ has rank $3$ and $VSP(F_i,3)=\Pn^1$.  
The ideal of any six points apolar to $F$ is generated by at least four quadrics, so is contained in $L_1\cup L_2$ and consists of two sets of three points, apolar to $F_1$ and $F_2$ respectively, so $VSP(F,6)=\Pn^1\times \Pn^1$.
Counting the dimension of the family of pairs  of lines, we get $8$.  The span of their image in $\Pn^{34}$
is $9$-dimensional, so there is an irreducible $8+9=17$-dimensional family of forms $F$ of this type in $\Pn^{34}$.

\vskip .1in
\item[{\bf Type $[430]$}]   For a general $F\in {\mathcal F}_{[310]}$ the quadratic ideal $Q_F$ is, by Proposition \ref{apolar b12=4}, the ideal of a line and three points. 
The ideal of any set of six points apolar to $F$ has at least four quadric generators, i.e.~the quadrics in in $F^{\bot}$ and a cubic generator.  
So $F$ is a sum of three fixed rank $1$ quartics and a unique quartic $F_1$ that is apolar to a line.  If $F_1$ is apolar to two points on the line, $F$  would be apolar to five points and $F^{\bot}$ would contain more quadrics, so $F_1$  is not apolar to two points on the line. Therefore by Lemma \ref{curves}, the form  $F_1$ has rank $3$ and $VSP(F_1,3)=\Pn^1$, and thus also $VSP(F,6)=\Pn^1$.
The dimension of the family of triples of points is $9$, and  the family of lines is $4$-dimensional.  The image of the line and the three independent  points in $\Pn^{34}$
span a $7$-space, so there is an irreducible $9+4+7=20$-dimensional family of forms $F$  of this type in $\Pn^{34}$.
\vskip .1in
\item[{\bf Type $[420]$}]  For a general $F\in {\mathcal F}_{[420]}$ the quadratic ideal $Q_F$ is, by Proposition \ref{apolar b12=4}, the ideal of six points. By Castelnuovo's Lemma \cite{EH}*{Theorem 1}, there is a unique twisted cubic curve through six points in $\Pn^3$ as soon as no subset of four lie in a plane,  so the six points is the intersection of the curve with a quadric.  
The ideal of any set of six points is generated by at least four quadrics, i.e.~by the quadrics in $F^{\bot}$.  Therefore $VSP(F,6)=\{1 \; point\}$.
The dimension of the family of $6$-tuples of points is $18$, and $6$ independent  points in $\Pn^{34}$
span a $5$-space, so there is a $18+5=23$-dimensional family of forms $F$  of this type in $\Pn^{34}$.
\vskip .1in
\item[{\bf Type $[331]$}]    For a general $F\in {\mathcal F}_{[331]}$ the quadratic ideal $Q_F$ is, by Proposition \ref{apolar b12=3}, the ideal of a plane $P$ and a point outside the plane.   So in the space $\Pn^{34}$  of quartics  the point $[F]$ lies in the span of the point  and a unique point $[F_1]$ in the span of the image $v_4(P)$ of the plane.   So $F_1$ is a quartic form  apolar to the plane.  Since there is a $3$-dimensional space of quadrics in $F^{\bot}$, the rank of $F$ is at least $7$, so $F_1$ has rank at least $6$.
Therefore $F_1$ is not apolar to any quadric in the plane, its rank is $6$ and $VSP(F_1,6)$ is a Fano threefold by \cite{Mukai}. 
The ideal of any apolar set of seven points contains a  $3$-dimensional space of quadrics, i.e.~all the quadrics in $F^{\bot}$. Hence any apolar set of seven points contains the point outside the plane, and $VSP(F,7)=VSP(F_1,6)$ i.e.~a Fano threefold.
Counting dimensions we get $3$ for the point and $3$ for the plane, while the span of the point and the image of the plane in $\Pn^{34}$  is $15$ -dimensional, yielding an irreducible $3+3+15=21$ dimensional family of forms $F$ of this type in $\Pn^{34}$.

\vskip .1in

          \item[{\bf Type $[310]$}]  For a general $F\in {\mathcal F}_{[310]}$ the quadratic ideal $Q_F$ is, by Proposition \ref{apolar b12=3}, is the ideal of the union of a conic $C$ and two points $p_1,p_2$ outside the plane of the conic.
The ideal of any set of seven points apolar to $F$ has at least $3$ quadric generators, i.e.~ the quadrics in $F^{\bot}$.   So any such set consists of the two points outside the conic and five points on the conic $C$.
The image $v_4(C)$ in $\Pn^{34}$ is a rational normal curve of degree $8$. By apolarity  $[F]$ lies in the span of the image of the two points $p_1,p_2$ and $v_4(C)$,  so $F$ has a unique decomposition $F=F_1+F_2$ where $F_1$  is apolar to the two points $p_1,p_2$ and has rank $2$, while $F_2$ is apolar to the conic $C$ and has rank $5$.   By Lemma \ref{curves},  $VSP(F_2,5)=\Pn^1$, 
therefore also $VSP(F,7)=\Pn^1$.
The dimension of the family of conics in a plane in $\Pn^3$ is $3+5=8$, so the set of triples (conic + two points) is $8+3+3=14$-dimensional. Each triple spans a $\Pn^{10}$ in $\Pn^{34}$,  so there is an irreducible $14+10=24$-dimensional family of forms $F$  of this type in $\Pn^{34}$.

\vskip .1in

           \item[{\bf Type $[210]$}] For a general $F\in {\mathcal F}_{[210]}$ the quadratic ideal $Q_F$ is, by Proposition \ref{apolar b12=2}, the ideal of a plane $P$ and a line $L$ in $\Pn^3$.   In the space $\Pn^{34}$ of quartics, the span of $v_4(P)$ and 
    $F$ intersects the span of $v_4(L)$ in a line that passes through the intersection point $v_4(P\cap L)$. So there is a linear pencil of lines through $F$ that intersect both the span of $v_4(P)$ and the span of $v_4(L)$. The line through $F$ and the point $v_4(P\cap L)$ is in this pencil, so there is a unique line in the pencil that intersects the span of $v_4(P)$ in a form of rank $5$, and similarly a unique line in the pencil that intersect the span of $v_4(L)$ in a form of rank $2$.
    The first line defines a decomposition $F=F_1+F_2$, where $F_1$ is apolar to the plane and has rank $5$, while $F_2$ is apolar to the line and has rank $3$.  
    The second line defines a decomposition $F=F'_1+F'_2$, where $F'_1$ is apolar to the plane and has rank $6$ and $F'_2$ is apolar to the line and has rank $2$.

The ideal of any set of eight points contains at least two quadratic generators, so any such set that is apolar to $F$ must be contained the union of the line and the plane. And so any set of eight points defines a decomposition $F=F_1+F_2$, where $F_1$ is apolar to the line and $F_2$ is apolar to the plane.

By the above, there are exactly two such decompositions where the sum of the ranks of the two summands is $8$.
When $F_1$ has rank $3$, then $F_2$ must have rank $5$ and is apolar to a conic in the plane.   By Lemma \ref{curves}, $VSP(F_1,3)=\Pn^1$ and $VSP(F_2,5)=\Pn^1$, so $VSP(F,8)$ contains a component $\Pn^1\times\Pn^1$ from the first decomposition.  

In the second decomposition $F'_1$ has rank $2$, then $F'_2$ is apolar to the plane and have rank $6$. $VSP(F'_1,2)=\{\mbox{one point}\}$ and 
$VSP(F'_2,6)$ is a Fano threefold, \cite{Mukai}.  
So $VSP(F,8)$ contains the Fano threefold $VSP(F'_2,6)$ from the second decomposition.
We conclude that  $VSP(F,8)$ is the disjoint union of a $\Pn^1\times\Pn^1$ and a Fano threefold.

In $\Pn^{34}$ the span of $v_4(P\cup L)$ is a $\Pn^{18}$, and the family of pairs of planes $P$ and lines $L$ defined by $Q_F$ is $3+4=7$-dimensional, so there is an 
irreducible $18+7=25$-dimensional family of forms $F$ 
with Betti table of this type in $\Pn^{34}$.
          
             \end{description}
 \end{proof}

        \subsection{$VSP(F,9)$ for a general quartic form of rank $9$}\label{vsp9}
             If $F$ has rank $9$ then $F^\perp$ contains a unique quadric form.  Here we assume that the quadric $Q$ defined by this form is smooth, which is certainly the case for a general form of rank $9$.  Since any set $\Gamma$ of nine points in $\Pn^3$ lies on a quadric, any  set $\Gamma$ that is apolar to $F$ is contained in $Q$. In particular the variety $VSP(F,9)$ is contained in the Hilbert scheme $\operatorname{Hilb}_9(Q)$ of schemes of length $9$ on $Q$.  Therefore we restrict our attention to $Q$. 

   \subsubsection{\bf Apolarity on $Q$.}
                  On $Q=\Pn^1\times\Pn^1$, the divisors are defined by bihomogeneous forms.  Let $$S_Q=\CC[u_0,u_1,v_0,v_1]=\oplus_{(a,b)}H^0({\mathcal O}_Q(a,b)$$ be the bigraded ring of forms on $Q$. 
             
              Then there is a natural restriction map
$S\to S_Q$, with kernel generated by the homogeneous ideal $I_Q$ of $Q$.  
Given $F\in R_4$, let $(F^{\bot})_Q$ be the ideal generated by the image of $F^{\bot}$. 
When $I_Q\subset  F^{\bot}\subset S$, 
then $(F^{\bot})_Q(4)$ has codimension $1$ in $S_Q(4,4)$, so is defined by a linear form $F_Q\in  S_Q(4,4)^*$.  
 We call {\em $F_Q$ the restriction of $F$ to $Q$}.
 
 Conversely, the natural map  
 \[
 S_Q(4,4)^*\to S_4^*=R_4
 \]
 is injective, and associates a unique form $F\in R_4$ to a form $F_Q\in S_Q(4,4)^*$.

Now, apolarity extends to  bihomogeneous forms.    Any form $G$ of bidegree $(a,b)$ in $S_Q$ may be interpreted as a differential operator on $F_Q$:
$$G(F_Q)\in S_Q(4-a,4-b)^*;\quad G(F_Q):G'\mapsto F_Q(G\cdot G').$$  
We define the apolar ideal  $F_Q^{\bot}\subset S_Q$ as the ideal generated by the saturation 
$$F_Q^{\bot}=\langle \cup_{(a,b)} \{G\in S_Q(a,b)|S_Q(4-a,4-b)\cdot G\subset {\rm ker}F_Q\}\rangle.$$ 
Then $F^{\bot}|_Q\subset F_Q^{\bot}$ and $F^\perp$ can be recovered from $F_Q^{\bot}$ as the biggest ideal in $S$ whose restriction $F^{\bot}|_Q$ to $Q$ is contained in $F_Q^{\bot}$.
  
  A scheme $\Gamma$ of length $9$ is apolar to $F$ if $I_\Gamma\subset F^\perp$.  Any such $\Gamma$ lies in $Q$,
  so $I_\Gamma|_Q\subset F^\perp|_Q\subset F_Q^\perp$. Since the ideal $I_\Gamma$ is saturated in $S$, and $F_Q^\perp$ is saturated in $S_Q$, the saturation of $I_\Gamma|_Q$ in $S_Q$ is contained in $F_Q^\perp$, which means that $\Gamma\in VSP(F_Q,9)\subset \operatorname{Hilb}_9(Q)$. 
  
  Conversely, let  $\Gamma\in VSP(F_Q,9)$ with $I_\Gamma\subset F_Q^\perp,$ and let $I'_\Gamma\subset S$ be the ideal of $\Gamma\subset\Pn^3$. Then $I'_\Gamma\subset F_Q^\perp$, and so $\Gamma\in VSP(F,9).$  Since $\operatorname{Hilb}_9(Q)$ is closed in the Hilbert scheme of length $9$ subschemes in $\Pn^3$, we have shown
  \begin{lemma} Let $F$ be a quaternary quartic of rank $9$, apolar to a smooth quadric $Q$, and let $F_Q$ be the restriction of $F$ to $Q$. Then 
  $$VSP(F,9)\cong VSP(F_Q,9).$$

  \end{lemma}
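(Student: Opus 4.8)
The plan is to establish the isomorphism $VSP(F,9)\cong VSP(F_Q,9)$ by showing that the two schemes parametrize the same subschemes of $\operatorname{Hilb}_9(Q)$, and that this identification is scheme-theoretic, not merely a bijection on points. The starting point is the observation, already made in the preamble to the lemma, that for a general (hence for any with $Q$ smooth) rank-$9$ form $F$, every length-$9$ scheme $\Gamma$ apolar to $F$ lies on the unique quadric $Q\in V(F^\perp_2)$; this is because any nine points in $\Pn^3$ lie on a quadric, and for a flat family one passes to the closure. Thus both $VSP(F,9)$ and $VSP(F_Q,9)$ are closed subschemes of $\operatorname{Hilb}_9(Q)$, and it suffices to identify them there.

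\smallskip

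First I would record the two directions of containment at the level of points, which are essentially spelled out in the excerpt: if $\Gamma\subset Q$ is apolar to $F$, i.e.\ $I_\Gamma\subset F^\perp$ in $S$, then restricting to $Q$ gives $I_\Gamma|_Q\subset F^\perp|_Q\subset F_Q^\perp$; since $I_\Gamma$ is saturated in $S$ and $\Gamma\subset Q$, the saturation in $S_Q$ of $I_\Gamma|_Q$ is the homogeneous ideal of $\Gamma$ inside $S_Q$, and because $F_Q^\perp$ is saturated this saturated ideal is still contained in $F_Q^\perp$, so $\Gamma\in VSP(F_Q,9)$. Conversely, if $\Gamma\in VSP(F_Q,9)$ with $I^Q_\Gamma\subset F_Q^\perp$, then the ideal $I'_\Gamma\subset S$ of $\Gamma$ viewed in $\Pn^3$ restricts onto $I^Q_\Gamma$, and one checks $I'_\Gamma\subset F^\perp$: an element $G\in I'_\Gamma$ of degree $\le 4$ has image in $F_Q^\perp$, so $S_Q(4-\deg G)\cdot \bar G$ lies in $\ker F_Q$, and unwinding the relation $\ker F_Q = (F^\perp|_Q)_{(4,4)}$ together with $F_Q(\bar H)=F(H)$ for $H\in S_4$ gives $G\circ F=0$ in degrees $\le 3$ and $G\in F^\perp_4$ in degree $4$; the key point is that $S\to S_Q$ in degree $4$ has kernel $(I_Q)_4$, which is contained in $F^\perp$ since $Q\in F^\perp$, and that $S_Q(4,4)^*\hookrightarrow R_4$ sends $F_Q$ to $F$. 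Hence $VSP(F,9)$ and $VSP(F_Q,9)$ have the same closed points in $\operatorname{Hilb}_9(Q)$.

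\smallskip

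Next I would upgrade this to an equality of closed subschemes. Both $VSP$'s are defined as closures of locally closed ``open power-sum'' loci, so it is enough to show that the open loci $\{\Gamma=\{[\ell_1],\dots,[\ell_9]\} : F\in\langle\ell_1^4,\dots,\ell_9^4\rangle\}$ and its $F_Q$-analogue (power sums of the bihomogeneous form, i.e.\ nine points of $Q$ whose $v_4$-images span a subspace containing $F_Q$) coincide as locally closed subschemes of the open cell of reduced nine-point configurations in $\operatorname{Hilb}_9(Q)$, and then take closures. But a reduced $\Gamma=\{p_1,\dots,p_9\}\subset Q$ is apolar to $F$ (in $\Pn^3$) if and only if $[F]\in\langle v_4(\Gamma)\rangle$ by the Apolarity Lemma~\ref{Apolarity}, and $\langle v_4(\Gamma)\rangle$ depends only on $\Gamma$ as a subscheme of $Q$ (the span of the $v_4$-images of the points); the analogous statement for $F_Q$ uses the $(4,4)$-Veronese-type re-embedding of $Q$, and the linear isomorphism $S_Q(4,4)^*\xrightarrow{\sim}$ (the span of $v_4(Q)$ in $\Pn(R_4))$ intertwines the two span conditions. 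So the two open loci are cut out by the same incidence conditions on $\operatorname{Hilb}_9(Q)\times\{[F]\}$, hence agree scheme-theoretically, and taking closures inside $\operatorname{Hilb}_9(Q)$ (which is the same ambient scheme for both, being closed in $\operatorname{Hilb}_9(\Pn^3)$) yields $VSP(F,9)=VSP(F_Q,9)$ as subschemes, a fortiori an isomorphism.

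\smallskip

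The main obstacle I anticipate is not the set-theoretic bijection, which is close to formal given the earlier apolarity machinery, but making the scheme-structure comparison clean: one must be careful that $VSP(F,9)$ is defined via the \emph{Hilbert scheme} closure and that the closure operation commutes with the identification $\operatorname{Hilb}_9(Q)\hookrightarrow\operatorname{Hilb}_9(\Pn^3)$, and that the non-reduced members of $VSP(F,9)$ (flat limits of reduced nine-point sets) are genuinely supported on $Q$ so that they live in $\operatorname{Hilb}_9(Q)$ in the first place — this is exactly where smoothness of $Q$ and the uniqueness of the apolar quadric are used, via the fact that a flat family of apolar schemes $\Gamma_t\subset Q$ has special fibre with ideal still contained in $F^\perp$ (apolarity is a closed condition) and still of length $9$ hence still scheme-theoretically inside $Q$. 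Once that containment is in place the rest is bookkeeping with the four restriction/inclusion maps $S\rightleftarrows S_Q$ and $S_Q(4,4)^*\hookrightarrow R_4$ already set up in the excerpt.
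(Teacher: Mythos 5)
Your proposal is correct and follows essentially the same route as the paper: every length-$9$ apolar scheme lies on the unique quadric $Q$, the two containments are proved by restricting ideals to $S_Q$ (with the saturation remark) and by extending ideals back via $S_Q(4,4)^*\hookrightarrow R_4$, and the equality of the closures follows since $\operatorname{Hilb}_9(Q)$ is closed in $\operatorname{Hilb}_9(\Pn^3)$. Your extra care about flat limits and the scheme structure only spells out what the paper's closedness remark is implicitly doing.
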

        
        \begin{lemma}\label{nonspecial44} 
Let $\Gamma\subset Q$ be a scheme of length $9$ with $h^0(I_\Gamma(4,4))\geq 17$.  Then the forms in $H^0(I_\Gamma(4,4))$ have a fixed component of bidegree $(0,1)$ or $(1,0)$ that contains  a subscheme of length at least $6$ on $\Gamma$.

Furthermore, if $F_Q$ is a $(4,4)$-form and the minimal length of a scheme  apolar to $F_Q$ is $9$, then $h^0(I_\Gamma(4,4))=16$ for any scheme $\Gamma$ of length $9$ that  is apolar to $F_Q$.
\end{lemma}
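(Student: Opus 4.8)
The plan is to work entirely on the smooth quadric $Q \cong \Pn^1\times\Pn^1$ and analyze the linear system $|I_\Gamma(4,4)|$ for a length-$9$ subscheme $\Gamma\subset Q$. The space $H^0(\cO_Q(4,4))$ has dimension $25$, so imposing $9$ conditions generically leaves a $16$-dimensional system; the hypothesis $h^0(I_\Gamma(4,4))\geq 17$ says $\Gamma$ fails to impose independent conditions on $(4,4)$-forms, dropping by at least one. First I would set up the bigraded Hilbert function of $\Gamma$ and use the fact that on $\Pn^1\times\Pn^1$ a finite scheme fails to impose independent conditions on a complete linear system $|\cO_Q(a,b)|$ only when a ``curve of low degree'' is forced — the analogue on $Q$ of the Bigatti-Geramita-Migliore results used in Proposition~\ref{prop_BettiGamma}. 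Concretely, I expect to argue: if $\Gamma$ imposes only $\leq 8$ conditions on $(4,4)$-forms, then by a restriction/exact-sequence argument (restricting to rulings $\cO_Q(1,0)$ and $\cO_Q(0,1)$ and bounding the Hilbert function growth via Macaulay-type estimates adapted to the bigraded setting) a ruling line $\ell$ of bidegree $(1,0)$ or $(0,1)$ must contain a large subscheme of $\Gamma$. A length count then shows that subscheme has length at least $6$: if $\ell$ met $\Gamma$ in length $\leq 5$, then the residual scheme $\Gamma'=\Gamma:\ell$ of length $\geq 4$ together with the exact sequence
\[
0\to I_{\Gamma'}(3,4)\to I_\Gamma(4,4)\to I_{\Gamma\cap\ell}(4,4)|_\ell\to 0
\]
forces $h^0(I_\Gamma(4,4))\leq h^0(I_{\Gamma'}(3,4)) + h^0(I_{\Gamma\cap\ell}(4)|_\ell)\leq (20-4)+0 = 16$ (with analogous bounds using $(0,1)$), contradicting $h^0\geq 17$. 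The fixed component statement then follows because every $(4,4)$-form vanishing on a length-$\geq 6$ subscheme of a ruling line $\ell$ of bidegree $(1,0)$ must contain $\ell$ (a $(4,4)$-form restricts to $\ell\cong\Pn^1$ as a degree-$4$ form, so cannot vanish at $6$ points without being identically zero on $\ell$).

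For the second, ``furthermore'' statement, suppose $F_Q$ is a $(4,4)$-form whose minimal apolar scheme has length $9$, and let $\Gamma$ be a length-$9$ apolar scheme, so $I_\Gamma \subseteq F_Q^\perp$ in the sense of the preceding discussion, hence $H^0(I_\Gamma(4,4))\subseteq (F_Q^\perp)_{(4,4)}$, which is the hyperplane $\ker F_Q$ in $S_Q(4,4)$ — so $h^0(I_\Gamma(4,4))\leq 24$ trivially, but that is not the point. I would argue by contradiction: if $h^0(I_\Gamma(4,4))\geq 17$, apply the first part to get a fixed ruling component, say of bidegree $(1,0)$, corresponding to a line $\ell$, with $\ell$ containing a subscheme $\Sigma\subseteq\Gamma$ of length $\geq 6$. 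Then the residual $\Gamma' = \Gamma \setminus \Sigma$ has length $\leq 3$. Now $F_Q$ is apolar to $\Gamma$, and I want to show $F_Q$ is then apolar to a shorter scheme, contradicting minimality. The key is that every $(4,4)$-form in $\ker F_Q$ that vanishes on $\Gamma$ actually vanishes on the whole of $\ell$; and since the $(4,4)$-forms apolar to $F_Q$ already generate (in the saturated sense of $F_Q^\perp$) enough to detect $\Gamma$, I would split $F_Q = F_1 + F_2$ via the decomposition coming from the pencil: write $\ell$ and the residual scheme, argue that $F_Q$ restricted appropriately is apolar to $\ell\cup\Gamma'$ where $\Gamma'$ is already shorter, and that a general length-$3$ (or fewer) scheme on $Q$ together with the line structure gives a scheme of length $\leq 5+3 = 8$, or more carefully: a line contributes cactus rank $\leq 5$ to an apolar scheme (two rulings meeting in a point) and the residual $\leq 3$ points give $\leq 3$, so $\operatorname{cr}(F_Q)\leq 8 < 9$, contradiction. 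This uses the curve lemma, Lemma~\ref{curves}, essentially: a form apolar to a line together with finitely many extra points has cactus rank bounded by $3 + (\text{rest})$.

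The main obstacle will be making the bigraded analogue of the Castelnuovo/BGM growth argument precise on $\Pn^1\times\Pn^1$ — i.e., rigorously showing that failure to impose independent conditions on $|\cO_Q(4,4)|$ by a length-$9$ scheme forces a length-$\geq 6$ subscheme on a ruling, rather than some other degenerate configuration (for instance a length-$9$ scheme spread on a $(1,1)$ or $(2,1)$ curve). I would handle this by a direct case analysis on the bigraded Hilbert function of $\Gamma$: restrict to the two rulings, bound $h^0(I_\Gamma(4,4))$ by $h^0(I_{\Gamma'}(a-1,b)) + \deg(\Gamma\cap\ell$-part$)$-type inequalities iteratively, and check that the only way to reach $\geq 17$ is the claimed ruling configuration. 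A secondary obstacle is bookkeeping the precise relationship between $I_\Gamma$ as a saturated $S_Q$-ideal and the apolar ideal $F_Q^\perp$ in the second statement — but this is routine given the saturation discussion already established in the excerpt, so I would cite that and focus the argument on the geometry of the fixed component and the cactus-rank bound via Lemma~\ref{curves}.
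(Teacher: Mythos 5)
The crux of the first statement is exactly the step you defer: you must prove, not assume, that $h^0(I_\Gamma(4,4))\geq 17$ forces a fixed ruling component containing a length-$\geq 6$ subscheme of $\Gamma$. Your proposal leaves this to an unexecuted ``bigraded BGM/Macaulay case analysis,'' and the one concrete inequality you write down is false as stated: if $\Gamma\cap\ell$ has length at most $4$, then $h^0(I_{\Gamma\cap\ell}(4,4)|_\ell)=5-\operatorname{length}(\Gamma\cap\ell)\geq 1$, not $0$, and your bound $h^0(I_{\Gamma'}(3,4))\leq 16$ quietly assumes the residual scheme imposes independent conditions on $(3,4)$-forms, which is a statement of the same kind as the one being proved. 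So the first part has a genuine gap. The paper closes it by a different, short mechanism worth knowing: if $|I_\Gamma(4,4)|$ had no fixed component, a general member would be an irreducible curve $C$ of geometric genus at most $9$; restricting the system to $C$ and removing the base locus $\Gamma$ gives a linear series of degree at most $32-9=23$ and (projective) dimension at least $15$, which cannot be nonspecial for genus $9$ and, being special, is killed by Clifford's bound. Hence there is a fixed component, necessarily of bidegree $(1,0)$ or $(0,1)$ for degree/dimension reasons, and then the count $h^0(\cO_Q(3,4))=20$ together with $h^0\geq 17$ shows the residual scheme has length at most $3$, i.e.\ the ruling carries a subscheme of $\Gamma$ of length at least $6$.

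Your ``furthermore'' argument is in the right spirit and close to the paper's: both hinge on the fact that a form apolar to a single ruling is apolar to a scheme of length at most $3$ on it (the binary-quartic bound underlying Lemma~\ref{curves}(1)), so a minimal apolar scheme of length $9$ cannot carry six or more points on a ruling. Your assertion that ``a line contributes cactus rank $\leq 5$'' conflates a ruling with a conic (two rulings meeting in a point); the correct bound is $3$, which only strengthens your contradiction to $\operatorname{cr}(F_Q)\leq 6<9$. The paper runs this slightly differently — it bounds $\Gamma\cap\ell$ by $3$ via minimality and then re-applies the speciality/Clifford argument to the moving $(3,4)$- or $(4,3)$-system through the residual length-$\geq 6$ scheme, whose general member has genus at most $6$ — but both routes are fine once the first part is available. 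Since your second half rests on the first, the missing fixed-component argument is the gap you must fill.
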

\begin{proof}
 Let $\Gamma$ be a scheme of length $9$ that impose dependent conditions on $(4,4)$-forms, i.e.~$h^0(I_\Gamma(4,4))\geq 17$. Then the restriction of the forms in $I_\Gamma(4,4)$ to any irreducible curve $C$ defined by a form in $I_\Gamma(4,4))$ would be special.  But any such curve $C$ has geometric genus at most $9$, so has no special linear system of dimension $\geq 16$.
  The forms in $I_\Gamma(4,4)$, must therefore have a fixed component, for dimension reasons, of bidegree $(0,1)$ or $(1,0)$.   Furthermore, if this component has bidegree $(1,0)$ and contains a subscheme $\Gamma'$  of $\Gamma$, and $\Gamma''=\Gamma\setminus\Gamma$ is the residual scheme, then 
  $h^0(I_{\Gamma''}(3,4))\geq 17$, which means the $\Gamma''$ has length at most $3$.  Hence, $\Gamma'$ has length at least $6$.

   By Lemma \ref{curves}, any scheme of minimal length that is apolar to a $(4,4)$-form  contains a subscheme of length at most $3$ on a curve of bidegree $(0,1)$ or $(1,0)$.  So the moving part of the forms  in $I_\Gamma(4,4)$ are $(3,4)$ or $(4,3)$ forms in the ideal of a scheme $\Gamma_0$ of length at least $6$ and defines a linear system of dimension at least $17$.  Again a general such form is irreducible and defines a curve $C$ of genus at most $6$.  But the restriction of the linear system to $C$ would be special, which is impossible when the dimension is at least $16$.    
\end{proof}
\begin{proposition}. Let $F_Q$ be a general $(4,4)$-form on a smooth quadric surface $Q$.  Then 
$VSP(F_Q,9)\subset \operatorname{Hilb}_9(Q)$ is an irreducible surface.
\end{proposition}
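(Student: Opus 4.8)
The plan is to realize $VSP(F_Q,9)$ as a fibre of a dominant morphism between two explicit parameter spaces, and to extract from this description its dimension, smoothness and irreducibility. Write $v=v_{(4,4)}\colon Q\to\Pn(S_Q(4,4)^{*})=\Pn^{24}$ for the $(4,4)$-Segre--Veronese embedding (so $\dim S_Q(4,4)=25$), and let $U\subseteq\Hilb_9(Q)$ be the open locus of length-$9$ schemes $\Gamma$ imposing independent conditions on $(4,4)$-forms, equivalently with $h^{0}(I_\Gamma(4,4))=16$, equivalently $\dim\langle v(\Gamma)\rangle=8$. Since the Hilbert scheme of points on a smooth surface is smooth and irreducible, $\Hilb_9(Q)$ and $U$ are smooth and irreducible of dimension $18$. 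The relative linear span gives a $\Pn^{8}$-bundle
\[
\mathcal{I}_U=\{(\Gamma,[G])\in U\times\Pn^{24}\mid [G]\in\langle v(\Gamma)\rangle\}\longrightarrow U,
\]
so $\mathcal{I}_U$ is smooth and irreducible of dimension $18+8=26$, and the second projection $\pi\colon\mathcal{I}_U\to\Pn^{24}$ has the property that, by apolarity on $Q$, its fibre over $[G]$ is exactly $\{\Gamma\in U\mid\Gamma\text{ apolar to }G\}$ (for $\Gamma\in U$ one has $[G]\in\langle v(\Gamma)\rangle$ iff $I_\Gamma(4,4)\subseteq\ker G=G^{\perp}(4,4)$).

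First I would record two inputs about a general $(4,4)$-form $F_Q$. (a) $F_Q$ has rank $9$: its middle catalecticant $\Cat(2,2)\colon S_Q(2,2)\to S_Q(2,2)^{*}$ is a $9\times 9$ matrix, nonsingular for general $F_Q$, so its catalecticant rank is $9$ and $\br(F_Q)\ge 9$; and $v_{(4,4)}(\Pn^{1}\times\Pn^{1})$ is not $9$-defective, so $\Sec_9(v(Q))=\Pn^{24}$ and $\rr(F_Q)=9$. Since every finite scheme on the smooth surface $Q$ is smoothable, the chain (catalecticant rank) $\le\br\le\operatorname{cr}\le\rr$ collapses to $9$; in particular the minimal length of a scheme apolar to $F_Q$ is $9$. (b) Hence Lemma~\ref{nonspecial44} applies: every length-$9$ scheme apolar to $F_Q$ satisfies $h^{0}(I_\Gamma(4,4))=16$, i.e.\ lies in $U$. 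Moreover, by the first part of Lemma~\ref{nonspecial44}, a scheme with $h^{0}(I_\Gamma(4,4))\ge 17$ contains a length-$\ge 6$ subscheme on a curve of bidegree $(1,0)$ or $(0,1)$, so $\Hilb_9(Q)\setminus U$ has dimension at most $13$; a dimension count then shows that for general $F_Q$ no scheme outside $U$ is a limit of $9$-schemes apolar to $F_Q$. Consequently $VSP(F_Q,9)$ — the closure of the reduced apolar $9$-schemes, which is non-empty as $\rr(F_Q)=9$ — coincides with the fibre $W:=\pi^{-1}([F_Q])$, a closed subvariety of $U$. Finally, a $9$-point rank decomposition of $F_Q$ gives a point of $W$, so $\pi$ is dominant.

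The dimension and smoothness of $W$ are now formal. As $\mathcal{I}_U$ is irreducible of dimension $26$ and $\pi$ is dominant onto $\Pn^{24}$, the general fibre has dimension $26-24=2$; and as $\mathcal{I}_U$ is smooth and $\car\CC=0$, generic smoothness of $\pi$ makes the general fibre $W$ smooth. It remains to prove that $W$ is connected, and this is the step I expect to be the main obstacle, since a dominant morphism from an irreducible variety may have disconnected general fibres. I would prove connectedness via the birational map $\Psi\colon\Hilb_9(Q)\to\Gr(16,S_Q(4,4))$, $\Gamma\mapsto I_\Gamma(4,4)$, defined on $U$ and birational onto its $18$-dimensional image (a general $\Gamma$ being recovered as the base locus of $I_\Gamma(4,4)$): $W$ is, up to the generic isomorphism furnished by $\Psi$, the preimage of the sub-Grassmannian $\Gr(16,\ker F_Q)$, which for general $F_Q$ is a general translate of a fixed codimension-$16$ subvariety of $\Gr(16,25)$. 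Since $18+\dim\Gr(16,24)=18+128\ge 145=\dim\Gr(16,25)+1$, the connectedness theorem for general intersections in a Grassmannian (a consequence of Kleiman transversality together with Grothendieck--Lefschetz-type connectedness), applied to a resolution of $\Psi$, forces this preimage to be connected; alternatively, one may invoke the explicit identification of the general $VSP(F_Q,9)$ with a quartic surface in a $\Pn^{3}$ built from $F_Q$, carried out in the remainder of this section, for which irreducibility of the general member is immediate. Either way, $W$ is a smooth connected surface, hence irreducible, and $VSP(F_Q,9)=W$ is an irreducible surface.

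The delicate points I anticipate are: (i) the passage from "$W$ equals the fibre over $U$" to "$W$ is closed in $\Hilb_9(Q)$" — i.e.\ that no boundary components of $\overline{VSP(F_Q,9)}$ hide in $\Hilb_9(Q)\setminus U$ — which is exactly where the dimension estimate coming from the first part of Lemma~\ref{nonspecial44} (bounding $\Hilb_9(Q)\setminus U$ by $13$, well below the $24$ available in $\Pn^{24}$) is used; and (ii) verifying that the Grassmannian connectedness theorem descends from the resolution of $\Psi$ to the actual fibre $W\subseteq U$, which again uses that the locus removed, being contained in the $\le 13$-dimensional set $\Hilb_9(Q)\setminus U$, meets a general translate in dimension $<\dim W$. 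Both are dimension counts rather than new ideas, so the essential content of the proof is the incidence-variety set-up together with Lemma~\ref{nonspecial44}.
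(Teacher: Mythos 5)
Your set-up coincides with the paper's: the proof there works with the incidence $I_9=\{(F_Q,I_\Gamma)\mid I_\Gamma\subset F_Q^\perp\}\subset\Pn(T(4,4))\times\Hilb_9(Q)$, restricts to the dense locus $H_0$ of ideals with $\dim J(4,4)=16$ (your $U$), notes that the preimage $I_{9,0}$ is irreducible (it is a $\Pn^8$-bundle over $H_0$, your $\mathcal{I}_U$), and uses Lemma~\ref{nonspecial44} to identify $VSP(F_Q,9)$ with the closure of the fibre of $I_{9,0}\to\Pn(T(4,4))$ over a general $F_Q$; your points (a), (b) and the boundary estimate are more detailed versions of exactly these steps, and up to that point your argument matches the paper and is fine. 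The difference is in the last step: the paper reads off irreducibility of the general fibre directly from the irreducibility of $I_{9,0}$, whereas you correctly observe that connectedness of a general fibre of a dominant map from an irreducible variety is not formal, and then try to supply it by a Grassmannian connectedness theorem.

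That patch is where your proposal breaks. There is no Fulton--Hansen-type connectedness theorem in $\Gr(16,S_Q(4,4))$ at the naive bound $\dim X+\dim Y\ge\dim\Gr+1$ that you invoke. Concretely, your $W$ is (via $\Psi$) the zero locus, on an $18$-dimensional variety, of a general section of the restriction of the rank-$16$ bundle $\mathcal{U}^{\vee}$ on the Grassmannian (whose zero loci are exactly the sub-Grassmannians $\Gr(16,\ker F_Q)$). This bundle is globally generated but not ample (on a line in the Grassmannian it is $\mathcal{O}(1)\oplus\mathcal{O}^{15}$), and for globally generated non-ample bundles connectedness of general zero loci can fail even when the dimension count is favourable: the lines on a smooth quadric surface, the disconnected zero locus of a general section of $\Sym^2\mathcal{U}^{\vee}$ on $\Gr(2,4)$, are the standard example. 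Kleiman transversality gives you the expected dimension and (with generic smoothness) smoothness, never connectedness, so steps (i)--(ii) of your outline cannot be completed by dimension counts alone. Your fallback --- invoking the identification of $VSP(F_Q,9)$ with a quartic surface ``carried out in the remainder of this section'' --- is not available inside a self-contained proof of this proposition and risks circularity with the results it is meant to feed. So your proposal establishes that $VSP(F_Q,9)$ is a pure $2$-dimensional, smooth fibre of the incidence projection, but the irreducibility claim itself is not proved by the route you give; you need either the paper's argument from the irreducible incidence correspondence or some genuine connectedness/monodromy input (for instance, an irreducible $2$-dimensional family of apolar $9$-schemes through a general point, which is in effect what the quartic-surface model later provides).
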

\begin{proof} Consider the incidence $$I_9=\{(F_Q, I_{\Gamma}| I_\Gamma\subset F_Q^\perp\}\subset 
\Pn(T(4,4))\times \operatorname{Hilb}_9(Q).$$
Let $H_0\subset \operatorname{Hilb}_9(Q)$ be the locus of ideals $J$, with ${\rm dim} J(4,4)=16$, it is a dense subset, so its inverse image $I_{9,0}$ in $I_9$ is irreducible.  Hence, the general fiber of the projection $I_{9,0}\to \Pn (T(4,4))$ is also irreducible.  Let $F_Q$ be a general $(4,4)$-form.  Then, by Lemma \ref{nonspecial44}, the variety $VSP(F_Q,9)$ is the closure in $\operatorname{Hilb}_9(Q)$ of the fiber over $F_q$ in  $I_{9,0}$, and the lemma follows. 
\end{proof}

     \vskip .5cm 
        
         Now, let $\Gamma\subset Q$ be a set of nine points on $Q$.   Then, by Proposition \ref{prop_BettiGamma}, the space of cubic generators in the ideal of $\Gamma$ modulo $Q$ is $7$-dimensional, i.e.~if $I_\Gamma\subset S_Q$ is the ideal of $\Gamma$ on $Q$, then $h^0(I_\Gamma(3,3))=7$. The forms of bidegree $(3,3)$ in $I_\Gamma$ are, however, not minimal generators.  In fact,  $h^0(I_\Gamma(3,2))=h^0(I_\Gamma(2,3))=3$ for a general $\Gamma$.          
 On the other hand,  
 ${\rm dim}_k F_Q^{\bot}(2,3)={\rm dim}_k F_Q^{\bot}(3,2)=6$, so if a set  $\Gamma$ of nine points is apolar to $F$, then 
 $H^0(I_\Gamma(3,2))\subset F_Q^{\bot}(3,2)$ and $H^0(I_\Gamma(2,3))\subset F_Q^{\bot}(2,3)$.
 
 For a general set $\Gamma$ of nine points on $Q$ the vanishing locus of both the forms of bidegree $(2,3)$ and those of bidegree $(3,2)$ in the ideal is $\Gamma$, so we may conclude that $VSP(F_Q,9)$ has a birational model in the two Grassmannians $G(3,  F_Q^{\bot}(2,3))$ 
and  $G(3,  F_Q^{\bot}(3,2))$.
In particular, by semicontinuity, ${\rm dim} H^0(I_\Gamma(2,3))\cap F_Q^{\bot}(2,3)\geq 3$ and 
$${\rm dim} H^0(I_\Gamma(3,2))\cap F_Q^{\bot}(3,2)\geq 3$$ 
for any $\Gamma\in VSP(F_Q,9)$.

We now characterize the nets of $(2,3)$-forms that vanish on nine general points on $Q$.
\begin{lemma}\label{linsyzygy} Let $\langle G_1,G_2,G_3\rangle $ be a net of $(2,3)$-forms on $Q$ whose vanishing locus is a scheme $\Gamma$ of length $9$, and assume that the ideal $I_\Gamma$ contains no form of bidegree $(1,3)$.  Then the net has a syzygy of bidegree $(1,1)$ and generates the space of $2\times 2$ minors of a matrix
$$
\begin{pmatrix} A_1,A_2,A_3\\
B_1,B_2, B_3
\end{pmatrix}, \qquad  \mbox{ with deg }A_i=(1,1), \mbox{ deg }B_i=(1,2).
$$
In particular $I_\Gamma(2,3)=\langle G_1,G_2,G_3\rangle $.
\end{lemma}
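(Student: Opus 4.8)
Work in the Cox ring $S_Q=\CC[u_0,u_1,v_0,v_1]$, with $H_1,H_2$ the two ruling classes. First I would note that the three $G_i$ have no common factor: a common factor would have a curve as part of its zero locus, and $\VV(G_1,G_2,G_3)=\Gamma$ is finite. Hence two general members of the net have no common factor either, so after a change of basis $G_1,G_2$ have none, and $Z:=\VV(G_1)\cap\VV(G_2)$ is a zero-dimensional complete intersection of two $(2,3)$-divisors, of length $(2H_1+3H_2)^2=12$. Since $\Gamma\subseteq Z$ has length $9$ and $\VV(N)=Z\cap\VV(G_3)=\Gamma$, the scheme $Z$ is the disjoint union $Z=\Gamma\sqcup Z'$ with $Z'$ of length $3$, and $I_\Gamma=(I_Z:I_{Z'})$ is linked to $I_{Z'}$ through the complete intersection $Z$.

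\textbf{Liaison computation.} Three points always lie on a $(1,1)$-form, and the idea is to show that under the hypotheses $Z'$ lies on no $(1,0)$- or $(0,1)$-curve, so that $S_Q/I_{Z'}$ is arithmetically Cohen--Macaulay; then by Hilbert--Burch $I_{Z'}$ is the ideal of $2\times 2$ minors of a $3\times 2$ matrix $\psi$ with generators of bidegrees $(1,1),(2,1),(1,2)$. Dualizing the Koszul complex of $Z$ gives $\omega_{S_Q/I_Z}\cong (S_Q/I_Z)(2,4)$ (using $\omega_{S_Q}=S_Q(-2,-2)$), and forming the standard liaison mapping cone of the dual of the resolution of $S_Q/I_{Z'}$, twisted by $(2,4)$, with the Koszul complex of $I_Z$, produces a minimal free resolution of $S_Q/I_\Gamma$. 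Reading off its first two terms (a routine bidegree bookkeeping) shows that the minimal generators of $I_\Gamma$ in the lowest bidegrees are exactly three forms of bidegree $(2,3)$, which are the $2\times 2$ minors of a $2\times 3$ matrix whose rows have bidegrees $(1,1)$ and $(1,2)$, with exactly two linear syzygies among them, of bidegrees $(1,1)$ and $(1,2)$. In particular $\dim I_\Gamma(2,3)=3$, hence $I_\Gamma(2,3)=N=\langle G_1,G_2,G_3\rangle$, and the net carries a syzygy of bidegree $(1,1)$, which is the assertion.

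\textbf{A direct alternative for the syzygy.} One can also get the $(1,1)$-syzygy by a dimension count once one knows $\Gamma$ imposes independent conditions on forms of bidegrees $(2,3)$ and $(3,4)$: then $H^0(\mathcal O_Q(1,1))\otimes N\to I_\Gamma(3,4)$ has a $12$-dimensional source and an at most $11$-dimensional target, so a nonzero kernel element is a syzygy $\sum_i A_iG_i=0$ with $\deg A_i=(1,1)$. Here the $A_i$ have no common factor: a common factor of bidegree $(1,1)$ would force a $\CC$-linear relation among the $G_i$ (excluded, since they are independent), while one of bidegree $(1,0)$ or $(0,1)$ would, via the factorization $u_0G'+u_1G''=0\Rightarrow G'=u_1H$ together with the saturatedness of $I_\Gamma$ with respect to $(u_0,u_1)$ and to $(v_0,v_1)$, put a form of bidegree $(1,3)$ or $(2,2)$ into $I_\Gamma$ --- the first contradicting the hypothesis and the second contradicting $\#\VV(N)=9$ (a $(4,3)$-... rather: $G_3$ restricted to a $(2,2)$-curve would then vanish at $10>9$ points). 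A parallel count yields a $(1,2)$-syzygy not a multiple of the first, and the $2\times 2$ minors of the resulting matrix are three independent $(2,3)$-forms in $I_\Gamma$.

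\textbf{Main obstacle.} In both routes the work is concentrated in the same place: extracting enough control on the Hilbert function of $\Gamma$ in bidegree $(2,3)$ --- equivalently, on the geometry of the residual triple $Z'$ --- from the two hypotheses alone. Concretely, one must rule out the degenerations that would break the argument (a fixed component in the linear system $|I_\Gamma(2,3)|$, or $Z'$ failing to be arithmetically Cohen--Macaulay), and show that these are precisely the configurations excluded by "$I_\Gamma$ contains no $(1,3)$-form" and "$\VV(N)$ has length $9$". Everything downstream is bidegree accounting in the Koszul/Hilbert--Burch complexes.
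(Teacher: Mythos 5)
Your overall skeleton --- link $\Gamma$ to a length-$3$ residual $Z'$ inside the complete intersection of two members of the net, and extract the $2\times 3$ determinantal structure from the residual --- is the same as the paper's, but the two load-bearing steps are missing and one technical claim is wrong. The pivot of your liaison route is false: avoiding the rulings does not make $S_Q/I_{Z'}$ Cohen--Macaulay in the bigraded Cox ring. Already two (a fortiori three) general points of $Q$ fail to be ACM in this sense: for $p_1=([1{:}0],[1{:}0])$, $p_2=([0{:}1],[0{:}1])$ the saturated ideal is $(u_0u_1,u_0v_1,u_1v_0,v_0v_1)$ and the quotient is the cone over two planes meeting in a point, of depth $1$. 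So the saturated ideal of a general $Z'$ has projective dimension $3$, not $2$, there is no $3\times 2$ Hilbert--Burch matrix, and the mapping-cone ``bidegree bookkeeping'' you invoke does not apply (also $Z=\Gamma\sqcup Z'$ need not be a disjoint union, and $(G_1,G_2)$ is not saturated, so $I_\Gamma=(I_Z:I_{Z'})$ needs qualification). The paper avoids all of this by arguing scheme-theoretically: either the residual spans only a line --- necessarily a $(1,0)$-line, in which case a member of the pencil acquires a $(1,3)$ factor vanishing on $\Gamma$, contradicting the hypothesis --- or it spans a plane and is then the complete intersection of a $(1,1)$- and a $(1,2)$-form, and the classical formula for a complete intersection linked inside a complete intersection produces exactly the $2\times3$ matrix with rows of bidegrees $(1,1)$ and $(1,2)$, whose first row is the asserted syzygy.

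Second, both of your routes ultimately hinge on knowing $h^0(I_\Gamma(2,3))=3$ (and, in the alternative route, $h^0(I_\Gamma(3,4))\le 11$), i.e.\ that $\Gamma$ imposes independent conditions; you say explicitly in your ``main obstacle'' paragraph that you have not derived this from the hypotheses, yet without it neither the equality $I_\Gamma(2,3)=\langle G_1,G_2,G_3\rangle$ nor the kernel count producing the $(1,1)$-syzygy is available. This is precisely the second half of the paper's proof: if $h^0(I_\Gamma(2,3))\geq 4$ then $h^1(I_\Gamma(2,3))>0$, and restricting along an irreducible curve $C$ of type $(2,3)$ (genus at most $2$) in the system via the sequence $0\to{\mathcal O}_Q\to I_\Gamma(2,3)\to I_{C,\Gamma}(2,3)\to 0$ yields a special linear system of dimension greater than $2$ on $C$, which is impossible. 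In addition, your route-2 assertion that a $(2,2)$-form in $I_\Gamma$ would contradict $\mathrm{length}(\Gamma)=9$ is not justified as stated. So the proposal identifies the right strategy but leaves the hypothesis-dependent core --- excluding the collinear residual and establishing the independence of conditions --- unproven, and the ACM/Hilbert--Burch shortcut offered in their place fails.
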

\begin{proof} Given the scheme $\Gamma$ of length $9$ and a general pencil of forms of bidegree $(2,3)$ in $I_\Gamma$.  Then $\Gamma$ is linked in this pencil to a scheme $\Gamma_0$ on $Q$ of length  $3$.  Then $\Gamma_0$ spans a line in $Q$ or a plane.  
If $\Gamma_0$ spans a line $L$, this line must have bidegree $(1,0)$, and one of the forms in the pencil must vanish on $L$.  But then that form has a factor of bidegree $(1,3)$ that vanishes on $\Gamma$, contrary to the assumption.
  So $\Gamma_0$ spans a plane. It is  then a complete intersection of forms of bidegree $(1,1)$ and $(1,2)$.
The ideal of $\Gamma$ therefore contains the $2$-minors of a $2\times 3$ matrix as stated in the lemma. The first row is then a syzygy among the $2$-minors.  

 It remains to show that the space of minors coincide with  original net of $(2,3)$ forms.  If not, $H^0(I_\Gamma(2,3))$ is at least $4$-dimensional, or equivalently, $h^1(I_\Gamma(2,3))>0$.
Let $C\subset Q$ be a curve defined by a general form $G\in I_\Gamma(2,3)$.  Then $C$ is irreducible, and of genus  $\leq 2$.  From the cohomology of the exact sequence
$$0\longrightarrow {\mathcal O}_Q \stackrel{\cdot G}{\longrightarrow} I_\Gamma(2,3)\longrightarrow I_{C,\Gamma}(2,3)\longrightarrow 0$$
we get $h^1(I_{C,\Gamma}(2,3))>0$ and $h^0(I_{C,\Gamma}(2,3))>2$.  So  the linear system $H^0(I_{C,\Gamma}(2,3))$ on $C$ is special.  But no curve of genus $\leq 2$ has a special linear system of dimension $>2$.  
\end{proof}
 
 We let $V_{F_Q}(2,3)\subset G(3, F_Q^{\bot}(2,3))$ be the locus of nets of $(2,3)$-forms in $F_Q^{\bot}(2,3))$ with a syzygy of bidegree $(1,1)$.  The variety $V_{F_Q}(3,2)\subset G(3, F_Q^{\bot}(3,2))$ is defined similarly.
 
 We will show that for a general $(4,4)$-form $F_Q$ the loci 
 $V_{F_Q}(2,3)$ and $V_{F_Q}(3,2)$ are surfaces with two smooth components, 
one birational to $VSP(F_Q,9)$ and the other consisting of nets of forms whose common zeros is a scheme of length $8$, in fact a complete intersection $((1,3),(2,2))$ and $((3,1),(2,2))$ respectively.  

To make this statement precise we need some lemmas on generality assumptions.
   
   For the $(4,4)$-form $F_Q$, consider the space of partials of order $(2,1)$:  $$P_{2,3}=\{G(F_Q) | G\in H^0({\mathcal O}_Q(2,1))\}\subset H^0({\mathcal O}_Q(2,3)).$$  Then  
   $F_Q^{\bot}(2,3)=P_{2,3}^{\bot}$  defines a rational map 
  $$
   \begin{matrix}
   &&&\Pn( H^0({\mathcal O}_Q(2,3))\\
   &v_{2,3}\hskip -0.4cm&\nearrow&\downarrow\\
   q_{2,3}: & Q&\to& \Pn( F_Q^{\bot}(2,3)^*)\\
   \end{matrix},
   $$
   which may be interpreted as the composition of the map
    $v_{2,3}:Q\to \Pn( H^0({\mathcal O}_Q(2,3))$ defined by $H^0({\mathcal O}_Q(2,3)^*$ and the projection from the space of partials  $P_{2,3}\subset \Pn( H^0({\mathcal O}_Q(2,3))$.
   In this interpretation, the following lemma is a consequence:
   \begin{lemma}\label{2,3} 
   The map  $q_{2,3}$ is a morphism if and only if $F_Q$ has no partials of order $(2,1)$ and rank $\leq 1$.
The map $q_{2,3}$ is an embedding if and only if $F_Q$ has no partials of order $(2,1)$ and cactus rank $\leq 2$.
\end{lemma}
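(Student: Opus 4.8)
\textbf{Proof plan for Lemma \ref{2,3}.}
The plan is to exploit the interpretation of $q_{2,3}$ as the composition of the Veronese-type embedding $v_{2,3}\colon Q\to\Pn(H^0(\mathcal O_Q(2,3)))$ with the linear projection away from the linear subspace $\Lambda=\Pn(P_{2,3})$ spanned by the partials of order $(2,1)$. Recall that for any point $p\in Q$, the point $v_{2,3}(p)$ corresponds, under the pairing between $H^0(\mathcal O_Q(2,3))$ and its dual, to a rank-one form, namely the value functional at $p$; and more generally the secant variety $\Sec_k(v_{2,3}(Q))$ parametrizes forms (in the dual sense) of border rank $\le k$. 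The key translation is: $q_{2,3}$ fails to be a morphism at $p$ exactly when $v_{2,3}(p)\in\Lambda$, and $q_{2,3}$ fails to be injective/an embedding exactly when a secant line (or tangent line) of $v_{2,3}(Q)$ meets $\Lambda$, equivalently when some length-$2$ subscheme $Z\subset Q$ has $\langle v_{2,3}(Z)\rangle\cap\Lambda\ne\emptyset$.

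First I would verify the morphism statement. By apolarity (Lemma \ref{Apolarity}), $v_{2,3}(p)\in\Lambda=\Pn(P_{2,3})$ if and only if every $(2,1)$-form $G$ with $G(F_Q)$ vanishing at $p$ — wait, more precisely: $v_{2,3}(p)$ lies in the span $P_{2,3}$ iff the functional ``evaluate at $p$'' annihilates $P_{2,3}^\perp=F_Q^\perp(2,3)$, iff every section in $F_Q^\perp(2,3)$ vanishes at $p$, iff the point $p$ (as a length-one scheme) is apolar to the restricted form $G(F_Q)$ for... Let me instead argue directly: $v_{2,3}(p)\in\Lambda$ iff $l_p^{\otimes}$, the rank-one element dual to $p$, lies in the linear span of $\{G(F_Q):G\in H^0(\mathcal O_Q(2,1))\}$. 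Dualizing, this says the hyperplane $v_{2,3}(p)^\perp\subset H^0(\mathcal O_Q(2,3))$ contains $F_Q^\perp(2,3)$; but $v_{2,3}(p)^\perp$ consists of the $(2,3)$-forms vanishing at $p$, so this holds iff $F_Q^\perp(2,3)\subseteq (I_p)_{(2,3)}$. The latter, by the apolarity lemma applied on $Q$, is equivalent to saying $p$ is apolar to the $(2,1)$-restriction space, i.e. $l_p^{(2)\cdot\text{something}}\dots$; the clean conclusion is that this happens iff there is a nonzero $(2,1)$-form $G_0$ with $G_0(F_Q)$ a rank-$\le 1$ $(2,3)$-form divisible by the linear forms cutting $p$ — i.e. $F_Q$ has a partial of order $(2,1)$ of rank $\le 1$. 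So $q_{2,3}$ is a morphism iff no such partial exists.

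Next, the embedding statement. One must show $q_{2,3}$ separates points and tangent vectors, which by the standard finite-scheme reformulation is: for every length-$2$ subscheme $Z\subset Q$, the span $\langle v_{2,3}(Z)\rangle$ (a line) does not meet $\Lambda$. Dualizing exactly as above, $\langle v_{2,3}(Z)\rangle\cap\Lambda\ne\emptyset$ is equivalent to $\dim(F_Q^\perp(2,3)\cap (I_Z)_{(2,3)})$ being larger than expected, which by apolarity says $Z$ is apolar to some partial $G(F_Q)$ of order $(2,1)$, i.e. that partial has cactus rank $\le 2$. Hence $q_{2,3}$ is an embedding iff $F_Q$ has no partial of order $(2,1)$ with cactus rank $\le 2$. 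I would present both directions: if such a partial exists the corresponding $p$ (resp. $Z$) is a base point (resp. non-separated pair), and conversely a failure of the morphism/embedding property produces such a low-rank partial by running the apolarity dictionary backwards.

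The main obstacle I anticipate is bookkeeping the duality carefully: one must be consistent about whether $F_Q^\perp(2,3)$ sits inside $H^0(\mathcal O_Q(2,3))$ or its dual, and track that ``$P_{2,3}^\perp = F_Q^\perp(2,3)$'' is precisely the statement that a $(2,3)$-form $H$ is apolar to all order-$(2,1)$ partials of $F_Q$ iff $H\in F_Q^\perp$. Once that identification is pinned down, the morphism/embedding criteria are formal consequences of the secant-variety description of $\Sec_1$ and $\Sec_2$ of $v_{2,3}(Q)$ together with the apolarity lemma; no hard geometry is needed, only the rank/cactus-rank translation already set up in Section \ref{section_preliminaries}.
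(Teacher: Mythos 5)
Your proposal is correct and follows essentially the same route as the paper's proof: both interpret $q_{2,3}$ as the projection of $v_{2,3}(Q)$ from the linear span $P_{2,3}$ of the order-$(2,1)$ partials, so that an indeterminacy point corresponds to a partial lying on $v_{2,3}(Q)$ (rank $\le 1$), and a failure to separate a length-$2$ subscheme $Z$ corresponds to some partial lying in $\langle v_{2,3}(Z)\rangle$, i.e.\ $I_Z\subset G(F_Q)^\perp$ (cactus rank $\le 2$), with both translations made by the apolarity lemma. The duality bookkeeping you worry about is exactly the identification the paper uses, so no further comparison is needed.
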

\begin{proof} A point $p=[G(F)]$ in the intersection $P_{2,3}\cap v_{2,3}(Q)$ is a form whose ideal $I_p\in S_Q$ is contained in $G(F)^\perp$.  Thus, $G(F)$ is apolar to a point $p$, for some partial $G\in S_Q$ of order $(2,1)$ if and only if $q_{2,3}$ is not defined at $p$.
Similarly, a point $s=G(F)$ lies in the span of a length $2$ scheme $Z\subset Q$, if and only if $Z$ is apolar to $G(F)$, i.e.~$I_Z\subset P(F)^\perp$. Thus, $G(F)$ is apolar to a scheme $Z$ of length $2$, for some partial $G\in S_Q$ of order $(2,1)$ if and only if $q_{2,3}$ is not an embedding. 
\end{proof}
Of course, there is an analogous lemma for $q_{3,2}$.

    \begin{lemma}\label{3,4rank6} In the space of $(4,4)$-forms, the following loci are at most divisors:
    \begin{enumerate}
    \item the locus of forms apolar to the union of $(0,1)$-curve or a $(1,0)$-curve and a scheme of length at most $6$.
    \item the locus of forms apolar to some $(1,3)$ or $(2,2)$-form.
     \item the locus of forms with a partial of order $(2,1)$ or $(1,2)$  that is apolar to a scheme of length at most $2$.
    \item the locus of forms that is apolar to some scheme $\Gamma$ of length $9$ that impose dependent conditions on $(4,4)$ -forms.
    \end{enumerate}
    \end{lemma}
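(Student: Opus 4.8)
Write $N=\Pn\bigl(H^0(\mathcal{O}_Q(4,4))^*\bigr)=\Pn^{24}$ for the space of $(4,4)$-forms. For each of the four loci the plan is to realise it as the image under the second projection of a closed incidence variety $\mathcal{I}\subseteq\Pi\times N$, where $\Pi$ is a projective parameter space for the relevant ``structures'' (curves, partials, finite schemes), and then to bound $\dim\mathcal{I}\le\dim\Pi+\max_{x\in\Pi}\dim\mathcal{I}_x$, obtaining $\le 23$ in every case; since $\Pi$ is projective the image in $N$ is then a proper closed subvariety, hence at most a divisor. Three standard ingredients are used throughout: (i) the Apolarity Lemma~\ref{Apolarity}, which rephrases ``$F_Q$ is apolar to a subscheme $X\subseteq Q$'' as the linear condition $[F_Q]\in\langle v_{4,4}(X)\rangle$, equivalently $H^0(I_X(4,4))\subseteq F_Q^\perp$; (ii) for any nonzero $G\in H^0(\mathcal{O}_Q(a,b))$ with $a,b\le4$ the contraction $H^0(\mathcal{O}_Q(4,4))\to H^0(\mathcal{O}_Q(4-a,4-b))$, $F\mapsto G(F)$, is surjective (its dual is multiplication by $G$ in the domain $S_Q$, hence injective), so the forms killed by a fixed $G$ cut out a linear subspace of $N$ of the expected codimension; (iii) the Hilbert scheme of length-$d$ subschemes of the smooth surface $Q$ has dimension $2d$.

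For Case (2): ``$F_Q$ apolar to a $(1,3)$-form $H$'' means $H(F_Q)=0$ (indeed $(H)\subseteq F_Q^\perp\iff H(F_Q)=0$, since $(HG)(F_Q)=G(H(F_Q))$, and $(H)$ is the saturated ideal of $V(H)$). Fixing $0\ne H$ of bidegree $(1,3)$, the condition $H(F_Q)=0\in H^0(\mathcal{O}_Q(3,1))$ (dimension $8$) cuts a $\Pn^{16}$ out of $N$ by (ii); letting $H$ range over $\Pn(H^0(\mathcal{O}_Q(1,3)))=\Pn^7$ gives a locus of dimension $\le 7+16=23$. For bidegree $(2,2)$ the fibre is a $\Pn^{15}$ (now $H(F_Q)\in H^0(\mathcal{O}_Q(2,2))$, dimension $9$) and $H$ ranges over $\Pn^8$, giving $\le 8+15=23$; the bidegrees $(3,1)$ and $(1,2)$ are symmetric. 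For Case (1): if $C$ is a ruling of $Q$ then $\mathcal{O}_Q(4,4)|_C\cong\mathcal{O}_{\Pn^1}(4)$, so $v_{4,4}(C)$ is a rational normal quartic spanning a $\Pn^4$; adjoining a scheme $\Gamma_0$ of length $\le 6$ one has $\dim\langle v_{4,4}(C\cup\Gamma_0)\rangle\le 4+6=10$. Rulings form two copies of $\Pn^1$ and such $\Gamma_0$ move in a family of dimension $\le 12$, so the incidence $\{(C,\Gamma_0,F_Q):[F_Q]\in\langle v_{4,4}(C\cup\Gamma_0)\rangle\}$ has dimension $\le 1+12+10=23$, and its image in $N$ is exactly locus (1).

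For Case (4): by Lemma~\ref{nonspecial44}, a length-$9$ scheme $\Gamma$ imposing dependent conditions on $(4,4)$-forms is contained in $C\cup\Gamma''$ with $C$ a ruling and $\Gamma''$ of length $\le 3$; as $I_{C\cup\Gamma''}\subseteq I_\Gamma$, any $F_Q$ apolar to $\Gamma$ is apolar to $C\cup\Gamma''$, so locus (4) is contained in locus (1) (here $3\le 6$), which has just been handled. For Case (3): a partial of order $(2,1)$ is $G(F_Q)\in H^0(\mathcal{O}_Q(2,3))$ (dimension $12$, giving $\Pn^{11}$) for $G\in H^0(\mathcal{O}_Q(2,1))$, and it is apolar to a scheme $Z$ of length $\le 2$ iff $[G(F_Q)]\in\langle v_{2,3}(Z)\rangle$, a linear subspace $W$ of dimension $\le 2$. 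For fixed $0\ne G$ and $Z$ the composite $F_Q\mapsto G(F_Q)\bmod W$ is a surjection onto a space of dimension $\ge 10$ by (ii), so the $F_Q$ satisfying the condition form a $\Pn^{\le 14}$; together with $G\in\Pn^5$ and $Z$ in the $4$-dimensional family of length-$\le 2$ subschemes of $Q$, the incidence has dimension $\le 5+4+14=23$. The partials of order $(1,2)$ are symmetric.

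All four estimates are routine incidence-variety counts once (i)--(iii) are in place; the step with genuine content is the structural input to Case (4), namely Lemma~\ref{nonspecial44}. The only other point to watch is uniformity over degenerate members --- reducible or non-reduced $H$, $C$, $\Gamma_0$, $Z$, $\Gamma$, and the boundary strata of the Hilbert schemes --- but the Apolarity Lemma and the surjectivity of the contraction maps hold verbatim for arbitrary subschemes and arbitrary nonzero forms, and passing to a degenerate member only decreases dimensions in each count, so no separate analysis is needed.
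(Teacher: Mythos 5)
Your counts for cases (1), (2) and (3) are correct, and for (1) and (2) they are essentially the paper's own $23$-dimensional incidence counts. For (3) you take a genuinely different route: the paper restricts the catalecticant $\Cat_G$ to the $4$-dimensional space of $(3,3)$-partials of $F_Q$, notes that the corank-$2$ locus has expected codimension $8$ in the $\Pn^5$ of forms $G$, and confirms emptiness on an explicit form, whereas your direct count over pairs $(G,Z)$, giving $5+4+14=23$, is uniform in $F_Q$ and needs no example; that is a legitimate (and arguably cleaner) alternative.

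Case (4), however, has a genuine gap. You claim, citing Lemma~\ref{nonspecial44}, that a length-$9$ scheme $\Gamma$ imposing dependent conditions is contained in $C\cup\Gamma''$ with $C$ a ruling and $\Gamma''$ of length at most $3$. The lemma asserts only that the forms of $H^0(I_\Gamma(4,4))$ have $C$ as a fixed component and that $C$ contains a length-$\ge 6$ subscheme of $\Gamma$; the scheme-theoretic containment does not follow, and for non-reduced $\Gamma$ it is false as stated. Take $\Gamma$ supported at one point $p\in C$ with local ideal $(u^2,uv^3,v^6)$, where $u$ is the local equation of $C$: then $\Gamma\cap C$ has length $6$, the residual $(I_\Gamma:u)=(u,v^3)$ has length $3$, and $h^0(I_\Gamma(4,4))\ge 17$ (via the residual sequence), so this $\Gamma$ is in the relevant family; but a local computation shows that any $\Gamma''$ with $\Gamma\subseteq C\cup\Gamma''$ must have length at least $6$ at $p$ (every element of $I_{\Gamma''}$ must lie in $(u,v^3)$ and $(u)\cap I_{\Gamma''}$ must lie in $(u^2,uv^3)$), so in particular $\Gamma\not\subseteq C\cup(\text{residual})$ and your bound ``$\le 3$'' fails; whether ``$\le 6$'' always suffices is precisely the kind of local analysis your argument would need and does not contain, and your closing remark that degenerate members only lower dimensions does not apply here, because what fails is the containment, not a dimension estimate. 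The paper avoids this entirely by using the second statement of Lemma~\ref{nonspecial44}: any $F_Q$ whose minimal apolar scheme has length $9$ (and a fortiori any $F_Q$ not apolar to a scheme of length $\le 9$) lies outside locus (4), and forms apolar to some length-$\le 8$ scheme fill at most a $7+16=23$-dimensional set, so the locus is proper. If you want to keep your reduction in spirit, the correct fix is to drop the containment: since every form of $I_\Gamma(4,4)$ is divisible by the equation $l$ of $C$, one has $l\cdot I_{\Gamma''}(4,3)\subseteq I_\Gamma(4,4)\subseteq F_Q^\perp$ with $\Gamma''=(I_\Gamma:l)$ of length $\le 3$; the orthogonal space of $l\cdot I_{\Gamma''}(4,3)$ has projective dimension at most $7$, and letting $(l,\Gamma'')$ vary gives a locus of dimension at most $1+6+7=14$.
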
 
    \begin{proof} 
    For the first case, let $Z$ be the union of a $(0,1)$-curve $L$ and a finite scheme $Z_0$ of length at most $5$ disjoint from $L$.   If a $(4,4)$-form $F_Q$ is apolar to $Z$, then $F_Q$ decomposes into $F_Q=F_{Q,L}+F_{Q,0}$, where $F_{Q,L}$ is apolar to $L$ and $F_{Q,0}$ is apolar to $Z_0$.  Now, since $v_{4,4}(L)$ spans a $\Pn^4$ and there is a pencil of $(0,1)$ curves, the forms apolar to a $(0,1)$-curve has dimension $4+1=5.$  Similarly the set of forms apolar to $Z_0$ is $5$-dimensional and the Hilbert scheme $\operatorname{Hilb}_k(Q)$ is $2k$-dimensional, so the set of forms apolar to $Z_0$ is at most $17$-dimensional.  The set of forms $F_Q$ with a decomposition  $F_Q=F_{Q,L}+F_{Q,0}$ is therefore at most $23$-dimensional, i.e.~at most a divisor in the space of $(4,4)$-forms.
    
    Similarly the set of $(4,4)$-forms apolar to a fixed $(2,2)$ curve or a $(1,3)$-curve is $15$-dimensional, respectively $16$-dimensional, while the families of such curves are $8$, respectively $7$- dimensional.  So the set of forms apolar to some $(2,2)$-curve or some $(1,3)$-curves  is at most $23$-dimensional, i.e.~at most a divisor in the space of $(4,4)$-forms.
    
    The ideal of a general scheme $Z$ of length $2$ on $Q$ is generated by two $(1,1)$-forms say $A_1$ and $A_2$.  A partial $G(F_Q)$ of  degree $(2,3)$ is apolar to $Z$ only if $A_1(G(F_Q))=A_2(G(F_Q))=0$, i.e.~if both $A_1(F_Q)$ and $A_2(F_Q)$ are annihilated by the $(2,1)$ form $G$.
   
    Any $(1,2)$ form $G\in S_Q(1,2)$ defines a linear map differentiation $$Cat_G: S_Q(3,3)^*\to S_Q(1,2)^*;\quad H\mapsto G(H).$$
    Consider its restriction to the space of degree $(3,3)$-partials
    $$P_{3,3}(F)=\{A(F_Q)|A\in S_Q(1,1)\}\subset S_Q(3,3)^*.$$ 
    Now, $G\in H^\perp$ only if $Cat_G$ has corank at least $1$. 
    Similarly, $G$ annihilates a pencil of forms in $P_{3,3}(F)$ only if $Cat_G$ restricted to $P_{3,3}(F)$ has corank at least $2$.
   Since ${\rm dim}P_{3,3}(F)=4$ and ${\rm dim}S_Q(1,2)^*=6$, the corank $2$ locus has expected dimension $8$, i.e.~the set of  $[G]\in \Pn(S_Q(2,1))$, such that $Cat_G$ is expected to be empty for general $F$.  This is also easily confirmed with the example 
   $$F_Q=x_0^4y_0^4+x_1^4y_0^4+x_0^3x_1y_0y_1^3+x_0^2x_1^2y_0^2y_1^2+x_1^4y_1^4.$$
   So the set of $(4,4)$-forms $F_Q$ for which a partial $G(F_Q)$ of  degree $(2,3)$ is apolar to length $2$ subscheme $Z$, defined by two $(1,1)$ forms, is at most a divisor in the space of $(4,4)$-forms.  If a length $2$ scheme $Z$ 
   is contained in a $(0,1)$-curve or a $(1,0)$-curve the similar argument applies with the same conclusion.
     
  By Lemma \ref{nonspecial44},  a $(4,4)$-form $F_Q$ of cactus rank $9$, is not part of the fourth locus, so this latter locus is at most a divisor. 
    \end{proof}
   
       We say that a $(4,4)$-form is {\em sufficiently general} if it is not in any of the loci of Lemma \ref{3,4rank6}.  
       Note that any apolar scheme of length at most $8$ has a $(2,2)$-form in its ideal, so a sufficiently general $(4,4)$ -form is not apolar to any scheme of length $\leq 8$.

    \begin{proposition}\label{11syz} Let $F_Q$ be a sufficiently general $(4,4)$ form on a quadric $Q$, and let 
  $\langle G_1,G_2,G_3\rangle$ be a net of forms in $F_Q^{\bot}(2,3)$ with a syzygy of bidegree $(1,1)$, i.e.~a net that  belongs to $V_{F_Q}(2,3)$.
  Let  
$$\Gamma=\VV(G_1,G_2,G_3).$$  Then $\Gamma$ is a subscheme of length $9$ apolar to $F_Q$, i.e.~$\Gamma\in VSP(F_Q,9)$,
or $\Gamma$ is a complete intersection $((1,3),(2,2))$ of length $8$.  In the latter case the $(1,1)$-forms of the syzygy vanish in a common point on $Q$.
 \end{proposition}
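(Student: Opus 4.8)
The plan is to analyze the net $\langle G_1,G_2,G_3\rangle$ via its $(1,1)$-syzygy, using the structure from Lemma \ref{linsyzygy}. First I would record that a net of $(2,3)$-forms with a $(1,1)$-syzygy is, by the Hilbert-Burch style argument in the proof of Lemma \ref{linsyzygy}, generically the set of $2\times 2$ minors of a $2\times 3$ matrix
$$
\begin{pmatrix} A_1 & A_2 & A_3\\ B_1 & B_2 & B_3\end{pmatrix},\qquad \deg A_i = (1,1),\ \deg B_i=(1,2),
$$
provided $I_\Gamma$ contains no $(1,3)$-form. So the argument splits into two cases according to whether $I_\Gamma$ contains a form of bidegree $(1,3)$.

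In the generic case, where there is no $(1,3)$-form in the ideal, the matrix is $1$-generic in the appropriate sense and its minors cut out a scheme $\Gamma$ of length $9$. I would then argue that $\Gamma$ is apolar to $F_Q$: since $G_1,G_2,G_3 \in F_Q^\perp(2,3)$ and $\dim F_Q^\perp(2,3)=6$, while for a sufficiently general $F_Q$ no partial of order $(2,1)$ is apolar to a scheme of length $\le 2$ (Lemma \ref{2,3}, Lemma \ref{3,4rank6}(3)), the vanishing locus of the net is exactly the base scheme of the three minors, which is $\Gamma$, and one checks $I_\Gamma(2,3)=\langle G_1,G_2,G_3\rangle$ as in Lemma \ref{linsyzygy}. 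One must also rule out that $\Gamma$ has length $\le 8$: a sufficiently general $F_Q$ is apolar to no scheme of length $\le 8$ (the remark after Lemma \ref{3,4rank6}), so if the three minors had a base scheme of length $<9$ that scheme would be apolar to $F_Q$ — contradiction unless the minors drop in number, which is precisely the degenerate case below. Hence $\Gamma \in VSP(F_Q,9)$.

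The remaining case is when $I_\Gamma$ contains a form $C$ of bidegree $(1,3)$. Then $C$ is a fixed component of the net: each $G_i$ factors as $C \cdot \ell_i$ with $\ell_i$ a $(1,0)$-form, \emph{or} the net has a common $(1,3)$-factor only partially. More precisely, if a general member of the net is $C$ times a $(1,0)$-form, the net reduces to $C\cdot\langle\ell_1,\ell_2,\ell_3\rangle$, and since the $\ell_i$ lie in a $2$-dimensional space $H^0(\mathcal O_Q(1,0))$ there would be a $(1,1)$ form — i.e. a second, linear-in-$(1,0)$ relation — forcing the common vanishing locus to contain the $(1,0)$-curve $\{C=0\}\cap\{\text{residual}\}$, which would make $F_Q$ apolar to a $(1,0)$-curve together with a short scheme, excluded by Lemma \ref{3,4rank6}(1). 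So not every member is divisible by $C$; then $C$ together with a $(2,2)$-form from the ideal generates a complete intersection $((1,3),(2,2))$ of length $8$ contained in $\Gamma$, and because $F_Q$ is apolar to no scheme of length $8$ and to no $(1,3)$- or $(2,2)$-form individually (Lemma \ref{3,4rank6}(2)), this forces $\Gamma$ itself to be exactly this complete intersection; in that situation the three $(1,1)$-entries $A_1,A_2,A_3$ of the syzygy have a common zero on $Q$, since a complete intersection $((1,3),(2,2))$ arises from the $2\times 3$ matrix exactly when the top row degenerates. I expect the main obstacle to be the bookkeeping in this second case: carefully showing that the presence of a $(1,3)$-form in $I_\Gamma$ forces the base scheme to be a complete intersection of length $8$ rather than some other length-$8$ or length-$9$ configuration, and that this is incompatible with the generality of $F_Q$ unless the $A_i$ share a zero. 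This requires a clean analysis of $1$-generic versus non-$1$-generic $2\times 3$ matrices of bidegrees $((1,1),(1,2))$ on $Q$, parallel to \cite{GeomSyz}*{\S 6}.
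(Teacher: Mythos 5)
Your overall plan (read the $(1,1)$-syzygy as the top row of a $2\times 3$ matrix, and split into a generic length-$9$ case and a degenerate case) parallels the paper, but there is a genuine gap at the heart of the generic case: you never prove that the length-$9$ scheme $\Gamma$ is apolar to $F_Q$. Knowing $G_1,G_2,G_3\in F_Q^{\bot}(2,3)$ and $I_\Gamma(2,3)=\langle G_1,G_2,G_3\rangle$ only controls one graded piece; apolarity requires $I_\Gamma(a,b)\subset F_Q^{\bot}(a,b)$ for all $(a,b)$, and this does not follow formally. The paper's proof closes this by showing the net generates a $16$-dimensional space of $(4,4)$-forms, identifying that space with $I_\Gamma(4,4)$ via a non-specialness argument on a general curve in the net, and then bootstrapping downward: any $G\in I_\Gamma$ of bidegree $(a,b)$ has $G(F_Q)$ annihilated by all forms of complementary bidegree, hence $G(F_Q)=0$. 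Without some version of this step the conclusion $\Gamma\in VSP(F_Q,9)$ is unsupported. The same confusion undermines your exclusion of shorter base loci: you argue a base scheme of length $<9$ ``would be apolar to $F_Q$ --- contradiction,'' but containment of the net in $F_Q^{\bot}(2,3)$ does not make the base scheme apolar; indeed the length-$8$ branch of the proposition exists precisely because such non-apolar base loci do occur for general $F_Q$.

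The degenerate case has a related misuse of the generality hypotheses. You invoke Lemma \ref{3,4rank6}(2) (no $(1,3)$- or $(2,2)$-form apolar to $F_Q$) to constrain the forms $H_1$ of bidegree $(1,3)$ and $H_2$ of bidegree $(2,2)$ cutting out $\Gamma$, but those lemmas concern forms in $F_Q^{\bot}$, and $H_1,H_2\in I_\Gamma$ need not be apolar to $F_Q$ --- in the paper's length-$8$ case they are not; only the three specific $(2,3)$-combinations $G_i$ lie in $F_Q^{\bot}$. The correct route (as in the paper) is a linkage/degree computation: when the $(1,1)$-forms $A_i$ acquire a common reduced zero, the base locus of the net drops to length $8$ and is, for degree reasons, the complete intersection $(H_1,H_2)$, and the common zero $p=\VV(A_1,B_1)$ of the $A_i$ is exhibited explicitly from the syzygy. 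You also do not systematically rule out a positive-dimensional base locus: the paper handles this through the rank-$2$ case of $\langle A_1,A_2,A_3\rangle$ and the case of a common $(0,1)$-factor of the $G_i$, both excluded by the generality of $F_Q$ (Lemma \ref{3,4rank6}(1)--(3)); your sketch only touches a common $(1,3)$-factor inside the degenerate branch, which does not cover these possibilities.
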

               
\begin{proof} 
Assume $\langle G_1,G_2,G_3\rangle$ is a net of $(2,3)$-forms that are apolar to $F_Q$ and have a syzygy 
$A_1G_1+A_2G_2+A_3G_3=0$ where the $A_i$ have bidegree $(1,1)$.
We analyze case by case:
\begin{enumerate}
\item ${\rm rank }\langle A_1,A_2,A_3\rangle=2$
\item ${\rm rank }\langle A_1,A_2,A_3\rangle=3$ and the forms $G_1,G_2,G_3$ have a common component.
\item ${\rm rank }\langle A_1,A_2,A_3\rangle=3$ and the vanishing  locus $\VV(G_1,G_2,G_3)$ is finite. 
\end{enumerate}

In case $(1)$ we may assume $A_3=0$ and  $A_1G_1=A_2G_2$.  If $A_1$ or $A_2$ is irreducible, then there is a $(1,2)$-form  $G_0$ such that $G_1=A_2G_0$ and $G_2=A_1G_0$.
The form $G_0(F_Q)$ has bidegree $(3,2)$, and $A_1( G_0(F_Q))=A_2( G_0(F_Q))=0$.  But $(A_1,A_2)$ generates the ideal of a scheme of length $2$, so  $G_0(F_Q)$ is apolar to a subscheme of length $2$. This is excluded by the generality assumptions of $F_Q$. If $A_1$ and $A_2$ are reducible with a common factor of bidegree $(0,1)$, then we may assume that $B_1G_1=B_2G_2$ for $(1,0)$-forms $B_i$ such that 
$G_1=B_2G_0$ and $G_2=B_1G_0$ .  Then $G_0$ is a $(1,3)$-form and $G_0(F_Q)$ is a $(3,1)$-form that is annihilated by every $(0,1)$ form.  But that means $G_0$ is apolar to $F_Q$, again contrary to the assumption on $F_Q$.  If  $A_1$ and $A_2$ are reducible with a common factor of bidegree $(1,0)$, then 
a similar argument implies that there is a $(2,2)$ form apolar to $F_Q$, against the generality assumption.

For cases $(2)$ and $(3)$, we first notice that 
when $F_Q$ is sufficiently general, then,  by Lemmas \ref{2,3} and \ref{3,4rank6}, the map $\phi_{2,3}:Q\to \Pn^5$ defined by $F_Q^{\bot}(2,3)$ is an embedding.  Therefore, the only curves on $Q$ that are mapped into a plane by $\phi_{2,3}$ are the curves of bidegree $(0,1)$.

In case $(2)$ we therefore may assume that the common factor $B$ of the $G_i$ has bidegree $(0,1)$.  So $G_i=BH_i$ where the $H_i$ have bidegree $(2,2)$ and 
$A_1H_1+A_2H_2+A_3H_3=0$.   The $A_i$ have at most one common zero, otherwise we would be in case $(1)$ again.  If the $A_i$ have one common zero, then the $H_i$ vanish on a common scheme $Z$ of length $5$, which would be the complete intersection of a $(2,1)$-form and a $(1,2)$-form.  The three $H_i$ would span a net of forms in the $4$-dimensional space of $(2,2)$-forms in the ideal of $Z$. Still, one may compute that the three $(2,3)$ forms $BH_i$ generate the space of $(4,4)$-forms in the  ideal of $Z\cup \{B=0\}$.  This means that if the $BH_i$ lie in $F_Q^\perp$, then $Z\cup \{B=0\}$ is apolar to $F_Q$, contrary to the generality assumption.

If the $(1,1)$-forms have no common zeros, then the $(2,2)$-forms $H_i$ generate the ideal of a scheme $Z$ of length $6$, and the union of this scheme and the curve $\{B=0\}$ is apolar to $F$, again contrary to the generality assumption on $F_Q$.  

In case $(3)$ we may assume that the $G_i$ are all irreducible.   Since $A_1G_1+A_2G_2+A_3G_3=0$, the $A_iG_i$ form a pencil. 
In the baselocus $\Gamma_P$ of this pencil of degree $24$, each $A_i$ contains a length $7$ subscheme, and the intersection of any two $A_i$ has length $2$.
If the three $A_i$ have no common zeros the intersection of the three $G_i$ is a subscheme of length $24-3\times 7+3\times 2=9$ of the baselocus $\Gamma_P$.   In this case the  vanishing locus $\{G_1=G_2=G_3=0\}$ is a scheme $\Gamma$ of length $9$.  

If the common zeros of the three $A_i$ is a reduced point, then the intersection of the three $G_i$ is a subscheme of length $24-3\times 7+3\times 2-1=8$ of the baselocus $\Gamma_P$.   In this case the  vanishing locus $\{G_1=G_2=G_3=0\}$ is a scheme $\Gamma$ of length $8$.  

If the common zeros of the three $A_i$ is scheme of length $2$, then the $A_i$ form a pencil, which is case (1).

Assume, first that the vanishing locus $\{G_1=G_2=G_3=0\}$ is a scheme $\Gamma$ of length $9$, and consider a general pencil in the net.  In this pencil $\Gamma$ is linked to a scheme $\Gamma_0$ of length $3$.
Then $\Gamma_0$ is a complete intersection $(1,0),(1,3)$  or $(1,1),(1,2)$.
Correspondingly the net is the space of $2\times 2$-minors  of a matrix
$$
\begin{pmatrix} A_1,A_2,A_3\\
B_1,B_2, B_3
\end{pmatrix}, 
$$
where ${\rm deg}A_i=(1,0), {\rm deg}B_i=(1,3)$ or ${\rm deg}A_i=(1,1), {\rm deg}B_i=(1,2)$ respectively.
 In the first case there is a syzygy of bidegree $(1,0)$ which generates syzygies of bidegree $(1,1)$ of rank $2$.  This is excluded in case $(1)$ above.   So only the second case is possible, as stated in the lemma.

To show that $\Gamma$ is apolar to $F_Q$ we need  to show that 
$I_\Gamma(a,b)\subset F_Q^{\bot}(a,b)$ for any $(a,b)$.  By Lemma \ref{linsyzygy} we know that this is the case for $(a,b)=(2,3)$.    
The space of forms of degree $(4,4)$ generated by $\langle G_1,G_2,G_3\rangle$
has a dimension that may be computed from the matrix representation:  The $(2,3)$-forms generate $3\cdot 6=18$ form of bidegree $(4,4)$, with two syzygies generated by the syzygy of degree $(1,1)$, so $16$ linearly independent $(4,4)$-forms.   Since the vanishing locus of the $(2,3)$-forms is $\Gamma$, the general $(4,4)$-form in this space defines an  irreducible curve $C$ of genus at most $9$.  The restriction of the $16$-dimensional space of $(4,4)$-forms to $C$ is $15$-dimensional, so that space defines nonspecial linear system on $C$.  We conclude that the $16$-dimensional space of $(4,4)$-forms coincides with $I_\Gamma(4,4)$.
In particular, $I_\Gamma(4,4)\subset F_Q^{\bot}(4,4)$.   By apolarity we infer:   For any form $G\in I_\Gamma$ of bidegree $(a,b)$, with $a\leq 4, b\leq 4$, the form 
$G(F_Q)$  has bidegree $(4-a,4-b)$ and is annihilated by any form of bidegree $(4-a,4-b)$.  Therefore $G(F_Q)=0$, and $G\in F_Q^{\bot}(a,b)$.
We conclude that $I_\Gamma(a,b)\subset F_Q^{\bot}(a,b)$ for any $(a,b)$.

Assume, next, that the vanishing locus $\{G_1=G_2=G_3=0\}$ is a scheme $\Gamma$ of length $8$, and consider a general pencil in the net.  In this pencil $\Gamma$ is linked to a scheme $\Gamma_0$ of length $4$.
In the relation $A_1G_1+A_2G_2+A_3G_3=0$, when the common zeros of the $A_i$ is a reduced point, the three forms $A_1,G_2,G_3$ must vanish in scheme of length $4$.  So the common zeros $\Gamma$ of the $G_i$ is linked in $(G_2,G_3)$ to a scheme that is the common zeros of $A_1$ and a $(2,2)$-form $B_1$, or $A_1$ and a $(1,3)$-form $B_2$.  From the first case there is a $(1,3)$-form $H_1$ in the ideal of $\Gamma$, while from the second case there is a $(2,2)$-form $H_2$ in the ideal of $\Gamma$.  For degree reasons, the ideal of $\Gamma$ is a complete intersection $(H_1,H_2)$.

The multiples of $H_1$ and $H_2$ generate the $4$-dimensional space $I_Z(2,3)$ of forms of bidegree $(2,3)$, with a $4$-dimensional space of syzygies of bidegree $(1,1)$.  In fact, any $3$-dimensional subspace in $I_Z(2,3)$ has a syzygy of bidegree $(1,1)$, such that the $(1,1)$ forms have a common zero on $Q$.
This can be seen explicitly:  As generators of a net in $I_Z(2,3)$ we may choose one form
$A_1H_1$, $B_1H_2$ and $A_2H_1+B_2H_2$, the $A_1$ and $A_2$ are $(1,0$-forms, while $B_1$ and $B_2$ are $(0,1)$-forms.  Then the syzygy of bidegree $(1,1)$ among the three is
$$A_2B_1\cdot A_1H_1-A_1B_1\cdot (A_2H_1+B_2H_2)+A_1B_2\cdot B_1H_2=0.$$
Notice that the $(1,1)$ forms vanish at the point $p=\VV(A_1,B_1)$.  When $A_2H_1+B_2H_2$ vanishes at this point, then the three $(2,3)$ forms in the ideal of $Z$ actually vanishes in a scheme of length $9$.  In this case $Z\cup p$ is apolar to $F_Q^\perp$.  But when $A_2H_1+B_2H_2$ does not vanish at $p$, then the three $(2,3)$-forms in $I_Z(2,3)$ does not vanish on an apolar scheme, even if the net lies in $F_Q^\perp$.

Thus the net of $(2,3)$ forms in $F_Q^\perp(2,3)$ with a syzygy in bidegree $(1,1)$ consists both of nets that define apolar schemes of length $9$ to $F$, and schemes of length $8$ that are not apolar to $F$.

 \end{proof}
 There is, of course, a completely analogous proposition for $V_{F_Q}(3,2)$.
 
 Let $V^8_{F_Q}(2,3)$ and  $V^9_{F_Q}(2,3)$ be the sets of nets $\langle G_1,G_2,G_3 \rangle\in G(3,F^{\perp}(2,3))$ with a $(1,1)$-syzygy, such that $\VV(G_1,G_2,G_3)$ has length $8$ and length $9$, respectively.  And similarly $V^8_{F_Q}(3,1)$ and  $V^9_{F_Q}(3,2)$ in $G(3,F^{\perp}(3,2))$.
 
 Clearly, both  $V^8_{F_Q}(2,3)$ and  $V^9_{F_Q}(2,3)$ are closed, 
  $V^8_{F_Q}(2,3)$ since it is the locus where the $(1,1)$-syzygy vanishes on $Q$, and $V^9_{F_Q}(2,3)$ since it the locus where the net of $(2,3)$ curves vanish on a scheme of length $9$.
  Also, since any net in closed variety $V^9_{F_Q}(2,3)$ defines an apolar scheme of length $9$ to $F$, the  natural map 
   $$V^9_{F_Q}(2,3)\to VSP(F_Q,9)$$
   is by Proposition \ref{11syz} both surjective and injective.
   In particular every point in $VSP(F_Q,9)$ is apolar to $F$.   
 \begin{proposition}\label{vsp9smooth} When $F_Q$ is a general $(4,4)$-form, then $VSP(F_Q,9)$ is a smooth surface isomorphic to $V^9_{F_Q}(2,3)$.
 \end{proposition}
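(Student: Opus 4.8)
The plan is to establish the isomorphism $VSP(F_Q,9)\cong V^9_{F_Q}(2,3)$ first, and then reduce smoothness of $VSP(F_Q,9)$ to a tangent-space computation on the Grassmannian side, where the defining equations are explicit. By the discussion immediately preceding the statement, the natural map $V^9_{F_Q}(2,3)\to VSP(F_Q,9)$ sending a net $\langle G_1,G_2,G_3\rangle$ with its $(1,1)$-syzygy to the scheme $\Gamma=\VV(G_1,G_2,G_3)$ is bijective: surjectivity holds because for a sufficiently general $F_Q$ any apolar $\Gamma$ of length $9$ satisfies $\dim H^0(I_\Gamma(2,3))\cap F_Q^\perp(2,3)\ge 3$ and, by Lemma~\ref{linsyzygy}, the $(2,3)$-forms in $I_\Gamma$ form exactly such a net with a $(1,1)$-syzygy; injectivity holds because $I_\Gamma(2,3)$ is recovered from $\Gamma$ and, conversely by Lemma~\ref{linsyzygy} and Proposition~\ref{11syz}, $\Gamma$ is recovered as the common zero locus of the net. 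The first step is therefore to promote this bijection to an isomorphism of schemes: both sides are closed subschemes of their respective ambient parameter spaces (the Hilbert scheme $\operatorname{Hilb}_9(Q)$ and the Grassmannian $G(3,F_Q^\perp(2,3))$), the assignment $\Gamma\mapsto I_\Gamma(2,3)$ and its inverse are given by functorial constructions (saturation, taking the degree-$(2,3)$ part, forming the ideal of a net), hence are morphisms, so a bijective morphism between the two — with appropriate normality or the explicit inverse — is an isomorphism.

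Granting the isomorphism, it suffices to prove $V^9_{F_Q}(2,3)$ is smooth for general $F_Q$. The second step is to set up the incidence correspondence
\[
\mathcal{I}=\{(F_Q,N)\ \mid\ N\in G(3,S_Q(2,3)),\ N\subseteq F_Q^\perp(2,3),\ N \text{ has a }(1,1)\text{-syzygy},\ \VV(N) \text{ has length }9\}
\]
inside $\Pn(S_Q(4,4)^*)\times G(3,S_Q(2,3))$, with its two projections $p_1$ to the space of $(4,4)$-forms and $p_2$ to the Grassmannian. The plan is to analyze $\mathcal I$ via $p_2$: the fiber of $p_2$ over a net $N$ with a $(1,1)$-syzygy, whose common zeros form a length $9$ scheme $\Gamma$, is the linear space $\Pn(I_\Gamma(4,4)^\perp)$ of forms apolar to $\Gamma$, which by the proof of Proposition~\ref{11syz} has constant dimension (the $(4,4)$-forms generated by $N$ are $16$-dimensional, so $\Gamma$ imposes independent conditions and $I_\Gamma(4,4)$ has codimension $17$). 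Since the base — the variety of nets with a $(1,1)$-syzygy and finite length $9$ zero locus — is itself smooth and irreducible (it maps to the Hilbert scheme of length $3$ subschemes $\Gamma_0$ linked to $\Gamma$, which sweeps out an irreducible smooth locus, with the net recovered as a space of $2\times2$ minors as in Lemma~\ref{linsyzygy}), $\mathcal I$ is smooth and irreducible. Then generic smoothness of $p_1$ — which applies in characteristic zero — shows the general fiber $p_1^{-1}(F_Q)$ is smooth, and this fiber is exactly $V^9_{F_Q}(2,3)$.

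The main obstacle I expect is twofold. First, the delicate point is to check that for general $F_Q$ the correspondence $\mathcal I$ is irreducible and that $p_1^{-1}(F_Q)$ really equals $V^9_{F_Q}(2,3)$ and not some larger component: one must rule out extra components of $V_{F_Q}(2,3)$ degenerating into $V^9_{F_Q}(2,3)$, which is precisely why Proposition~\ref{11syz} separates the length-$8$ locus $V^8_{F_Q}(2,3)$ from $V^9_{F_Q}(2,3)$ and shows the two are disjoint closed sets for sufficiently general $F_Q$ (the length $8$ locus carries an extra $(2,2)$-form apolar to $F_Q$, excluded by the generality hypotheses of Lemma~\ref{3,4rank6}). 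Second, one must verify that the base of $p_2$ — nets admitting a $(1,1)$-syzygy with finite $9$-point zero locus — is genuinely smooth; here I would use the matrix description $N=I_2\!\begin{pmatrix}A_1&A_2&A_3\\ B_1&B_2&B_3\end{pmatrix}$ with $\deg A_i=(1,1),\deg B_i=(1,2)$ from Lemma~\ref{linsyzygy}, parametrizing such nets by the pair $(\Gamma_0,\text{matrix data})$ modulo the obvious $\mathrm{GL}_2$ and column operations, and count that the resulting variety is smooth of the expected dimension. Once these two points are in place, generic smoothness finishes the argument; I would also record that $\dim VSP(F_Q,9)=2$ is forced by the dimension count $\dim\mathcal I-\dim\Pn(S_Q(4,4)^*)$, consistent with the heuristic count in Section~\ref{vsp9} that nine points on $Q$ apolar to a general $F_Q$ move in a surface.
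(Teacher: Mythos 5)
Your strategy is, at bottom, the same mechanism as the paper's, transported to the Grassmannian side. The paper works directly on the Hilbert scheme: it takes the open subset $U\subset \operatorname{Hilb}_9(Q)$ of schemes $\Gamma$ with $\dim I_\Gamma(4,4)=16$ (Lemma~\ref{nonspecial44}), notes that $VSP(F_Q,9)\subset U$ for general $F_Q$ (Proposition~\ref{11syz}), realizes $VSP(F_Q,9)\cap U$ as the zero locus of the section of the dual of the rank~$16$ bundle with fiber $I_\Gamma(4,4)$ determined by $F_Q$, and concludes smoothness by Bertini for a globally generated bundle, using Fogarty's theorem that $\operatorname{Hilb}_9(Q)$ is smooth; the isomorphism with $V^9_{F_Q}(2,3)$ is then deduced afterwards from the bijection already established. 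Your incidence variety $\mathcal I$ with constant-dimensional linear fibers over the space of nets, followed by generic smoothness of $p_1$, is exactly the standard proof of that Bertini statement, so the two routes buy the same thing; the only structural difference is that you prove the isomorphism first and smoothness of the net-side model second, whereas the paper proves smoothness of $VSP(F_Q,9)$ first and then the isomorphism comes for free from bijectivity (your ``functorial constructions are morphisms'' step quietly needs the constancy $h^0(I_\Gamma(2,3))=3$ along all of $VSP(F_Q,9)$, which is exactly what the paper's ordering avoids having to address separately).

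The genuine gap is your step (b), smoothness of the base of $p_2$. A parametrization of the nets by $2\times 3$ matrices modulo row and column operations, together with a count that the quotient has ``the expected dimension,'' yields irreducibility and the dimension but not smoothness, and the whole argument stands or falls with this point: generic smoothness of $p_1$ requires $\mathcal I$ smooth, and $\mathcal I$ is (at best) a $\Pn^8$-bundle over this base. This is precisely where the paper invokes Fogarty. You can close the gap without new deformation theory by observing that, by Lemma~\ref{linsyzygy}, on the relevant locus the assignment $N\mapsto \VV(N)$ is a bijection onto the open subset of $\operatorname{Hilb}_9(Q)$ of schemes $\Gamma$ with $h^0(I_\Gamma(2,3))=3$, no $(1,3)$-form in $I_\Gamma$, and $\VV(I_\Gamma(2,3))=\Gamma$, with inverse $\Gamma\mapsto I_\Gamma(2,3)$ (a morphism there because the rank is constant); hence the base is isomorphic to an open subset of $\operatorname{Hilb}_9(Q)$ and Fogarty gives its smoothness. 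You should also record that the fibers of $p_2$ form a projective subbundle, i.e.\ that the rank of $N\otimes S_Q(2,1)\to S_Q(4,4)$ is constantly $16$ over the base (this is the content of Lemma~\ref{nonspecial44} and the proof of Proposition~\ref{11syz}, which you do cite), and that $p_1$ is dominant, which holds because a general $(4,4)$-form admits a length-$9$ apolar scheme. With those repairs your argument is complete and equivalent to the paper's.
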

 \begin{proof} The last isomorphism follows from the above bijection, as soon as $VSP(F_Q,9)$ is smooth.
 
 Consider the subset $U\subset \operatorname{Hilb}_9Q$ of schemes $\Gamma$ such that $I_\Gamma(4,4)$ is $16$-dimensional.  By Lemma \ref{nonspecial44}, $U$ is an open set, that, by Proposition \ref{11syz}, contains $VSP(F_Q,9)$.  Let $E_U$ be the rank $16$ vector bundle on $U$ whose fiber over $\Gamma\in U$ is $I_\Gamma(4,4)$, then any $(4,4)$-form $F_Q$ defines a section of $E_U^*$.   The vector bundle is clearly generated by these sections, and the zeros of a section is the set of $\Gamma$ such that $I_\Gamma(4,4)$ is in the kernel of $F_Q$.  In particular this zero locus is $VSP(F_Q,9)\cap U$.  Finally, $U$ is smooth (cf. \cite{Fogarty}),
and $VSP(F_Q,9)\subset  U$ for a general $F_Q$, so by the Bertini theorem, $VSP(F_Q,9)$ is also smooth.
 \end{proof}

\begin{theorem}
 Let $F$ be a general quaternary quartic form of rank $9$, then the variety $VSP(F,9)$ is a K3 surface isomorphic to a smooth quartic surface in $\mathbb{P}^3$.  
\end{theorem}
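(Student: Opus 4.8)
The plan is to leverage everything already established about $VSP(F_Q,9)$ together with an explicit projective model. By the lemma identifying $VSP(F,9)\cong VSP(F_Q,9)$ (for $F_Q$ the restriction of $F$ to its apolar smooth quadric $Q$), by the bijection following Proposition~\ref{11syz} between $VSP(F_Q,9)$ and $V^9_{F_Q}(2,3)$, and by Proposition~\ref{vsp9smooth}, we already know that $Y:=VSP(F,9)$ is a smooth, irreducible, projective surface. So it remains to recognize its deformation type and to exhibit the claimed embedding into $\mathbb{P}^3$.

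First I would construct a natural morphism $\phi\colon Y\to\mathbb{P}^3$. A point of $Y$ corresponds, via $V^9_{F_Q}(2,3)$, to a net $N=\langle G_1,G_2,G_3\rangle\subset F_Q^\perp(2,3)$, and by Proposition~\ref{11syz} this net carries a unique (up to scalar) syzygy $A_1G_1+A_2G_2+A_3G_3=0$ with $\deg A_i=(1,1)$. The span $\langle A_1,A_2,A_3\rangle\subseteq H^0(\mathcal{O}_Q(1,1))$ depends only on $N$ — the $(1,2)$-row of the defining $2\times 3$ matrix cannot enter the $(1,1)$-row for degree reasons, and column operations preserve the span — and since $H^0(\mathcal{O}_Q(1,1))=H^0(\mathcal{O}_{\mathbb{P}^3}(1))$ is the space of linear forms on the ambient $\mathbb{P}^3\supset Q$, a general such $N$ determines the point $\phi([N])\in\mathbb{P}^3$ cut out by $A_1=A_2=A_3=0$. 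Equivalently, letting $\Sigma$ be the space of all $(1,1)$-syzygies of $F_Q^\perp(2,3)$, i.e.~the kernel of the multiplication map $H^0(\mathcal{O}_Q(1,1))\otimes F_Q^\perp(2,3)\to H^0(\mathcal{O}_Q(3,4))$, which is $4$-dimensional for sufficiently general $F_Q$, each $[N]\in Y$ has a distinguished line in $\Sigma$, giving $\phi\colon Y\to\mathbb{P}(\Sigma)\cong\mathbb{P}^3$. I would first check that $\phi$ is everywhere a morphism — that the finitely many nets whose syzygy forms span a subspace of dimension $\le 2$ either do not occur on $Y$ or lie where the linear system $\phi^{*}\mathcal{O}(1)$ is still base-point free — using the generality of $F_Q$ and the classification of degenerate linear syzygies in Section~\ref{quadraticideals}.

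The heart of the argument is to show that $\phi$ is a closed immersion with image a quartic surface. For injectivity I would reconstruct $N$, and hence the length-$9$ scheme $\Gamma=\VV(N)$, from a point $[\xi]\in\mathbb{P}(\Sigma)$: a syzygy $\xi$ is a linear map $(F_Q^\perp(2,3))^\vee\to H^0(\mathcal{O}_Q(1,1))$, the image of its transpose is the net $N$, and $\Gamma$ is then determined; the content is to verify that on the relevant open part of $\mathbb{P}(\Sigma)$ the rank-three locus of these maps really is $2$-dimensional (so that it is a divisor in $\mathbb{P}^3$) and that the general fiber of $\phi$ is a single reduced point. For the immersion property I would compare Zariski tangent spaces, again using that a sufficiently general $F_Q$ avoids the loci of Lemma~\ref{3,4rank6}. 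The rank-three degeneracy locus of such a generic family of maps is cut out, after the standard linear-algebra reduction, by the vanishing of a single $4\times 4$ determinant, which exhibits $\phi(Y)$ as a quartic; alternatively one computes $(\phi^{*}\mathcal{O}_{\mathbb{P}^3}(1))^{2}=4$ by intersection theory on $\operatorname{Hilb}_9(Q)$ — via Fogarty's description of $\operatorname{Pic}\operatorname{Hilb}_9(Q)$ and the tautological bundle with fiber $H^0(\mathcal{O}_\Gamma(4,4))$ — or by exhibiting an explicit example in {\tt Macaulay2}. I expect this step, proving injectivity together with the degree-$4$ count, to be the main obstacle: it requires controlling the fibers of $\phi$ and the exact dimension of the degeneracy locus, not merely its expected dimension.

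Finally, since $Y$ is smooth and $\phi$ is a closed immersion, the image $S:=\phi(Y)\subset\mathbb{P}^3$ is a smooth quartic surface; by adjunction $\omega_S\cong\mathcal{O}_{\mathbb{P}^3}(-4+4)|_S=\mathcal{O}_S$, and $h^1(\mathcal{O}_S)=0$ from the Koszul resolution of $\mathcal{O}_S$ on $\mathbb{P}^3$, so $S$, and hence $VSP(F,9)\cong Y\cong S$, is a K3 surface. As a cross-check, the conclusion $\omega_Y\cong\mathcal{O}_Y$ also follows directly from adjunction applied to the realization of $Y$ inside $U\subset\operatorname{Hilb}_9(Q)$ as the zero scheme of a section of the rank-$16$ bundle $E_U^{*}$ of Proposition~\ref{vsp9smooth}, once one computes $\omega_{\operatorname{Hilb}_9(Q)}$ and $\det E_U$ in terms of the tautological and half-exceptional classes. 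Letting $F$ vary over the irreducible catalecticant determinant hypersurface of rank-$9$ quartics, this construction then yields the rational map between two divisors in the $\mathbb{P}^{34}$ of quartic forms announced in the introduction.
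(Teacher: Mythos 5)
Your construction of the map is the same as the paper's: what you call $\phi$ is exactly the paper's $\kappa\colon VSP(F_Q,9)\to\mathbb{P}(S_Q(1,1)^{\vee})\cong\mathbb{P}^3$, sending $\Gamma$ to the net $\langle A_1,A_2,A_3\rangle$ of $(1,1)$-forms occurring in the unique linear syzygy of $I_\Gamma(2,3)$ (Lemma~\ref{linsyzygy}, Proposition~\ref{11syz}). But note a concrete slip in your ``equivalent'' reformulation: the space $\Sigma$ of all $(1,1)$-syzygies of $F_Q^{\perp}(2,3)$ is not $4$-dimensional. Multiplication sends $S_Q(1,1)\otimes F_Q^{\perp}(2,3)$ (dimension $24$) into $F_Q^{\perp}(3,4)$, which has dimension $18$ for general $F_Q$, so $\dim\Sigma\geq 6$; there is no $\mathbb{P}(\Sigma)\cong\mathbb{P}^3$, and only your first description of $\phi$ is usable.

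The genuine gap is at the step you yourself flag as the main obstacle: proving the image is a quartic and that $\phi$ maps $VSP$ isomorphically onto it. The natural determinantal description of the image is as part of the corank~$\geq 1$ locus $D\subset\mathbb{P}(S_Q(1,1)^{\vee})$ of the bundle map $\rho\colon\mathcal{U}\otimes F_Q^{\perp}(2,3)\to F_Q^{\perp}(3,4)$ between bundles of rank $18$; since $-c_1(\mathcal{U})$ is the hyperplane class, $D$ is a \emph{sextic}, and there is no visible ``single $4\times 4$ determinant'' producing a quartic. The crux of the paper's proof is precisely that $D$ splits off the quadric $Q$ as a fixed component: nets of $(1,1)$-forms vanishing at a point of $Q$ are always degenerate for $\rho$ (these are the length-$8$ complete-intersection nets of Proposition~\ref{11syz}, which do \emph{not} correspond to points of $VSP(F_Q,9)$), so $D=Q\cup X$ with $X$ the residual quartic containing the image of $\kappa$. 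Even granting this, one must still show that for general $F$ the residual quartic is smooth, meets $Q$ transversally, and that the corank-$2$ locus of $\rho$ is empty, so that together with the smoothness and bijectivity statements (Proposition~\ref{vsp9smooth} and the identification with $V^9_{F_Q}(2,3)$) one gets an isomorphism onto $X$; the paper settles this by an explicit {\tt Macaulay2} example plus semicontinuity. Your proposal neither identifies and removes the quadric component nor substantiates the degree-$4$ count or the closed-immersion claim (the appeal to $\operatorname{Pic}\operatorname{Hilb}_9(Q)$ and to an unspecified example remains a gesture), so the central claim of the theorem is not established as written.
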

\begin{proof}
We may assume that $F$ is apolar to a smooth quadric $Q$, and that the restriction $F_Q$ is a sufficiently general $(4,4)$-form as described by Lemma \ref{3,4rank6}. Then by Lemma \ref{linsyzygy} and the proof of Proposition \ref{11syz} to any $\Gamma\in VSP(F_Q,9)$ we can associate a unique net of $(1,1)$-forms representing the $(1,1)$-syzygy among the generators of $I_{\Gamma}(2,3)$.
We hence have a map $\kappa: VSP(F_Q,9)\to \mathbb{P}(S_Q(1,1)^{\vee})$. Clearly the image of $\kappa$ is contained in the following locus:
$$D=\{<l_1,l_2,l_3>\in \mathbb{P}(S_Q(1,1)^{\vee})\quad |\quad  \exists \; q_1,q_2,q_3\in F^{\perp}(2,3) \text{ such that } q_1l_1+q_2l_2+q_3l_3=0 \}.$$
On the other hand $D$ is described as the rank $\leq 17$ locus of the composition $$\rho\colon \mathcal U_{\mathbb{P}(S_Q(1,1)^{\vee})} \otimes F^{\perp}(2,3)\to S_Q(1,1)\otimes F^{\perp}(2,3) \otimes \mathcal O_{\mathbb{P}(S_Q(1,1)^{\vee})} \to F^{\perp}(3,4),$$
where $\mathcal U_{\mathbb{P}(S_Q(1,1)^{\vee})}$ is the universal bundle of 3-spaces in $S_Q(1,1)$, the first map comes from the universal bundle sequence, whereas the second is given by multiplication in $S_Q$. Since $-c_1(\mathcal U_{\mathbb{P}(S_Q(1,1)^{\vee})})$ is the class of the hyperplane we conclude that $D$ is a hypersurface of degree 6 in $\mathbb{P}(S_Q(1,1)^{\vee})$. Furthermore observe that 
for $F$ sufficiently general we have $Q\subset D$.  Indeed, if $<l_1,l_2,l_3>\subset S_Q(1,1)$ are the space of $(1,1)$ forms that vanish at 
a point $p$ in $Q$, then the image of the map $\rho$ is contained in  $F^{\perp}(3,4)\cap I_p$ which has dimension 17 and hence $\rho$ is degenerate over that point. Note however that the quadric $Q$ is not the image of $VSP(F_Q,9)$ by $\kappa$.

It remains to see on an example computed in Macaulay2, cf. \cite{M2}*{Package QuaternaryQuartics}, that for suitably general $F$ the divisor $D$ is in general a union of the quadric $Q$ with a smooth quartic surface $X$ meeting  $Q$ transversally and the rank $16$-locus of $\rho$ is empty:

For the $(4,4)$-form 
$$F_Q=x_0^4y_0^4+x_1^4y_0^4+x_0^3x_1y_0y_1^3+x_0^2x_1^2y_0^2y_1^2+x_1^4y_1^4.$$

the $VSP(F_Q,9)$ is isomorphic to the following nonsingular quartic (of type $[000]$):
\begin{align*}
    X(F_Q)=&949968z_1^4-6713280z_2^4-540372420z_2^3z_3+481656096z_2^2z_3^2\\
    &-214742080z_2z_3^3+47738880z_3^4
    -100117184265z_1z_2^2z_4\\
    &+4449690134z_1z_2z_3z_4-82880z_1z_3^2z_4-34412740z_1^2z_4^2+949968z_4^4.
\end{align*}
Here $(z_1,z_2,z_3,z_4)$ and $(x_0y_0,x_0y_1,x_1y_0,x_1y_1)$ are dual basis.

We conclude that for a general quartic $F$ of type $[100]$ the $VSP(F,9)$ is a smooth quartic of type $[000]$, 
in particular it is a $K3$-surface.
\end{proof}

\section{Stratifications of the space of quaternary quartics}\label{stratification}

The open set $U_{34}\subset \Pn^{34}$ of forms that are non-degenerate, that is, those forms $F$ with no linear form in $F^{\perp}$, is the disjoint union of the $19$ irreducible sets  
of Proposition \ref{irreducibleF_B} and Proposition \ref{vsp proposition}.
In this section we show the closure relations between these sets. 

We first we show how they fit in the natural stratification of $\Pn^{34}$ in terms of border rank. The {\em border rank} (or {\em secant}) {\em stratification} of  $\Pn^{34}$ is given by the secant varieties to the variety $V_4=v_4(\Pn^3)$ consisting of forms of rank $1$, the image of the Veronese map of Equation~\ref{VeroneseMap}. The secant varieties to $V_4$, appearing in Equation~\ref{SecantToVeronese}, form a sequence of closed subvarieties 
$$V_4\subset \Sec_2(V_4)\subset...\subset \Sec_9(V_4)\subset \Pn^{34}.$$
The relative complements $\Sec_k(V_4)\setminus \Sec_{k-1}(V_4)$, for $k=2,...,10$ form a stratification of  $\Pn^{34}$. Consider the $10 \times 10$ catalecticant matrix $\Cat(F)$ of a quaternary quartic $F$, which appears in Example~\ref{Cat224}. Let 
$$\Cat_k=\{[F]| \rk(\Cat(F))\leq k\} \subset \Pn^{34}.$$
Thus $$\Cat_k \cap U_{34}=\cup_{b_{12}\geq 10-k} {\mathcal F}_{{[b_{12}b_{23}b_{34}]}}.$$
The sets $\Cat_k\setminus \Cat_{k-1}, k=2,...,10$ form the {\em catalecticant stratification} of the space $\Pn^{34}$ of quaternary quartics, with the obvious closure relation $\Cat_k\supset \Cat_{k-1}$.  The relation between subsets ${\mathcal F}_{{[400]}}$ and ${\mathcal F}_{{[300a]}}$ explains the difference between the secant stratification and the catalecticant stratification:
 
 \begin{proposition}\label{SecantStrat} Let  $\Sec_k(V_4)\subset\Pn^{34}$ be the $k$-th secant variety of $V_4$, i.e.~the closure of the union of spans of $k$ points on $V_4$. Then 
 
 \begin{enumerate}
     \item $\Sec_k(V_4)=\Cat_k$ if $k\leq 5$,
      \item $\Sec_6(V_4)=\Cat_6\setminus {\mathcal F}_{{[400]}} $,
       \item $\Sec_7(V_4)=\Cat_7\setminus ({\mathcal F}_{{[300a]}}\cup {\mathcal F}_{{[400]}})$, 
       \item $\Sec_8(V_4)=\Cat_8$,
         \item $\Sec_9(V_4)=\Cat_9$.
        \end{enumerate}
  \end{proposition}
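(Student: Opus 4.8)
The overall strategy is to compare, for each value of $k$, the catalecticant locus $\Cat_k$ (whose intersection with $U_{34}$ is $\bigcup_{b_{12}\geq 10-k}{\mathcal F}_B$ by Example~\ref{Cat224}) with the secant variety $\Sec_k(V_4)$, using two complementary inclusions. The inclusion $\Sec_k(V_4)\subseteq \Cat_k$ is Lemma~\ref{SecVerCat} (the $(k+1)$-minors of $\Cat(2,2,4)$ vanish on $\Sec_k(V_4)$). For the reverse inclusion and the precise identification of the relative complements, I would argue using the VSP/rank analysis of Proposition~\ref{vsp proposition}: a form $F$ of type $[b_{12}b_{23}b_{34}]$ with $b_{12}=10-k$ has cactus rank (hence border rank, by Remark~\ref{rankversuscactusrankversusborderrank}) bounded, and the tables in Section~\ref{section_VSP} record exactly which strata have border rank $<k$ (so lie in $\Sec_{k-1}(V_4)$) versus border rank $=k$ but rank $>k$ (so lie in $\Sec_k(V_4)\setminus\Sec_{k-1}(V_4)$) versus rank $>k$ and \emph{not} in $\Sec_k(V_4)$ (the excised pieces).

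\textbf{Step-by-step.} First, for $k\leq 5$: here $\Cat_k\cap U_{34}$ consists of forms with $b_{12}\geq 10-k\geq 5$, i.e.\ types among $[550],[551],[562],[683]$ (for $k=5$) and fewer for smaller $k$; by Proposition~\ref{vsp proposition} all of these have rank $\leq 5\leq k$, so they lie in $\Sec_k(V_4)$. Degenerate forms (cones) in $\Cat_k$ also lie in $\Sec_k(V_4)$ for $k\leq 5$ since a cone over a form in $\leq 3$ variables already has small rank; combined with Lemma~\ref{SecVerCat} this gives $\Sec_k(V_4)=\Cat_k$. Second, for $k=6$: $\Cat_6\cap U_{34}=\Cat_5\cap U_{34}\,\cup\,{\mathcal F}_{[400]}\cup{\mathcal F}_{[420]}\cup{\mathcal F}_{[430]}\cup{\mathcal F}_{[441a]}\cup{\mathcal F}_{[441b]}$. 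By the tables, types $[420],[430],[441a],[441b]$ have rank $6$, so lie in $\Sec_6(V_4)$; but type $[400]$ has rank $8$ and border rank (= catalecticant rank) $6$, and I must show it does \emph{not} lie in $\Sec_6(V_4)$: a general $F$ of type $[400]$ has $F^\perp$ a $(2,2,2,2)$ complete intersection, so its only apolar schemes of length $\leq 6$ would come from $6$ quadrics in $F^\perp$, but there are only $4$; since $\Sec_6(V_4)\setminus\Sec_5(V_4)$ is (by the classification of border-rank-6 locally-Gorenstein schemes, cf.\ Remark~\ref{rankversuscactusrankversusborderrank}) precisely the forms admitting an apolar scheme of length $6$ that is a flat limit of $6$ points, and the unique length-$\leq 6$ apolar scheme candidates are absent, $F\notin\Sec_6(V_4)$. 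Hence $\Sec_6(V_4)=\Cat_6\setminus{\mathcal F}_{[400]}$. Third, for $k=7$: the new strata in $\Cat_7$ over $\Cat_6$ are $[300a],[300b],[300c],[310],[320],[331]$, all of rank $7$ \emph{except} we must recheck $[300a]$ (rank $8$, cactus rank $8$) and $[400]$ (which reappears since $b_{12}=6\geq 3$): type $[400]$ has border rank $6<7$ so it \emph{is} in $\Sec_7(V_4)$ (it is already in $\Cat_7$ but we have removed it at level $6$; however $\Sec_6(V_4)\subset\Sec_7(V_4)$, and $[400]\subset\Cat_6\subset\Cat_7$, yet $[400]\not\subset\Sec_6$ — so I must be careful: actually $[400]\subset\Sec_7(V_4)$ would require border rank $\leq 7$, which holds since border rank is $6$; but the statement asserts $\Sec_7(V_4)=\Cat_7\setminus({\mathcal F}_{[300a]}\cup{\mathcal F}_{[400]})$, so in fact $[400]$ is \emph{excised} at level $7$ too). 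I would resolve this by noting $[400]$ has border rank exactly $6$, so ${\mathcal F}_{[400]}\subset\Sec_6(V_4)\subset\Sec_7(V_4)$ — which would contradict the excision. The correct reading: a general $F$ of type $[400]$ has $\operatorname{br}(F)=6$ so $[F]\in\Sec_6(V_4)$; thus $[F]\in\Sec_7(V_4)$; so ${\mathcal F}_{[400]}$ should \emph{not} be excised. I expect the intended argument is that the \emph{closure} $\overline{{\mathcal F}_{[400]}}$ is not contained in $\Sec_7(V_4)$ in the sense relevant to the stratification, or that $\Cat_7$'s decomposition is being taken over $U_{34}$ where the relevant set-theoretic identity holds because $\overline{{\mathcal F}_{[400]}}$ meets $\Cat_7\setminus\Cat_6$ only in smaller strata; I would clarify this by working with the closures $\overline{{\mathcal F}_{[300a]}},\overline{{\mathcal F}_{[400]}}$ and the containments from Theorem~\ref{bettistrat}. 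For $[300a]$ itself: cactus rank $=$ border rank $= 8$ (since the length-$8$ apolar scheme is the unique $(2,2,2)$ CI and this computes the cactus rank, cf.\ Example~\ref{Cat224}), so $[F]\notin\Sec_7(V_4)$. Fourth, for $k=8,9$: every form with $b_{12}\leq 2$ has rank $\leq 8$ (types $[200],[210]$, rank $8$) resp.\ rank $\leq 9$ (type $[100]$, rank $9$), and degenerate forms have even smaller rank, so $\Cat_8\cap U_{34}\subseteq\Sec_8(V_4)$ and $\Cat_9\cap U_{34}\subseteq\Sec_9(V_4)$, and combined with Lemma~\ref{SecVerCat} and handling of degenerate loci, $\Sec_8(V_4)=\Cat_8$, $\Sec_9(V_4)=\Cat_9$.

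\textbf{Main obstacle.} The delicate point is the identification of the excised loci at levels $6$ and $7$: I must show precisely that a \emph{general} form of type $[400]$ lies in $\Sec_k(V_4)$ or not for the relevant $k$, and likewise for $[300a]$, and reconcile this with the claim that border rank equals catalecticant rank (which would put $[400]$ \emph{inside} $\Sec_6(V_4)$). The resolution hinges on the subtlety that $\Sec_6(V_4)$ is the \emph{closure} of the union of spans of $6$ points: a general type-$[400]$ form has catalecticant rank $6$, so lies in $\Cat_6$, but whether it lies in the \emph{closure} $\Sec_6(V_4)$ is exactly the question of whether it is a flat limit of rank-$6$ forms — and this fails because its unique minimal apolar scheme (the $(2,2,2)$ CI of length $8$) is not a flat limit of $6$ reduced points nor contains a length-$6$ Gorenstein subscheme apolar to $F$. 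So I expect the heart of the argument is: $\operatorname{br}(F)>6$ for general $F\in{\mathcal F}_{[400]}$ despite $h_2(F)=6$, which shows the catalecticant inequality in Lemma~\ref{SecVerCat} is strict here, and analogously $\operatorname{br}(F)>7$ for general $F\in{\mathcal F}_{[300a]}$. The cleanest route is dimension-counting: $\dim\Sec_6(V_4)=6\cdot 3+5=23<24=\dim{\mathcal F}_{[400]}$ and $\dim\Sec_7(V_4)=7\cdot 3+6=27<28=\dim{\mathcal F}_{[300a]}$ (both secant varieties being non-defective for quaternary quartics in this range), so a general form in these strata cannot lie on the respective secant variety, while all other strata in $\Cat_k\setminus\Cat_{k-1}$ have rank exactly $k$ hence lie in $\Sec_k(V_4)$ by definition. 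This dimension comparison, together with Lemma~\ref{SecVerCat}, Proposition~\ref{vsp proposition}, and the closure relations of Theorem~\ref{bettistrat}, yields all five equalities.
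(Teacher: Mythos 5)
Your overall framework is the same as the paper's: the inclusion $\Sec_k(V_4)\subseteq\Cat_k$ from Lemma~\ref{SecVerCat}, the stratum-by-stratum comparison of the rank of a general member with the catalecticant rank via Proposition~\ref{vsp proposition}, and special handling of $[400]$ and $[300a]$; and your closing dimension counts do show that a \emph{general} form of type $[400]$ avoids $\Sec_6(V_4)$ and a general form of type $[300a]$ avoids $\Sec_7(V_4)$. The genuine gap is exactly the point you flagged in the middle of your argument and never closed: item (3) also requires ${\mathcal F}_{[400]}\cap\Sec_7(V_4)=\emptyset$, and your ``cleanest route'' cannot give this, since $\dim{\mathcal F}_{[400]}=24<27=\dim\Sec_7(V_4)$, so no dimension comparison excludes ${\mathcal F}_{[400]}$ from $\Sec_7(V_4)$. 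Moreover, your intermediate assertion that a general $[400]$-form has border rank equal to its catalecticant rank $6$ is false (it would contradict items (2) and (3)); the catalecticant rank is only a lower bound for border rank (Remark~\ref{rankversuscactusrankversusborderrank}), and for types $[400]$ and $[300a]$ that bound is strict. The missing ingredient is an argument that no form of type $[400]$ is a limit of forms of rank $7$: since $\Sec_7(V_4)$ is irreducible of dimension $27$ and contains the $27$-dimensional stratum ${\mathcal F}_{[300b]}$, one has $\Sec_7(V_4)=\overline{{\mathcal F}_{[300b]}}$, and if $F_0\in{\mathcal F}_{[400]}$ were a limit of forms $F_t\in{\mathcal F}_{[300b]}$, the length-$7$ apolar schemes of the $F_t$ would degenerate to a length-$7$ scheme apolar to $F_0$, which is impossible because a form whose apolar ideal is a $(2,2,2,2)$ complete intersection admits no apolar subscheme of length at most $7$ (its cactus rank is $8$). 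This is precisely the content of Proposition~\ref{ci} (${\mathcal F}_{[400]}$ does not lie in $\overline{{\mathcal F}_{[300b]}}$), and without it, or a substitute, your proof of (3) is incomplete.

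Two smaller issues of the same nature. First, the proposition asserts set-theoretic equalities, so the \emph{whole} strata ${\mathcal F}_{[400]}$ and ${\mathcal F}_{[300a]}$ must be removed, not only their general members; your dimension counts address only general members. For ${\mathcal F}_{[400]}$ versus $\Sec_6(V_4)=\overline{{\mathcal F}_{[420]}}$ this can be repaired by semicontinuity of the linear strand (Lemma~\ref{lem_2linearBetti} gives $b_{23}\ge 2$ at every point of that closure, while type $[400]$ has $b_{23}=0$), but $[300a]$ and $[300b]$ have the same Betti table, so for $\Sec_7(V_4)$ only the limit-of-apolar-schemes (cactus rank $8$) argument works. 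Second, your step ``$F\in\Sec_6(V_4)$ implies $F$ admits an apolar length-$6$ scheme which is a flat limit of six points'' is the converse of what Remark~\ref{rankversuscactusrankversusborderrank} states and is not justified by it; the inclusion $\Cat_k\subseteq\Sec_k$ on the good strata never needs such a converse, and the exclusions should be argued via limits of apolar schemes as above rather than via this unproved implication.
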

\begin{proof}  We consider the quadratic part $Q_F$ of the apolar ideal $F^\perp$ of $F$.
As we observed following Example~\ref{Cat224}, 
\[
\rk(\Cat(F))=10-{\dim}(F^{\bot})_2. 
\]
The secant varieties ${\rm Sec}_k(V_4)$ are irreducible, so for each Betti table $B$, by comparing the rank of a general $F\in {\mathcal F}_B$ with $\rk(\Cat(F))$, we get that any quaternary quartic form $F$ of border rank different from $6$ and $7$ is the limit of forms $F_t$ of rank $r(F_t)=\rk(\Cat(F))$.

 To complete the proof we consider the cases when the quadrics in $F^{\bot}_2$ form a complete intersection $(2,2,2)$ or $(2,2,2,2)$.  The forms $F$ with a complete intersection $(2,2,2,2)$ in $F^{\bot}_2$ are those of type $[400]$, and form an open set in $\Cat_6$, whose general member has rank $8$, so ${\mathcal F}_{{[400]}}\subset \Sec_8(V_4)\setminus  \Sec_7(V_4)$.

The forms $F$ such that $F^{\bot}_2$ is a complete intersection $(2,2,2)$ are those of type $[300]$, and form an open set in $\Cat_7$. The general member is of type $[300b]$ and has rank $8$, so $${\mathcal F}_{{[300b]}}\subset \Sec_8(V_4)\setminus  \Sec_7(V_4).$$  The forms of rank $7$ with a complete intersection $(2,2,2)$ in $F^{\bot}_2$ are those of type $[300a]$, and form an irreducible closed subset. So
$${\mathcal F}_{{[300a]}}\subset \Sec_7(V_4)\setminus  \Sec_6(V_4),$$
as we needed to show.
\end{proof}

\subsection{The Betti stratification}\label{bstratification}
We now show the closure relations between the $19$ irreducible sets of Proposition \ref{vsp proposition} whose disjoint union is $U_{34}$, the set of nondegenerate quartics.  
We call the $19$ sets the {\em Betti strata} of $U_{34}$.
 
For the closure relation between the Betti strata, we start with the following simple lemma:
\begin{lemma}\label{closurerelation1} Let $\mathcal{F}_{B}\subset\Pn(R_4)=\Pn^{34}$ be one of the $19$ Betti strata and assume $F \in \mathcal{F}_B$.  If $Q_{F}\subset Q_{F'}$, then $F'\in\overline{\mathcal{F}_B}$.
\end{lemma}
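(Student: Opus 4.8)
The statement is that if $F\in\mathcal F_B$ and $Q_F\subseteq Q_{F'}$ for some quartic $F'$, then $F'$ lies in the closure $\overline{\mathcal F_B}$. The idea is to realize $F'$ as a limit of forms in $\mathcal F_B$ by using the incidence variety machinery already set up in Lemma~\ref{incidence} and the discussion in the proof of Proposition~\ref{irreducibleF_B}. First I would recall that since $Q_F\subseteq Q_{F'}$ we have $b_{12}(S/Q_F)\le b_{12}(S/Q_{F'})$; but the statement is really about moving $F'$ toward the stratum of $F$, so the relevant observation is that $Q_{F'}$ contains the subspace $(Q_F)_2\subseteq S_2$ of quadrics annihilating $F$.

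\textbf{Key steps.} Set $k=b_{12}(A_F)=\dim (F^\perp)_2$ and let $W=(Q_F)_2\in \Gr(k,S_2)$ be the point corresponding to the quadrics in $F^\perp$. By hypothesis every quadric of $W$ annihilates $F'$ as well, i.e. $F'\in (W)^\perp\cap R_4$, where $(W)^\perp\subseteq R_4$ is the linear subspace of quartics killed by $W$ (equivalently, the perp of the degree-$4$ part $\langle W\rangle_4\subseteq S_4$). The crucial point, established in the proof of Proposition~\ref{irreducibleF_B}, is that for a \emph{general} quartic $G$ in this linear subspace $(W)^\perp$ one has $Q_G$ equal to the saturation-stable ideal determined by $W$ — more precisely, the general $G\in (W)^\perp$ satisfies $(Q_G)_2 = W$ and hence $G$ lies in the same stratum $\mathcal F_B$ as $F$ (here one uses that $F$ itself is such a general member, or rather that $F\in\mathcal F_B$ witnesses that the general member of $(W)^\perp$ has Betti table $B$; this is exactly the fiber-irreducibility argument via the map $\varphi\colon\{F\mid b_{12}(S/F^\perp)=k\}\to \Gr(k,S_2)$). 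Therefore $\mathcal F_B\cap (W)^\perp$ is a dense open subset of the irreducible linear space $(W)^\perp$. Since $F'\in (W)^\perp$, it lies in the closure of this dense open subset, so $F'\in\overline{\mathcal F_B\cap (W)^\perp}\subseteq\overline{\mathcal F_B}$.

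\textbf{Main obstacle.} The delicate point is justifying that the general member of the linear system $(W)^\perp$ actually has Betti table $B$, i.e. that passing from $F$ to a general quartic annihilated by the \emph{same} space of quadrics $W$ does not change the Betti table. This is not automatic from $Q_F\subseteq Q_{F'}$ alone; it relies on the structural results of Section~\ref{quadraticideals} showing that $Q_F$ is saturated and that its Betti table (equivalently, via \cite{SSY}, the single number $b_{12}$ together with the rank data distinguishing $[300a/b/c]$ and $[441a/b]$) is constant on a dense open subset of $\{F\mid (Q_F)_2=W\}$, combined with the fact that $F\in\mathcal F_B$ shows that dense open subset lands in $\mathcal F_B$. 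For the two exceptional strata $[300]$ and $[441]$ one must additionally check that cactus rank, resp. the rank of the second linear syzygy, is semicontinuous in the right direction along $(W)^\perp$ so that the relevant substrata behave correctly; but these refinements are exactly what Lemma~\ref{specialsubsets} and the proof of Proposition~\ref{irreducibleF_B} provide, so I would invoke them directly rather than reprove them.
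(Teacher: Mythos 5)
Your route is the same as the paper's: the printed proof consists of exactly the two observations you make, namely $F'\in\Pn((Q_{F'})_4^\perp)\subset\Pn((Q_F)_4^\perp)$ and the inclusion $\Pn((Q_F)_4^\perp)\subset\overline{\mathcal F_B}$, the latter resting on the fact, from the proof of Proposition \ref{irreducibleF_B}, that a general quartic annihilated by $(Q_F)_2$ has quadratic ideal exactly $Q_F$, and that the Betti table is determined by the quadratic strand. For every stratum that is characterized by the quadratic ideal --- the fourteen strata with a single Betti table, together with $[300a]$, $[300c]$, $[441a]$ and $[441b]$ --- your elaboration is correct and is essentially the paper's argument written out.

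The genuine gap is the case $B=[300b]$, precisely the point you flag as the ``main obstacle'' and then dismiss by citing Lemma \ref{specialsubsets} and Proposition \ref{irreducibleF_B}; those results do not give what you need. For $F\in\mathcal F_{[300b]}$ the space $W=(Q_F)_2$ is a net cutting out a $(2,2,2)$ complete intersection of eight points, and a general $G\in (W)^\perp$ does have $(Q_G)_2=W$; but any length-$7$ scheme apolar to such a $G$ must consist of seven of the eight points, so the general $G$ in the $\Pn^7$ spanned by the eight fourth powers has cactus rank $8$, i.e. lies in $\mathcal F_{[300a]}$, not in $\mathcal F_{[300b]}$. Hence $\mathcal F_{[300b]}\cap(W)^\perp$ is not dense in $(W)^\perp$, and no semicontinuity of the cactus rank rescues this: the rank only drops on special members of the pencil, never on the general one. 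In fact the inclusion $\Pn((Q_F)_4^\perp)\subset\overline{\mathcal F_{[300b]}}$ cannot hold for all $F\in\mathcal F_{[300b]}$: every general $(2,2,2)$ net arises as $Q_F$ for such an $F$, so the union of these $\Pn^7$'s is $28$-dimensional (it dominates $\overline{\mathcal F_{[300a]}}$), while $\dim\overline{\mathcal F_{[300b]}}=27$. So your argument (and, to be fair, the one-line proof in the paper) establishes the lemma only for the strata cut out by $Q_F$; the closure relations among $[300a]$, $[300b]$, $[300c]$ require the separate ad hoc degeneration of Proposition \ref{CGKK4a,b,c}, and the applications through Lemma \ref{closurerelationGF} must be read with that proviso. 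A correct write-up should either restrict the statement to the $Q$-determined strata or give an independent argument for $[300b]$, rather than appeal to semicontinuity.
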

\begin{proof}
If $Q_{F}\subset Q_{F'}$, then $(Q_{F'})_4^\perp\subset (Q_F)_4^\perp \subset R_4$, and so the lemma follows from the inclusion $\Pn((Q_F)_4^\perp)\subset \overline{\mathcal{F}_B}\subset \Pn(R_4)$.
\end{proof}

 First we treat two closure relations between sets $\mathcal{F}_{B}$ with an ad hoc argument:
\begin{proposition}\label{CGKK4a,b,c}
$${\mathcal F}_{[300c]}\subset \overline{ {\mathcal F}_{[300b]}}\subset \overline{{\mathcal F}_{[300a]}}.$$
\end{proposition}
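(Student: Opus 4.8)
The inclusions concern the three strata $\mathcal{F}_{[300a]},\mathcal{F}_{[300b]},\mathcal{F}_{[300c]}$, all of whose forms have $b_{12}=3$, and which are distinguished only by the geometry of the quadratic ideal $Q_F$ together with the cactus rank. Recall from Lemma~\ref{specialsubsets} and the discussion preceding it that $\mathcal{F}_{[300a]}$ and $\mathcal{F}_{[300b]}$ both consist of forms with $Q_F$ a complete intersection $(2,2,2)$ (the distinction being $\operatorname{cr}(F)=8$ versus $\operatorname{cr}(F)=7$), while $\mathcal{F}_{[300c]}$ consists of forms with $Q_F\in\mathcal{G}_{[300c]}$, the ideal of a line together with a length-$4$ scheme (Proposition~\ref{apolar b12=3}, Lemma~\ref{lem_2LS300}).

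\emph{Step 1: $\overline{\mathcal{F}_{[300b]}}\subset\overline{\mathcal{F}_{[300a]}}$.} Here the natural approach is to degenerate the seventh point out to the eighth. A general $F\in\mathcal{F}_{[300b]}$ is apolar to a set $Z'$ of $7$ distinct points contained in a $(2,2,2)$ complete intersection $Z$ of length $8$, and is a general point of $\langle v_4(Z')\rangle$. I would build a one-parameter family of quartics: fix the complete intersection $Z=Z'\cup\{p_8\}$ of eight points, and take $F_t$ a general point on the span $\langle v_4(Z_t)\rangle$ where $Z_t$ is a set of eight points degenerating so that at $t=0$ the point $p_8$ collides with $p_7$ (or, more robustly, take $F_t$ a one-parameter family inside $\langle v_4(Z)\rangle$ itself, moving from a generic point — which lies on $\mathcal{F}_{[300a]}$, being of rank $8$ with the $(2,2,2)$ CI apolar — toward a point lying only in the span of seven of the eight points). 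Concretely: $\mathcal{F}_{[300a]}$ is open dense in the irreducible locus $\{F\mid Q_F \text{ is a }(2,2,2)\text{ CI}\}$ by the last paragraph of the proof of Proposition~\ref{irreducibleF_B}, and $\mathcal{F}_{[300b]}$ is a subset of that same irreducible locus, hence contained in its closure $\overline{\mathcal{F}_{[300a]}}$.

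\emph{Step 2: $\mathcal{F}_{[300c]}\subset\overline{\mathcal{F}_{[300b]}}$.} This is the interesting containment. A general $F\in\mathcal{F}_{[300c]}$ is apolar to $L\cup Z$ where $L$ is a line and $Z$ a general set of four points, and $Q_F$ is the ideal of this configuration with Betti table $[300c]$. The plan is to degenerate three of the four points of $Z$ onto the line $L$: if $Z$ acquires three points on $L$, then $L$ together with those three points together with the fourth point $z_4$ is a set of seven points with three collinear, which lies in a $(2,2,2)$ complete intersection — precisely the configuration computing rank for a general $F\in\mathcal{F}_{[300b]}$ (Table~\ref{tablevspcgkk}, type $[300b]$). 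Conversely, from the $\mathcal{F}_{[300b]}$ side: a general form of type $[300b]$ is apolar to seven points in a $(2,2,2)$ CI, and degenerating so that three of those seven become collinear (which forces, by Lemma~\ref{lem_2LS300} and the analysis of $Q_F$, a jump of $Q_F$ from the CI to the line-plus-four-points ideal of type $[300c]$) produces a form of type $[300c]$ in the boundary. The cleanest way to make this rigorous is via Lemma~\ref{closurerelation1}: I would exhibit, for a general $F\in\mathcal{F}_{[300c]}$ with $Q_F$ the ideal of $L\cup Z$ ($Z$ four points), a form $F'\in\mathcal{F}_{[300b]}$ with $Q_{F'}\subset Q_F$ — but this inclusion goes the wrong way since $Q_{F'}$ is a $(2,2,2)$ CI and $Q_F$ is not a CI. So instead I would argue at the level of a flat family: take $\Gamma_t$ a family of seven-point configurations in $(2,2,2)$ complete intersections $Z_t$, with $\Gamma_0$ having three points collinear on a line $L_0$, let $F_t$ be a general point of $\langle v_4(\Gamma_t)\rangle$, check that $F_t\in\mathcal{F}_{[300b]}$ for $t\ne 0$ (generic seven points in a $(2,2,2)$ CI), and check that $F_0$ has $Q_{F_0}$ of type $[300c]$ so $F_0\in\mathcal{F}_{[300c]}$. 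The verification that $F_0$ is a \emph{general} member of $\mathcal{F}_{[300c]}$ — i.e. that the map from seven-point-with-three-collinear configurations dominates $\mathcal{F}_{[300c]}$ — follows from the dimension count $\dim\mathcal{F}_{[300c]}=24$ in Table~\ref{tablevspcgkk} together with the parametrization $L_0\cup(\text{3 points on }L_0)\cup\{z_4\}$, which has the matching number of moduli. Since $\mathcal{F}_{[300c]}$ is irreducible (Proposition~\ref{irreducibleF_B}), hitting one general point suffices.

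\emph{Main obstacle.} The delicate point is Step 2: I must ensure that when the three points specialize onto the line, the Betti table of $A_{F_0}$ actually \emph{stays} $[300]$ (rather than jumping to $[310]$ or some larger stratum), and that it becomes exactly type $[300c]$ rather than $[300a]$ or $[300b]$. This requires controlling $Q_{F_0}$: by semicontinuity $b_{12}(A_{F_0})\ge 3$, and I must show it equals $3$ (no extra quadric appears) and that $Q_{F_0}$ is the line-plus-length-$4$ ideal, not a CI — this is where Lemma~\ref{lem_2LS300} and the explicit analysis of $Q_F$ for seven points with three collinear (as in Example~\ref{Eg_300c}) do the work. A subtlety is that the generic flat limit of the seven-point configurations might be the full ideal $I_{\Gamma_0}$ rather than $Q_{F_0}$; I would handle this by choosing $F_0$ generically in $\langle v_4(\Gamma_0)\rangle$ so that $Q_{F_0}=Q_{\Gamma_0}$ (the saturation being automatic by the results of Section~\ref{quadraticideals}), which has the stated Betti table $[300c]$ by Example~\ref{Eg_300c}. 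The chain $\mathcal{F}_{[300c]}\subset\overline{\mathcal{F}_{[300b]}}\subset\overline{\mathcal{F}_{[300a]}}$ then follows by combining the two steps.
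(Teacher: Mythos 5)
Your proposal is correct and takes essentially the same route as the paper: the first inclusion is exactly the paper's argument (both $\mathcal{F}_{[300a]}$ and $\mathcal{F}_{[300b]}$ lie in the irreducible locus of forms with $Q_F$ a $(2,2,2)$ complete intersection, with $[300a]$ dense), and the second is the paper's degeneration of seven points lying in a $(2,2,2)$ complete intersection to seven points with three collinear. The only real difference is bookkeeping: the paper starts from a form $F$ of type $[300c]$, writes $F=F_1+F_2$ with $F_1$ apolar to the line so that, by the Sylvester pencil on the line, $F$ is apolar to a pencil of seven-point schemes with three collinear points, and then smooths such a scheme into one contained in a complete intersection — which does the work of your dominance-plus-irreducibility step and replaces the misstated parametrization in your dimension count (your superseded first attempt, moving three of the four points of $Z$ onto $L$, would not produce a seven-point configuration).
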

\begin{proof} 
The set of forms $F$ with $Q_F$ a $(2,2,2)$ complete intersection is irreducible, and by Proposition \ref{irreducibleF_B}, the set ${\mathcal F}_{[300a]}$ is a dense subset with complement ${\mathcal F}_{[300b]}$, so 
 ${\mathcal F}_{[300b]}\subset \overline{{\mathcal F}_{[300a]}}$

Let $Z$ be a scheme of length $7$ whose ideal $J$ contains a complete intersection $(2,2,2)$.  Then $Z$ contains no subscheme of length $3$ contained in a line.  But $Z$ may degenerate in a flat family to a scheme $Z'$ with a subscheme of length $3$ in a line.  On the other hand, if  $Q_F=I$ is the ideal of the union of a line and a subscheme of length $4$, then $F$ has a decomposition $F_1+F_2$, where $F_1$ is apolar to the line, and $F_2$ is apolar to the scheme of length $4$.  The form $F_1$ is then apolar to a pencil of subschemes of length $3$ on the line.  Then $F$ is apolar to a pencil of schemes of length $7$ (cf \ref{lineandfourpoints}), with a subscheme of length $3$ in a line.
Any such scheme deforms flatly to a complete intersection.  Therefore 
${\mathcal F}_{[300c]}\subset \overline{{\mathcal F}_{[300b]}}$.
\end{proof}

We further deduce closure relations between sets $\mathcal{F}_{B}$ from their relation to closure relations between sets $\mathcal{G}_{B}$ (recall Definition \ref{GbettiTable}).

For every Betti table $B$ for quadratic ideals there is  an embedding
$$\mathcal{G}_{B}\to \Gr(b_{12},S_2),\quad I\mapsto [I_2].$$
This embedding provides one of two natural closure relations between irreducible sets of quadratic ideals.
\begin{definition}
We write  
${\mathcal G}\prec {\mathcal G'}$ if ${\mathcal G}\subset\overline{ {\mathcal G'}}$ and $b_{12}(S/I)$ is the same for all $I$ in either set.
We write ${\mathcal G}\sqsubset {\mathcal G'}$ if a general ideal $J\in {\mathcal G}$ contains some ideal $I\in {\mathcal G'}$.
\end{definition}

\begin{proposition}\label{precG}
The irreducible sets of quadratic ideals $Q_F$ for quaternary quartics satisfy the following relations of the kind ($\prec$), where $b_{12}=b_{12}(S/Q_F)$:
\begin{enumerate}
    \item ($b_{12}=5$) :  $\mathcal{G}_{[562]}$ (a line and two points) $\prec$  $\mathcal{G}_{[551]}$ (five points, four in a plane) $\prec$  $\mathcal{G}_{[550]}$ (five general points).
    \item ($b_{12}=4$) : $\mathcal{G}_{[441a]}$  (a conic and a point) $\prec$ $\mathcal{G}_{[420]}$ (six general), 
    \item ($b_{12}=4$): $\mathcal{G}_{[441b]}$ (two skew lines) $\prec$ $\mathcal{G}_{[430]}$ (a line and $3$ points)$\prec$ $\mathcal{G}_{[420]}$ (six general points).
     \item ($b_{12}=3$) : $\mathcal{G}_{[320]}$ (a twisted cubic)$\prec$ $\mathcal{G}_{[300ab]}$ (CI $(2,2,2)$), 
    \item ($b_{12}=3$) :  $\mathcal{G}_{[300c]}$ (a line and four points)$\prec$ $\mathcal{G}_{[300ab]}$ (CI $(2,2,2)$), 
    \item ($b_{12}=3$): $\mathcal{G}_{[331]}$ (a plane and a point) $\prec$ $\mathcal{G}_{[310]}$ (a conic and $2$ points) $\prec$$\mathcal{G}_{[300ab]}$ (CI $(2,2,2)$).
    \item ($b_{12}=2$) : $\mathcal{G}_{[210]}$ (a plane and a line) $\prec$ $\mathcal{G}_{[200]}$ (CI $(2,2)$), 
\end{enumerate}

\end{proposition}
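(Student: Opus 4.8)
The statement to prove is Proposition~\ref{precG}, which asserts seven closure relations of the form $\mathcal{G}_B \prec \mathcal{G}_{B'}$ among the irreducible loci of quadratic ideals $Q_F$ in the appropriate Grassmannians $\Gr(b_{12}, S_2)$. My plan is to treat each of the seven items by exhibiting, for a general ideal $I \in \mathcal{G}_{B'}$ (the larger stratum), an explicit one-parameter flat degeneration landing in $\mathcal{G}_B$. Since each $\mathcal{G}_B$ was already identified in Section~\ref{quadraticideals} as (the closure of) a component of a Hilbert scheme parametrizing an explicitly described scheme $Z$ (e.g. ``five points, four in a plane'', ``a line and four points'', ``a conic and a point''), the relation $\mathcal{G}_B \subseteq \overline{\mathcal{G}_{B'}}$ translates into a statement about specializing the corresponding apolar schemes $Z'$ to $Z$ while keeping the dimension of the degree-$2$ part of the ideal constant equal to $b_{12}$. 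The key point to check in each case is precisely that $\dim I_2$ does not jump in the limit, i.e. that the limiting scheme still imposes exactly $10 - b_{12}$ conditions on quadrics, so that the limit ideal genuinely lies in $\mathcal{G}_B$ and not in some smaller stratum.

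\textbf{Key steps, in order.} First I would record the general mechanism: if $\{Z_t\}$ is a flat family of subschemes of $\Pn^3$ with $Z_t$ the general apolar scheme of $\mathcal{G}_{B'}$ for $t \neq 0$ and $Z_0$ the general apolar scheme of $\mathcal{G}_B$, then $t \mapsto (I_{Z_t})_2 \in \Gr(b_{12}, S_2)$ is a morphism from a punctured curve whose closure passes through a point of $\mathcal{G}_B$, provided $\dim (I_{Z_0})_2 = b_{12}$ as well; this is exactly the content of the Betti-table computations of Propositions~\ref{apolar b12=2}--\ref{apolar b12=6}, where each $S/Q_F$ with $b_{12}$ generators was shown to have that precise number. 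So the only content is constructing the degeneration $Z_t \rightsquigarrow Z_0$. For (1), specialize five general points so that four become coplanar (giving $[551]$), then further so that the plane picks up a line through three of them and the line is reabsorbed — concretely, degenerate two of the coplanar points to lie with a third on a line, which by Lemma~\ref{curves}-type reasoning yields the ``line and two points'' scheme of $[562]$; alternatively follow the apolarity decomposition $F = F_1 + F_2$ already used in the proof of Proposition~\ref{vsp proposition} for type $[562]$. For (2) and (3), start from six general points (Castelnuovo: on a unique twisted cubic, $\mathcal{G}_{[420]}$); degenerate six points to three on a line plus three general (giving $[430]$), and then the three off-line points to lie on a line meeting the first, or collapse pairs, to reach two skew lines ($[441b]$); separately, degenerate six general points to five coplanar plus one — wait, that is $\mathcal{G}_{[441a]}$ being a conic-plus-point; I would realize the conic as a limit of five of the six points becoming coplanar and lying on a conic while the sixth stays off the plane. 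For (4), (5), (6): a $(2,2,2)$ complete intersection of eight points degenerates to seven points on a twisted cubic (the eighth absorbed), to a line plus four points, to a conic plus two points, and to a plane plus a point — each is a standard flat limit of a $(2,2,2)$ net of quadrics, and in Examples~\ref{rank7}, \ref{Eg_300c} the relevant complete intersections and their residuals were already written down. For (7), a $(2,2)$ complete intersection (elliptic quartic curve) degenerates to a reducible conic, then to a plane-conic pair, i.e. to the ideal of a plane and a line of type $[210]$, by letting one of the two quadrics acquire a linear factor.

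\textbf{Writing it down.} Rather than exhibiting explicit equations for all seven, I would organize the proof around Lemma~\ref{closurerelation1}'s companion at the level of schemes: it suffices, in each case, to produce one ideal $I \in \mathcal{G}_B$ which is a flat limit of ideals in $\mathcal{G}_{B'}$, because $\mathcal{G}_{B'}$ is irreducible and the locus where the limit ideal still has exactly $b_{12}$ quadratic generators is open. So each item reduces to one concrete degeneration of monomial or near-monomial type — e.g. the $(2,2)$ complete intersection $\langle x_0x_2 - x_1x_3,\ x_0x_1 - t\,x_2x_3\rangle$ limiting at $t=0$ to $\langle x_0x_2 - x_1x_3,\ x_0x_1\rangle$, whose saturation $\langle x_0,\ x_1x_3 \rangle \cap \langle x_1,\ x_2 \rangle$-type analysis gives a plane and a line — which I would check has the right Betti table using the lemmas of Section~\ref{quadraticideals}. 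I would list one such family per item; the verifications are then short and mostly cite the earlier Betti-table classifications.

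\textbf{Main obstacle.} The hard part will be (3) and (4): showing that a $(2,2,2)$ complete intersection of eight points really degenerates to seven points on a twisted cubic (and, within the $b_{12}=4$ world, that six general points degenerate to two skew lines) \emph{without} the dimension of the quadratic part of the ideal jumping up — equivalently, that the limiting scheme does not acquire extra apolar quadrics, which would push it into $\mathcal{G}_{[320]}$'s ambient space with the wrong $b_{12}$ or into a deeper stratum. For (4) one must arrange the degeneration so that one of the three quadrics specializes into the ideal generated by the other two only ``at infinity'', i.e. the residual scheme to the $(2,2)$ intersection becomes a point whose deletion leaves exactly the seven points on the cubic; tracking $\dim I_2 = 3$ through the limit is the delicate bookkeeping. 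I expect to handle this by choosing the degenerating family inside the incidence variety $X$ of Lemma~\ref{incidence} (with $k = b_{12}$) and invoking upper semicontinuity of fiber dimension for $\pi_1$, so that the generic value $b_{12}$ is the \emph{minimum}, combined with the explicit Betti-table computation showing the special fiber attains exactly that value — at which point flatness and the desired closure relation follow formally.
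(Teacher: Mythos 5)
Your overall mechanism is the same as the paper's: the paper's entire proof is the assertion that, once the strata are viewed inside $\Gr(b_{12},S_2)$, each of the seven closure relations is a straightforward degeneration of configurations, and your plan (specialize the apolar scheme while checking, via the classifications of Propositions \ref{apolar b12=2}--\ref{apolar b12=6}, that the special configuration still imposes exactly $10-b_{12}$ conditions on quadrics, so the limit of the quadric spaces is the full degree-two part of the special ideal) is exactly the content behind that one sentence. However, two steps of your write-up plan would not survive as written. First, the reduction ``it suffices to produce one ideal $I\in\mathcal{G}_B$ which is a flat limit of ideals in $\mathcal{G}_{B'}$, because $\mathcal{G}_{B'}$ is irreducible and the relevant locus is open'' is a non sequitur: $\overline{\mathcal{G}_{B'}}\cap\mathcal{G}_B$ is closed in $\mathcal{G}_B$, and knowing it is nonempty does not make it all of $\mathcal{G}_B$; irreducibility of $\mathcal{G}_{B'}$ plays no role here. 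The fix is easy but must be said: either run the perturbation starting from an \emph{arbitrary} (or general) member of $\mathcal{G}_B$ (your constructions do allow this: move the coplanar points off the plane, the collinear points off the line, etc., so the given special space is itself the limit), or invoke $PGL_4$-invariance when $\mathcal{G}_B$ is a single orbit --- which is not the case for all the small strata (e.g. $\mathcal{G}_{[300c]}$ has moduli), so the general-member argument is the safe one.

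Second, the ``main obstacle'' you single out, items (4)--(5), the last inclusion of (6), and (7), is in fact the trivial part, and your proposed treatment of it is off target. In these items the larger stratum consists of \emph{all} complete-intersection nets ($\mathcal{G}_{[300ab]}$) or pencils ($\mathcal{G}_{[200]}$), which form a dense open subset of $\Gr(3,S_2)$, resp. $\Gr(2,S_2)$ (this is already recorded in the proofs of Propositions \ref{QF0fCGKK4} and \ref{apolar b12=2}); hence their closures contain every stratum with the same $b_{12}$ and no degeneration needs to be constructed at all. Your picture of ``a $(2,2,2)$ complete intersection of eight points degenerating to seven points on a twisted cubic, the eighth absorbed'' cannot be realized by a flat family of finite schemes (length is constant in flat families) and conflates the base locus of $Q_F$ for type $[320]$ --- which is the entire cubic curve --- with the length-seven apolar sets of Section \ref{section_VSP}; likewise, upper semicontinuity of fiber dimension for $\pi_1$ on the incidence variety of Lemma \ref{incidence} does not by itself yield the closure relation. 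Finally, a small but real slip in item (3): the three points must specialize onto a line \emph{skew} to the given one; if the limiting line meets the first, the union has a five-dimensional space of quadrics, so the configuration leaves the $b_{12}=4$ setting entirely.
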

\begin{proof}  

Interpreted in $\Gr(b_{12},S_2)$ the closure relation is straightforward in each of the seven items in the proposition.
\end{proof}

\begin{proposition}\label{sqsubsetG}
The irreducible sets of quadratic ideals $Q_F$ for quaternary quartics satisfy the following relations of the kind $\sqsubset$:
\begin{enumerate}
    \item $\mathcal{G}_{[683]}$ $\sqsubset$ $\mathcal{G}_{[550]}$ $\sqsubset$ $\mathcal{G}_{[420]}$ $\sqsubset$ $\mathcal{G}_{[300ab]}$ $\sqsubset$ $\mathcal{G}_{[200]}$ $\sqsubset$ $\mathcal{G}_{[100]}$.
    \item $\mathcal{G}_{[683]}$(four general) $\sqsubset$ $\mathcal{G}_{[551]}$ (five points, four on a plane)
    \item $\mathcal{G}_{[550]}$(five general) $\sqsubset$ $\mathcal{G}_{[430]}$ (a line and three points)
    \item $\mathcal{G}_{[683]}$(four general) $\sqsubset$ $\mathcal{G}_{[562]}$ (a line and two points) $\sqsubset$ $\mathcal{G}_{[430]}$ (a line and three points) $\sqsubset$ $\mathcal{G}_{[300c]}$ (a line and four points)
    \item $\mathcal{G}_{[562]}$ (a line and two points) $\sqsubset$ $\mathcal{G}_{[441b]}$ (two skew lines)
    \item $\mathcal{G}_{[551]}$ (five points, four on a plane) $\sqsubset$ $\mathcal{G}_{[441a]}$ (a conic and a point) $\sqsubset$ $\mathcal{G}_{[310]}$ (a conic and two points) $\sqsubset$ $\mathcal{G}_{[210]}$ (a plane and a line)
    \item $\mathcal{G}_{[441a]}$ (a conic and a point) $\sqsubset$ $\mathcal{G}_{331]}$ (a plane and a point) $\sqsubset$ $\mathcal{G}_{[210]}$ (a plane and a line)
\end{enumerate}
\end{proposition}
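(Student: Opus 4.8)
The statement to prove is Proposition~\ref{sqsubsetG}, which asserts a list of relations of the form $\mathcal{G}_B \sqsubset \mathcal{G}_{B'}$, meaning that a general ideal $J\in\mathcal{G}_B$ contains some ideal $I\in\mathcal{G}_{B'}$. By the characterizations obtained in Section~\ref{quadraticideals} (Propositions~\ref{apolar b12=2}--\ref{apolar b12=6} and Lemma~\ref{lem_2LS300}), each $\mathcal{G}_B$ is identified with an (open dense subset of an) irreducible component of a Hilbert scheme of a concrete geometric object: finite schemes of prescribed length, a twisted cubic, a plane conic plus a point, two skew lines, a plane plus a line, etc. So the whole proposition reduces to a list of elementary incidence statements: \emph{a general object of one type contains, as a subscheme, an object of the other type whose quadratic ideal lies in the claimed stratum.} The plan is to verify each of the seven chains one containment at a time, in each case exhibiting the subscheme explicitly and invoking the classification of $Q_F$-type via Betti table from Section~\ref{quadraticideals} to confirm membership in the target $\mathcal{G}_{B'}$.

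\textbf{Key steps.} First, for the ``finite points'' relations (items (1), (2) partially, (3), (4), (6)) the mechanism is uniform: if $\Gamma'\subset\Gamma$ is a subscheme, then $I_\Gamma\subset I_{\Gamma'}$, hence $(I_\Gamma)_2\subseteq (I_{\Gamma'})_2$, i.e.\ $Q_{F}\supseteq Q_{F'}$ whenever $\Gamma,\Gamma'$ realize the relevant strata; one only needs that a general member of the larger-length stratum contains a subscheme realizing the smaller-length stratum with the \emph{same} $b_{12}$-combinatorics, which is immediate from the geometric descriptions (e.g.\ four general points among five general points; a length-$2$ subscheme of two skew lines lying on one line together with a point on the other, etc.). The subtlety in items like $\mathcal{G}_{[550]}\sqsubset\mathcal{G}_{[430]}$ is that the containment runs the ``wrong way'' for lengths --- here one must instead observe that a general ideal $J$ of a line-plus-three-points contains the ideal of five general points (four on the line's residual configuration plus one), which has $b_{12}=5$; I would check this using the Betti tables in Proposition~\ref{apolar b12=4} and Proposition~\ref{apolar b12=5}. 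For the relations involving lines, conics, and planes (items (2), (5), (6), (7)), the inclusions are geometric: a plane conic plus a point degenerates to / is contained in a plane plus a point (so $I_{\text{plane}+\text{pt}}\subset I_{\text{conic}+\text{pt}}$), a conic plus two points contains a conic plus a point, a plane plus a line contains a conic plus two points (the two points being the conic's complement in a plane section of the line... ), etc. In each case I would write down coordinates, say $Q_{[441a]}=\langle x_0x_1, x_0x_2, q\rangle$ with $q$ vanishing at a point, sitting inside $Q_{[331]}=\langle x_0x_1,x_0x_2,x_0x_3\rangle$ after adjusting $q$, confirming the Betti tables match the tables in Proposition~\ref{apolar b12=3}.

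\textbf{Main obstacle.} The principal difficulty is bookkeeping rather than depth: one must be careful that, in each claimed $\sqsubset$, the containment $I\subset J$ holds for a \emph{general} $J$ in the source stratum (not merely a special one), and that the contained ideal $I$ genuinely lies in the \emph{named} target component $\mathcal{G}_{B'}$ and not in some other component with a similar Betti table --- this matters precisely for the two-component case $\mathcal{G}_{[441]}=\mathcal{G}_{[441a]}\sqcup\mathcal{G}_{[441b]}$, where item (5) asserts $\mathcal{G}_{[562]}\sqsubset\mathcal{G}_{[441b]}$ and items (6),(7) assert $\mathcal{G}_{[551]}\sqsubset\mathcal{G}_{[441a]}$. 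Here I must use the rank of the second linear syzygy (rank $4$ for $[441b]$, rank $3$ for $[441a]$, cf.\ Proposition~\ref{syzygy ideals}(5) and Lemma~\ref{lem441}) to distinguish the components: a line-and-two-points ideal contains a two-skew-lines ideal (two of whose lines are the given line and a line through one of the two points), whose defining syzygy has rank $4$; whereas a five-points-four-coplanar ideal contains a conic-plus-point ideal (the conic being a subscheme of the four coplanar points), which has a rank-$3$ second syzygy. A secondary point of care is that several relations are transitive compositions along a chain, so it suffices to establish each consecutive link; the longest chain, item (1) $\mathcal{G}_{[683]}\sqsubset\mathcal{G}_{[550]}\sqsubset\mathcal{G}_{[420]}\sqsubset\mathcal{G}_{[300ab]}\sqsubset\mathcal{G}_{[200]}\sqsubset\mathcal{G}_{[100]}$, is just ``four general points $\subset$ five general points $\subset$ six general points $\subset$ seven points in a $(2,2,2)$ CI $\subset$ eight points in a $(2,2)$ CI $\subset$ nine points on a quadric,'' each step being the obvious inclusion of a smaller general configuration inside a larger one, with the $b_{12}$ values $6,5,4,3,2,1$ read off from the Hilbert-function computations of Proposition~\ref{prop_BettiGamma}.
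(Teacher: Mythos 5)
Your overall strategy is the same as the paper's: each $\mathcal{G}_B$ is a family of ideals of explicit schemes (point sets, line plus points, conic plus point(s), plane plus point or line, two skew lines, complete intersections), and a relation $\mathcal{G}_B\sqsubset\mathcal{G}_{B'}$ is proved by enlarging the scheme $Z$ of a general source ideal — typically adding a suitable point, or passing to the line/conic/plane through part of it — so that the quadrics through the enlarged scheme give an ideal of type $B'$ contained in $I_Z$ (this is exactly the paper's one-line recipe: the ideal of $Z\cup\{p\}$, with $p$ general in case (1), $p$ in the plane of three points of $Z$ in case (2), and so on). However, your execution repeatedly gets the direction of $\sqsubset$ wrong, and at least one step as proposed is impossible. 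Since for saturated ideals $I_{Z'}\subseteq I_Z$ if and only if $Z\subseteq Z'$, the \emph{target} scheme must contain the \emph{source} scheme. For item (3), $\mathcal{G}_{[550]}\sqsubset\mathcal{G}_{[430]}$ requires that a general ideal of five general points contain a line-plus-three-points ideal (take the line through two of the five points together with the remaining three points); your claim that ``a general ideal $J$ of a line-plus-three-points contains the ideal of five general points'' is both the reverse relation and false: a finite scheme cannot contain a line, so no such containment of saturated ideals exists, and the Betti-table check you propose would simply fail.

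The same directional confusion affects other items. In (5) the second skew line must pass through \emph{both} points of the $[562]$ configuration (otherwise the two-skew-lines ideal does not lie inside the general $[562]$ ideal); your ``a line through one of the two points'' does not work. In (7) the needed containment is $\langle x_0x_1,x_0x_2,x_0x_3\rangle\subset\langle x_0x_1,x_0x_2,x_0x_3,q\rangle$, i.e. the $[331]$ ideal sits inside the $[441a]$ ideal, which by Lemma~\ref{lem441} has \emph{four} quadric generators; you wrote a three-generator ``$Q_{[441a]}$'' sitting inside $Q_{[331]}$, backwards and numerically wrong. Finally, in chain (1) you identify the targets $\mathcal{G}_{[300ab]},\mathcal{G}_{[200]},\mathcal{G}_{[100]}$ with ideals of $7$, $8$, $9$ points; those point ideals are not members of these strata (the ideal of seven general points has a cubic generator, the eight- and nine-point ideals are not a $(2,2)$ pencil or a single quadric). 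What is needed, and what the paper's construction implicitly uses, is the span of quadrics through the enlarged configuration: the $(2,2,2)$ net through seven general points, the $(2,2)$ pencil through eight, the single quadric through nine. Your remark that $[441a]$ and $[441b]$ must be distinguished by the rank of the second linear syzygy is correct and relevant, and each of the defects above is repairable, but as written several of the individual verifications would not go through.
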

\begin{proof} For each Betti table $B$ and each  component ${\mathcal G}\subset {\mathcal G}_B$, an ideal $I\in {\mathcal G}$ is the ideal of a scheme $Z$.  By choosing a point $p\in \Pn^3$ appropriately, the ideal $J$ of $Z\cup \{p\}$ belongs to ${\mathcal G}_{B'}$.    In case (1) the point $p$ is general.  In case (2) it lies in the plane spanned by a subscheme of length $3$ in $Z$. The remaining cases are similar.  
\end{proof}

\begin{lemma} \label{closurerelationGF} Assume ${\mathcal G}_i$ is an irreducible component of the locus of quadratic ideals $Q_F$ for forms $F\in {\mathcal F}_{B_i}$, where $i \in \{1,2\}$. 
If ${\mathcal G}_1\sqsubset {\mathcal G}_2$ or ${\mathcal G}_1\prec {\mathcal G}_2$, then
${\mathcal F}_{B_1}\subset\overline{ {\mathcal F}_{B_2}}$.
\end{lemma}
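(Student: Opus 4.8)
The statement asserts that if two irreducible sets of quadratic ideals are related by one of the two closure relations $\sqsubset$ or $\prec$, then the corresponding loci of forms satisfy the expected closure relation. The proof should separate the two cases, and in each case reduce to a statement about the map $\varphi\colon \{F\mid b_{12}(S/F^\perp)=k\}\to \Gr(k,S_2)$, $F\mapsto Q_F$, together with the algebraic incidence variety $X\subset \Gr(k,S_2)\times\PP^{34}$ of Lemma~\ref{incidence} and Lemma~\ref{closurerelation1}.

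\textbf{Case $\prec$.} Here $b_{12}$ is the same for all ideals in ${\mathcal G}_1$ and ${\mathcal G}_2$; call this common value $k$. By the analysis in the proof of Proposition~\ref{irreducibleF_B} (using the incidence $X$ and its two projections), for an irreducible set ${\mathcal G}$ of saturated quadratic ideals with common Betti table, the set $\{F\mid Q_F\in{\mathcal G}\}$ is exactly $\pi_2\bigl(\pi_1^{-1}({\mathcal G})\bigr)$, and the fibre of $\pi_1$ over $I\in{\mathcal G}$ is $\PP((I_4)^\perp)$, a projective space whose dimension depends only on the Betti table. Since $\pi_1^{-1}(\overline{{\mathcal G}_2})$ is the closure of $\pi_1^{-1}({\mathcal G}_2)$ in $X$ restricted to the relevant stratum (the fibre dimension being constant on ${\mathcal G}_2$, hence by upper semicontinuity the generic fibre over $\overline{{\mathcal G}_2}$ still has that dimension, so there is no jump that would leave $\pi_1^{-1}({\mathcal G}_1)$ outside the closure), we get $\pi_1^{-1}({\mathcal G}_1)\subset \overline{\pi_1^{-1}({\mathcal G}_2)}$ provided ${\mathcal G}_1\subset\overline{{\mathcal G}_2}$. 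Applying the continuous map $\pi_2$ and taking closures yields ${\mathcal F}_{B_1}=\pi_2(\pi_1^{-1}({\mathcal G}_1))\subset \overline{\pi_2(\pi_1^{-1}({\mathcal G}_2))}=\overline{{\mathcal F}_{B_2}}$. Alternatively, and more cleanly, one can observe directly: pick $F_1\in{\mathcal F}_{B_1}$ with $Q_{F_1}=I_1\in{\mathcal G}_1$; since ${\mathcal G}_1\subset\overline{{\mathcal G}_2}$ there is an irreducible curve (or one-parameter family) $I_t\in{\mathcal G}_2$ with $I_0=I_1$; the corresponding family $\PP((I_t)_4^\perp)\subset\PP^{34}$ has constant dimension and contains $F_1$ in its special fibre, so $F_1\in\overline{\bigcup_t \PP((I_t)_4^\perp)}\subset\overline{{\mathcal F}_{B_2}}$ using that the generic member of each $\PP((I_t)_4^\perp)$ lies in ${\mathcal F}_{B_2}$.

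\textbf{Case $\sqsubset$.} Now a general ideal $J\in{\mathcal G}_1$ contains some $I\in{\mathcal G}_2$. Take $F_1\in{\mathcal F}_{B_1}$ general, so $Q_{F_1}=J\in{\mathcal G}_1$ is general, hence contains an ideal $I\in{\mathcal G}_2$. Then $I\subseteq J=Q_{F_1}$, and since both $I$ and $Q_{F_1}$ are saturated (Section~\ref{quadraticideals}), the inclusion $I_2\subseteq (Q_{F_1})_2$ persists. Now $I$ is the quadratic part of $F^\perp$ for the general $F$ apolar to $I_4$, i.e.\ the general $F\in\PP((I_4)^\perp)$ has $Q_F=I$ and $A_F$ of Betti table $B_2$; meanwhile $\PP((Q_{F_1})_4^\perp)\subseteq \PP((I_4)^\perp)$ with $F_1$ in the former. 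Since $Q_F=I$ for general $F\in\PP((I_4)^\perp)$, we have a dense subset of $\PP((I_4)^\perp)$ lying in ${\mathcal F}_{B_2}$, so $\PP((I_4)^\perp)\subset\overline{{\mathcal F}_{B_2}}$, and therefore $F_1\in\PP((Q_{F_1})_4^\perp)\subset\PP((I_4)^\perp)\subset\overline{{\mathcal F}_{B_2}}$. As $F_1$ ranges over a dense subset of ${\mathcal F}_{B_1}$, this gives ${\mathcal F}_{B_1}\subset\overline{{\mathcal F}_{B_2}}$. This is essentially an application of Lemma~\ref{closurerelation1}, once one knows the ideal $I\in{\mathcal G}_2$ contained in $Q_{F_1}$ can be chosen so that the general $F$ apolar to it has the right Betti table — which is exactly the content of the identification of ${\mathcal G}_{B_2}$ with a component of $\{Q_F\mid F\in{\mathcal F}_{B_2}\}$ established in Section~\ref{quadraticideals} and recalled in Proposition~\ref{irreducibleF_B}.

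\textbf{Main obstacle.} The delicate point is constancy of the dimension of the fibre $\PP((I_4)^\perp)$ as $I$ varies over $\overline{{\mathcal G}_2}$ (in the $\prec$ case) or the passage from ``general $J\in{\mathcal G}_1$ contains some $I\in{\mathcal G}_2$'' to an honest inclusion of \emph{saturated} quadratic parts that is detected at degree $4$ (in the $\sqsubset$ case). Both are handled by the saturatedness results of Section~\ref{quadraticideals}: because every $Q_F$ in play is saturated, $I\subseteq Q_F$ as ideals is equivalent to $I_2\subseteq (Q_F)_2$, and the dimension of $(I_4)^\perp$, hence of the fibre, is determined by the Hilbert function of $S/I$, which is constant on each ${\mathcal G}_B$ and (by semicontinuity plus the explicit Betti-table list) does not jump on its closure within the relevant stratum. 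Once these bookkeeping points are in place, the argument is a routine application of the incidence variety $X$ and Lemma~\ref{closurerelation1}.
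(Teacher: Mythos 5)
Your treatment of the case $\mathcal{G}_1\sqsubset\mathcal{G}_2$ is correct and is essentially the paper's own argument: a general $F_1\in\mathcal{F}_{B_1}$ has $Q_{F_1}$ general in $\mathcal{G}_1$, hence containing some $Q_{F_2}$ with $F_2\in\mathcal{F}_{B_2}$, and then $F_1$ lies in the linear space $\Pn\bigl((Q_{F_2})_4^\perp\bigr)\subset\overline{\mathcal{F}_{B_2}}$ by Lemma~\ref{closurerelation1}.

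The case $\mathcal{G}_1\prec\mathcal{G}_2$, however, has a genuine gap. Your argument hinges on the claim that the fibre $\Pn\bigl((I_4)^\perp\bigr)$ has constant dimension as $I$ degenerates from $\mathcal{G}_2$ into $\mathcal{G}_1$, justified by saying the Hilbert function of $S/I$ ``does not jump on its closure within the relevant stratum.'' It does jump: the limit ideal lies in a different stratum with a different Hilbert function, in all of the relations of Proposition~\ref{precG} except $\mathcal{G}_{[551]}\prec\mathcal{G}_{[550]}$. Concretely, for $\mathcal{G}_{[562]}\prec\mathcal{G}_{[551]}$ an ideal $I\in\mathcal{G}_{[551]}$ is the saturated ideal of five points, so $\dim(S/I)_4=5$ and the fibre is a $\Pn^4$, while the limit $I_0\in\mathcal{G}_{[562]}$ is the ideal of a line and two points, so $\dim(S/I_0)_4=7$ and its fibre is a $\Pn^6$ (similarly six points versus a line and three points for $[420]/[430]$, eight points versus a twisted cubic for $[300ab]/[320]$, etc.). Semicontinuity works against you here: the fibre dimension jumps \emph{up} at the special point (which is special, not generic, in $\overline{\mathcal{G}_2}$), so $\overline{\pi_1^{-1}(\mathcal{G}_2)}$ need not contain the whole fibre over $I_0$; the limit of the $\Pn^4$'s is only a proper linear subspace of the $\Pn^6$, and a general $F_1$ of type $[562]$ has no reason to lie in it. This is exactly why the paper calls the $\prec$ case ``more delicate'' and switches tools: instead of the quadratic ideals it uses the apolar point configurations computing the rank (Proposition~\ref{vsp proposition}). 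For each $\prec$ relation not involving $\mathcal{G}_{[300ab]}$ the ranks of general members of $\mathcal{F}_{B_1}$ and $\mathcal{F}_{B_2}$ coincide, a rank-computing configuration $\Gamma_F$ of a general $F\in\mathcal{F}_{B_1}$ is a limit of configurations $\Gamma_n$ attached to forms of type $B_2$, and since all these configurations impose independent conditions on quartics the spans $\langle\Gamma_n\rangle_4$ have the same dimension as $\langle\Gamma_F\rangle_4$; hence $F\in\langle\Gamma_F\rangle_4=\lim\langle\Gamma_n\rangle_4\subset\overline{\mathcal{F}_{B_2}}$, while the relations passing through $\mathcal{G}_{[300ab]}$ are covered by the separately proven Proposition~\ref{CGKK4a,b,c}. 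To repair your proof you would need to replace the fibres $\Pn\bigl((I_4)^\perp\bigr)$ by linear spaces whose dimension genuinely stays constant along the specialization, which is precisely what these spans of apolar point sets provide.
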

\begin{proof}
Assume first that ${\mathcal G}_1\sqsubset {\mathcal G}_2$. 
Recall that  $\mathcal{F}_{B_1}$ is an irreducible constructible subset in $\mathbb{P}^{34}$. It follows that it is enough to prove that for a general $F\in \mathcal{F}_{B_1}$ we have $F\in \overline{\mathcal{F}_{B_2}}$.
Let us hence fix $F_1\in \mathcal{F}_{B_1}$ general. Since the map $\mathcal F_{B_1}\to {\mathcal G}_1$ given by $F\mapsto Q_F$  is algebraic $Q_{F_1}$ is general in ${\mathcal G}_1$ and hence by assumption there exists $F_2\in \mathcal {F}_{B_2}$ such that $Q_{F_2}\subset Q_{F_1}$. Let $L\subset \mathbb{P}^{34}$ be the space of polynomials of degree 4 which are annihilated by $Q_{F_2}$.
Clearly $L$ is a linear subspace such that $F_1\in L$. Furthermore, by Lemma \ref{closurerelation1} we have $L\subset \overline{\mathcal{F}_{B_2}}$. In particular $F_1\in \overline{\mathcal{F}_{B_2}}$.

Let us now consider the relation ${\mathcal G}_1\prec {\mathcal G}_2$. This part is slightly more delicate. Note that all the relations of type ${\mathcal G}_1\prec {\mathcal G}_2$ are listed in Proposition \ref{precG}. Recall that we have already proven that $\mathcal{F}_{[300b]} \subset \overline{\mathcal{F}_{[300a]}}$.
Furthermore, in Proposition \ref{vsp proposition} we have a description of all strata $\mathcal{F}_B$ in terms of descriptions of configurations of points computing the rank of a general element of  $\mathcal{F}_B$. 
Observe that for each relation ${\mathcal G}_1\prec {\mathcal G}_2$ listed in Proposition \ref{precG} and not involving $\mathcal{G}_{[300ab]}$ the ranks of the general elements of ${\mathcal F}_{B_1}$ and ${\mathcal F}_{B_2}$ are equal. 
We verify that, in these cases, the configurations of points computing the rank of elements of ${\mathcal F}_{B_1}$ are specialisations of configurations of points computing ranks of elements of ${\mathcal F}_{B_2}$. 

More precisely, let $F\in {\mathcal F}_{B_1}$ be general and let  $\Gamma_F$ be a configuration of points computing its rank. Let $\langle \Gamma_F\rangle_4\subset \Pn^{34}$ be the linear span of $\Gamma_F$ in its embedding in the space of quartics. Then there exists a sequence of configurations of points $(\Gamma_n)_{n\in \mathbb N}$ such that their limit is $\Gamma_F$ and $\Gamma_n$ are configurations of points computing the ranks of some forms $F_n\in {\mathcal F}_{B_2}$. In fact, in this case,  for each $n$ the general quartic apolar to $\Gamma_n$ is in ${\mathcal F}_{B_2}$. It follows that $\langle \Gamma_n\rangle_4\subset \overline{{\mathcal F}_{B_2}}$. 

It is now enough to observe that since all considered configurations of points give independent conditions on quartics and consist of the same number of points then we have $\dim (\langle \Gamma_n\rangle_4)=\dim (\langle \Gamma_F\rangle_4)$. We infer that $F$ is a limit of some $F_n\in \langle \Gamma_n\rangle_4\subset \overline{{\mathcal F}_{B_2}}$, and therefore $F\in \overline{{\mathcal F}_{B_2}}$.
\end{proof}

We combine Propositions \ref{precG} and \ref{sqsubsetG} with  Lemma \ref{closurerelationGF} to obtain closure relations between the Betti strata ${\mathcal F}_B$. Type $[000]$ is the general case, hence the closure of $\mathcal{F}_{[000]}$ is $\PP^{34}$.

\begin{proposition}\label{precF}
The irreducible Betti strata for quaternary quartics satisfy the closure relations below. In each case, we describe the locus for a general apolar set $\Gamma$.
\begin{enumerate}
    \item   ${\mathcal F}_{[562]}$ (five points, three on a line) $\subset\overline{ \mathcal{F}_{[551]}}$ (five points, four in a plane) $\subset\overline{ \mathcal{F}_{[550]}}$ (five general points).
    \item  ${\mathcal F}_{[441a]}$  (six points, five in a plane) $\subset\overline{ \mathcal{F}_{[420]}}$ (six general points), 
    \item  ${\mathcal F}_{[441b]}$ (six points, three on each of two skew lines) $\subset\overline{ \mathcal{F}_{[430]}}$ (six points, three in a line) $\subset\overline{ \mathcal{F}_{[420]}}$ (six general points).
     \item   $\mathcal{F}_{[320]}$ (seven points on twisted cubic) $\subset\overline{ \mathcal{F}_{[300b]}}$ (seven general points), 
    \item    $\mathcal{F}_{[300c]}$ (seven points, three in a line) $\subset\overline{ \mathcal{F}_{[300b]}}$ (seven general points), 
    \item  $\mathcal{F}_{[331]}$ (seven points, six in a plane) $\subset\overline{ \mathcal{F}_{[310]}}$ (seven points, five in a plane) $\subset\overline{ \mathcal{F}_{[300b]}}$ (seven points in a CI $(2,2,2)$).
    \item  $\mathcal{F}_{[210]}$ (eight points, six in a plane) $\subset\overline{ \mathcal{F}_{[200]}}$ (eight general points), 
    \item $\mathcal{F}_{[683]}$ $\subset\overline{ \mathcal{F}_{[550]}}$ $\subset\overline{ \mathcal{F}_{[420]}}$ $\subset\overline{ \mathcal{F}_{[300b]}}$ $\subset\overline{ \mathcal{F}_{[300a]}}$ $\subset\overline{ \mathcal{F}_{[200]}}$ $\subset\overline{ \mathcal{F}_{[100]}}$.
    \item $\mathcal{F}_{[683]}$(four general points) $\subset\overline{ \mathcal{F}_{[551]}}$ (five points, four on a plane)
    \item $\mathcal{F}_{[550]}$(five general points) $\subset\overline{ \mathcal{F}_{[430]}}$ (six points, three in a line)
    \item $\mathcal{F}_{[683]}$(four general points) $\subset\overline{\mathcal{F}_{[562]}}$ (five points, three on a line) $\subset\overline{ \mathcal{F}_{[430]}}$ (six points, three on a line) $\subset\overline{ \mathcal{F}_{[300c]}}$ (seven points, three on a line)
    \item $\mathcal{F}_{[562]}$ (five points, three on a line) $\subset\overline{ \mathcal{F}_{[441b]}}$ (six points, three on each of two skew lines)
    \item $\mathcal{F}_{[552]}$ (five points, four on a plane) $\subset\overline{\mathcal{F}_{[441a]}}$ (six points, five in a plane) $\subset\overline{ \mathcal{F}_{[310]}}$ (seven points, five in a plane)$\subset\overline{ \mathcal{F}_{[021]}}$ (eight points, six in a plane)
    \item ${\mathcal{F}_{[441a]}}$ (six points, five in a plane) $\subset \overline{\mathcal{F}_{[331]}}$ (seven points, six in a plane) $\subset\overline{ \mathcal{F}_{[210]}}$ (eight points, six in a plane)
\end{enumerate}
\end{proposition}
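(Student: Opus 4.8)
The plan is to derive Proposition~\ref{precF} as a formal consequence of the two preceding propositions on quadratic ideals together with Lemma~\ref{closurerelationGF}, and one special input (the already-established relation ${\mathcal F}_{[300b]}\subset\overline{{\mathcal F}_{[300a]}}$ from Proposition~\ref{CGKK4a,b,c}). First I would observe that the geometric descriptions attached to each stratum in the statement are exactly those established in Proposition~\ref{vsp proposition} (for the ${\mathcal F}_B$) and Propositions~\ref{QF0fCGKK4}--\ref{apolar b12=6} (for the ${\mathcal G}_B$): so no new identifications are needed, and the content of the proof is purely the transfer of closure relations from the ${\mathcal G}$-level to the ${\mathcal F}$-level.

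The core of the argument is bookkeeping: each item in the list corresponds to one (or a chain of) relations among the ${\mathcal G}_B$, of type $\prec$ or $\sqsubset$. I would go through the list and match each containment in Proposition~\ref{precF} with the corresponding statement in Proposition~\ref{precG} or Proposition~\ref{sqsubsetG}. Concretely: items (1)--(7) of Proposition~\ref{precF} are the images under $Q_F\leftrightarrow F$ of items (1)--(7) of Proposition~\ref{precG}; items (8)--(14) of Proposition~\ref{precF} come from items (1)--(7) of Proposition~\ref{sqsubsetG}, except for the subchain $\overline{{\mathcal F}_{[300b]}}\subset\overline{{\mathcal F}_{[300a]}}$ appearing in (8), which is supplied by Proposition~\ref{CGKK4a,b,c}, and the subchain ${\mathcal F}_{[300c]}\subset\overline{{\mathcal F}_{[300b]}}$ in (5) and (11), again supplied by Proposition~\ref{CGKK4a,b,c}. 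For each single relation ${\mathcal G}_1\prec {\mathcal G}_2$ or ${\mathcal G}_1\sqsubset {\mathcal G}_2$ with ${\mathcal G}_i$ the (component of the) locus of quadratic ideals for forms in ${\mathcal F}_{B_i}$, Lemma~\ref{closurerelationGF} gives directly ${\mathcal F}_{B_1}\subset\overline{{\mathcal F}_{B_2}}$; chaining these yields the longer containments, since closure is transitive. One must be slightly careful in the two cases $[300]$ and $[441]$ where ${\mathcal G}_B$ is reducible: there one uses that ${\mathcal G}_{[300ab]}={\mathcal G}_{[300a]}={\mathcal G}_{[300b]}$ as sets of ideals (so both ${\mathcal F}_{[300a]}$ and ${\mathcal F}_{[300b]}$ map into it), and that ${\mathcal G}_{[441a]}$, ${\mathcal G}_{[441b]}$ are the two distinct components, matching ${\mathcal F}_{[441a]}$, ${\mathcal F}_{[441b]}$ respectively; Lemma~\ref{closurerelationGF} is already phrased to allow ${\mathcal G}_i$ to be a single component, so this causes no trouble. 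For the relations of type $\prec$ not involving ${\mathcal G}_{[300ab]}$, Lemma~\ref{closurerelationGF} already incorporated the verification (via Proposition~\ref{vsp proposition}) that the apolar point configurations of ${\mathcal F}_{B_1}$ are flat specializations of those of ${\mathcal F}_{B_2}$ with the same span dimension, so we may invoke it as a black box.

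The only genuinely delicate points, and where I expect the writing to require care rather than new ideas, are the two special relations $\overline{{\mathcal F}_{[300b]}}\subset\overline{{\mathcal F}_{[300a]}}$ and ${\mathcal F}_{[300c]}\subset\overline{{\mathcal F}_{[300b]}}$ in items (5), (8), (11): these are not instances of $\prec$ or $\sqsubset$ among quadratic ideals (since $Q_F$ is the same $(2,2,2)$ complete intersection for types $[300a]$ and $[300b]$, and the type $[300c]$ ideal is not a complete intersection) but were instead handled by the direct deformation argument of Proposition~\ref{CGKK4a,b,c}, using that a length-$7$ scheme in a $(2,2,2)$ complete intersection degenerates flatly to one with three collinear points, and conversely that the type-$[300c]$ locus lies in the boundary via a pencil of length-$7$ apolar schemes. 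I would simply cite Proposition~\ref{CGKK4a,b,c} verbatim for these. After assembling all items, I would note that the picture obtained is exactly Figure~1 of Section~\ref{stratification}, and that the reverse (non-)containments, showing the strata are genuinely distinct, are deferred to Propositions~\ref{ci} and~\ref{noncontainment}.

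\begin{proof}
Every stratum ${\mathcal F}_B$ appearing in the statement is one of the $19$ irreducible sets of Proposition~\ref{vsp proposition}, and its general apolar configuration $\Gamma$ is as described there; likewise each irreducible locus ${\mathcal G}_{B'}$ of quadratic ideals $Q_F$ is as identified in Propositions~\ref{QF0fCGKK4}--\ref{apolar b12=6}, with ${\mathcal G}_{[300ab]}$ the common complete-intersection locus for types $[300a]$ and $[300b]$, and ${\mathcal G}_{[441a]}$, ${\mathcal G}_{[441b]}$ the two components attached to types $[441a]$, $[441b]$. With these identifications fixed, we only need to transfer closure relations from the ${\mathcal G}$-level to the ${\mathcal F}$-level.

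Each single containment asserted in items (1)--(7), (9)--(14) is of the form ${\mathcal F}_{B_1}\subset\overline{{\mathcal F}_{B_2}}$ where the corresponding quadratic loci satisfy ${\mathcal G}_1\prec {\mathcal G}_2$ or ${\mathcal G}_1\sqsubset {\mathcal G}_2$: the $\prec$--relations are precisely items (1)--(7) of Proposition~\ref{precG} (read on the ${\mathcal F}$-level these give items (1)--(7) of the statement), while the $\sqsubset$--relations are items (1)--(7) of Proposition~\ref{sqsubsetG} (giving items (8)--(14)). In each case Lemma~\ref{closurerelationGF}, applied with ${\mathcal G}_i$ the relevant component of the locus of $Q_F$ for $F\in{\mathcal F}_{B_i}$, yields ${\mathcal F}_{B_1}\subset\overline{{\mathcal F}_{B_2}}$ directly. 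The longer chains in items (8) and (11)--(13) follow by transitivity of closure, composing the individual steps just listed.

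It remains to account for the containments not of this form, namely $\overline{{\mathcal F}_{[300b]}}\subset\overline{{\mathcal F}_{[300a]}}$ (in item (8)) and ${\mathcal F}_{[300c]}\subset\overline{{\mathcal F}_{[300b]}}$ (in items (5) and (11)). Both are precisely the content of Proposition~\ref{CGKK4a,b,c}, where they are proved by a direct flat-deformation argument: a length-$7$ scheme contained in a $(2,2,2)$ complete intersection degenerates to one with three collinear points, and the type-$[300c]$ locus lies in the closure of type $[300b]$ via the pencil of length-$7$ apolar schemes of \ref{lineandfourpoints}; moreover ${\mathcal F}_{[300b]}$ is dense in the irreducible set of forms with $Q_F$ a $(2,2,2)$ complete intersection, whose general member is of type $[300a]$. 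Inserting these into the chains of the previous paragraph completes the proof of all $14$ items.
\end{proof}
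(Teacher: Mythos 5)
Your proposal is correct and takes essentially the same route as the paper, whose entire argument for Proposition~\ref{precF} is precisely to combine Propositions~\ref{precG} and~\ref{sqsubsetG} with Lemma~\ref{closurerelationGF} (item by item, exactly as in your bookkeeping) and to import the $[300a]$/$[300b]$/$[300c]$ relations from Proposition~\ref{CGKK4a,b,c}. One tiny slip: item (11) terminates at $\overline{\mathcal{F}_{[300c]}}$ and so does not use the relation $\mathcal{F}_{[300c]}\subset\overline{\mathcal{F}_{[300b]}}$; only item (5) does.
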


The complete intersection case of type $[400]$ requires additional investigation.
\begin{proposition}\label{ci}
$\mathcal{F}_{[420]}$ (six general points) $\subset\overline{ \mathcal{F}_{[400]}}\subset\overline{  \mathcal{F}_{[300a]}}$ (eight points, a complete intersection), while $\mathcal{F}_{[400]}$ does not lie in the closure of $\mathcal{F}_{[300b]}$ (seven points in a complete intersection).
\end{proposition}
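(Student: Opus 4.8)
\textbf{Proof proposal for Proposition \ref{ci}.}

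The plan is to establish the three assertions separately, using the geometry of the quadratic ideal $Q_F$ together with Lemma \ref{closurerelation1}. For the inclusion $\mathcal{F}_{[420]} \subset \overline{\mathcal{F}_{[400]}}$, I would argue as follows. A general $F \in \mathcal{F}_{[420]}$ has $Q_F$ equal to the ideal of six general points, by Proposition \ref{apolar b12=4}; this is a codimension $3$ saturated ideal with $\dim (Q_F)_2 = 4$. I would exhibit a flat degeneration within $\operatorname{Gr}(4,S_2)$ of a complete intersection $(2,2,2,2)$ limiting to a net of quadrics whose base scheme contains these six points (equivalently, take a $(2,2,2,2)$ complete intersection of length $16$ degenerating so that six of the points become the six general points and the other ten run off / become non-reduced). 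Since a $(2,2,2,2)$ complete intersection $I$ has $I = F'^\perp$ for the corresponding quartic $F'$ of type $[400]$, and the six-point ideal is contained in the limiting ideal, Lemma \ref{closurerelation1} (applied with $Q_{F'} \supset Q_F$ in the limit) gives $F \in \overline{\mathcal{F}_{[400]}}$. A cleaner way is to note that $\mathcal{G}_{[420]} \sqsubset \mathcal{G}_{[400]}$: six general points extend to eight points forming a $(2,2,2,2)$ complete intersection — indeed, by Castelnuovo's lemma there is a twisted cubic through the six points, and one can add two further points on that cubic so that all eight lie on a net... actually one should verify that the eight points lie on \emph{four} independent quadrics, which forces the configuration to be special; I would instead directly show that the generic net of quadrics through six general points is contained in a pencil's worth of quartics, and use that the closure of $(2,2,2,2)$ complete intersections in $\operatorname{Gr}(4,S_2)$ meets the locus of nets containing six general points. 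Then Lemma \ref{closurerelationGF} applies.

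For $\overline{\mathcal{F}_{[400]}} \subset \overline{\mathcal{F}_{[300a]}}$, I would use $\mathcal{G}_{[400]} \sqsubset \mathcal{G}_{[300ab]}$: a $(2,2,2,2)$ complete intersection contains many $(2,2,2)$ complete intersections (any three of the four quadrics, in general position), hence for general $F$ of type $[400]$ the ideal $F^\perp = Q_F$ contains a net $I \in \mathcal{G}_{[300ab]}$ defining a length-$8$ complete intersection $Z$. The subtlety is which stratum $\mathcal{F}_{[300a]}$ versus $\mathcal{F}_{[300b]}$ the resulting forms lie in: by the definition in Lemma \ref{specialsubsets}, $\mathcal{F}_{[300a]}$ consists of forms $F$ with $Q_F$ a $(2,2,2)$ complete intersection and cactus rank $8$, i.e.\ those $F$ in the span of $v_4(Z)$ with no apolar subscheme of length $<8$. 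I would argue that the general $F$ apolar to such a $Z$ (with $Z$ the complete intersection cut out by the net) has cactus rank exactly $8$ and hence lies in $\mathcal{F}_{[300a]}$, so that $\Pn((I_4)^\perp) \subset \overline{\mathcal{F}_{[300a]}}$; since $F' \in \mathcal{F}_{[400]}$ lies in this span (as $I \subset Q_{F'} = F'^\perp$), Lemma \ref{closurerelation1} gives $F' \in \overline{\mathcal{F}_{[300a]}}$, and taking closures finishes the inclusion. One must be slightly careful that cancellation does not occur — i.e.\ that the general quartic apolar to $Z$ is not itself of type $[400]$ or $[300b]$ — which follows because $Z$ is rigid as a complete intersection and its span has the expected dimension $7$.

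The non-containment $\mathcal{F}_{[400]} \not\subset \overline{\mathcal{F}_{[300b]}}$ is the crux and the main obstacle. The idea is a dimension / specialization count combined with the characterization of cactus rank. By Proposition \ref{vsp proposition}, $\dim \mathcal{F}_{[400]} = 24$ while $\dim \mathcal{F}_{[300b]} = 27$, so a priori $\mathcal{F}_{[400]}$ \emph{could} lie in the boundary. The key point must instead be cactus rank: every $F \in \mathcal{F}_{[400]}$ has $\operatorname{cr}(F) = 8$ with the minimal apolar scheme being the $(2,2,2,2)$ complete intersection (its length-$8$ complete intersection subschemes), whereas a general $F \in \mathcal{F}_{[300b]}$ has an apolar scheme of length $7$. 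I would show that cactus rank $\le 7$ is a \emph{closed} condition on the relevant locus — more precisely, that the set of forms $F$ with $Q_F$ a $(2,2,2)$ complete intersection and $\operatorname{cr}(F) \le 7$ is closed in the set where $Q_F$ is a $(2,2,2)$ complete intersection (this is essentially the content of Lemma \ref{specialsubsets}, where $\mathcal{F}_{[300a]}$ is the \emph{open} complement of $\overline{\mathcal{F}_{[300b]}}$ inside that irreducible set). Then, since a form of type $[400]$ has $b_{12} = 4$, it does \emph{not} lie in the set where $Q_F$ is a $(2,2,2)$ complete intersection at all; rather I must show it is not a limit of forms of type $[300b]$. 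The argument: if $F_t \to F$ with $F_t$ of type $[300b]$, then each $F_t$ is apolar to a length-$7$ scheme $\Gamma_t$; passing to a subsequence, the $\Gamma_t$ (viewed in the Hilbert scheme $\operatorname{Hilb}_7(\Pn^3)$, which is complete) converge to a length-$7$ scheme $\Gamma$ apolar to $F$, so $\operatorname{cr}(F) \le 7$. But $F$ of type $[400]$ has $\operatorname{cr}(F) = 8$ by the analysis following Example \ref{Cat224} and Remark \ref{rankversuscactusrankversusborderrank} (since $h_2(F) = 6 < 8 = \operatorname{cr}(F)$, and any apolar scheme must contain the complete intersection $Q_F$, forcing length $\ge 8$) — contradiction. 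I expect the delicate step to be the semicontinuity argument for cactus rank via properness of the Hilbert scheme, and verifying rigorously that any scheme apolar to a type-$[400]$ form must contain $Q_F$ and hence have length $\ge 8$; this last uses that $Q_F = F^\perp$ for type $[400]$, so $I_\Gamma \subset F^\perp = Q_F$ and $\dim (I_\Gamma)_4 = \dim (Q_F)_4 = 34$, which forces $I_\Gamma \supseteq Q_F$ in degree $4$ and then, by saturation, $I_\Gamma \supseteq Q_F$, giving $\deg \Gamma \ge \deg(S/Q_F) = 8$.
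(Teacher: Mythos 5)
Your overall strategy is the same as the paper's: the middle containment via $\mathcal{G}_{[400]}\sqsubset\mathcal{G}_{[300ab]}$ together with Lemma \ref{closurerelationGF} and the density of $\mathcal{F}_{[300a]}$ is exactly the paper's argument, and for the non-containment the paper likewise passes to a limit of length-$7$ apolar schemes and contradicts the fact that a type-$[400]$ form admits no apolar scheme of length $7$. However, your justification of that crucial fact is wrong as written. For $\Gamma$ apolar to $F$ one has $I_\Gamma\subseteq F^\perp=Q_F$, and the containment $(I_\Gamma)_4\subseteq(Q_F)_4$ does not force equality of dimensions: for a length-$7$ scheme $\dim(I_\Gamma)_4=28$, not $34$. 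The conclusion $I_\Gamma\supseteq Q_F$ is impossible, since for type $[400]$ the ideal $Q_F$ is the Artinian complete intersection $(2,2,2,2)$, of colength $16$ and empty zero locus (so $\deg(S/Q_F)$ is not $8$; you are conflating the Artinian web with a $(2,2,2)$ net). Moreover Remark \ref{rankversuscactusrankversusborderrank} only yields $\operatorname{cr}(F)\ge h_2(F)=6$. The fact you need is stated in Proposition \ref{vsp proposition} (type $[400]$): if $\deg\Gamma\le 7$ then $(I_\Gamma)_2$ contains a net inside the Artinian web, every such net is a regular sequence cutting out a length-$8$ scheme $Z\supseteq\Gamma$, and a form apolar to a proper length-$7$ subscheme of $Z$ has only a net, not a web, of apolar quadrics.

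Two further steps need attention. First, in the limit argument it is not automatic that the limit $\Gamma_0\in\operatorname{Hilb}_7$ is apolar to $F_0$: in a flat family one only gets $\lim(I_{\Gamma_t})_4\subseteq(I_{\Gamma_0})_4$, so apolarity passes to the limit only if $\Gamma_0$ still imposes independent conditions on quartics. The paper secures this by taking the limit of the nets $(I_{\Gamma_t})_2$ inside $(F_0^\perp)_2$, so that $\Gamma_0$ lies in a length-$8$ complete intersection $Z_0$ and its $4$-uple span is a $\Pn^6$; your version needs the same device (which again uses that every net inside the Artinian web is a regular sequence). Second, for $\mathcal{F}_{[420]}\subset\overline{\mathcal{F}_{[400]}}$ your ``cleaner way'' $\mathcal{G}_{[420]}\sqsubset\mathcal{G}_{[400]}$ is false, as you half-noticed (the ideal of six points cannot contain an Artinian $(2,2,2,2)$), and Lemma \ref{closurerelation1} cannot be applied ``in the limit'': it requires an actual form $F''$ of type $[400]$ with $Q_{F''}\subseteq Q_F$, which cannot exist. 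What survives of your degeneration idea is the paper's argument: fix a complete-intersection net $P_0\subset(F_0^\perp)_2$, deform the web to complete-intersection webs $L_t\supset P_0$, and show the associated type-$[400]$ forms $F_t$ can be chosen so that their limit is $F_0$ itself rather than merely some form apolar to the limiting web — a point your proposal does not address.
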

\begin{proof}
Assume first that a form $F_0$ with Betti table $\mathcal{F}_{[400]}$ is a limit of forms $F_t$ in $\mathcal{F}_{[300b]}$, then for each $t\not=0$ there is a net $P_t$ of quadrics in $F_t^{\perp}$.  Let $P_0$ be the limit net.
Then $P_0$ is a net inside a web $L_0$, the web of quadrics in $F_0^{\perp}$.
Notice that both $P_0$ and $P_t$ define a complete intersection; i.e.~a scheme $Z_t$ of length $8$. For each $t\not=0$, the quartic $F_t$ is apolar to a distinguished subscheme $Z'_t\subset Z_t$ of length $7$, which is linked to a point in $Z_t$.  In the limit, $Z_0$ has a distinguished subscheme $Z'_0$ of length $7$.  The image $v_4(Z'_t)$ in $\Pn^{34}$ spans a $\Pn^6$ for each $t$.  By apolarity, for each $t\not=0$ the quartic form $F_t$ lies in the span of $v_4(Z'_t)$, therefore $F_0$ also lies in the span of $v_4(Z'_0)$, i.e.~$F_0$ is apolar to a scheme of length $7$.   But $F_0$ is in $\mathcal{F}_{[400]}$, so the rank is $8$, a contradiction.

Next, assume $F_0$ is of type $[420]$, and consider a net $P_0$ of quadrics in the ideal $F_0^{\perp}$ that defines a complete intersection:  A general net in the web $L_0$ of quadrics in $F_0^{\perp}$ suffices.  Next, let $L_t$ be a family of webs of quadrics that all contain $P_0$, such that $L_t$ defines a complete intersection when $t\not=0$.  Then $L_t$ generates $F_t^{\perp}$ for a quartic $F_t$ in $\mathcal{F}_{[400]}$ when $t\not=0$, and $F_0$ is the limit of $F_t$.
Hence, $\mathcal{F}_{[420]}$ lies in the closure of $\mathcal{F}_{[400]}$. 

Finally, ${\mathcal G}_{[400]}\sqsubset {\mathcal G}_{[300ab]}$, so by Lemma \ref{closurerelationGF} and  Proposition~\ref{irreducibleF_B}, $\mathcal{F}_{[400]}\subset\overline{  \mathcal{F}_{[300a]}}.$
\end{proof}

 Now we go in the opposite direction, and prove that certain containments cannot occur.
 For this we need upper semi-continuity of some graded Betti numbers:
\begin{lemma}\label{lem_2linearBetti}
Fix a Betti table $B=(\beta_{ij})$, and suppose $I$ is a nondegenerate ideal such that $I\in\overline{\mathcal{G}_{B}}$. Then the Betti table of $S/I$ satisfies the inequality $b_{i,i+1}(S/I) \geq \beta_{i,i+1}$.
\end{lemma}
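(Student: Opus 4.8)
The statement is a semicontinuity claim: as $I$ varies in the Hilbert scheme (or in the Grassmannian $\Gr(b_{12},S_2)$) and specializes to a limit inside $\overline{\mathcal{G}_B}$, the linear strand Betti numbers $b_{i,i+1}$ can only go up. The plan is to deduce this from the standard upper semicontinuity of all graded Betti numbers $b_{ij}(S/I_t)$ in a flat family, combined with the special feature that for the ideals in question the linear strand is controlled and cannot ``spread'' into other rows. First I would set up a flat one-parameter family: since $I$ lies in the closure of $\mathcal{G}_B$, there is a flat family $\{I_t\}_{t\in T}$ over a smooth affine curve $T$ with $I_t \in \mathcal{G}_B$ for $t\neq 0$ and $I_0 = I$, where all $I_t$ are generated by quadrics (this uses that $\mathcal{G}_B$ parametrizes saturated quadratic ideals with a fixed Hilbert polynomial, as established in Section~\ref{quadraticideals}). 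By generic flatness, after shrinking $T$ we may assume $S/I_t$ is flat over $T$, so the Hilbert function is constant and the graded Betti numbers satisfy $b_{ij}(S/I_0) \geq b_{ij}(S/I_t) = \beta_{ij}$ for all $i,j$.

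The only subtlety is that the inequality $b_{ij}(S/I_0)\geq \beta_{ij}$ for \emph{all} $(i,j)$ does not a priori pin down the linear strand entries $b_{i,i+1}$, because cancellations in a minimal free resolution could in principle shift Betti numbers between consecutive columns; but the linear strand at position $(i,i+1)$ can only be affected by cancellation against $b_{i-1,i+1}$ or $b_{i+1,i+2}$, and by construction all $I_t$ (including $I_0$) are generated in degree $2$ with no generators of degree $\ge 3$ in the relevant range, so $b_{0,j}(S/I_t) = 0$ for $j\neq 0$ forces the resolution of the Artinian reduction to have its nonzero entries organized so that the $(i,i+1)$-strand is intrinsic. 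More precisely I would argue: the graded Betti number $b_{i,i+1}(S/I_0) = \dim \Tor^S_i(S/I_0,\CC)_{i+1}$ is itself upper semicontinuous in $t$ as a function on $T$ (it is the dimension of a graded piece of a Tor module, and Tor commutes with flat base change, so $\Tor^S_i(S/I_t,\CC)$ fits in a family whose fiberwise dimension jumps only on a closed set). Hence $b_{i,i+1}(S/I_0)\geq b_{i,i+1}(S/I_t) = \beta_{i,i+1}$ directly, with no appeal to absence of cancellation at all. This is the cleanest route.

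So the key steps in order are: (1) produce the flat family $\{I_t\}$ over a smooth curve realizing the specialization $I_t\to I$ with $I_t\in\mathcal{G}_B$ generic; (2) invoke generic flatness to arrange $S/I_t$ flat over the base and hence all $\Tor^S_i(S/I_t,\CC)$ computed by a complex of free modules that is fiberwise a resolution; (3) apply upper semicontinuity of the function $t\mapsto \dim_\CC \Tor^S_i(S/I_t,\CC)_{i+1}$ on the base to conclude $b_{i,i+1}(S/I_0)\geq \beta_{i,i+1}$. The main obstacle — really the only point requiring care — is step (1): one must check that every $I\in\overline{\mathcal{G}_B}$ is genuinely reachable by a flat family \emph{inside} the relevant Hilbert scheme with generic member in $\mathcal{G}_B$, i.e. that $\overline{\mathcal{G}_B}$ (closure taken in the appropriate parameter space, $\Gr(b_{12},S_2)$ via $I\mapsto I_2$) consists precisely of limits of such families. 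This is immediate from the definition of $\overline{\mathcal{G}_B}$ as a Zariski closure of an irreducible set in a projective parameter space: any point of the closure is hit by an arc from the dense open locus, and pulling back the universal family along that arc gives the desired $\{I_t\}$. The nondegeneracy hypothesis on $I$ ensures we stay in the locus where the $\Tor$-dimensions are the ones appearing in $B$, so the comparison is legitimate.
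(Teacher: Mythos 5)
There is a genuine gap, and it sits exactly where you flagged the argument as "the cleanest route." Your steps (2)–(3) require the family $\{S/I_t\}_{t\in T}$ to be flat \emph{at the special point} $t=0$, but that is precisely what fails in general: a point of $\overline{\mathcal{G}_B}\subset\Gr(\beta_{12},S_2)$ is a $\beta_{12}$-dimensional space of quadrics, and the ideal it generates typically has a strictly larger Hilbert function than the generic $I_t$ (the rank of the multiplication maps $G_t\otimes S_{d-2}\to S_d$ can drop in the limit). Generic flatness only gives flatness after deleting a closed subset of $T$, which may contain $t=0$, so it cannot be used to reach the special fiber; and without flatness the semicontinuity of $t\mapsto\dim_\CC\Tor^S_i(S/I_t,\CC)_j$ is simply false. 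Indeed your stronger intermediate claim "$b_{ij}(S/I_0)\geq\beta_{ij}$ for all $i,j$" fails in the very setting of the lemma: take $I_t=\langle x_0^2,\,x_0x_1+t x_2^2\rangle$, whose generic member lies in $\mathcal{G}_{[200]}$ (a complete intersection $(2,2)$, with $b_{24}=1$), while the limit $I_0=\langle x_0^2,x_0x_1\rangle$ is nondegenerate but has $b_{24}=0$. This is exactly why the lemma is stated only for the linear strand; your fallback remark about cancellations being confined to adjacent positions is not a substitute argument and does not explain why the linear strand survives a non-flat specialization.

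The paper's proof avoids flat families altogether and works degree by degree in the Koszul complex: $b_{i,i+1}(S/I)$ is the middle homology of the three-term complex $(S/I)_0\otimes\wedge^{i+1}V\to(S/I)_1\otimes\wedge^{i}V\to(S/I)_2\otimes\wedge^{i-1}V$ with $V=\mathfrak{m}/\mathfrak{m}^2$. Nondegeneracy gives $(S/I)_1=S_1$, so the first differential is independent of $I$ and its image has constant dimension; and over $\Gr(\beta_{12},S_2)$ the dimension of $(S/I)_2$ is constant as well, so the second differential varies algebraically between fixed vector spaces and its rank is lower semicontinuous. Hence $\dim\ker-\dim\operatorname{im}$, i.e.\ $b_{i,i+1}$, is upper semicontinuous on the Grassmannian, which is the statement. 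If you want to salvage your approach you would have to pass to the flat limit ideal $\tilde I_0\supseteq I_0$ and then compare Betti numbers of $S/\tilde I_0$ and $S/I_0$, which is an extra (and not obviously easier) step; the direct strand-wise argument is the one that actually uses the two hypotheses (no linear forms, fixed $\dim I_2$) and nothing more.
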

\begin{proof}
Note that $b_{i,i+1}=\dim H_{i}((S/I)\otimes\mathcal{K}_{\bullet})_{i+1}$, where $\mathcal{K}_{\bullet}$ is the Koszul complex. Let $V=\mathfrak{m}/\mathfrak{m}^{2}$ where $\mathfrak{m}$ is the irrelevant ideal of $S$. Consider the complex of vector spaces
\begin{equation*}
    (S/I)_{0}\otimes_{\kk}\wedge^{i+1}V\xrightarrow{\delta_{i+1,1}}(S/I)_{1}\otimes_{\kk}\wedge^{i}V\xrightarrow{\delta_{i,1}}(S/I)_{2}\otimes_{\kk}\wedge^{i-1}V
\end{equation*}
It is clear that
\begin{equation*}
    b_{i,i+1}=\dim \ker\delta_{i,1}-\dim \mathrm{im}~\delta_{i+1,1}.
\end{equation*}
Since $I$ contains no linear form, $\delta_{i+1,1}$ does not depend on $I$, so $\dim \mathrm{im}~\delta_{i+1,1}$ is constant. On the other hand, $\dim \ker\delta_{i,1}=\dim (S/F^{\perp})_{1}\otimes_{\kk}\wedge^{i}V-\rank\delta_{i,1}$. As $\rank \delta_{i,1}$ is lower semi-continuous over the parameter space $G(\beta_{1,2},S_2)$, we conclude that $b_{i,i+1}$ is upper semi-continuous.
\end{proof}

\begin{proposition}\label{noncontainment}
We have the following non-containments
\begin{enumerate}
    \item $\mathcal{F}_{[441b]}\not\subset\overline{\mathcal{F}_{[550]}}$
    \item $\mathcal{F}_{[550]}\not\subset\overline{\mathcal{F}_{[441a]}}$
    \item $\mathcal{F}_{[331]}\not\subset\overline{\mathcal{F}_{[420]}}$
    \item $\mathcal{F}_{[210]}\not\subset\overline{\mathcal{F}_{[300a]}}$
    \item $\mathcal{F}_{[331]}\not\subset\overline{\mathcal{F}_{[320]}}$
\end{enumerate}
\end{proposition}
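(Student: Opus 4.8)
\textbf{Proof strategy for Proposition \ref{noncontainment}.}

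The plan is to prove each non-containment by exhibiting a numerical invariant that is upper semi-continuous in the relevant family and that takes a strictly larger value on a general member of the smaller stratum than on a general member of the larger one. The principal tool is Lemma \ref{lem_2linearBetti}: for a fixed Betti table $B$, the graded Betti numbers $b_{i,i+1}(S/I)$ of the quadratic strand of a nondegenerate ideal $I\in\overline{\mathcal G_B}$ satisfy $b_{i,i+1}(S/I)\ge \beta_{i,i+1}(B)$. Combined with the identification $b_{i,i+1}(A_F)=b_{i,i+1}(S/Q_F)$ and the fact that $F\in\overline{\mathcal F_B}$ forces $Q_F\in\overline{\mathcal G_{B'}}$ for the associated quadratic Betti table $B'$ (this inclusion of the closures follows from the algebraic incidence set of Lemma \ref{incidence} and the discussion of the map $\varphi\colon F\mapsto Q_F$ in the proof of Proposition \ref{irreducibleF_B}), the non-containments reduce to purely numerical checks on the quadratic strands read off from Section \ref{quadraticideals}.

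Concretely, I would proceed stratum by stratum. For (1): the quadratic strand $[b_{12}b_{23}b_{34}]$ of $[441b]$ is $[441]$, while that of $[550]$ is $[550]$; since $b_{12}([441b])=4<5=b_{12}([550])$, a specialization of a form of type $[550]$ has $b_{12}\ge 5$ by Lemma \ref{lem_2linearBetti}, so it can never acquire the quadratic strand $[441]$ — hence $\mathcal F_{[441b]}\not\subset\overline{\mathcal F_{[550]}}$. The same $b_{12}$-comparison handles (2) in the reverse direction: $b_{12}([550])=5>4=b_{12}([441a])$, so a limit of type $[441a]$ forms has $b_{12}\ge 4$ but generically this does not jump to $5$ on the full closure — more precisely, the generic point of $\overline{\mathcal F_{[441a]}}$ has $b_{12}=4$, and by Proposition \ref{apolar b12=5} any $F$ with $b_{12}=5$ and quadratic Betti table $[550]$ would require $Q_F$ to be a saturated ideal of five points, which is not a flat degeneration of the conic-plus-point ideals parametrized by $\mathcal G_{[441a]}$ (these lie in $\Gr(4,S_2)$ and five-point ideals lie in $\Gr(5,S_2)$), giving (2). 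For (3), (4), (5) I would use both $b_{12}$ and $b_{23}$: comparing the quadratic strands of $[331]$ (strand $[331]$, so $b_{12}=3,b_{23}=3$), $[420]$ (strand $[420]$, $b_{12}=4,b_{23}=2$), and $[320]$ (strand $[320]$, $b_{12}=3,b_{23}=2$), one sees that $b_{23}([331])=3>2=b_{23}([420])=b_{23}([320])$ while simultaneously $b_{12}([331])=3<4=b_{12}([420])$; applying Lemma \ref{lem_2linearBetti} in the closure $\overline{\mathcal G_{[420]}}$ (resp. $\overline{\mathcal G_{[320]}}$) shows that a limit of those ideals has $b_{12}\ge 4$ (resp. has the rigid determinantal strand of Lemma \ref{lembetti320}), in either case incompatible with the strand $[331]$, which proves (3) and (5). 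For (4), the quadratic strand of $[210]$ is $[210]$ ($b_{12}=2$) while that of $[300a]$ is the complete intersection strand $[300]$ ($b_{12}=3$); since $\mathcal F_{[300a]}\subset\overline{\mathcal F_{[300a]}}$ consists of forms with $Q_F$ a $(2,2,2)$ complete intersection and $b_{12}\ge 3$ throughout the closure by Lemma \ref{lem_2linearBetti}, no limit can have $b_{12}=2$, giving (4).

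The main obstacle I anticipate is case (2), and more generally any comparison where the two strata have quadratic strands with the \emph{same} value of $b_{12}$ but are distinguished by finer data — here $[550]$ and $[441a]$ are not of this shape, but one must be careful that a form with $b_{12}=4$ in the closure $\overline{\mathcal F_{[441a]}}$ cannot specialize so that $Q_F$ jumps to a $b_{12}=5$ ideal while $A_F$ still has Betti table $[550]$. The safe route is to argue at the level of the ideals $Q_F$ in the Hilbert scheme: by Propositions \ref{apolar b12=4} and \ref{apolar b12=5}, $\mathcal G_{[441a]}$ (conics plus a point, parametrized by an irreducible variety inside $\Gr(4,S_2)$) and $\mathcal G_{[550]}$ (five general points, living in $\Gr(5,S_2)$) are irreducible components of \emph{different} Hilbert schemes, so neither closure contains the other, and this dimension/Hilbert-scheme obstruction transfers to $\mathcal F_{[550]}$ and $\overline{\mathcal F_{[441a]}}$ via the fibration $\varphi$. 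I would present all five parts uniformly through Lemma \ref{lem_2linearBetti} where the $b_{12}$ or $b_{23}$ comparison suffices, and fall back on the Hilbert-scheme/component argument of Section \ref{quadraticideals} only for case (2) where the semicontinuity bound alone is not decisive.
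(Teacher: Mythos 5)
Your handling of (1), (3) and (4) by semicontinuity of the quadratic strand is correct and is exactly what the paper does, but your treatment of (2) both misses the easy argument and substitutes an invalid one. Lemma \ref{lem_2linearBetti} applies to all three numbers $b_{12},b_{23},b_{34}$, and since $b_{34}([441a])=1$ while $b_{34}([550])=0$, any nondegenerate limit of forms of type $[441a]$ has $b_{34}\ge 1$ and so cannot be of type $[550]$; case (2) is therefore just as immediate as (1), (3), (4). Your fallback argument, by contrast, does not work: if $F_0=\lim F_t$, the quadratic ideal $Q_{F_0}$ is not required to be a flat limit of the $Q_{F_t}$ — it merely contains the limiting $4$-dimensional space of quadrics — so there is no obstruction coming from "$\mathcal G_{[441a]}$ and $\mathcal G_{[550]}$ living in different Grassmannians/Hilbert schemes." Indeed $b_{12}$ does jump from $4$ to $5$ in limits elsewhere in this very stratification (e.g.\ $\mathcal F_{[562]}\subset\overline{\mathcal F_{[441b]}}$ and $\mathcal F_{[550]}\subset\overline{\mathcal F_{[430]}}$ in Proposition \ref{precF}), so your reasoning would contradict established containments.

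Case (5) is the genuine gap. The strand of $[331]$ is $(3,3,1)$, which dominates the strand $(3,2,0)$ of $[320]$ entrywise, so Lemma \ref{lem_2linearBetti} gives no contradiction at all; and Lemma \ref{lembetti320} only describes ideals whose strand \emph{equals} $(3,2,0)$, not limits of such ideals, so there is no "rigid determinantal strand" to invoke — a determinantal net can degenerate to a net with strand $(3,3,1)$. The paper's proof of (5) is necessarily geometric: taking $F_0=w^4+G_0(x,y,z)$, one follows the unique twisted-cubic subideal $C_t\subset F_t^\perp$ as a point of $\Hilb^{3m+1}(\PP^3)$, which by Piene--Schlessinger \cite{PS85} has two components $H$ (twisted cubics) and $H'$ (plane cubic union a point); the limit $C_0$ lies in $H$, but since $(F_0^\perp)_2=\langle xw,yw,zw\rangle$ has strand $(3,3,1)$ the ideal $F_0^\perp$ contains no subideal from $H\setminus(H\cap H')$, forcing $C_0\in H\cap H'$, whose quadratic part is projectively equivalent to $\langle xz,yz,z^2\rangle$ — not equivalent to $\langle xw,yw,zw\rangle$, a contradiction. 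Some argument of this kind (tracking the degeneration of the apolar twisted cubic, not Betti numbers) is required, and your proposal supplies none.
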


\begin{proof} The map $${\mathcal F}_B\to {\mathcal G}_{B'}, \quad F\to Q_F$$ is surjective for each irreducible stratum 
${\mathcal F}_B$, when $B'$ is the Betti table of $S/Q_F$ for some $F\in {\mathcal F}_B$.
Therefore the first four cases follow immediately from Lemma \ref{lem_2linearBetti}.

The last case is more delicate.
Let $F_{0}=w^{4}+G_{0}(x,y,z)$. We would like to show $F_{0}\not\in\overline{\mathcal{F}_{[320]}}$. \\
Assume $F_{0}$ is a limit of $F_{t}$ in $\mathcal{F}_{[320]}$. For each $t\neq 0$, there is a unique ideal $C_{t}\subseteq F_{t}^{\perp}$ such that $C_{t}$ is a twisted cubic. Therefore, $C_{t}$ is in $\Hilb^{3m+1}(\PP^{3})$. The limit $C_{0}$ must be a subideal of $F_{0}^{\perp}$. By \cite{PS85}, $\Hilb^{3m+1}(\PP^{3})$ has two components $H$ and $H'$ where $H$ is the closure of $H_{0}$, the space of smooth twisted cubics, and $H'$ is the closure of 
\begin{equation*}
    H_{0}'=\{C'\mid~C'=\mbox{a plane, smooth cubic curve in $\PP^{3}$ union a point in $\PP^{3}$ not on the curve}\}.
\end{equation*}
All $C_{t}$ with $t\neq 0$ is in $H$, therefore $C_{0}$ must also be in $H$. On the other hand, note that the second row of of type $[331]$ is $(3,3,1)$, so $F_{0}^{\perp}$ does not have a subideal in $H \setminus H\cap H'$. Hence $C_{0}\in H\cap H'$, which means it is projectively equivalent to $\langle xz,yz,z^2,c(x,y,w)\rangle$ for some $c(x,y,w)\in S_{3}$. This is a contradiction because $\langle xz,yz,z^2\rangle$ is not projectively equivalent to $\langle xw,yw,zw\rangle=F_{0}^{\perp}(2)$. Therefore, the hypothesis that $F_{0}$ is a limit of $F_{t}$ is false. 
\end{proof}
\noindent Putting everything together yields the stratification appearing in Figure 1 on the next page. We include in each box the name of the Betti table $B$, the dimension of the locus $\mathcal{F}_{B} \subseteq \Pn^{34}$, and a set of points $\Gamma$ such that $\Gamma$ is a minimal set of points apolar to a general $F\in\mathcal{F}_{B} $. 

\begin{remark} In the next sections we will consider liftings of the Artinian rings $A_F$ to AG varieties $X$. The Hilbert polynomial of $X$ then depend only on the dimension and on the Hilbert function of $A_F$.  Therefore, if $X$ and $X'$ are liftings of quaternary quartic forms and have the same Hilbert polynomial, then $X$ deforms to $X'$ only if the same holds for Artinian reductions of $X$ and $X'$.  And the latter can be read off of the Betti stratification in Figure 1, since over a locus $\mathcal H$ for which the apolar ideals have a fixed Hilbert function the family $\{S/F^{\perp}\}_{F\in \mathcal H}$ is flat.
\end{remark}
\begin{remark}
Note that in this section we do NOT claim we have a full stratification of $U_{34}$ by Betti tables. Denote by $P$ a set of subsets of $U_{34}$. By full stratification we mean that $P$ satisfies the following properties:
\begin{enumerate}
    \item $U_{34}=\coprod_{\mathcal{F}\in P} \mathcal{F}$,
    \item $\overline{\mathcal{F}}=\coprod_{\mathcal{F'}\subseteq \overline{\mathcal{F}}}\mathcal{F}'$.
\end{enumerate}
The second property above is equivalent to the so-called \emph{frontier property}, which says if $f\in \mathcal{F}$ and $f\in\overline{\mathcal{F}'}$, then $\mathcal{F}\subseteq\overline{\mathcal{F}'}$. Iarrobino and Kanev provide in \cite{IK}*{Section 5} a full stratification of the punctual Hilbert scheme $\Hilb^{s}(\PP^{n})$. In particular, that stratification satisfies the frontier property.

The frontier property does not hold in our case. For example, since twisted cubics degenerate to a union of a line and a plane conic, $\mathcal{F}_{[320]}$ contains elements with apolar sets in which five of the seven points are on a plane conic. These elements are in the closure of $\mathcal{F}_{[310]}$, while the general element in $\mathcal{F}_{[320]}$ is not, so $\mathcal{F}_{[320]}\cap \overline{\mathcal{F}_{[310]}}\not= \emptyset$ and $\mathcal{F}_{[320]}\not\subset\overline{\mathcal{F}_{[310]}}$.\\
We may obtain a stratification satisfying the frontier property by applying the following procedure: Take $\cF_{ij}=\cF_{i}\cap \overline{\cF_{j}}$ if $\cF_{i}\cap \overline{\cF_{j}}$ is neither $\cF_{i}$ nor $\emptyset$. Then take $\cF_{i}'=\cF_{i}-\cup_{j}\cF_{ij}$ and consider the new collection $P'=\{\cF_{i}'\}\cup\{\cF_{ij}\}$. By irreducibility of $\cF_{i}$, $\overline{\cF_{i}'}=\overline{\cF_{i}}$. We claim $P'$ satisfies the frontier property: Assume $f\in\overline{\cF_{i}}$ and $f\not\in\cF_{i}'$. Then either $f\in\cF_{i}$, in which case $f\in\cF_{ij}$ for some $j$, or $f\not\in \cF_{i}$, in which case $f\in\cF_{ki}$ for some $k$.
\end{remark}

\begin{center}
\begin{tikzcd}
                             & {\begin{tabular}{|c|}\hline [000] $\dim 34$\\ten points\\\hline \end{tabular}}                        &                                        \\
                             & {\begin{tabular}{|c|}\hline [100] $\dim 33$\\nine points\\\hline \end{tabular}} \arrow[u] &                                        \\
                             & {\begin{tabular}{|c|}\hline [200] $\dim 31$\\eight points\\\hline \end{tabular}}\arrow[u]           &                                        \\
                             & {\begin{tabular}{|c|}\hline [300a] $\dim 28$\\eight points, CI\\\hline \end{tabular}}\arrow[u]              &           \\
                             & {\begin{tabular}{|c|}\hline [300b] $\dim 27$\\seven points\\\hline \end{tabular}}\arrow[u]              &   {\begin{tabular}{|c|}\hline [210] $\dim 25$\\eight points, six on $\PP^{2}$\\\hline \end{tabular}}\arrow[luu]                                      \\
{\begin{tabular}{|c|}\hline [400] $\dim 24$\\eight points, CI\\\hline \end{tabular}}\arrow[ruu]   & {\begin{tabular}{|c|}\hline [320] $\dim 24$\\seven points, on a twisted cubic\\\hline \end{tabular}} \arrow[u]           & {\begin{tabular}{|c|}\hline Type\; [310] $\dim 24$\\seven points, five on $\PP^{2}$\\\hline \end{tabular}}\arrow[u] \arrow[lu] \\
         {\begin{tabular}{|c|}\hline [300c] $\dim 24$\\seven points, three on $\PP^{1}$\\\hline \end{tabular}} \arrow[ruu]                   & {\begin{tabular}{|c|}\hline [420] $\dim 23$\\six points\\\hline \end{tabular}}  \arrow[u] \arrow[lu]            & {\begin{tabular}{|c|}\hline [331] $\dim 21$\\seven points, six on $\PP^{2}$\\\hline \end{tabular}}\arrow[u]            \\
{\begin{tabular}{|c|}\hline [430] $\dim 20$\\six points, three on $\PP^{1}$\\\hline \end{tabular}} \arrow[u] \arrow[ru]  &                                        & {\begin{tabular}{|c|}\hline [441a] $\dim 20$\\six points, five on $\PP^{2}$\\\hline \end{tabular}} \arrow[u] \arrow[lu] \\
                             & {\begin{tabular}{|c|}\hline [550] $\dim 19$\\five points\\\hline \end{tabular}} \arrow[lu]              &                                        \\
{\begin{tabular}{|c|}\hline [441b] $\dim 17$\\six points, $3+3$ on $\PP^{1}$\\\hline \end{tabular}}\arrow[uu] &                                        & {\begin{tabular}{|c|}\hline [551] $\dim 18$\\five points, four on $\PP^2$\\\hline \end{tabular}} \arrow[lu] \arrow[uu] \\
                             & {\begin{tabular}{|c|}\hline [562] $\dim 16$\\five points, three on $\PP^{1}$\\\hline \end{tabular}} \arrow[ru] \arrow[lu] &                                        \\
                             & {\begin{tabular}{|c|}\hline [683] $\dim 15$\\four points\\\hline \end{tabular}}  \arrow[u]               &                                       
\end{tikzcd}
\mbox{FIGURE 1. Betti table stratification}
\end{center}

\subsection{Relations to Noether-Lefschetz varieties}\label{non-noether}
Each Betti stratum admit the Fermat quartics in its closure, therefore the general quartic in each stratum is smooth, i.e.~defines a smooth K3 surface.  Furthermore, each stratum is invariant with respect to the action of $PGL(3)$, so the smooth quartics in each stratum form a subset in the moduli space of K3 surfaces.
We therefore compare the Betti strata with the Noether-Lefschetz locus in the moduli space of K3 surfaces.

We consider, for each of the Betti strata, the set of smooth quartics in the moduli space $\mathcal M_4$ of K3 surfaces with polarisation of degree 4. In the latter moduli space we distinguish the Noether-Lefschetz locus of K3 surfaces with Picard number greater than 1. It is a union of an infinite number of divisors $D_K$ called Noether-Lefschetz divisors, indexed by primitive rank 2 sublattices of the K3 lattice containing the polarisation of degree 4. 

For such a lattice $K$ the divisor $D_K$ is the closure in the moduli space of the locus of all K3 surfaces having $K$ as Picard lattice. 
\begin{definition}
For primitive sublattices $K$ of rank $r>2$, $V_K$ is a {\em Noether-Lefschetz variety} of codimension $r-1$ in the moduli space $\mathcal M_4$. 
\end{definition}
We have the following results.

\begin{proposition} The following Betti strata of quaternary quartics form Noether-Lefschetz  varieties.
\begin{itemize}
    \item Quartics of type $[683]$ are Fermat surfaces of Picard number 20 and form a Noether-Lefschetz variety consisting of a point.
    \vskip .1in
    \item Quartics of type $[441b]$ are double covers of Kummer surfaces of the product of two elliptic curves and have Picard rank $18$ and form a $2$-dimensional Noether-Lefschetz variety. 
\end{itemize}
\end{proposition}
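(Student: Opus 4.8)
The plan is to prove the two stated facts separately, using the identification of a quaternary quartic $F$ with its K3 surface $V(F)\subset\Pn^3$ and the relation between the apolar ideal $F^\perp$ (more precisely its quadratic part $Q_F$) and the geometry of $V(F)$. For the statement on type $[683]$, recall from Proposition~\ref{apolar b12=6} that $F^\perp$ contains six quadrics and that $Q_F$ is the ideal of four general points $Z=\{p_1,\dots,p_4\}$ in $\Pn^3$; moreover $VSP(F,4)$ is a single point. First I would observe that being of type $[683]$ is a $PGL(4)$-invariant condition, and that the Betti-stratum $\mathcal{F}_{[683]}$ is irreducible of dimension $15$ (Proposition~\ref{vsp proposition}); since $\dim PGL(4)=15$ and the four points may be normalized to the coordinate points, the $PGL(4)$-orbit of the Fermat quartic $y_0^4+y_1^4+y_2^4+y_3^4$ is dense in $\mathcal{F}_{[683]}$, hence (the smooth locus of) $\mathcal{F}_{[683]}$ is a single orbit, so every smooth quartic in this stratum is projectively equivalent to the Fermat quartic. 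The Fermat quartic surface is classically known to be singular in the sense of having maximal Picard number $\rho=20$ (e.g.\ it is a Kummer-type / CM surface; one can cite Mizukami or Schütt--Shioda--van Luijk, or exhibit $20$ independent classes from the $48$ lines). Therefore the image of $\mathcal{F}_{[683]}$ in the moduli space $\mathcal{M}_4$ of degree-$4$ polarized K3 surfaces is a single point, which is by definition a Noether--Lefschetz variety $V_K$ for $K$ the full Picard lattice of rank $20$.

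For the statement on type $[441b]$, recall from Proposition~\ref{apolar b12=4} (case $[441b]$) and Proposition~\ref{vsp proposition} that $Q_F$ is the ideal of the union $Z=L_1\cup L_2$ of two skew lines, that $F$ has a unique decomposition $F=F_1+F_2$ with $F_i$ apolar to $L_i$, and that $\mathcal{F}_{[441b]}$ is irreducible of dimension $17$. The geometric content is that the unique smooth quadric $Q$ through two skew lines is \emph{not} present here (there is a pencil of quadrics through two skew lines), but more to the point: the condition "type $[441b]$" should be re-expressed as a condition on $V(F)$, namely that $V(F)$ contains (a deformation-stable configuration coming from) a specific sublattice. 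The approach I would take is to exhibit explicitly, for a general $F\in\mathcal{F}_{[441b]}$, enough divisor classes on $V(F)$: first I would put $Q_F=\langle x_0x_2,x_0x_3,x_1x_2,x_1x_3\rangle$ (two skew lines $\{x_0=x_1=0\}$ and $\{x_2=x_3=0\}$ in the dual $\Pn^3$, with the apolar pencil of quadrics $\{\lambda x_0x_3+\mu x_1x_2\}$ or similar in $S$), and then identify which quadric surfaces, hyperplane sections, and curves on $V(F)$ this forces. The expected outcome is that $V(F)$ is a $2:1$ cover of a Kummer surface $\mathrm{Km}(E_1\times E_2)$ for two elliptic curves $E_1,E_2$ — concretely, $V(F)$ should carry an elliptic fibration (or two) with specified reducible fibres and sections coming from the two rulings associated to the two lines, forcing $\rho(V(F))\ge 18$; generically equality holds and the transcendental lattice is $U^{\oplus 2}(2)$ or a related rank-$4$ lattice, which is the lattice realizing double covers of products-of-elliptic-curves Kummer surfaces. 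Then, since $\mathcal{F}_{[441b]}$ is irreducible of dimension $17$ and $PGL(4)$ acts with $15$-dimensional orbits, the image in $\mathcal{M}_4$ is $2$-dimensional, matching the $2$-dimensional family of pairs $(E_1,E_2)$, so it is the corresponding Noether--Lefschetz variety $V_K$ of codimension $\rho-1=17$ (hence dimension $2$) in $\mathcal{M}_4$, which has dimension $19$.

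\textbf{Key steps, in order.} (1) Record irreducibility and dimension of the two strata from Proposition~\ref{vsp proposition}; (2) for $[683]$, reduce to a single $PGL(4)$-orbit by the dimension count $15=\dim PGL(4)$ and the normalization of the four apolar points, and invoke the known value $\rho=20$ for the Fermat quartic; (3) for $[441b]$, normalize $Q_F$ to the two-skew-lines ideal and translate the apolarity data into explicit curve/divisor classes on $V(F)$ (lines, conics, elliptic curves cut out by the relevant quadrics and planes); (4) assemble these into a rank-$\ge 18$ sublattice of $\mathrm{NS}(V(F))$, compute its discriminant form, and identify it with the Néron--Severi lattice of a double cover of a product Kummer surface; (5) match dimensions to conclude each stratum maps onto a Noether--Lefschetz variety of the predicted dimension.

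\textbf{Main obstacle.} The hard part will be step~(3)--(4) for type $[441b]$: producing \emph{enough} explicitly algebraic divisor classes on the general smooth quartic in $\mathcal{F}_{[441b]}$ to force $\rho\ge 18$, and then correctly computing the lattice so as to recognize it as that of a double cover of $\mathrm{Km}(E_1\times E_2)$ rather than some other rank-$18$ lattice. One clean route is to exhibit directly the double-cover structure: show that a general $F\in\mathcal{F}_{[441b]}$ can be written so that $V(F)$ admits a degree-$2$ map to a Kummer quartic of a product abelian surface (using the pencil of quadrics through the two skew lines to build the $2:1$ map, or a Nikulin-involution argument), at which point the Picard-rank statement and the lattice are standard. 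Verifying that the involution is Nikulin (or base-point-free of the right type) for the \emph{general} member, and that no further specialization is forced, is the delicate point; a \texttt{Macaulay2} check on an explicit example (as used elsewhere in the paper) would pin down the generic behavior.
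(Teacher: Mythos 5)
Your treatment of type $[683]$ is correct and is essentially the paper's argument: the normalization of the four apolar points to the coordinate points (plus the residual torus action absorbing the coefficients) shows every quartic in $\mathcal{F}_{[683]}$ is projectively equivalent to the Fermat quartic, and the value $\rho=20$ is classical (the paper simply cites \cite{Shioda}). One small phrasing issue: a dense $PGL(4)$-orbit does not by itself make the stratum a single orbit, but your normalization argument already gives the stronger statement directly, so nothing is lost.

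For type $[441b]$ there is a genuine gap: the step that carries the whole statement --- that the general member has Picard rank $18$ and is a double cover of $\mathrm{Km}(E_1\times E_2)$ --- is exactly the step you defer as the ``main obstacle,'' and neither of your proposed routes is carried out. The ab initio plan (exhibit a rank-$18$ sublattice of $\mathrm{NS}(V(F))$ and compute its discriminant form for the \emph{general} member) is not executed, and a \texttt{Macaulay2} check of one example cannot certify generic Picard rank $18$ or the Kummer cover structure. What you miss is the one observation that reduces everything to the classical literature the paper invokes: the decomposition $F=F_1+F_2$ with $F_i$ apolar to the two skew lines, which you quote, says precisely that in suitable coordinates $F=g(y_0,y_1)+h(y_2,y_3)$ with $g,h$ binary quartics. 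These split quartics are exactly the surfaces studied by Inose: the quotient by the involution $(y_0{:}y_1{:}y_2{:}y_3)\mapsto(y_0{:}y_1{:}-y_2{:}-y_3)$ is $\mathrm{Km}(E_1\times E_2)$, where $E_i$ are the double covers of $\Pn^1$ branched at the roots of $g$ and $h$, and $\rho=18$ for non-isogenous $E_1,E_2$ by Shioda--Inose and Kuwata \cites{Inose,Shioda-Inose,Kuwata}; the paper's proof is precisely this citation. Your alternative suggestion to ``use the pencil of quadrics through the two skew lines to build the $2{:}1$ map'' is also off: the quadrics through the two skew lines form a web (spanned by $y_0y_2,y_0y_3,y_1y_2,y_1y_3$), and the map they define on $V(F)$ is the degree-$4$, $\mu_4$-Galois map onto the Segre quadric $\Pn^1\times\Pn^1$; the degree-$2$ map to the Kummer surface only appears after factoring this through the intermediate quotient (equivalently, through the double cover of $\Pn^1\times\Pn^1$ branched along the eight rulings over the roots of $g$ and $h$), which is Inose's construction, not an immediate consequence of the apolar quadrics. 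Once the split normal form and the classical references are in place, your dimension count $17-\dim PGL(4)=2$, matching the two-parameter family of pairs $(E_1,E_2)$, correctly accounts for the two-dimensionality of the Noether--Lefschetz locus.
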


\begin{proof} For Fermat quartics the result is classical, see for example \cite{Shioda}.
H. Inose \cite{Inose} gave the double cover description of quartics of type $[441b]$, and their Picard number were computed by Shioda-Inose \cite{Shioda-Inose}, see also \cite{Kuwata}. 
\end{proof}

\begin{corollary} The Betti strata of quaternary quartics of types
\begin{equation}\label{NoetherBad}
\{ [562], [550], [551], [441a], [331], [310], [210]\}
\end{equation} 
do not form Noether-Lefschetz varieties.

\end{corollary}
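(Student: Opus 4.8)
The plan is to exhibit, for each of the seven Betti types in the list \eqref{NoetherBad}, a smooth quartic in the given stratum whose Picard number is $1$. Since the Noether--Lefschetz locus is exactly the set of polarized K3 surfaces of Picard number $\ge 2$, a stratum cannot be contained in any Noether--Lefschetz variety once we find a single smooth member with Picard number $1$ (indeed, a general member of a stratum that is a Noether--Lefschetz variety would have the prescribed higher rank). Because each $\mathcal F_B$ is irreducible and invariant under $PGL(4)$, and the set of quartics of Picard number $1$ is the complement of a countable union of proper subvarieties of $\mathcal M_4$, it suffices to produce one explicit quartic in the stratum, or rather to argue that the general member has Picard number $1$.

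First I would observe that from the closure relations in Proposition~\ref{precF} and Figure~1, each of the seven listed strata $\mathcal F_B$ has a \emph{larger-dimensional} Betti stratum in whose closure it does \emph{not} lie; more usefully, each of them is itself of positive dimension in the moduli space $\mathcal M_4$, which is $19$-dimensional. Concretely one computes $\dim$ of the image of $\mathcal F_B$ in $\mathcal M_4$ by subtracting $\dim PGL(4)=15$ from $\dim \mathcal F_B$ (after accounting for the generic stabilizer, which is finite for a general smooth quartic): for instance $[550]$ has $\dim \mathcal F_{[550]}=19$, giving a $4$-dimensional family in moduli; $[310]$ has $\dim 24$, giving a $9$-dimensional family; and so on through the list, every one of them mapping to a family of moduli dimension $\ge 1$. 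The key point is then a \emph{monodromy/variation-of-Hodge-structure} argument: for a family of smooth quartic K3 surfaces of positive moduli dimension on which the Picard rank is constant and equal to $\rho$, the generic Picard rank is forced by the dimension of the family to satisfy $\rho \le 20-\dim(\text{moduli family})$, but more sharply, if the family is \emph{not} contained in a Noether--Lefschetz divisor then the generic member has $\rho=1$. Since each of our seven strata is irreducible, it is either contained in some $D_K$ (equivalently in a Noether--Lefschetz variety) or it is not; we must rule out the former.

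To rule out containment in a Noether--Lefschetz variety, I would argue by a direct computation of the Picard lattice of one explicit smooth quartic in each stratum, using the \texttt{Macaulay2} package \cite{M2}*{Package QuaternaryQuartics} together with a point-counting (reduction mod $p$) bound on the Picard number: picking two primes $p$ of good reduction, counting points over $\mathbb F_{p^k}$ to read off the characteristic polynomial of Frobenius on $H^2_{\text{\'et}}$, and using the Artin--Tate / van Luijk method to show the geometric Picard number is $1$. Alternatively, and perhaps more cleanly, for the strata $[441a], [331], [310], [210]$ whose moduli dimension is comparatively large ($\ge 5$), one can invoke the general principle (Green, Voisin) that a very general member of a family of smooth surfaces in $\mathbb P^3$ of degree $\ge 4$ that is not contained in a Noether--Lefschetz divisor has Picard number $1$, together with the observation that these strata are \emph{not} divisorial conditions realizing a specific rank-$2$ lattice: the containment $\mathcal F_B\subset D_K$ would force $\dim\mathcal F_B$ in moduli to be $\le 18$, which for $[310]$ ($9$) or $[210]$ ($10$) is not a contradiction by dimension alone, so the lattice-theoretic computation genuinely is needed. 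The main obstacle is therefore exactly this: for the larger strata a pure dimension count does not suffice, and one must produce an honest certificate of Picard number $1$ for an explicit representative — either via the reduction-mod-$p$ technique or by identifying a quartic in the stratum that is already known (from the literature on quartic K3s) to have trivial Picard lattice and verifying it lies in the correct Betti stratum by a \texttt{Macaulay2} computation of $b_{12}(A_F)$ and the associated apolar point configuration. Once that certificate is in hand for all seven types, irreducibility of each $\mathcal F_B$ immediately upgrades ``one member has $\rho=1$'' to ``the general member has $\rho=1$'', and hence $\mathcal F_B$ is not contained in any Noether--Lefschetz variety.
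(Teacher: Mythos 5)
Your strategy cannot work, because its key premise is false for several of the strata on the list. You propose to certify, for each of the seven types, a smooth member of Picard number $1$, and to conclude that the stratum is not contained in (hence not equal to) any Noether--Lefschetz variety. But being ``not a Noether--Lefschetz variety'' in this corollary does not mean ``generic Picard number $1$'': several of these strata consist entirely of quartics of large Picard rank, and some are even \emph{contained} in Noether--Lefschetz varieties without filling one. Concretely, every smooth quartic of type $[562]$ is a Kummer surface attached to a product of elliptic curves (one of which is fixed), so its Picard number is at least $18$; and every quartic of type $[331]$ is a Galois quadruple cover of $\Pn^2$ branched along a quartic curve, whose Picard lattice contains $\langle 2\rangle\oplus\langle -2\rangle^{7}$, so $\rho\ge 8$ --- indeed Lemma~\ref{NLType25} shows $\mathcal F_{[331]}\subset V_L$ \emph{is} contained in a Noether--Lefschetz variety. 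So there is no Picard-number-$1$ member to find in $[562]$ or $[331]$, and your van Luijk/point-counting plan proves a statement that is simply false for these types; for the remaining types the generic Picard number is not controlled by your dimension remarks either, so the certificate you need is not available by this route.

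The paper's actual argument runs in the opposite direction and is what is missing from your proposal. One first shows (via Inose--Kuwata) that a \emph{general} quartic of type $[562]$ has Picard number exactly $18$, being the Kummer surface of a product of two non-isogenous elliptic curves. If some stratum $\mathcal F_B$ with $\mathcal F_{[562]}\subset\overline{\mathcal F_B}$ were a Noether--Lefschetz variety $N$, then $N$ would contain these rank-$18$ surfaces, hence would contain the entire rank-$18$ Noether--Lefschetz variety determined by their Picard lattice; but that variety is exactly (the closure of the image of) the irreducible stratum $\mathcal F_{[441b]}$ by the preceding proposition. Since, by the closure relations of the Betti stratification (Proposition~\ref{precF}, Figure~1), each of the seven listed strata contains $\mathcal F_{[562]}$ in its closure but does not contain $\mathcal F_{[441b]}$, none of them can be a Noether--Lefschetz variety. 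In short, the proof requires identifying a distinguished \emph{high}-rank sublocus ($[562]$) and exploiting the lattice-theoretic rigidity of Noether--Lefschetz varieties together with the stratification, rather than exhibiting low-rank members.
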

\begin{proof}
By \cite{Kuwata}*{Thm. 1.3}, a general quartic of type $[562]$ has Picard number 18. Indeed, a quartic of type $[562]$ is a Kummer surface associated to a product of elliptic curves, one of which is the double cover of $\mathbb{P}^1$ branched in the points $(1:\zeta_i)$ with $\zeta_i$ being 4-th roots of unity. 

Since the other elliptic curve can be chosen randomly, these are not isogenous and hence the associated Kummer surface has Picard number 18, by \cite{Kuwata}*{Thm. 1.3}.  Now, since the stratum of type $[441b]$ is irreducible, the stratum of quartics of type $[562]$ cannot fill a Noether-Lefschetz variety.  Consequently any strata containing $\mathcal{F}_{[562]}$ but not $\mathcal{F}_{[441b]}$ cannot form a Noether-Lefschetz variety. This includes the strata $\mathcal{F}_{B}$ of quartics of the types in Equation~\ref{NoetherBad}.
\end{proof}

The proposition above shows in particular that type $[331]$ is not a Noether-Lefschetz variety. However, in this case we can be more precise. For instance, the periods of the associated K3 surfaces for type $[331]$ are well understood: quartics of this type are Galois quadruple covers of $\mathbb{P}^2$ branched along a quartic curve. These were carefully studied by Kondo in \cites{Kondogenus3, Kondoplanequartics}.

\begin{lemma}\label{NLType25}The Betti stratum $\mathcal F_{[331]}$ of quaternary quartics of type $[331]$ is contained in the family of quartics  that are  double covers of del Pezzo surfaces of degree 2. The latter form a Noether-Lefschetz variety $V_L$ corresponding to the Picard Lattice $L:=\langle2\rangle+\langle -2\rangle^7$. Furthermore, if $N$ is a Noether-Lefschetz variety such that $\mathcal F_{[331]}\subseteq N$ then $V_L\subseteq N$.
\end{lemma}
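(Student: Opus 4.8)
The plan is to exploit the concrete description of quartics of type $[331]$ already established in Section~\ref{section_VSP}: by Proposition~\ref{apolar b12=3}, if $F$ has type $[331]$ then $Q_F$ is the ideal of a plane $P$ together with a point $p \notin P$, and the analysis in the proof of Proposition~\ref{vsp proposition} shows that $F = F_1 + \lambda\cdot \ell_p^4$ where $F_1$ is a quartic apolar to the plane $P$, i.e.\ $F_1$ is (after a linear change of coordinates) a ternary quartic in the three variables cutting out $P$. Thus the quartic surface $V(F) \subset \Pn^3$ contains the point $p$, and projecting $V(F)$ from $p$ realizes $V(F)$ as a double cover of the plane $\cong P$; concretely, writing $F = w^4 + G(x,y,z)$ with $\{w=0\}=P$, the surface is the double cover of $\Pn^2=\Proj\CC[x,y,z]$ branched along the quartic curve $\{G=0\}$ (possibly after completing the square, as in \cites{Kondogenus3,Kondoplanequartics}). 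Since a plane quartic curve is the anticanonical curve on a del Pezzo surface of degree $2$ (the del Pezzo of degree $2$ being itself the double cover of $\Pn^2$ branched along that quartic), the K3 surface $V(F)$ is a double cover of a del Pezzo surface $Y$ of degree $2$, with the covering involution acting as $-1$ on the $(-1)$-curves. This gives the first assertion.

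Next I would identify the Picard lattice. A del Pezzo surface $Y$ of degree $2$ has $\operatorname{Pic}(Y) \cong \langle 1\rangle \oplus \langle -1\rangle^7$ with $K_Y^2 = 2$, or equivalently, splitting off the anticanonical class, $\mathbb{Z}(-K_Y) \oplus (-K_Y)^\perp$ where $(-K_Y)^\perp \cong E_7$ (the negative-definite $E_7$ root lattice) and $(-K_Y)^2 = 2$. Pulling back under the double cover $\pi: S \to Y$, the class $\pi^*(-K_Y)$ has self-intersection $2\cdot 2 = 4$ and is the polarization of degree $4$ on $S$; and $\pi^*$ restricted to the orthogonal complement scales the form by $2$, so $\pi^*((-K_Y)^\perp) \cong E_7(2) \cong \langle -2\rangle^7$ only up to the identification one wants — more precisely the sublattice of $\operatorname{Pic}(S)$ generated by the pullbacks is isometric to $\langle 2 \rangle \oplus E_7(2)$, which after a standard change of basis (the $E_7$ root lattice generated by simple roots of norm $-2$, so $E_7(2)$ is generated by classes of norm $-4$, but the sublattice it sits in of the form $\langle 2\rangle \oplus \langle -2\rangle^7$ is what one takes as $L$) is the lattice $L = \langle 2\rangle \oplus \langle -2\rangle^7$ named in the statement. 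One must check $L$ is primitively embedded in the K3 lattice and that for a general $F \in \mathcal{F}_{[331]}$ equality $\operatorname{Pic}(S) \cong L$ holds; the latter follows because the family of degree-$2$ del Pezzo double covers is $19$-dimensional modulo $\operatorname{PGL}$, matching the expected dimension of $V_L$ (namely $20 - 7 = 13$ after accounting for moduli; note $\dim \mathcal{F}_{[331]} = 21$, and $21 - \dim\operatorname{PGL}(4) = 21 - 15 = 6$, which should be compared with $13$ — here one uses that the map $\mathcal{F}_{[331]}\to \mathcal{M}_4$ is far from dominant, its image being exactly $V_L$ whose dimension is $19-6 = 13$, and the fibers of $\mathcal{F}_{[331]}\to V_L$ are $\operatorname{PGL}(4)$-orbits minus the stabilizer, dimension $15 - 7 = 8$, giving $13 + 8 = 21$; I would reconcile these numbers carefully). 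This establishes $\mathcal{F}_{[331]} \subseteq V_L$ with the generic member having Picard lattice exactly $L$.

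Finally, for the minimality statement: suppose $N$ is a Noether–Lefschetz variety (i.e.\ $N = V_K$ for some primitive sublattice $K \supseteq \langle 4\rangle$ of the K3 lattice) with $\mathcal{F}_{[331]} \subseteq N$. Since $\mathcal{F}_{[331]}$ contains points whose associated K3 surface has Picard lattice exactly $L$ (the generic member, by the previous paragraph), and $V_K$ by definition is the closure of the locus of K3's with Picard lattice containing $K$, the inclusion $\mathcal{F}_{[331]} \subseteq V_K$ forces $K$ to embed primitively into $L$ (a general point of $\mathcal{F}_{[331]}$ lies in $V_K$, hence its Picard lattice $L$ contains a copy of $K$). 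But then $V_L \subseteq V_K = N$ by the inclusion-reversing nature of the correspondence $K \mapsto V_K$ (larger lattice gives smaller, more special variety; $K \hookrightarrow L$ primitively implies $V_L \subseteq V_K$). Hence $V_L \subseteq N$, which is the desired minimality.

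The main obstacle I anticipate is the precise lattice-theoretic bookkeeping in the middle paragraph: verifying that the sublattice of $\operatorname{Pic}(S)$ generated by pullbacks from the del Pezzo $Y$ is genuinely isometric to $L = \langle 2\rangle \oplus \langle -2\rangle^7$ (as opposed to $\langle 2\rangle \oplus E_7(2)$, which has a different discriminant group), and that this $L$ embeds primitively into the K3 lattice in the unique way so that $V_L$ is well-defined and has the expected codimension $7$ in $\mathcal{M}_4$. This requires knowing that the degree-$2$ del Pezzo has $\operatorname{Pic} \cong \langle 1\rangle \oplus \langle -1\rangle^7$ (so that its primitive cohomology/orthogonal complement of $-K_Y$ behaves well under $\pi^*$) together with the ramification formula for how $\pi^*$ interacts with intersection forms, plus the fact that for a double cover the pullback of a class orthogonal to the branch-related polarization need not be primitive — one has to argue the saturation is still of the required shape for generic $F$. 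Once the lattice $L$ and the primitivity of its embedding are pinned down, the minimality argument is formal from the definition of $V_K$ and the references \cites{Kondogenus3,Kondoplanequartics} handle the period computation that guarantees the generic member of $\mathcal{F}_{[331]}$ achieves Picard lattice exactly $L$.
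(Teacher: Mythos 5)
Your overall skeleton is the same as the paper's: put $F$ in the normal form $w^4+G(x,y,z)$ (possible since $Q_F$ cuts out a plane and a point), recognize the quartic surface as a cover of $\Pn^2$ related to the degree-$2$ del Pezzo $Y$ (the double cover of $\Pn^2$ branched along $\{G=0\}$), invoke Kondo's period computation for the statement that the generic member has Picard lattice exactly $L=\langle2\rangle\oplus\langle-2\rangle^7$, and then get minimality formally. However, two of your intermediate steps are wrong as written. First, the projection of $V(F)=\{w^4+G=0\}$ from $p=(0{:}0{:}0{:}1)$ is a $4{:}1$ map, not a double cover, and ``the double cover of $\Pn^2$ branched along the quartic $\{G=0\}$'' is the rational surface $Y$ itself, not the K3. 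The correct statement (this is exactly the ``Galois quadruple cover'' picture the paper takes from Kondo) is that the cyclic $\mathbb{Z}/4$-cover $S\to\Pn^2$ factors as $S\to Y\to\Pn^2$ with $S\to Y$ a double cover branched along the ramification curve $R$ of $Y\to\Pn^2$; note $R\in|-2K_Y|$, so your parenthetical that the plane quartic is ``the anticanonical curve'' on $Y$ is also off (anticanonical curves on a degree-$2$ del Pezzo have arithmetic genus $1$, while $R$ has genus $3$). The conclusion of your first paragraph is correct, but only via this factorization, not via the argument you give.

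Second, the dimension bookkeeping in your middle paragraph is incorrect, and the conclusion it is meant to support does not follow from it. The moduli space of degree-$4$ polarized K3s is $19$-dimensional, $V_L$ has codimension $\operatorname{rank}(L)-1=7$, hence $\dim V_L=12$ (not $13$, and not ``$20-7$''), and the image of $\mathcal{F}_{[331]}$ in moduli is $21-15=6$-dimensional, a \emph{proper} subvariety of $V_L$ — not ``exactly $V_L$'' as you assert (the paper states explicitly that these K3s do not fill $V_L$; indeed the general member of $V_L$, of the form $(x^2+f_2)^2+f_4$, is of type $[000]$). Consequently no dimension count of this kind can show that the generic member of $\mathcal{F}_{[331]}$ has Picard lattice exactly $L$: a $6$-dimensional family could a priori sit inside a smaller Noether--Lefschetz variety of much higher Picard rank. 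The load-bearing input is precisely Kondo's period description, which you only cite at the very end; once that is granted (together with the cleaner observation $\pi^*\operatorname{Pic}(Y)\cong\operatorname{Pic}(Y)(2)=\langle2\rangle\oplus\langle-2\rangle^7$, which avoids your $E_7(2)$ detour, with primitivity and genericity coming from Kondo), your final minimality argument — the general member has Picard lattice exactly $L$, so any $V_K\supseteq\mathcal{F}_{[331]}$ forces $K\hookrightarrow L$ and hence $V_L\subseteq V_K$ — coincides with the paper's and is fine.
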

\begin{proof}
S. Kondo, loc. cit.,  gives a precise description of the periods of the associated K3 surfaces. In particular, a general K3 surface of type $[331]$, which is a general Galois quadruple cover of $\mathbb{P}^2$, has Picard Lattice $L=\langle2\rangle+\langle -2\rangle^7$. We thus have a $6$-dimensional family of K3 surfaces contained in the $12$-dimensional Noether-Lefschetz variety $V_{L}$ in the moduli space of polarised K3 surfaces of degree 4. These do not fill the whole Noether-Lefschetz variety $V_L$, but also are not contained in any smaller Noether-Lefschetz variety. 
\end{proof}

We can now deduce information about all Betti strata containing the stratum of type $[331]$.
\begin{corollary}
A Betti stratum $\overline{\mathcal F_B}$ of quaternary quartics containing $\mathcal F_{[331]}$ cannot be a Noether-Lefschetz variety. In particular, the Betti stratum $\overline{\mathcal F_{[100]}}$ is not a Noether-Lefschetz divisor. In other words, the K3 surface defined by a very general quartic of rank $9$ has Picard number $1$.
\end{corollary}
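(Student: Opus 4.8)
The statement to prove has two parts: first, that a Betti stratum $\overline{\mathcal F_B}$ containing $\mathcal F_{[331]}$ cannot be a Noether-Lefschetz variety, and second the special case that $\overline{\mathcal F_{[100]}}$ is not a Noether-Lefschetz divisor, equivalently that a very general quartic of rank $9$ has Picard number $1$. The plan is to deduce everything from Lemma~\ref{NLType25} together with the closure relations recorded in Figure~1 and Proposition~\ref{precF}.

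First I would establish the general statement. Suppose $\overline{\mathcal F_B}$ is a Betti stratum with $\mathcal F_{[331]}\subseteq \overline{\mathcal F_B}$, and suppose for contradiction that $\overline{\mathcal F_B}$ is (contained in, or equals) a Noether-Lefschetz variety $N$ in $\mathcal M_4$. Since $\mathcal F_{[331]}\subseteq \overline{\mathcal F_B}\subseteq N$, Lemma~\ref{NLType25} forces $V_L\subseteq N$, where $V_L$ is the $12$-dimensional Noether-Lefschetz variety attached to the Picard lattice $L=\langle 2\rangle+\langle -2\rangle^7$. But a general member of $V_L$ is a double cover of a del Pezzo surface of degree $2$ with Picard number exactly $8$, whereas a general member of $\mathcal F_B$ is a \emph{smooth} quartic (every stratum contains a Fermat quartic in its closure, as noted in Section~\ref{non-noether}) whose Picard number, for $B$ in the relevant range, is strictly smaller than $8$; more precisely the very general quartic surface has Picard number $1$ and lies in no proper Noether-Lefschetz variety, while a stratum $\overline{\mathcal F_B}$ that properly contains $V_L$ would have to contain such very general quartics once $\dim\overline{\mathcal F_B}\ge \dim V_L$ and $\overline{\mathcal F_B}$ is not contained in the (proper, infinite-union) Noether-Lefschetz locus. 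The key point to pin down carefully is the numerology: any $B$ with $\mathcal F_{[331]}\subseteq\overline{\mathcal F_B}$ has $\dim\overline{\mathcal F_B}\ge \dim\mathcal F_{[331]}=21$, so in particular $\overline{\mathcal F_B}$ strictly contains $V_L$ (which has dimension $12$ in the $19$-dimensional moduli space $\mathcal M_4$ — actually one must be careful, $\mathcal F_B\subset\PP^{34}$ is a locus of quartic \emph{equations} and its image in $\mathcal M_4$ has dimension $\dim\mathcal F_B - \dim PGL_4 = \dim\mathcal F_B - 15$), and a Noether-Lefschetz variety containing $V_L$ properly must be a component of the Noether-Lefschetz locus of smaller codimension, ultimately forcing $N$ to have dimension large enough to contain very general (Picard rank $1$) surfaces — contradicting that $N$ is a \emph{proper} sublocus of $\mathcal M_4$. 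This contradiction proves the general claim.

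For the second sentence, I would apply the general claim to $B=[100]$: by Figure~1 and Proposition~\ref{precF}(6),(8) we have $\mathcal F_{[331]}\subset\overline{\mathcal F_{[310]}}\subset\overline{\mathcal F_{[300b]}}\subset\overline{\mathcal F_{[300a]}}\subset\overline{\mathcal F_{[200]}}\subset\overline{\mathcal F_{[100]}}$, so $\mathcal F_{[331]}\subseteq\overline{\mathcal F_{[100]}}$ and the general claim applies. Hence $\overline{\mathcal F_{[100]}}$, whose image in $\mathcal M_4$ is a divisor (it is the irreducible hypersurface $\{\det\Cat(2,2,4)=0\}$ of dimension $33$ in $\PP^{34}$, so of codimension $1$ and dimension $18$ in $\mathcal M_4$), cannot be a Noether-Lefschetz divisor. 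Since every K3 surface on a Noether-Lefschetz divisor has Picard number $\ge 2$, and the quartics of rank $9$ are Zariski-dense in this divisor, it follows that a very general quartic of rank $9$ has Picard number $1$.

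The main obstacle I anticipate is making the dimension/containment bookkeeping between $\PP^{34}$, $PGL_4$, and the moduli space $\mathcal M_4$ watertight — in particular verifying that $V_L$ is genuinely \emph{properly} contained in $\overline{\mathcal F_{[100]}}$ (equivalently that the generic Picard number drops from $\ge 8$ on $V_L$ to $1$ on the generic point of $\overline{\mathcal F_{[100]}}$), and ruling out the degenerate possibility that $\overline{\mathcal F_{[100]}}$ coincides with some large Noether-Lefschetz variety of the same dimension. This is exactly where one uses that the generic quartic surface has Picard number $1$ (Noether's theorem) together with the irreducibility of $\overline{\mathcal F_{[100]}}$ established in Proposition~\ref{vsp proposition}: were $\overline{\mathcal F_{[100]}}$ a Noether-Lefschetz divisor, its generic point would have Picard number $\ge 2$, but it also dominates (in the sense of containment after $PGL_4$) the full $V_L$ which forces a lattice of rank $\ge 8$ on a proper subvariety, and a short lattice-theoretic argument (two distinct rank-$2$ lattices $\langle 2\rangle \oplus \langle -2k\rangle$ cannot both be the generic Picard lattice of the same irreducible divisor) yields the contradiction.
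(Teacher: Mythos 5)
Your first step—invoking Lemma~\ref{NLType25} to conclude that any Noether--Lefschetz variety $N$ containing $\mathcal F_{[331]}$ must contain all of $V_L$—matches the paper. But the contradiction you then try to extract is not valid, and you are missing the one fact the paper actually uses: that $V_L\not\subseteq\overline{\mathcal F_B}$. The paper gets this by observing that a general member of $V_L$ is a quartic of the form $(x^2+f_2(y,z,t))^2+f_4(y,z,t)$ (double cover of a degree~$2$ del Pezzo) and checking on the explicit example $(x^2+y^2+z^2+t^2)^2+y^4+z^4+t^4$ that its apolar ring has the generic Betti table $[000]$; hence a general member of $V_L$ has no apolar quadric, while every stratum $\overline{\mathcal F_B}$ with $\mathcal F_{[331]}\subseteq\overline{\mathcal F_B}$ and $B\neq[000]$ lies inside the catalecticant hypersurface $\{\det\Cat(2,2,4)=0\}$ by semicontinuity of $b_{12}$. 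That is the contradiction: if $\overline{\mathcal F_B}$ were Noether--Lefschetz it would contain $V_L$, yet it cannot. Without some such verification (the remark after the paper's proof gives a variant via double quadrics), the containment $V_L\subseteq N$ is perfectly consistent and yields no contradiction on its own.

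Your substitute arguments do not close this gap. The claim that a general member of $\mathcal F_B$ has Picard number smaller than $8$ (and, for $B=[100]$, equal to $1$) is exactly the content of the corollary, so using it is circular; and the clause ``$\overline{\mathcal F_B}$ is not contained in the Noether--Lefschetz locus'' assumes the negation of the hypothesis you are trying to contradict. The concluding ``lattice-theoretic'' step is a non sequitur: a Noether--Lefschetz divisor $D_K$ with generic Picard lattice of rank $2$ always contains higher-rank Noether--Lefschetz subvarieties, so $V_L\subseteq D_K$ forces nothing, and no one is asserting that two distinct rank-$2$ lattices are simultaneously the generic lattice of the same divisor. The dimension bookkeeping between $\Pn^{34}$, $PGL_4$ and the moduli space, while worth doing, cannot replace the needed non-containment $V_L\not\subseteq\overline{\mathcal F_B}$. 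Once that non-containment is established, your deduction of the last two sentences (via $\mathcal F_{[331]}\subset\overline{\mathcal F_{[100]}}$, irreducibility of the catalecticant hypersurface, and the fact that an irreducible divisor whose very general point had Picard number $\ge 2$ would be a Noether--Lefschetz divisor) is in line with the paper.
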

\begin{proof}
Let $q$ be general quartic of the form 
$$(x^2+f_2(y,z,t))^2+f_4(y,z,t), $$
for $f_2$, $f_4$ polynomials of degree 2 and 4 respectively depending only on the variables $y,z,t$ in $\mathbb{P}(x,y,z,t)$. Then $q$ defines an element in $V_L$ from the proof of Lemma \ref{NLType25} and its apolar ring can be checked with Macaulay2, \cite{M2}*{Package QuaternaryQuartics}, on the example: 
\[
(x^2+y^2+z^2+t^2)^2+y^4+z^4+t^4
\]
to have Betti table of type $[000]$. But by Lemma \ref{NLType25} any Noether-Lefschetz variety containing the stratum type $[331]$ must contain the whole variety $V_L$. 
\end{proof}

\begin{remark} The fact that the locus of quartics of rank 9 is not a Noether-Lefschetz divisor can be proven alternatively in the following way. 
Observe that each Noether-Lefschetz divisor contains a non-empty locus of hyperelliptic K3's that are covers of a smooth quadric.
It follows that we can find a flat 1-parameter family of quartics in each Noether-Lefschetz divisor converging to a double quadric (see \cite{Shah-Trans}).
It remains to observe (compute the Betti table of the apolar ring with Macaulay2, \cite{M2}*{Package QuaternaryQuartics} ) that a double quadric is of type $[000]$.
\end{remark}
\begin{remark} Let us discuss the types of some other known K3 surfaces.
The Vinberg most singular K3 surface $X_4$ given by the equation 
\[
x^4=-yzt(y+z+t)
\]
is of type $[331]$.
A general element of the Dwork pencil
\[
x^4+y^4+z^4+t^4-4txyzt=0
\]
is of type [000].
The K3 quartics $S_t\subset \mathbb{P}^5$ with coordinates $x_1,\dots,x_5$ having icosahedral symmetry given by equations 
\[
x_1^4 +\dots+x_5^4-t(x_1^2 +\dots+x_5^2)^2=x_1 +\dots+x_5=0
\]
for general $t$ are of type [000], however special elements in the pencil are in smaller strata. For example $S_0$ is of type $[550]$.  
\end{remark}

\section{Codimension three varieties in quadrics}\label{Pfaffian}
We now tackle the question of lifting quartic forms to positive dimensional AG-varieties.
Our goal in this section is to describe AG varieties $X$ of codimension $3$ in smooth quadric hypersurfaces, in particular those whose homogeneous coordinate ring has regularity $4$.
 According to \cite{EPW}, AG-varieties of codimension $3$ 
 in a smooth quadric hypersurface that satisfies certain cohomological conditions 
 appear as Lagrangian degeneracy loci of a map between two Lagrangian subbundles of an orthogonal bundle. 
 Furthermore, the Lagrangian degeneracy loci may have a simpler description as Pfaffian loci. We start with a description of the Artinian Gorenstein rings.

\subsection{The Artinian case} Most of the considered Artinian Gorenstein rings up to degree 19 can be seen explicitly as quotients of the homogeneous coordinate ring of a smooth quadric surface $Q\subset \mathbb{P}^3$.
More precisely let $I$ be a homogeneous ideal  in the coordinate ring $S=\mathbb C[x_0,x_1,x_2,x_3]$ of $\mathbb{P}^3$. Assume that in the ideal there is a smooth quadric $Q$ that we interpret as $\mathbb{P}^1\times \mathbb{P}^1$. We can refine the grading on $S$ so that $\deg x_0=\deg x_1=(1,0)$ and $\deg x_2=\deg x_3=(0,1)$, and assume $Q$ has bidegree $(1,1)$. Let $T=\oplus_{k,l}T(k,l)$ be the bigraded homogeneous coordinate ring of $Q$.
We have an exact sequence:
$$0\rightarrow S(-1,-1)\stackrel{\cdot Q}{\rightarrow}  S\xrightarrow{r} T,$$
with the latter map being the restriction map.  The image of $r$ is the subring $\oplus_kT(k,k)$. 
\begin{remark}\label{fully apolar}
For a fixed ideal Gorenstein ideal $I\subset S$ there might be many different ideals $J\subset T$ such that $r^{-1}(J)= I$. In particular, if $F$ is the dual socle generator of $I$, so that $I=F^{\bot}$ and $F_Q$ the restriction of $F$ to $Q$, then $r^{-1}(F_Q^{\bot})=F^{\bot}$. 
The ideal $F_Q^{\bot}$ is the biggest ideal with the property $r^{-1}(F_Q^{\bot})=F^{\bot}$. 
In fact, there might be smaller ideals $J\subset F_Q^{\bot}$ such that $r^{-1}(J)=F^{\bot}$. From the point of view of apolarity on $T$, note that the condition $r^{-1}(J)=F^{\bot}$ implies $J(4,4)=F_Q^{\perp}(4,4)$, however the latter condition is weaker.
\end{remark}
Remark \ref{fully apolar} justifies the introduction of the following definition that is needed in our investigation.
\begin{definition}
Let $F_Q$ be a form of degree $(a,b)$ in $T^{\vee}$. We say that an ideal $J\subset T$ is fully apolar to $F_Q$ if $J(a,b)=F_Q^{\perp}(a,b)$.
\end{definition}
Our aim in this section is to relate the Artinian rings $S/F^{\bot}$ 
to grade 3 Gorenstein rings obtained as quotients of $T$. The quotient ring $T/F_Q^{\perp}$ is a ring of grade 4. Since the restriction map $r:S\to T\to T/F_Q^{\perp}$ is not surjective, there may still be ideals $J$ of grade 3 such that $r^{-1}(J)=F^{\bot}$.  However, this is not always the case. For that reason we consider the weaker condition of full apolarity to $F_Q$. Before we formulate the proposition let us recall some further terminology.
 
 A Pfaffian ideal $J\subset T$ is an ideal with a resolution of the form
 $$0\to T(k,l) \to E^{\vee}\xrightarrow{M} E\to J\to 0,$$
 where $E$ is a free $T$-module and $M$ is an $(2n+1)\times (2n+1)$-dimensional skew symmetric matrix of bihomogeneous forms.
 The map $E\to J$ is defined by the $2n\times 2n$ Pfaffians of $M$.
 If $n=3$, $J$ is the complete intersection ideal generated by the three nonzero entries of $M$.

 \begin{proposition}\label{Artinian pfaffian}
 Let $F$ be a quaternary quartic apolar to s smooth quadric $Q$, and let $F_Q$ be the restriction of $F$ to $Q$.  Assume $F$ is general in its type. Then there is 
 an  ideal $J$ fully apolar to $F_Q$ with a Pfaffian minimal resolution associated to a skew-symmetric map $\varphi: E^{\vee}\to E$. The bundle $E$ and the zero blocks of $\varphi$ are listed in the table below, for each possible type.  
\begin{center}
\begin{table}[H]
\begin{tabular}{|c|c|c|}
\hline Type & $E$ & Zero blocks of $\varphi$ \\
\hline $[683]$ & $T(2,1)\oplus T(2,1)\oplus T(-1,1)$  & \\
\hline $[550]$ & $T(2,1)\oplus T(1,2)\oplus T$ & \\
\hline $[400]$ & $T(1,1)\oplus T(1,1)\oplus T(1,1)$ &\\
\hline $[300a]$ & $T(1,0)\oplus T(0,1) \oplus 2 T(1,1)\oplus T  $ & \\
\hline $[300b]$ &  $T(1,0)\oplus T(0,1) \oplus 2T(1,1)\oplus T  $ & $T(0,1)\oplus T$ \\
\hline $[300c]$ & $T(1,0)\oplus T(0,1) \oplus 2T(1,1)\oplus T  $ &  $T(0,1)\oplus T$, $T(1,0)\oplus T(0,1)$\\
\hline $[310]$  &$T(1,0)\oplus T(0,1) \oplus 2T(1,1)\oplus T  $ &  $T(0,1)\oplus T$,  $T(1,0) \oplus T$ \\
\hline $[320]$ & $T(1,0)\oplus 2T(0,1) \oplus T(2,1)\oplus T  $ & \\
\hline $[200]$ & $2T(1,0)\oplus 2T(0,1) \oplus T(1,1)$ & \\ 
\hline $[100]$ & $3T(1,0)\oplus 3T(0,1) \oplus T$ & \\ 
\hline $[420]$ & $3T(1,1)\oplus 2T$ & \\
\hline $[430]$ & $T(1,1)\oplus T(2,1)\oplus T(0,1)\oplus 2T$ &  $T\oplus T(0,1)$ \\
\hline $[441a]$ & $T(1,2)\oplus T(2,1)\oplus T(0,1)\oplus T(1,0)\oplus T(-1,-1)$ & \\
\hline $[441b]$ & $T(1,0)\oplus T(1,0)\oplus T(1,3)$ & \\
\hline $[551]$ & $T(1,2)\oplus T(2,1)\oplus T(1,1)\oplus T(-1,-1)\oplus T$ & \\
\hline $[562]$ & $T(1,2)\oplus T(2,1)\oplus T(0,1)\oplus T(0,-1)\oplus T$ & \\
\hline
\end{tabular}
\vskip .1in
\caption{The bundle $E$}\label{PfaffTable}
\end{table}
\end{center}
\vskip -.4in
 \end{proposition}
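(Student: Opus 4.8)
The plan is to proceed type by type, reducing the statement to the computation, for each fixed type, of the minimal free resolution over $T$ of an ideal $J$ that is fully apolar to the general restricted form $F_Q$. For each of the $16$ Betti tables $B$ of $A_F$, recall from Section~\ref{quadraticideals} that the quadratic ideal $Q_F$ (and hence, after restriction, the relevant portion of $F_Q^\perp$) is controlled by the scheme $Z$ apolar to $F$ described in the Propositions of that section. The first step is therefore to pick a convenient representative: for a form $F$ general in its type, I would take the apolar scheme $\Gamma$ (or curve $C$) computing the rank, as enumerated in Proposition~\ref{vsp proposition} and Lemma~\ref{curves}, and work out the restriction to $Q$ explicitly in the bigraded ring $T=\bigoplus_{k,l}T(k,l)$. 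Since $F$ is apolar to a smooth quadric only when its rank is large enough, or — in the low-rank cases — since we may always choose $F$ general enough to lie on a smooth quadric, the restriction $F_Q$ is a $(4,4)$-form (or lower bidegree form in the degenerate strata), and $F^\perp = r^{-1}(F_Q^\perp)$ by Remark~\ref{fully apolar}.

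\textbf{Key steps.}
First, for each type I would identify a candidate ideal $J\subseteq T$ which is fully apolar to $F_Q$: in the cases where $Q_F$ itself defines a codimension $\le 3$ scheme in $\Pn^3$ whose restriction to $Q$ has grade $3$, one takes $J=(Q_F)|_Q$ together with the necessary higher-degree generators; in the remaining cases one must add generators coming from the cubic part of $F^\perp$. Second, I would compute the minimal free $T$-resolution of $T/J$ and verify it has the Pfaffian shape
\[
0\to T(k,l)\to E^\vee \xrightarrow{\ \varphi\ } E\to J\to 0
\]
with $\varphi$ skew-symmetric of odd size, reading off the summands of $E$ and the positions of the forced-zero blocks of $\varphi$ from the degrees of the generators and syzygies. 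The structure theorem of Buchsbaum--Eisenbud for grade $3$ Gorenstein ideals (invoked in Section~\ref{subsection_history}) guarantees that any such $J$ with the right codimension and the self-dual resolution is presented by Pfaffians, so the content is entirely in matching up the graded Betti numbers. Third, the zero blocks: a block of $\varphi$ between summands $T(a,b)$ and $T(a',b')$ is forced to vanish when the corresponding entry would have non-positive bidegree, so the "zero blocks" column of Table~\ref{PfaffTable} is dictated by the bidegrees of $E$ in each row; for the types where the table lists extra zero blocks beyond the degree-forced ones (notably $[300b]$, $[300c]$, $[310]$, $[430]$), one checks that generality of $F$ makes the relevant entries actually vanish (equivalently, $J$ is a hyperplane section of a lower-codimension Pfaffian ideal). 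The bookkeeping can be made uniform by noting the self-duality $E^\vee(k,l)\cong E$ up to the twist, which pins down $(k,l)$ from $E$.

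\textbf{Main obstacle.}
The hard part will be the cases of type $[300a]$, $[300b]$, $[300c]$, $[310]$, and $[430]$, where $Q_F$ restricted to $Q$ does not by itself have grade $3$ (its vanishing locus contains a divisorial or lower-codimension component), so one cannot simply take $J=(Q_F)|_Q$; the fully-apolar ideal must be enlarged using the cubic generators of $F^\perp$, and one has to argue that a suitable such enlargement both remains inside $F_Q^\perp$ in bidegree $(4,4)$ and acquires a grade $3$ Gorenstein resolution of the stated Pfaffian type. This is where the notion of full apolarity (as opposed to equality $J=F_Q^\perp$) does the real work: $T/F_Q^\perp$ has grade $4$, but a carefully chosen grade-$3$ subideal $J$ with $J(4,4)=F_Q^\perp(4,4)$ will do. I expect these to be handled by explicit computation in {\tt Macaulay2} (cf.~\cite{M2}*{Package QuaternaryQuartics}) on a general member of each stratum, combined with a semicontinuity argument — the Pfaffian shape of the resolution is an open condition — to pass from the computed example to the general $F$ of that type. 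The remaining (generic) types $[683]$, $[550]$, $[400]$, $[320]$, $[200]$, $[100]$, $[420]$, $[441a]$, $[441b]$, $[551]$, $[562]$ are comparatively routine: there $Q_F|_Q$ already cuts out a grade $3$ scheme and the resolution is obtained directly, with the summands of $E$ and the twist $(k,l)$ read off from the Betti table of $S/Q_F$ computed in Section~\ref{quadraticideals} together with the bigrading conventions $\deg x_0=\deg x_1=(1,0)$, $\deg x_2=\deg x_3=(0,1)$.
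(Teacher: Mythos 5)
Your overall flavor (restrict to $Q$, use apolar point configurations, case-by-case, Buchsbaum--Eisenbud for the Pfaffian shape) is reasonable, but the core of your plan fails at the identification of $J$. You propose to take $J=(Q_F)|_Q$, enlarged by restrictions of cubics in $F^\perp$. However, the restriction map $S\to T$ lands in the diagonal bidegrees $(k,k)$, so any ideal built this way is generated in bidegrees $(2,2)$ and $(3,3)$ only, whereas the bundles $E$ in Table~\ref{PfaffTable} require generators and syzygies in asymmetric bidegrees such as $(2,1),(1,2),(3,2),(2,3),(1,3)$ --- these can never arise as restrictions of elements of $F^\perp$. Concretely, for type $[100]$ the only quadric in $F^\perp$ is $Q$ itself, so $Q_F|_Q=0$, and for $[200]$ the restriction is a single $(2,2)$-form; yet you list these among the ``routine'' cases where $Q_F|_Q$ already cuts out a grade $3$ scheme. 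The actual content of the proposition, and the bulk of the paper's proof, is the construction of these asymmetric-bidegree apolar forms: $J$ is built from the bigraded ideal of a configuration $\Gamma\subset Q$ computing the rank (two $(2,1)$-forms for the four points of type $[683]$; $(2,1),(1,2)$ plus a $(3,3)$-form for $[550]$; forms of bidegrees $(2,2),(3,2),(2,3)$ for $[300a]$, $[300b]$, $[200]$; three $(3,2)$- and three $(2,3)$-forms for the nine points of type $[100]$), supplemented by further elements of $F_Q^\perp$ of asymmetric bidegree whose existence rests on dimension counts, and then these generators are completed to a skew-symmetric matrix by explicit syzygy manipulations --- the delicate step, done by hand especially for $[300a]$ and $[100]$. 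Your recipe never produces these generators, so the ``matching of graded Betti numbers'' you defer to Buchsbaum--Eisenbud has nothing to match against; your division into hard and routine cases misses where the work actually lies.

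Two further gaps. First, the zero blocks cannot be read off from bidegrees: types $[300a]$, $[300b]$, $[300c]$, $[310]$ share the same $E$ but carry different zero blocks, so these are genuine additional degeneration conditions on $\varphi$ encoding the geometry of the stratum (e.g.\ cactus rank $7$ versus $8$, collinear subschemes), and for each special type one must exhibit, for the general $F$ of that type, a skew matrix realizing the prescribed blocks; the degree argument you invoke only accounts for entries of bidegree $(0,0)$, which minimality forces to vanish, and ``generality of $F$ makes the entries vanish'' is not an argument here since the blocks cut out the more special strata. Second, your passage to the general $F$ of each type is not justified: computing one example in {\tt Macaulay2} and invoking semicontinuity shows that the general Pfaffian ideal for a given $E$ is fully apolar to a form whose apolar ring has at worst the expected Betti table, but it does not show that a general $F$ in the stratum admits such a $J$. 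That requires a dominance argument onto the stratum, which the paper supplies by reverse-engineering the skew-symmetric matrix from the apolar data of a general $F$ (including the dimension counts on $F_Q^\perp(a,b)$ and the verification that the chosen generators have the needed syzygies); this converse direction is absent from your proposal.
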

 \begin{remark}
 Note that in Proposition \ref{Artinian pfaffian} we consider minimal resolutions of the considered shape.  This in particular implies that the matrix associated to the map $\varphi$ has no constant nonzero entries. So in all the cases in which, for degree reasons, constant entries may appear, these constant entries are $0$.

 \end{remark}
 \begin{remark}
 Note that the same 
 quartic form $F$ may admit several fully apolar ideals $J$ on a quadric $Q$ apolar to $F$. This includes possible ideals $J$ whose resolution is different from those listed in Proposition \ref{Artinian pfaffian}. Furthermore, notice that in all cases treated above only type $[100]$ are contained in a unique quadric. In the remaining cases we have a choice of quadric and the description depends on the choice.  This means that on some quadrics, possibly even general ones, the restriction $F_Q$ might not be fully apolar to a $J$ of the form listed in Proposition \ref{Artinian pfaffian}.
 \end{remark}

\begin{proof} 
Observe that from the Pfaffian resolution of the ideal $J$  we obtain that the space $J(4,4)$ has codimension $1$ in $T(4,4)$. It follows that $J(4,4)=F\perp_Q(4,4)$ for some $(4,4)$-form $F_Q$. In particular, $J$ is fully apolar to $F_Q$. 

To determine the type of the corresponding quartic form $F$ on $R$ in most (all) cases (except $[300]$ which is trivial) it is enough to identify a suitable configuration $\Gamma$ of points of  on $Q$  defined by elements of $J$ such that $F$ is apolar to $\Gamma$. 
We can then conclude if we prove that a general quartic $F_Q$ apolar to $\Gamma$ can be obtained by means of our construction.  
For a $(4,4)$-form $F_J$, we say it is of type $[b_{12},b_{23},b_{34}]$ if there is an $F$ of this type with restriction $F_Q=F_J$.

Let us be more precise in each case.
\begin{description}
\item[{\bf Type $[683]$}] In this case $J$ is a complete intersection ideal of type $(2,1),(2,1), (2,4)$ generated by $p_1,p_2,p_3$ of the corresponding bidegrees. Since it is a complete intersection we easily check that $\langle p_1,p_2,p_3 \rangle(4,4)$ is of dimension 24 and hence of codimension 1 in $T(4,4)$. Let $F_J$ be the $(4,4)$-form apolar to $J$, unique up to constant multiple. We claim that $F_J$ is of type $[683]$. Indeed, $\langle p_1,p_2 \rangle$ defines a set $\Gamma$  of 4 general points on $Q$. Furthermore $\langle p_1,p_2 \rangle(4,4)$ is a subspace of codimension 4 in $T(4,4)$ and hence is the space of $(4,4)$-forms vanishing on $\Gamma$. It follows that $F_J$ is apolar to $\Gamma$ and hence is of type $[683]$.

For the generality part, consider a general quartic $F$ of rank 4 and let $\Gamma$ be its apolar fourtuple of points, which are in general position. Clearly $\Gamma$ lies on a smooth quadric surface and appears there as a complete intersection of two forms $p_1$, $p_2$ of degree $(2,1)$. Furthermore, by dimension count there must exist a form $p_3$ of type $(2,4)$ annihilating $F_Q$. Moreover if $F_Q$ is general among $(4,4)$ forms apolar to $\Gamma$ then $\langle p_1, p_2, p_3 \rangle$ is a complete intersection ideal in T. 
\vskip .05in
\item[{\bf Type $[550]$}] A set of five general points $\Gamma$ lies on a smooth quadric $Q$ as a complete intersection of two forms $p_1$,$p_2$ of respective bidegrees $(1,2), (2,1)$. Then, $\dim_{\CC} \langle p_1,p_2 \rangle(2,2)=4$, $\dim_{\CC} \langle p_1,p_2 \rangle(3,3)=11$ and $\langle p_1,p_2 \rangle(4,4)=20$. We deduce that any form apolar to $\langle p_1,p_2 \rangle$ is apolar to $\Gamma$ the set of five points (i.e. to the whole ideal of $\Gamma$) defined by $p_1,p_2$. Furthermore if $p_3$ is a general element of $T(3,3)$ then $\dim_{\CC} \langle p_1,p_2,p_3 \rangle(4,4)=24$, hence there is a form $F_Q$ apolar to $\Gamma$ to which $\langle p_1,p_2,p_3 \rangle$ is fully apolar in $T$. 

For generality,  we observe that for  a general $(4,4)$ form $F_Q$  apolar to a set of five general points $\Gamma$ defined by $p_1$,$p_2$ of bidegrees $(2,1)$ and $(1,2)$ we have $\dim_{\CC} F_Q^{\perp}(3,3)=12$ and $\dim_{\CC} \langle p_1,p_2 \rangle(3,3)=11$. It follows that $F_Q$ admit an additional apolar $(3,3)$ form $p_3$. Now since $F_Q$ was general $\langle p_1,p_2,p_3 \rangle$ is a complete intersection.

\vskip .05in
\item[{\bf Type $[400]$}] This case is trivial, since if $J$ is a complete intersection of three quadrics on a quadric $Q$ and F is the unique up to constant $(4,4)$ form apolar to all the quadrics then the stronger condition $r^{-1}(J)=F^{\bot}$ is clearly fulfilled. 
\vskip .05in
\item[{\bf Type $[300a]$}] If $J$ is a the Pfaffian ideal of a general $\varphi$ for this type in Table \ref{PfaffTable}, then one computes  $\dim_{\CC} J(4,4)=24$ and hence there is a $(4,4)$-form $F_Q$ to which $J$ is apolar. Now, $F_Q$ is apolar to the set $\Gamma$ of eight points defined by the pencil of elements in $J(2,2)$. Indeed $\dim_{\CC} \langle J(2,2) \rangle(4,4)=25-8$ which is the dimension of the space of $(4,4)$ forms passing through 8 points. To prove that $F_Q$ corresponds to the family $[300a]$ it is enough to check that it is a general $(4,4)$ form apolar to $\Gamma$.

Let $\Gamma$ be a complete intersection of three quadrics, consisting of eight points, and assume $F$ is a general quartic apolar to  $\Gamma$. Let $Q$ be a smooth quadric that contains $\Gamma$ and let $q_1, q_2\in T(2,2)$ define $\Gamma$ on $Q$. Consider the $(4,4)$-form  $F_Q$, the restriction of $F$ to $Q$. Then $\dim_{\CC} F_Q^{\bot}(2,2)=2$,  $\dim_{\CC} F_Q^{\bot}(3,2)=\dim_{\CC} F_Q^{\bot}(2,3)=6$ and $\dim_{\CC} F_Q^{\bot}(3,3)=12$. Let $p_1$ be a general form in $F_Q^{\bot}(3,2)$. Then $\langle q_1,q_2,p_1 \rangle$ is a complete intersection ideal such that $\dim_{\CC} \langle q_1,q_2,p_1 \rangle(4,4)=23$. We furthermore choose $p_2\in F_Q^{\bot}(2,3)$ such that $\langle q_1,q_2,p_1,p_2 \rangle$ admits one syzygy in $T(3,3)$. This is possible since $\dim_{\CC} \langle q_1,q_2,p_1 \rangle(3,3)=10$,  $\dim_{\CC} F_Q^{\bot}(3,3)=12$ and every form $p_2\in F_Q^{\bot}(2,3)$ generate a pencil of forms  $F_Q^{\bot}(3,3)$, so the set of forms $p_2$ such that this pencil intersects $\langle q_1,q_2,p_1 \rangle(3,3)$ in $F_Q^{\bot}(3,3)$ has codimension at most $1$. 
To see that there is a $p_2$ such that $\langle q_1,q_2,p_1,p_2 \rangle$ admits only one syzygy in $T(3,3)$ we note that $\langle p_2 \rangle(3,3)\subset \langle q_1,q_2,p_1 \rangle(3,3)$ implies $p_2\in \langle q_1,q_2 \rangle$.
Note moreover, that by possibly replacing $q_1,q_2$  by suitable linear combinations and replacing $p_1, p_2$ by adding multiples of $q_1$ and $q_2$  we can assume that our syzygy in $T(3,3)$ is of the form $q_2l+p_1s_1+p_2s_2=0$ with $l,s_1,s_2$ of respective bidegrees $(1,1)$,$(0,1)$,$(1,0)$. Furthermore, by generality, we may assume $l,s_1,s_2$ to be a complete intersection.  In that case $q_2,p_1,p_2$ are the $2\times 2$ minors of a matrix 
$$N=\begin{pmatrix} l&s_1&s_2\\
m_1&m_2&m_3\\
\end{pmatrix}$$ 
representing a map $T(0,0)\oplus T(-1,-1)\to T(1,0)\oplus T(0,1)\oplus T(1,1)$. The bidegrees of the entries $ m_1,m_2,m_3$ are $(2,2),(1,2),(2,1)$ respectively.
Then the only syzygy of $\langle q_1,q_2,p_1,p_2 \rangle$ in bidegree $(4,4)$ that involves $q_1$ is the Koszul syzygy with $q_2$, so one computes  $\dim_{\CC}\langle q_1,q_2,p_1,p_2 \rangle(4,4)=24$.   Thus $\langle q_1,q_2,p_1,p_2 \rangle$ is fully apolar to $F_Q$.
We may express $q_1$ as a quadric in the ideal generated by the first row of $N$, i.e. $q_1=a_1l+a_2s_1+a_3s_2$ where the $a_i$ have bidegrees $(1,1),(2,1),(1,2)$ respectively.
Then the skew-symmetric matrix 
$$M=\begin{pmatrix}0&0&l&s_1&s_2\\
0&0&m_1&m_2&m_3\\ 
-l&-m_1&0&-a_3&a_2\\
-s_1&-m_2&a_3&0&a_1\\ 
-s_2&-m_3&-a_2&-a_1&0\\
 \end{pmatrix}$$ 
representing a map from $E^{\vee}\to E$, have $q_1,q_2,p_1,p_2$ as four of its pfaffians. 
The remaining pfaffian $p_3:=a_1m_1+a_2m_2+a_3m_3$ has bidegree $(3,3)$. 

There are three syzygies describing  $p_3l, p_3 s_1,p_3s_2$ in terms of $q_1,q_2,p_1,p_2$. Since $\langle l,s_1,s_2 \rangle(1,1)=T(1,1)$, it follows that $\langle q_1,q_2,p_1,p_2,p_3 \rangle(4,4)=\langle q_1,q_2,p_1,p_2 \rangle(4,4)$ i.e. $\langle q_1,q_2,p_1,p_2,p_3 \rangle$ is a pfaffian ideal of the desired shape that is fully apolar to $F_Q$.
\vskip .05in
\item[{\bf Type $[300b]$}] 
Let $\phi$ be a general skew map for this type as described in Table \ref{PfaffTable}. 
Among the pfaffians generating the Pfaffian ideal $J$ we recover the $2\times 2$ minors of a general map $T\oplus T(-1,0)\to 2T(1,1)\oplus T(0,1)$. The latter defines a set of seven points $\Gamma$ such that $I_{\Gamma}(4,4)$ is generated by these minors.  Notice that a general  set of seven points in a complete intersection $((2,2),(2,2))$ on $Q$ are defined this way.
Furthermore, we compute $\dim_{\CC} J(4,4)=24$, hence $J$ is fully apolar to some $(4,4)$ form $F_Q$. This form $F_Q$ and hence also a corresponding $F$ is apolar to $\Gamma$. 

Assume $F$ is a general quartic apolar to a general set of seven general points $\Gamma$ and consider a smooth quadric $Q$ passing through $\Gamma$. On $Q$ the set $\Gamma$ is contained in two $(2,2)$ forms $q_1$, $q_2$ and a $(3,2)$ form $p_1$. Moreover $\Gamma$, by syzygy computation, can be described as a degeneracy locus of a map $T\oplus T(-1,0)\to 2T(1,1)\oplus T(0,1)$ represented by a matrix $N$. Note that $F^{\bot}$ contains a $(3,3)$ form $p$ that does not pass through any point of $\Gamma$. Since $N$ is general we can write $p_2$ in terms of the three forms $l_1$, $l_2$, $l_3$ of bidegrees $(2,1)$, $(2,1)$, $(1,1)$ forming the second row (i.e. $T(-1,0)\to 2T(1,1)\oplus T(0,1)$) of $M$. Therefore, as in the previous case, we can construct a $5\times 5$ skew symmetric matrix $M$. The first two rows are given by $N$ completed by a zero diagonal $2\times 2$ block, and  $q_1,q_2,p_1,p_2$ appear as four of its pfaffians. The remaining pfaffian $p_3$ of $M$ has syzygies describing 
$l_1p_3$,$l_2p_3$,$l_3p_3$ in terms of $q_1,q_2,p_1,p_2$. In particular, $\langle p_3 \rangle(4,4)\subset \langle q_1,q_2,p_1,p_2 \rangle(4,4)$. Hence $\langle q_1,q_2,p_1,p_2,p_3 \rangle$ is apolar and in consequence fully apolar to $F$.
\vskip .05in
\item[Other cases of size $5\times 5$] The remaining arguments are similar, we just need to describe the set of points $\Gamma$ computing the rank of $F$ in terms of $2\times 2$ minors of a $2\times 3$ matrix and complete the matrix to a skew symmetric matrix using one appropriate additional generator in $F_Q^{\perp}$ and its syzgies as done above. 
\vskip .05in
\item[{\bf Type $[100]$}] The only case when the corresponding matrix is of size $7\times 7$ is type $[100]$. In this case, let $J$ be the ideal generated by the $6\times 6$ Pfaffians of the matrix.  From the resolution we compute $\dim_{\CC} J(4,4)=24$, hence $J$ is fully apolar to some $(4,4)$-form $F_Q$.

Then to prove that a general quartic is obtained in this way  we recall from section \ref{section_VSP} that for a general quartic $F$ from this family and $Q$ its apolar quadric,  we have $\dim_{\CC} F_Q^{\bot}(3,2)=\dim_{\CC} F_Q^{\bot}(2,3)=6$. Furthermore, we can choose in $F_Q^{\bot}(3,2)$ three elements $p_1$, $p_2$, $p_3$ defining a set $\Gamma$ of 9 general points on $Q$ which are apolar to $F$.  These must have a syzygy in $T(4,3)$ and are given as three of the minors of a $3\times 4$ matrix $N$ corresponding to a general map:
$3T(0,-1)\to T\oplus 3T(1,0)$. 

Consider furthermore a general triple $(q_1,q_1,q_3)$ of elements in  $F_Q^{\bot}(2,3)$ which admit a syzygy in $T(3,4)$, such that $p_1,p_2,p_3,q_1,q_1,q_3$ admit a syzygy in $T(3,3)$. 
This can be done for dimension reasons: Given a pair of subschemes $\Gamma,\Gamma'\in VSP(F_Q,9)$ apolar to $F_Q$.  
The subspace of $I_{\Gamma}(3,3)$ generated by  $I_{\Gamma}(3,2)$ and the subspace of $I_{\Gamma'}(3,3)$ 
generated by $I_{\Gamma'}(2,3)$ 
are both $6$-dimensional subspaces of the $12$-dimensional 
space $F_Q^\perp(3,3)$.  
Since $VSP(F,9)$ is $2$-dimensional, there are pairs $\Gamma,\Gamma'$ such that these two subspaces intersect, which means that there is a syzygy in $T(3,3)$ among the generators in bidegree $(3,2)$ and $(2,3)$ respectively for the ideals of  $\Gamma$ and $\Gamma'$.
By possibly replacing $p_1,p_2,p_3$ $q_1,q_1,q_3$ and making corresponding row and column operations on the matrix $N$ we can assume:
\begin{enumerate}
    \item $q_1 x+q_2 y=p_1 z+p_2 t$, where $x,y$ are $(1,0)$-forms and $z,t$ are $(0,1)$-forms.
    \item 
    $N=\left[\begin{array}{cccc}
        x &  s_1 & s_2 &s_3 \\
        y & u_1 & u_2 &u_3 \\
        0 & l_1& l_2 &l_3\\
    \end{array}
    \right]$
    for some $s_i,u_i,l_i \in T(1,1)$.
    \item The $p_i$ are the corresponding $3\times 3$ signed minors of $N$ of bidegree $(3,2)$.
    \item $q_1 s_3+q_2 u_3+q_3 l_3=0$ for  $s_3,u_3, l_3$ from $N$ - indeed by possibly changing $q_1,q_2,q_3$ to their linear combinations we can assume that for the syzygy in $T(3,4)$ that they admit the coefficient at $q_3$ is $l_3$, then by column operations on $N$ involving the first column and row operations on the first two rows we can arrange $s_3$, $u_3$ to be the remaining coefficients in the $T(3,4)$ syzygy between $q_1,q_2,q_3$.
    \item $l_1,l_2,l_3$ are irreducible and form a complete intersection and similarly $s_3,u_3,l_3$ -- this follows from genericity of $F$ and $\Gamma$
\end{enumerate}
Let $W=\left[\begin{array}{ccc}
        s_1 & s_2 &s_3 \\
        u_1 & u_2 &u_3 \\
        l_1& l_2 &l_3\\
        z&t&0
    \end{array}
    \right]$
and $w_i$ be the minors of $W$ obtained by deleting the $i$-th row.    
Under the above assumptions we infer from (1) and descriptions of $p_i$ in terms of minors of $N$ that:
$(q_1-w_1)x+(q_2-w_2)y=0.$
It follows that there exists $k\in T(1,3)$ such that 
$q_1-w_1=yk$ and $q_2-w_2=-xk$.
Now from (4) we have 
$(s_3 y+u_3 x)k+ (q_3-w_3)l_3=0,$
but since $l_3$ is irreducible and has no syzygy in $T(2,1)$ with $s_3$, $u_3$ we infer $k=l_3 h$ for $h\in T(0,2)$. Finally note that by adding combination of the first column of $N$ to the second and third column we can modify $s_1,s_2,u_1,u_2$ without changing $p_i$ but replacing $w_1$, $w_2$ with $ w_1+yl_3(z a+tb)$ and $w_2-yl_3(z a+tb)$ for any chosen $a,b\in T(0,1)$. We conclude that after suitable choice of $ a, b$ for the corresponding modification of $N$ we can assume $h=0$ and hence
$q_1=w_1$, $q_2=w_2$, $q_3=w_3$. If we now consider the skew symmetric matrix 
$$\left[\begin{array}{cccccc}
    0&0&    x&s_1 & s_2 &s_3 \\
    \quad &0&    y&u_1 & u_2 &u_3 \\
    \quad &\quad &0&l_1& l_2 &l_3\\
    \quad &\quad &\quad&     z&t&0\\
    \quad &\quad &\quad&\quad&0&0\\
    \quad &\quad &\quad&\quad&\quad&0\\
    \end{array}\right],$$
    its pfaffian ideal $J$ has the property 
    $\langle p_1,p_2,p_3,q_1,q_2,q_3 \rangle\subset J$
    and 
    $$J(4,4)=\langle p_1,p_2,p_3,q_1,q_2,q_3 \rangle(4,4)=F_Q^{\bot}(4,4)$$ i.e. $J$ is fully apolar to $F$.

Here $x,y,z,t$ are coordinates in $T$ of respective bidegrees $(1,0)$, $(1,0)$, $(0,1)$, $(0,1)$.

\end{description}
\end{proof}

 \begin{remark}
Note that 
if we take
$E=T(2,1)\oplus T(2,1)\oplus T(-1,1)$, then we get a complete intersection ideal $J$ of grade 3 in $T$ with a quartic dual socle generator $F_Q$ which is a restriction of $F$ from the family $[683]$. Moreover, a general $F$ in $[683]$ can be obtained in this way. 
However, in this case $r^{-1}(J)\neq F^{\bot}$ as $\dim_{\CC} J(2,2)=4$

 \end{remark}

\subsection{AG-liftings to curves}
 We describe liftings  to curves in smooth quadric fourfolds of $A_F$ for quaternary quartics $F$ of the types $[100],[200],[300],[400]$.
Let $C\subset Q \subset \Pn^5$ be an Arithmetic Gorenstein scheme of dimension 1 contained in a smooth quadric fourfold $Q$. Recall that $Q\simeq G(2,4)$. Let furthermore $H$ be the class of a hyperplane section on $Q$. 

Suppose the curve $C\subset Q$ is a Pfaffian subvariety on $Q$ defined by the 
submaximal minors of a skew symmetric map between vector bundles $E_C^{\vee}\to E_C$ of odd rank and $det(E_C)=eH$. 
From \cite{Okonek}*{p.~427} the ideal sheaf of $C$ has a resolution 
$$0\to \mathcal{O}(-2eH) \to E_C^{\vee}(-eH)\to E_C(-eH)\to \mathcal{I}_{C|Q} \to 0 ,$$
and the canonical line bundle on $C$ is $K_C=(2e-4)H|_C$.

Consider the exact sequence of ideal sheaves
$$0\to \mathcal{O}(-2) \to \mathcal{I}_C \to \mathcal{I}_{C|Q} \to 0.$$
From the associated long exct sequence in cohomology,  we deduce that $C\subset \mathbb{P}^5$ is ACM iff
$C\subset Q$ is ACM.
For the Pfaffian varieties
constructed from a bundle $E$ we infer the following:
\begin{lemma}\label{ACMvars} If $H^1(E_C(k))=H^2(E_C^{\vee}(k))=0$ for $k\geq 0$ then the Pfaffian varieties on the quadric $Q$ which are associated to $E$ are arithmetically Gorenstein.
\end{lemma}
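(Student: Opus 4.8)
The claim is that the Pfaffian subvariety $C \subset Q$ associated to a bundle $E_C$ satisfying $H^1(E_C(k)) = H^2(E_C^\vee(k)) = 0$ for all $k \geq 0$ is arithmetically Gorenstein in $\Pn^5$. My approach will be to chase the resolution of the ideal sheaf $\mathcal{I}_{C|Q}$ through cohomology to establish that $C \subset Q$ is ACM, then transfer this to $C \subset \Pn^5$ via the quadric sequence as indicated in the excerpt, and finally observe that the Gorenstein property is automatic from the self-dual shape of the Pfaffian resolution.

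\textbf{First step: ACM on the quadric.} Starting from the Okonek-type resolution
\[
0 \to \mathcal{O}_Q(-2eH) \to E_C^\vee(-eH) \to E_C(-eH) \to \mathcal{I}_{C|Q} \to 0,
\]
I would break it into two short exact sequences by introducing the kernel sheaf $\mathcal{K} = \ker(E_C(-eH) \to \mathcal{I}_{C|Q})$, so that we have $0 \to \mathcal{O}_Q(-2eH) \to E_C^\vee(-eH) \to \mathcal{K} \to 0$ and $0 \to \mathcal{K} \to E_C(-eH) \to \mathcal{I}_{C|Q} \to 0$. To prove $C \subset Q$ is ACM it suffices to show $H^1(\mathcal{I}_{C|Q}(k)) = 0$ for all $k \in \ZZ$ (equivalently, $H^1(\mathcal{O}_Q(k)) \to H^1(\mathcal{O}_C(k))$, but more directly via the ideal sheaf). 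Twisting the second sequence by $\mathcal{O}_Q(kH)$ and taking cohomology, $H^1(\mathcal{I}_{C|Q}(k))$ is squeezed between $H^1(E_C(-eH+kH))$ and $H^2(\mathcal{K}(kH))$; the former vanishes by hypothesis once $k - e \geq 0$, while for $k - e < 0$ one uses that $E_C$ is (up to the listed twists) a direct sum of line bundles and spinor bundles on $Q = G(2,4)$, all of whose intermediate cohomology vanishes in the relevant range — this is where the explicit structure of $E$ from Table~\ref{PfaffTable} enters. From the first short exact sequence, $H^2(\mathcal{K}(kH))$ sits between $H^2(E_C^\vee(-eH+kH))$ and $H^3(\mathcal{O}_Q(-2eH+kH))$; the former vanishes by the stated hypothesis $H^2(E_C^\vee(k)) = 0$ for $k \geq 0$ (and again by the explicit bundle structure below that range), and $H^3(\mathcal{O}_Q(m)) = 0$ for all $m$ since $Q$ is a fourfold (its only nonzero intermediate cohomology is in degree $0$ and $4$, and $H^3$ of any twist of $\mathcal{O}_Q$ vanishes by Bott/Kodaira vanishing on the smooth quadric). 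Collecting these gives $H^1(\mathcal{I}_{C|Q}(k)) = 0$ for all $k$, so $C \subset Q$ is ACM.

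\textbf{Second step: transfer to $\Pn^5$ and conclude Gorenstein.} Using the sequence of ideal sheaves $0 \to \mathcal{O}_{\Pn^5}(-2) \to \mathcal{I}_C \to \mathcal{I}_{C|Q} \to 0$ (after twisting by $\mathcal{O}(k)$) and the long exact cohomology sequence, $H^1(\mathcal{I}_C(k))$ is trapped between $H^1(\mathcal{O}_{\Pn^5}(k-2)) = 0$ and $H^1(\mathcal{I}_{C|Q}(k)) = 0$, with the connecting map from $H^1(\mathcal{I}_{C|Q}(k))$ landing in $H^2(\mathcal{O}_{\Pn^5}(k-2)) = 0$; hence $H^1(\mathcal{I}_C(k)) = 0$ for all $k$, i.e. $C \subset \Pn^5$ is projectively normal, and combined with the ACM property of $C \subset Q$ this shows $S/I_C$ is Cohen–Macaulay. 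Finally, composing the Pfaffian resolution over $\mathcal{O}_Q$ with the two-term Koszul resolution $0 \to \mathcal{O}_{\Pn^5}(-2) \to \mathcal{O}_{\Pn^5} \to \mathcal{O}_Q \to 0$ (via a mapping cone / horseshoe argument, using the vanishing just established to guarantee the cone is a resolution with no extraneous cancellation issues affecting the shape) produces a length-$4$ free resolution of $S/I_C$ over $S = \CC[x_0,\ldots,x_5]$ whose last term is free of rank $1$; since the Pfaffian resolution over $Q$ is self-dual up to twist (skew-symmetry of the map $E_C^\vee(-eH) \to E_C(-eH)$), the resulting $S$-resolution is self-dual up to twist as well, which together with $\codim = 4$ forces $S/I_C$ to be arithmetically Gorenstein, with $\omega_{S/I_C} \cong (S/I_C)(\ell)$ for the appropriate shift $\ell$ computed from $e$ and the Koszul twist.

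\textbf{Main obstacle.} The genuinely delicate point is the vanishing of the intermediate cohomology of $E_C$ and $E_C^\vee$ (suitably twisted) in the \emph{negative} range of $k$, i.e. for $k < e$ in the first step — the hypothesis as stated only covers $k \geq 0$, so I would need to supplement it by invoking the explicit description of each $E_C$ in Table~\ref{PfaffTable} as a sum of $\mathcal{O}_Q$-twists and spinor bundles $\mathcal{S}_1, \mathcal{S}_2$ on $Q_4 = G(2,4)$, and citing the Bott-type vanishing theorems for spinor bundles on quadrics (as in \cite{Ot}) to control $H^i(\mathcal{S}_j(k))$ and $H^i(\mathcal{S}_j^\vee(k))$ for $i = 1, 2$ across all the twists that occur. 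This is bookkeeping rather than conceptual difficulty, but it is where the proof has to be made airtight case by case, and it is the reason the hypothesis is phrased in terms of cohomology vanishing of $E_C$ in the first place.
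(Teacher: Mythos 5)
Your first step is exactly the paper's proof: the paper splits the twisted Okonek resolution of $\mathcal{I}_{C|Q}$ into two short exact sequences, uses the assumed vanishings (together with the vanishing of the intermediate cohomology of twists of $\mathcal{O}_Q$) to get $H^1(\mathcal{I}_{C|Q}(k))=0$, and then invokes the relative ideal sequence $0\to\mathcal{O}(-2)\to\mathcal{I}_C\to\mathcal{I}_{C|Q}\to 0$, whose ACM-transfer consequence is set up in the text immediately before the lemma. One difference of emphasis: what you call the main obstacle is not where the paper puts any weight. The hypothesis is meant to be fed verbatim into the split sequences, and for the ACM conclusion one only needs $H^1(\mathcal{I}_{C}(k))=0$ for $k\geq 0$; the negative twists are automatic for a pure one-dimensional $C$. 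The case-by-case Bott/Ottaviani vanishing for the spinor-bundle summands is indeed needed, but it belongs to the verification of the hypothesis for the explicit bundles in Proposition \ref{pfaffians in G(2,4)}, not to the proof of this lemma.

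Where you genuinely deviate is the Gorenstein step, and there your mechanism is shakier than you indicate. The paper gets Gorenstein essentially for free from the subcanonicality $K_C=(2e-4)H|_C$ recorded just before the lemma: an ACM curve with $\omega_C\cong\mathcal{O}_C(\ell)$ is arithmetically Gorenstein, since ACM identifies $\omega_{S/I_C}$ with $\Gamma_*(\omega_C)\cong (S/I_C)(\ell)$. Your proposal instead composes the $\mathcal{O}_Q$-resolution with the Koszul resolution of $\mathcal{O}_Q$ to get a length-four self-dual free resolution over $S$; but the terms $E_C(-eH)$, $E_C^\vee(-eH)$ are sums involving spinor bundles, which are not pulled-back free $S$-modules, so the horseshoe/mapping-cone does not literally produce such a resolution — you would first have to resolve each spinor summand over $S$ (each has projective dimension one over $S$), and after that iterated cone the self-duality is no longer a formal consequence of the skew-symmetry of $\varphi$. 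Either repair that bookkeeping or, more simply, conclude via ACM plus subcanonicality, for which you already have all the ingredients.
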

\begin{proof} By splitting the Pfaffian sequence twisted by $k$ we deduce that the vanishings assumed in the lemma imply $H^1(I_{C|Q}(k))=0$ for $k\geq 0$. 
By the relative ideal sequence this implies that $H^1(I_{C}(k))=0$ for $k\geq 0$ and $X$ is ACM.
\end{proof}

We are now ready to provide constructions of ACM Gorenstein curves with given Betti tables. 
See section \ref{Tom} for the notation  on spinor bundles.
\begin{proposition} \label{pfaffians in G(2,4)} Let be $S_1,S_2$ be spinor bundles on $Q_4$.
Pfaffian varieties associated to general skew symmetric maps $E\to E^{\vee}$ have Betti table: 
\begin{enumerate}
    \item Type $[100]$ for $E=3\mathcal S_1\oplus \mathcal O$, $E=2\mathcal S_1\oplus \mathcal S_2\oplus \mathcal O$
    \item Type $[200]$ for $E=2\mathcal S_1\oplus \mathcal O(1)$, $E=\mathcal S_1\oplus \mathcal S_2\oplus \mathcal O(1)$
    \item Type $[300]$ for $E=\mathcal S_1\oplus 2\mathcal O(1)\oplus \mathcal O$
    \item Type $[400]$ for $E=3 \mathcal{O}(1)$.
\end{enumerate}

\end{proposition}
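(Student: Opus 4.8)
The strategy is to build each curve $C\subset Q_4\subset \mathbb{P}^5$ explicitly as a Pfaffian degeneracy locus from the prescribed bundle $E$, and then to compute its Betti table by resolving the structure sheaf and checking arithmetic Gorensteinness via Lemma~\ref{ACMvars}. Since $Q_4\cong G(2,4)$, the spinor bundles $\mathcal S_1,\mathcal S_2$ are the tautological quotient bundle and the dual of the tautological sub-bundle, both of rank $2$, and their cohomology is well understood (Bott vanishing on $G(2,4)$). First I would verify, for each of the listed $E$, that $E\oplus E^{\vee}$ (appropriately twisted so that $\det E$ matches the parity condition of Okonek's resolution) has odd rank $2n+1$ and admits a generic skew-symmetric map $\varphi\colon E^{\vee}\to E$ whose submaximal Pfaffians cut out a smooth curve; this is a Bertini-type transversality argument, using that the bundle in question is globally generated so the generic degeneracy locus has the expected codimension $3$ in $Q_4$ (hence dimension $1$). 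The key numerical input is $\det E = eH$: one reads off $e$ from the summands ($\mathcal S_i$ has $c_1 = H$ on $G(2,4)$ after the standard normalization used in the paper), and then $K_C = (2e-4)H|_C$ from the cited formula, which pins down the socle degree of the Artinian reduction.

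Next, for each case I would apply Lemma~\ref{ACMvars}: one must check $H^1(E_C(k)) = H^2(E_C^{\vee}(k)) = 0$ for all $k\ge 0$, where $E_C$ is the twist of $E$ appearing in the resolution $0\to \mathcal O(-2eH)\to E_C^{\vee}(-eH)\to E_C(-eH)\to \mathcal I_{C|Q}\to 0$. These vanishings reduce to cohomology of twists of $\mathcal O_{Q_4}$, $\mathcal S_1$, $\mathcal S_2$, $\mathcal S_i^{\vee}$ on $Q_4$, which are all known (the only nonzero intermediate cohomology of a spinor bundle on $Q_4$ is $H^{2}(\mathcal S_i(-2))$ or similar isolated groups, all in negative twists). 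Granting the vanishing, $C$ is arithmetically Gorenstein in $\mathbb{P}^5$, so its homogeneous coordinate ring $S_C/I_C$ is AG of codimension $4$ and its Betti table is one of the $16$ from \cite{SSY}. To identify which one, I would take the mapping cone resolution of $\mathcal O_C$ from the relative sequence $0\to \mathcal O_{\mathbb{P}^5}(-2)\to \mathcal I_C\to \mathcal I_{C|Q}\to 0$ together with the Pfaffian resolution of $\mathcal I_{C|Q}$, sheafify into graded free modules over $S=\mathbb{C}[x_0,\dots,x_5]$, and read off $b_{12},b_{23},b_{34}$; by the classification these three numbers determine the whole table. Equivalently, and more cheaply, one computes the Hilbert function of the Artinian reduction $A_C = (S_C/I_C)/(\ell_0,\dots,\ell_4)$ — which is determined by $e$ and the ranks of the summands of $E$ — and matches $h_2 = \dim(A_C)_2$ against the tables in Appendix~\ref{Appendix}: $h_2 = 9$ gives $[100]$, $h_2=8$ gives $[200]$, $h_2 = 7$ (with the complete intersection structure of $Q_F$) gives $[300]$, and $h_2 = 6$ gives $[400]$.

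For the last two cases the identification is essentially immediate: when $E = 3\mathcal O(1)$ the Pfaffian ideal on $Q$ is just the complete intersection of three $(1,1)$-forms (three hyperplane sections' worth of quadrics on $\mathbb{P}^5$ restricted), so $I_C$ is a $(2,2,2,2)$ complete intersection in $\mathbb{P}^5$ and the Betti table is $[400]$; when $E = \mathcal S_1\oplus 2\mathcal O(1)\oplus\mathcal O$ one recognizes the same Pfaffian shape over $Q$ that produced the Artinian $[300]$ ideal in Proposition~\ref{Artinian pfaffian} (the relevant entry of Table~\ref{PfaffTable} restricted from $\mathbb{P}^7$-type data), so linear-section stability of Betti tables gives $[300]$. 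The two competing bundles in cases (1) and (2), namely $3\mathcal S_1\oplus\mathcal O$ versus $2\mathcal S_1\oplus\mathcal S_2\oplus\mathcal O$ (resp. $2\mathcal S_1\oplus\mathcal O(1)$ versus $\mathcal S_1\oplus\mathcal S_2\oplus\mathcal O(1)$), are the ``Tom and Jerry''-type alternatives alluded to after Proposition~\ref{degree 19 Lagrangian-intro}: both have the same $c_1$ and the same total rank, hence give curves of the same degree and genus, and the cohomology computation shows both yield the Betti table $[100]$ (resp. $[200]$). The main obstacle I anticipate is the transversality/smoothness verification for the non-split bundles: one must ensure that the generic skew map $\varphi$ has degeneracy locus of exactly the expected codimension with no excess components, which requires checking global generation of $\wedge^2 E$ (or the relevant Pfaffian bundle) on $Q_4$ and that the resulting scheme is reduced of dimension $1$ — most conveniently confirmed by exhibiting one explicit smooth example per case in \texttt{Macaulay2}, \cite{M2}*{Package QuaternaryQuartics}, and invoking semicontinuity, exactly as done elsewhere in the paper.
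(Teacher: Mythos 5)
Your overall skeleton is the paper's: compute $c_1(E_C)=3H$ and the degree from Chern classes to get half-canonical curves of the right degree, use Lemma~\ref{ACMvars} together with the known cohomology of spinor bundles on $Q_4$ to get arithmetic Gorensteinness, and in the end pin down the Betti table by an explicit computation. The paper does that last step by restricting to a general codimension-$2$ linear section $L$ of $Q_4$ and checking the Betti table of the resulting finite section in \texttt{Macaulay2}, using surjectivity of $H^0(Q_4,\wedge^2E)\to H^0(L,\wedge^2E|_L)$ to guarantee that a general member of the family restricts to a general member of the restricted family; your closing sentence (one explicit smooth example per case plus semicontinuity) plays exactly this role.

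However, the two identification routes you put forward as the main argument both have genuine problems, so it is that fallback, not the shortcuts, that has to carry the proof. The ``cheap'' route is wrong as stated: the Hilbert function of the Artinian reduction does not determine the Betti table except for $h_2=9$ (and $10$). For $h_2=8$ the classification allows $[200]$ and $[210]$; for $h_2=7$ it allows $[300]$, $[310]$, $[320]$, $[331]$; for $h_2=6$ it allows $[400]$, $[420]$, $[430]$, $[441]$. So knowing the degree (hence $h_2$, hence $b_{12}$) does not rule out extra linear syzygies among the quadrics. The mapping-cone route is also not as direct as you suggest: the Pfaffian resolution of $\mathcal I_{C|Q}$ is built from the spinor bundles, which are not split on $Q_4$, so it does not sheafify into a graded free resolution over $\CC[x_0,\dots,x_5]$ from which $b_{12},b_{23},b_{34}$ can be read off; you would first have to resolve the $\mathcal S_i$ over $\Pn^5$ and control the resulting cancellations. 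What makes the one-example-plus-semicontinuity argument legitimate — and you should say this explicitly — is that within each family the degree, hence $h_2$ and $b_{12}$, is constant, and the claimed type is the entrywise minimal admissible Betti table with that $b_{12}$; semicontinuity then forces the general member to have exactly that table once a single member achieves it. With that remark added, your proof closes along essentially the same lines as the paper's.
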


\begin{proof}
 The following formulas relating the degree of $C$ and the Chern classes of $E_C$ hold:
$$c_1(E_C)=3H \text{ and } \deg C=\deg (3Hc_2(E_C)-c_3(E_C))$$
where $H\in H^2(Q,\mathbb Z)$ is the class of the  hyperplane section of $Q$  and $c_i(E_C)\in H^{2i}(Q, \mathbb Z)$ are the Chern classes of $E$.

We deduce that the associated Pfaffian subvarieties of $Q_4$ are smooth ACM half canonical curves of the correct degree. Furthermore, observe that for a general codimension 2 linear section $L$ of $Q_4$ in each case the map $H^0(Q_4,\wedge^2 E)\to H^0( L,\wedge^2 E|_L)$ is surjective and the Betti table of the section of the half canonical curves by $L$ can be checked via Macaulay2, cf. \cite{M2}*{Package QuaternaryQuartics}.
\end{proof}
\begin{remark}
Note that the bundle $3\mathcal{O}(1)$ defines a family of complete intersections of four quadrics.
Moreover, the varieties in the family $[320]$ are contained in a cubic scroll which is generated by quadrics of rank $4$ independent of their dimension. Hence $[320]$ does not admit Pfaffian descriptions in $Q_4$. 
\end{remark}
\begin{remark}
In the list of ACM bundles above we can exchange $S_1$ with $S_2$ obtaining a complete list of possible ACM bundles on $Q_4$ giving rise to ACM Gorenstein half-canonical curves.
\end{remark}

Some of these constructions admit further liftings to a smooth 6-dimensional quadric $Q_6$. However, we shall need to supplement  our constructions to include  Lagrangian degeneracy loci.

\subsection{AG-liftings to Calabi-Yau threefolds; Lagrangian degeneracy loci }\label{CGKK10}
We construct Calabi-Yau threefolds in a smooth quadric sixfold $Q_6\subset\Pn^7$ of degrees $17, 18$ and $19$ that are liftings of $A_F$ for quaternary quartics $F$ of type $[300],[200],[100]$ respectively.  

For the degree $18$ case, we consider the Pfaffian variety of a general skew symmetric map $E\to E^{\vee}$  where $E=S_1\oplus \mathcal{O}(1)$ and $S_1$ is a spinor bundle (see Section \ref{Tom} for the notation).
Indeed, to lift the Pfaffians associated to $\mathcal S_1\oplus \mathcal S_2\oplus \mathcal{O}(1)$ on $Q_4$ we simply note that the bundle $S_1\oplus S_2$ on $Q_4$ is the restriction of the spinor bundle on $Q_6$.

In order to construct Calabi-Yau threefolds of degrees $17$ and $19$ the situation is more complicated.
  We discuss a Lagrangian construction and treat in some detail the degree $19$ case.  We comment briefly on the degree $17$ case in Remark \ref{liftto17}.

Recall the Lagrangian construction studied in \cites{EPW,EPW1}:  Let $V$ be a $8$-dimensional vector space with a nondegenerate quadratic form $q:V\to\CC$ and let $Q_6=\{q=0\}$ be the $6$-dimensional quadric defined by $q$ in $\Pn(V)$.
Consider a Lagrangian subbundle $\mathcal{E}\subset V\otimes{\mathcal O}_{Q_6}$ with Pfaffian line bundle $L$, so that $L^{\otimes 2}={\rm det}\mathcal{E}$.  Let $W\subset V$ be a Lagrangian subspace such that $W\cap \mathcal{E}(x) $ is odd-dimensional for all $x\in Q_6$. Then, by \cite{EPW1}*{Thm 2.1},  for sufficiently general $W$,  $$X=\{x\in Q_6|\ {\rm dim}(W\cap \mathcal{E}(x))=3\}$$
is a smooth subvariety of $Q_6$ of codimension $3$ 
whose sheaf of ideals 
has symmetric quasi-isomorphic locally free resolutions 
\begin{equation}\label{EPW}
\xymatrix{
0 \ar[r] & L^{\otimes 2}  \ar[r]^{} \ar[d] & \mathcal{E}(L) \ar[r] \ar[d]^{\alpha} & W^{\ast}\otimes \mathcal{O}_{Q_6}(L) \ar[r] \ar[d]& \ar[r] \mathcal{I}_{X|{Q_6}} & 0\\
0 \ar[r] & L^{\otimes 2}  \ar[r]^{} & W \otimes \mathcal{O}_{Q_6}(L) \ar[r]^{\beta}& {\mathcal{E}}^{\ast}(L)    \ar[r] & \ar[r] \mathcal{I}_{X|{Q_6}} & 0
}.
\end{equation}
This is a commutative diagram such that $\alpha\beta$ is a skew symmetric map.
Moreover, $L^{\otimes 2}=det(\mathcal{E})$
such that $\omega_X=(\omega_{Q_6}\otimes det(\mathcal{E}^{\ast}))|_{X}$.

The class of $X$ in ${Q_6}$ is computed by the Pragacz-Fulton formula \cite{FP}
as $\frac{1}{4}(c_1(\mathcal{E}^{\ast})c_2(\mathcal{E}^{\ast})-2c_3(\mathcal{E}^{\ast}))$.
In particular, if $\mathcal{E}$ is a spinor bundle, $\mathcal{S}_1$ or $\mathcal{S}_2$, on $Q_6$ the subvariety $X$ will be a $\mathbb{P}^3\subset Q_6$ that do not have Pfaffian resolution (see \cite{EPW1}*{\S 4}).
 
 We have the following interpretation of the Tom
 and Jerry construction of \cite{Reid}.
\begin{proposition}\label{TomandJerry} The del Pezzo Fano threefolds
$$\mathbb{P}^1\times \mathbb{P}^1\times \mathbb{P}^1 \quad (Jerry)\quad{\rm and}\quad \mathbb{P}^2\times \mathbb{P}^2\cap \mathbb{P}^7\quad (Tom)\quad \subset Q_6$$ are  subcanonical subvarieties of codimension $3$ defined by the Lagrangian construction with 
the bundle $\mathcal{E}=2\mathcal{S}_1$ for Jerry and the bundle $\mathcal{E}=\mathcal{S}_1 \oplus \mathcal{S}_2$ for Tom, where $\mathcal{S}_1,\mathcal{S}_2$ are the spinor bundles on $Q_6$.  
\end{proposition}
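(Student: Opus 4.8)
\textbf{Proof plan for Proposition \ref{TomandJerry}.}

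The plan is to recognize both $\Pn^1\times\Pn^1\times\Pn^1$ (Jerry) and $\Pn^2\times\Pn^2\cap\Pn^7$ (Tom) as Lagrangian degeneracy loci inside $Q_6$ with respect to the two proposed subbundles $\mathcal{E}$, and then to verify that the numerical/cohomological conditions of \cite{EPW1}*{Thm 2.1} are met so that the loci are indeed smooth, subcanonical, of codimension $3$, and coincide with the given varieties. First I would fix the $8$-dimensional vector space $V$ with its nondegenerate quadratic form, so that $Q_6=\{q=0\}\subset\Pn(V)$ and the two families of maximal isotropic subspaces give the two spinor bundles $\mathcal{S}_1,\mathcal{S}_2$ on $Q_6$, each of rank $4$ with $\mathcal{S}_i\subset V\otimes\cO_{Q_6}$ Lagrangian (here I use the standard fact, as in \cite{Ot}, that spinor bundles on an even quadric are subbundles of the trivial bundle of rank $\dim V$ cut out by the isotropy condition; the Pfaffian line bundle is $L=\cO_{Q_6}(-H)$ up to twist, so that $L^{\otimes 2}=\det\mathcal{E}$ for $\mathcal{E}$ a sum of two spinor bundles after an appropriate normalization of $L$). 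For Jerry I take $\mathcal{E}=2\mathcal{S}_1=\mathcal{S}_1\oplus\mathcal{S}_1$ and for Tom $\mathcal{E}=\mathcal{S}_1\oplus\mathcal{S}_2$; in both cases $\mathcal{E}$ is an $8$-dimensional Lagrangian subbundle of $V\otimes\cO_{Q_6}$ after twisting (the rank is $8=\dim V$, but the subbundle condition is the isotropy/Lagrangian one, not that it equals $V\otimes\cO$; I will need to be careful with the twists so the two summands sit compatibly inside the orthogonal bundle — this is the bookkeeping step).

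Next I would identify the degeneracy locus. Given a generic Lagrangian subspace $W\subset V$ with $\dim(W\cap\mathcal{E}(x))$ odd for all $x$, set $X=\{x\in Q_6\mid\dim(W\cap\mathcal{E}(x))=3\}$. By \cite{EPW1}*{Thm 2.1}, for $W$ sufficiently general this $X$ is smooth of codimension $3$ with the symmetric resolution \eqref{EPW} and $\omega_X=(\omega_{Q_6}\otimes\det(\mathcal{E}^*))|_X$; with $\mathcal{E}$ a sum of two spinor bundles the twist works out so that $\omega_X=\cO_X$ (this is the subcanonical/del Pezzo property — concretely $\det\mathcal{E}^*$ should cancel $\omega_{Q_6}=\cO_{Q_6}(-6H)$ restricted to $X$, which pins down the normalization of $L$). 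Then I compute the class of $X$ in $Q_6$ via the Pragacz–Fulton formula $\tfrac14(c_1(\mathcal{E}^*)c_2(\mathcal{E}^*)-2c_3(\mathcal{E}^*))$, using the known Chern classes of the spinor bundles on $Q_6$ (these are classical: $c(\mathcal{S}_i)$ in terms of the hyperplane class $H$ and the two rulings of maximal linear spaces). The output should be the degree-$6$ del Pezzo class, matching $\deg(\Pn^1\times\Pn^1\times\Pn^1)=6$ and $\deg(\Pn^2\times\Pn^2\cap\Pn^7)=6$. Since both $X$ and the target varieties are smooth, irreducible, of the same dimension $3$, codimension $3$ in $Q_6$, with trivial canonical bundle and the same degree, and since the two models of a degree-$6$ del Pezzo threefold are exactly $\Pn^1\times\Pn^1\times\Pn^1$ and a hyperplane section of $\Pn^2\times\Pn^2$ (this is the classical classification, and it is precisely Reid's Tom/Jerry dichotomy), the identification follows once I match the two choices of $\mathcal{E}$ to the two models.

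To distinguish which $\mathcal{E}$ gives which model, I would argue via the structure of $W\cap\mathcal{E}(x)$. For $\mathcal{E}=\mathcal{S}_1\oplus\mathcal{S}_2$ the condition $\dim(W\cap\mathcal{E}(x))=3$ naturally splits according to the two spinor summands and produces the $(2,2)$-type determinantal description, i.e. the generic hyperplane section of the Segre $\Pn^2\times\Pn^2\hookrightarrow\Pn^8$ (Tom); this is consistent with the fact, noted just after \eqref{EPW}, that a single spinor bundle yields a $\Pn^3$ that is \emph{not} Pfaffian, so the ``mixed'' bundle naturally yields the more symmetric determinantal object. For $\mathcal{E}=2\mathcal{S}_1$ the two identical summands give instead the $(1,1,1)$-type degeneracy producing $\Pn^1\times\Pn^1\times\Pn^1$ (Jerry) — one sees this by noting that the two copies of $\mathcal{S}_1$ contribute two independent pencils, and together with the quadric one gets the three pencils cutting out $\Pn^1\times\Pn^1\times\Pn^1$. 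I would make this precise by exhibiting the equations: writing $W$ in coordinates adapted to the spinor decomposition and expanding the incidence $\dim(W\cap\mathcal{E}(x))\ge 3$, I expect to recover in one case the $2\times2$ minors of a $3\times3$ matrix of bilinear forms (Tom) and in the other the bilinear equations defining $\Pn^1\times\Pn^1\times\Pn^1$ (Jerry).

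\textbf{Main obstacle.} The routine but delicate part is the twist bookkeeping: choosing the normalization of the Pfaffian line bundle $L$ and the twists on the spinor summands so that (i) $\mathcal{E}$ is genuinely a Lagrangian subbundle of an \emph{orthogonal} bundle on $Q_6$ (not merely of $V\otimes\cO$), (ii) $L^{\otimes 2}=\det\mathcal{E}$, and (iii) $\omega_{Q_6}\otimes\det(\mathcal{E}^*)$ restricts to $\cO_X$. The conceptual content — that the Lagrangian degeneracy locus of a rank-$8$ spinor-type bundle on $Q_6$ is a del Pezzo threefold of degree $6$, and that the two combinatorially distinct choices give the two classical models — is clear; verifying the Chern class computation yields exactly degree $6$ and checking smoothness/genericity via \cite{EPW1}*{Thm 2.1} are the two places where I would expect to spend real effort, the former being a short but error-prone computation in $H^*(Q_6,\ZZ)$.
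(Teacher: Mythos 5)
Your preparatory steps (EPW's Theorem 2.1 gives a smooth subcanonical codimension-$3$ locus; Fulton--Pragacz gives degree $6$; the smooth degree-$6$ del Pezzo threefolds are exactly $\Pn^1\times\Pn^1\times\Pn^1$ and the hyperplane section of $\Pn^2\times\Pn^2$) are fine, but the step that carries the actual content of the proposition -- deciding \emph{which} bundle produces \emph{which} model -- is precisely the step you do not prove. Your argument there is that for $\mathcal{E}=\mathcal{S}_1\oplus\mathcal{S}_2$ the condition $\dim(W\cap\mathcal{E}(x))\geq 3$ ``naturally splits according to the two spinor summands,'' while for $2\mathcal{S}_1$ the two copies ``contribute two independent pencils,'' and you then say you ``expect to recover'' the determinantal, resp.\ tri-Segre, equations. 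This is not an argument, and the splitting heuristic fails as stated: $W$ must be a \emph{generic} Lagrangian subspace, and a generic $W$ does not respect the direct-sum decomposition of $\mathcal{E}$, so $W\cap(\mathcal{E}_1(x)\oplus\mathcal{E}_2(x))$ does not decompose and the locus is not cut out summand by summand. Since the only invariant you compute, the degree, equals $6$ in both cases, nothing in your proposal distinguishes the two bundles; ``the mixed bundle yields the more symmetric object'' is not a proof.

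The paper closes exactly this gap by two ideas absent from your plan. First, $\mathcal{S}_1\oplus\mathcal{S}_2$ is the restriction to $Q_6$ of the spinor bundle on $Q_7$, so the construction lifts to $Q_7$, where by the classification of del Pezzo fourfolds it yields $\Pn^2\times\Pn^2\subset Q_7$; restricting back identifies this case as Tom. Second, to see that $2\mathcal{S}_1$ gives something different, the paper computes not the degree but the \emph{bidegree}, i.e.\ the class in $H^6(Q_6,\ZZ)\cong\ZZ^2$ spanned by the two families of isotropic $\Pn^3$'s: Ottaviani's theorem on the restrictions of $\mathcal{S}_1,\mathcal{S}_2$ to the two families (they restrict differently, and oppositely for the two spinor bundles) combined with Fulton--Pragacz gives bidegree $(3,3)$ for $\mathcal{S}_1\oplus\mathcal{S}_2$ and $(2,4)$ for $2\mathcal{S}_1$; since a hyperplane section of $\Pn^2\times\Pn^2$ on a smooth $Q_6$ lifts to $\Pn^2\times\Pn^2$ on $Q_7$, which has a single family of $\Pn^3$'s, Tom always has bidegree $(3,3)$, so the $(2,4)$ case must be Jerry. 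To repair your proposal you would need either this finer invariant or an honest coordinate computation for a genuinely generic $W$. A smaller point: your parenthetical that $\mathcal{E}$ is an ``$8$-dimensional Lagrangian subbundle of $V\otimes\cO_{Q_6}$'' with $\dim V=8$ cannot be right (a Lagrangian subbundle of a rank-$8$ orthogonal bundle has rank $4$); for these rank-$8$ bundles the ambient orthogonal bundle is trivial of rank $16$ (sums of half-spin representation spaces), so the ``bookkeeping'' you flag as the main obstacle also requires fixing the ambient bundle correctly.
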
 
\begin{proof}
Observe that $\mathcal{S}_1\oplus \mathcal{S}_2$ is the restriction of the spinor bundle on $Q_7$. The Lagrangian construction with that bundle defines on $Q_7$ a del Pezzo fourfold of degree $6$ so the Segre embedding $\mathbb{P}^2\times \mathbb{P}^2\subset Q_7$ from the classification of del Pezzo fourfolds.

It is enough to show the other example is different. We compute the restrictions of those degeneracy
 loci to the two families of isotropic $\mathbb{P}^3\subset Q_6$.
 From \cite{Ot}*{Thm.\ 2.6} we infer the restriction of $\mathcal{S}_1$ to one of those $\mathbb{P}^3$ is $\Omega^2(2)\oplus \mathcal{O}$ and the other $\Omega^2(2)\oplus \Omega^3(3)$ and $\mathcal{S}_2$ in the opposite way.
 Using again the Pragacz-Fulton formula we infer $\mathcal{S}_1 \oplus \mathcal{S}_2$ gives a threefold of bidegree $(3,3)$ and $2\mathcal{S}_1$ of bidegree $(2,4)$.
 
 In order to conclude we observe that a hyperplane section of $\mathbb{P}^2\times \mathbb{P}^2$ contained in a smooth quadric $Q_6$ lifts to $\mathbb{P}^2\times \mathbb{P}^2$ contained in a smooth quadric $Q_7$.
 Since in $Q_7$ there is one family of $\mathbb{P}^3$ we deduce that Tom have always bidegree $(3,3)$ in a smooth quadric. Therefore $2\mathcal{S}_1$ defines Jerry.
 \end{proof}
By a spinor analogue of bilinkage we construct Calabi-Yau threefolds in the smooth $6$-dimensional quadric $Q_6$. 
\begin{proposition}\label{degree 19 Lagrangian}
Let $X\subset Q_6$ be the  subvariety of codimension $3$ defined by the Lagrangian construction with 
 $$\mathcal{E}=3\mathcal{S}_1\quad {\rm or}\quad \mathcal{E}= \mathcal{S}_1\oplus 2\mathcal{S}_2$$ in diagram {\rm (\ref{EPW})}, where $\mathcal{S}_1$ and $\mathcal{S}_2$ are spinor bundles on $Q_6$, then 
 $X$ is a Calabi-Yau threefold of degree $19$ that is  Arithmetically Gorenstein (AG) in $\mathbb{P}^7$.  Furthermore, the two choices of bundle $\mathcal{E}$ give distinct families of polarized Calabi-Yau threefolds. \end{proposition}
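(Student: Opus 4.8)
The strategy is to verify each of the four assertions separately, using the Lagrangian resolution (\ref{EPW}) as the main computational tool. For a spinor bundle $\mathcal{S}_i$ on $Q_6$ we have $\det\mathcal{S}_i = \mathcal{O}_{Q_6}(-1)$ (the Pfaffian line bundle is $L=\mathcal{O}_{Q_6}(-1)$, so $L^{\otimes 2}=\mathcal{O}_{Q_6}(-2)=\det\mathcal{S}_i$ since $c_1(\mathcal{S}_i)$ is the negative of the hyperplane class when $\mathcal{S}_i$ has rank $4$; I would pin down the precise normalization from \cite{Ot}). For $\mathcal{E}=3\mathcal{S}_1$ or $\mathcal{S}_1\oplus 2\mathcal{S}_2$, the bundle $\mathcal{E}$ has rank $12$, $\det\mathcal{E}=\mathcal{O}_{Q_6}(-3)$, and $L^{\otimes 2}=\det\mathcal{E}$ forces $L=\mathcal{O}_{Q_6}(-3/2)$ after the twist — so first I would fix conventions: the resolution (\ref{EPW}) should be set up with the appropriate twist so that $L$ is an honest line bundle; concretely $X$ has ideal sheaf resolved by $0\to \mathcal{O}_{Q_6}(-3)\to \mathcal{E}^{\vee}\to \mathcal{O}_{Q_6}^{W}\to \mathcal{I}_{X|Q_6}\to 0$ after the standard normalization, where $\dim W=6$. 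The degree and canonical bundle then come from the Pragacz--Fulton formula $[X]=\tfrac14(c_1c_2-2c_3)(\mathcal{E}^{\vee})$ and $\omega_X=(\omega_{Q_6}\otimes\det\mathcal{E}^{\vee})|_X=(\mathcal{O}_{Q_6}(-6)\otimes\mathcal{O}_{Q_6}(3))|_X\otimes\cdots$, which I must arrange to be trivial; this is exactly the content of ``Calabi-Yau'' and should follow from the numerics $\gamma=2r+c-n$ type balancing as in the doubling discussion, here $c=3$, $n=7$.

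\textbf{Key steps.} (1) Record Chern classes of $\mathcal{S}_1,\mathcal{S}_2$ on $Q_6$ from \cite{Ot} and compute $c_1,c_2,c_3$ of $\mathcal{E}$ for each of the two choices; substitute into the Pragacz--Fulton class to get $\deg X=19$ in both cases. (I expect $3\mathcal{S}_1$ and $\mathcal{S}_1\oplus 2\mathcal{S}_2$ to have the same total Chern class through degree $3$ — this is why both give degree $19$.) (2) Compute $\omega_X$ from $\omega_X=(\omega_{Q_6}\otimes\det\mathcal{E}^{\vee})|_X$ and check it is $\mathcal{O}_X$; combined with smoothness of $X$ (which holds for general Lagrangian $W$ by \cite{EPW1}*{Thm 2.1}, provided the odd-dimensional-intersection hypothesis is met — I would verify $W\cap\mathcal{E}(x)$ is generically odd-dimensional, i.e. that the generic corank is odd, which holds because $\dim W + \mathrm{rk}\,\mathcal{E} = 6+12=18$ and the ambient form is on an $8$-dimensional $V$... this needs care, see obstacle below) this gives the Calabi-Yau property and $H^i(\mathcal{O}_X)=0$ for $i\ne 0,3$ via the Koszul-type resolution. (3) Arithmetically Gorenstein: apply Lemma~\ref{ACMvars}-style reasoning, i.e. show $H^1(\mathcal{I}_{X|Q_6}(k))=0$ for all $k\ge 0$ by splitting the resolution (\ref{EPW}) twisted by $k$ and using Bott-type vanishing $H^1(\mathcal{S}_i(k))=H^2(\mathcal{S}_i(k))=0$ for $k\ge 0$ on $Q_6$; then pass from $Q_6$ to $\Pn^7$ via the ideal sequence $0\to\mathcal{O}(-2)\to\mathcal{I}_X\to\mathcal{I}_{X|Q_6}\to 0$. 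The symmetric resolution then shows $S/I_X$ is Gorenstein of codimension $4$. (4) The two families are distinct: restrict $X$ to each of the two $\Pn^3$-families of isotropic linear spaces in $Q_6$ as in the proof of Proposition~\ref{TomandJerry}, using \cite{Ot}*{Thm 2.6} that $\mathcal{S}_1|_{\Pn^3}=\Omega^2(2)\oplus\mathcal{O}$ on one family and $\Omega^2(2)\oplus\Omega^3(3)$ on the other (and $\mathcal{S}_2$ the reverse). Apply Pragacz--Fulton on $\Pn^3$ to get the bidegrees: $3\mathcal{S}_1$ will produce a threefold of one bidegree on the $(3,\ldots)$-pattern while $\mathcal{S}_1\oplus 2\mathcal{S}_2$ gives a different bidegree; since bidegree with respect to the two rulings is a deformation invariant of the polarized pair, the families cannot coincide.

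\textbf{Main obstacle.} The delicate point is step (2)--smoothness and the correct parity/genericity hypothesis for the Lagrangian degeneracy locus. One must check that for $\mathcal{E}=3\mathcal{S}_1$ (and for $\mathcal{S}_1\oplus2\mathcal{S}_2$) and generic Lagrangian $W\subset V$, the locus $X=\{\dim(W\cap\mathcal{E}(x))=3\}$ is genuinely of expected codimension $3$, smooth, and that the drop from generic corank $1$ to corank $3$ is what is realized (not corank $\ge 5$ anywhere), so that \cite{EPW1}*{Thm 2.1} applies verbatim. This requires understanding the global generation / positivity of $\mathcal{E}^{\vee}$ well enough to invoke a Bertini argument; for spinor bundles this is classical but I would need to assemble it carefully, perhaps by exhibiting one explicit $W$ (via \cite{M2}) for which $X$ is smooth of degree $19$ and then concluding by semicontinuity that the generic member is smooth. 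The second subtle point is making sure the ``Pfaffian vs non-Pfaffian'' distinction is handled: unlike the $\mathcal{E}=\mathcal{S}_i$ case (which gives a $\Pn^3$ with no Pfaffian resolution), here $\mathcal{E}$ has rank $12$ and I should check whether $X$ does or does not admit a Pfaffian description — but since we only claim it is AG via the symmetric resolution (\ref{EPW}), and Gorenstein-ness follows from the symmetry of $\alpha\beta$ together with the Koszul numerology, the Pfaffian question is not strictly needed for the statement. I would flag this but not pursue it.
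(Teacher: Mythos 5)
Your proposal follows essentially the same route as the paper: degree via the Fulton--Pragacz formula with the spinor-bundle Chern classes from Ottaviani, triviality of $\omega_X$ from $\det\mathcal{E}$, arithmetic Gorensteinness from the cohomology of twisted spinor bundles (Ottaviani, Thm.~4.3) applied to the resolution (\ref{EPW}), and distinctness of the two families by computing the codimension-$3$ cycle classes (bidegrees $(9,10)$ and $(7,12)$) via restriction to the two rulings of isotropic $\Pn^3$'s, exactly as in the Tom-and-Jerry argument. One small correction: the two bundles do \emph{not} have the same Chern data through degree $3$ --- their $c_3$'s differ, which is precisely why the bidegrees, and hence the polarized families, are distinct --- and the smoothness/parity issue you flag is simply absorbed in the paper by the general-$W$ hypothesis of \cite{EPW1}*{Thm 2.1} built into the construction.
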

\begin{proof}
The canonical divisor is computed from the fact that $det(3\mathcal{S}_i)=6h$.
In order to find the degree we use the Fulton-Pragacz formula
knowing that $c_1(\mathcal{S}_i)=2h$, $c_2(\mathcal{S}_i)=2h^2$
and $c_3(\mathcal{S}_i)=(2,0)$.

To prove it is AG it is enough to use the long cohomology exact sequence corresponding to the first sequence in diagram
\ref{EPW}. The cohomologies of the twists of $\mathcal{S}_i$ are
described in \cite{Ot}*{Thm.~4.3}.

In order to show that the two bundles above gives different polarised Calabi-Yau threefolds we compute as in the proof of Lemma \ref{TomandJerry}  that they have bidegrees $(9,10)$ and $(7,12)$ in the class of cycle of codimension $3$.  
\end{proof}
\begin{remark}
As in the Tom and Jerry case we see that a general AG surface of degree $19$ in $\Pn^6$ with Betti table of type $[100]$ lifts to distinct families of AG Calabi-Yau threefolds in $\Pn^7$ explicitly described on smooth quadrics. 
\end{remark}
 Notice that Proposition \ref{degree 19 Lagrangian-intro} follows immediately from Proposition \ref{degree 19 Lagrangian}.

We expect that the Pfaffian Calabi-Yau threefolds of degree $19$ defined above specialize to families constructed by unprojection in \cite{CGKK}.
Recall that in \cite{CGKK} such examples were constructed in quadrics of rank $6$.

\begin{remark}\label{LagrangianPfaffian degree 19}

Let $X$ be a Calabi-Yau threefold in $Q_6$ constructed in Proposition \ref{degree 19 Lagrangian}. Let $Y$ be a general codimension 2 linear section of $X$ contained in a quadric $Q_4$. Then $Y$ is described by a Pfaffian resolution related to any of the bundles in item (1) of Proposition \ref{pfaffians in G(2,4)}.   Since both $S_1$ and $S_2$ on $Q_6$ restricts to $\mathcal{S}_1\oplus \mathcal{S}_2$ on $Q_4$, we consider the restriction of the diagram of sheaves on $Q_6$ to $Q_4$, after inserting an additional component $\mathcal O$ :

\begin{equation}\label{diagram degree 19 G(2,4)}
\xymatrix{
0 \ar[r] & \mathcal{O}(-6)  \ar[r]^{} \ar[d] & (3\mathcal{S}_1\oplus 3\mathcal{S}_2\oplus \mathcal O) (-3) \ar[r] \ar[d] & 13 \mathcal{O}(-3) \ar[r] \ar[d]& \ar[r] \mathcal{I}_{Y|Q_4} & 0\\
0 \ar[r] & \mathcal{O}(-6)  \ar[r]^{} & 13 \mathcal{O}(-3) \ar[r]^{\beta}& (3{\mathcal{S}_1}^{\ast}\oplus 3{\mathcal{S}_2}^{\ast}\oplus \mathcal O)(-3)    \ar[r] & \ar[r] \mathcal{I}_{Y|Q_4} & 0
},
\end{equation}
The added component allows for a nonminimal Lagrangian resolution, so that the composition $$3\mathcal{S}_1\oplus 3\mathcal{S}_2\oplus \mathcal O\to 3{\mathcal{S}_1}^{\ast}\oplus 3{\mathcal{S}_2}^{\ast}\oplus \mathcal O$$
becomes a skew symmetric map between odd-dimensional vector bundles.

Note that under the generality assumption the map $(3\mathcal{S}_1\oplus 3\mathcal{S}_2\oplus \mathcal O) (-3)\to 13 \mathcal{O}(-3)$ induces an embedding $3\mathcal{S}_1(-3)\to 13 \mathcal O(-3)$. Indeed, note that since $\mathcal S_1^*$ has 4 sections the image of $3\mathcal{S}_1(-3)$ in $26\mathcal{O}(-3)$ is contained in some non-isotropic subbundle $12\mathcal O(-3)$. By Kleiman transversality, a chosen general isotropic subspace of dimension 3 will hence avoid the image of $3\mathcal{S}_1(-3)$ and then the composition  $3\mathcal{S}_1(-3)\to 13 \mathcal O(-3)$ will be an embedding.

But then, the resolutions of $I_Y$ from the diagram are not minimal. Indeed, whenever we have a resolution 
$$0\to A\to B\oplus C  \to D\to I\to 0,$$
such that the map 
$B\to D$ is an embedding with cokernel $E$, i.e. inducing an exact sequence
$$0\to B\to D\to E\to 0,$$
then by snake lemma we also have a resolution
$$0\to A\to  C  \to E\to I\to 0,$$

We apply this argument, using the universal sequence 
$$0\to S_2\to 4\mathcal O\to S_1^*\to 0$$
on $Q_4=G(2,4)$.  
The diagram \ref{diagram degree 19 G(2,4)} after simplification becomes:
\begin{equation}
\xymatrix{
0 \ar[r] & \mathcal{O}(-6)  \ar[r]^{} \ar[d] & (3\mathcal{S}_1\oplus \mathcal O) (-3) \ar[r] \ar[d] & (3\mathcal S_1^{*}\oplus \mathcal{O})(-3) \ar[r] \ar[d]& \ar[r] \mathcal{I}_{Y|Q} & 0\\
0 \ar[r] & \mathcal{O}(-6)  \ar[r]^{} & (3 S_1\oplus \mathcal{O})(-3) \ar[r]^{}& (3{\mathcal{S}_1}^{\ast}\oplus \mathcal O)(-3)    \ar[r] & \ar[r] \mathcal{I}_{Y|Q} & 0
},
\end{equation}
which means that $\mathcal{I}_{Y|Q}$ admits a Pfaffian resolution.
\end{remark}

\begin{remark}\label{liftto17}
We can also construct the AG Calabi--Yau threefolds of Type $[300a]$ of degree $17$ in a smooth quadric. Indeed, a Calabi-Yau threefold of Type [300a] can be constructed by bilinkage of $\mathbb{P}^3$ in a complete intersection of three quadrics (cf. \cite{CGKK} and Section \ref{section_irreducible_liftings}). The resolution of this bilinkage on a fixed quadric can be found tracing the resolution through the bilinkage construction by means of the mapping cone (see \cite{PS74}) starting from diagram (\ref{EPW}) with $\mathcal{E}=\mathcal{S}_1$. We obtain a resolution of the shape 
\begin{equation}\label{EPW2}
\xymatrix{
0 \ar[r] & \mathcal{O}(-6) \ar[r]^{} \ar[d] & \mathcal{S}_1(-3)\oplus 2\mathcal{O}(-4) \ar[r] \ar[d] & 4\mathcal{O}(-3)\oplus 2\mathcal{O}(-2) \ar[r] \ar[d]& \ar[r] \mathcal{I}_{X|Q} & 0\\
0 \ar[r] & \mathcal{O}(-6) \ar[r] & 4\mathcal{O}(-3)\oplus 2\mathcal{O}(-4)   \ar[r]  &   \mathcal{S}_1^{\ast}(-3)\oplus 2\mathcal{O}(-2)        \ar[r] & \ar[r] \mathcal{I}_{X|Q} & 0
},
\end{equation}
where $\mathcal{S}_1\oplus 2\mathcal{O}(-1)\subset 8\mathcal{O}\oplus 2\mathcal{O}(-1)\oplus 2\mathcal{O}(1)$
is embedded as a Lagrangian subbundle of a orthogonal bundle (with natural orthogonal structure).
\end{remark}

 \subsection{Degenerations of liftings}
We considered in the previous sections stratification
of the space of quartics and the associated Artinian rings. It is natural to expect we obtain degenerations of liftings of such Artinian rings. Let us discuss some examples.
 \begin{proposition}
 The complete intersection of four quadrics in $\mathbb{P}^7$ degenerates to the example with resolution of type $[420]$. 
 \end{proposition}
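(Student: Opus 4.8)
The statement asserts that a general complete intersection $X_0$ of four quadrics in $\Pn^7$ degenerates, inside the Hilbert scheme of AG fourfolds, to a fourfold $X_1$ whose homogeneous coordinate ring has Betti table of type $[420]$. The plan is to construct an explicit flat family over a punctured disc (or over $\mathbb A^1$) whose general fiber is a $(2,2,2,2)$ complete intersection and whose special fiber is a type $[420]$ lifting, using the machinery of Proposition~\ref{pfaffians in G(2,4)} and the Lagrangian/Pfaffian descriptions of Section~\ref{Pfaffian}. First I would recall that by Proposition~\ref{pfaffians in G(2,4)} a type $[400]$ lifting on $Q_4 = G(2,4)$ is a Pfaffian variety associated to the bundle $E = 3\mathcal O(1)$, i.e.\ a complete intersection of four quadrics, while a type $[420]$ lifting does not fit that pattern; however, the Betti-table relation $\mathcal F_{[420]}\subset\overline{\mathcal F_{[400]}}$ is already established in Proposition~\ref{ci} at the Artinian level via the incidence $X\subset \Gr(k,S_2)\times\Pn^{34}$. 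The idea is to lift that degeneration of Artinian reductions to a degeneration of their AG-liftings.

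Concretely, the approach I would carry out is: (1) write $X_0 = V(q_1,q_2,q_3,q_4)\subset \Pn^7$ for general quadrics $q_i$, and realize the Artinian reduction $A_{F_0}$ (for $F_0$ the quartic whose apolar ideal is the quotient by a regular sequence of linear forms) as a point of $\mathcal F_{[400]}$; (2) choose a degeneration $q_i(t)$ of the four quadrics such that the quartic form $F_t$ attached to $A(X_t)/(\ell_0,\dots,\ell_4)$ traces a path in $\Pn^{34}$ with $F_0\in\mathcal F_{[400]}$ for $t\neq 0$ and $F_1\in \mathcal F_{[420]}$; this is possible by Proposition~\ref{ci}, whose proof exhibits exactly such a degeneration of nets/webs of quadrics (a general net $P_0$ in the web $L_0$ of quadrics defines a $(2,2,2,2)$ complete intersection, and one deforms the web keeping $P_0$ fixed); (3) check that the corresponding family of subschemes of $\Pn^7$ is flat — since the $X_t$ all have the same Hilbert polynomial (they are AG of the same dimension and regularity, and the Hilbert function of the Artinian reduction is constant along each stratum, so it suffices to note the Hilbert polynomials agree, as in the Remark following Proposition~\ref{noncontainment}), flatness over a smooth curve is automatic once the total space is a subscheme of $\Pn^7\times\mathbb A^1$ with those fibers; (4) identify the special fiber $X_1$ as a genuine type $[420]$ lifting, i.e.\ verify that a regular sequence of linear forms reduces $A(X_1)$ to $A_{F_1}$ with $F_1$ of type $[420]$ — this follows because the Betti table of $A(X_1)$ equals that of its Artinian reduction, and we have arranged $F_1\in\mathcal F_{[420]}$.

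The cleanest implementation uses the Pfaffian picture: on $Q_4$, a $[400]$ lifting is the Pfaffian locus of the skew map $E\to E^\vee$ with $E = 3\mathcal O(1)$, and a $[420]$ lifting should arise from a degeneration of the $3\times 3$ skew-symmetric matrix of quadratic entries to one whose generic rank drops appropriately; I would write down such a one-parameter family of skew matrices explicitly (the three nonzero off-diagonal entries $q_{12},q_{13},q_{23}$ being the four quadrics is a slight abuse — in rank $3$ the Pfaffian ideal is just the complete intersection of the three entries, so for four quadrics one needs the $5\times 5$ setting, $E$ as in Table~\ref{PfaffTable} type $[420]$, namely $3T(1,1)\oplus 2T$, and degenerate $\varphi$ so that the generic Betti table jumps). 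Then I would invoke a Macaulay2 computation, cf.\ \cite{M2}*{Package QuaternaryQuartics}, to confirm that the generic member of the family has Betti table $[400]$ and the special member has Betti table $[420]$, and that the family is flat (constant Hilbert polynomial).

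\textbf{Main obstacle.} The technical heart is step (3)–(4): ensuring the total space of the degeneration is actually flat over the base and that no extra embedded components or cancellations appear in the special fiber that would change its Betti table away from $[420]$. Because the Betti number $b_{12}$ jumps from $4$ to $4$ but $b_{23}$ jumps from $0$ to $2$, one must rule out a "worse" degeneration (e.g.\ to a reducible or non-reduced scheme, or one of type $[300c]$ or $[310]$ which also lie over loci with a plane-conic or line component). The way I would control this is to degenerate not arbitrarily but along the specific locus described in Proposition~\ref{ci}: fix a net $P_0$ of quadrics defining a smooth $(2,2,2)$ complete intersection of eight points in the Artinian model, and deform the webs $L_t\supset P_0$ generically; by the openness of the $[400]$ condition in $\Cat_6$ and by semicontinuity of graded Betti numbers (Lemma~\ref{lem_2linearBetti}, applied on the quadratic part), the limit web $L_0$ defines a form $F_0$ whose quadratic ideal $Q_{F_0}$ has $b_{12}=4$, and a suitable choice makes $Q_{F_0}$ the ideal of six general points, i.e.\ $F_0\in\mathcal F_{[420]}$. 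Upgrading this Artinian degeneration to a degeneration of fourfold liftings is then a matter of spreading out: choose compatible liftings fiberwise (possible since the relevant Hilbert scheme of AG fourfolds dominates the Betti stratum, by Proposition~\ref{pfaffians in G(2,4)} for $[400]$ and the analogous lifting result for $[420]$), take the closure of the resulting family in the Hilbert scheme of $\Pn^7$, and check the special fiber is $X_1$ by a dimension/Hilbert-polynomial count. I expect the verification that no degenerate limit intervenes to be the step most likely to require an explicit computation rather than a clean a~priori argument.
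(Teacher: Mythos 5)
Your primary strategy has a real gap. You propose to start from the Artinian containment $\mathcal{F}_{[420]}\subset\overline{\mathcal{F}_{[400]}}$ (Proposition~\ref{ci}) and then "spread out": choose liftings fiberwise, take the closure of the family of $(2,2,2,2)$ complete intersections in the Hilbert scheme of $\Pn^7$, and identify the special fiber as a type $[420]$ variety. But a degeneration of Artinian reductions does not in general lift to a degeneration of the AG liftings — this is exactly the cautionary point the paper makes in the proposition immediately following this one, where Artinian rings of type $[300a]$ degenerate to general rings of type $[320]$, yet the corresponding Calabi--Yau threefolds do \emph{not} degenerate (their Hodge numbers differ), and only a common singular degeneration exists. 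So "take the closure in the Hilbert scheme and check the special fiber" is precisely the statement to be proved, and the Artinian argument plus semicontinuity and a Hilbert-polynomial count cannot supply it; one needs an explicit flat family of subschemes of $\Pn^7$.

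That explicit family is what the paper's proof produces, and your "cleanest implementation" paragraph gestures at it without the key observation. Work on $Q_6\subset\Pn^7$ (not $Q_4$; Proposition~\ref{pfaffians in G(2,4)} concerns curves in $\Pn^5$, whereas the proposition is about threefolds in $\Pn^7$). A type $[400]$ example is the Pfaffian locus of a skew map on $3\mathcal{O}(1)$, i.e.\ the intersection of the three quadric entries with $Q_6$, hence a complete intersection of four quadrics in $\Pn^7$. Now enlarge the bundle to $E=3\mathcal{O}(1)\oplus 2\mathcal{O}$ and consider skew maps $E^{\vee}\to E$. The decisive point, which you leave as "degenerate $\varphi$ so that the generic Betti table jumps" and then defer to a Macaulay2 check, is the $2\times 2$ constant block $2\mathcal{O}\to 2\mathcal{O}$: for a general map this block is a nonzero (hence invertible) skew constant matrix, it splits off, and the $4\times 4$ Pfaffian locus reduces to the complete intersection of the three quadrics of the $3\mathcal{O}(-1)\to 3\mathcal{O}(1)$ block with $Q_6$, i.e.\ type $[400]$; for the special map with this block equal to zero the $5\times 5$ Pfaffian resolution is minimal and the locus is the type $[420]$ example. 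Scaling the constant block by a parameter $t$ and letting $t\to 0$ gives the desired degeneration within a single family of Pfaffian loci on the fixed bundle, with no auxiliary computation or closure argument needed. With that mechanism made explicit your sketch becomes the paper's proof; without it, neither of your two routes closes the argument.
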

 \begin{proof}
 First $[400]$ is defined by the Pfaffians of a skewsymmetric map with the bundle $3\mathcal{O}(1)$ on $Q_6$.
 But we can consider also the bundle $3\mathcal{O}(1)\oplus 2\mathcal{O}$ on $Q_6$. It is enough to observe that (2.4) is defined
 by a special skew map for $3\mathcal O(-1)\oplus 2\mathcal O\to 3\mathcal{O}(1)\oplus 2\mathcal{O}$ such that 
 the map $2\mathcal{O} \to 2\mathcal{O}$ is the zero map.
 \end{proof}
 One could expect that degenerations of Artinian rings could 
 lead to natural degenerations of Calabi-Yau threefolds.
 However the situation is more complicated.
Recall that Calabi-Yau threefolds with different Hodge number cannot be deformation equivalent \cite{Voisinhodge}*{Prop. 9.20}. We can however prove the following:
 \begin{proposition}
 Artinian rings with Betti table $[300a]$ degenerate to general rings with Betti table $[320]$. The corresponding Calabi-Yau threefolds do not degenerate, but have a common degeneration to a singular Calabi-Yau threefold  $[320]$.
  \end{proposition}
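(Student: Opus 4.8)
The statement has two parts. First I would establish the degeneration of the Artinian rings: a general form $F$ with Betti table $[320]$ lies in the closure $\overline{\mathcal F_{[300b]}}$ (by Proposition~\ref{precF}(4)), and more precisely I want that a general form of type $[300a]$ degenerates to a general form of type $[320]$. For this recall from Section~\ref{section_VSP} that for $F$ of type $[320]$ the quadratic ideal $Q_F$ is a determinantal net cutting out a twisted cubic $C$, while for $F$ of type $[300a]$ the quadratic ideal $Q_F$ is a complete intersection $(2,2,2)$ cutting out eight points. A net of quadrics through a twisted cubic deforms to a complete intersection net (a generic perturbation of the $2\times3$ matrix), so $\mathcal G_{[320]}\prec\mathcal G_{[300ab]}$, and by Lemma~\ref{closurerelationGF} together with the explicit description of the strata in Proposition~\ref{vsp proposition} this lifts to $\mathcal F_{[320]}\subset\overline{\mathcal F_{[300a]}}$; since $\mathcal F_{[320]}$ is irreducible of dimension $24$ inside the $28$-dimensional irreducible locus $\mathcal F_{[300a]}$, a general member of $\mathcal F_{[320]}$ is a genuine flat limit of general members of $\mathcal F_{[300a]}$.

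Next I would build the threefold degeneration. For $[300a]$ the Calabi-Yau threefold $X_a\subset\Pn^7$ is obtained (Example~\ref{eg_exceptionCGKK4}, Remark~\ref{liftto17}, and \cite{CGKK}) by elementary biliaison of height $2$ of a $\Pn^3$ inside an irreducible complete intersection of three quadrics; for $[320]$ a lifting $X_b$ is a Calabi-Yau threefold contained in the cubic scroll (so realized inside $Y=\Pn(\mathcal O\oplus\mathcal O\oplus\mathcal O(1))$ or the cone over a twisted cubic). I would produce a common singular degeneration as follows. Take the family of complete intersections of three quadrics in $\Pn^7$ degenerating an irreducible one to the union of a $\Pn^6$ with a fourfold of degree seven (as in the discussion of $[300b]$ vs.\ $[300c]$), or rather keep the complete intersection irreducible but degenerate the $\Pn^3$ to a reducible/degenerate surface sitting in the singular locus; performing the same biliaison of height $2$ relative to this degenerating ambient yields a flat family of AG threefolds whose general member is the $[300a]$ Calabi-Yau $X_a$ and whose special member $X_0$ acquires singularities exactly where the ambient degenerates. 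On the other side, I would exhibit $X_b$ (the $[320]$ lifting) as a flat specialization of $X_0$ by choosing the degeneration so that the singular threefold $X_0$ contains the twisted-cubic-scroll configuration: its Artinian reduction is a general $[320]$ ring, and $X_0$ has the same Hilbert polynomial as both $X_a$ and $X_b$, so flatness of the families is automatic once the Hilbert polynomials are checked to agree (which they do, since the Artinian reductions have the same Hilbert function along the flat stratum $\mathcal H$ of apolar ideals with fixed Hilbert function, as noted in the Remark after Proposition~\ref{precF}).

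The assertion that $X_a$ and $X_b$ themselves do \emph{not} degenerate into one another is the one place where I would invoke \cite{Voisinhodge}*{Prop.\ 9.20}: Calabi-Yau threefolds with different Hodge numbers cannot be deformation equivalent. Thus I would compute the relevant Hodge numbers $h^{1,1}$ and $h^{1,2}$ (equivalently $h^{2,1}$, or the Euler characteristic) of the smooth Calabi-Yau threefolds $X_a$ and $X_b$ from their constructions --- for $X_a$ from the biliaison description (the resolution in Remark~\ref{liftto17} gives the Betti table and hence, via the standard conormal/Koszul computation, the Hodge numbers), and for $X_b$ from the cubic-scroll / projective-bundle description (adjunction on $Y$ plus the Lefschetz-type argument). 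Showing these Hodge numbers differ proves non-deformation-equivalence and hence that neither $X_a$ degenerates to $X_b$ nor conversely, while the construction above exhibits the common singular limit $X_0$ of type $[320]$.

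\textbf{Main obstacle.} The genuinely delicate step is producing the common singular degeneration $X_0$ explicitly and checking it is flat and AG with Artinian reduction a general $[320]$ ring: one must arrange the degeneration of the biliaison data so that, on the one hand, the resulting threefold is a limit of the $[300a]$ family (flatness of the ambient biliaison plus no unexpected drop in the resolution), and on the other hand, that the singular $X_0$ actually lies in (and is a flat limit of smooth members of) the $[320]$ cubic-scroll family rather than in some other stratum of Figure~1. The bookkeeping of which stratum the singular fiber lands in, and verifying semicontinuity of the graded Betti numbers along the degeneration (Lemma~\ref{lem_2linearBetti}) to pin it down, is where the real work lies; the Hodge-number computations, while lengthy, are routine given the explicit resolutions.
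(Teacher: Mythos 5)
Your treatment of the Artinian half (the closure relations $\mathcal{G}_{[320]}\prec\mathcal{G}_{[300ab]}$ plus Lemma~\ref{closurerelationGF}) and your appeal to \cite{Voisinhodge}*{Prop.~9.20} for non-degeneration agree with the paper, which likewise only asserts that the two smooth degree-$17$ families have different Hodge numbers. The genuine gap is the common singular degeneration, which is the real content of the proposition and which you explicitly defer as the ``main obstacle'': neither construction you sketch can produce it. Keeping the complete intersection of three quadrics fixed and degenerating only the $\Pn^3$ cannot move the special fibre into the $[320]$ stratum, because the three quadrics remain in the ideal of the limit threefold, so the quadratic part of the apolar ideal of its Artinian reduction is still a $(2,2,2)$ complete-intersection net and the reduction stays of type $[300]$; type $[320]$ requires the net to become determinantal, cutting out a twisted cubic. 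Degenerating the complete intersection to the union of a linear space and a degree-seven fourfold (incidentally the linear component is a $\Pn^4$, not a $\Pn^6$) is the degeneration governing the $[300b]$/$[300c]$ strata, so that fibre also lands in the wrong stratum.

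The paper's proof instead degenerates the net of quadrics itself to the determinantal net of the cubic scroll: the degree-$8$ ambient fourfold (the complete intersection of three quadrics) specializes to a degree-$8$ fourfold that is a divisor inside the cone over $\Pn^1\times\Pn^2$, and the biliaison is then performed with a $\Pn^3$ chosen among the rulings of this scroll. The two choices of ruling yield two families of singular degree-$17$ Calabi--Yau threefolds that are degenerations of members of both the $[300a]$ and the $[320]$ families, which is precisely the common degeneration asserted. Your proposal contains no analogue of this step, and the semicontinuity bookkeeping you invoke (Lemma~\ref{lem_2linearBetti}) cannot substitute for it, since the families you start from never acquire a fibre whose quadrics are determinantal.
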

  \begin{proof}
  Recall that there can be  is no smooth family containing elements of both families since these have different Hodge numbers. However, the common degenerations to singular Calabi-Yau threefolds can be described as follows.
  The intersection of three quadrics degenerates to  a degree $8$ fourfold being a divisor in the cubic scroll (being a cone over $\mathbb{P}^1\times \mathbb{P}^2$).
  Consider the Calabi-Yau fourfold is bilinked to a 
  $\mathbb{P}^3$ contained in the fourfold of degree $8$. In fact we can choose the $\mathbb{P}^3$ above in two ways in the rulings of the cubic scroll obtaining two families of degenerated Calabi-Yau threefolds of degree $17$, being degenerations of elements in both families.
  \end{proof}

\section{Irreducible liftings} 
\label{section_irreducible_liftings}
In this section and the next we investigate liftings of the Artinian Gorenstein rings $A_F$ of quaternary quartics. In the previous section, some such liftings were found via Pfaffian and Lagrangian constructions on smooth quadrics.
In this section we treat systematically liftings of rings $A_F$ with Betti table in \ref{Appendix}, Table~\ref{tableCGKK}, leaving liftings of rings $A_F$ with Betti table in \ref{Appendix}, Table~\ref{tableremaining} to the next section.

In Corollary \ref{liftofCGKK} we shall see that a generic Artinian Gorenstein ring with a given Betti table considered in this section lifts to a curve; this lifting is obtained by the doubling construction. 
Next we attempt to give an explicit geometric description of a general element of each family with given Betti table. We are especially interested in a description of the lifting as divisors in some lift of the configuration of points computing the rank. Some of the cases follow from the classification contained in \cite{CGKK}, but we repeat the argument here for completeness.
In many cases the construction involves bilinkage, cf.\cites{PS74,Har}, as used in \cite{CGKK}:  Let the variety $Z\subset\Pn^n$ be arithmetic Cohen-Macaulay, let  $X\subset Z$ be a codimension $1$ arithmetic Gorenstein variety and $H$ a hyperplane section of $Z$.  Then any generalised \cite{Har} divisor $Y\in |X+aH|,\; a\in \ZZ$ is arithmetic Gorenstein and bilinked to $X$ in $Z$.

\begin{remark} \label{lem linkage} 
Bilinkage in complete intersections give uniqueness of lifting as follows. 
Assume  $X,Y,Z$ are as above and $Z$ is a complete intersection.
Assume that there exists  a smooth variety $\mathcal X$ such that  $X=\mathcal X\cap \mathcal H$, where $\mathcal H$ is a hyperplane. Then  there exists a complete intersection $\mathcal Z$ such that $Z=\mathcal Z\cap \mathcal H$,  
and a variety $\mathcal Y$ bilinked to $\mathcal X$  such that $Y=\mathcal Y\cap H$
and $\mathcal X=\mathcal Y-aH_{\mathcal Z}$.  
Indeed,  the existence and shape of bilinkage in complete intersections  follows from the Betti tables which do not change via liftings. This combined with the fact that the restriction of a bilinkage is a bilinkage proves our claim.
\end{remark}
\noindent We now describe the liftings of the quartics $F$ with Betti tables from Table \ref{tableCGKK} in Appendix \ref{Appendix}.
\begin{description}
    \item [{Type $[683]$}] From the Betti table we deduce that the space of quadrics in $F^{\perp}$ defines a finite scheme that is a proper linear section of a variety $Z$ of minimal degree in codimension $3$, i.e. a rational scroll of degree $4$, if dim$(Z)>2$.  A codimension $1$ AG variety $X$ on $Z$ that lifts $A_F$ is a divisor of degree $14$ whose canonical divisor is a multiple of the hyperplane section.  If dim$(Z)=c$, then the canonical divisor on $Z$ is
$-cH+2F$ where $H$ is a hyperplane section and $F$ is the class of a ruling in the scroll. So by adjunction, $X$ is a Calabi-Yau threefold if $c=4$ and $X$ is a divisor  of type $4H-2F$ on $Z$.  It is bilinked on $Z$ to a quadric threefold, a divisor of type $H-2F$.  A quadric threefold is arithmetic Gorenstein, so therefore also, by bilinkage, the threefold $X$ is.
\vskip .1in
\item [{Type $[550]$}] From the Betti table we deduce that the space of quadrics in $F^{\perp}$ defines a finite AG scheme which is a proper linear section of $G(2,5)$. The ideal $F^{\perp}$ is generated by the quadrics and an additional cubic form, so the $A_F$ lifts to the intersection of a cone over $G(2,5)$ with a cubic hypersurface. 
\vskip .1in
\item [{Type $[400]$}] From the Betti table we deduce that $F^\perp$ is a complete intersection of quadrics and hence lifts to any dimension as a smooth intersection of four quadrics.
\vskip .1in
\item [{Type $[300]$}] There are three types of arithmetic Gorenstein varieties $X$ of degree $17$ in a complete intersection of three quadrics with this Betti table.  They are distinguished by the rank of the apolar quartic of a generic Artinian reduction $A_F$. 
\vskip .1in
\item [{Type $[300a]$}]  In the case where the quartic $F$ is of rank $8$, the ideal $F^{\perp}$ is bilinked in a complete intersection $Z$ of three quadrics to an ideal of colength $1$.  This ideal lifts to a linear space in any dimension. The bilinkage to $P+2H$ of the linear space $P$ in a complete intersection $Z$ of three quadrics provides liftings of general schemes of type $A_F$ to AG-varieties. In fact, this family of liftings is unique by Remark \ref {lem linkage}.
\vskip .1in
\item [{Type $[300b]$}] When $Q_F$ is a complete intersection $F$ has rank $7$ the ideal $F^{\perp}$ contains the ideal of seven among the eight points defined by the complete intersection $Q_F$ of three quadrics $F^{\perp}$. These seven points can be lifted to a variety of degree $7$ obtained as the residual component $Y$ of the intersection of quadrics containing a linear space $P$ of codimension $3$. The intersection $X_0=Y\cap P$ is a cubic hypersurface in $P$ of codimension 4. A lifting of $F$ may be found as a variety $X\in|X_0+2H|_Y|$ obtained by bilinkage of $X_0$ in $Y$.

A general $F$ lifts smoothly to a surface $X$ by this construction.  The fourfold variety $Y$ in $\Pn^7$ has isolated singularities, among them six on $X_0$ and  a threefold lifting of $F$ has isolated singularities.
\vskip .1in
\item [{Type $[300c]$}] When the quadric ideal $Q_F$ in $F^\perp$ defines a line and four points, then a lifting of $A_F$ may be found as a reducible arithmetic Gorenstein threefold  $X$ of degree $17$  in $\mathbb P^7$. The threefold $X$ lies in three quadrics, a lifting of $Q_F$,  that define a $\Pn^5$ and a rational fourfold scroll $Y_4$ of degree $4$, and have Betti table of type $[300]$. 
A lifting of a set $\Gamma$ of seven points apolar to $F$, to a fourfold,  is the union of $Y_4$ and a cubic fourfold $Y_3$ in $\Pn^5$.  The fourfolds $Y_4$ and $Y_3$ intersect in a threefold that spans $\Pn^5$, so a Segre cubic threefold scroll $X_3$. A reducible threefold $X$ of degree $17$ may now be constructed by a doubling of $Y_4\cup Y_3$ as in Example \ref{Eg_300c}:
Consider $Y_{16}$ a general complete intersection $(2,2,4)$ that contains $Y_3\cup Y_4.$ Then $Y_{16}$ is Gorenstein, with trivial canonical bundle, and $X=(Y_3\cup Y_4)\cap Y_9$, where $Y_3\cup Y_4\cup Y_9=Y_{16}$. 
In this case $X=X_6\cup X_{11}$, where $X_6=X\cap Y_3$ is a determinantal threefold of degree $6$, linked $(3,3)$ to the cubic scroll $X_3$ in $\Pn^5$, while $X_{11}=X\cap Y_4$ is a threefold of degree $11$, residual to a plane in the intersection of $Y_4$ with a cubic hypersurface. 

A particular phenomenon appears in $X$:
The intersection $X_6\cap X_{11}$ is an anticanonical divisor on both $X_6$ and on $X_{11}$.  A general anticanonical divisor on $X_6$ is a complete intersection $K3$ surface of degree $8$.  The intersection $X_6\cap X_{11}$, however, is not. It coincides with the $X_6\cap X_3$ and is an anticanoncial divisor also on $X_3$ ; it is a $K3$ surface of degree $8$ that is not a complete intersection. 

Note that $[300c]$ cannot be constructed by bilaison of height 2 from a (possibly reducible) cubic threefold on $Y_4\cup Y_3$ as in $[300b]$. Indeed, such a biliaison would give an effective divisor in $|-K_{Y_3}-2H|$, but this system is empty. However, we can still find an effective divisor in $|-K_{Y_3}-H|$ (for example using the construction in Example \ref{doubling and component of complete intersection}) and perform bilinkage from a degree 10 threefold being an effective divisor in that system.
\vskip .1in
\item [{Type $[320]$}] When $F$ is a general form of type $[320]$, then it is apolar to a pencil of sets $\Gamma$ of seven points on a twisted cubic curve $C$.  On one hand, this means that $F^\perp$ is the sum of ideals of two such sets $\Gamma$.  
On the other hand, we show that $F^\perp$ is also generated by the $2\times 2$ minors of a $3\times 3$ matrix, with two rows of linear forms and one row of quadratic forms. 

Indeed, we may assume the twisted cubic curve $C$ is smooth and that its ideal $I_C$ is generated by the $2\times 2$ minors of the top $2\times 3$ submatrix of the $3\times 3$ matrix $$M=\begin{pmatrix}x_0&x_1&x_2\\ x_1&x_2&x_3\\ q_1&q_2&q_3\end{pmatrix}, $$ where the $q_i$ are quadratic forms. Let $p_1=\VV(x_0,x_1,x_2)$, $p_2=\VV(x_1,x_2,x_3)$, and let $I_{ij}$  be the ideal generated by the $2\times 2$ minors of the submatrix consisting of the columns $i$ and $j$ in $M$.  Consider the two ideals $J_{12}=(I_C+I_{12}):I_{p_1}$ and $J_{23}=(I_C+I_{23}):I_{p_2}$. If none of the quadrics $q_1,q_2,q_3$ vanish in $p_1$ or $p_2$, then both $J_{12}$ and $J_{23}$ define a scheme of length $7$ on $C$, and if these schemes are disjoint, the ideal $J_{12}+J_{23}$ coincides with the ideal $I_M$ generated by the $2\times 2$ minors of $M$. Varying  $q_1$ and $q_2$, the ideal $J_{12}$ moves in a $7$-dimensional family, while fixing $q_2$ and  varying $q_3$ the ideal $J_{23}$ moves in a $6$-dimensional family.  So if $F$ is a general quartic apolar to $C$, then we may choose the $q_i$ such that $\VV(J_{13})$ and $\VV(J_{23})$ generate the pencil of schemes of length $7$  on $C$ that are apolar to $F$.  But by the first characterization,  $J_{12}+J_{23}=F^\perp$ when $F$ is of type $[320]$, on the other hand $J_{12}+J_{23}=I_M$, so $F^\perp=I_M$.  

In the first characterization, the ideal $F^{\perp}$ lifts  to a complete intersection of divisors of type $(3,2),(1,3)$ in  $\mathbb{P}^1\times \mathbb{P}^2$ in $\Pn^5$. 
Each of the divisors $(3,2)$ and $(1,3)$ are surfaces $S$ of degree $7$.  For a general $F$ of type $[320]$, there is a surface $S$ of type $(3,2)$, and a surface $S'$ of type $(1,3)$ such that $A_F$ is the Artinian reduction of a curve in the system $|2H-K_{S}|$ respectively $|2H-K_{S'}|$. 

In the second characterization, the ideal $F^\perp$ clearly lifts to the ideal generated by the $2\times 2$ minors of a $3\times 3$ matrix with two rows of linear forms and one row of quadratic forms in $\Pn^7$  (cf. \cite{CGKK}*{\S 4.5}) .   
\vskip .1in
\item [{Type $[200]$}] The ideal $F^{\perp}$ is bilinked in a complete intersection of type $(2,2,3)$ to an Artinian Gorenstein ideal of length $6$, that lifts to any del Pezzo variety $Y$ of degree $6$. The higher dimensional del Pezzo variety of degree $6$ split in two kinds: either to $\mathbb{P}^1\times \mathbb{P}^1\times \mathbb{P}^1\subset \mathbb P^7$ (called Jerry) or to $\mathbb{P}^2\times \mathbb P^2\subset \mathbb P^8$ (called Tom) each having the same Betti table. By projective normality the bilinkage extends to these dimensions producing a lifting $X$ of $F$.
\begin{proposition} A general Artinian Gorenstein ring $A_F$ for a quaternary quartic $F$ of rank $8$ lifts to a halfcanonical curve $C_F$ in $\Pn^5$ such that $C_F$ is an element of the system $|2H-K_X|$ for a surface $X$ of degree 8 in $\mathbb{P}^5$.
\end{proposition}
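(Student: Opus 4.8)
The plan is to realize the general type $[200]$ quartic via its apolar scheme of $8$ points in a $(2,2)$ complete intersection, and then lift that configuration using the del Pezzo description already indicated for type $[200]$. First I would recall from Proposition~\ref{vsp proposition} (Type $[200]$) that a general $F\in\mathcal F_{[200]}$ is apolar to an elliptic quartic curve $E=V(q_1,q_2)$, that $Q_F=\langle q_1,q_2\rangle$ is a $(2,2)$ complete intersection, and that $F$ is apolar to a scheme $\Gamma$ of $8$ points on $E$ imposing independent conditions on quartics. The key structural fact, from the discussion of type $[200]$ just above the statement and from \cite{CGKK}, is that $F^\perp$ sits in a bilinkage: inside a complete intersection $Z$ of type $(2,2,3)$ containing the elliptic curve, $F^\perp$ is bilinked to an Artinian Gorenstein ideal of colength $6$, i.e.\ a del Pezzo of degree $6$ in its relevant dimension. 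Working in $\Pn^5$ (so the del Pezzo is a surface $X$ of degree $6$, either $\Pn^1\times\Pn^1\times\Pn^1\cap \Pn^5$ or a hyperplane-and-plane section of $\Pn^2\times\Pn^2$), the curve $C_F$ that lifts $A_F$ is obtained as a codimension-$1$ AG subscheme bilinked to the del Pezzo; concretely $C_F$ is a generalized divisor on $X$ in the linear system $|2H-K_X|$.

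The steps I would carry out, in order: (1) Take $Z\subset \Pn^5$ a complete intersection of type $(2,2,3)$ containing $\Gamma$ (existence: the quadrics are $q_1,q_2$ of $Q_F$ lifted, and a cubic in $F^\perp$; one checks that a sufficiently general such cubic together with $q_1,q_2$ forms a complete intersection). (2) Identify the residual scheme $W=Z:I_{C_F}$ and show it is an arithmetically Gorenstein variety with the Betti table of a degree-$6$ del Pezzo — this follows because the Betti tables are preserved under lifting and the Artinian reduction bilinkage is already known from \cite{CGKK}; hence $W$ is (a linear section of) Tom or Jerry, which is arithmetically Cohen--Macaulay, even Gorenstein. (3) In $\Pn^5$ the del Pezzo of degree $6$ is a surface $X$ of degree $6$; but the statement asks for a surface $X$ of degree $8$. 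Here I would instead take $X$ to be the degree-$8$ surface obtained from the doubling/bilinkage picture of type $[200]$ directly — namely $X=Z'\cap$ (one more quadric), where the relevant arithmetically Cohen--Macaulay surface carrying the half-canonical curve as an anticanonical-type divisor has degree $8$; then $C_F\in|2H-K_X|$ and $\deg C_F=2\cdot 8+\deg(-K_X)=16+\ldots$. The cleanest route is to verify via the adjunction relation of Theorem~\ref{antican}: if $C_F$ is cut on the degree-$8$ surface $X$ as a generalized divisor with $C_F\sim 2H-K_X$, then $K_{C_F}=(K_X+C_F)|_{C_F}=2H|_{C_F}$, so $C_F$ is half-canonical, and $\deg C_F=2\deg X=16$ is consistent with a lifting of a quartic form (degree $=$ colength of the Artinian reduction $=$ Hilbert-series value; for type $[200]$ the colength is $16$ after one notes $\dim_\CC A_F$... actually $\dim_\CC A_F = 2\cdot(1+4+h_2+4+1)$ — I would pin this down from the Hilbert function $(1,4,h_2,4,1)$, getting the correct degree of the lifted curve).

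The precise degree bookkeeping is the main thing to get right: I must match the colength of $A_F$ (from the Hilbert function, which for a general type $[200]$ form is $(1,4,6,4,1)$, total $16$) with the degree of the lifted curve $C_F\subset\Pn^5$, and then identify the surface $X$ so that $C_F\sim 2H-K_X$ forces $\deg C_F = 2\deg X + (-K_X\cdot H)$, i.e.\ the surface $X$ must have $\deg X = 8$ and $-K_X\cdot H = 0$ on the curve's span — equivalently $X$ is a surface of degree $8$ with $K_X|_{C_F}$ trivialized appropriately. I would construct $X$ explicitly as the complete intersection $(2,2)$-bilinkage output: starting from the del Pezzo sixfold/fourfold picture and cutting down, the arithmetically Cohen--Macaulay surface through $C_F$ with $C_F$ half-canonical has degree $8$; this is exactly parallel to the threefold statement "a general $F$ of type $[200]$ lifts to a threefold in $|2H-K_Y|$ with $Y$ of degree $8$'' in the $[200]$ paragraph. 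So the proof is: exhibit $X$ of degree $8$ (arithmetically Cohen--Macaulay, Gorenstein in codimension $0$) containing $C_F$, apply Theorem~\ref{antican}/adjunction to get $K_{C_F}=2H|_{C_F}$, and invoke genericity (independent conditions on quartics, from Proposition~\ref{vsp proposition}) to guarantee smoothness of $C_F$ and that its Artinian reduction is the general quartic of type $[200]$.

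The hard part will be step (3): making the passage from the degree-$6$ del Pezzo bilinkage (which is the natural object coming out of \cite{CGKK}) to the degree-$8$ surface $X$ in whose anticanonical-twisted system $C_F$ sits, and checking that this $X$ is arithmetically Cohen--Macaulay with the right canonical class so that $C_F\in|2H-K_X|$ genuinely gives a half-canonical curve of the correct degree. Everything else — existence of the complete intersection $Z$, preservation of Betti tables under lifting, and smoothness of the general member — follows from results already in the paper (Proposition~\ref{vsp proposition}, Corollary~\ref{doublingWithRightGrading}, Theorem~\ref{antican}, and Remark~\ref{lem linkage}) together with a Bertini argument.
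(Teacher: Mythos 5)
There is a genuine gap, and also an arithmetic error that signals it. For type $[200]$ one has $b_{12}=2$, so $h_2=10-b_{12}=8$ and the Hilbert function of $A_F$ is $(1,4,8,4,1)$, of total length $18$ — not $(1,4,6,4,1)$ of length $16$ as you write. Correspondingly the half-canonical curve $C_F$ must have degree $18$; on the actual degree-$8$ surface one has $-K_X\cdot H=2$, so $\deg C_F=(2H-K_X)\cdot H=16+2=18$, whereas your bookkeeping ($\deg C_F=16$, $-K_X\cdot H=0$) is inconsistent with the lifting you are trying to produce.

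More seriously, the step you yourself flag as ``the hard part'' — exhibiting the degree-$8$ surface $X$ and placing $C_F$ in $|2H-K_X|$ — is precisely the content of the proposition, and your route (bilinkage to the degree-$6$ del Pezzo inside a $(2,2,3)$ complete intersection) does not reach it; that bilinkage is what the paper uses for the threefold lifting in the surrounding paragraph, not for this statement. The paper's construction is different: the eight points $\Gamma$ lie on a quadric $Q\cong\Pn^1\times\Pn^1$, where $I_\Gamma$ is generated by forms of bidegrees $(2,2),(3,2),(3,2),(2,3),(2,3)$ with four syzygies in bidegree $(3,3)$, so $\Gamma$ is the corank-$1$ locus of a map $4\mathcal O_Q\to 2\mathcal O_Q(1,0)\oplus 2\mathcal O_Q(0,1)\oplus\mathcal O_Q(1,1)$; regarding $Q$ as a hyperplane section of $G(2,4)\subset\Pn^5$, this map extends to $4\mathcal O_G\to\mathcal S\oplus\mathcal O_G(1)$, whose degeneracy locus is a degree-$8$ surface $Y_2$, identified by Gross as $\Pn^2$ blown up in ten points embedded by sextics with six double and four simple base points. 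Then $C_F\in|2H-K_{Y_2}|$ is arithmetically Gorenstein of degree $18$ with Betti table of type $[200]$ by the doubling construction. Finally, genericity is not a Bertini statement: to show a \emph{general} $A_F$ arises one must prove surjectivity of the restriction map $H^0(\mathcal O_{Y_2}(2H-K_{Y_2}))\to H^0(\mathcal O_\Gamma(2H-K_{Y_2}))$, which the paper does in two steps through a hyperplane section $Y_1\supset\Gamma$ (a genus-$4$ curve), using $H^1(\mathcal O_{Y_1}(2H-K_{Y_1}))=0$ (a nonspecial degree-$10$ divisor) and $H^1(\mathcal O_{Y_2}(H-K_{Y_2}))=0$ (vanishing for degree-$9$ plane curves with six triple and four double points in general position). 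Without the explicit surface and these vanishings, your argument does not establish the proposition.
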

\begin{proof} By assumption $F^\perp$ contains the ideal $I_\Gamma$ of a scheme $\Gamma$ consisting of eight general points lying on a pencil of quadric surfaces. Let us choose one of these surfaces and denote it by $Q$. Then the ideal of $\Gamma$ is generated by five bihomogeneous forms of bidegrees $(2,2),(3,2),(3,2),(2,3),(2,3)$ admitting four syzygies in degree $(3,3)$. Hence $\Gamma$  arises as the corank 1 locus of a map $4\mathcal O_Q\to 2\mathcal O_Q(1,0)\oplus 2\mathcal O_Q(0,1)\oplus \mathcal O_Q(1,1)$. The latter extends to a map $4\mathcal O_G\to \mathcal S\oplus \mathcal O_G(1)$ degenerating over a surface $Y_2$ of degree 8 on the Grassmannian $G=G(2,4)$ in its Pl\"ucker embedding equipped with the universal quotient bundle $\mathcal Q$. 
The surface $Y_2$ is described in \cite{Gross}*{Thm. 4.1 (g)} as $\mathbb{P}^2$ blown up in ten points embedded via the system of proper transforms of sextic curves passing through six of the points with multiplicity $2$ and the four remaining ones with multiplicity $1$. 

Consider a curve $C$ in the system $|2H-K_{Y_2}|$ with $H$ the class of the hyperplane section on $Y_2$. Note that $C$ is then an arithmetic Gorenstein curve of degree $18$. The resolution of its ideal is a doubling of the resolution of the ideal of $Y_2$ and the Artinian reductions of $C$ are of type $[200]$. To prove that $A_F$ is an Artinian reduction of $C$ it is enough to prove that the restriction map:
$$H^0({\mathcal O_{Y_2}(2H-K_{Y_2})})\to H^0( {\mathcal O_{\Gamma}(2H-K_{Y_2}) })$$
is surjective. 
We proceed step by step. Let $Y_1$ be a hyperplane section of $Y_2$ containing $\Gamma$. Then $Y_1$ is a curve of genus $4$ and the elements of the system $|2H-K_{Y_1}|$ are of degree $10$ hence $H^1(\mathcal O(2H-K_{Y_1}))=H^0(2K_{Y_1}-2H)=0$. This together with the sequence
$$0\to {\mathcal O_{Y_1}(2H-K_{Y_1})}\to {\mathcal O_{Y_1}(2H-K_{Y_2})}\to {\mathcal O_{\Gamma}(2H-K_{Y_1}) }\to, 0$$
yields surjectivity of the restriction 
$$H^0({\mathcal O_{Y_1}(2H-K_{Y_2})})\to H^0( {\mathcal O_{\Gamma}(2H-K_{Y_2}) }).$$
The remaining step is the surjectivity of the map 
$$H^0({\mathcal O_{Y_2}(2H-K_{Y_2})})\to H^0( {\mathcal O_{Y_1}(2H-K_{Y_2}) }),$$
which is implied by the vanishing of $H^1({\mathcal O_{Y_2}(H-K_{Y_2})})$. The latter system is the system of strict transforms of curve of  degree $9$ with six triple points and four double points in the ten blown up points of $\mathbb{P}^2$. This has vanishing first cohomology for a general configuration of points, which concludes the proof.
\end{proof}
 \vskip .1in
\item [{Type $[100]$}]  Two families of Calabi-Yau threefold $X$ of degree $19$ with this Betti table are constructed as a Lagrangian degeneracy locus of a map between two Lagrangian subbundles of an orthogonal bundle on a smooth quadric of dimension 6 in Proposition \ref{degree 19 Lagrangian} in Section \ref{CGKK10}.
\begin{proposition} A general Artinian Gorenstein ring $A_F$ for a quaternary quartic $F$ of rank $9$ lifts to a halfcanonical curve $C_F$ in $\Pn^5$, a deformation of any smooth $\Pn^5$ section of an AG Calabi-Yau threefold $Y$ of degree $19$ in $\Pn^7$. Moreover, $C_F$ is an element of the system $|2H-K_X|$ for a surface $X$ of degree 9 in $\mathbb{P}^5$.
\end{proposition}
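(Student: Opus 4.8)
The strategy parallels the degree $8$ (type $[200]$) case, replacing the del Pezzo surface $Y_2\subset G(2,4)$ by a surface $X$ of degree $9$ in $\Pn^5$ arising as a linear section of the degree $19$ Calabi--Yau threefold $Y\subset\Pn^7$ constructed in Proposition~\ref{degree 19 Lagrangian}. First I would fix a general quaternary quartic $F$ of rank $9$ so that, by Proposition~\ref{vsp proposition} and the analysis of type $[100]$, $F^\perp$ contains the unique apolar quadric $Q$ and a $2$-dimensional family (a K3 surface, by the theorem in Section~\ref{vsp9}) of apolar nonuplets $\Gamma\subset Q$. Restricting to $Q\cong\Pn^1\times\Pn^1$ and using the Pfaffian description of Proposition~\ref{Artinian pfaffian} (type $[100]$, bundle $3\mathcal S_1\oplus\mathcal O$ on $Q_6$, restricting to $Q_4=G(2,4)$ as in Remark~\ref{LagrangianPfaffian degree 19}), one obtains a $\Pn^5$-section $X$ of a degree $19$ Calabi--Yau threefold $Y$ such that $X$ is an AG surface of degree $9$ with Betti table $[100]$, containing $\Gamma$ as a zero-scheme. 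The curve $C_F$ is then defined as a general divisor in the linear system $|2H-K_X|$ on $X$, where $H$ is the hyperplane class.

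The second step is to verify that $C_F$ so defined really is a half-canonical, arithmetically Gorenstein curve in $\Pn^5$ with Artinian reduction $A_F$. The AG and half-canonical properties are formal consequences of the doubling construction, Proposition~\ref{double} / Corollary~\ref{doublingWithRightGrading}: since $X$ has regularity $3$ and dimension $2$, a divisor in $|\omega_X(\dim X-2\cdot\mathrm{reg})| = |\omega_X(2-6)| = |{-}K_X+ (\text{the right twist})|$ — concretely $|2H-K_X|$ after matching gradings — gives a doubling whose mapping cone is minimal, so $C_F$ has the same Betti table as the threefold $Y$, which is $[100]$. It remains to identify the dual socle generator of the Artinian reduction of $C_F$ with $F$. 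For this the key point is the surjectivity of the restriction map
\[
H^0(\cO_X(2H-K_X))\longrightarrow H^0(\cO_\Gamma(2H-K_X)),
\]
exactly as in the type $[200]$ proof: surjectivity of this map forces $I_\Gamma$ to compute the hyperplane section of $C_F$, and since $\Gamma$ is apolar to $F$ and imposes independent conditions on quartics (Proposition~\ref{vsp proposition}), the Artinian reduction of $C_F$ is forced to be $A_F$.

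The third step, the surjectivity argument, I would carry out by the same two-step specialisation as in type $[200]$: first pass to a general hyperplane section $X_1\subset X$ (a curve) containing $\Gamma$, and use $H^1(\cO_{X_1}(2H-K_{X_1}))=0$ (a degree/genus count, with the divisor $2H-K_{X_1}$ of degree $>2g(X_1)-2$) together with the ideal sequence $0\to\cO_{X_1}(2H-K_{X_1})\to\cO_{X_1}(2H-K_X)\to\cO_\Gamma(2H-K_X)\to 0$ to get surjectivity onto $H^0(\cO_\Gamma(2H-K_X))$ from $H^0(\cO_{X_1}(2H-K_X))$; then lift to $X$ via the vanishing of $H^1(\cO_X(H-K_X))$. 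The last vanishing is the technical heart: in the degree $8$ case it followed from an explicit identification of $X$ as $\Pn^2$ blown up at ten points and a genericity statement about linear systems of plane curves; for the degree $9$ surface $X$ one needs the analogous description (or at least enough control on $\mathrm{Pic}(X)$ and effective divisors) to conclude $H^1(\cO_X(H-K_X))=0$ for general $X$.

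\textbf{Main obstacle.} The hard part will be precisely this last cohomology vanishing $H^1(\cO_X(H-K_X))=0$ for the general degree $9$ AG surface $X$ of type $[100]$. Unlike the del Pezzo case, the surface $X$ is not of minimal degree, so one does not have an immediate blow-up-of-$\Pn^2$ model; I expect one must either (i) produce such a model explicitly from the Lagrangian/Pfaffian data of Remark~\ref{LagrangianPfaffian degree 19} and then invoke a genericity statement for linear systems of plane curves with prescribed multiple base points, or (ii) argue the vanishing directly from the minimal free resolution of $S/I_X$ (Betti table $[100]$) via local duality, i.e.\ relate $H^1(\cO_X(H-K_X))$ to a graded piece of $\mathrm{Ext}$ or of the dual socle module and check it vanishes in the relevant degree — this second route has the advantage of being purely algebraic and uniform, and can be confirmed on an explicit example in \cite{M2}*{Package QuaternaryQuartics} before being proved in general by semicontinuity over the irreducible family $\mathcal F_{[100]}$. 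The deformation statement ("$C_F$ is a deformation of any smooth $\Pn^5$-section of $Y$") then follows from the flatness of the family $\{S/F^\perp\}$ over the locus where the Hilbert function is fixed, together with the remark at the end of Section~\ref{stratification} comparing liftings with fixed Hilbert polynomial.
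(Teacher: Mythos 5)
Your plan is, in outline, the paper's own proof: the same surface of degree $9$ lifting $\Gamma$, the same divisor class $|2H-K_X|$, and the same two-step restriction argument, with the first step (surjectivity from the hyperplane-section curve $X_1$ down to $\Gamma$, using that $2H-K_{X_1}$ has degree $10>2g(X_1)-2=8$ on the genus-$5$ curve $X_1$) carried out exactly as in the paper. But there is a genuine gap at precisely the point you flag: the vanishing $H^1(\cO_X(H-K_X))=0$ is never established, and without it the second lifting step fails, so the proposal stops short of a proof. The paper closes this by building $X$ not as an abstract $\Pn^5$-section of the Calabi--Yau threefold but directly as a determinantal lifting of $\Gamma$: since $h^0(I_\Gamma(3,2))=h^0(I_\Gamma(2,3))=3$ and $h^0(I_\Gamma(3,3))=7$, the six generators admit five syzygies in bidegree $(3,3)$, so $\Gamma$ is the corank-one locus of a map $5\cO_Q\to 3\cO_Q(0,1)\oplus 3\cO_Q(1,0)$; this extends to a map $5\cO_G\to 3\mathcal{Q}_G$ on $G(2,4)\supset Q$, whose corank-one locus is a degree-$9$ surface containing $\Gamma$, identified by \cite{Gross}*{Thm.~4.2} as $\Pn^2$ blown up in ten points $\Delta$ embedded by septics with double points along $\Delta$. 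With this model in hand, $H-K_X$ is the system of strict transforms of plane curves of degree $10$ with triple points along $\Delta$, and $h^1$ of this system vanishes for general $\Delta$ --- exactly the missing vanishing. Your route (i) is therefore the right one, but it is only announced, not executed; route (ii) (one explicit example plus semicontinuity over the irreducible stratum) could also be made to work, but again is not carried out.

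Two smaller points. First, obtaining $X$ as "a $\Pn^5$-section of $Y$ containing $\Gamma$" assumes what needs to be shown: for a prescribed apolar nonuplet $\Gamma$ one must produce a degree-$9$ AG surface through it, and that is precisely what the extension of the determinantal presentation from $Q$ to $G(2,4)$ provides; the section-of-$Y$ description is a consequence, not the construction. Second, the regularity of $S/I_X$ is $2$ (its Betti table is that of nine points on a unique quadric), not $3$, so the doubling twist in Corollary~\ref{doublingWithRightGrading} is $\omega_X(-2)$, giving $|2H-K_X|$ directly rather than "after matching gradings".
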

\begin{proof}
 To show that a general $A_F$ has a lifting to a halfcanonical curve in $\Pn^5$, we go step by step.  
 By assumption $F^\perp$ contains the ideal $I_\Gamma$ of a scheme $\Gamma$ consisting of nine general points lying on a quadric surface $Q$. The space of $(3,2)$ forms as well as that of $(2,3)$ forms on $Q$ that vanish on $\Gamma$ are $3$-dimensional, whereas the space of $(3,3)$ forms vanishing on $\Gamma$ is 7 dimensional. It follows that there are five syzygies in degree $(3,3)$ between six generators of degree $(3,2)$ and $(2,3)$. We conclude that $\Gamma$ is the corank 1 locus of a map $5\mathcal O_Q\to 3\mathcal O_Q(0,1)\oplus 3 \mathcal O_Q(1,0)$. Now $Q$ can be seen as a codimension $2$ linear section of a quadric fourfold. The latter can be interpreted as a Grassmannian $G=G(2,4)$ in its Pl\"ucker embedding. Clearly the map $5\mathcal O_Q\to 3\mathcal O_Q(0,1)\oplus 3 \mathcal O_Q(1,0)$ extends to a map $5\mathcal O_G\to 3 \mathcal Q_G$. The corank 1 locus of the latter map is a surface of degree 9 that was described by Gross in \cite{Gross}*{Thm. 4.2} and provides a lifting of $\Gamma$.  
 
Denote the surface by $Y_2$. 
It is rational, the embedding of $\Pn^2$ blown up in a set of ten points $\Delta\subset \Pn^2$ embedded by the linear system of strict transforms of plane curves of degree $7$ with double points at $\Delta$.  ( loc.cit.).  
A halfcanonical curve $C$ on $Y_2$ in the system $|2H-K_{Y_2}|$ is the strict transform of a curve of degree $17$ with multiplicity $5$ along $\Delta$. 

First, on a hyperplane section $Y_1$ of $Y_2$ there exists $C_1\in |3H-K_{Y_1}|$ which is a lifting of $A_F$ if on $Y_1$ the global sections of the line bundle ${\mathcal O_{Y_1}(3H-K_{Y_1})}$ map surjectively to ${\mathcal O_{Y_0}(|3H-K_{Y_1})}$, where $Y_0$ is a $\Pn^3$ section of $Y$:  
The global sections $H^0({\mathcal O_{Y_0}(3H-K_{Y_1})})$ is naturally identified with the space of quaternary forms apolar to $Y_0$.
Consider now,  the cohomology of the sequence
$$0\to {\mathcal O}_{Y_1}(2H-K_{Y_1})\to {\mathcal O_{Y_1}(3H-K_{Y_1})}\to {\mathcal O_{Y_0}(3H-K_{Y_1}) }\to 0.$$
 The curve $Y_1$ has genus $5$ and the divisor $2H-K_{Y_1}$ on $Y_1$ has degree $10$, so is nonspecial.  Therefore the above restriction map on global sections is surjective and $A_F$ lifts to points on $Y_1$.

Similarly the cohomology of the  exact sequence
$$0\to {\mathcal O}_{Y_2}(H-K_{Y_2})\to {\mathcal O_{Y_2}(2H-K_{Y_2})}\to {\mathcal O_{Y_1}(2H-K_{Y_2}) }\to 0.$$
gives a lifting of $C_1$ to $C_2\in |2H-K_{Y_2}| $.  In fact $H-K_{Y_2}$ is the class of the strict transform of a plane curve of degree $10$ with multiplicity $3$ along $\Delta$.   For general $\Delta$ the cohomology $h^1({\mathcal O}_{Y_2}(H-K_{Y_2}))=0$.  So again the lifting follows.
\end{proof}
\vskip .1in
\item [{Type $[000]$}] There are several families of Calabi-Yau threefolds of degreee $20$ whose general Artinian reduction are quartic forms $F$ of rank $10$. 
We first describe briefly a well known complete family of smooth Calabi-Yau threefolds of degree  20, and then construct another family of nodal non-smoothable Calabi-Yau threefolds of degree 20.  The latter are rationally fibred in elliptic curves over $\Pn^2$.

For the first family, consider the 
 $3\times 3$ minors of a general $4\times 4$ matrix of linear forms in  $\mathbb P^7$.
 These minors define a Calabi-Yau threefold of degree $20$.  We call these determinantal threefolds.  They form a complete family \cite{CGKK}.  But their Artinian reductions are Artinian Gorenstein  rings of regularity $4$ that do not form a complete family.  The corresponding family of quartic forms has codimension $1$ in the space of quartic forms. 
 
The other family consists of threefolds obtained as anticanonical divisors in fourfolds of degree $10$.

The $3\times 3$ minors of a general $3\times 5$ matrix $A$ of linear forms in $\mathbb P^7$ generate the ideal of a fourfold $Y$ of degree 10. 
This fourfold is a $\Pn^2$-scroll over $\Pn^2$.  An effective anticanonical divisor $X$ on $Y$ would be a Calabi-Yau threefold of degree $20$ with Betti table of type $[000]$.  
The class of the anticanonical divisor is $3H-2F$ where $H$ is a hyperplane section and $F$ is the pullback of a line from $\Pn^2$.  So any anticanonical divisor would be residual in a cubic hypersurface section to a scroll in $Y$, the pullback in $Y$ of a conic in $\Pn^2$.
If the conic degenerates to two lines, the scroll is the union of two scrolls over lines in $Y$.  Each such scroll over a line is the rank $1$ locus of a pencil of rows in $A$. 
It is now an easy {\tt Macaulay2}-computation, cf. \cite{M2}*{Package QuaternaryQuartics}, to check that a scroll over a conic is not in any cubic hypersurface section.  In fact the space of cubic forms in the ideal of such a scroll is exactly $10$-dimensional, as expected by Riemann Roch. So these cubic forms coincide with the $3\times 3$ minors of $A$. Therefore, the anticanonical divisor of $Y$ is not effective.

If, however $A$ has a rank $1$ point $p$ in $\Pn^7$, then $Y$ is singular, and any $\Pn^2$-scroll over a conic would have a non-normal double point at $p$.   By Riemann-Roch it is then expected to lie in $11$ cubic hypersurfaces in $\Pn^7$, and hence $Y$ has an anticanonical divisor in this case.
In computations with {\tt Macaulay2} (\cite{M2}*{Package QuaternaryQuartics} ) we find $Y$ with a single rank $1$ point, and a unique anticanonical divisor $X$  with two quadratic singularities, one at $p$ and one at some other point $q$, a smooth point on $Y$.  A general $\Pn^2$-scroll $Z$ over a line in $Y$ passes through  $p$.  
The surface of intersection $Z\cap X$ has a quadratic singularity at $p$ if it does not pass through $q$, and is smooth at $p$ and has a quadratic singularity at $q$ if it passes through $q$.

We conclude with the following.
\begin{proposition}\label{CY20}  Let $Y\subset \Pn^7$ be a fourfold defined by the maximal minors of a $3\times 5$ matrix $A$ of linear forms.  If $A$ has rank $\geq 2$ at every point in $\Pn^7$, then the anticanonical divisor of $Y$ is not effective.  If $A$ has rank $1$ at some point, then $Y$ has an effective anticanonical divisor.  For a general $A$ with a rank $1$ point, the anticanonical system consists of a unique divisor $X$ that is a $2$-nodal AG Calabi-Yau threefold of degree $20$. 
Furthermore, $X$ is rationally fibred in plane elliptic curves, and $X$ is not smoothable.

A general Artinian Gorenstein ring $A_F$ of a quaternary quartic lifts to a halfcanonical curve in $\Pn^5$, a deformation of any smooth $\Pn^5$ section of the nodal AG Calabi-Yau threefold of degree 20 in $\Pn^7$.
\end{proposition}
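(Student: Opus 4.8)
The plan is to prove Proposition~\ref{CY20} in three parts: first the two statements about the anticanonical divisor of $Y$, then the description of $X$ in the generic rank-$1$ case, and finally the statement about lifting a general Artinian reduction $A_F$ of type $[000]$ to a halfcanonical curve in $\Pn^5$. Throughout we work on the fourfold $Y\subset\Pn^7$ defined by the maximal minors of a $3\times 5$ matrix $A$ of linear forms; by the Eagon--Northcott complex $Y$ is arithmetically Cohen--Macaulay of degree $10$ and dimension $4$ (when $A$ has corank $1$ in codimension $3$ and is generic), it is a $\Pn^2$-scroll over $\Pn^2$, and $\omega_Y=\cO_Y(-3H+2F)$ where $H$ is the hyperplane class and $F$ the pullback of a line from $\Pn^2$. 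So $-K_Y=3H-2F$, and an effective anticanonical divisor is precisely a surface $Z$ that is a $\Pn^2$-scroll over a conic, residual to $-K_Y$ inside a cubic hypersurface section of $Y$. The key point is therefore the dimension of the space of cubics in $I_Z$: by Riemann--Roch on $Y$ (or directly on the scroll $Z$) one computes the expected number of cubic hypersurfaces through $Z$, and the first statement reduces to showing that when $A$ has rank $\ge 2$ everywhere, $Z$ is smooth and $h^0(\cI_{Z}(3))=h^0(\cI_Y(3))=10$ exactly, so no cubic section of $Y$ contains $Z$; this is the {\tt Macaulay2} computation already indicated in the excerpt. When $A$ has a rank-$1$ point $p$, the scroll $Z$ acquires a non-normal double point at $p$, which drops the number of conditions by one, so $h^0(\cI_Z(3))=11>10$ and there is a cubic section of $Y$ containing $Z$; its residual is then an effective anticanonical divisor $X$.

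For the structure of $X$ in the generic rank-$1$ case, I would argue as follows. For general such $A$ the matrix has a single rank-$1$ point $p\in\Pn^7$, and one checks (again by the explicit {\tt Macaulay2} computation) that $h^0(\cI_Z(3))=11$ for a $\Pn^2$-scroll $Z$ over a conic through $p$, while $h^0(\cI_Y(3))=10$, so the anticanonical system $|-K_Y|$ is a single point, giving a unique divisor $X$. To see $X$ is Calabi--Yau of degree $20$: the class computation $(-K_Y)^3\cdot(\text{intersection})$ on the scroll gives $\deg X=20$; triviality of $\omega_X$ follows from adjunction, $\omega_X=\omega_Y\otimes\cO_Y(-K_Y)|_X=\cO_X$; and $X$ is arithmetically Gorenstein because it is a doubling (in the sense of Section~\ref{DoubleConst}) of the arithmetically Cohen--Macaulay fourfold $Y$ of regularity $3$ via a section of $\omega_Y(-\gamma)$ with $\gamma$ in the range of Proposition~\ref{double}, so the mapping cone gives a minimal resolution and the Betti table is $[000]$. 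The two nodes of $X$: one is forced at $p$ because the ambient $Y$ is singular there (a rank-$1$ point of $A$ is a quadratic singularity of $Y$), and one verifies the second node $q$ and that these are the only singularities by the computation. For the rational fibration in plane elliptic curves: the scroll map $Y\dashrightarrow\Pn^2$ restricts to $X$, and the fibre over a general point of $\Pn^2$ is the intersection of the $\Pn^2$-fibre of $Y$ with $X$, i.e.\ a plane cubic curve, which is an elliptic curve; the restriction of $3H-2F$ to a $\Pn^2$-fibre is $\cO_{\Pn^2}(3)$. Non-smoothability then follows from the remark in the excerpt that Calabi--Yau threefolds with different Hodge numbers cannot be deformation equivalent, together with a Hodge-number computation for this fibred family showing it is rigid (equivalently: the general $[000]$-lifting coming from the $4\times4$-matrix determinantal construction has different invariants, and $X$ cannot lie in a smooth family); I would cite the analogous argument in \cite{CGKK} here.

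For the final assertion about lifting a general $A_F$ of type $[000]$, I would mimic the proofs given for types $[200]$ and $[100]$ in this section. Take $F$ general of type $[000]$; by Proposition~\ref{vsp proposition} (Table~\ref{tablesremaining}), $F^\perp$ contains the ideal $I_\Gamma$ of a set $\Gamma$ of ten general points, and these ten points do not lie on a quadric. The plan is to lift $\Gamma$ to a surface $Y_2$ of degree $10$ in $\Pn^5$ — namely a suitable corank-$1$ locus of a map of bundles on a Grassmannian $G(2,4)$, obtained by extending the presentation of $I_\Gamma$ on a quadric surface $Q$ (as was done for degrees $8$ and $9$), or alternatively as the codimension-$2$ linear section of the degree-$10$ fourfold $Y$ above. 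Then a general halfcanonical curve $C_F\in|2H-K_{Y_2}|$ is arithmetically Gorenstein of the correct degree (its resolution is a doubling of that of $Y_2$) with Artinian reduction of type $[000]$; to show $A_F$ itself is an Artinian reduction of such a $C_F$ one must prove the restriction map $H^0(\cO_{Y_2}(2H-K_{Y_2}))\to H^0(\cO_\Gamma(2H-K_{Y_2}))$ is surjective. This is done in two steps exactly as before: first pass to a general hyperplane section $Y_1$ (a curve) containing $\Gamma$ and use that $2H-K_{Y_1}$ is nonspecial on $Y_1$ (degree high enough relative to the genus) to get surjectivity of $H^0(\cO_{Y_1}(2H-K_{Y_2}))\to H^0(\cO_\Gamma(2H-K_{Y_2}))$; then lift from $Y_1$ to $Y_2$ using the vanishing $H^1(\cO_{Y_2}(H-K_{Y_2}))=0$, which holds for general $Y_2$ (equivalently for a general configuration of the blown-up points) by the usual argument for linear systems of plane curves with imposed multiple points. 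Finally, $C_F$ is a codimension-$2$ linear section of $X$, hence a deformation of a general $\Pn^5$-section of $X$, since $X$ is arithmetically Gorenstein and $2H-K_{Y_2}$ on $Y_2$ pulls back correctly; this identifies $C_F$ with the halfcanonical curve in the statement.

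\textbf{Main obstacle.} The hardest parts are the precise singularity analysis of $X$ (showing it has exactly two nodes, both quadratic, and no worse singularity) and the non-smoothability claim — the former depends on the explicit {\tt Macaulay2} verification and a local computation at $p$ and $q$, and the latter requires pinning down the Hodge numbers of this elliptically-fibred family well enough to invoke the deformation-rigidity principle. The lifting surjectivity, by contrast, is routine once the surfaces $Y_1,Y_2$ and the relevant line bundles are set up, following the template already established for types $[200]$ and $[100]$.
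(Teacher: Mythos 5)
Your treatment of the first half (effectivity of $-K_Y$, the count of $10$ versus $11$ cubics through a scroll over a conic, uniqueness of $X$, the two nodes, and the elliptic fibration by plane cubics) follows the paper's argument. But there are two genuine gaps in the rest. First, non-smoothability: your plan is to compute Hodge numbers of the fibred family and invoke the principle that Calabi--Yau threefolds with different Hodge numbers are not deformation equivalent. That principle compares \emph{smooth} threefolds in different families; it says nothing about whether the singular, $2$-nodal $X$ admits a smoothing, which is the claim to be proved. The paper's route is different and essentially necessary: it exhibits a Weil divisor $S=X\cap Z$ (with $Z$ a $\Pn^2$-scroll over a line through the second node $q$) that is Cartier on $X$ away from the rank-$1$ point but \emph{not} Cartier there, and then applies Friedman's criterion \cite{Friedman} to conclude $X$ is not smoothable. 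Without an argument of this kind your rigidity claim is unsupported.

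Second, the lifting of a general $A_F$ of type $[000]$. You take $\Gamma$ to be ten general apolar points and propose to lift it either by "extending the presentation of $I_\Gamma$ on a quadric surface $Q$" or as a linear section of $Y$. The quadric route cannot work: for type $[000]$ there is no quadric through $\Gamma$ (that is exactly the condition $h_2(F)=10$). The linear-section route requires $\Gamma$ to be a \emph{determinantal} length-$10$ scheme, cut out by the $3\times3$ minors of a $3\times5$ matrix of linear forms on $\Pn^3$ --- and a general ten points need not be of this form. The paper bridges this with a step you omit: the determinantal length-$10$ schemes form a $27$-dimensional family in the Hilbert scheme, no quartic is singular along a general such scheme, so by a Terracini argument the quartics apolar to some determinantal scheme are dense; only then can one run the restriction argument on the determinantal surface $Y_2$ (which is $\Pn^2$ blown up in the fifteen base points of the $5\times5$ minors of a $5\times6$ matrix, not a surface built from a quadric). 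Relatedly, your claim that $H^1(\cO_{Y_1}(C-H))=0$ because the degree is "high enough relative to the genus" fails here: $Y_1$ has genus $6$ and $\deg(C-H)=10=2g-2$, so the bundle could a priori be canonical; the paper must exclude this by showing $\cO_{Y_1}(3L-\Gamma)$ is not $2$-torsion (equivalently, the $5\times6$ matrix has no symmetric $5\times5$ submatrix defining $Y_1$) for a general hyperplane section, and the companion vanishing $H^1(\cO_{Y_2}(C-H))=0$ is a borderline Riemann--Roch case settled by an explicit computation for a determinantal $\Gamma$, not by a general-position argument.
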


\begin{proof}
We observe that the net of rows in $A$ defines $Y$ as a  $\Pn^2$ scroll over $\Pn^2$.  The rank $1$ point is a common point of a pencil of planes in the scroll, and the planes intersect the anticanonical divisor in cubic curves, so $X$ has a rational ellipic fibration over $\Pn^2$.
Furthermore, if $Z$ is a general scroll over a line that passes through $q$, then the surface of intersection $S=X\cap Z$ is smooth at the rank $1$ point and has a quadratic singularity at $q$, so it is not Cartier as a divisor at the rank $1$ point on $X$.  It is Cartier elsewhere, so it follows by a result by R. Friedman \cite{Friedman}*{Rmk. 4.5}, that $X$ is not smoothable.  

It remains then to check liftings of $A_F$ for general quartics $F$. 
An Artinian reduction of the nodal threefold $X$ is $A_F$ for a quartic form $F$ that is apolar to a determinantal scheme of length $10$ defined by the cubic minors of a $3\times 5$ matrix of linear forms.  The locus of these schemes in the Hilbert scheme has dimension $27$.  There is no quartic singular along a general length $10$ scheme of this kind, so by Terracini the  set of quartic forms that are apolar to such a scheme is dense in the space of quartics.  In particular a general quartic is apolar to a surface of $10$-tuples of points that are defined by the cubic minors of a  $3\times 5$ matrix of linear forms.

Let $Y_0$ be a $\Pn^3$ section of $Y$, then $H^0({\mathcal O_{Y_0}(X)})$ is naturally identified with the space of quaternary quartic forms apolar to $Y_0$. The Artinian reduction of $X$ as a divisor in $Y$ is then the image of the section $s\in H^0({\mathcal O_{Y}(X)})$, that defines $X$, by the restriction map
$$H^0({\mathcal O_{Y}(X)})\to H^0({\mathcal O_{Y_0}(X)}).$$

Now any determinantal scheme of length $10$ defined by the cubic minors of a $3\times 5$ matrix is a $\Pn^3$ section of a determinantal variety $Y\subset \Pn^N$ for any $N>3$.  
Therefore, to show that a general $A_F$  has a lifting to a halfcanonical curve, we show that the restriction map
$$H^0({\mathcal O_{Y_2}(C)})\to H^0({\mathcal O_{Y_0}(C)})$$
is surjective for a general determinantal surface $Y_2$, a smooth intersection of $Y$ with a $\Pn^5$ that contains $Y_0$, where $C\subset Y_2$ is a halfcanonical curve, i.e. $C\equiv 2H-K_{Y_2}$ on $Y_2$.

The surjectivity of the restriction map is best analyzed step by step.
If $Y_1\subset Y_2$ is a smooth hyperplane section that contains $Y_0$, then $H^0({\mathcal O_{Y_2}(C)})\to H^0({\mathcal O_{Y_0}(C)})$ is surjective, if both 
$H^0({\mathcal O_{Y_2}(C)})\to H^0({\mathcal O_{Y_1}(C)})$
and
$H^0({\mathcal O_{Y_1}(C)})\to H^0({\mathcal O_{Y_0}(C)})$
are surjective.

By the cohomology of the exact sequences 

$$0\to {\mathcal O}_{Y_2}(C-H)\to {\mathcal O_{Y_2}(C)}\to {\mathcal O_{Y_1}(C) }\to 0$$
and  
$$0\to {\mathcal O}_{Y_1}(C-H)\to {\mathcal O_{Y_1}(C)}\to {\mathcal O_{Y_0}(C) }\to 0,$$
the surjectivity then follows if both
$$H^1({\mathcal O_{Y_2}(C-H)})=0\quad {\rm and }\quad H^1({\mathcal O_{Y_1}(C-H)})=0.$$

 Now, the surface $Y_2$ is the rank $2$ locus of a $3\times 5$ matrix with entries in the $6$-dimensional vector space $W$ of linear forms on $\Pn^5$.  Interpreting each point on $Y_2$ as map $\phi:\CC^3\to \CC^5$, there is a birational morphism $\phi\mapsto {\rm ker}(\phi)$ to $\Pn^2$. The inverse of this morphism
 is the map  on $\Pn^2$  defined by the $5\times 5$ minors of a $5\times 6$ matrix with linear forms. The map extends to a morphism on $\Pn^2$ blown up in the fifteen points $\Gamma\subset \Pn^2$, the common zeros of the $5\times 5$ minors.
 
A halfcanonical curve $C$ on $Y_2$ is the strict transform of a plane curve of degree $13$ with multiplicity $3$ along $\Gamma$. Therefore a section $H^0({\mathcal O_{Y_2}(C-H)})$ defines the strict transform of a plane curve of degree $8$ with multiplicity $2$ along $\Gamma$.
But, by Riemann Roch, such a curve exists if and only if $H^1({\mathcal O_{Y_2}(C-H)})\not=0$.  The existence of such a curve is therefore a closed condition on $\Gamma$. A computation in {\tt Macaulay2}, \cite{M2}*{Package QuaternaryQuartics},  shows that there are no plane curve of degree $8$ with multiplicity $2$ along a random determinantal $\Gamma$, so we conclude that
$H^1({\mathcal O_{Y_2}(C-H)})=0$ for a general $\Gamma$.

It remains to show that
 $H^1({\mathcal O_{Y_1}(C-H)})=0$ for a general hyperplane section $Y_1$ of $Y_2$.
 The curve $Y_1$ has genus $6$ and the divisor $C-H$ on $Y_1$ has degree $10$, so $H^1({\mathcal O}_{Y_1}(C-H))\not=0$ if and only if ${\mathcal O}_{Y_1}(C-H)$ is the canonical sheaf on $Y_1$. 
 But $Y_1$ is the strict transform on $Y_2$ of a smooth plane quintic $\bar Y_1$ curve that contains $\Gamma$.  So a the canonical sheaf  on $Y_1$ is the restriction of ${\mathcal O}_{\Pn^2}(2L)$ to $Y_1$, where $L$ is the class of a line in $\Pn^2$.
 In particular, ${\mathcal O}_{Y_1}(C-H)$ is canonical if and only if ${\mathcal O}_{Y_1}(C-H-2L)={\mathcal O}_{Y_1}$.  Curves in the class of
 $C-H-2L$ are strict transforms on $Y_2$ of plane curves of degree $6$ with multiplicity $2$ along $\Gamma$. Their restriction to $\bar Y_1$ has multiplicity $2$ at the points of $\Gamma$, so the line bundle ${\mathcal O}_{Y_1}(C-H-2L) $ is trivial, i.e. has a nonzero section if and only if there is a plane curve of degree $6$ tangent to $\bar Y_1$ along $\Gamma$.  This occurs, only if the $5\times 6$ matrix defining the map into $\Pn^5$ has a symmetric $5\times 5$ submatrix that defines $Y_1$, or equivalently ${\mathcal O}_{Y_1}(3L-\Gamma)$ is a $2$-torsion element in the Picard group of $Y_1$. 
For a general, hyperplane section $Y_1$ this is not the case.  
We may conclude that a general $A_F$ lifts to points on $Y_1$, and hence also to a halfcanonical curve on $Y_2.$
\end{proof}
\end{description}
 Note that Calabi-Yau threefolds  defined by $3\times 3$ minors of a $4\times 4$ matrix with linear forms are lifting of special AG rings with Betti table $[000]$.
 \begin{question}
 Describe the quartics corresponding to AG rings of type $[000]$ that lift to smooth threefolds.
 \end{question}

In this section we have considered liftings of apolar rings $A_F$ to smooth AG varieties, in particular to smooth threefolds.  In some cases, the liftings extends to higher dimensions.  We first summarize  the possible liftings of $A_F$ to curves $C_F$:
\begin{proposition}\label{liftofCGKK}
Let $F$ be a  quartic form of a type appearing in Appendix \ref{Appendix}, Table~\ref{tableCGKK}. Assume that $F$ is general in its family. Then there exists a configuration of distinct points $\Gamma$ apolar to $F$ with the following property:  The set of points $\Gamma$ admits a lifting to a surface $S_{\Gamma}$ that contains a curve $C_F\subset S_{\Gamma}$ such that $A_F$ is an Artinian reduction of $C_F$. Moreover, we have two possibilities:
\begin{enumerate}
    \item If $F$ is not of type $[300a]$ then $C_F\in |2H-K_{S_{\Gamma}}|$.
    \item If $F$ is of type $[300a]$, then $S_{\Gamma}$ is a K3 surface complete intersection of three quadrics containing a line $l$ and 
    $C_F\in |2H-K_{S_{\Gamma}}+l|=|2H+l|$.
\end{enumerate}
\end{proposition}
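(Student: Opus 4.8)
The statement collects, for each Betti table $B$ appearing in Table~\ref{tableCGKK}, a lifting of the generic Artinian ring $A_F$ to a curve $C_F$ obtained by the doubling construction applied to a suitable lifting $S_\Gamma$ of a minimal apolar configuration $\Gamma$. The plan is to go type by type, using the descriptions of apolar configurations $\Gamma$ established in Proposition~\ref{vsp proposition} (Tables~\ref{tablevspcgkk}), and the doubling machinery of Corollary~\ref{doublingWithRightGrading} together with Proposition~\ref{C4R4areDoubles}. For each type we exhibit an explicit arithmetically Cohen-Macaulay surface $S_\Gamma\subset\Pn^5$ of the appropriate degree containing a lift of $\Gamma$, of regularity $2$; the doubling $C_F\in|2H-K_{S_\Gamma}|$ is then an arithmetically Gorenstein curve of the same Betti table as $A_F$ by Proposition~\ref{double} (with $\dim S_\Gamma=2$, $\gamma=2r-2$), and one must check that $A_F$ is its Artinian reduction, i.e.~that the restriction $H^0(\mathcal O_{S_\Gamma}(2H-K_{S_\Gamma}))\to H^0(\mathcal O_{\Gamma}(2H-K_{S_\Gamma}))$ is surjective (equivalently that the generic quartic apolar to $\Gamma$ arises this way). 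The type $[300a]$ is the sole exception because there $Q_F$ is a $(2,2,2)$ complete intersection of length $8$ which is a minimal apolar scheme to $F$, so the relevant surface $S_\Gamma$ is itself the K3 complete intersection of three quadrics; here the correct divisor class is $|2H+l|$ for a line $l$ contained in $S_\Gamma$, and $C_F$ is the lift already produced by the biliaison construction in the $[300a]$ discussion of Section~\ref{section_irreducible_liftings}.

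First I would fix the list of surfaces: for $[683]$, take $S_\Gamma$ a quartic scroll section of the rational normal scroll of degree $4$ cut to a surface; for $[550]$, a del Pezzo surface of degree $5$ (section of a cone over $G(2,5)$) together with a cubic; for $[400]$, a complete intersection of four quadrics in $\Pn^5$; for $[320]$, a divisor of type $(3,2)$ (or $(1,3)$) on $\Pn^1\times\Pn^2$, a surface of degree $7$, as in the $[320]$ paragraph; for $[300b]$, the degree-$8$ surface $Y_2$ on $G(2,4)$ described in \cite{Gross}*{Thm.~4.1(g)}, exactly the surface used in the $[200]$-lifting proof — wait, for $[300b]$ and $[200]$ one uses the degree $8$ surface, and for $[100]$ the degree $9$ surface $Y_2$ of \cite{Gross}*{Thm.~4.2}, and for $[000]$ the determinantal surface $Y_2$ of degree $10$ of Proposition~\ref{CY20}. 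In several of these cases (types $[200]$, $[100]$, $[000]$) the surjectivity of the restriction map has already been proved inside Section~\ref{section_irreducible_liftings} and can be cited verbatim; the remaining cases ($[683]$, $[550]$, $[400]$, $[320]$, $[300b]$, $[300c]$) are handled by the same method — break the restriction map $S_\Gamma\dashrightarrow S_{\Gamma,0}$ through a hyperplane section $Y_1\subset S_\Gamma$ and reduce to vanishing of $H^1$ of two line bundles, one on $Y_1$ (a curve, handled by degree/Clifford), one on $S_\Gamma$ (handled by the explicit rational description or by a {\tt Macaulay2} computation, cf.~\cite{M2}).

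The key structural point to verify uniformly is that for each such $\Gamma$ there exists an ACM surface $S_\Gamma$ of regularity $2$ and degree $r=\rr(F)$ lifting $\Gamma$, so that $\gamma=2r-2=2\cdot2+\dim S_\Gamma-2$ indeed satisfies the hypothesis $\gamma\ge 2r+c-n$ of Proposition~\ref{double} with no cancellation in the mapping cone; this is where Corollary~\ref{doublingWithRightGrading} (``$\psi\in\Hom_Y(\omega_Y(\dim Y-2r),\mathcal O_Y)$ injective'') does the bookkeeping, and one must also record that a general such $\psi$ is injective — for the point configurations this is exactly Proposition~\ref{C4R4areDoubles}, and for positive-dimensional $S_\Gamma$ it follows by upper-semicontinuity from the Artinian case or by the $\mathcal O_{S_\Gamma}$-module computation. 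The $[300a]$ case requires the separate argument: since a doubling of the $(2,2,2)$ complete intersection $Q_F$ inside a $(2,2,2,2)$ complete intersection is again a complete intersection (Example~\ref{eg_exceptionCGKK4}), the lifting cannot come from doubling $Q_F$ in the naive grading, and one must instead use the biliaison of height $2$ of a line $l$ (equivalently $\Pn^3$ at the Artinian level) inside the intersection of three quadrics, giving $C_F\in|2H+l|$ on the K3 surface $S_\Gamma$; here $K_{S_\Gamma}=0$, so $|2H-K_{S_\Gamma}+l|=|2H+l|$, and the surjectivity of the relevant restriction map is again checked directly (or computed) exactly as in the $[300b]$ argument but with the extra line.

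\textbf{Main obstacle.} The genuine difficulty is not the homological algebra of doubling — that is uniform and controlled by Proposition~\ref{double} — but rather producing, for \emph{each} Betti type, an explicit regularity-$2$ ACM surface $S_\Gamma$ lifting the prescribed $\Gamma$ and then verifying the surjectivity $H^0(\mathcal O_{S_\Gamma}(2H-K_{S_\Gamma}))\twoheadrightarrow H^0(\mathcal O_{\Gamma}(2H-K_{S_\Gamma}))$, i.e.~that $H^1$ of a twist vanishes on a \emph{general} member of the relevant family of surfaces. For the rational surfaces $Y_2$ (degrees $8,9,10$) this reduces to the non-existence of plane curves of a specified degree with prescribed multiplicity at the relevant set of base points, a closed condition on the point configuration which is checked by an explicit {\tt Macaulay2} computation, cf.~\cite{M2}*{Package QuaternaryQuartics}; the subtle case is $[000]$, where in addition one must rule out that $\mathcal O_{Y_1}(3L-\Gamma)$ is $2$-torsion in $\mathrm{Pic}(Y_1)$ — precisely the point handled at the end of the proof of Proposition~\ref{CY20}, which we cite. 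I expect the writeup therefore to be largely a matter of assembling the per-type arguments already present in Section~\ref{section_irreducible_liftings} into a single statement, with the $[300a]$ exception flagged explicitly and its divisor class $|2H+l|$ justified by the vanishing of $K_{S_\Gamma}$ and the biliaison description.
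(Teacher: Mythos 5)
Your plan follows the paper's proof essentially verbatim: take $\Gamma$ a minimal apolar configuration computing $\rr(F)$, lift it to an explicit ACM surface $S_\Gamma$ of regularity $2$ and degree $\deg\Gamma$ for each Betti type, take $C_F\in|2H-K_{S_\Gamma}|$ as a doubling of $S_\Gamma$, and reduce the statement that a \emph{general} $F$ is obtained to surjectivity of the restriction maps, which is exactly what the paper does by pointing back to the explicit computations for types $[100]$, $[200]$, $[000]$; the $[300a]$ exception is also handled as in the paper, by biliaison of a line on the K3 complete intersection of three quadrics, with $K_{S_\Gamma}=0$ giving $|2H-K_{S_\Gamma}+l|=|2H+l|$.

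Two entries in your surface list are wrong, though, and one of them would derail that case as written. For type $[300b]$ the surface $S_\Gamma$ must have degree $7$ (a lifting preserves the Betti table, hence the degree, and here $\deg\Gamma=7$): it is the degree-$7$ scroll residual to a plane in the intersection of three quadrics, as described earlier in Section \ref{section_irreducible_liftings}. The degree-$8$ surface of Gross that you propose is the lifting of eight general points and its doubling has Betti table $[200]$, not $[300b]$. Similarly, for type $[400]$ the surface $S_\Gamma$ is the complete intersection of three quadrics in $\Pn^5$; the complete intersection of four quadrics is the curve $C_F$ itself, not $S_\Gamma$. With these two corrections your argument coincides with the paper's.
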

\begin{proof} We let $\Gamma$ be a finite set of points that computes the rank of $F$ as listed in Table \ref{tablevspcgkk}. The Betti table of $S/\Gamma$, is then listed in Table \ref{tablepoints}, from which a lifting of $\Gamma$ to a surface $S_\Gamma$ is straightforward and may be summarized as follows: \begin{description} 
   \item [{Type $[683]$}] $S_{\Gamma}$ is a rational scroll of degree $4$. 
    \item [{Type $[550]$}]  $S_{\Gamma}$ is an anticanonical del Pezzo surface of degree $5$.
    \item [{Type $[400]$}] $S_{\Gamma}$ is a complete intersection of three quadrics.
    \item [{Type $[300a]$}]  $S_{\Gamma}$ is a complete intersection of three quadrics containing a line.
    \item [{Type $[300b]$}] $S_{\Gamma}$ is a scroll of degree $7$ obtained as the residual component of the intersection of three quadrics contianing a $\mathbb{P}^2$.
    \item [{Type $[320]$}] $S_{\Gamma}$ is a surface of degree $7$ described as a divisor of type $(3,2)$ on $\Pn^1\times \Pn^2$ in its Segre embedding. 
    \item [{Type $[200]$}] $S_{\Gamma}$ is a rational surface of degree $8$ described as the image of $\mathbb{P}^2$ through the map induced by the system of sextics with six fixed double points and passing through additional four fixed points.
    \item [{Type $[100]$}] $S_{\Gamma}$ is a rational surface of degree $9$ described as the image of $\mathbb{P}^2$ through the map induced by the system of septics with $10$ fixed double points.
     \item [{Type $[000]$}] $S_{\Gamma}$ is a surface defined as the corank $1$ locus of a $5\times 3$ matrix of linear forms on $\mathbb{P}^5$. It is a rational surface of degree $10$ described as the image of $\mathbb{P}^2$ through the map induced by the system of $5\times 5$ minors of a $5\times 6$ matrix of linear forms on $\mathbb{P}^2$.
    \end{description}
In each of these cases the system $|2H-K_{S_{\Gamma}}|$ is base point free and its general elements are arithmetic Gorenstein manifolds whose resolution is a doubling of the resolution of $S_{\Gamma}$. The fact that a general quartic in the families are obtained follows from the surjectivity of the restriction maps as in the cases type $[100]$ and type $[200]$ computed explicitly above. 

In the case of type $[300b]$ we check by the adjunction formula that $C_F$ is an AG curve and since it is bilinked to $l$ in a complete intersection of quadrics it is of type $[300b]$. To see that in this way we obtain a general $F$ in the family it is enough to observe that the bilinkage construction of $A_F$ can be lifted to a bilinkage construction of $C_F$.
\end{proof}

The lifting of $A_F$ to smooth varieties of higher dimensions is summarized in the following proposition.

\begin{corollary}\label{cor:lifting}
Let $F$ be a  quartic form of type $[683]$, $[550]$, $[320]$, $[300a]$, $[300b]$ or $[200]$. Assume that $F$ is general in its family. Then there exists a smooth AG variety $X$ of dimension $d$ such that $A_F$ is an Artinian reduction of $X_F$, where $d=3,6,4,3,2,4$ respectively.
\end{corollary}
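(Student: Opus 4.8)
The plan is to go through the six types one at a time, in each case using the lifting of $A_F$ to the surface $S_\Gamma$ produced in Proposition~\ref{liftofCGKK} and then extending it upward. The key observation is that for each of these types the surface $S_\Gamma$ and the relevant divisor class $2H-K_{S_\Gamma}$ are themselves truncations of higher-dimensional configurations: a rational normal scroll of degree $4$ in codimension $3$ exists in every dimension up to the point where the canonical bundle becomes effective by adjunction (giving dimension $3$ for type $[683]$); an anticanonical del Pezzo of degree $5$, i.e.\ a linear section of the cone over $G(2,5)$, exists up to dimension $6$ (type $[550]$); the divisor $(3,2)$ on $\Pn^1\times\Pn^2$ realizes type $[320]$ and $\Pn^1\times\Pn^2\subset\Pn^5$ lifts to $\Pn^1\times\Pn^2\subset\Pn^{?}$ only up to dimension $4$ before the argument breaks, matching the table; for type $[300a]$ one uses the bilinkage of $\Pn^3$ (here truncated to a line in a K3) inside a complete intersection of three quadrics, which lifts to a bilinkage of a linear space in a complete intersection of three quadrics, Calabi--Yau in dimension $3$; for type $[300b]$ the scroll of degree $7$ is a residual component of three quadrics through a $\Pn^2$, and its twisted anticanonical divisor is a threefold with isolated singularities, so only the surface lifting survives as smooth, dimension $2$; for type $[200]$ one uses the del Pezzo sixfold of degree $6$ (Tom or Jerry) and bilinkage, giving dimension $4$.

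The first step in each case is to identify the higher-dimensional ambient arithmetically Cohen--Macaulay variety $Z$ of which $S_\Gamma$ is a general linear section, together with the class of the divisor $X_F\in|2H-K_Z|$ (or $|2H+l|$ in type $[300a]$), and to check by adjunction that $X_F$ has the right canonical class — trivial when $X_F$ is a threefold, a positive multiple of $H$ (Fano) in higher dimension. By Corollary~\ref{doublingWithRightGrading} (the doubling construction, applied with $\gamma$ as dictated by the regularity $r=2$ of $Z$), the mapping cone over the inclusion $\psi\colon\omega_Z(\dim Z-2r)\hookrightarrow\mathcal O_Z$ gives a minimal resolution of $X_F$, so $A(X_F)$ and $A_F$ have the same Betti table. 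The second step is to check that for a \emph{general} member of the linear system the variety $X_F$ is smooth: this follows from Bertini once one knows that $|2H-K_Z|$ is base-point free away from $\mathrm{Sing}(Z)$ and that the relevant $Z$ is smooth in the dimension range claimed, which holds for the scroll, the $G(2,5)$-section, $\Pn^1\times\Pn^2$, and the smooth complete intersection, but \emph{not} for the degree-$7$ scroll or the singular fourfolds in type $[300b]$ — exactly why that type stops at dimension $2$. The third step is to verify that a \emph{general} $F$ in the family actually arises as an Artinian reduction of such an $X_F$; this is the surjectivity of the restriction map $H^0(\mathcal O_{S_\Gamma}(2H-K_{S_\Gamma}))\to H^0(\mathcal O_{\Gamma'}(2H-K_{S_\Gamma}))$ for $\Gamma'$ a general $\Pn^3$-section, which is precisely what was proved for the surface cases inside Proposition~\ref{liftofCGKK}, and since adding linear sections only increases the source it propagates upward.

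The main obstacle I expect is the smoothness/genericity bookkeeping in the borderline dimensions, i.e.\ pinning down exactly why the dimension cannot be pushed one higher. For type $[683]$ one must observe that on the scroll $Z$ of degree $4$ in codimension $3$ of dimension $d$, the class $2H-K_Z = 2H-(-dH+2F)=(d+2)H-2F$ has the property that in dimension $4$ it \emph{is} anticanonical, so a doubling gives a Calabi--Yau threefold, but in dimension $5$ the would-be $X_F$ is no longer of the right type — here the constraint is that $X_F$ must itself be AG of regularity $4$, equivalently the Betti table must be unchanged, which fails once the grading shifts. Similarly for $[550]$ the cone over $G(2,5)$ has dimension $6$ sections that are still smooth del Pezzo varieties, but in dimension $7$ the relevant section is a cone; and for $[320]$ the product $\Pn^1\times\Pn^2$ admits the divisor $(3,2)$ only so long as $\dim(\Pn^1\times\Pn^2)=3$, while to get a $4$-fold lifting one passes instead to the determinantal $3\times3$-minors description of $F^\perp$ in $\Pn^7$ noted in the type $[320]$ discussion, and checks that the maximal minors of a $3\times3$ matrix with two rows of linear and one row of quadratic forms cut out a smooth fourfold. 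Assembling these six verifications, each reduced to a Bertini argument plus the surjectivity already in hand, completes the proof.
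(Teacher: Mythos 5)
Your overall strategy is the same as the paper's: take the ambient varieties $S_\Gamma$ (and the bilinkage data) from Proposition~\ref{liftofCGKK}, lift them to higher-dimensional determinantal or linked configurations, produce $X$ as an appropriately twisted divisor via the doubling/bilinkage constructions, and locate the obstruction to going higher in the singular locus of the ambient variety. The dimension count $d=3,6,4,3,2,4$ comes out as in the paper. However, two steps as you have written them would fail. First, the divisor class: the lifting is \emph{not} uniformly a member of $|2H-K_Z|$. By Corollary~\ref{doublingWithRightGrading} with $r=\reg(Z)=2$, one needs $X\in|-K_Z+(4-\dim Z)H|$, which is $|2H-K_Z|$ only when $\dim Z=2$; this is what forces $\omega_X\cong\cO_X(3-\dim X)$ and hence socle degree $4$ for the Artinian reduction. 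Your own type $[683]$ computation exposes the inconsistency: on the $4$-dimensional scroll the correct class is the anticanonical $4H-2F$ (as in Section~\ref{section_irreducible_liftings}), not your $(d+2)H-2F=6H-2F$; with the latter the Betti table of $A(X)$ would not be $[683]$. Second, type $[550]$: the $6$-dimensional lifting is not a linear section of the cone over $G(2,5)$ (those are del Pezzo varieties of degree $5$, with regularity-$2$ Betti table, and they do stop at dimension $6$); it is the \emph{cubic hypersurface section} of the $7$-dimensional cone over $G(2,5)$ with vertex a point, smooth because a general cubic misses the vertex. As written — ambient capped at dimension $6$ and $X$ a divisor in it — your construction only produces a $5$-dimensional lifting for this type.

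Some further points are slips rather than gaps, but worth fixing. The reason type $[683]$ stops at dimension $3$ is not that ``the Betti table changes once the grading shifts'' — the doubling with the correct twist always reproduces the Betti table — but that a codimension-$3$ scroll of degree $4$ and dimension $\ge 5$ is necessarily a cone, so the singular locus of the ambient obstructs smoothness of the divisor; this is exactly the paper's phrasing, though since the corollary only asserts existence in dimension $d$ this affects only your side remarks on maximality. For type $[200]$ the relevant ambient is the \emph{fourfold} $\Pn^2\times\Pn^2$ of degree $6$ (Tom), bilinked inside a $5$-dimensional complete intersection $(2,2,3)$, not a ``del Pezzo sixfold''; and for type $[320]$ the higher-dimensional model is cut out by the $2\times 2$ minors (not the maximal minors) of the $3\times 3$ matrix with two rows of linear forms and one row of quadrics, whose rank-zero locus first becomes nonempty in $\Pn^9$, which is what caps the smooth lifting at dimension $4$.
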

\begin{proof}
We consider the constructions of surfaces $S_\Gamma$ and curves $C_F$ of Proposition \ref{liftofCGKK}. In cases $[683]$, $[550]$ and $[320]$  
the surface $S_\Gamma$ and the curve $C_F$ are either determinantal varieties or divisors in determinantal varieties. The obstruction to smooth liftings to higher dimensions is the singular locus of the determinantal varieties.  
In  cases $[683]$, $[550]$, $[320]$, the singularities of the determinantal variety allow smooth liftings of $A_F$ to dimensions $3,6,4$, respectively.

In the cases $[300a]$ and $[300b]$ we may lift $\Gamma$ further to a complete intersection $Y_8$ of type $(2,2,2)$ and  a codimension $3$ variety $Y_7$ linked to a codimension $3$ linear space $P$ in a complete intersection $(2,2,2)$, respectively.
A lifting of $A_F$ is obtained by bilinkage of a codimension $4$ linear space $P'$ in $Y_8$, and the variety $Y_7\cap P$, respectively.
Obstructions to a smooth lifting, are the singularities of $Y_8$ in $P'$ and of $Y_7$ in $P$, respectively.
Smooth liftings to a threefold of $F$ of type $[300a]$ was found in \cite{CGKK}, while a lifting of $F$ of type $[300b]$ to a threefold has isolated singularities, as seen above. 

Similarly, in case $[200]$ a general $A_F$ lifts to a Fano fourfold bilinked to  $\mathbb P^2\times \mathbb P^2$ in a complete intersection $(2,2,3)$ and the same types of obstructions to further lifting arise.
\end{proof}

\section{Reducible liftings 
}\label{section_reducible_liftings}
\noindent

Patience Ablett, cf. \cite{Ablett}, recently found liftings to curves in $\Pn^5$ of quartics $F$ whose Betti table is in Table \ref{tableremaining} in Appendix \ref{Appendix}. We recover these liftings in the following constructions of liftings to AG threefolds in $\Pn^7$.  

Any lifting of a ring $A_F$ of type $[300c]$ is reducible; this case was treated in the previous section. For liftings of rings $A_F$ considered in this section we have:
\begin{proposition}
Every Betti table appearing in Table~\ref{tableremaining} can be realized as the Betti table of an AG threefold in $\mathbb P^7$.  The threefold is irreducible but singular for the type $[420]$, in all other cases it is reducible.
\end{proposition}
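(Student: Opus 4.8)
The plan is to produce, for each Betti table $B$ appearing in Table~\ref{tableremaining}, an explicit AG threefold $X\subset\Pn^7$ whose Artinian reduction is a quartic form of type $B$. The Betti strata in Table~\ref{tableremaining} are $[562],[551],[441a],[441b],[430],[420],[331],[310],[210]$, so nine constructions are needed, but by Proposition~\ref{liftofCGKK} and Corollary~\ref{cor:lifting} (and the Noether--Lefschetz containments of Proposition~\ref{precF}) these are organized around a small number of ambient varieties. The main tool throughout will be the doubling construction (Proposition~\ref{double}, Corollary~\ref{doublingWithRightGrading}) applied to a projectively normal lifting $S_\Gamma$ of the finite set $\Gamma$ of points computing the rank of a general $F\in\mathcal F_B$, together with the bilinkage/biliaison constructions of Section~\ref{unpr}: each $\Gamma$ listed in Table~\ref{tablesremaining} lifts, exactly as in the proof of Proposition~\ref{liftofCGKK}, to an arithmetically Cohen--Macaulay surface $S_\Gamma$ of the appropriate degree (a conic-plus-point scroll, a union of planes, $\Pn^1\times\Pn^1$-type scrolls, a cubic scroll in $\Pn^5$, del Pezzo surfaces, etc.), and one takes $X$ to be a general divisor in $|2H-K_{S_\Gamma}|$ or, in the bilinkage cases, a divisor obtained from a height $1$ or $2$ biliaison.

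The steps I would carry out, in order, are as follows. First, for each $B$, fix the lifting $S_\Gamma$ of the apolar point configuration $\Gamma$ using the Betti tables in Table~\ref{tablepoints}; since $\reg S/I_\Gamma=2$ in every relevant case, Corollary~\ref{doublingWithRightGrading} applies with $\gamma = 2\cdot 2 + c - n$ computed for a lifting to $\Pn^7$, guaranteeing that the doubling $X=\{\psi=0\}$ with $\psi$ a general element of $\Hom_{S_\Gamma}(\omega_{S_\Gamma}(-\gamma),\mathcal O_{S_\Gamma})$ has a minimal mapping-cone resolution, hence Betti table exactly $B$ (the Betti table being insensitive to the dimension of the lifting, it matches the one in Table~\ref{tableremaining}). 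Second, verify that a general such $\psi$ is an inclusion: this is the content of the computations referenced in Proposition~\ref{C4R4areDoubles} and can be checked case by case, or read off from the fact that $S_\Gamma$ is generically Gorenstein and the relevant $\Hom$-module is generated by a section whose vanishing has the right codimension. Third, identify which cases must instead (or may alternatively) be handled by biliaison: type $[300c]$-style reducibility is forced whenever the ambient scroll $Y$ has no effective $-K_Y-2H$, as in the $[300c]$ discussion, so for those Betti tables one uses a height $1$ biliaison from a threefold that is an effective divisor in $|-K_Y-H|$ inside $Y$. Fourth, for type $[420]$ specifically, recall from the degeneration argument in Section~\ref{Pfaffian} (the proposition that the complete intersection of four quadrics degenerates to the $[420]$ example via the bundle $3\mathcal O(1)\oplus 2\mathcal O$ on $Q_6$) that $[420]$ lifts to an irreducible threefold in $\Pn^7$ realized as a Pfaffian/Lagrangian degeneracy locus; one then checks this threefold is singular by locating the locus where the associated matrix drops rank further than expected. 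Fifth, in all remaining cases confirm reducibility: this follows because the liftings produced are divisors in reducible or non-normal ambient varieties (unions of scrolls, unions of a $\Pn^k$ with a degree-$d$ scroll), and one exhibits the two components, typically a low-degree determinantal threefold linked in a complete intersection to a cubic scroll together with a residual higher-degree scroll, mirroring the $X=X_6\cup X_{11}$ decomposition in the $[300c]$ case.

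The main obstacle I anticipate is the simultaneous bookkeeping of (a) projective normality of the lifted surface $S_\Gamma$ in $\Pn^7$ for each configuration $\Gamma$ and (b) surjectivity of the restriction map $H^0(\mathcal O_{S_\Gamma}(2H-K))\to H^0(\mathcal O_\Gamma(2H-K))$, which is what guarantees that a \emph{general} $F$ in the stratum (not merely some special $F$) arises as an Artinian reduction; this is precisely the kind of step-by-step cohomology vanishing argument carried out in detail for types $[100]$ and $[200]$ in Section~\ref{section_irreducible_liftings}, and for the Table~\ref{tableremaining} cases it will require either analogous hyperplane-section filtrations of the $S_\Gamma$ (reducing to genus and degree estimates on curve sections) or explicit \texttt{Macaulay2} verification on random examples, cf.~\cite{M2}*{Package QuaternaryQuartics}. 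A secondary difficulty is isolating the type $[420]$ case: showing the Pfaffian degeneracy locus in $\Pn^7$ is irreducible but has a non-empty singular locus, which amounts to a transversality/Bertini-type analysis of the Lagrangian construction with the non-minimal bundle, together with a dimension count showing the extra-degenerate locus is non-empty but low-dimensional. Once these generality and smoothness checks are in place, the statement follows by collecting the nine constructions, noting that exactly the $[420]$ lifting is irreducible while all the others decompose into two components whose intersection is an anticanonical $K3$ divisor on each, exactly as in the reducible $[300c]$ picture.
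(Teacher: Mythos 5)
Your plan reproduces the right general toolkit (doubling and bilinkage on liftings of the apolar point configurations, case by case, with \texttt{Macaulay2} checks), but the central mechanism as you describe it does not produce the objects the proposition is about. A general divisor in $|2H-K_{S_\Gamma}|$ on a \emph{surface} $S_\Gamma$ is a curve, so your construction recovers the halfcanonical curve liftings in $\Pn^5$ (Proposition~\ref{liftofCGKK}, Ablett's result), not AG \emph{threefolds} in $\Pn^7$; adjusting the twist $\gamma$ in Corollary~\ref{doublingWithRightGrading} does not change the dimension of a divisor on $S_\Gamma$. To get threefolds you must first lift each configuration $\Gamma$ from Table~\ref{tablesremaining} two more dimensions, to an arithmetically Cohen--Macaulay, generically Gorenstein \emph{fourfold} $Y\subset\Pn^7$, and then double or bilink inside $Y$. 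That step is where all the case-specific work lies and it is not automatic from Table~\ref{tablepoints}: for these types the fourfolds are forced to be reducible (unions of linear spaces, quadric and cubic hypersurfaces in sub-linear-spaces, scrolls), one must check the $\mathrm{G}_0$ condition, and the effectivity of the relevant class, i.e.\ injectivity of a general $\psi\in\Hom_Y(\omega_Y(-\gamma),\cO_Y)$, can genuinely fail --- the paper's own treatment of type $[300c]$ shows $|-K_Y-2H|$ can be empty on a component, forcing a different (height~$1$) biliaison. This is precisely what the paper's proof supplies: for each Betti table it writes down an explicit reducible threefold with components of prescribed degrees obtained by linkage in complete intersections (e.g.\ $X_1\cup X_6\cup X_{11}$ of degrees $1,6,11$ for $[210]$, $X_6\cup X_{11}$ for $[310]$, two degree-$8$ components meeting in a quartic surface for $[441b]$, two distinct families for $[562]$, etc.), and verifies the Betti table on a random example in \texttt{Macaulay2}.

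Two further points. For $[420]$ the paper does not use the $Q_6$ degeneration you invoke; it takes the degree-$16$ threefold linked to a quadric threefold in a $(2,3)$ complete intersection on the cone over the Segre cubic scroll, and the singularity is located at the intersection with the vertex of the cone --- your appeal to the Pfaffian degeneration with $3\mathcal{O}(1)\oplus 2\mathcal{O}$ would still owe an irreducibility and singularity analysis. Finally, the dichotomy ``irreducible only for $[420]$'' is anchored in the paper by Theorem~2.3 of \cite{SSY}, which says these Betti tables (except $[420]$) cannot arise from irreducible threefolds; your argument for reducibility (``divisors in reducible ambient varieties'') speaks only to your particular construction and, as sketched, is not yet a proof even of that, since a divisor class on a reducible ACM variety can a priori be represented by a divisor supported on one component.
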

\begin{proof}
In Theorem 2.3 of \cite{SSY}, it is shown that every Betti table in Table~\ref{tableremaining} of Appendix \ref{Appendix} (except for type $[420]$) cannot be the Betti table of an irreducible threefold. We construct the liftings case by case, and confirm the constructions by checking a random example using {\tt Macaulay2}, cf. \cite{M2}*{Package QuaternaryQuartics} .
\begin{description}
    \item [{\bf Type $[210]$}]  There exists an AG variety $X$ of codimension 4 in $\mathbb P^7$ that lifts $A_F$ for quartic forms $F$ with Betti table of type $[210]$. The variety 
    $X$ is reducible with three components, $X_1,X_6$ and $X_{11}$ of degrees $1,6$ and $11$ respectively.  The variety $X_{11}$ is linked to a threefold $X'_7$ in a complete intersection of degrees $(1,2,3,3)$, and $X'_7$ is linked to a $\mathbb P^3$ in a complete intersection of degrees $(1,2,2,2)$.
The threefold $X_6$ is a complete intersection $(1,1,2,3)$ and $X_1$ is a $\mathbb P^3$.  
The space $X_1$ intersects both $X_6$ and $X_{11}$ in a quadric surface, while $X_{11}$ intersect $X_6$ in a complete intersection surface of degrees $(1,1,1,2,2)$.

Here is a more detailed description of a lifting of $A_F$ to an AG curve.
A general $\Pn^5$ intersects the three components of the threefold $X$ in curves $L,C_6$ and $C_{11}$ respectively. The curve $C_{11}$ is linked $(3,3)$ on a quadric threefold $Q$ to a curve of degree $7$ which again is linked $(2,2)$ on $Q$ to a line. The curve $C_{11}$ lie in five cubics on $Q$.
The curve $C_6$ is cubic section of a quadric surface $S_2$. The curves $C_{11}$ and $C_6$ intersect in four coplanar points.  The line $L$ lies in this plane $S_1\subset Q$, and intersects $S_2$ in a conic.
The line $L$ is a secant line to both $C_{11}$ and $C_6$. 
There is a pencil of cubic sections of $Q$ that contain $C_{11}\cup S_1$.
These sections are reducible, in a quintic surface and the plane $S_1$.  Let $S_5$ be one of the quintic surfaces.
Then $C_{11}\subset S_5$, $L\subset S_1$ and $C_6\subset S_2$.
Furthermore, $S_5\cap S_1=C$ is a conic and $S_1\cap S_2=C'$ is a conic.
The curve $C_{11}$ is linearly equivalent to $2H-K_{S_5}-C$ on $S_5$.
The line $L$ is linearly equivalent to $2H-K_{S_1}-C-C'$ on $S_1$.
The curve $C_{6}$ is linearly equivalent to $2H-K_{S_2}-C'$ on $S_2$.
And $S_\Gamma=S_5\cup S_1\cup S_2$ is an arithmetic Cohen-Macaulay surface of degree $8$ that contain the AG curve  $C_F=C_{11}\cup L\cup C_6$.
The curve $C_F$ has Artinian reduction $A_F$ for a quartic form $F$ of type $[210]$.
\vskip .1in
 \item [{\bf Type $[310]$}]  There exists an AG variety $X$ of codimension 4 in $\mathbb P^7$ that lifts $A_F$ for quartics $F$ with Betti table  of type $[310]$. The variety $X$ is reducible with two components, $X_6$ and $X_{11}$ of degrees $6$ and $11$ respectively.  The variety $X_{11}$ is linked to a threefold $Y_7$ in a complete intersection of degrees $(1,2,3,3)$, and $Y_7$ is linked to a $\mathbb P^3$ in a complete intersection of degrees $(1,2,2,2)$.
The threefold $X_6$ is a complete intersection $(1,1,2,3)$ and intersects $X_{11}$ in a hyperplane section of $X_6$.

Here is a more detailed description of a lifting of $A_F$ to an AG curve.
Consider a rational surface $S_5$ of degree $5$ in a $\Pn^4$ linked to a plane in the complete intersection $(2,3)$ and a quadric surface $S_2$ in a $\Pn^3$, so that $S_5\cup S_2$ spans a $\Pn^5$, and $S_5\cap S_2$ is a conic section $C$.  Assume furthermore that the conic $C$ moves in a pencil on $S_5$. 
Let $C_{11}$ be a curve linearly equivalent to  $2H-K_{S_5}-C$ on $S_5$ that intersects $C$ properly, i.e. in a scheme of length $6$. Let $C_6$ be a curve linearly equivalent to $2H-K_{S_2}-C$ such that  $C_6\cap C=C_{11}\cap C$.  Then $S_\Gamma=S_5\cup S_2$ is arithmetic Cohen-Macaulay and $C_F=C_{11}\cup C_6\subset S_\Gamma$ is arithmetic Gorenstein and has Artinian reduction $A_F$ for a quartic form $F$ of type $[310]$.

 \begin{proposition}\label{ReidNoAnswer}
 Type $[310]$ gives a negative answer to Reid's question from \cite{ReidGeneral}: ``For odd $k$, are hypersurfaces in a codimension 3 Gorenstein variety the only cases?'' 
 \end{proposition}
 \begin{proof}
 If these were the only cases, then the hypersurface $\VV(f)$ would have to have $f$ a nonzero divisor of degree $2$ or $3$. 
 
 In the former case, the free resolution of $I \setminus f$ has Betti table of the form below, where $*$ denotes an unknown entry:
 \[
 \begin{matrix}1 & - & - & - & -& \cr
       - & 2& 1 & - & - &\cr
       - &5 & * &* & *& \cr
       - &- & *& * & * &\cr
       - &- & *& * & *&  \cr
\end{matrix}. 
\] 
The linear syzygy on the two remaining quadrics must persist, for if not, this would imply that $f$ is a zero divisor. Since the Pfaffians are unmixed of codimension $3$, quotienting by a zero divisor $f$ cannot yield a codimension $4$ ideal. Taking the mapping cone would then force $b_{25}\ge 5$, which is inconsistent with the Betti table of type $[310]$. 
Similarly, if $f$ is a cubic, the Betti table of $I \setminus f$ is
\[
 \begin{matrix}1 & - & - & - & -& \cr
       - & 3& 1 & - & - &\cr
       - &4 & * &* & * &\cr
       - &- & *& * & * &\cr
       - &- & *& * & *&  \cr
\end{matrix} .
\] 
The mapping cone construction yields $b_{25}\ge 4$, which is again inconsistent with a Betti table of type $[310]$. For an irreducible threefold providing a negative answer to Reid's question, reason as above with type $[200]$. 
\end{proof}
 \item [{\bf Type $[430]$}] There exists an AG variety $X$ of codimension 4 in $\mathbb P^7$ that lifts $A_F$ for quartic forms $F$ with Betti table  of  type $[430]$. The variety $X$ is reducible with three components, $X_1,X_6$ and $X_{9}$ of degrees $1,6$ and $9$ respectively.  The union $X_1\cup X_{9}$ is linked to a quadric threefold $Y_2$ in a complete intersection of degrees $(1,2,2,3)$, such that $X_1\cap Y_2$ is a quadric surface. The threefold $X_6$ is a complete intersection $(1,1,2,3)$ and intersects $X_1\cup X_{9}$ in a hyperplane section of $X_6$.
 
 Here is a more detailed description of a lifting of $A_F$ to an AG curve.
 Consider a plane $S_1$ and a $\Pn^4$ and a $\Pn^3$ in a $\Pn^5$ that intersect in $S_1$.
 Let $S_3\subset \Pn^4$ be a cubic  surface scroll that intersect $S_1$ in a conic $C$.
 Let $C_9\subset S_3$ the linked to $C\cup L$ in a cubic section of $S_3\cup S_1$.
 Let $S_2\subset\Pn^3$ be a quadric surface, and let $C'=S_1\cap S_2$.  
 Then $C'\cap L\cup C_9$ is a complete intersection $(2,3)$ in $S_1$. Let $S_6$ be a general cubic section of $S_2$ that contains $C'\cap L\cup C_9$.  Then $S_1\cap (S_3\cup S_2)=C\cup C'$, $S_3\cap (S_1\cup S_2)=C$ and $S_2\cap (S_3\cup S_1)= C'$, and therefore
 $C_1$ is linearly equivalent to $2H-K-C-C'$ on $S_1$,
 $C_9$ is linearly equivalent to $2H-K-C$ on $S_3$, and
 $C_1$ is linearly equivalent to $2H-K-C'$ on $S_2$.
 
 Furthermore $S_\Gamma=S_3 \cup S_1\cup S_2$ is arithmetic Cohen-Macaulay and $C_F=C_{9}\cup L\cup C_6\subset S_\Gamma$ is AG and has Artinian reduction $A_F$ for a quartic form $F$ of this type.
\vskip .1in
 \item [{\bf Type $[420]$}]  There exists an AG variety $X$ of codimension 4 in $\mathbb P^7$ that lifts $A_F$ for quartic forms $F$ with Betti table  of type $[420]$. The variety $X$ is irreducible of degree $16$.  It is linked to a quadric threefold in a complete intersection $(2,3)$  on a fivefold cubic scroll.  $X$ is singular in its intersection with the vertex of the scroll.
 
 Here is a more detailed description of a lifting of $A_F$ to an AG curve.
 Let $S_6$ be a general quadric hypersurface section  of $\Pn^1\times \Pn^2$ in its Segre embedding in $\Pn^5$.  It is a conic bundle over $\Pn^1$.  Let $C\subset S_6$  be a conic, and let $C_{16}$ be the residual curve to $C$ in a general cubic section of $S_6$ that contains $C$.
 Then
 $C_{16}$ is linearly equivalent to $2H-K_{S_6}$ on $S_6$.
 Furthermore $S_6$ is arithmetic Cohen-Macaulay and $C_{16}\subset S_6$ is arithmetic Gorenstein and has Artinian reduction $A_F$ for a quartic form $F$ of this type.
 \vskip .1in
 \item [{\bf Type $[331]$}]  There exists an AG variety $X$ of codimension 4 in $\mathbb P^7$ that lifts $A_F$ for quartic forms $F$ with Betti table of type $[331]$. The variety $X$ is reducible with two components, $X_4$ and $X_{13}$ of degrees $4$ and $13$ respectively.  The threefold $X_{13}$ is linked to a $\mathbb P^3$ in the cubic pfaffians of a $7\times 7$ skew matrix, and $X_4$ is a quartic threefold in a $\mathbb P^4$.  The two components of $X$ intersect in a hyperplane section of $X_4$.
 
 Here is a more detailed description of a lifting of $A_F$ to an AG curve.
 Let $C_{13}\subset \Pn^4$ be a smooth curve of degree $13$ residual to a line  $L$ in the zero locus of the cubic pfaffians of a $7\times 7$ skew symmetric matrix $M$ with linear forms. 
 Then  $C_{13}$ is contained in a surface $S_6$ of degree $6$ defined by the $3\times 3$ minors of a $3\times 4$ submatrix of $M$.
Furthermore, $L\cap C_{13}$ is a scheme of length $4$.  Let $C_4$ be a plane quartic curve that
 contains $L\cap C_{13}$, and let $S_1$ be the plane of $C_4$.   Then $C_F=C_{13}\cup C_4$ is a curve in the reducible surface $S_\Gamma=S_6\cup S_1$ that spans $\Pn^5$.

 Then
 $C_{13}$ is linearly equivalent to $2H-K_{S_6}-L$ on $S_6$, while $C_4$ is linearly equivalent to $2H-K_{S_1}-L$ on $S_1$.
 Furthermore $S_\Gamma$ is arithmetic Cohen-Macaulay and $C_F\subset S_\Gamma$ is arithmetic Gorenstein and has Artinian reduction $A_F$ for a quartic form $F$ of this type.
\vskip .1in
\item  [{\bf Type $[441a]$}] There exists an AG variety $X$ of codimension 4 in $\mathbb P^7$ that lifts $A_F$ for quartic forms $F$ with Betti table of type $[441a]$.
The quadrics in $F^{\perp}$ define a conic and a point.

The threefold $X$ is reducible with two components, $X_4$ and $X_{12}$ of degrees $4$ and $12$ respectively.  The threefold $X_{12}$ is the rank $1$ locus of a $2\times 4$ matrix $M$ with two columns of linear forms and two columns of quadratic forms in a $\mathbb P^6$. The quadric minor defines a quadric hypersurface that reduces to a conic for $F$.  The component $X_4$ is a quartic threefold in a $\mathbb P^4$ that intersects $X_{12}$ in a hyperplane section of $X_4$.  The $\mathbb P^4$ reduces to a point for $F$.
 We check using {\tt Macaulay2}, \cite{M2}*{Package QuaternaryQuartics},  on a random example that the Betti table of $X$ is of this type.
 
 Restricted to a general $\Pn^5$, a submatrix of $M$ consisting of two columns of linear forms and one column of quadratic forms has rank $1$ on a surface $S_5$ of degree $5$ in a $\Pn^4$.  The surface $S_5$ is a conic bundle with some reducible conics.  The $\Pn^4$ of the quartic threefold $X_4$
 intersects $S_5$ in a line $L$, and the $\Pn^5$ in a plane $S_1$.  The restriction of $X_{12}$ to $S_5$ is a curve $C_{12}$ linearly equivalent to $2H-K_{S_5}-L$ on $S_5$, while the restriction of $X_4$ to $S_1$ is a curve $C_4$, linearly equivalent to $2H-K_{S_1}-L$ on $S_1$.
 \vskip .1in
\item  [{\bf Type $[441b]$}] There exists an AG variety $X$ of codimension 4 in $\mathbb P^7$ that lifts $A_F$ for quartic forms $F$ with Betti table of type $[441b]$.  The quadrics in $F^{\perp}$ define two skew lines. 

 The threefold $X$ is reducible with two components, $X_8$ and $Y_{8}$  both of  degrees $8$.  Both components are linked to a common $\mathbb P^3$ in complete intersections of degrees $(1,1,3,3)$, and they intersect along a quartic surface  in the  common $\mathbb P^3$.  The two linear forms define $\Pn^5$'s for each component  that reduces to the two skew lines for $F$.
 
 Restricted to a general $\Pn^5$, each of the two components $X_8$ and $Y_8$, restrict to curves $C_8$ and $C'_8$ respectively of degree $8$, each in a $\Pn^3$.  The two $\Pn^3$'s intersect in a line $L$, and each of the two curves lie in a pencil of cubic surfaces that contain that line.  Let $S_3$ and $S'_3$  be two such surfaces, such that
 $C_F=C_8\cup C'_8$ is contained in $S_F=S_3\cup S'_3$. Then $S_3\cap S'_3=L$
 and $C_8$ is linearly equivalent to $2H-K_{S_3}-L$ on $S_3$, while
$C'_8$ is linearly equivalent to $2H-K_{S'_3}-L$ on $S'_3$.
  \vskip .1in
  \item [{\bf Type $[551]$}]  There exists an AG variety $X$ of codimension $4$ in $\mathbb P^n$ that lifts $A_F$ for quartic forms $F$ with Betti table  of this type.

Consider the variety $Y$ that is the intersection of five quadrics and decomposes as the union of a linear space $P$ of codimension $3$ and a complete intersection $Q$ of type $(2,2)$ in a hyperplane $H$, such that  $Q\cap P$ is the linear space $H\cap P$ of codimension $4$. Take a general cubic hypersurface $C$ containing $P$. Then $Z=\overline{Y\cap C\setminus P}$ is a variety of codimension $4$ and degree $11$. Let finally $D$ be a quartic hypersurface containing $Z$, then $X=Y\cap C\cap D$ is a variety of degree $15$ decomposing as the union of a quartic hypersurface $D\cap P$ and the variety $Z$ of degree $11$. 

A general $\Pn^5$ intersects the fourfold $Q$ in a surface $S_4$ in a $\Pn^4$ and  $P$ in a plane $S_1$, such that $S_1\cap S_4$ is a line $L$.   In a cubic section containing $L$, the residual curve $C_{11}$ on $S_4$ has degree $11$.  The quartic hypersurface  $D$ intersects. $S_1$ in a curve $C_{4}$ of degree $4$.
 Thus $C_F=C_{11}\cup C_4$ is contained in $S_F=S_4\cup S_1$
 and $C_{11}$ is linearly equivalent to $2H-K_{S_4}-L$ on $S_4$, while
$C_4$ is linearly equivalent to $2H-K_{S_1}-L$ on $S_1$.
\vskip .1in
\item[{\bf Type $[562]$}] There exist two families (I and II) of AG varieties $X$ of codimension $4$ in $\mathbb P^7$ that lift $A_F$ for  quartic forms $F$ with Betti table  of type $[562]$. Note that the quadrics in $F^{\perp}$ define a line and two points that altogether span $\Pn^3$, and that $F$ is apolar to the union of a set of three points on the line and the two points outside.  Therefore the forms of this family are projectively equivalent.  So the different AG-varieties $X$ all have the same Artinian reduction.
  
  In this case a general Artinian algebra $A_F$ with this Betti table lift to AG-point sets in both families.

The general member of the first family (I) is a reducible threefold  
$$X=X_7\cup X_8$$ 
where the components $X_7$ and $X_{8}$ have degree $7$ and $8$ respectively.  The threefold $X_{8}$ is linked to a 
$\mathbb P^3$ in a complete intersection $(1,1,3,3)$, while $X_7$ is linked to a 
$\mathbb P^3$ in a complete intersection $(1,1,2,4)$. 

The two components intersect in a quartic surface in the intersection of their spans. 

 The general member of the second family (II) is a reducible threefold  
 $$X=X_7\cup X_4\cup X'_4,$$ 
 where the  components $X_7, X_{4}$ and $X'_4 $ have degrees $7,4$ and $4$  respectively.  The threefold $X_{7}$ is linked to a 
reducible quadric $Y_2=Y_1\cup Y'_1$ in a complete intersection $(1,1,3,3)$, while $X_4$ is a quartic threefold in a $\Pn^4$ that contains the quartic surface $(X_7\cap Y_1)\cup (Y_1\cap Y'_1)$, and $X'_4$ is a quartic threefold in a $\Pn^4$ and contains the quartic surface $(X_7\cap Y'_1)\cup (Y_1\cap Y'_1)$. 
The span of $X_7$ is a $\Pn^5$, 
while the two $\Pn^4$'s of $X_4$ and $X'_4$ each intersect $\Pn^5$ in a $\Pn^3$ and each other in a $\Pn^2$.  The threefold $X$, of course, spans $\mathbb P^7$.

It is instructive to see the two distinct types of liftings (I and II) of type $[562]$ to AG sets of points $X\subset \Pn^4$. 
A general member of the first family (I) restricts to a general $\Pn^4$ as the union $\Gamma=\Gamma_1\cup \Gamma_2$ of two planar sets of points. The set $\Gamma_1$ in one plane is eight points that are linked to the intersection point of the two planes in a complete intersection of two cubics.   The set $\Gamma_2$ in the other plane is seven points that are linked to the intersection point in a conic and a quartic.  
In particular $\Gamma$ lies in the union of a plane cubic curve and a plane conic that meet at a point, the eight points $\Gamma_1$ on the cubic, the seven points $\Gamma_2$ on the conic.  The eight points together with the intersection point is the complete intersection with another cubic.
 
 The general member of the other family (II) restricts to a general $\Pn^4$ as a set of fifteen points $\Gamma=\Gamma_1\cup \Gamma_2\cup \Gamma_3$.  The set $\Gamma_1$ is seven points that lie in a plane.  Each of the two sets $\Gamma_2$ and $ \Gamma_3$ consists of four points that lie in a line.  The two lines are skew and meet the plane of $\Gamma_1$ in two points that are linked in the plane to $\Gamma_1$ in a complete intersection of two cubics.  
 
 In particular $\Gamma$ lies in the union of a plane cubic curve and two skew lines that both intersect the plane cubic curve.  The seven points on the cubic curve together with the intersection points with the lines is the complete intersection with another cubic.
 
 A {\tt Macaulay2} computation, cf. \cite{M2}*{Package QuaternaryQuartics}, shows that the two families of liftings have a common degeneration where two of the fifteen points come together as a scheme of length $2$ supported at a point.  This corresponds to the quintic curves, in the two liftings above, degenerating to a plane cubic and two lines that meet at a point on the plane cubic. 
 \end{description}
 \end{proof}

\vskip30pt

	\begin{bibdiv}
	\begin{biblist}
	 	\bib{Ablett}{article}{
      author={Ablett, P.},
       title={Halfcanonical Gorenstein curves of codimension four},
        date={2021},
     journal={Warwick thesis},
      volume={},
       pages={},
}
\bib{alb-lella}{article}{
  title={Strongly stable ideals and Hilbert polynomials},
  author={Alberelli,D.},
  author={Lella, P.},
  journal={Journal of Software for Algebra and Geometry},
  volume={9},
  number={1},
  pages={1--9},
  year={2019},
  publisher={Mathematical Sciences Publishers}
}
	\bib{ACV}{article}{
      author={Angelini, E.},
      author={Chiantini, L.},
      author={Vannieuwenhoven, N.},
       title={Identifiability beyond Kruskal’s bound for symmetric tensors of degree 4},
        date={2018},
     journal={Rend. Lincei Mat. Appl.},
      volume={29},
       pages={465--485},
}

\bib{BS87}{article}{
title={A criterion for detecting m-regularity},
  author={Bayer, D.},
  author={Stillman, M.},
  JOURNAL = {Invent. Math.},
  volume={87},
  number={1},
  pages={1--11},
  year={1987},
  publisher={Springer}
}     
\bib{BSDuke}{article}{
    AUTHOR = {Bayer, D.},
    author = {Stillman, M.},
     TITLE = {A theorem on refining division orders by the reverse
              lexicographic order},
   JOURNAL = {Duke Math. J.},
     VOLUME = {55},
      YEAR = {1987},
        PAGES = {321--328},
     
}
		\bib{B}{article}{
      author={Bertin, M.},
       title={Examples of Calabi-Yau $3$-folds of $\Pn^7$ with $\rho=1$},
        date={2009},
     journal={Canad. J. Math.},
      volume={61},
       pages={1050--1072},
}
\bib{BGM}{article}{
    AUTHOR = {Bigatti, A.},
    author = {Geramita, A.},
    author = {Migliore, J.},
         TITLE = {Geometric consequences of extremal behavior in a theorem of
              {M}acaulay},
   JOURNAL = {Trans. Amer. Math. Soc.},
     VOLUME = {346},
      YEAR = {1994},
         PAGES = {203--235},
          }
\bib{Boij}{article}{
    AUTHOR = {Boij, M.},
         TITLE = {Components of the space parameterizing graded Gorenstein Artin algebras with a given Hilbert function},
   JOURNAL = {Pacific J. Math.},
     VOLUME = {187},
      YEAR = {1999},
         PAGES = {1--11},
          }

\bib{vB07a}{article}{
title={Generic syzygy schemes},
  author={Bothmer, H.-C. von},
  journal={Journal of Pure and Applied Algebra},
  volume={208},
  number={3},
  pages={867--876},
  year={2007},
  publisher={Elsevier}
} 

          \bib{Reid}{article}{
    AUTHOR = {Brown, G.},
    author = {Kerber, M.},
    author = {Reid, M.},
         TITLE = {Fano 3-folds in codimension 4, Tom and Jerry. Part I},
   JOURNAL = {Compositio Mathematica},
     VOLUME = {148},
      YEAR = {2012},
         PAGES = {1171 - 1194},
          }

         \bib{CMrings}{book}{
    AUTHOR = {Bruns, W.},
    AUTHOR = {Herzog, J.},
     TITLE = {Cohen-Macaulay rings},
    SERIES = {Cambridge studies in advanced mathematics},
    VOLUME = {39},
       PUBLISHER = {Cambridge},
      YEAR = {1993},
     
} 
          \bib{BV}{book}{
  author={Bruns, W.},
  author={Vetter, U.},
  title={Determinantal rings},
  SERIES = {Lecture Notes in Mathematics},
    VOLUME = {1327},
       PUBLISHER = {Springer-Verlag, New York},
      YEAR = {1988},
  }
\bib{be1975generic}{article}{
  author={Buchsbaum, D.},
  author={Eisenbud, D.},
  title={Generic free resolutions and a family of generically perfect ideals},
  date={1975},
  journal={Advances in Mathematics},
  volume={18},
   pages={245--301},
  }
  
  \bib{BB}{article}{
author={Buczy\'{n}ska, W.}, 
author={Buczy\'{n}ski, J.},
title={Secant varieties to high degree Veronese reembeddings, catalecticant matrices and smoothable Gorenstein schemes}, 
date={2014},
journal={J. Algebraic Geom.},
volume={\bf 23},
pages={63-90},
}

\bib{celikbas2020family}{article}{
  author={Celikbas, E.},
  author={Laxmi, J.},
  author={Weyman, J.},
  title={The family of perfect ideals of codimension 3, of type 2 with 5 generators},
  date={2020},
  journal={Proc. Amer. Math. Soc.},
  volume={148},
    pages={2745 \ndash 2755},
  }
  
  \bib{laxmi2021spinor}{article}{
  author={Celikbas, E.},
  author={Laxmi, J.},
  author={Weyman, J.},
  title={Spinor structures on free resolutions of codimension four Gorenstein ideals},
  journal={arXiv:1912.07510}
		}
\bib{CGKK}{article}{
      author={Coughlan, S.},
      author={Golebiowski, L.},
      author={Kapustka, G.},
      author={Kapustka, M.},
       title={Arithmetically Gorenstein Calabi-Yau threefolds in $\Pn^7$},
        date={2016},
     journal={Electron. Res. Announc. Math. Sci.},
      volume={23},
       pages={52\ndash 68},
}
	\bib{Cox}{article}{
      author={Cox, D.},
       title={The homogeneous coordinate ring of a toric variety},
        date={1995},
     journal={J. Algebraic Geom.},
      volume={4},
      number={1},
       pages={17\ndash 50},
}
\bib{CLO}{book} {
AUTHOR = {Cox, D},
author = {Little, J.},
author = {O'Shea, D.},
     TITLE = {Ideals, varieties, and algorithms},
 PUBLISHER = {Springer, Cham},
      YEAR = {2015},
     }
\bib{CAwvtAG}{book}{
    AUTHOR = {Eisenbud, D.},
     TITLE = {Commutative Algebra with a view towards Algebraic Geometry},
    SERIES = {Graduate Texts in Mathematics},
    VOLUME = {150},
       PUBLISHER = {Springer-Verlag, New York},
      YEAR = {1995},
     
}
\bib{GeomSyz}{book}{
    AUTHOR = {Eisenbud, D.},
     TITLE = {The geometry of syzygies},
    SERIES = {Graduate Texts in Mathematics},
    VOLUME = {229},
       PUBLISHER = {Springer-Verlag, New York},
      YEAR = {2005},
     
}
\bib{EH}{article}{
      author={Eisenbud, D.},
      author={Harris, J.},
       title={Finite projective schemes in linearly general position},
        date={1992},
     journal={J. Alg. Geom},
      volume={1},
       pages={15\ndash 30},
}
			
\bib{EPW}{article}{
      author={Eisenbud, D.},
      author={Popescu, S.},
      author={Walter, C.},
       title={Lagrangian subbundles and codimension 3 subcanonical subschemes},
        date={2001},
     journal={Duke Math. J.},
      volume={107},
      number={3},
       pages={427\ndash 467},
}
\bib{EPW1}{article}{
      author={Eisenbud, D.},
      author={Popescu, S.},
      author={Walter, C.},
       title={Enriques surfaces and other non-Pfaffian subcanonical subschemes of codimension 3.},
        date={2001},
     journal={Communications in Algebra},
      volume={28},
      number={12},
       pages={5629\ndash 5653.}
}

\bib{Engheta}{article}{
    AUTHOR = {Engheta, B.},
     TITLE = {A bound on the projective dimension of three cubics},
   JOURNAL = {J. Symbolic Comput.},
    VOLUME = {45},
      YEAR = {2010},
        PAGES = {60--73},
      }

\bib{Fisher}{article}{
      author={Fisher, T.},
       title={Pfaffian presentations of elliptic normal curves},
        date={2010},
     journal={Trans. Amer. Math. Soc.},
      volume={362},
      number={5},
       pages={2525\ndash 2540},
}

\bib{Fogarty}{article}{
      author={Fogarty, J.},
       title={Algebraic families on an algebraic surface},
        date={1968},
     journal={Amer. J. Math.},
      volume={90},
   pages={511\ndash 521},
}

\bib{Friedman}{article}{
      author={Friedman, R.},
       title={Simultaneous resolution of threefold double points},
        date={1986},
     journal={Math. Ann.},
      volume={274},
   pages={671\ndash 689},
}
\bib{FP}{article}{
      author={Fulton, W.},
      author={Pragacz, P.},
       title={Schubert Varieties and Degeneracy Loci},
        date={1998},
     journal={Lecture Notes in Math.},
      volume={1698}
}

\bib{Galazka}{article}{
      author={Ga\l\c{a}zka, M.},
       title={Multigraded Apolarity},
        date={2016},
     journal=
     {arXiv:1601.06211}
     }

\bib{Ga74}{incollection}{
  title={A propos du th{\'e}oreme de pr{\'e}paration de Weierstrass},
  author={Galligo, A.},
  booktitle={Fonctions de plusieurs variables complexes},
  pages={543--579},
  year={1974},
  publisher={Springer}
}
     
\bib{Geramita1}{article}{
   AUTHOR = {Geramita, A.V.},
     TITLE = {Catalecticant varieties},
 BOOKTITLE = {Commutative algebra and algebraic geometry ({F}errara)},
    SERIES = {Lecture Notes in Pure and Appl. Math.},
    VOLUME = {206},
     PAGES = {143--156},
       YEAR = {1999},
   }
     
 \bib{Geramita2}{article}{    
     AUTHOR={Geramita, A.V.},
     TITLE = {Inverse systems of fat points: {W}aring's problem, secant
              varieties of {V}eronese varieties and parameter spaces for
              {G}orenstein ideals},
 BOOKTITLE = {The {C}urves {S}eminar at {Q}ueen's, {V}ol. {X} ({K}ingston,
              {ON}, 1995)},
    SERIES = {Queen's Papers in Pure and Appl. Math.},
    VOLUME = {102},
     PAGES = {2--114},
 PUBLISHER = {Queen's Univ., Kingston, ON},
  }

\bib{M2}{article}{
     	author = {Grayson, D.},
     	author = {Stillman, M.},
     	title = {Macaulay2, a software system for research in algebraic geometry, ver. 1.19},
     	journal = {Available at \url{http://www.math.uiuc.edu/Macaulay2/}},
     }
 \bib{Gross}{article}{
      author={Gross, M.},
       title={Surfaces of bidegree $(3,n)$ in $G(1,\mathbb{P}^3)$},
        date={1993},
     journal={Math. Z.},
   volume={212},
   pages={73-106},}

 \bib{Har}{article}{
      author={Hartshorne, R.},
       title={ Generalized divisors on Gorenstein schemes},
        date={1994},
     journal={K.Theory},
   volume={8},
   pages={287– 339},}
 \bib{Har2}{article}{
      author={Hartshorne, R.},
       title={ Generalized divisors and biliaison},
        date={2007},
     journal={Illinois J. Math.},
   volume={51},
   pages={83-98},}

\bib{IK}{book}{
      author={Iarrobino, A.},
      author={Kanev, V.},
       title={Power sums, {G}orenstein algebras, and determinantal loci},
      series={Lecture Notes in Mathematics},
   publisher={Springer-Verlag, Berlin},
        date={1999},
      volume={1721},
        note={Appendix C by A. Iarrobino and S. L. Kleiman},
}
\bib{Inose}{article}{
    author={Inose,H.},
    title={On certain Kummer surfaces which can be realized as non-singular quartic
surfaces in $\Pn^3$,},
    journal={J. Fac. Sci. Univ. Tokyo},
    volume={23},
    year={1976}, 
    pages={545-560},
    }
    
     \bib{KK1}{article}{
    author={Kapustka, G.},
     author={Kapustka, M.},
    title={Bilinkage in codimension 3 and canonical surfaces of degree 18 in $\mathbb{P}^5$},
    journal={Ann. Scouola Norm-Sci},
    volume={14},
    year={2016}, 
    pages={767-787},
    }
 
 \bib{Kondoplanequartics}{article}{
    author={Kondo,S.},
    title={Moduli of plane quartics, Goepel invariants and Borcherds products},
    journal={Int. Math. Res. Not.},
    volume={2011},
    year={2011}, 
    pages={2825--2860},
    }
    
    \bib{Kondogenus3}{article}{
    author={Kondo,S.},
    title={A complex hyperbolic structure for the moduli space of curves of genus three},
    journal={J. Reine Angew. Math.},
    volume={525},
    year={2000}, 
    pages={219-232},
    }

    \bib{Ku}{article}{
    author={Kunz,E.},
    title={Almost complete intersections are not Gorenstein rings},
    journal={J. Algebra},
    volume={28},
    year={1974}, 
    pages={111-115},
    }
    
\bib{kustin1982structure}{article}{
  title={Structure theory for a class of grade four Gorenstein ideals},
  author={Kustin, A.},
  author={Miller, M.},
  journal={Trans. Amer. Math. Soc.},
  volume={270},
  number={1},
  pages={287--307},
  year={1982}
}
\bib{KMmzeit}{article}{
  author={Kustin, A.},
  author={Miller, M.},
     TITLE = {Algebra structures on minimal resolutions of {G}orenstein
              rings of embedding codimension four},
   JOURNAL = {Math. Zeit.},
     VOLUME = {173},
      YEAR = {1980},
         PAGES = {171--184}
        
    }
     \bib{Kuwata}{article}{
    author={Kuwata, M.},
    title={Ellipic fibrations on quartic K3 surfaces with large Picard numbers},
    journal={Pac. J. Math.},
    volume={171},
    year={1995}, 
    pages={231-243},
    }
    
    \bib{laxmi}{article}{
    author={Laxmi, J.},
    title={Generic doublings of almost complete intersections of codimension three},
  journal={arXiv:2006.11690}
		}

\bib{lascala}{inproceedings}{
title={An algorithm for complexes},
  author={La Scala, R.},
  booktitle={Proceedings of the international symposium on Symbolic and algebraic computation},
  pages={264--268},
  year={1994}
}

\bib{lella-roggero}{article}{
  title={Rational components of Hilbert schemes},
  author={Lella, P.},
  author={Roggero, M.},
  journal={Rendiconti del Seminario Matematico della Universit{\`a} di Padova},
  volume={126},
  pages={11--45},
  year={2011}
}		

\bib{M27}{article}{
    AUTHOR = {Macaulay, F. S.},
     TITLE = {Some {P}roperties of {E}numeration in the {T}heory of
              {M}odular {S}ystems},
   JOURNAL = {Proc. London Math. Soc. (2)},
      VOLUME = {26},
      YEAR = {1927},
     PAGES = {531--555},
      }
      
      \bib{maclagan}{article}{
    AUTHOR = {Maclagan, D.},
     TITLE = {Antichains of monomial ideals are finite},
   JOURNAL = {Proc. Amer. Math. Soc.},
      VOLUME = {129},
      YEAR = {2001},
        PAGES = {1609--1615},
        }
        		
       \bib{Mar}{article}{
      author={Marinari, M.G.},
       title={ Gorenstein Sequences and $G_n$ Condition},
        date={1976},
     journal={Journal of Algebra},
   volume={39},
   pages={349-359},}
        
    \bib{CMlinkage}{book}{
    AUTHOR = {Martin, H. M.},
     TITLE = {Linkage by generically {G}orenstein {C}ohen-{M}acaulay ideals},
      NOTE = {Thesis (Ph.D.)--Louisiana State University and Agricultural \&
              Mechanical College},
 PUBLISHER = {ProQuest LLC, Ann Arbor, MI},
      YEAR = {1993},
     PAGES = {90},
}

		   \bib{MM}{article}{
    AUTHOR = {Micha\l ek, M.},
    Author={Moon, H.},
     TITLE = {Spaces of sums of powers and real rank boundaries},
   JOURNAL = {Beitr. Algebra Geom.},
      VOLUME = {59},
      YEAR = {2018},
         PAGES = {645--663},
}

\bib{moore-nagel}{article}
{title={Algorithms for strongly stable ideals},
  author={Moore, D.},
  author={Nagel, U.},
  journal={Mathematics of Computation},
  volume={83},
  number={289},
  pages={2527--2552},
  year={2014}
}

\bib{Mukai}{incollection}{
      author={Mukai, S.},
       title={Fano {$3$}-folds},
        date={1992},
   booktitle={Complex projective geometry ({T}rieste, 1989/{B}ergen, 1989)},
      series={London Math. Soc. Lecture Note Ser.},
      volume={179},
   publisher={Cambridge Univ. Press, Cambridge},
       pages={255\ndash 263},
}

\bib{Okonek}{article}{
     AUTHOR = {Okonek,C.,},
     TITLE = {Notes on varieties of codimension 3 in $\mathbb{P}^n$},
journal={ Manuscripta Math.},
       volume={84},
       year={1994}, 
       pages={421–442},
       }
       
       \bib{Ot}{article}{
     AUTHOR = {Ottaviani,G.},
     TITLE = {Spinor bundles on quadrics},
journal={Trans. Amer. Math. Soc.},
       volume={307, nr.~1},
    
       year={1988}, 
       pages={301–316},
       }

       \bib{PR}{article}{
     AUTHOR = {Papadakis, S.},
     AUTHOR = {Reid, M.},
     TITLE = {Kustin--Miller unprojection without complexes},
journal={J. Algebraic Geometry 13 (2004)},
       volume={13},
    
       year={2004}, 
       pages={563-577},
       }

      \bib{PS74}{article}{
     AUTHOR = {Peskine,C.,},
     AUTHOR = {Szpiro,L.},
     TITLE = {Liaison des vari\'et\'e alg\'ebrique. I},
journal={ Invent. Math.},
       volume={26},
    
       year={1974}, 
       pages={271–302},
       }
       
\bib{PS85}{article}{
  author={Piene, R.},
  author={Schlessinger, M.},
  title={On the Hilbert scheme compactification of the space of twisted cubics},
  journal={American Journal of Mathematics},
  volume={107},
  number={4},
  pages={761--774},
  year={1985},
  publisher={JSTOR}
}
       
 \bib{Raicu}{article}{
    AUTHOR = {Raicu, C.},
     TITLE = {{$3\times3$} minors of catalecticants},
   JOURNAL = {Math. Res. Lett.},
     VOLUME = {20},
      YEAR = {2013},
        PAGES = {745--756},
}
\bib{RS}{article}{
      author={Ranestad, K.},
      author={Schreyer, F.-O.},
       title={Varieties of sums of powers},
        date={2000},
     journal={J. Reine Angew. Math.},
      volume={525},
       pages={147\ndash 181},
         url={http://dx.doi.org/10.1515/crll.2000.064},
}
\bib{alyson-thesis}{book}{
    AUTHOR = {Reeves, A.},
     TITLE = {Combinatorial structure on the {H}ilbert scheme},
      NOTE = {Thesis (Ph.D.)--Cornell University},
 PUBLISHER = {ProQuest LLC, Ann Arbor, MI},
      YEAR = {1992},
      }
\bib{ReidGeneral}{article}{
     AUTHOR = {Reid, M.},
     TITLE = {Gorenstein in codimension 4: the general structure theory},
 BOOKTITLE = {Algebraic geometry in east {A}sia---{T}aipei 2011},
    SERIES = {Adv. Stud. Pure Math.},
    VOLUME = {65},
     PAGES = {201--227},
 PUBLISHER = {Math. Soc. Japan, Tokyo},
      YEAR = {2015},
       }
       
       \bib{Room}{book}{
      author={Room, T.},
       title={The geometry of determinantal loci},
   publisher={Cambridge University Press},
        date={1938},
}
	\bib{SSY}{article}{
	author={Schenck, H.}, 
	author={Stillman, M.},
	author={Yuan, B.},
	title={Calabi-Yau threefolds in $\Pn^n$ and Gorenstein rings,},
	journal=
	{Advances in Theoretical and Mathematical Physics, to appear}
		}
	
	\bib{Schreyer}{article}{
      author={Schreyer, F.-O.},
      title={Geometry and algebra of prime Fano 3-folds of genus $12$},
       journal={Compositio Math},
       volume={127},
              year={2001}, 
       pages={297-319},
       }	
       
\bib{Schreyer80}{book}{
    author={Schreyer, F.-O.},
     title={Die Berechnung von Syzygien mit dem verallgemeinerten Weierstra{\ss}schen Divisionssatz und eine
Anwendung auf analytische Cohen-Macaulay Stellenalgebren
minimaler Multiplizit\"at},
      NOTE = {Master's Thesis -- Universit\"at Hamburg},
      YEAR = {1980},
      }
      
\bib{schreyer1991standard}{article}{
  title={A standard basis approach to syzygies of canonical curves.},
  author={Schreyer, F.-O.},
  journal={Journal f{\"u}r die reine und angewandte Mathematik},
  volume={421},
  pages={83--124},
  year={1991}
}		
		
\bib{Shah-Trans}{article}{
      author={Shah, J.},
      title={Deganeration of K3 surfaces of degree $4$},
       journal={Trans. Amer. Math. Soc.},
       volume={263},
              year={1981}, 
       pages={271 - 308},
       }
       
\bib{Shioda}{article}{
      author={Shioda, T.},
      title={On the Picard number of a Fermat surface},
       journal={J. Fac. Sci. Univ. Tokyo Sect. IA Math.},
       volume={28},
              year={1981}, 
       pages={725 - 734},
       }

		\bib{Shioda-Inose}{incollection}{
    author={Shioda, T., and Inose, H.},
    title={On Singular K3 surfaces,},
    date={1977},
    booktitle={Complex Analysis and Algebraic Geometry: A Collection of Papers Dedicated to K. Kodaira},
     publisher={Cambridge Univ. Press, Cambridge},
    pages={119--136},
    }
    
		\bib{Sylvester}{article}{
      author={Sylvester, J.},
       title={Sketch of a memoir on elimination, transformation, and canonical
  forms},
        date={1904},
     journal={Collected Works},
      volume={I},
       pages={184\ndash 197},
        note={Cambridge University Press},
}

\bib{VV}{article}{
      author={Vasconcelos, W.},
      author={Villereal, F.},
       title={On Gorenstein ideals of codimension four},
       journal={Proc. Amer. Math. Soc.},
       volume={98},
       year={1986}, 
       pages={205 - 210},
       }
\bib{Voisinhodge}{book} {
    AUTHOR = {Voisin, Claire},
     TITLE = {Hodge theory and complex algebraic geometry. {I}},
    SERIES = {Cambridge Studies in Advanced Mathematics},
    VOLUME = {76},
   EDITION = {English},
      NOTE = {Translated from the French by Leila Schneps},
 PUBLISHER = {Cambridge University Press, Cambridge},
      YEAR = {2007},
     PAGES = {x+322},
      ISBN = {978-0-521-71801-1},
}     

\end{biblist}
	\end{bibdiv}
	\vfill\eject
	\appendix
\section{Betti tables for the inverse system of a quarternary quartic}\label{Appendix}

The $16$ possible Betti tables for a minimal resolution of a nondegenerate graded Artinian Gorenstein quotient $A_F$ of $S=\CC[x_0,\ldots,x_3]$ of regularity $4$ is given in \cite {SSY}. For convenience we list the set of Betti tables here.  They are given in {\tt macaulay2} notation, as in \cite{GeomSyz}: if the minimal resolution is
$$ 0 \leftarrow A_F  \leftarrow F_0 \leftarrow F_1 \leftarrow \ldots  \leftarrow F_3 \leftarrow  0$$
with $F_i = \bigoplus_{j \in \ZZ} b_{ij}S(-j)$ and $b_i=\sum_j b_{ij} $,  then the Betti table of an Artinian Gorenstein ring of codimension $4$ and regularity $4$ is symmetric with the shape
\[
\begin{matrix}
& 1 & b_{1} & b_{2} & b_{3}& 1 &\cr
0:& 1 & . & . & .& . &\cr
1:&. & b_{12} & b_{23}  & b_{34}& . &\cr
2:&. & b_{13} & b_{24}  & b_{35}& .& \cr
3:&. & b_{14} & b_{25} &b_{36}& . &\cr
4:&. & . & . &.& 1 &\cr
\end{matrix}.
\]

The first eight tables are the Betti tables of arithmetic Gorenstein Calabi-Yau threefolds in $\Pn^7$ studied by 
Coughlan-Golebiowski-Kapustka-Kapustka

\begin{figure}[htbp]
\begin{center}
\includegraphics[width=260pt, angle=270]{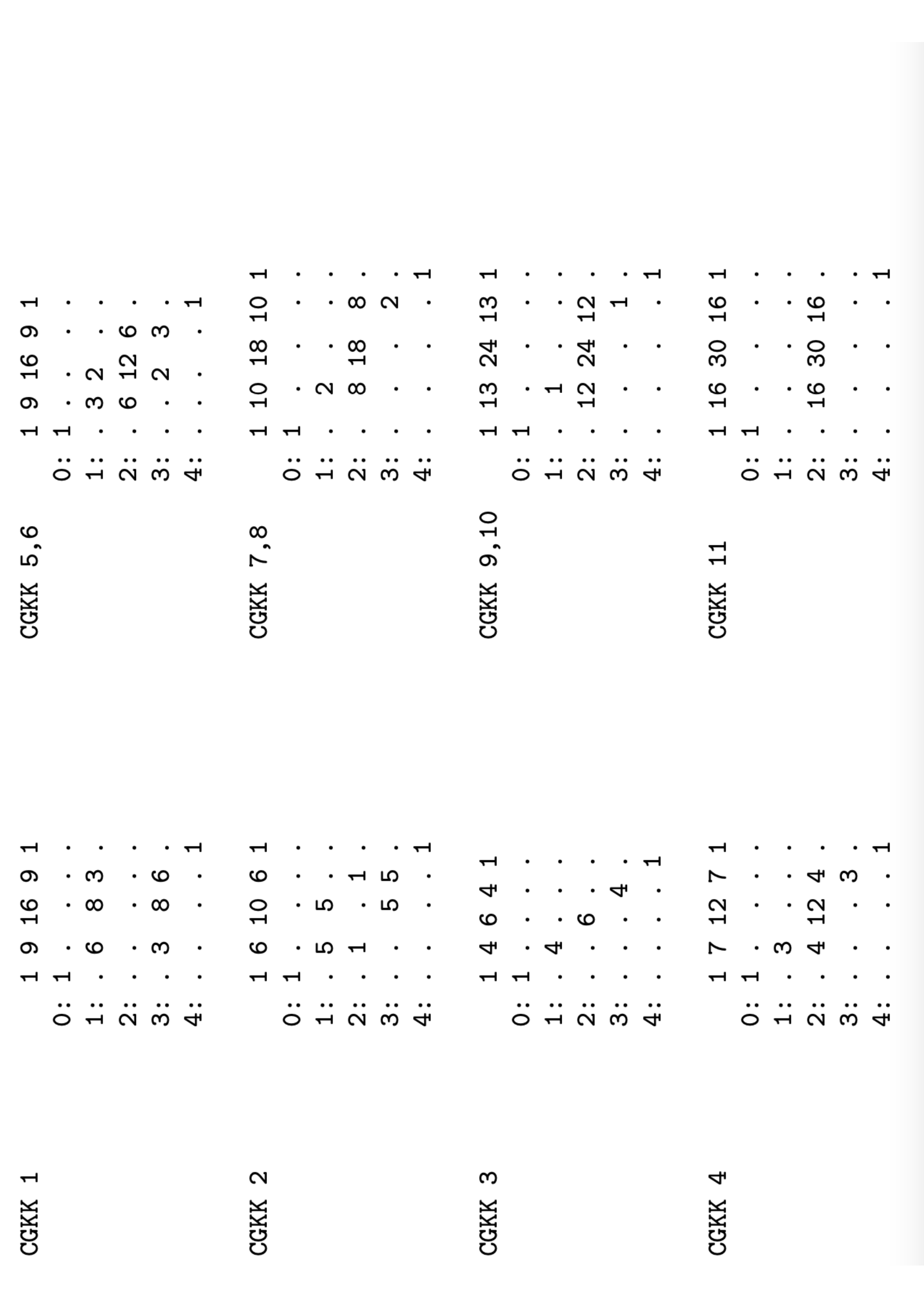} 

\vskip 0.4cm
\captionof{table}{Betti tables of AG Calabi-Yau threefolds in \cite{CGKK}}
\label{tableCGKK}
\end{center}
\end{figure}
\vskip 2.5cm
\begin{center}
\begin{figure}[htbp]
\includegraphics[width=260pt, angle=90]{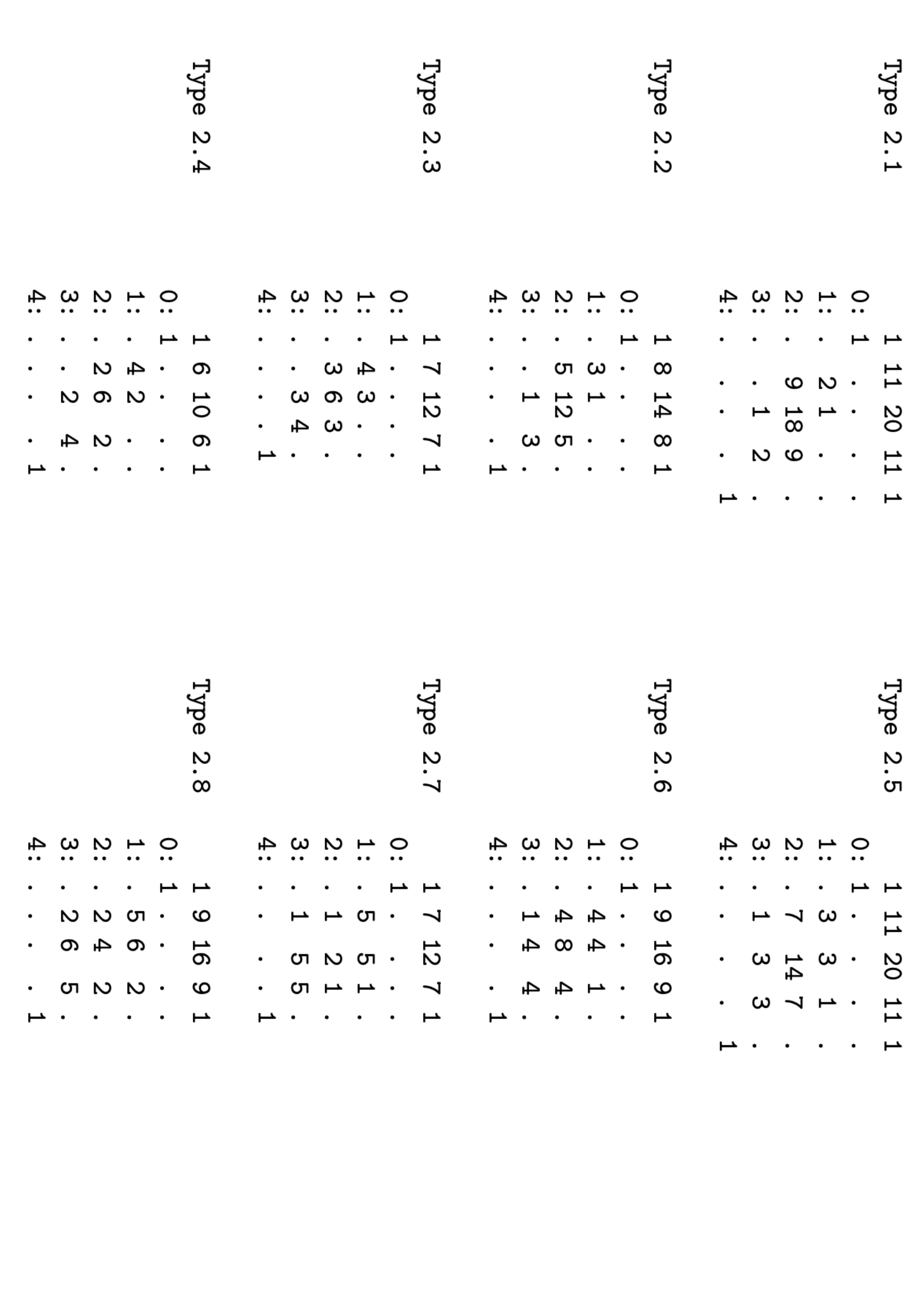} 
\vskip 0.4cm
\captionof{table}{Betti tables for the remaining eight Artinian Gorenstein algebras in \cite {SSY}}
\label{tableremaining}
\end{figure}
\end{center}

\section{Betti table subschemes of the Hilbert scheme}\label{Appendix2}

In this appendix, we investigate the Betti table strata of the
Hilbert scheme $\Hilb = \Hilb^d(\PP^3)$ whose points parametrize subschemes $\VV(I) \subset \PP^3$ of length $d$. We assume familiarity
with term orders and Groebner bases at the level of \cite{CLO}.
If $B$ is a Betti table the locus we wish to consider is
\[ \Hilb_B :=  \{ [I] \in \Hilb \mid BT(S/I) = B \}. 
\]
Similarly, if $J$ is a monomial ideal, we can consider the subset
\[
\Hilb_J := \{ [I] \in \Hilb \mid \operatorname{in}_{\text{grevlex}}(I) = J \}. 
\]
Both of these sets have natural structures as locally closed subschemes of $\Hilb$, and
for a Betti table $B$ and monomial ideal $J$, we let 
\[
\Hilb_{B,J} := \Hilb_J \cap \Hilb_B
\]
We show how to use generic initial ideals, Schreyer resolutions,
Groebner strata and {\it Macaulay2} to find the components of this subscheme of $\Hilb$.  
In
the range we are interested in ($d \le 9$), this locus can have more than one
component, but each component turns out to be rational for these degrees.  The computations involve ideals
in lots of variables (basically, affine coordinates of open subsets of
$\Hilb$), so we sometimes need to use special features to get computations
to finish.

In this paper, we are interested in the $\Hilb_B$, where $B$ is the
Betti table of an ideal with $4 \le d \le 9$ points, and every ideal
with the given Betti table has $\reg(S/I) \le 2$.  In
this appendix, we will explain how we investigate the components of these $\Hilb_B$,
focusing on the case $d = 6$.

We are interested in the case when the characteristic of the base
field $\kk$ is zero.  Most of our computations will be done over a
finite field, though, but with large enough characteristic so that the
results will hold in characteristic zero.

\subsection{Strongly stable ideals with a given Hilbert polynomial}

A {\em strongly stable monomial ideal} is a monomial ideal $J \subset S = \kk[x_0, \ldots, x_n]$
such that if $x_i m$ is in $J$,
then $x_j m$ is in $J$, for all $j < i$.

In characteristic zero (or more generally, if the characteristic is
greater than the regularity of $J$), this coincides with the notion of
an ideal being fixed under the Borel action of upper triangular
matrices.

There are only finitely many saturated strongly stable monomial ideals of
a given Hilbert polynomial (this is folklore, but D. Maclagan has a
nice general paper \cite{maclagan} which proves a sweeping generalization of this).
Alyson Reeves \cite{alyson-thesis} gave an algorithm in her Cornell thesis to compute all
of the strongly stable ideals, with a given Hilbert polynomial.  Moore and
Nagel \cite{moore-nagel}, and separately Albarelli-Lella \cite{alb-lella} have papers (and the latter has a {\tt Macaulay2} implementation improving this construction).

We compute using Macaulay2, the case when $d=6$.
We use the {\tt StronglyStableIdeals} Macaulay2 package written by Albarelli and Lella.

{\small\begin{verbatim}
i2 : needsPackage "QuartenaryQuartics";
i3 : kk = ZZ/32003;
i4 : S = kk[a..d];
i5 : B6 = stronglyStableIdeals(6, S);

i6 : netList B6

     +----------------------------------+
     |              6                   |
o6 = |ideal (b, a, c )                  |
     +----------------------------------+
     |                2   5             |
     |ideal (a, b*c, b , c )            |
     +----------------------------------+
     |           2     2   4            |
     |ideal (a, b , b*c , c )           |
     +----------------------------------+
     |           3     2   2    3       |
     |ideal (a, c , b*c , b c, b )      |
     +----------------------------------+
     |                  2        2   4  |
     |ideal (b*c, a*c, b , a*b, a , c ) |
     +----------------------------------+
     |             2        2   3     2 |
     |ideal (a*c, b , a*b, a , c , b*c )|
     +----------------------------------+

i7 : netList pack(3, B6/minimalBetti)

     +--------------+--------------+--------------+
     |       0 1 2 3|       0 1 2 3|       0 1 2 3|
o7 = |total: 1 3 3 1|total: 1 4 5 2|total: 1 4 5 2|
     |    0: 1 2 1 .|    0: 1 1 . .|    0: 1 1 . .|
     |    1: . . . .|    1: . 2 3 1|    1: . 1 1 .|
     |    2: . . . .|    2: . . . .|    2: . 1 2 1|
     |    3: . . . .|    3: . . . .|    3: . 1 2 1|
     |    4: . . . .|    4: . 1 2 1|              |
     |    5: . 1 2 1|              |              |
     +--------------+--------------+--------------+
     |       0 1 2 3|       0 1 2 3|       0 1 2 3|
     |total: 1 5 7 3|total: 1 6 8 3|total: 1 6 8 3|
     |    0: 1 1 . .|    0: 1 . . .|    0: 1 . . .|
     |    1: . . . .|    1: . 5 6 2|    1: . 4 4 1|
     |    2: . 4 7 3|    2: . . . .|    2: . 2 4 2|
     |              |    3: . 1 2 1|              |
     +--------------+--------------+--------------+

\end{verbatim}
}

\medskip
Thus, there are six saturated strongly stable monomial ideals, although four of these
are degenerate: they lie on a hyperplane. Of the remaining two, only one has regularity  $\reg(S/J) = 2$.

\subsection{Generic initial ideals}

There are two basic theorems (see Eisenbud's commutative algebra book \cite{CAwvtAG}, for an
      exposition) that we will use:
  
      \begin{theorem}[Galligo \cite{Ga74}, Bayer-Stillman \cite{BSDuke}]
        Fix a term order $>$, and let $\kk$ be an infinite field.  Let $I
        \subset T = \kk[x_0, \ldots, x_n]$ be a homogeneous ideal.
        After a general change of coordinates $g \in GL(n+1, \kk)$,
        the initial ideal $J = in_>(g \cdot I)$ in this term order is Borel
        fixed.
      \end{theorem}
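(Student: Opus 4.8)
\emph{Proof proposal.} The plan is to prove this in two stages: first, to establish that $\operatorname{in}_>(g\cdot I)$ is independent of $g$ for $g$ in a dense Zariski-open subset of $GL(n+1,\kk)$ (this produces the generic initial ideal $\operatorname{gin}_>(I)$); and second, to show that this common monomial ideal is Borel fixed. For the first stage I would argue degree by degree. Fix $d$, set $p_d=\dim_\kk I_d$, and note that $g\mapsto (g\cdot I)_d$ defines a morphism $\varphi_d\colon GL(n+1,\kk)\to \Gr(p_d,T_d)$ from an irreducible variety. For each size-$p_d$ set $\mathcal M$ of degree-$d$ monomials, the locus of $W\in\Gr(p_d,T_d)$ whose leading-term space $\operatorname{in}_>(W)$ has monomial basis $\mathcal M$ is locally closed — it is cut out by nonvanishing of one maximal minor of the coordinate matrix and vanishing of those lexicographically to its left — and these loci partition the Grassmannian. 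Pulling back along $\varphi_d$ and using irreducibility of $GL(n+1,\kk)$, exactly one such locus meets the image in a dense open $U_d$; call the corresponding monomial set $\mathcal M_d$. Invoking the standard degree bound (Galligo's argument, or a Macaulay-type regularity bound in terms of the Hilbert function) there is a $D$ so that both the eventual monomial ideal $J:=(\mathcal M_d:d\le D)$ and each $\operatorname{in}_>(g\cdot I)$ are generated in degrees $\le D$; then $U:=\bigcap_{d\le D}U_d$ is dense open and $\operatorname{in}_>(g\cdot I)=J$ for all $g\in U$.

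For the second stage let $B\subset GL(n+1,\kk)$ be the Borel subgroup of upper triangular matrices, oriented so that $b\,x_i\in\operatorname{span}(x_i,x_{i+1},\ldots,x_n)$; for grevlex with $x_0>\cdots>x_n$ this makes $\operatorname{in}_>(b\cdot x_i)=x_i$ up to scalar, and hence $\operatorname{in}_>(b\cdot m)=m$ for every monomial $m$. Fix $b\in B$. Since $U$ and $b^{-1}U$ are both dense open, $U\cap b^{-1}U\neq\varnothing$; choose $g$ in it, so that $g\in U$ and $bg\in U$. Then for each $d$ we have $J_d=\operatorname{in}_>((g\cdot I)_d)=\operatorname{in}_>((bg\cdot I)_d)=\operatorname{in}_>\!\big(b\cdot(g\cdot I)_d\big)$. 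Writing $W=(g\cdot I)_d$, both $W$ and $bW$ admit reduced echelon bases (with respect to $>$) whose pivot monomials are exactly the monomials of $J_d$. Comparing the two echelon presentations and using that $b$ and $b^{-1}$ fix leading monomials up to scalar, one shows that $bW$ differs from $W$ only by terms strictly below the pivots, and inspection of leading terms then forces $b\cdot m\in J_d$ for every monomial $m\in J_d$, i.e.\ $b\cdot J=J$. Running over all $b\in B$ gives that $J$ is Borel fixed.

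I expect the genuine obstacle to be this second stage: upgrading the cheap statement ``$\operatorname{in}_>(b\cdot W)=\operatorname{in}_>(W)$ for appropriately-oriented $b$'' to the real assertion ``$b\cdot J=J$ for the monomial ideal $J$''. The point is that $bW$ and $W$ can be genuinely different subspaces with the \emph{same} leading-term space, so one cannot simply write $bJ_d=\operatorname{in}_>(bW)=J_d$; one must use that $W$ is the degree-$d$ piece of a $g$-generic ideal and track precisely how $b$ moves the staircase of $J$. A secondary subtlety, which I would flag explicitly, is the passage — valid only in characteristic $0$, or characteristic exceeding the regularity — between the group-theoretic notion of being Borel fixed and the combinatorial notion of being strongly stable used throughout the rest of this appendix. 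This is where one invokes the one-parameter subgroups $t\mapsto\operatorname{diag}(1,\ldots,t,\ldots,1)$ together with the elementary unipotents $I+tE_{ij}$ for $i<j$ to translate invariance under all of $B$ into the exchange property $x_i m\in J\Rightarrow x_j m\in J$ for $j<i$, and conversely.
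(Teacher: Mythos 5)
The paper does not actually prove this statement: it is quoted as a classical result of Galligo and Bayer--Stillman, with Eisenbud's book cited for the exposition. So the comparison is with the standard proof. Your first stage (existence of a single initial ideal $J$ on a dense open $U\subset GL(n+1,\kk)$, via the locally closed strata of the Grassmannian and irreducibility of $GL$, plus a uniform degree bound) is exactly the standard existence argument for $\gin$ and is fine modulo the glossed degree bound.

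The genuine gap is in your second stage, and it is twofold. First, you have fixed the wrong Borel subgroup. The group you take, with $b\cdot x_i\in\operatorname{span}(x_i,\dots,x_n)$ (each variable goes to itself plus \emph{smaller} variables), is precisely the one that preserves leading monomials, but it is the \emph{opposite} Borel to the one in the theorem: $\gin(I)$ is generally not fixed by it, and invariance under it would give the reverse exchange property, not strong stability. Indeed your own final paragraph needs the unipotents $x_i\mapsto x_i+t\,x_j$ with $x_j>x_i$ (adding a \emph{larger} variable), which do not lie in your $B$. Worse, your key deduction fails outright for your $B$: take $I=(x_0+x_1)\subset\kk[x_0,x_1]$, so $J=\gin(I)=(x_0)$, and $b\colon x_0\mapsto x_0+x_1$, $x_1\mapsto x_1$. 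With $g=\mathrm{id}\in U\cap b^{-1}U$ and $W=(g\cdot I)_1$ one has $\operatorname{in}_>(W)=\operatorname{in}_>(bW)=J_1$, yet $b\cdot x_0=x_0+x_1\notin J_1$; so ``inspection of leading terms forces $b\cdot m\in J_d$'' is false as stated. Second, if one corrects the orientation to the right Borel (variables map to themselves plus larger variables), then $b$ no longer preserves leading monomials, and the implication from $J=\operatorname{in}_>(W)=\operatorname{in}_>(bW)$ (for a single generic $W$) to $bJ=J$ is exactly the content of Galligo's theorem -- which your text defers with ``one shows that\ldots'' and which you yourself identify as the obstacle. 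The standard proofs close this gap with an additional idea you do not supply: run the one-parameter elementary unipotent subgroup $b_t$ through a generic $g$ (so $\operatorname{in}_>(b_tgI)=J$ for all $t$ in a nonempty open set containing $0$) and use a specialization/limit argument -- initial ideals can only jump upward in the induced order on Pl\"ucker coordinates -- or, equivalently, the criterion that a monomial ideal $J$ is Borel-fixed iff $\operatorname{in}_>(bJ)=J$ for all $b$ in the Borel, combined with a weight/degeneration computation (see Eisenbud, Thm.~15.20, or Herzog--Hibi). Your closing remark on translating Borel-fixedness into strong stability in characteristic zero is correct, but only for the standard Borel, not the one you set up.
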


      If the term order is
      the graded reverse lexicographic order, we call this monomial ideal the {\em generic initial ideal} of $I$ (or $\gin(I)$).
      
      \begin{theorem}[Bayer-Stillman \cite{BS87}]
      The generic initial ideal $J = \gin(I)$
      has the same depth, projective dimension, and regularity as $I$.
      \end{theorem}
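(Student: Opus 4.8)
The statement to prove is the Bayer--Stillman theorem: for a homogeneous ideal $I \subset T = \kk[x_0,\ldots,x_n]$ over an infinite field, the generic initial ideal $J = \gin(I)$ (in the graded reverse lexicographic order) has the same depth, projective dimension, and regularity as $I$.

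\medskip

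\textbf{Plan of proof.} The plan is to reduce all three invariants to a single combinatorial feature of the grevlex order, namely its compatibility with quotients by generic linear forms. The key observation is that for any ideal $a \subset T$ and a general linear form $\ell$ (say $\ell = x_n$ after a generic change of coordinates, which we may assume since $J$ is already Borel fixed and hence $x_n$ is a nonzerodivisor modulo the saturation), the grevlex initial ideal of $(a + (x_n))/(x_n) \subset T/(x_n)$ equals $(\mathrm{in}(a) + (x_n))/(x_n)$; this is the standard fact that grevlex "plays well" with the last variable. From this one derives, by induction on the number of variables we quotient out, that $\mathrm{in}(a : x_n^\infty) = \mathrm{in}(a) : x_n^\infty$ and more generally that saturation commutes with taking grevlex initial ideals. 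I would first record these lemmas carefully, citing \cite{BSDuke} and \cite{GeomSyz} for the grevlex/last-variable compatibility.

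\medskip

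\textbf{Depth and projective dimension.} Next I would compute depth via the Auslander--Buchsbaum formula, so it suffices to handle projective dimension (or equivalently depth directly). The strategy: $\depth(T/I) \geq k$ if and only if a generic sequence of $k$ linear forms is a regular sequence on $T/I$, which by prime avoidance and the grevlex compatibility above translates into a statement about $T/\gin(I)$ quotiented by the corresponding coordinate variables. One shows by descending induction: after quotienting by $x_n, x_{n-1}, \ldots, x_{n-k+1}$ (generically), the ideal $I$ becomes an ideal whose grevlex initial ideal is $\gin(I)$ reduced modulo those variables, and the property of "$x_j$ is a nonzerodivisor" is detected identically on both sides because the Hilbert function is preserved under taking initial ideals and a linear form is a nonzerodivisor iff multiplication by it preserves the Hilbert function appropriately. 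So $\depth(T/I) = \depth(T/\gin(I))$, and then $\pdim$ follows from Auslander--Buchsbaum since $\pdim = \dim T - \depth$.

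\medskip

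\textbf{Regularity.} For regularity I would use the Bayer--Stillman criterion itself (Proposition 2.11 / Theorem in \cite{BS87}): a monomial ideal that is Borel fixed has regularity equal to the maximal degree of a minimal generator, and for an arbitrary ideal $I$, $\reg(I) = \reg(\gin(I))$ because the criterion "$\reg(I) \leq m$" is equivalent to the condition that for $i = 0,\ldots,n$, the generic linear forms $\ell_1,\ldots,\ell_i$ satisfy $(I, \ell_1,\ldots,\ell_i) : \ell_{i+1} = (I,\ell_1,\ldots,\ell_i)$ in degrees $\geq m$, together with $I$ being generated in degrees $\leq m$ — and each of these conditions transfers between $I$ and $\gin(I)$ by the saturation/last-variable lemmas above plus the fact that generic initial ideals are Borel fixed, for which $\reg = $ top generator degree is immediate from the explicit Eliahou--Kervaire resolution. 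The main obstacle I anticipate is the bookkeeping in the regularity step: one must be careful that "generic change of coordinates" applied at each stage of quotienting by linear forms is compatible, i.e.\ that a single generic $g \in GL(n+1,\kk)$ simultaneously puts $I$ in the Borel-fixed position and makes all the successive quotients behave generically — this is where the infiniteness of $\kk$ and a Zariski-openness argument (the "generic" conditions each define a nonempty open subset of $GL$) must be invoked to intersect finitely many open dense conditions. Once that uniformity is secured, the three equalities $\depth(T/I) = \depth(T/\gin I)$, $\pdim = \pdim$, $\reg = \reg$ all follow.
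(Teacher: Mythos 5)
The paper does not prove this statement: it is quoted as a known theorem of Bayer--Stillman, cited to \cite{BS87} with \cite{CAwvtAG} indicated for an exposition, so there is no in-paper argument to compare against. Your sketch reproduces the standard proof from those sources — grevlex compatibility with the last variable and with saturation in generic coordinates, depth detected by generic linear forms (hence projective dimension via Auslander--Buchsbaum), and the Bayer--Stillman regularity criterion combined with the fact that Borel-fixed ideals have regularity equal to the maximal generator degree, with the finitely-many-dense-open-conditions argument supplying the needed uniform genericity — and this approach is correct.
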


      In particular, after a general, or sufficiently random, change of coordinates,
      the initial ideal $J$ is saturated if $I$ is, it is Borel fixed, and has the same regularity and projective
      dimension as $I$.  

      In characteristic zero (or if the characteristic is greater than the largest
      degree generator of $J$), $J$ is strongly stable, and the regularity of $I$ is precisely the maximal
      degree of a generator of $J$.
      
      In the range of degrees we are discussing, for characteristic
      larger than, say, 100, strongly stable ideals and Borel fixed
      ideals coincide, so we do not encounter the subtleties that
      occur in finite characteristic.

      Thus, we can partition the set of ideals $I$ with $[I] \in H$ by their generic initial
      ideal $J = \gin(I)$.

      In our running example,
      there is exactly one strongly stable saturated ideal $J$ with no linear forms in 
      the ideal, and $\operatorname{reg}(S/J) \le 2$.
      
{\small\begin{verbatim}
i8 : select(B6, J -> first degree J_0 != 1 and regularity (S^1/J) <= 2)

                   2        2   3     2
o8 = {ideal (a*c, b , a*b, a , c , b*c )}

i9 : J = ideal(a^2, a*b, b^2, a*c, b*c^2, c^3)

             2        2          2   3
o9 = ideal (a , a*b, b , a*c, b*c , c )

i10 : betti res J

             0 1 2 3
o10 = total: 1 6 8 3
          0: 1 . . .
          1: . 4 4 1
          2: . 2 4 2
\end{verbatim}
}

We compare this to the resolution of the ideal $I$ of 6 general (random) 
points in $\PP^3$.  A general fact (which follows from upper-semicontinuity) is that the
Betti table of $I$ is entry by entry at most the Betti table of its initial ideal $J$.

{\small\begin{verbatim}
i11 : I = pointsIdeal randomPoints(S, 6);

o11 : Ideal of S

i12 : J == ideal leadTerm I

o12 = true

i13 : (betti res I, betti res J)

              0 1 2 3         0 1 2 3
o13 = (total: 1 4 5 2, total: 1 6 8 3)
           0: 1 . . .      0: 1 . . .
           1: . 4 2 .      1: . 4 4 1
           2: . . 3 2      2: . 2 4 2
\end{verbatim}
}
%

\subsection{The Groebner family and the Groebner stratum of an initial ideal}

      Given a monomial ideal $J$, and a term order (in this paper, we 
      always take the graded reverse lex order), we can form a
      parameter space of all ideals having initial ideal $J$.  
      This gives rise to a locally closed subscheme $\Hilb_J \subset \Hilb$ of the Hilbert scheme,
      whose points correspond to ideals having initial ideal $J$.  See 
      Lella-Roggero \cite{lella-roggero} for additional details.
      
      We now sketch the construction.
      For each
      minimal generator $x^\alpha$ of $J$, consider the set $x^\gamma$
      of monomials of the same degree, not in $J$.  For each such
      $\alpha$, form a polynomial
      
      \[      
          F_\alpha = x^\alpha + \sum\limits_{\gamma < \alpha} t_{\alpha, \gamma} x^\gamma
      \]
      
      Let $U = T[x_0, \ldots, x_n]$, where $T = \kk[t_{\alpha, \gamma}]$
      is the polynomial ring generated by all variables of the form $t_{\alpha, \gamma}$ as above.
      Let $F$ be the ideal in $U$ generated by the $F_\alpha$.
      The ideal $F = (F_\alpha)$ is called the {\em Groebner family of $J$}.
      
      Any term order $>$ on the monomials of $S$, when restricted to a finite set $M$ of monomials, is given by
      a weight vector $w \in \mathbb{Z}^{n+1}$: for $x^\alpha, x^\beta \in M$, 
      $x^\alpha > x^\beta$ if and only if $w \cdot \alpha > w \cdot \beta$.  If we let 
      the weight of $t_{\alpha, \gamma}$ be $w \cdot (\alpha - \gamma)$, then with this grading,
      $F_\alpha$ is homogeneous of weight $w \cdot \alpha$.  This grading is useful for 
      performance reasons: computing with homogeneous ideals is much more efficient than with
      arbitrary ideals.  Additionally, we can order the variables $t_{\alpha, \gamma}$ by refining
      the weight degree by any term order on $T$: if the weight of $t_{\alpha, \gamma}$ is
      greater than $t_{\alpha', \gamma'}$ then $t_{\alpha, \gamma} > t_{\alpha', \gamma'}$ in this term order.
      This insures that if a homogeneous polynomial in $T$ has a term which is a variable, then the lead term will be a variable.


We want the ideal $L \subset T$ in the parameter variables that encodes the constraints that insure $F$ is a Groebner basis.
Let $L$ be
the ideal generated by all of the coefficients in $T$ of the reductions
of the S-polynomials on the $F_\alpha$.  The ideal $L$ parametrizes all ideals which have S-pairs reducing to zero.
 The scheme 
$\VV(L)$ with ideal $L$ is called the {\em Groebner stratum of $J$}.

Over $\VV(L)$, $F$ defines a flat family all of whose fibers
have initial ideal $J$, and consequently defines via representability
of the Hilbert scheme, a locally closed subscheme $Hilb_J := \VV(L)$ of $\Hilb$. These families,
over all saturated monomial ideals, cover the Hilbert scheme. For $p \in \VV(L)$, let
$F_p \subset S$ be the ideal corresponding to that point.

If we restrict to generic initial ideals, then every ideal is equivalent,
under a change of coordinates, to an ideal corresponding to a point in one of these subsets of $\Hilb$.

\medskip
In our running example above, this Groebner stratum is an open affine subset
of the Hilbert scheme.  As we saw earlier (and also follows directly), $\dim \Hilb = 18$, and is known to be irreducible and rational.
We check the dimension and irreducibility of $\Hilb_J$, which is an open subset of $\Hilb$.

{\small\begin{verbatim}
i14 : F = groebnerFamily J;
i15 : U = ring F;
i16 : T = coefficientRing U;

i17 : netList F_*

      +--------------------------------------------------------------+
      | 2                      2                      2              |
o17 = |a  + t b*c + t a*d + t c  + t b*d + t c*d + t d               |
      |      1       3       2      4       5       6                |
      +--------------------------------------------------------------+
      |                         2                         2          |
      |a*b + t b*c + t a*d + t c  + t  b*d + t  c*d + t  d           |
      |       7       9       8      10       11       12            |
      +--------------------------------------------------------------+
      | 2                         2                         2        |
      |b  + t  b*c + t  a*d + t  c  + t  b*d + t  c*d + t  d         |
      |      13       15       14      16       17       18          |
      +--------------------------------------------------------------+
      |                            2                         2       |
      |a*c + t  b*c + t  a*d + t  c  + t  b*d + t  c*d + t  d        |
      |       19       21       20      22       23       24         |
      +--------------------------------------------------------------+
      |   2                    2       2          2         2       3|
      |b*c  + t  b*c*d + t  a*d  + t  c d + t  b*d  + t  c*d  + t  d |
      |        25         27        26       28        29        30  |
      +--------------------------------------------------------------+
      | 3                    2       2          2         2       3  |
      |c  + t  b*c*d + t  a*d  + t  c d + t  b*d  + t  c*d  + t  d   |
      |      31         33        32       34        35        36    |
      +--------------------------------------------------------------+

i18 : L = trim groebnerStratum F;

o18 : Ideal of T

i19 : dim L == 18

o19 = true

i20 : isPrime L

o20 = true
\end{verbatim}
}

\subsection{The Schreyer resolution and minimal Betti numbers}

Schreyer's algorithm for computing free resolutions \cite{Schreyer80} can be used
to construct equations for $\Hilb_{B,J}$ from the Groebner family and stratum.  

For any Betti table $B$, and saturated monomial ideal $J$, we obtain the
locally closed subscheme $\Hilb_{B,J}$, consisting of all $p \in \VV(L)$ such that
$F_p$ has initial ideal $J$, and $S/F_p$ has Betti table $B$.
The steps for this construction are as follows.
\begin{enumerate}
    \item A free resolution $C$ of the ideal $F$ over $U/L$ can be computed via Schreyer's algorithm \cites{Schreyer80, lascala}.
    \item This resolution restricts to a minimal free resolution of the initial ideal $J$ if $J$ is strongly stable.
    \item We get a number of degree 0 maps in this resolution, say $\phi_{i,d}$, with entries in the ring $T/L$.  We consider the loci of these maps with prescribed ranks.  The ranks of these maps describe where cancellation in the resolution can
    occur, and consequently describe the Betti table.  For fixed size of minors of each of these matrices, we get a subscheme of $\VV(L)$ which is the union of all of the
    $\Hilb_{B,J}$, over all Betti tables entrywise greater than the one corresponding to the rank conditions.
    \item To be a bit more precise, for any point
    $p \in \VV(L) \subset Hilb$, we get a nonminimal resolution $C$ for $S/F_p$.  The minimal Betti table of this ideal can be computed from $C$ by 
    \[b_{id}(S/F_p) = \rank(C_i) - \rank(\phi_{i,d})(p) - \rank(\phi_{i-1,d})(p).\]
    Similarly, given a Betti table $B$ which is entrywise $\le B_J$, there exists a
    unique assignment of ranks which correspond to this Betti table.
    \item Thus, given a strongly stable ideal $J$ and a Betti table $B$, we can consider the locus $\overline{\Hilb}_{B,J} \subset \Hilb$ in $\VV(L)$, defined by the minors of these $\phi_{i,d}$ of the corresponding size.  This is a closed subscheme
    of $\VV(L)$ containing the union of all $\Hilb_{B',J}$, with $B' \ge B$.
    \end{enumerate}

It turns out for small numbers of points, we can compute all of these loci! Rather, we can compute enough about them to
identify the irreducible components and to show that they are rational.

A complication with doing this in Macaulay2 arises, but it is a small issue: we can lift a free resolution
of $J$ to one of $F$, but it is not always the minimal resolution of $J$.  Still,
we can use this to determine the equations of the $\overline\Hilb_{B,J}$ locally on $\VV(L)$, and consequently we can determine the component structure of the $\Hilb_{B,J}$

\medskip
We now illustrate the above technique in our running example.  We have written a function in Macaulay2, in the package {\tt ParameterSchemes},
which computes this lifted resolution, and the various degree 0 maps whose ranks determine the
Betti table of the minimal free resolution (at any point of $\VV(L)$).

{\small\begin{verbatim}
i21 : (CF, H) = nonminimalMaps F;
i22 : U = ring CF;

i23 : CF

       1      6      10      6      1
o23 = U  <-- U  <-- U   <-- U  <-- U
                                    
      0      1      2       3      4

o23 : ChainComplex

i24 : betti(CF, Weights=>{1}) -- in this case it is indeed a non-minimal resolution

             0 1  2 3 4
o24 = total: 1 6 10 6 1
          0: 1 .  . . .
          1: . 4  4 2 .
          2: . 2  5 3 1
          3: . .  1 1 .

i25 : isHomogeneous CF -- but it is homogeneous, as it needs to be.
o25 = true
\end{verbatim}
}
Notice that the resulting complex is not minimal when restricted to the origin (i.e. the point $J$).
The following 4 matrices determine the minimal Betti numbers of points on $\Hilb$.
In fact, the last 2 matrices always have full rank (as they must), and the $5 \times 2$ matrix $M_2$
always has rank at least 1.

\medskip

{\small\begin{verbatim}
i26 : keys H -- these are the maps of scalars in the resolution.  
             -- Key is (homological degree, internal degree).
o26 = {(3, 4), (3, 5), (4, 6), (2, 3)}

o26 : List

i27 :  M1 = H#(2,3) -- rank is 0, 1, or 2.

o27 = {3} | -t_8-t_14t_19      t_7t_14-t_20t_14+t_14t_19t_13            
      {3} | -t_7+t_20-t_19t_13 -t_8-t_14t_19+t_7t_13-t_20t_13+t_19t_13^2
      ---------------------------------------------------------------------
      -t_2-t_20^2+t_14t_19^2    -t_8t_20+t_1t_14+t_7t_14t_19             |
      -t_1-2t_20t_19+t_19^2t_13 -t_2-t_7t_20-t_8t_19+t_1t_13+t_7t_19t_13 |

              2       4
o27 : Matrix T  <--- T

i28 : M2 = H#(3,4) -- rank is 1 or 2

o28 = {4} | -t_14                                   
      {4} | -1                                      
      {4} | t_8+t_14t_19-t_7t_13+t_20t_13-t_19t_13^2
      {4} | -t_7+t_20-t_19t_13                      
      {4} | 0                                       
      ---------------------------------------------------------------------
      -t_8                                    |
      t_19                                    |
      t_2+t_7t_20+t_8t_19-t_1t_13-t_7t_19t_13 |
      -t_1-2t_20t_19+t_19^2t_13               |
      t_7-t_20+t_19t_13                       |

              5       2
o28 : Matrix T  <--- T

i29 : M3 = H#(3,5) -- maximal rank, can ignore

o29 = {5} | -1 t_19 -t_20 |

              1       3
o29 : Matrix T  <--- T

i30 : M4 = H#(4,6) -- maximal rank, can ignore

o30 = {6} | -1 |

              1       1
o30 : Matrix T  <--- T

i31 : minors(2, M2) == minors(1, M1)

o31 = true
\end{verbatim}
}

Since the ideal of maximal minors of $M_2$ is equal to the ideal of 
the maximal minors of $M_1$ (which are the entries), we obtain 3 possible
rank conditions 
\[ (\rank(M_1), \rank(M_2)) \in \{ (0,1), (1,2), (2,2) \}. \]
These correspond to three Betti table loci $\Hilb_{B,J}$.
The generic case (ranks are maximal) corresponds to the Betti table of
6 general points in $\PP^3$.  The minimal rank case corresponds to the Betti table of $J$.  All three Betti tables occur as cases (5, 6, 7) in Table~\ref{tablepoints}.

\subsection{The components of the Betti table loci of degree $6$}

We will next compute the components of the $\overline{\Hilb}_{B,J}$.  We can compute the loci in $\VV(L)$ that correspond to these three rank
conditions.  This will lead us to the following loci:
\begin{enumerate}
\item 6 general points.  This locus is dense in $\Hilb$, which is rational of dimension 18, and has Betti table type (5) in Table~\ref{tablepoints}.
\item  3 points on a line, and 3 general points.  
The closure of the locus
of such points is rational and irreducible, and has dimension 16,
and has Betti table type (6) in Table~\ref{tablepoints}.
\item 5 points in a plane and one point off the plane. The closure of the locus of such points is rational and irreducible, and has dimension 16,
and has Betti table type (7) in Table~\ref{tablepoints}.
\item 3 points on each of 2 skew lines.  The closure of the locus
of such points is rational and irreducible, and has dimension 14,
and has Betti table type (7) in Table~\ref{tablepoints}.
\end{enumerate}

We now explain how to determine this information.
Above, we have already handled case (1): the ideal $L$ itself is prime of dimension 18, and is dense in the Hilbert scheme $\Hilb$ parametrizing length 6 subschemes of $\PP^3$.  

We compute the ideals $L_{441}$ and $L_{430}$
defining the rank loci $\overline{\Hilb}_{B,J}$.  Recall that the corresponding Betti tables are:

\[\small
\begin{matrix}
&0&1&2&3\\
\text{total:}&1&6&8&3\\
\text{0:}&1&\text{.}&\text{.}&\text{.}\\
\text{1:}&\text{.}&4&4&1\\
\text{2:}&\text{.}&2&4&2\\
\end{matrix}
\qquad
\begin{matrix}
&0&1&2&3\\
\text{total:}&1&5&6&2\\
\text{0:}&1&\text{.}&\text{.}&\text{.}\\
\text{1:}&\text{.}&4&3&\text{.}\\
\text{2:}&\text{.}&1&3&2\\
\end{matrix}
\]

The first step is to compute the ideals $L_{441}$ and $L_{430}$ which define the rank loci, and decompose these
loci into irreducible components.

{\small\begin{verbatim}
i32 : L441 = trim(minors(1, M1) + L);
o32 : Ideal of T
i33 : L430 = trim(minors(2, M1) + L);
o33 : Ideal of T
i34 : compsL441 = decompose L441;
i35 : compsL430 = decompose L430;
i36 : (#compsL441, #compsL430)

o36 = (2, 2)

i37 : compsL441/dim

o37 = {16, 14}

i38 : compsL430/dim

o38 = {16, 16}

i39 : compsL441_0 == compsL430_0
o39 = true

i40 : isSubset(compsL430_1, compsL441_1)
o40 = true
\end{verbatim}
}

The last 2 lines confirm that the rank locus $\VV(L_{430})$
contains the rank locus $\VV(L_{441})$.  We see that there
are 2 irreducible components of $\Hilb_{B_{441}, J}$
of dimensions 14 and 16, and that there is one irreducible component of $\Hilb_{B_{430}, J}$, it has dimension 16.

Next, we check that the 2 components for the $441$ locus 
are both rational.  The function {\tt randomPointOnRationalVariety} returns a $\kk$-valued point, if certain sufficient conditions for the variety to be rational are satisfied.  This holds for both components of $L_{441}$, so they are rational, of dimensions 14 and 16.

We now take the random point on each (rational) component that this function returns, 
and compute the corresponding ideal in $S$.  By considering
its irreducible decomposition, we obtain a candidate
for the general element on each component.  In these cases,
a dimension count matches the dimension we computed for the
component.

We implement this with the code below.  The dimension 14
component has general element 3 points on one line, and 3 points on another skew line.  The dimension 16 component
has general element 5 points in a plane, and one point off
the plane.

{\small\begin{verbatim}
i42 : pta = randomPointOnRationalVariety compsL441_0

o42 = | 4818 14868 14185 11709 -12680 -3804 11646 9165 1329 -11430 -14770
      ---------------------------------------------------------------------
      -11245 -7240 13564 8800 -14581 12746 -792 15900 4173 15035 1560 -5149
      ---------------------------------------------------------------------
      12775 4571 7802 1805 13264 -10075 7936 -805 1321 -2281 9132 10444
      ---------------------------------------------------------------------
      3905 |

               1        36
o42 : Matrix kk  <--- kk

i43 : ptb = randomPointOnRationalVariety compsL441_1

o43 = | -10329 -120 -8822 7364 -103 11106 -435 -15249 -10586 -3570 -12139
      ---------------------------------------------------------------------
      6555 -10648 -8190 5354 1013 -5960 -67 13303 11931 -5834 13202 10214
      ---------------------------------------------------------------------
      10956 -2813 -12260 14265 8927 12815 -7346 11947 -23 0 11453 -5155
      ---------------------------------------------------------------------
      -3006 |

               1        36
o43 : Matrix kk  <--- kk

i44 : I441a = sub(F, (vars S) | pta)

              2                   2                                  
o44 = ideal (a  + 8800b*c - 11430c  - 11245a*d - 3804b*d + ..., ...)

o44 : Ideal of S

i45 : I441b = sub(F, (vars S) | ptb)

              2                  2                                 
o45 = ideal (a  + 5354b*c - 3570c  + 6555a*d + ..., ...)

o45 : Ideal of S

i46 : betti res I441a

             0 1 2 3
o46 = total: 1 6 8 3
          0: 1 . . .
          1: . 4 4 1
          2: . 2 4 2

o46 : BettiTally

i47 : betti res I441b

             0 1 2 3
o47 = total: 1 6 8 3
          0: 1 . . .
          1: . 4 4 1
          2: . 2 4 2

o47 : BettiTally

i48 : decompose I441a

                                                                           
o48 = {ideal (c - 10075d, b + 10116d, a + 15005d), ideal (a + 1321b + 7802c
      ---------------------------------------------------------------------
                 2                    2                             2   3  
      - 12657d, b  + 10444b*c + 13264c  + 6479b*d - 11887c*d + 7735d , c  +
      ---------------------------------------------------------------------
                      2            2          2        3     2            
      3905b*c*d - 805c d + 14534b*d  - 9978c*d  - 1947d , b*c  + 7936b*c*d
      ---------------------------------------------------------------------
             2           2           2        3
      - 5149c d - 3437b*d  + 15943c*d  + 7452d )}

o48 : List

i49 : (decompose I441a)/degree

o49 = {1, 5}
\end{verbatim}
}

Finally, the ideal $L_{430}$ does not meet the
sufficient conditions to determine that it is rational.
The workaround is to find a rational family of ideals which each have Betti table $B_{430}$, and form a family of dimension 16.  Since $\VV(L_{430})$ is irreducible of dimension 16
and must contain this family, they must coincide, and
therefore that component is rational of dimension 16.
The general element is a set of 3 points on a line, together with 3 general points in space.

The computation below yields the Betti table $B_{430}$ for
a random such ideal.

{\small\begin{verbatim}
i50 : I430 = pointsIdeal randomBlockMatrix({S^2, S^2}, {S^3, S^3}, 
          {{random, random}, 
           {0, random}})
                                            2   2                      
o50 = ideal (a*d - 9045b*d - 9460c*d + 2639d , c  + ..., ...)

o50 : Ideal of S

i51 : betti res I430

             0 1 2 3
o51 = total: 1 5 6 2
          0: 1 . . .
          1: . 4 3 .
          2: . 1 3 2
\end{verbatim}
}

This vignette illustrates the techniques to analyze the case of 6 points.  For degrees $4 \le d \le 9$, a similar
analysis
allows us to produce all of the components and show that they are rational.

\end{document}